\documentclass[12pt,reqno]{amsart}
\usepackage{etex}
\usepackage{lscape}
\usepackage{mathrsfs}
\usepackage{amsfonts}
\usepackage[sorted,compressed-cites,sorted-cites,initials]{amsrefs}
\usepackage[centertags]{amsmath}
\usepackage{amssymb,array}
\usepackage{amsthm,youngtab}
\usepackage{graphicx}
\usepackage{caption}
\usepackage{ytableau}
\usepackage{resizegather}
\usepackage{enumerate,enumitem}
\SetLabelAlign{center}{\hfill #1\hfill}
\usepackage[textwidth=16cm, hmarginratio=1:1]{geometry}
\usepackage{tikz-cd}
\usepackage{rotating}
\usepackage{mathdots}
\usepackage[all,cmtip]{xy}
\usepackage[titletoc,title]{appendix}
\usepackage{amscd}
\usepackage[colorlinks]{hyperref}
\usepackage{float}
\usepackage{appendix}
\usepackage{makecell}
\usepackage{listings} 
\hypersetup{
bookmarksnumbered,
pdfstartview={FitH},
breaklinks=true,
linkcolor=blue,
urlcolor=blue,
citecolor=blue,
bookmarksdepth=2
} 
\allowdisplaybreaks[4]

\theoremstyle{plain}
   \newtheorem{theorem}{Theorem}[section]
   \newtheorem{proposition}[theorem]{Proposition}
   \newtheorem{lemma}[theorem]{Lemma}
   \newtheorem{corollary}[theorem]{Corollary}
   \newtheorem{conjecture}[theorem]{Conjecture}
   
\theoremstyle{definition}
   \newtheorem{definition}[theorem]{Definition}
   \newtheorem{example}[theorem]{Example}

   \newtheorem{remark}[theorem]{Remark}

\numberwithin{equation}{section}

\setcounter{tocdepth}{2}
\setcounter{secnumdepth}{3}

\newlist{thmlist}{enumerate}{1}

\begin{document}

\title[Real simple modules over simply-laced quantum affine algebras]{Real simple modules over simply-laced quantum affine algebras and categorifications of cluster algebras}
\author{Bing Duan}
\address[Bing Duan]{School of Mathematics and Statistics, Lanzhou University, Lanzhou, 730000 P. R. China}
\email{duanbing@lzu.edu.cn}
\author{Ralf Schiffler}
\address[Ralf Schiffler]{Department of Mathematics, University of Connecticut, Storrs, CT 06269--1009, USA}
\email{schiffler@math.uconn.edu}
\thanks{The first author was supported by the National Natural Science Foundation of China (No. 12001254, 12171213) and by Gansu Province Science Foundation for Youths (No. 22JR5RA534). The second author was supported by the NSF grant DMS-2054561.}

\date{}

\maketitle

\begin{abstract}
Let $\mathscr{C}$ be the category of finite-dimensional modules over a simply-laced quantum affine algebra $U_q(\widehat{\mathfrak{g}})$. For any height function $\xi$ and $\ell\in \mathbb{Z}_{\geq 1}$, we introduce certain subcategories $\mathscr{C}^{\leq \xi}_\ell$ of $\mathscr{C}$, and prove that the quantum Grothendieck ring $K_t(\mathscr{C}^{\leq \xi}_\ell)$ of $\mathscr{C}^{\leq \xi}_\ell$ admits a quantum cluster algebra structure. Using $F$-polynomials and monoidal categorifications of cluster algebras, we classify all real simple modules in $\mathscr{C}^{\leq \xi}_1$ in terms of their highest $l$-weight monomials, among them the families of type $D$ and type $E$ are new. For any $\ell$, inspired by Hernandez and Leclerc's work, we propose two conjectures for the study of real simple modules, and prove them for the subcategories $\mathscr{C}^{\leq \xi}_\ell$  whose Grothendieck rings are cluster algebras of finite type.

\hspace{0.15cm}

\noindent
{\bf 2020 Mathematics Subject Classification}: 13F60, 17B37

\hspace{0.15cm}

\noindent
{\bf Keywords}: Quantum affine algebras; Cluster algebras; Real simple modules; Auslander-Reiten quivers; Hernandez-Leclerc modules
\end{abstract}

\tableofcontents

\section{Introduction}

Cluster algebras were introduced in \cite{FZ02} by Fomin and Zelevinsky as a tool for studying canonical bases and total positivity from Lie theory. Later, quantum cluster algebras, as quantum counterparts of cluster algebras of geometric type, were introduced by Berenstein and Zelevinsky \cite{BZ05}.
An additive categorification of cluster algebras, the cluster categories, was established in \cite{BMRRT06,CCS06,Amiot}. This provided a fruitful connection to the representation theory of finite-dimensional algebras. On the other hand, a monoidal categorification of cluster algebras was obtained in \cite{HL10} using representations of quantum affine algebras.

\smallskip

Let $U_q(\widehat{\mathfrak{g}})$ be the quantum affine algebra associated to a simply-laced simple complex Lie algebra $\mathfrak{g}$ with quantum parameter $q\in \mathbb{C}^*$ not a  root of unity. Denote by $\mathscr{C}$ the category of finite-dimensional $U_q(\widehat{\mathfrak{g}})$-modules. Every finite-dimensional simple $U_q(\widehat{\mathfrak{g}})$-module in $\mathscr{C}$ is a highest weight module and is parameterized by a dominant monomial \cite{CP94, CP95, FR98}. A simple module $M$ in $\mathscr{C}$ is said to be \textit{real} if $M\otimes M$ is simple \cite{Lec03}, and $M$ is said to be \textit{prime} if it cannot be written as a tensor product of non-trivial modules \cite{CP97}.

Hernandez and Leclerc \cite{HL10} discovered a connection between representations of quantum affine algebras and cluster algebras, where a monoidal categorification of a cluster algebra was given. The existence of a monoidal categorification implies the positivity of the cluster variables and the linear independence of the cluster monomials, see \cite[Proposition 2.2]{HL10}. Hernandez and Leclerc showed in \cite{HL16} that the Grothendieck rings of certain subcategories of $\mathscr{C}$ admit cluster algebra structures \cite[Theorem 5.1]{HL16}, and conjectured that the set of cluster monomials of each cluster algebra should be identified with the set of classes of real simple objects in the corresponding subcategory. One direction of the bijection (the modules corresponding to cluster monomials are real simple modules) is known \cite{Qin17,KKKO18,KKOP22}, but the other direction (the classes of real simple modules in a subcategory are cluster monomials in the corresponding cluster algebra) is still open except for Kirillov-Reshetikhin modules \cite{HL16}, snake modules of type $AB$ \cite{DLL19,DS20}, as well as Hernandez-Leclerc modules of type $A$ \cite{BC19,GDL22}.  

After Hernandez and Leclerc's work \cite{HL10,HL16}, Bittmann proved in \cite{Bit21a,Bit21} that the quantum Grothendieck rings of certain subcategories of $\mathscr{C}$ admit quantum cluster algebra structures. As a result of quantum cluster mutation, the $(q,t)$-characters of real simple modules in these subcategories have a close connection with quantum cluster monomials. 

Moreover, Hernandez and Leclerc \cite[Section 5.2.6]{HL21} pointed out that  
\begin{align*}
& ``\textit{Classification of real simple modules (in terms of their highest loop-weight) is a difficult} \\
&  \textit{open problem}."  
\end{align*}

In this paper, our main aim is to classify real simple modules in subcategories of $\mathscr{C}$.

Let $\xi: I \to \mathbb{Z}$ be a height function satisfying that $|\xi(i)-\xi(j)|=1$ if there is an edge $i\sim j$ in the Dynkin diagram of $\mathfrak{g}$. For any height function $\xi$ and $\ell\in \mathbb{Z}_{\geq 1}$, we introduce in Section \ref{subcategories} a series of  monoidal subcategories $\mathscr{C}^{\leq \xi}_\ell$ of $\mathscr{C}$ as follows:
\[
\mathscr{C}^{\leq \xi}_1 \subset \mathscr{C}^{\leq \xi}_2 \subset \cdots \subset \mathscr{C}^{\leq \xi}_\ell \subset \cdots \subset \mathscr{C}^{\leq \xi}_\infty:=\mathscr{C}^{\leq \xi}.
\]
The subcategories $\mathscr{C}^{\leq \xi}_\ell$ coincide with Hernandez-Leclerc subcategories $\mathscr{C}_\ell$ \cite{HL16,HL21} if we let $\xi$ be a sink-source function. The subcategory $\mathscr{C}^{\leq \xi}_1$ was introduced in \cite{HL13}. Very recently,  the subcategory $\mathscr{C}^{\leq \xi}$ was introduced and studied independently by Fujita, Hernandez, Oh and Oya \cite[Section 4.3]{FHOO23} for general Dynkin type, not only ADE type.  

It is natural to ask if there exists a direct relation, for example a pair of maps, between the additive and the monoidal categorification of the cluster algebra. Leclerc suggested the terminology \emph{exponential} and \emph{log} in  \cite{Lec08}, and  Nakajima proposed \emph{tropicalization} and \emph{de-tropicalization} in \cite{Nak11}. We give such a correspondence in the case where $\ell=1$, and conjecture a generalization for $\ell\geq 2$.

Using Bittmann's work \cite{Bit21}, we prove that the quantum Grothendieck ring $K_t(\mathscr{C}^{\leq \xi}_\ell)$ of $\mathscr{C}^{\leq \xi}_\ell$ admits a quantum cluster algebra structure, see Theorem \ref{quantum Grothendieck ring admits a quantum cluster algebra structure}. The crucial step is to prove that, for any choice of $\xi$, $(L^{\leq \xi}_\ell,B^{\leq \xi}_\ell)$ forms a compatible pair, see Proposition \ref{Compatible pair} and Section \ref{quantum cluster algebra structure on quantum Grothendieck ring of a subcategory} for undefined symbols.

In order to use the technique of additive categorification of cluster algebras, for  an arbitrary height function $\xi$ and $\ell\in \mathbb{Z}_{\geq 1}$,  we define a Jacobian algebra $A^{\leq \xi}_\ell$ in the sense of Derksen-Weyman-Zelevinsky \cite{DWZ08,DWZ10} in Section \ref{our Jacobian algebras}. When $\ell=1$, our Jacobian algebra $A^{\leq \xi}_1$ is nothing but a path algebra of a Dynkin quiver, with vertex set $I$. The associated cluster category $\mathcal{C}^{\leq \xi}_1$ of \cite{BMRRT06} is a triangulated 2-Calabi-Yau category with cluster-tilting objects. For each pair of non-split triangles $L\to M \to N \to L[1]$,  $N\to M' \to L \to N[1]$ in $\mathcal{C}^{\leq \xi}_1$, with $L$ and $N$ indecomposable and $\text{Ext}^1_{\mathcal{C}^{\leq \xi}_1}(L,N)=1$, it follows from \cite{CK06,FK10} that 
\begin{align}\label{cluster exchanges in a cluster category}
X_L X_{N}=X_M + \textbf{y}^{\alpha}X_{M'}
\end{align}
(we exchange $M$ and $M'$ if necessary such that $g(M)=g(L)+g(N)$, see Lemma \ref{g-vector equation in an exchange pair}), where $X_L$ is the cluster variable associated to $L$, $\textbf{y}^{\alpha}$ is a Laurent monomial in variables $y_i$, with $i\in I$, and the vector $\alpha\in \mathbb{Z}^I$ is the dimension vector of the image of the morphism $h: \tau^{-1} L \to N$ from the second exchange triangle.

We lift the formula (\ref{cluster exchanges in a cluster category}) to the following exchange relation (\ref{cluster exchanges in the subcategory}) in $K_0(\mathscr{C}^{\leq \xi}_1)$ in Theorem~\ref{main theorem2}:
\begin{align}\label{cluster exchanges in the subcategory}
[\Phi(L)] [\Phi(N)] = [\Phi(M)] \left(\prod_{i \in I} [L(f_i)]^{c_i} \right) + [\Phi(M')] \left(\prod_{j \in I} [L(f_j)]^{d_j} \right),
\end{align}
where $[L(f_i)]$, with $i\in I$, are frozen variables in the cluster algebra $K_0(\mathscr{C}^{\leq \xi}_1)$, $\Phi$ is a map from the set of indecomposable rigid objects in $\mathcal{C}^\xi_1$ to the set of real prime simple modules without $\{L(f_i) \mid i\in I \}$ in $\mathscr{C}^{\leq \xi}_1$, defined by a composition of the cluster character or the CC map and a bijection induced by the cluster algebra structure of $K_0(\mathscr{C}^{\leq \xi}_1)$, see Section~\ref{case l=1}, and the non-negative vectors $(c_i)_{i\in I}$ and $(d_j)_{j\in I}$ are defined by
\begin{itemize}
\item[(1)] $(c_i)_{i\in I} = (\max(n_i-m_i,0))_{i\in I}$, where $\text{soc}(M)= \oplus_{i\in I} S(i)^{m_i}$, $\text{soc}(N\oplus \tau_{\mathcal{C}}\,\text{Ker}(h))=\oplus_{i\in I} S(i)^{n_i}$ with $m_i, n_i \geq 0$, and $\text{soc}(P(i)[1])=0$ for $i\in I$. See Theorem \ref{equivalent descriptions of c}.  
\item[(2)] $(d_j)_{j\in I} = \underline{\text{dim}}(\text{soc}(L))+\underline{\text{dim}}(\text{soc}(N))+g(\text{Im}(h))-\underline{\text{dim}}(\text{soc}(M'))$, where $\underline{\text{dim}}(M)$ is the dimension vector of $M$, and $g(M)$ is the $g$-vector of $M$. See Theorem \ref{equivalent descriptions of d}.  
\end{itemize}
In particular,  in the case where $L=P(i)[1]$, $N=I(i)$ for an indecomposable projective module $P(i)$ and indecomposable injective module $I(i)$, our formula (\ref{cluster exchanges in the subcategory}) is an equation from a $T$-system, see Proposition \ref{images of indecomposable injective modules under Phi} and Remark \ref{one case of our Equation from T system}. If $N$ is an indecomposable non-projective $A^{\leq \xi}_1$-module and $L=\tau N$, then $M'=0$ and $\alpha=\underline{\text{dim}}(N)$. In this case, we obtain the following exchange relation (\ref{cluster exchanges for meshes in the subcategory}) in Corollary \ref{a corollary of main theorem2}.
\begin{align}\label{cluster exchanges for meshes in the subcategory}
[\Phi(\tau N)] [\Phi(N)] = [\Phi(M)] \left(\prod_{i \in I} [L(f_i)]^{c_i} \right) + \prod_{j \in I} [L(f_j)]^{d_j}.
\end{align}

Since \text{mod}\,$A^{\leq \xi}_1$ is a hereditary category, we can compute the highest $l$-weight monomials of all prime simple modules by starting with the images $\Phi(I(i))$ of the indecomposable injective modules $I(i)$ and apply Equation (\ref{cluster exchanges for meshes in the subcategory}) until we reach the images $\Phi(P(i))$ of the indecomposable projective modules $P(i)$.

Along the way, we obtain the highest $l$-weight monomials of Hernandez-Leclerc modules in Section \ref{The highest l-monomials of Hernandez-Leclerc modules}. The families of Hernandez-Leclerc modules of type $D$ and type $E$ are new, and the treatment of Hernandez-Leclerc modules of type $A$ is different from the one in \cite{BC19}.  An Hernandez-Leclerc module is a real prime simple module in $\mathscr{C}^{\leq \xi}_1$. They were studied by Hernandez and Leclerc \cite{HL10, HL13}, Brito and Chari \cite{BC19}, and Guo, Duan, and Luo \cite{GDL22}. 
The equivalence class of an Hernandez-Leclerc module is a cluster variable in $K_0(\mathscr{C}^{\leq \xi}_1)$, and the equivalence classes of tensor products of Hernandez-Leclerc modules from the same cluster are cluster monomials, and every cluster monomial is of this form. The set of all cluster monomials forms a canonical basis of $K_0(\mathscr{C}^{\leq \xi}_1)$.

For general $\ell\in \mathbb{Z}_{\geq 2}$, inspired by Hernandez and Leclerc's work \cite{HL16}, we provide a conjectural framework for the study of real simple modules in subcategories $\mathscr{C}^{\leq \xi}_\ell$, see Conjecture \ref{rigid objects equal real prime simple modules conjecture} on a bijection between real (prime) simple modules and (indecomposable) rigid objects and Conjecture \ref{character conjecture} on a realization of the $q$-character of a real simple module as the $F$-polynomial of a rigid object in the cluster category. If we take $\xi$ as a sink-source function and $\ell=\infty$, our Conjecture \ref{character conjecture} becomes a conjecture on Hernandez-Leclerc's geometric $q$-character formulas \cite[Conjecture 5.3]{HL16}. We prove the two conjectures for any height function $\xi$, in the cases where $\ell=1$ in type $ADE$, $\ell\leq 4$ in type $A_2$, and $\ell=2$ in types $A_3$ and $A_4$ in Section \ref{check our conjecture for small cases}.

This paper is organized as follows. In Section \ref{preliminaries}, we briefly collect basic material on quantum affine algebras, quantum cluster algebras, Auslander-Reiten quivers, geometric realizations of cluster categories, $F$-polynomials and $g$-vectors. In Section \ref{subcategories and cluster algebras}, we define monoidal subcategories $\mathscr{C}^{\leq \xi}_\ell$ of $\mathscr{C}$, Jacobian algebras $A^{\leq \xi}_\ell$, and a quantum cluster algebra associated to $(L^{\leq \xi}_\ell,B^{\leq \xi}_\ell)$ for any height function $\xi$. In Section \ref{quantum cluster algebras on subcategories}, we prove that the quantum Grothendieck ring $K_t(\mathscr{C}^{\leq \xi}_\ell)$ of $\mathscr{C}^{\leq \xi}_\ell$ is a quantum cluster algebra, see Theorem \ref{quantum Grothendieck ring admits a quantum cluster algebra structure}. In Section \ref{the classification of real simple modules in Cl}, we classify real simple modules in subcategories $\mathscr{C}^{\leq \xi}_1$ for any height function $\xi$, see Theorem~\ref{main theorem2}. Moreover, we provide a conjectural framework for the study of all the real simple modules. In Section \ref{check our conjecture for small cases}, we prove our conjectures for  an arbitrary height function $\xi$, $\ell=1$ in type $ADE$, $\ell\leq 4$ in type $A_2$, and $\ell=2$ in types $A_3$ and $A_4$. In Section \ref{The highest l-monomials of Hernandez-Leclerc modules}, we make use of Theorem \ref{main theorem2} and Corollary \ref{a corollary of main theorem2} to determine highest $l$-weight monomials of Hernandez-Leclerc modules. In Appendices \ref{Hernandez-Leclerc modules of type E6}, \ref{Hernandez-Leclerc modules of type E7}, and \ref{Hernandez-Leclerc modules of type E8}, we give highest $l$-weight monomials of Hernandez-Leclerc modules of types $E_6$, $E_7$, and $E_8$, respectively.

\section{Preliminaries} \label{preliminaries}

\subsection{Quantum affine algebras and their representations}

Let $C=(C_{ij})_{i,j\in I}$ be an indecomposable symmetric Cartan matrix with an index set $I$ and let $\mathfrak{g}$ be the simply-laced simple complex  Lie algebra associated to $C$. We will use the following convention:
\begin{itemize}
\item[(i)] $\gamma$ is the Dynkin diagram associated to $C$, where we use the same labeling as in \cite{Bou02}. 
\item[(ii)] $P$ is the weight lattice associated to $C$ with fundamental weights $\omega_1,\ldots,\omega_{n}$, where $n=|I|$.
\item[(iii)] $\Pi=\{\alpha_i\in P \mid i\in I\}$ is the set of simple roots associated to $C$, $\Phi$ is the corresponding root system with a decomposition $\Phi=\Phi^{+} \cup \Phi^{-}$, where $\Phi^+$ and $\Phi^-$ are the set of positive and negative roots respectively, and $\mathbb{Z}\Phi$ is the root lattice.
\item[(iv)] $W$ is the Weyl group of $C$ with the set of simple reflections $\{s_i \mid i\in I\}$, the action of $W$ on $\Phi$ is given by $s_i(\alpha_j)=\alpha_j-C_{ij}\alpha_i$.
\item[(v)] $\Phi_{\geq -1}=\Phi^{+} \cup (-\Pi)$ is the set of almost positive roots, equivalently, the union of all positive roots and negative simple roots.
\end{itemize}

Let $q\in \mathbb{C}^*$ be not a root of unity, and let $U_q(\mathfrak{g})$ be the quantum enveloping algebra associated to $\mathfrak{g}$, and $U_q(\widehat{\mathfrak{g}})$ the corresponding quantum affine algebra. The algebra $U_q(\widehat{\mathfrak{g}})$ has Drinfeld generators $x^{\pm}_{i,m}$ ($i\in I, m\in \mathbb{Z}$), $h_{i,m}$ ($i\in I, m\in \mathbb{Z}^*$), $k_i^{\pm1}$ ($i\in I$) and center elements $c^{\pm \frac{1}{2}}$ \cite{Dri87,Bec94}. We refer to \cite{FR98} for the relations among Drinfeld generators and the definition of $\phi^{\pm}_{i,\pm m} \in U_q(\widehat{\mathfrak{g}})$ ($i\in I, m\in \mathbb{Z}$). There is an embedding $U_q(\mathfrak{g}) \subset U_q(\widehat{\mathfrak{g}})$ and a Hopf algebra structure on $U_q(\widehat{\mathfrak{g}})$. Note that $h_{i,m}$ and $h_{i,-m}$ do not commute, but if the action of $c^{\pm 1/2}$ on a $U_q(\widehat{\mathfrak{g}})$-module $V$ is the identity, then the images of $h_{i,m}, k_i$ ($i\in I, m\in \mathbb{Z}^*$) in $\text{End}(V)$ form a commutative subalgebra. From now on, we assume that $V$ is of type 1, equivalently, $V$ is finite-dimensional, the action of $c^{\pm 1/2}$ on $V$ is the identity, and as a $U_q(\mathfrak{g})$-module $V$ has the following decomposition
\begin{align*} 
V=\bigoplus_{\lambda \in P} V_\lambda, \,\,\,  V_\lambda=\{ v\in V\mid k_i v = q^{(\alpha_i,\lambda)}v\}.
\end{align*}

Similar to the Cartan's highest weight classification of finite-dimensional representations of $\mathfrak{g}$ and $U_q(\mathfrak{g})$, a $U_q(\widehat{\mathfrak{g}})$-module $V$ can be refined into Jordan subspaces of $\phi^{\pm }_{i,\pm r}$ \cite{FR98}:
\begin{align*}
V=\bigoplus_{\eta} V_\eta, \,\,\,  \eta=(\eta^{\pm}_{i,\pm r})_{i\in I, r\in \mathbb{Z}_{\geq 0}}, \,\, \eta^{\pm}_{i,\pm r}\in \mathbb{C},
\end{align*}
where $V_\eta = \{ v\in V \mid \exists \, p\in \mathbb{N}, \forall\,i\in I, \forall\,m\in \mathbb{Z}, (\phi^{\pm}_{i,m}-\eta^{\pm}_{i,m})^p v=0\}$. If $\text{dim}(V_\eta)\neq 0$, $\eta$ is called the $l$-weight of $V$, and $V_\eta$ is called the $l$-weight space of $V$ with $l$-weight $\eta$. Following \cite{CP94,CP95,FR98}, the $l$-weight of every $U_q(\widehat{\mathfrak{g}})$-module is of the following form
\begin{align*}
\eta^{\pm}_i(u):=\sum_{r=0}^\infty \eta^{\pm}_{i,\pm r} u^{\pm r} = q^{\text{deg(Q)}_i-\text{deg(R)}_i} \cfrac{Q_i(uq^{-1}) R_i(uq)}{Q_i(uq) R_i(uq^{-1})},
\end{align*}
where $Q_i$ and $R_i$ are polynomials of the following form
\begin{align*}
Q_i(u) = \prod_{a\in \mathbb{C}^*} (1-ua)^{w_{i,a}}, \,\,\, R_i(u) = \prod_{a\in \mathbb{C}^*} (1-ua)^{x_{i,a}}, 
\end{align*}
where $w_{i,a}, x_{i,a} \geq 0$, with $i\in I$. 

Let $\mathcal{P}$ be a free abelian multiplicative group generated by infinitely many formal variables $(Y_{i,a})_{i\in I, a\in \mathbb{C}^*}$. There is a bijection between $\mathcal{P}$ and the set of $l$-weights, that is, 
\begin{align*}
m=\prod_{i\in I, a\in \mathbb{C}^*} Y^{w_{i,a}-x_{i,a}}_{i,a} \mapsto \eta.
\end{align*}
We identify the elements of $\mathcal{P}$ with  $l$-weights of $V$. A monomial $m$ is said to be \textit{dominant} if it does not contain variables $Y_{i,a}$ with negative exponents. Given a dominant monomial $m$, we have the corresponding simple module $L(m)$ \cite{CP94, FR98}. 

Let $V$ be a simple $U_q(\widehat{\mathfrak{g}})$-module. A non-trivial module $V$ is said to be \textit{prime} if it has no non-trivial tensor factorization \cite{CP97}, and it is said to be \textit{real} if $V \otimes V$ is simple \cite{Lec03}. For a real simple module $V$, $V^{\otimes k}$ is simple \cite{Her10,KKKO15} for any $k\in \mathbb{Z}_{>0}$.  

\subsection{The $q$-characters}

Let $\mathcal{Y}=\mathbb{Z}[Y^{\pm 1}_{i,a}\mid i\in I, a\in \mathbb{C}^*]$ be the Laurent polynomial ring with integer coefficients. We denote by $\mathscr{C}$ the category of finite-dimensional $U_q(\widehat{\mathfrak{g}})$-modules, by $K_0(\mathscr{C})$ the Grothendieck ring of $\mathscr{C}$, and by $[V] \in K_0(\mathscr{C})$ the equivalence class of $V\in \mathscr{C}$. It follows from \cite{FR98} that $K_0(\mathscr{C})$ is commutative and is generated by $[L(Y_{i,a})]$, with $i\in I, a\in \mathbb{C}^*$. The \emph{$q$-character} $\chi_q(?): K_0(\mathscr{C}) \to \mathcal{Y}$ is an injective ring homomorphism defined by 
\begin{align*}
\chi_q([V]) =\sum_{m\in \mathcal{P}} \text{dim}(V_m)m \,\, \text{ for $[V]\in K_0(\mathscr{C})$}.
\end{align*}

Following \cite{FR98}, for $i\in I, a\in \mathbb{C}^*$, one defines $A_{i,a} \in \mathcal{P}$ (called a quantum affine analog of a root, because we have $\text{wt}(A_{i,a})=\alpha_i$) by
\begin{align*} 
A_{i,a} = Y_{i,  aq^{-1}} Y_{i,  aq} \left(\prod_{j:C_{ij}=-1}Y_{j,a}^{-1}\right). 
\end{align*}  
Assume that $\mathcal{Q}$ is the subgroup of $\mathcal{P}$ generated by $A^{\pm 1}_{i,a} \, (i\in I,  a\in \mathbb{C}^*)$, $\mathcal{Q}^{+}$ (respectively, $\mathcal{Q}^{-}$) is the submonoid of $\mathcal{Q}$ generated by $A_{i,a}$ (respectively, $A^{-1}_{i,a}$), with $i\in I, a\in \mathbb{C}^*$. There exists a partial order $\leq$ on $\mathcal{P}$:
\begin{align}\label{Nakajima partial order}
m\leq m' \text{ if and only if } m'm^{-1}\in \mathcal{Q}^{+}.
\end{align}
The partial order is compatible with a partial order on the weight lattice. In other words, $m\leq m'$ implies that $\text{wt}(m) \leq \text{wt}(m')$. It was shown by Frenkel and Mukhin in \cite{FM01} that, for each dominant monomial $m$, 
\begin{align} \label{FM q-character foumula} 
\chi_q(L(m))=m(1+\sum_{p}M_p), 
\end{align}
where each $M_p$ is a monomial in variables $A^{-1}_{i,a}$, with $i\in I,a\in \mathbb{C}^*$. Clearly, the monomial $m$ is the highest $l$-weight monomial in $\chi_q(L(m))$ under (\ref{Nakajima partial order}). 

We shall frequently use the following theorem.

\begin{theorem}[{\cite[Proposition 5.3]{HL10}}] \label{dominant monomials determine q-characters}
Suppose that $V$ and $W$ are two $U_q(\widehat{\mathfrak{g}})$-modules. If $\chi_q([V)]$ and $\chi_q([W])$ have the same dominant monomials and multiplicities, then $\chi_q([V]) = \chi_q([W])$.
\end{theorem}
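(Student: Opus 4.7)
The plan is to use the basis $\{[L(m)] : m \text{ dominant}\}$ of $K_0(\mathscr{C})$ together with the Frenkel--Mukhin formula (\ref{FM q-character foumula}) to reduce the statement to a triangularity argument in the Nakajima partial order (\ref{Nakajima partial order}). Since $\chi_q$ is an injective ring homomorphism, it suffices to prove the stronger statement that $[V]=[W]$ in $K_0(\mathscr{C})$ whenever the dominant monomials in $\chi_q([V])$ and $\chi_q([W])$ agree with multiplicities.

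First, I would write $[V]=\sum_m a_m [L(m)]$ and $[W]=\sum_m b_m [L(m)]$ as finite integer linear combinations of classes of simples, where the sums are over dominant monomials $m$. Setting $c_m := a_m - b_m$, the goal becomes $c_m=0$ for all $m$. Suppose, for contradiction, that the finite set $S=\{m : c_m\neq 0\}$ is nonempty, and choose $m_0\in S$ maximal with respect to the partial order $\leq$ on $\mathcal{P}$; such an $m_0$ exists because $S$ is finite. Note that $m_0$ is dominant, since $c_{m_0}\neq 0$ forces $[L(m_0)]$ to occur with nonzero coefficient.

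Next, I would compute the coefficient of $m_0$ in $\chi_q([V])-\chi_q([W]) = \sum_m c_m\,\chi_q(L(m))$. By (\ref{FM q-character foumula}), each monomial appearing in $\chi_q(L(m))$ has the form $m\cdot M$ with $M$ a product of $A_{i,a}^{-1}$'s, so its coefficient in $\chi_q(L(m))$ is zero unless $m_0\leq m$, and equals $1$ when $m_0=m$. For any $m\in S$ with $m\neq m_0$, maximality of $m_0$ prevents $m>m_0$, hence only $m=m_0$ contributes. Thus
\begin{equation*}
[\chi_q([V])-\chi_q([W]) : m_0] \;=\; c_{m_0} \;\neq\; 0,
\end{equation*}
which contradicts the hypothesis that the dominant parts of $\chi_q([V])$ and $\chi_q([W])$ coincide (since $m_0$ is dominant). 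Therefore $c_m=0$ for all $m$, so $[V]=[W]$ in $K_0(\mathscr{C})$, and applying $\chi_q$ gives $\chi_q([V])=\chi_q([W])$.

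The only potential obstacle is ensuring that the maximality argument is valid, i.e., that $S$ is finite and that the transition matrix from $([L(m)])_{m\in S}$ to dominant-monomial multiplicities is unitriangular along any linear extension of $\leq$. Finiteness is immediate because $V$ and $W$ are finite-dimensional, and unitriangularity is exactly what (\ref{FM q-character foumula}) provides; so the argument is essentially formal once these two inputs are in place.
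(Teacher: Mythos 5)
Your proof is correct and is essentially the standard triangularity argument used in the cited reference [HL10, Proposition 5.3]: expand $[V]-[W]$ in the basis of simples, take a maximal dominant monomial in the support, and use the fact from (\ref{FM q-character foumula}) that $\chi_q(L(m))$ is unitriangular with leading term $m$ under the Nakajima order. One small remark: the appeal to injectivity of $\chi_q$ at the start is unnecessary and points the wrong way — you only need that $[V]=[W]$ implies $\chi_q([V])=\chi_q([W])$, which is immediate since $\chi_q$ is a map; injectivity is never used.
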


For a dominant monomial $m$, Frenkel and Mukhin \cite{FM01} developed an algorithm, now called Frenkel-Mukhin algorithm, which outputs a Laurent polynomial $\text{FM}(m)$. A simple $U_q(\widehat{\mathfrak{g}})$-module $L(m)$ is called \textit{minuscule} or \textit{special} if $m$ is the unique dominant monomial in $\chi_q(L(m))$. If $L(m)$ is minuscule, then $\chi_q(L(m))=\text{FM}(m)$.  A sufficient condition for having $FM(m) = \chi_q(L(m))$ is that $FM(m)$ contains all the dominant monomials of $\chi_q(L(m))$. Based on the Frenkel-Mukhin algorithm, a combinatorial method to compute $q$-characters of snake modules was introduced by Mukhin and Young \cite{MY12a,MY12b}. In some cases, but not always, the Frenkel-Mukhin algorithm gives the correct $q$-characters \cite{NN11}. 

The $q$-characters of standard modules of a simply-laced quantum affine algebra were calculated by Nakajima \cite{Nak01} in terms of homology of graded quiver varieties. Nakajima's algorithm was in fact built at the quantum level. Similar to the Kazhdan-Lusztig algorithm, $q$-characters of simple modules were calculated in terms of standard modules \cite{Nak04}. 

Let $m_1, m_2$ be two dominant monomials. Then $L(m_1m_2)$ is a simple quotient of $L(m_1) \otimes L(m_2)$. An interesting question is when the equation $L(m_1m_2)=L(m_1) \otimes L(m_2)$ holds. A consequence of Hernandez-Leclerc's cluster algebra approach is that if $[L(m_1)]$ and $[L(m_2)]$ are cluster variables in the same cluster, then $L(m_1m_2)=L(m_1)\otimes L(m_2)$. 

We frequently use two operators, \textit{renormalization} and \textit{truncation}, on $q$-characters. The former means that we divide the $q$-character of $L(m)$ by $m$, so that its leading term becomes $1$, denoted by $\widetilde{\chi}_{q}([L(m)])$, the latter means that we delete some monomials in $q$-characters which are beyond the scope of restrictions. Both renormalization and truncation are injective ring homomorphisms from $K_0(\mathscr{C})$ to $\mathcal{Y}$. 

\subsection{Quantum cluster algebras}
The notion of quantum cluster algebras was introduced by Berenstein and Zelevinsky \cite{BZ05} as a quantum counterpart of a Fomin-Zelevinsky's cluster algebra \cite{FZ02}.

Suppose that $m, n$ are two positive integers with $m\geq n$. Let $\widetilde{B}=(b_{ij})$ be an integer-valued $m\times n$ matrix such that its upper $n\times n$ submatrix $B=(b_{ij})$ is skew-symmetric, and $\Lambda=(\lambda_{ij})$ an $m\times m$ skew-symmetric matrix. A pair $(\widetilde{B},\Lambda)$ is called a \textit{compatible pair} if 
\[
\widetilde{B}^T\Lambda=(D \mid 0),
\] 
where $(D \mid 0)$ denotes an $n\times m$ block matrix with a diagonal submatrix $D=\text{diag}(d_1,\ldots,d_n)$ for integer coefficients $d_i$, $1 \leq i \leq n$, with constant signs.

To such a matrix $\widetilde{B}$, we have an associated quiver $Q_{\widetilde{B}}$ with vertex set $\{1,2,\ldots,m\}$ and with $b_{ij}$ arrows from $j$ to $i$ if $b_{ij}>0$, $|b_{ij}|$ arrows from $i$ to $j$ if $b_{ij}<0$, the vertices $n+1,n+2,\ldots,m$ are called frozen vertices. Since $B$ is skew-symmetric, it follows that $Q_{\widetilde{B}}$ does not have any loop, any 2-cycle, nor arrow between frozen vertices. Conversely, for any quiver without loops or 2-cycles, one can associate a matrix $\widetilde{B}$ by letting
\[
b_{ij}=(\text{the number of arrows from $j$ to $i$})-(\text{the number of arrows from $i$ to $j$}).
\]

The quantum torus $\mathcal{T}(\Lambda)$ associated to $\Lambda$  is an associative $\mathbb{Z}[q^{\pm 1/2}]$-algebra with generators $X^{\pm 1}_1, X^{\pm 1}_2, \ldots, X^{\pm 1}_m$ subject to the following relations
\[
X_iX^{\pm 1}_i=X^{\pm 1}_iX_i=1, \quad  X_i X_j=q^{\lambda_{ij}} X_j X_i, \,\,  1\leq i,j \leq m.
\]
Let $\mathcal{F}$ be the fractional skew-field of $\mathcal{T}(\Lambda)$.  The toric frame $M: \mathbb{Z}^m \to \mathcal{F}\setminus\{0\}$ is a map defined by 
\[
M(\sum_{i=1}^m c_ie_i)=q^{1/2\sum_{i<j}c_ic_j\lambda_{ji}} X^{c_1}_1 X^{c_2}_2 \cdots X^{c_m}_m,
\]
where $\{e_1,e_2,\ldots,e_m\}$ is an orthogonal basis of $\mathbb{Z}^m$. The set $\{ M(c) \mid c \in \mathbb{Z}^m \}$ is a $\mathbb{Z}[q^{\pm 1/2}]$-basis of $\mathcal{T}(\Lambda)$. For any $c,c'\in \mathbb{Z}^m$, 
\begin{align*}
M(c) M(c') &=  q^{1/2 \sum_{i>j} (c_ic'_j - c'_ic_j)\lambda_{ij}} M(c+c') \\
& = q^{\sum_{i>j} (c_ic'_j - c'_ic_j)\lambda_{ij}} M(c') M(c).
\end{align*}

An initial quantum seed in $\mathcal{F}$ is a triple $(\widetilde{\textbf{z}}, \widetilde{B}, \Lambda)$, where $\widetilde{\textbf{z}}=\{X_1,\ldots,X_m\}$ is called a \textit{quantum extended cluster},  $\textbf{z}=\{X_1,\ldots,X_n\} \subset \widetilde{\textbf{z}}$ is called a \textit{quantum cluster}, every variable in $\textbf{z}$ is called a \textit{quantum cluster variable}, and $(\widetilde{B}, \Lambda)$ is a compatible pair. 

For $k\in \{1,\ldots,n\}$, one defines a mutation $\mu_k$ of  $(\widetilde{\textbf{z}},\widetilde{B}, \Lambda)$ at $k$, that produces a new quantum seed $\mu_k(\widetilde{\textbf{z}},\widetilde{B}, \Lambda)=(\widetilde{\textbf{z}}', \widetilde{B}', \Lambda')$, where 
\begin{align*}
& X'_j = \begin{cases}
M(-e_k+\sum_{i: b_{ik}>0} b_{ik}e_i) + M(-e_k-\sum_{i: b_{ik}<0} b_{ik}e_i) & \text{if $j=k$}, \\
X_j  &  \text{otherwise},
\end{cases} \\
& \widetilde{B}'=E_\varepsilon \widetilde{B} F_\varepsilon, \\
& \Lambda'= E^T_\varepsilon \Lambda E_\varepsilon,
\end{align*}
where $\varepsilon\in \{\pm \}$, $E_\varepsilon=(E_{ij})$ and $F_\varepsilon=(F_{ij})$ are defined as follows:
\begin{align*}
E_{ij}=\begin{cases}
\delta_{ij}  & \text{if $j \neq k$}, \\
-1  & \text{if $i=j=k$}, \\
\max\{ \varepsilon b_{ik}, 0\} & \text{if $i \neq j=k$}.
\end{cases} \quad 
F_{ij}=\begin{cases}
\delta_{ij}  & \text{if $i \neq k$}, \\
-1  & \text{if $i=j=k$}, \\
\max\{-\varepsilon b_{kj}, 0\} & \text{if $i=k\neq j$}.
\end{cases}
\end{align*}

It was shown in \cite{BZ05} that the mutation of quantum seeds is an involution, the new quantum seed $(\widetilde{\textbf{z}}', \widetilde{B}', \Lambda')$ is still a quantum seed, and does not depend on the choice of $\varepsilon$.

A quantum cluster algebra $\mathcal{A}_{q^{1/2}}(\widetilde{\textbf{z}}, \widetilde{B}, \Lambda)$ with an initial quantum seed $(\widetilde{\textbf{z}}, \widetilde{B}, \Lambda)$ is a $\mathbb{Z}[q^{\pm 1/2}][X^{\pm}_{n+1},\ldots,X^{\pm}_m]$-subalgebra of $\mathcal{F}$ generated by all quantum cluster variables obtained from $(\widetilde{\textbf{z}}, \widetilde{B}, \Lambda)$ by mutations. 

The specialization of a quantum cluster algebra at $q=1$ is a cluster algebra of geometric type. Moreover, the quantum cluster algebras share some common properties with cluster algebras, for instance, Laurent phenomenon \cite{FZ02,BZ05}, finite-type classification \cite{FZ03,BZ05}, and positivity \cite{LS15,GHKK18,Dav18}. 

\subsection{Auslander-Reiten quivers} \label{Auslander-Reiten quivers} 

This section prepares some material on Auslander-Reiten quivers. We follow here the exposition in \cite{Bed99} for type $ADE$, other sources are the books \cite{ASimS06,Sch14}.

Assume that $Q$ is an orientation of the Dynkin diagram $\gamma$. Then $Q$ is called a Dynkin quiver.  Let $\sigma$ be the unique permutation of vertices of $\gamma$ such that $w_0(\alpha_i)=-\alpha_{\sigma(i)}$, where $w_0$ is the longest word in the Weyl group. The Auslander-Reiten quiver $\Gamma_Q$ of $Q$ is a model of the category $\text{mod}\,kQ$ of finite-dimensional $kQ$-modules, where $k$ is an algebraically closed field. Its vertices are the isoclasses of indecomposable modules of $Q$ and there is an arrow $[M]\to [N]$ between $[M]$ and $[N]$ if and only if there exists an irreducible morphism from $M$ to $N$. There is an important functor $\tau: \text{mod}\,kQ \to \text{mod}\,kQ$, called the Auslander-Reiten translation. 

Denote by $I(i),P(i),S(i)$ the indecomposable injective, projective, simple $kQ$-module respectively with socle $S(i)$ and top $S(i)$, respectively. Gabriel's theorem says that the dimension vector $\underline{\textit{dim}}$ gives a bijection from the set of isoclasses of indecomposable modules to the set of positive roots of $\gamma$. From now on, we identify an indecomposable module $M$ with its positive root $\underline{\textit{dim}}(M)$.  

Recall that $Q$ has vertex set $I$. For each $i\in I$, we denote by $s_i(Q)$ the quiver obtained from $Q$ by reversing all arrows at $i$. A sequence of reflections $s_{i_1} s_{i_2} \cdots s_{i_n}$ is adapted to $Q$ if $i_1$ is a source of $Q$, and $i_k$ is a source of $s_{i_{k-1}} \cdots s_{i_2} s_{i_1}(Q)$ for each $2\leq k \leq n$. 

Let $\nu$ be the number of positive roots, and $w_0=s_{i_1} s_{i_2} \cdots s_{i_{\nu}}$ a reduced expression of the longest element $w_0$ in the Weyl group $W$ of $Q$, which is adapted to $Q$. The sequence of roots
\begin{align}\label{order positive roots}
\beta_k=s_{i_1} s_{i_2} \cdots s_{i_{k-1}}(\alpha_{i_{k}}) \,\, \text{ for $1\leq k \leq \nu$}
\end{align}
contains every positive root exactly once. It induces a total order on positive roots by $\alpha<\beta$ if $\alpha$ appears in the sequence (\ref{order positive roots}) before $\beta$. 

The Auslander-Reiten quiver $\Gamma_Q$ can be constructed as follows: the vertices of $\Gamma_Q$ are indexed by (\ref{order positive roots}), and there is an arrow from $\beta_k$ to $\beta_j$ if $j^+>k>j>k^-$, and an edge $i_k \sim i_j$ in $\gamma$, where we denote by $k^+$ (respectively, $k^-$) the smallest (respectively, largest) index $\ell$ such that $k<\ell$ (respectively, $\ell<k$) and $i_\ell=i_k$. We have the convention $k^+=v+1$ (respectively, $k^-=0$)  if $i_\ell \neq i_k$ for any $\ell>k$ (respectively, $\ell<k$).

Assume without loss of generality that $(i_1, i_2, \cdots, i_n)$ is the first $n$ different indices of $w_0$, where $n=|I|$. Then $c=s_{i_1} s_{i_2} \cdots s_{i_n}\in W$ is a Coxeter element adapted to $Q$. For each $j\in I$,  let 
\[
\Omega_j=\left\{ c^{\ell} \theta_j  \mid 0\leq \ell \leq  \frac{h+a_j-b_j}{2}-1 \right\},
\] 
where $\theta_j=s_{i_1} s_{i_2} \cdots  s_{i_{j-1}} (\alpha_{i_j})$, $h$ is the Coxeter number, $a_j$ (respectively, $b_j$) is the number of arrows in the unoriented path from $j$ to $\sigma(j)$ which are directed towards $\sigma(j)$ (respectively, $j$). Each $\Omega_j$ is a $\tau$-orbit in $\Gamma_Q$, and $\Omega_j \cap \Omega_{k}=\emptyset$ for $j\neq k$. If $M$ is a non-projective indecomposable $kQ$-module, then the dimension vector $\underline{\textit{dim}}(\tau M)$ is equal to $c\,\underline{\textit{dim}}(M)$.

\subsection{A geometric realization of the cluster category} \label{a geometric realization of cluster category} 
In this section, we refer to \cite{BMRRT06,BMR07,CCS06,FZ03,FST08,Kel05,Sch08,Sch14}.

Following \cite{BMRRT06}, let $\mathcal{C}_Q$ be the cluster category associated to $Q$ defined as a quotient category of the bounded derived category $\mathcal{D}^b(kQ)$ of $\text{mod}\,kQ$ by $\tau^{-1}[1]$, where $[1]: \mathcal{D}^b(kQ) \to \mathcal{D}^b(kQ)$ is the shift functor, and $\tau: \mathcal{D}^b(kQ) \to \mathcal{D}^b(kQ)$ is the AR translation on the derived category. It acts on indecomposable objects by the rule $\tau(M[i])=(\tau M)[i]$ for a non-projective module $M\in \text{mod}\,kQ$, and $\tau(P(x)[i])=I(x)[i-1]$. It was shown in \cite{Kel05,BMRRT06} that $\mathcal{C}_Q$ is a triangulated 2-Calabi-Yau category, that is, we have bifunctorial isomorphisms
\[
\text{Ext}^1_{\mathcal{C}_Q}(N,L) \cong D\,\text{Ext}^1_{\mathcal{C}_Q}(L,N)
\] 
for any objects $M,N\in \mathcal{C}_Q$, where $D=\text{Hom}_k(-,k)$ is the $k$-duality.

An object $T$ in a triangulated 2-Calabi–Yau category $\mathcal{C}$ is \textit{rigid} if $\text{Ext}^1_{\mathcal{C}}(T,T)=0$. We denote by $\text{add}\,T$ the subcategory of $\mathcal{C}$ whose objects consist of the direct sums of direct summands of $T$. A rigid object $T\in \mathcal{C}$ is called a \textit{cluster-tilting object} if $\text{Ext}^1_{\mathcal{C}}(T,X)=0$ for some $X\in \mathcal{C}$ implies that $X\in \text{add}\,T$. Let $T$ be a cluster-tilting object corresponding to a choice of initial cluster variables. The functor $\text{Hom}_{\mathcal{C}_Q}(T,-): \mathcal{C}_Q \to \text{mod}\,kQ$ induces an equivalence of categories $\mathcal{C}_Q/\tau\,T \cong \text{mod}\,kQ$, see \cite{BMR07}.

Let $P_{n+3}$ be a regular polygon with $n+3$ vertices and boundary edges. A diagonal in $P_{n+3}$ is a line segment that joins two nonadjacent vertices and goes through the interior of the polygon, and a triangulation of $P_{n+3}$ is a maximal collection of non-crossing diagonals, labeled by $1,2,\ldots,n$. For a triangulation $T$ of $P_{n+3}$ with the property that every triangle in it has at least one edge on the boundary, we associate a quiver of type $\gamma=A_n$ as in \cite{Sch14}. To a diagonal $d$ not in $T$, we associate a representation $M_d=(M_i,\psi_\alpha)$ of $Q$ by setting
\begin{align*}
M_i=\begin{cases}
k & \text{if $d$ crosses the diagonal $i$}, \\
0 & \text{otherwise},
\end{cases}
\end{align*}
and letting $\psi_\alpha=1$ whenever the vector spaces corresponding to source and target of $\alpha$ are $k$, and $\psi_\alpha=0$ otherwise. 

For type $D_n$, instead of a regular polygon, we work with a triangulated punctured polygon $P^{\bullet}_{n}$ with $n$ boundary vertices and one interior punctured vertex. For each boundary vertex $a$, there are two arcs (plain or tagged to distinguish them) from $a$ to the puncture, denoted by $a$ and $a^{\bowtie}$. Next we define the crossing number between two arcs. If one of the two arcs has both endpoints on the boundary, the number of crossing should be intuitively clear. The number of crossings of $a$ and $a^{\bowtie}$, or $a$ and $b$ ($a\neq b$), or $a^{\bowtie}$ and $b^{\bowtie}$ ($a\neq b$) is 0, but the number of crossings of $a$ and $b^{\bowtie}$ ($a\neq b$) is 1.  
We say that two arcs cross if their crossing number is at least $1$, and a triangulation in $P^{\bullet}_{n}$ is a maximal set of non-crossing arcs. Similar to the case of type $A$, we associate a triangulation $T_Q$ of $P^{\bullet}_{n}$ to a quiver $Q$ (in the mutation class of a type $D_n$ quiver), but have to be more careful in a self-folded triangle, see Figure \ref{triangulation from a quiver of type Dn}. To an arc $d$ not in $T_Q$, we associate an indecomposable representation $M_d=(M_i,\psi_\alpha)$ of $Q$, where the dimension of $M_i$ is equal to the number of crossings between $d$ and the arc $i$. 

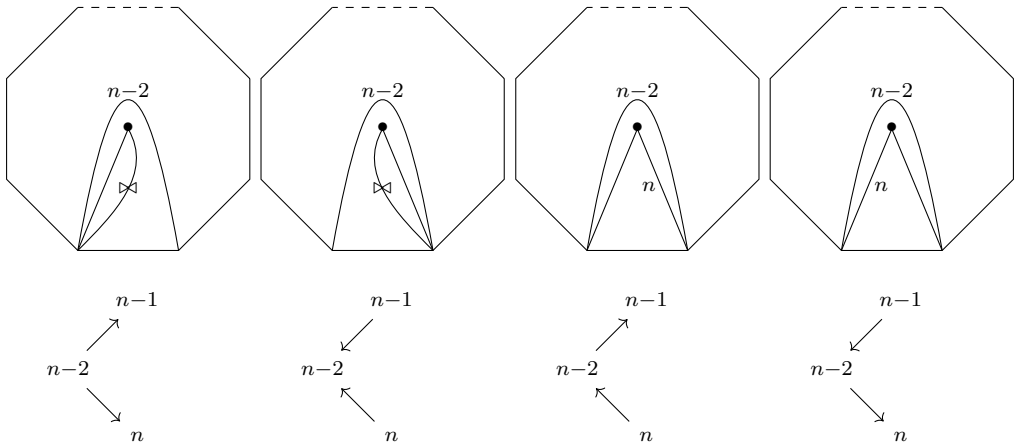
\begin{figure}[H]
\begin{tikzpicture}[node distance={13mm}] 
\draw (22.5:1.75cm) -- (67.5:1.75cm)  (112.5:1.75cm) -- (157.5:1.75cm) -- (-157.5:1.75cm) -- (-112.5:1.75cm) -- (-67.5:1.75cm) -- (-22.5:1.75cm) -- (22.5:1.75cm);
\draw[dashed] (67.5:1.75cm) -- (112.5:1.75cm);
\node[thick] (A) at (0,0) {$\substack{\bullet}$};
\draw (-112.5:1.75cm) to [out=80,in=100,looseness=5.2] (-67.5:1.75cm);
\draw (-112.5:1.75cm) -- (0,0);
\draw (-112.5:1.75cm) to [out=45,in=-60,looseness=1] (0,0);
\node at (-90:0.8cm) {$\substack{\bowtie}$};
\node at (0,0.5) {$\substack{n-2}$};
\node (1) at (-0.8,-3.2) {$\substack{n-2}$}; 
\node (2) [above right of=1] {$\substack{n-1}$}; 
\node (3) [below right of=1] {$\substack{n}$}; 
\draw[->] (1) -- (2); 
\draw[->] (1) -- (3); 
\end{tikzpicture} 
\begin{tikzpicture}[node distance={13mm}] 
\draw (22.5:1.75cm) -- (67.5:1.75cm)  (112.5:1.75cm) -- (157.5:1.75cm) -- (-157.5:1.75cm) -- (-112.5:1.75cm) -- (-67.5:1.75cm) -- (-22.5:1.75cm) -- (22.5:1.75cm);
\draw[dashed] (67.5:1.75cm) -- (112.5:1.75cm);
\node[thick] (A) at (0,0) {$\substack{\bullet}$};
\draw (-112.5:1.75cm) to [out=80,in=100,looseness=5.2] (-67.5:1.75cm);
\draw (-67.5:1.75cm) -- (0,0);
\draw (-67.5:1.75cm) to [out=135,in=-120,looseness=1] (0,0);
\node at (-90:0.8cm) {$\substack{\bowtie}$};
\node at (0,0.5) {$\substack{n-2}$};
\node (1) at (-0.8,-3.2) {$\substack{n-2}$}; 
\node (2) [above right of=1] {$\substack{n-1}$}; 
\node (3) [below right of=1] {$\substack{n}$}; 
\draw[->] (2) -- (1); 
\draw[->] (3) -- (1); 
\end{tikzpicture}
\begin{tikzpicture}[node distance={13mm}] 
\draw (22.5:1.75cm) -- (67.5:1.75cm)  (112.5:1.75cm) -- (157.5:1.75cm) -- (-157.5:1.75cm) -- (-112.5:1.75cm) -- (-67.5:1.75cm) -- (-22.5:1.75cm) -- (22.5:1.75cm);
\draw[dashed] (67.5:1.75cm) -- (112.5:1.75cm);
\node[thick] (A) at (0,0) {$\substack{\bullet}$};
\draw (-67.5:1.75cm) -- (0,0)  (-112.5:1.75cm) -- (0,0);
\draw (-112.5:1.75cm) to [out=80,in=100,looseness=5.2] (-67.5:1.75cm);
\node at (-79:0.8cm) {$\substack{n}$};
\node at (0,0.5) {$\substack{n-2}$};
\node (1) at (-0.8,-3.2) {$\substack{n-2}$}; 
\node (2) [above right of=1] {$\substack{n-1}$}; 
\node (3) [below right of=1] {$\substack{n}$}; 
\draw[->] (1) -- (2); 
\draw[->] (3) -- (1); 
\end{tikzpicture}
\begin{tikzpicture}[node distance={13mm}] 
\draw (22.5:1.75cm) -- (67.5:1.75cm)  (112.5:1.75cm) -- (157.5:1.75cm) -- (-157.5:1.75cm) -- (-112.5:1.75cm) -- (-67.5:1.75cm) -- (-22.5:1.75cm) -- (22.5:1.75cm);
\draw[dashed] (67.5:1.75cm) -- (112.5:1.75cm);
\node[thick] (A) at (0,0) {$\substack{\bullet}$};
\draw (-67.5:1.75cm) -- (0,0)  (-112.5:1.75cm) -- (0,0);
\draw (-112.5:1.75cm) to [out=80,in=100,looseness=5.2] (-67.5:1.75cm);
\node at (-100:0.8cm) {$\substack{n}$};
\node at (0,0.5) {$\substack{n-2}$};
\node (1) at (-0.8,-3.2) {$\substack{n-2}$}; 
\node (2) [above right of=1] {$\substack{n-1}$}; 
\node (3) [below right of=1] {$\substack{n}$}; 
\draw[->] (2) -- (1); 
\draw[->] (1) -- (3); 
\end{tikzpicture}
\caption{Arrows associated to a self-folded triangle, from \cite[Figure 3.12]{Sch14}} \label{triangulation from a quiver of type Dn}
\end{figure}

The map $d \to M_d$ gives a bijection from the set of non-initial diagonals/arcs and the set of isoclasses of indecomposable representations of $Q$. We extend the map $d \to M_d$ to include the initial diagonals/arcs in $T_Q$ by defining the diagonal/arc $i$ in $T_Q$ as $M_{-\alpha_i}$, which induces a bijection from the set of diagonals/arcs and the set of indecomposable objects in $\mathcal{C}_Q$. 
 
The Auslander–Reiten translation $\tau$ (respectively, $\tau^{-1}$) is explained as an elementary clockwise (anti-clockwise) rotation of $P_{n+3}$ or of $P^{\bullet}_{n}$ with simultaneous change of the tags at the puncture. The projective representation $P(i)$ is given by $\tau^{-1}$ of the diagonal $i$, and the injective representation $I(i)$ is given by $\tau$ of the diagonal $i$. The cluster category $\mathcal{C}_Q$ is equivalent to  
\begin{itemize}
\item the category of all diagonals in $P_{n+3}$ in type $A_n$, see \cite{CCS06},
\item the category of all (tagged) arcs in $P^{\bullet}_{n}$ in type $D_n$, see \cite{Sch08}.
\end{itemize}

The following proposition is very useful.

\begin{proposition}[{\cite[Corollary 4.4]{FZ03},\cite[Theorem 5.2]{CCS06},\cite[Theorem 7.5]{BMRRT06},\cite[Theorem 4.3]{Sch08}}] \label{characteristic of exchange pair}
Suppose that $M_d, \, M_{d'}$ are two indecomposable objects in $\mathcal{C}_Q$. The following statements hold. 
\begin{itemize}
\item[(1)] $dim(Ext^1_{\mathcal{C}_Q}(M_d, M_{d'}))$ is equal to the number of crossings between the diagonals or arcs $d$ and $d'$. 
\item[(2)] $M_d, \, M_{d'}$ are the summands of the same cluster-tilting object if and only if 
\[
dim(Ext^1_{\mathcal{C}_Q}(M_d, M_{d'}))= 0.
\]
\item[(3)] $M_d, \, M_{d'}$ form an exchange pair if and only if $dim(Ext^1_{\mathcal{C}_Q}(M_d, M_{d'}))=1$.
\end{itemize} 
\end{proposition}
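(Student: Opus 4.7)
The plan is to prove part (1) in both types $A_n$ and $D_n$, and then deduce (2) and (3) as formal consequences. Since $\mathcal{C}_Q$ is 2-Calabi--Yau, $\text{Ext}^1$ is symmetric, which mirrors the symmetry of the crossing number. A cluster-tilting object in $\mathcal{C}_Q$ has exactly $n = |I|$ non-isomorphic indecomposable summands that are pairwise Ext-orthogonal; by (1), these correspond to $n$ pairwise non-crossing diagonals/arcs, i.e., to a triangulation (which by construction also has $n$ internal diagonals/arcs), giving (2). For (3), the definition of an exchange pair as a pair of non-isomorphic indecomposable objects with one-dimensional $\text{Ext}^1$-space, together with (1), immediately yields the characterization by a single crossing.

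For (1) in type $A_n$, I would first handle the case where both $M_d$ and $M_{d'}$ lie in $\text{mod}\,kQ$. Indecomposable modules in this case have support on an interval of the vertex set, and a direct projective-resolution computation shows that $\dim\text{Ext}^1_{kQ}(M_d, M_{d'}) \in \{0,1\}$, with value $1$ precisely when the two supports interlock in the pattern that forces the diagonals $d, d'$ to cross. The general case in $\mathcal{C}_Q$ reduces to this one by exploiting that $\tau$ is an auto-equivalence of $\mathcal{C}_Q$ corresponding to an elementary rotation of $P_{n+3}$, and rotations preserve crossing numbers; combined with the orbit-category formula
\[
\text{Ext}^1_{\mathcal{C}_Q}(X, Y) \;\cong\; \text{Ext}^1_{D^b(kQ)}(X, Y) \,\oplus\, D\,\text{Ext}^1_{D^b(kQ)}(Y, X)
\]
from \cite{Kel05,BMRRT06}, everything reduces to Hom and $\text{Ext}^1$ computations in $\text{mod}\,kQ$, where the preceding analysis applies. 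Up to using $\tau$-invariance one only needs to check a few base configurations, for instance when one of the two diagonals is an initial diagonal in $T_Q$, in which case $M_{-\alpha_i} = P(i)[1]$ and the Ext computation is transparent.

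For type $D_n$, the strategy follows \cite{Sch08}: verify the equality on a chosen family of base cases involving arcs adjacent to initial ones, then extend by $\tau$-invariance. The main additional subtlety, and indeed the principal obstacle in the proof, is the treatment of tagged arcs and self-folded triangles near the puncture. One must match the combinatorial convention (where $a$ and $a^{\bowtie}$ do not cross while $a$ and $b^{\bowtie}$ cross for $a \neq b$) against explicit $\text{Ext}^1$ computations for the indecomposable modules supported on the two short $\tau$-orbits meeting the boundary labels $n-1$ and $n$ (cf.\ Figure \ref{triangulation from a quiver of type Dn}). Once this case analysis is carried out, $\tau$-invariance together with the formula above propagates the equality to all pairs of arcs, and parts (2) and (3) follow as described.
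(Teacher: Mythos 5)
The paper states this proposition as a citation of \cite{FZ03,CCS06,BMRRT06,Sch08} and supplies no proof of its own, so there is no in-paper argument to compare against; your proposal reconstructs the arguments of those references, and it is essentially correct. For part (1), reducing $\text{Ext}^1_{\mathcal{C}_Q}$ to $\text{Ext}^1_{kQ}$ and $\text{Hom}_{kQ}(-,\tau -)$ via the orbit-category decomposition, normalizing by $\tau$-invariance (which is an elementary rotation on the geometric model), and verifying base configurations --- with the extra care about tagged arcs and self-folded triangles in type $D$ --- is precisely the strategy of \cite{CCS06} and \cite{Sch08}. Two points deserve to be made explicit. In part (2), the backward implication also relies on the elementary fact that any set of pairwise non-crossing diagonals or arcs extends to a full triangulation, together with the observation that the resulting basic rigid object has $n$ indecomposable summands and is therefore cluster-tilting in this $2$-Calabi--Yau setting; you gesture at the counting but should state the completion step rather than leave it as ``giving (2).'' In part (3), the paper's own working definition of exchange pair (recalled just before the lemma on $g$-vectors) is a pair of indecomposable rigid objects $(L,N)$ with $\dim\text{Ext}^1_{\mathcal{C}}(N,L)=1$, so your deduction of (3) from (1) is correct for the purposes of this paper; what \cite[Theorem 7.5]{BMRRT06} actually establishes is the stronger equivalence with the mutation-theoretic notion of exchange pair, which is not needed here.
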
 

\subsection{Cluster characters}\label{cluster characters}

Let $(Q,W)$ be a quiver with potential in the sense of \cite{DWZ08}, and assume further that the vertex set of the quiver is equal to $I$. Let $\mathcal{I}$ be the  Jacobian ideal of the potential and  $A=kQ/\mathcal{I}$ the Jacobian algebra. Then $\text{mod}\,A$ is equivalent to the category $\text{rep}(Q,\mathcal{I})$ of finite-dimensional representations of the bound quiver $(Q,\mathcal{I})$, denoted by $\text{mod}\,A\cong \text{rep}(Q,\mathcal{I})$.

We denote by $\mathcal{C}$ the cluster category of $A$ as defined in \cite{Amiot}. Let $T=T_1\oplus \cdots \oplus T_{n}$ be a cluster-tilting object in $\mathcal{C},$ and $B=\text{End}_{\mathcal{C}}(T)$. Following \cite{DWZ10}, the $F$-polynomial $F^T_M$ of $M\in \mathcal{C}$ with respect to $T$ is defined as 
\begin{align} \label{F-polynomials formula}
F^T_M(y_1,\ldots,y_{n})=
\begin{cases}
1 & \text{if $M\in \text{add}\,T[1]$}, \\
\sum_{\underline{e} \in \mathbb{N}^{I}} \chi(\text{Gr}_{\underline{e}}(M)) \prod_{i\in I} y^{e_i}_i \in \mathbb{Z}[y_1,\ldots,y_{n}], & \text{if $M\in \text{mod}\,B$},
\end{cases}
\end{align}
where $\text{Gr}_{e}(M)$ is the Grassmannian of submodules of $M$ with dimension $\underline{e}$ and $\chi$ is the Euler-Poincar\'e characteristic. For two modules $M$ and $N$ in $\text{mod}\,B$, it was shown in \cite{CC06,DWZ10} that $F^T_{M\oplus N}=F^T_M F^T_N$. 

For any $M \in \mathcal{C}$, there exists a triangle  
\[
\xymatrix{
M \ar[r]^f& T^{0}_M[2] \ar[r]& T^{1}_M[2] \ar[r]& M[1]},
\]
where $T^{0}_M=\oplus_{i=1}^n T^{a_i}_i, T^{1}_M=\oplus_{i=1}^n T^{b_i}_i \in \text{add}\,T$, and $f$ is a minimal $\text{add}\, T$-approximation. Define the \textit{$g$-vector} of $M$ with respect to $T$ by 
\[
g^T(M) = (b_1-a_1,b_2-a_2,\ldots,b_n-a_n)\in \mathbb{Z}^n.
\]

In \cite{Pal08}, Palu proved the following proposition, which is very useful in the sequel.

\begin{proposition}[{\cite[Proposition 2.2]{Pal08}}] \label{Palu's g-vector formula}
Let $L \overset{f}{\to}  M \overset{g}{\to} N \overset{\varepsilon}{\to} L[1]$ be a triangle in $\mathcal{C}$. Take $K\in \mathcal{C}$ to be any lift of $\textup{Ker}(\textup{Hom}_{\mathcal{C}}(T,f))$. Then
\begin{align*}
g^T(M) = g^T(L) + g^T(N) - g^T(K) - g^T(K[1]).
\end{align*}
\end{proposition}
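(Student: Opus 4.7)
The plan is to reduce the identity to additivity of $g^T$ along triangles whose image under $H:=\text{Hom}_\mathcal{C}(T,-)$ is a short exact sequence. Applying $H$ to the given triangle yields the long exact sequence in $\text{mod}\,B$
\begin{equation*}
H(L)\xrightarrow{H(f)} H(M)\to H(N)\to H(L[1])\to H(M[1])\to H(N[1]).
\end{equation*}
Using the equivalence $\mathcal{C}/\text{add}\,T[1]\simeq\text{mod}\,B$, I would lift $\ker H(f)$ to an object $K\in\mathcal{C}$, well-defined up to summands in $\text{add}\,T[1]$, and then slice the long exact sequence into short exact sequences (via the successive kernels, images, and cokernels) and realise each of them as the image under $H$ of a distinguished triangle in $\mathcal{C}$.

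\textbf{Key additivity lemma.} The core technical step is the lemma: if $X\to Y\to Z\to X[1]$ is a triangle in $\mathcal{C}$ such that $0\to H(X)\to H(Y)\to H(Z)\to 0$ is exact in $\text{mod}\,B$, then $g^T(Y)=g^T(X)+g^T(Z)$. This I would prove by a horseshoe-type argument: the minimal $\text{add}\,T$-approximation triangles of $X$ and $Z$ encode minimal projective presentations of $H(X)$ and $H(Z)$ over $B$, and these splice along the short exact sequence to produce an $\text{add}\,T$-presentation of $Y$ whose numerical $g$-vector is the sum $g^T(X)+g^T(Z)$; any non-minimal summands introduced by the splicing occur identically in both the $T^0$- and $T^1$-layers and hence cancel in the $g$-vector.

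\textbf{Assembly and main obstacle.} Applying this lemma to each short exact sequence piece of the long exact sequence above and summing, every intermediate kernel/image term telescopes, leaving precisely
\begin{equation*}
g^T(L)+g^T(N)-g^T(M) = g^T(K)+g^T(K[1]),
\end{equation*}
which is the claimed identity. The delicate point I expect to be the main obstacle is tracking the $g^T(K[1])$ contribution: it arises from the tail of the long exact sequence passing through $H(L[1]),H(M[1]),H(N[1])$, and identifying the corresponding kernel there with the $H$-image of $K[1]$ requires the 2-Calabi--Yau duality $H(L[1])\cong D\,\text{Hom}_\mathcal{C}(L,T[1])$, which dualises $H(f)$ and so brings $\ker H(f)\cong H(K)$ back into play in shifted form. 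Independence of the right-hand side on the choice of lift $K$ (up to $\text{add}\,T[1]$-summands) then follows from the identity $g^T(T')+g^T(T'[1])=0$ for any $T'\in\text{add}\,T$, which is immediate from the defining approximation triangle.
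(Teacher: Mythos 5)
The paper does not prove this proposition itself; it imports it verbatim from Palu, so I will compare your sketch against Palu's own argument.

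Your ``key additivity lemma'' is essentially Palu's Lemma 2.1(1); note Palu's version is stronger and more usable than yours, since it only requires $\text{Hom}_{\mathcal{C}}(T,f)$ to be a monomorphism rather than the whole $H$-image to be a short exact sequence. Where your proposal runs into a genuine gap is in the assembly. To apply the lemma to your slice (a), you lift $\ker H(f)\hookrightarrow H(L)$ to a morphism $K\to L$ in $\mathcal{C}$ and complete it to a triangle $K\to L\to L'\to K[1]$, which indeed gives $g^T(L)=g^T(K)+g^T(L')$. But your slice (b) needs a triangle with $M$ in the middle whose left term has $H$-image $\textup{Im}\,H(f)$. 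The only natural candidate is $L'$, and the long exact sequence of $K\to L\to L'\to K[1]$ only gives an inclusion $H(L)/H(K)\hookrightarrow H(L')$: the connecting map $H(L')\to H(K[1])$ need not vanish, so $H(L')$ is generally strictly larger than $\textup{Im}\,H(f)$. The same problem recurs at slice (c). Thus the slices are not images of triangles in $\mathcal{C}$ with compatible lifts, and the telescoping does not close; the ``obstacle'' you flag with $g^T(K[1])$ is real, but it is downstream of this earlier failure, and invoking the Calabi--Yau duality $H(L[1])\cong D\,\text{Hom}_{\mathcal{C}}(L,T[1])$ does not by itself identify $\textup{Im}\,H(\varepsilon)$ with $H(K[1])$.

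Palu's actual argument sidesteps the slicing. Since $H(K)\to H(L)\to H(M)$ is a complex, the composite $K\to M$ has zero $H$-image, hence factors through $\text{add}\,T[1]$. He then applies the octahedral axiom to the composite $K\to L\to M$, producing the triangle $K\to L\to L'\to K[1]$ and a second triangle $L'\to W\to N\to L'[1]$ with $W=\text{cone}(K\to M)$. The factorisation of $K\to M$ through $\text{add}\,T[1]$, together with the identity $g^T(T')+g^T(T'[1])=0$ for $T'\in\text{add}\,T$ (which you correctly note), lets one compute $g^T(W)$ and the $g^T(K[1])$ term appears from the approximation triangle of $K$, not from dualising the tail of the long exact sequence. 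If you want to complete your proof, replace the slicing by this octahedral construction and the characterisation of $H$-zero morphisms as those factoring through $\text{add}\,T[1]$.
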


We have the following corollary.

\begin{corollary}\label{g-vector equation associated an almost split sequence}
Suppose that $L \overset{f}{\to}   M  \to N  \to L[1]$ is a non-split triangle in $\mathcal{C}$.
\begin{itemize}
\item[(1)] If the triangle is induced by a non-split short exact sequence $0 \to L \to M \to N \to 0$ in $\textup{mod}\,B$, then $g^T(M) = g^T(L) + g^T(N)$.
\item[(2)] If $L\in \textup{add}\,T[1]$, then $g^T(M) = g^T(L) + g^T(N)$.
\end{itemize}
\end{corollary}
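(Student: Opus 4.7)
The plan is to reduce both statements to Proposition~\ref{Palu's g-vector formula} by exhibiting a lift $K$ of $\textup{Ker}(\textup{Hom}_{\mathcal{C}}(T,f))$ for which $g^T(K)+g^T(K[1])=0$. I will argue in each case that this kernel is in fact zero, so that $K=0$ is a valid lift and the vanishing is automatic.

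For part (1), recall that the functor $\textup{Hom}_{\mathcal{C}}(T,-)$ induces an equivalence $\mathcal{C}/\textup{add}\,T[1]\cong \textup{mod}\,B$. Under this identification the given short exact sequence $0\to L\to M\to N\to 0$ in $\textup{mod}\,B$ is the image of the triangle $L\xrightarrow{f} M\to N\to L[1]$, and in particular $\textup{Hom}_{\mathcal{C}}(T,f)$ coincides with the injective inclusion of $B$-modules. Hence $\textup{Ker}(\textup{Hom}_{\mathcal{C}}(T,f))=0$, so we may take $K=0$, and Proposition~\ref{Palu's g-vector formula} yields $g^T(M)=g^T(L)+g^T(N)$.

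For part (2), write $L\cong T'[1]$ with $T'\in \textup{add}\,T$. Since the cluster-tilting object $T$ is rigid,
\[
\textup{Hom}_{\mathcal{C}}(T,L)=\textup{Hom}_{\mathcal{C}}(T,T'[1])=\textup{Ext}^1_{\mathcal{C}}(T,T')=0.
\]
Therefore $\textup{Hom}_{\mathcal{C}}(T,f)$ is the zero map out of the zero module, its kernel is zero, and once again $K=0$ is a valid lift. Proposition~\ref{Palu's g-vector formula} gives the desired identity.

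The only potentially subtle point is the identification in (1) of the triangle in $\mathcal{C}$ with the short exact sequence in $\textup{mod}\,B$. This relies on the standard fact that, under the equivalence $\mathcal{C}/\textup{add}\,T[1]\cong \textup{mod}\,B$, every short exact sequence in $\textup{mod}\,B$ lifts to a triangle in $\mathcal{C}$ of the stated form, so that the first map of the triangle induces the prescribed injection of $B$-modules. Once this is granted, both parts are immediate consequences of Palu's formula.
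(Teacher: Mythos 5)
Your proof is correct and takes essentially the same route as the paper's: in both parts you reduce to Proposition~\ref{Palu's g-vector formula} by showing $\textup{Ker}(\textup{Hom}_{\mathcal{C}}(T,f))=0$, so that $K=0$ is a valid lift and the correction terms vanish. The only difference from the paper is that you spell out why $\textup{Hom}_{\mathcal{C}}(T,L)=0$ in part (2) (rigidity of $T$ applied to $L\cong T'[1]$), whereas the paper states this without comment.
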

\begin{proof}
(1) Since $\text{Hom}_{\mathcal{C}}(T,L)\cong L$ for any $L\in \textup{mod}\,B$, $\textup{Ker}(\textup{Hom}_{\mathcal{C}}(T,f))=0$. Our result follows from Proposition \ref{Palu's g-vector formula}.

(2) It follows from Proposition \ref{Palu's g-vector formula} and $\text{Hom}_{\mathcal{C}}(T,L)=0$.
\end{proof}

The \emph{cluster character} or the CC map of $T$ is a map 
\[
X^{T}_{?}: \mathcal{C} \to \mathbb{Z}[x^{\pm 1}_1,x^{\pm 1}_2,\ldots,x^{\pm 1}_n;y_1,y_2,\ldots,y_n]
\]
defined, for any $M \in \mathcal{C}$, by 
\[
X^{T}_{M} = {\bf x}^{g^T(M)} \sum_{\underline{e} \in \mathbb{N}^n} \chi(\text{Gr}_{\underline{e}}(\text{Hom}_{\mathcal{C}}(T,M))) {\bf x}^{a^{T}(\underline{e})} {\bf y}^{\underline{e}},
\]
where for any $d\in \mathbb{Z}^n$, ${\bf x}^{d}=\prod_{i=1}^n x^{d_i}_i$, ${\bf y}^{d}=\prod_{i=1}^n y^{d_i}_i$, and the $i$-th component of $a^{T}(\underline{e})\in \mathbb{Z}^n$ is defined by 
\[
(a^{T}(\underline{e}))_i = \sum_{j:j\to i} e_j - \sum_{j:i\to j} e_j. 
\] 
If $M$ is an indecomposable rigid reachable object in $\mathcal{C}$, then $X^{T}_{M}$ is a cluster variable in the cluster algebra $\mathcal{A}^T({\bf x},{\bf y},Q_B)$, where $Q_B$ is the quiver of $B$.

From now on, we ignore the superscript $T$ if $T$ is clear.

For two indecomposable rigid modules $L$ and $N$ with $\text{dim}(\text{Ext}^1_{\mathcal{C}}(N,L))=1$, it follows from \cite{Amiot,FK10,CK06} that  
\[
X_N X_L=X_M + \textbf{y}^{\alpha} X_{M'},
\] 
where $M, M'$ are the middle terms of the following non-split triangles 
\begin{align*}
L \to M \to N \to L[1],  \quad N \to M' \to L \to N[1],
\end{align*}
in $\mathcal{C}$, the vector $\alpha\in \mathbb{Z}_{\geq 0}^{I}$ is called a \textit{$c$-vector} of the exchange pair $(N,L)$ in the sense of Fei \cite[Definition 6.3]{Fei21}, defined as the dimension vector of the image of the nonzero morphism $h\colon\tau^{-1}_{\mathcal{C}}L \to N$ (denoted $L\to \tau_{\mathcal{C}} N$ in \cite{Fei21} due to different convention). 
Note that $h$ is unique up to scaling, because $\text{Hom}_{\mathcal{C}}(\tau^{-1}_{\mathcal{C}} L,N)\cong D\text{Ext}^1_{\mathcal{C}}(N,L)$ is one-dimensional by assumption. The vector $\alpha\in \mathbb{Z}_{\geq 0}^{I}$ satisfies that $-\mathcal{B}_{Q_B} \alpha=g(M)-g(M')$, where $\mathcal{B}_{Q_B}$ is the skew-symmetric matrix of $Q_B$, see \cite[Theorem 1.5]{ST16} (here $g$-vectors were interpreted as weights). In particular, for any almost-split exact sequence 
\[
0 \to \tau N \to M \to N \to 0,
\] 
where $N$ is a non-projective indecomposable module, we have
\begin{align*} 
X_{\tau N} X_N = X_{M} + \prod_{i\in I} y^{\text{dim}\,N_i}_i. 
\end{align*}

If $Q$ is a Dynkin quiver, then every indecomposable $kQ$-module is rigid. For any indecomposable module $M \in \text{mod}\,kQ$, there is an injective resolution of $M$ 
\[
0 \to M \to \bigoplus_{i\in I} I(i)^{a_i} \to  \bigoplus_{i\in I} I(i)^{b_i} \to 0, \text{ with $a_i, b_i \geq 0$}. 
\]
In this case, the $g$-vector $g(M)$ of $M$ is as follows:
\[
g(M)=(b_1-a_1,b_2-a_2,\ldots,b_{n}-a_{n}) \in \mathbb{Z}^{n}.
\]
Following \cite{FZ07}, we have
\begin{align}\label{separate formula for a cluster variable}
X_M=\textbf{z}^{g(M)} F_M(\widehat{y}_1,\ldots,\widehat{y}_{n}),
\end{align}
where $\widehat{y}_j=y_j(\prod_{i\in I} x^{b_{ij}}_i)$ for any $j\in I$.

\section{The subcategories $\mathscr{C}^{\leq \xi}_\ell$, Jacobian algebras $A^{\leq \xi}_\ell$, and quantum cluster algebras} \label{subcategories and cluster algebras}

Let $\gamma$ be a simply-laced Dynkin diagram with vertex set $I$, where we use the same labeling as in \cite{Bou02}.

\subsection{Subcategories {$\mathscr{C}^{\leq \xi}_\ell$}} \label{subcategories}

Let $\xi: I \to \mathbb{Z}$ be a height function such that $|\xi(i)-\xi(j)|=1$ if there is an edge $i\sim j$ in $\gamma$. Such $\xi$ exists, since $\gamma$ is finite and connected. It defines an orientation of the Dynkin diagram $\gamma$ via $i \to j$ if $\xi(i)=\xi(j)+1$.

We define an infinite quiver $\Gamma$ with vertex set $\widehat{I}=\{(i,p) \mid i\in I, p\in \xi(i)+2\mathbb{Z}\}$ and arrows given by $(i,r) \to (j,s)$ if and only if $C_{ij}\neq 0$, $s=r+C_{ij}$. The quiver $\Gamma$ was introduced by Hernandez and Leclerc \cite{HL16}, and it does not depend on the choice of $\xi$ up to isomorphism. An example of $\Gamma$ is shown in Figure \ref{quiver Gamma in type A3}. 

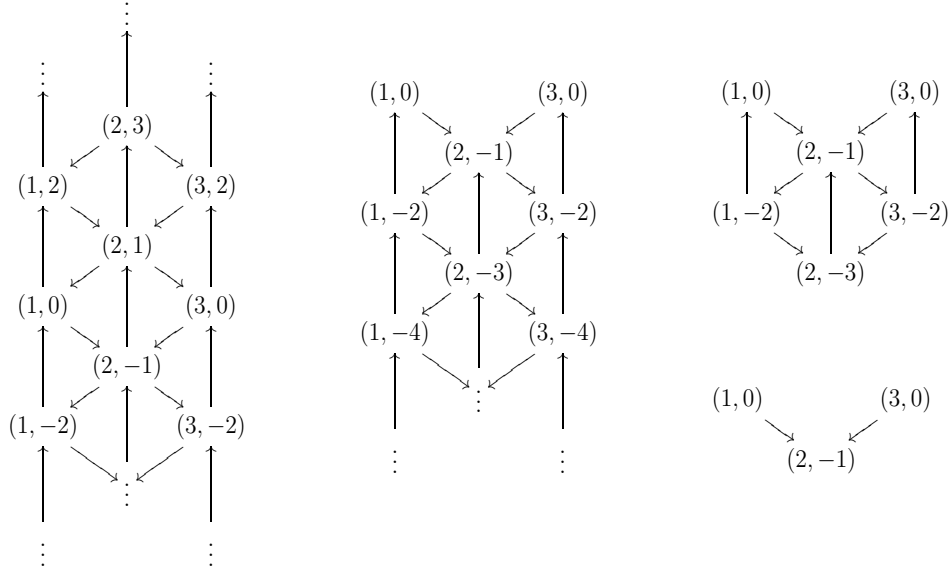
\begin{figure}
\centering
\resizebox{.8\textwidth}{.8\height}{
\begin{minipage}[c]{0.5\textwidth}
\begin{xy}
(40,60)*+{\vdots}="xx"; 
(25,70)*+{\vdots}="z";%
(10,60)*+{\vdots}="x"; 
(40,40)*+{(3,2)}="bb"; 
(25,50)*+{(2,3)}="a";%
(10,40)*+{(1,2)}="b";
(40,20)*+{(3,0)}="ee"; 
(25,30)*+{(2,1)}="d";%
(10,20)*+{(1,0)}="e";
(40,0)*+{(3,-2)}="hh";%
(25,10)*+{(2,-1)}="g";%
(10,0)*+{(1,-2)}="h";%
(10,-20)*+{\vdots}="j"; 
(25,-10)*+{\vdots}="k";
(40,-20)*+{\vdots}="jj";
{\ar "a";"b"};{\ar "a";"bb"};
{\ar "b";"d"};{\ar "bb";"d"};
{\ar "d";"a"};
{\ar "d";"e"};{\ar "d";"ee"};
{\ar "e";"b"};{\ar "ee";"bb"};
{\ar "e";"g"};{\ar "ee";"g"};%
{\ar "g";"d"};
{\ar "g";"h"};{\ar "g";"hh"};%
{\ar "h";"e"};{\ar "hh";"ee"};%
{\ar "j";"h"};{\ar "jj";"hh"};
{\ar "k";"g"};%
{\ar "h";"k"};{\ar "hh";"k"};%
{\ar "b";"x"};{\ar "bb";"xx"};
{\ar "a";"z"};
\end{xy}
\end{minipage}
\qquad \qquad
\begin{minipage}[c]{0.5\textwidth}
\begin{xy}
(40,40)*+{(3,0)}="bb"; 
(10,40)*+{(1,0)}="b";
(40,20)*+{(3,-2)}="ee"; 
(25,30)*+{(2,-1)}="d";%
(10,20)*+{(1,-2)}="e";
(40,0)*+{(3,-4)}="hh";%
(25,10)*+{(2,-3)}="g";%
(10,0)*+{(1,-4)}="h";%
(10,-20)*+{\vdots}="j"; 
(25,-10)*+{\vdots}="k";
(40,-20)*+{\vdots}="jj";
{\ar "b";"d"};{\ar "bb";"d"};
{\ar "d";"e"};{\ar "d";"ee"};
{\ar "e";"b"};{\ar "ee";"bb"};
{\ar "e";"g"};{\ar "ee";"g"};%
{\ar "g";"d"};
{\ar "g";"h"};{\ar "g";"hh"};%
{\ar "h";"e"};{\ar "hh";"ee"};%
{\ar "j";"h"};{\ar "jj";"hh"};
{\ar "k";"g"};%
{\ar "h";"k"};{\ar "hh";"k"};%
\end{xy}
\end{minipage}
\qquad \qquad
\begin{minipage}[c]{0.5\textwidth}
\begin{xy}
(40,40)*+{(3,0)}="bb"; 
(10,40)*+{(1,0)}="b";
(40,20)*+{(3,-2)}="ee"; 
(25,30)*+{(2,-1)}="d";%
(10,20)*+{(1,-2)}="e";
(25,10)*+{(2,-3)}="g";%
{\ar "b";"d"};{\ar "bb";"d"};
{\ar "d";"e"};{\ar "d";"ee"};
{\ar "e";"b"};{\ar "ee";"bb"};
{\ar "e";"g"};{\ar "ee";"g"};%
{\ar "g";"d"};
\end{xy}
\vspace{40pt}
\begin{xy}
(40,40)*+{(3,0)}="bb"; 
(10,40)*+{(1,0)}="b";
(25,30)*+{(2,-1)}="d";%
{\ar "b";"d"};{\ar "bb";"d"};
\end{xy}
\end{minipage}}
\caption{Quivers $\Gamma$ (left) and $\Gamma^{\leq \xi}$ (middle),  $\Gamma^{\leq \xi}_1$ (top right), and $Q$ (bottom right) in type $A_3$, where $\xi(1)=0$, $\xi(2)=-1$, and $\xi(3)=0$.} 
\label{quiver Gamma in type A3}
\end{figure}

Let $\Gamma^-$ and $\Gamma^{\leq \xi}$ be semi-infinite full subquivers of $\Gamma$ with vertex set $\widehat{I}^-=\widehat{I} \cap (I\times \mathbb{Z}_{\leq 0})$ and $\widehat{I}^{\leq \xi}=\{ (i,p) \mid i\in I, p\in \xi(i)+2\mathbb{Z}_{\leq 0}\}$, respectively. The two quivers $\Gamma^-$ and $\Gamma^{\leq \xi}$ are the same if we take an appropriate height function $\xi$. An example of $\Gamma^{\leq \xi}$ is shown in Figure \ref{quiver Gamma in type A3}. We define $k^{\xi}_{i,p}$, with $(i,p)\in \widehat{I}^{\leq \xi}$, to be the unique positive integer $k$ such that $(i,p)$ is the $k$-th vertex in its column, counting from the top $(i,\xi(i))$.

\begin{lemma}\label{mutation equivalent of Gamma}
For any choice of $\xi$, the quiver $\Gamma^{\leq \xi}$ is mutation equivalent to $\Gamma^-$ as quivers.
\end{lemma}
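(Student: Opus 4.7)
The strategy is to reduce the claim to a single BGP-type reflection of the height function and realize it as an explicit (infinite) mutation sequence of the quiver.

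First, I introduce depth coordinates on $\widehat{I}^{\leq\xi}$ by identifying $(i,p)$ with $(i,k)\in I\times\mathbb{Z}_{\geq 0}$, where $k=(\xi(i)-p)/2$. Translating the arrow rule $(i,r)\to(j,s)$ with $s=r+C_{ij}$ into these coordinates shows that on the common vertex set $I\times\mathbb{Z}_{\geq 0}$ the quiver $\Gamma^{\leq\xi}$ depends on $\xi$ only through the orientation $Q$ of $\gamma$ defined by $i\to j$ when $\xi(i)>\xi(j)$. Writing $\Gamma^{\leq Q}$ for this quiver, and letting $\xi^-\in\{-1,0\}^I$ be the sink-source height function with $\xi^-(i)\equiv\xi(i)\pmod{2}$ for each $i$ (well-defined since $\gamma$ is bipartite), one checks directly that $\Gamma^-$ equals $\Gamma^{\leq\xi^-}$, whose depth-coordinate form is $\Gamma^{\leq Q^-}$ for the sink-source orientation $Q^-$ induced by $\xi^-$. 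Hence the lemma reduces to showing that for any two orientations $Q,Q'$ of $\gamma$, the quivers $\Gamma^{\leq Q}$ and $\Gamma^{\leq Q'}$ are mutation equivalent on $I\times\mathbb{Z}_{\geq 0}$.

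Second, using the classical fact that any two orientations of the tree $\gamma$ are connected by a finite sequence of BGP reflections at sources or sinks, I reduce further to the case where $Q'=s_i Q$ arises from reversing all arrows at a single source $i$ of $Q$.

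Third, for this single reflection, I apply the mutations $\mu_{(i,0)},\mu_{(i,1)},\mu_{(i,2)},\ldots$ in order down the column over $i$. I would prove by induction on $k\geq 0$ the following invariant: after the first $k+1$ mutations in the sequence, the arrows incident to the vertices $(i,\ell)$ for $\ell\leq k-1$ and $(j,\ell)$ for $j\sim i$ in $\gamma$ and $\ell\leq k$ coincide with those of $\Gamma^{\leq Q'}$, while the remaining (deeper) part of the quiver is still as in $\Gamma^{\leq Q}$. The inductive step is a local computation at depth $k$: the mutation at $(i,k)$ reverses the cross-column arrows $(i,k)\to(j,k)$ into $(j,k)\to(i,k)$ as required by $Q'$, produces the shifted arrows $(i,k)\to(j,k+1)$ from compositions along paths through $(i,k)$, and cancels the pre-existing arrows $(j,k-1)\to(i,k)$ (together with the newly created opposite arrows) as two-cycles. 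Because each arrow of the infinite quiver is incident to at most two vertices in the column over $i$, only finitely many mutations in the sequence affect any given arrow, so the infinite composition is well-defined and by the invariant equals $\Gamma^{\leq Q'}$, as desired. The main obstacle is the bookkeeping in the inductive step: one has to track how the arrows between column $i$ and each neighboring column $j\sim i$ interact across consecutive levels, verify each two-cycle cancellation, and split into subcases according to whether a given neighboring vertex currently lies above or below the mutation level. Once the invariant at a generic depth is formulated precisely, this is a routine though somewhat tedious local verification.
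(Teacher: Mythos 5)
Your proof follows essentially the same strategy as the paper's: reduce to a sequence of source/sink reflections of the underlying Dynkin orientation (the paper cites \cite[Lemma 5.1.4]{Mar13}; you invoke the equivalent classical fact that any two orientations of a tree are connected by BGP reflections) and implement each reflection by mutating straight down the corresponding column, exactly as the paper does when it ``replaces each $(i,\xi(i))$ by the ordered vertices $(i,\xi(i)),(i,\xi(i)-2),\ldots$''. You spell out the inductive invariant that the paper leaves implicit, which is useful, though the local bookkeeping in your sketch is off by one --- for instance $\mu_{(i,k)}$ produces the new arrow $(i,k-1)\to(j,k)$ (not $(i,k)\to(j,k+1)$, which only appears after $\mu_{(i,k+1)}$), and the $2$-cycle canceled at that step is between $(i,k+1)\to(j,k)$ and $(j,k)\to(i,k+1)$ (the arrow $(j,k-1)\to(i,k)$ was already removed by the previous mutation) --- so the invariant needs to be phrased to record the temporarily reversed boundary arrow $(i,k)\to(i,k+1)$ sitting between the already-converted and not-yet-converted regions.
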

\begin{proof}
We identify each vertex $(i,p)$ in $\Gamma^{\leq \xi}$ with the vertex $(i,r)$ in $\Gamma^-$ such that $k_{i,r}=k^{\xi}_{i,p}$. Let $Q^{\leq \xi}_1$ be the subquiver of $\Gamma^{\leq \xi}$ with vertex set $I_\xi=\{(i,\xi(i)) \mid i\in I\}$, $Q$ the subquiver of $\Gamma^-$ with vertex set consisting of the top vertex in each column. By definition, $Q$ is a bipartite Dynkin quiver. It follows from \cite[Lemma 5.1.4]{Mar13} that there exists a sequence of source/sink mutations $\mathscr{S}$ that transforms $Q^{\leq \xi}_1$ into $Q$. We modify the sequence $\mathscr{S}$ by replacing each $(i,\xi(i))$ by the ordered vertices $(i,\xi(i)), (i,\xi(i)-2), \ldots$, up to infinity. The new sequence transforms $\Gamma^{\leq \xi}$ into $\Gamma^-$.
\end{proof}

For each $\ell\in \mathbb{Z}_{\geq 1}$, let $\Gamma^{\leq \xi}_\ell$ be the subquiver of $\Gamma^{\leq \xi}$ with vertex set $\widehat{I}^{\leq \xi}_\ell=\{(i,p) \mid i\in I, -2\ell+\xi(i) \leq p \leq \xi(i)\}$.

Fix a non-zero $a\in \mathbb{C}^*$, we simply write $Y_{i,r}$ for $Y_{i,aq^r}$. Let $\mathcal{P}$ (respectively, $\mathcal{P}^{\leq \xi}$, $\mathcal{P}^{\leq \xi}_\ell$) be the abelian group generated by $Y^{\pm 1}_{i,p}$, with $(i,p)\in \widehat{I}$ (respectively, $(i,p)\in \widehat{I}^{\leq \xi}$, $(i,p)\in \widehat{I}^{\leq \xi}_\ell$), and $\mathcal{P}_{+}$ (respectively, $\mathcal{P}^{\leq \xi}_{+}$, $\mathcal{P}^{\leq \xi}_{+,\ell}$) the submonoid of $\mathcal{P}$ (respectively, $\mathcal{P}^{\leq \xi}$, $\mathcal{P}^{\leq \xi}_\ell$) with generators $Y_{i,p}$, with $(i,r)\in \widehat{I}$ (respectively, $(i,r)\in \widehat{I}^{\leq \xi}$, $(i,r)\in \widehat{I}^{\leq \xi}_\ell$).  

Let $\mathscr{C}_{\mathbb{Z}}$ (respectively, $\mathscr{C}^{\leq \xi}$, $\mathscr{C}^{\leq \xi}_\ell$) be the monoidal subcategory of $\mathscr{C}$ whose objects have all their composition factors of the form $L(m)$, where $m\in \mathcal{P}_+$ (respectively, $m\in \mathcal{P}^{\leq \xi}_{+}$, $m\in  \mathcal{P}^{\leq \xi}_{+,\ell}$). This defines a series of subcategories of $\mathscr{C}$
\[
\mathscr{C}^{\leq \xi}_1 \subset \mathscr{C}^{\leq \xi}_2 \subset \cdots \subset \mathscr{C}^{\leq \xi}_\ell \subset \cdots \subset \mathscr{C}^{\leq \xi}_\infty:=\mathscr{C}^{\leq \xi} \subset \mathscr{C}_{\mathbb{Z}}.
\]

\begin{remark}
\begin{itemize}[]

\item[(1)] The subcategory $\mathscr{C}^{\leq \xi}_1$ was introduced in \cite{HL13}. For a linear height function $\xi$, there exists a subcategory $\mathscr{C}_Q$ of $\mathscr{C}^{\leq \xi}$ such that $\mathscr{C}_Q$ is a monoidal categorification of the quantum cluster algebra $\mathbb{C}_q[N]$, where $N$ is the maximal unipotent radical of $G=\text{Lie}(\mathfrak{g})$, see \cite{HL15}. Very recently, the subcategory $\mathscr{C}^{\leq \xi}$ was introduced and studied by Fujita, Hernandez, Oh and Oya \cite[Section 4.3]{FHOO23} for general Dynkin type, not only ADE type.

\item[(2)] If $\xi$ is a sink-source function, then the subcategory $\mathscr{C}^{\leq \xi}_\ell$ is the same as the Hernandez-Leclerc subcategories $\mathscr{C}_\ell$ \cite{HL16,HL21}.
 
\item[(3)] It is interesting to compare $\mathscr{C}^{\leq \xi}_\ell$ with Kashiwara-Kim-Oh-Park's  subcategory introduced in \cite[Definition 6.16]{KKOP23}.
\end{itemize} 
\end{remark}
 
The $q$-character of a simple object $L(m)\in \mathscr{C}^{\leq \xi}_\ell$ possibly contains  many monomials which do not belong to $\mathcal{P}^{\leq \xi}_{\ell}$. By discarding these monomials we obtain a truncated $q$-character $\chi_{q}([L(m)])_{\leq \xi}$. The Grothendieck ring $K_0(\mathscr{C}^{\leq \xi}_\ell)$ of $\mathscr{C}^{\leq \xi}_\ell$ is a polynomial ring in the truncated $q$-characters $\chi_{q}([L(Y_{i,p})])_{\leq \xi}$, with $(i,p)\in \widehat{I}^{\leq \xi}_\ell$.

One checks that all the dominant monomials (including multiplicities) occurring in $\chi_{q}([L(m)])$ appear in the truncated $q$-character $\chi_{q}([L(m)])_{\leq \xi}$. 

\begin{proposition}[{\cite[Proposition 2.2]{HL10}}] \label{dominant monomials determine q-characters}
If $\chi_{q}([M_1])_{\leq \xi}=\chi_{q}([M_2])_{\leq \xi}$ for any two simple objects $M_1, M_2\in \mathscr{C}^{\leq \xi}_\ell$, then $\chi_q([M_1])=\chi_q([M_2])$. 
\end{proposition}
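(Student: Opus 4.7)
The plan is to deduce the proposition from the earlier theorem (\cite[Proposition 5.3]{HL10}, the homonymous theorem stated above) asserting that two $U_q(\widehat{\mathfrak{g}})$-modules whose $q$-characters have identical dominant monomials (with multiplicities) have equal $q$-characters. The bridge is the assertion made just before the proposition: for every simple $L(m)\in \mathscr{C}^{\leq \xi}_\ell$, every dominant monomial of $\chi_q([L(m)])$ already lies in $P^{\leq \xi}_{+,\ell}$, and hence survives the truncation $\chi_q([L(m)])_{\leq \xi}$ with the same multiplicity. Once this bridge is in place, the proposition reduces to reading off the top monomial of each truncated $q$-character.

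First I would justify this bridge. Starting from the Frenkel-Mukhin formula (\ref{FM q-character foumula}), one has $\chi_q([L(m)])=m\bigl(1+\sum_p M_p\bigr)$ with each $M_p\in \mathcal{Q}^-$ a product of factors $A_{j,r}^{-1}$, and recall that $A_{j,r}=Y_{j,r-1}Y_{j,r+1}\prod_{i\sim j}Y_{i,r}^{-1}$. Suppose $mM_p$ is dominant but not in $P^{\leq \xi}_{+,\ell}$. Selecting a factor $A_{j,r}^{-1}$ in $M_p$ whose vertex $(j,r)$ is extremal among those outside $\widehat{I}^{\leq \xi}_\ell$ (i.e.\ furthest above the top or below the bottom of the strip), I would show that its contributions $Y_{j,r-1}^{-1}$ and $Y_{j,r+1}^{-1}$ cannot be cancelled either by the remaining factors of $M_p$ (which are constrained to nearby vertices of $\Gamma$, and by the extremality cannot reach the offending position the requisite number of times) or by $m$ (whose support lies in $\widehat{I}^{\leq \xi}_\ell$), contradicting dominance. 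Hence every dominant monomial of $\chi_q([L(m)])$ lies in $P^{\leq \xi}_{+,\ell}$ and is preserved by truncation.

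Given the bridge, the conclusion is a short comparison. Writing $M_i=L(m_i)$ with $m_i\in P^{\leq \xi}_{+,\ell}$ for $i=1,2$, each $m_i$ is the unique $\leq$-maximal monomial of $\chi_q([M_i])$ under the partial order (\ref{Nakajima partial order}), is itself dominant, and by the bridge appears in $\chi_q([M_i])_{\leq \xi}$; since every other monomial of $\chi_q([M_i])$ is strictly smaller than $m_i$, the same holds in the truncated character, so $m_i$ is the unique $\leq$-maximal monomial there. The hypothesis $\chi_q([M_1])_{\leq \xi}=\chi_q([M_2])_{\leq \xi}$ then forces $m_1=m_2$, whence $M_1\cong M_2$ and $\chi_q([M_1])=\chi_q([M_2])$. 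The main obstacle is the first step, the combinatorial verification of the bridge, which is precisely what the paragraph preceding the proposition asks the reader to check; everything else is formal, relying only on the maximality property of the highest $\ell$-weight monomial.
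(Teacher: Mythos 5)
Your conclusion step is correct, but it does not actually use the ``bridge.'' All it needs is that the highest $\ell$-weight monomial $m_i$ of the simple $M_i=L(m_i)\in\mathscr{C}^{\leq\xi}_\ell$ lies in $P^{\leq\xi}_{+,\ell}$, which is immediate from the definition of the subcategory (a simple object is its own composition factor). By the Frenkel--Mukhin formula~(\ref{FM q-character foumula}), every monomial of $\chi_q([M_i])$ is $\leq m_i$, so $m_i$ remains the unique $\leq$-maximal monomial of $\chi_q([M_i])_{\leq\xi}$, and equality of the truncated characters forces $m_1=m_2$, hence $M_1\cong M_2$. The combinatorial verification of the bridge that you flag as ``the main obstacle'' is therefore not needed for the proposition as stated, and your first paragraph can be dropped.

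The bridge together with \cite[Proposition 5.3]{HL10} is the route the paper points to (and the one \cite[Proposition 2.2]{HL10} takes), and it has the advantage of proving the stronger statement that truncation is injective on all of $K_0(\mathscr{C}^{\leq\xi}_\ell)$ --- not only on classes of simples --- which is what is used later when comparing products of simple classes, as in the proof of Theorem~\ref{main theorem2}. Your top-monomial shortcut proves the proposition as stated and also yields linear independence of the truncated characters of simples, from which the $K_0$-injectivity follows; it trades the delicate dominant-monomial combinatorics for a cheap triangularity argument. Both routes are correct; just note that the work in your first paragraph only earns its keep for the general version, not for the statement you were asked to prove.
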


Let $\mathcal{A}^{\leq \xi}_\ell$ be the cluster algebra with initial seed $(\mathbf{z}^{\leq \xi}_\ell,\Gamma^{\leq \xi}_\ell)$ and frozen vertices $\{ z^{\leq \xi}_{i,p} \mid (i,p) \in \widehat{I}^{\leq \xi}_\ell \setminus \widehat{I}^{\leq \xi}_{\ell-1}\}$, where
\[
\mathbf{z}^{\leq \xi}_\ell=\{ z^{\leq \xi}_{i,p} \mid (i,p) \in \widehat{I}^{\leq \xi}_\ell \},
\] 
and $z^{\leq \xi}_{i,p}$, with $(i,p) \in \widehat{I}^{\leq \xi}_\ell$, are the generators of a field of rational functions $\mathbb{Q}(\mathbf{z}^{\leq \xi}_\ell)$.

We will show in Section \ref{quantum cluster algebra structure on quantum Grothendieck ring of a subcategory} that \cite[Propostion 3.10]{HL16} and \cite[Theorem 5.1]{HL21} imply that $K_0(\mathscr{C}^{\leq \xi}_\ell)$ admits a cluster algebra structure $\mathcal{A}^{\leq \xi}_\ell$.

\subsection{Jacobian algebra $A^{\leq \xi}_\ell$} \label{our Jacobian algebras}

Let $Q^{\leq \xi}_\ell$ be the principal quiver of $\Gamma^{\leq \xi}_\ell$ obtained by deleting frozen vertices  $\{(i,p) \mid (i,p) \in \widehat{I}^{\leq \xi}_\ell \setminus \widehat{I}^{\leq \xi}_{\ell-1}\}$ and incident arrows. By definition, if $\ell>1$, the quiver $Q^{\leq \xi}_\ell$ has cycles of the form shown in Figure \ref{cycle type}.

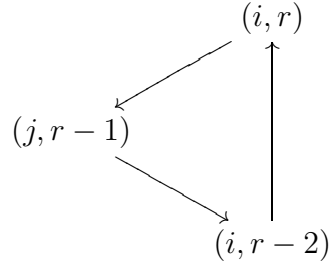
\begin{figure}[H]
\centerline{
\xymatrix{& (i,r) \ar[dl] \\ (j,r-1)\ar[dr] & \\ & (i,r-2) \ar[uu]}}
\caption{A cycle in $Q^{\leq \xi}_\ell$. Here the index $j$ is such that $C_{ij}=-1$.}\label{cycle type}
\end{figure}

Let $\mathfrak{P}$ be the sum of all the 3-cycles of the form shown in Figure \ref{cycle type}. Then $\mathfrak{P}$ is a potential in the sense of \cite[Section 3]{DWZ08}. Note that a given arrow of $Q^{\leq \xi}_\ell$ can only occur in a finite number of summands in  $\mathfrak{P}$. For an arrow $\alpha$ in $Q^{\leq \xi}_\ell$, the cyclic derivative $\partial_\alpha$ on a potential $\mathfrak{P}$ is the continuous $\mathbb{C}$-linear map which sends $\mathfrak{P}$ to the sum $\sum_{l=\beta\alpha\delta}\delta\beta$, where the sum takes over all possible cycles containing $\alpha$ in $\mathfrak{P}$.

Let 
\[
A^{\leq \xi}_\ell=\mathbb{C}Q^{\leq \xi}_\ell/J^{\leq \xi}_\ell,
\] 
where $J^{\leq \xi}_\ell$ is the truncated Jacobian ideal of $\mathbb{C}Q^{\leq \xi}_\ell$ generated by all cyclic derivatives in $Q^{\leq \xi}_\ell$. For each $\ell\in \mathbb{Z}_{\geq 1}$, the truncated Jacobian algebra $A^{\leq \xi}_\ell$ is a finite-dimensional $\mathbb{C}$-algebra, and the quiver with potential $(Q^{\leq \xi}_\ell,J^{\leq \xi}_\ell)$ is rigid, see \cite[Proposition 4.17]{HL16}.

Denote by $\text{mod}\,A^{\leq \xi}_\ell$ the category of finitely generated right $A^{\leq \xi}_\ell$-modules. It is well-known that $\text{mod}\,A^{\leq \xi}_\ell \cong \text{rep}(Q^{\leq \xi}_\ell,J^{\leq \xi}_\ell)$.

\subsection{Quantum Cartan matrix and quantum cluster algebra} \label{quantum cluster algebra structure on quantum Grothendieck ring of a subcategory}

Let $C$ be the Cartan matrix of $\mathfrak{g}$ and $C(z)=(C(z)_{ij})$ the corresponding quantum Cartan matrix defined by 
\[
C(z)_{ij} = \begin{cases}
z+z^{-1}  & \text{if $i=j$}, \\
-1  &  \text{if there is an edge $i\sim j$ in the Dynkin diagram of $\mathfrak{g}$}, \\
0 & \text{otherwise},
\end{cases}
\]
where $z$ is an indeterminate.

It follows from the above definition that $C(z)$ is an invertible symmetric matrix, and the specialization of $C(z)$ at $z=1$ is the ordinary Cartan matrix $C$. We denote by $\widetilde{C}(z)=(\widetilde{C}(z)_{ij})$ the inverse matrix of $C(z)$. The matrix $\widetilde{C}(z)$ is a symmetric matrix, one can write the entries of $\widetilde{C}(z)$ as power series in $z$:
\[
\widetilde{C}(z)_{ij}=\sum_{k=0}^{\infty} \widetilde{C}_{ij}(k)z^k \in \mathbb{Z}[[z]].
\]  
It was shown by Hernandez and Leclerc in \cite[Corollary 2.3]{HL15} that for $i,j\in I$ and $k\geq 1$, 
\[
\widetilde{C}_{ij}(k)=\widetilde{C}_{ij}(k+2h),
\] 
where $h$ is the Coxeter number of $C$.

\begin{example}
In type $A_n$, 
\begin{itemize}
\item[(1)] $|C(z)| = [n+1]_q$;
\item[(2)] for $1\leq i\leq j \leq n$,
\begin{align*}
\widetilde{C}(z)_{ij} = \left(\sum_{s=0}^{n-j} z^{j-i+2s+1}  -  \sum_{s=0}^{n-j} z^{j+i+2s+1} \right) \left( \sum_{k\geq 0} z^{2(n+1)k} \right).
\end{align*}
\end{itemize}
The case (2) appeared in \cite[Remark 3.1]{FH22}. Here we use a different expression of $\widetilde{C}(z)_{ij}$. 
\end{example}

The quantum Cartan matrix is closely related to the representations of a simply-laced quantum affine algebra \cite{Bit21}. Following \cite{HL15},  one can extend the definition of $\widetilde{C}_{ij}(k)$ to every $k\in \mathbb{Z}$ by setting $\widetilde{C}_{ij}(k) = 0$ if $k \leq 0$.

Let $t$ be an indeterminate. Let  $(\mathcal{Y}_t,*)$ (respectively, $(\mathcal{Y}^{\leq \xi}_t,*)$,  $(\mathcal{Y}^{\leq \xi}_{t,\ell},*)$) be the $\mathbb{C}[t^{\pm \frac{1}{2}}]$-algebra generated by variables $Y^{\pm 1}_{i,r}$, with $(i,r)\in \widehat{I}$ (respectively, $(i,r)\in \widehat{I}^{\leq \xi}$, $(i,r)\in \widehat{I}^{\leq \xi}_\ell$), subject to the following relations: for $(i,r), (j,s) \in \widehat{I}$ (respectively, $\widehat{I}^{\leq \xi}$, $\widehat{I}^{\leq \xi}_\ell$), 
\begin{align}\label{t-multiplication}
&  Y_{i,r} * Y^{-1}_{i,r} = Y^{-1}_{i,r} * Y_{i,r} =1, \notag \\
&  Y_{i,r} * Y_{j,s} = t^{\mathcal{N}_{ij}(s-r)} Y_{j,s} * Y_{i,r},  
\end{align}
where $\mathcal{N}_{ij}: \mathbb{Z} \to \mathbb{Z}$ is the odd function defined by
\[
\mathcal{N}_{ij}(k) = \widetilde{C}_{ij}(k+1) +  \widetilde{C}_{ij}(-k-1) -  \widetilde{C}_{ij}(k-1)-\widetilde{C}_{ij}(-k+1).
\]
Since $\widetilde{C}(z)$ is symmetric, we have $\mathcal{N}_{ij}(k)=-\mathcal{N}_{ji}(-k)$. 

Let $L^{\leq \xi}_\ell$ be the $\widehat{I}^{\leq \xi}_\ell \times \widehat{I}^{\leq \xi}_\ell$ skew-symmetric matrix whose $((i,r),(j,s))$-entry (for $\xi(i)-2\ell \leq r \leq \xi(i)$, $\xi(j)-2\ell \leq s \leq \xi(j)$) is defined by

\begin{gather}\label{Lambda matrix entries}
\begin{aligned}
& L^{\leq \xi}_\ell((i,r),(j,s)) = \sum_{\substack{k\geq 0 \\ r+2k\leq \xi(i)}} \sum_{\substack{l\geq 0\\ s+2l \leq \xi(j)}} \mathcal{N}_{ij}(s+2l-r-2k) \\
& = \sum_{\substack{k\geq 0 \\ r+2k\leq \xi(i)}} \left(  \widetilde{C}_{ij}(\xi(j)-r-2k+1) + \widetilde{C}_{ij}(-\xi(j)+r+2k-1) - \widetilde{C}_{ij}(s-r-2k-1) - \widetilde{C}_{ij}(-s+r+2k+1)    \right).
\end{aligned}
\end{gather}

With this notation, the $t$-commutation relation (\ref{t-multiplication}) yields the following equation
\begin{gather}\label{t-commutative relation in Yt}
\left(\prod_{k\geq 0,\atop r+2k\leq \xi(i)} Y_{i,r+2k}\right) * \left(\prod_{l\geq 0,\atop s+2l\leq \xi(j)} Y_{j,s+2l}\right)= t^{L^{\leq \xi}_\ell((i,r),(j,s))} \left(\prod_{l\geq 0,\atop s+2l\leq \xi(j)} Y_{j,s+2l}\right) * \left(\prod_{k\geq 0,\atop r+2k\leq \xi(i)} Y_{i,r+2k}\right).
\end{gather}

We need the following proposition.

\begin{proposition}[{\cite{Bit21a,Bit21,HL15}}] \label{quantum Cartan matrix relations}
For all $(i,j) \in I \times I$, 
\begin{align*}
& \widetilde{C}_{ij}(m-1) +  \widetilde{C}_{ij}(m+1) - \sum_{k \sim j} \widetilde{C}_{ik}(m) = 0, \, \forall \,  m \geq 1, \\
& \widetilde{C}_{ij}(m-1) +  \widetilde{C}_{ij}(m+1) - \sum_{k \sim i} \widetilde{C}_{kj}(m) = 0, \, \forall \,  m \geq 1, \\
& \widetilde{C}_{ij}(1) =  \delta_{i,j}.
\end{align*}
\end{proposition}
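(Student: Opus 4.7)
The plan is to derive all three identities simultaneously by expanding the matrix equations $C(z)\widetilde{C}(z)=I$ and $\widetilde{C}(z)C(z)=I$ as power series in $z$ and reading off coefficients. The key observation is that $C(z) = (z+z^{-1})I - A$, where $A$ is the adjacency matrix of the Dynkin diagram (symmetric, with $A_{ij}=1$ iff $i\sim j$). Multiplying through by $z$, we have $zC(z) = (1+z^{2})I - zA$, so
\[
\widetilde{C}(z) = C(z)^{-1} = z\bigl(I - zA + z^{2}I\bigr)^{-1} = z\sum_{k\geq 0}(zA - z^{2}I)^{k},
\]
which is a genuine power series in $z$ with no constant term. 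In particular $\widetilde{C}_{ij}(0)=0$ and $\widetilde{C}_{ij}(1) = \delta_{ij}$ (reading off the coefficient of $z^{1}$ from the $k=0$ summand $zI$), which is precisely the third identity. This also justifies the convention $\widetilde{C}_{ij}(m)=0$ for $m\leq 0$.

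Next, I would expand the identity $C(z)\widetilde{C}(z)=I$ entrywise. For a fixed pair $(i,j)\in I\times I$ we obtain
\[
(z+z^{-1})\widetilde{C}(z)_{ij} - \sum_{k\sim i}\widetilde{C}(z)_{kj} = \delta_{ij}.
\]
Writing $\widetilde{C}(z)_{ij} = \sum_{m\geq 0}\widetilde{C}_{ij}(m)z^{m}$ and collecting the coefficient of $z^{m}$ on both sides, the left hand side becomes $\widetilde{C}_{ij}(m-1)+\widetilde{C}_{ij}(m+1) - \sum_{k\sim i}\widetilde{C}_{kj}(m)$, while the right hand side contributes $\delta_{ij}$ only at $m=0$. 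Thus for every $m\geq 1$ we obtain
\[
\widetilde{C}_{ij}(m-1) + \widetilde{C}_{ij}(m+1) - \sum_{k\sim i}\widetilde{C}_{kj}(m) = 0,
\]
which is the second identity of the proposition. The $m=0$ coefficient recovers $\widetilde{C}_{ij}(1)=\delta_{ij}$ once we use $\widetilde{C}_{ij}(-1)=0=\widetilde{C}_{kj}(0)$, giving an alternative derivation of the third identity.

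Finally, the first identity follows by symmetry from the companion equation $\widetilde{C}(z)C(z)=I$, which entry-wise reads $(z+z^{-1})\widetilde{C}(z)_{ij} - \sum_{k\sim j}\widetilde{C}(z)_{ik} = \delta_{ij}$; the same coefficient extraction yields $\widetilde{C}_{ij}(m-1)+\widetilde{C}_{ij}(m+1) - \sum_{k\sim j}\widetilde{C}_{ik}(m) = 0$ for all $m\geq 1$. No real obstacle arises: the only point to justify carefully is that $\widetilde{C}(z)$ is a power series in $z$ (with $\widetilde{C}_{ij}(m)=0$ for $m\leq 0$), which is the content of the geometric-series argument above and is what makes the convention $\widetilde{C}_{ij}(m)=0$ for $m\leq 0$ consistent with the matrix inversion. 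Once this is in place, the three identities are simply the degree $m\geq 1$ coefficients (for the first two) and the degree $1$ coefficient (for the third) of the matrix identities defining $\widetilde{C}(z)$.
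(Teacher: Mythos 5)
Your proof is correct. The paper cites \cite{Bit21a,Bit21,HL15} for this proposition without reproducing a proof, but your argument — writing $C(z)=(z+z^{-1})I-A$ with $A$ the adjacency matrix, multiplying by $z$ and using the geometric series $\widetilde{C}(z)=z\sum_{k\geq 0}(zA-z^{2}I)^{k}$ to see that $\widetilde{C}(z)$ is a genuine power series with $\widetilde{C}_{ij}(0)=0$ and $\widetilde{C}_{ij}(1)=\delta_{ij}$, and then extracting the coefficient of $z^{m}$ for $m\geq 1$ from the entrywise versions of $C(z)\widetilde{C}(z)=I$ and $\widetilde{C}(z)C(z)=I$ — is the standard derivation appearing in those references and is complete.
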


For any choice of $\xi$,  we have the following analog of Bittmann's Proposition 5.1.1 in \cite{Bit21} (where the sink-source height function is used) and Proposition 6.2.4 in \cite{Bit21a}.
\begin{proposition} \label{Compatible pair}
For any choice of $\xi$, $(L^{\leq \xi}_\ell,B^{\leq \xi}_\ell)$ forms a compatible pair. 
\end{proposition}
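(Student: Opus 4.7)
The plan is to verify the compatibility identity $(B^{\leq \xi}_\ell)^T L^{\leq \xi}_\ell = (D \mid 0)$ by a direct entrywise computation, following the strategy of Bittmann \cite{Bit21} but allowing for an arbitrary height function. Fix a mutable vertex $(i,r) \in \widehat{I}^{\leq \xi}_{\ell-1}$ and an arbitrary column vertex $(j,s) \in \widehat{I}^{\leq \xi}_\ell$. Using the explicit list of arrows of $\Gamma^{\leq \xi}_\ell$ recorded in Section \ref{subcategories}, the relevant entry of $(B^{\leq \xi}_\ell)^T L^{\leq \xi}_\ell$ is the alternating sum
\begin{align*}
\Sigma_{(i,r),(j,s)}
&= L^{\leq \xi}_\ell\bigl((i,r+2),(j,s)\bigr) - L^{\leq \xi}_\ell\bigl((i,r-2),(j,s)\bigr) \\
&\quad + \sum_{i' \sim i} \Bigl[ L^{\leq \xi}_\ell\bigl((i',r-1),(j,s)\bigr) - L^{\leq \xi}_\ell\bigl((i',r+1),(j,s)\bigr) \Bigr],
\end{align*}
where any term indexed by a vertex outside $\widehat{I}^{\leq \xi}_\ell$ is understood to vanish. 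The goal is to show that $\Sigma_{(i,r),(j,s)} = 2\,\delta_{(i,r),(j,s)}$ when $(j,s) \in \widehat{I}^{\leq \xi}_{\ell-1}$, and $\Sigma_{(i,r),(j,s)} = 0$ when $(j,s) \in \widehat{I}^{\leq \xi}_\ell \setminus \widehat{I}^{\leq \xi}_{\ell-1}$, so that $D = 2\,\mathrm{Id}$.

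Next I would substitute the defining formula \eqref{Lambda matrix entries} and reorganise the resulting quadruple sum by the column-depth index $k$ starting from the top of each column. Each collection of summands is a linear combination of values of $\widetilde{C}_{i'j}$ evaluated at closely spaced arguments differing by $\pm 1$ or $\pm 2$. The crucial input is the pair of recursions from Proposition \ref{quantum Cartan matrix relations}: the first one collapses the telescoping contribution produced by the neighbours $i' \sim i$, while the second one absorbs the on-column contributions from $(i,r\pm 2)$. After these collapses, the only surviving contributions are boundary terms coming from the top of the column at height $\xi(i)$ and, in the case $r = \xi(i) - 2(\ell-1)$, from the threshold between the mutable and the frozen layer. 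The identity $\widetilde{C}_{ij}(1) = \delta_{ij}$ then reduces the boundary contributions to the desired Kronecker delta.

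The main obstacle is controlling the boundary bookkeeping for a general $\xi$. In the sink-source case treated by Bittmann, adjacent columns have uniform parity offsets, and summation ranges in neighbouring columns align cleanly; for an arbitrary height function, the top of column $i'$ sits at $\xi(i') = \xi(i) \pm 1$, so the shifts $r \pm 1$ and $r \pm 2$ land inside or outside the defining ranges of \eqref{Lambda matrix entries} in a way that depends on the local orientation of the Dynkin diagram at $i$. I plan to reparametrise each $L^{\leq \xi}_\ell((i,r),(j,s))$ by the relative column depths $\xi(i) - r$ and $\xi(j) - s$, so that the shifts appearing in $\Sigma_{(i,r),(j,s)}$ correspond uniformly to incrementing a single index by $\pm 1$ or $\pm 2$; under this reparametrisation the recursions apply verbatim and the cancellations proceed independently of $\xi$.

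As a sanity check, and a possible alternative route, one can invoke Lemma \ref{mutation equivalent of Gamma}: since the sink-source quiver is obtained from $\Gamma^{\leq \xi}_\ell$ by a sequence of source/sink mutations, and compatible pairs are preserved under quantum mutation by \cite{BZ05}, it would suffice to verify that our $L^{\leq \xi}_\ell$ is the image of Bittmann's sink-source $\Lambda$ under the corresponding mutation sequence. However, implementing this reduction still requires the same $\widetilde{C}$-recursions in order to check the transformation of a single entry of $L^{\leq \xi}_\ell$ under one elementary mutation step, so the direct computation above is more economical and will be carried out in the next section.
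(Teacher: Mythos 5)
Your plan is essentially the paper's proof: verify $(B^{\leq\xi}_\ell)^T L^{\leq\xi}_\ell = (D\mid 0)$ with $D=2\,\mathrm{Id}$ entrywise, substitute the explicit formula \eqref{Lambda matrix entries}, and use the recursions of Proposition~\ref{quantum Cartan matrix relations} together with the convention $\widetilde{C}_{ij}(m)=0$ for $m\le 0$ to telescope the resulting sums, treating the top-of-column vertices $r=\xi(i)$ as a separate boundary case. One small caveat on your organizational idea: under the depth reparametrisation $d_i=\xi(i)-r$, a shift to a neighbouring column $u\sim i$ changes the depth by $0$ or $\pm 2$ depending on the sign of $\xi(u)-\xi(i)$, so the shifts are not truly uniform in that coordinate; this does not obstruct the telescoping, but it means the case analysis on the local orientation cannot be avoided by the reparametrisation alone.
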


\begin{proof}
Suppose that $((i,r),(j,s))\in \widehat{I}^{\leq \xi}_{\ell-1}\times \widehat{I}^{\leq \xi}_\ell$ for $r\leq \xi(i)-2$. We compute the position $((i,r), (j,s))$ of the matrix $(B^{\leq \xi}_\ell)^T L^{\leq \xi}_\ell$ as follows:
\begin{gather*}
\begin{aligned}
& (B^{\leq \xi}_\ell)^T L^{\leq \xi}_\ell ((i,r), (j,s))  = L^{\leq \xi}_\ell ((i,r+2), (j,s))-L^{\leq \xi}_\ell ((i,r-2), (j,s)) \\
&  \qquad \qquad \qquad \qquad \qquad - \sum_{u \sim i} \left( L^{\leq \xi}_\ell ((u,r+1), (j,s))-L^{\leq \xi}_\ell ((u,r-1), (j,s)) \right) \\
& = \sum_{ k\geq 0, \atop r+2k+2\leq \xi(i)} \left( \widetilde{C}_{ij}(-r-2k+\xi(j)-1)+\widetilde{C}_{ij}(r+2k-\xi(j)+1) - \widetilde{C}_{ij}(-r-2k+s-3) -\widetilde{C}_{ij}(r+2k-s+3)  \right) \\
& - \sum_{ k\geq 0, \atop r+2k-2\leq \xi(i)} \left( \widetilde{C}_{ij}(-r-2k+\xi(j)+3)+\widetilde{C}_{ij}(r+2k-\xi(j)-3)-\widetilde{C}_{ij}(-r-2k+s+1) -\widetilde{C}_{ij}(r+2k-s-1) \right)  \\
& - \sum_{u \sim i}  \sum_{k\geq 0,\atop r+2k+1\leq \xi(u)} \left( \widetilde{C}_{uj}(-r-2k+\xi(j))+ \widetilde{C}_{uj}(r+2k-\xi(j)) -\widetilde{C}_{uj}(-r-2k+s-2) -\widetilde{C}_{uj}(r+2k-s+2) \right) \\
& + \sum_{u \sim i} \sum_{k\geq 0,\atop r+2k-1\leq \xi(u)} \left( \widetilde{C}_{uj}(-r-2k+\xi(j)+2) + \widetilde{C}_{uj}(r+2k-\xi(j)-2) -\widetilde{C}_{uj}(-r-2k+s) - \widetilde{C}_{uj}(r+2k-s) \right)  \\
& = -\left( \widetilde{C}_{ij}(-r+\xi(j)+3)+\widetilde{C}_{ij}(r-\xi(j)-3)-\widetilde{C}_{ij}(-r+s+1)-\widetilde{C}_{ij}(r-s-1) \right)  \\
& - \left( \widetilde{C}_{ij}(-r+\xi(j)+1) + \widetilde{C}_{ij}(r-\xi(j)-1) - \widetilde{C}_{ij}(-r+s-1) - \widetilde{C}_{ij}(r-s+1) \right) \\
& + \sum_{u \sim i} \left( \widetilde{C}_{uj}(-r+\xi(j)+2) + \widetilde{C}_{uj}(r-\xi(j)-2) - \widetilde{C}_{uj}(-r+s) - \widetilde{C}_{uj}(r-s) \right).
\end{aligned}
\end{gather*}
Here the second equality uses Equation (\ref{Lambda matrix entries}), the last equality is obtained by combining terms of the same kind.

Rearranging the terms, applying Proposition \ref{quantum Cartan matrix relations}, and $\widetilde{C}_{ij}(m) = 0$ for $m\leq 0$, we obtain
\begin{gather}\label{step 1 formula}
\begin{aligned}
(B^{\leq \xi}_\ell)^T L^{\leq \xi}_\ell ((i,r), (j,s)) & =  -\left( \widetilde{C}_{ij}(r-\xi(j)-3) + \widetilde{C}_{ij}(r-\xi(j)-1)- \sum_{u \sim i} \widetilde{C}_{uj}(r-\xi(j)-2) \right) \\
& - \left( \widetilde{C}_{ij}(-r+\xi(j)+3) + \widetilde{C}_{ij}(-r+\xi(j)+1)- \sum_{u \sim i} \widetilde{C}_{uj}(-r+\xi(j)+2) \right)  \\
& + \left( \widetilde{C}_{ij}(r-s-1) + \widetilde{C}_{ij}(r-s+1) - \sum_{u \sim i} \widetilde{C}_{uj}(r-s)  \right) \\
& + \left( \widetilde{C}_{ij}(-r+s+1) + \widetilde{C}_{ij}(-r+s-1) - \sum_{u \sim i}  \widetilde{C}_{uj}(-r+s)  \right).
\end{aligned}
\end{gather}

In the following, we discuss (\ref{step 1 formula}) by a case-by-case analysis. 

{\bf Case 1.} Suppose that $(i,r)\neq (j,s)$. 

We start with the discussion of the first two terms on the right hand side of Equation~(\ref{step 1 formula}).

{\bf Case 1.1} If $r-\xi(j)-1\leq 0$, then (we agree that $\widetilde{C}_{ij}(m) = 0$ for $m\leq 0$)
\[
\widetilde{C}_{ij}(r-\xi(j)-3) + \widetilde{C}_{ij}(r-\xi(j)-1)- \sum_{u \sim i} \widetilde{C}_{uj}(r-\xi(j)-2)=0.
\]
In this case, $-(r-\xi(j)-1) \geq 0$, so by Proposition \ref{quantum Cartan matrix relations}
\[
\widetilde{C}_{ij}(-r+\xi(j)+3) + \widetilde{C}_{ij}(-r+\xi(j)+1)- \sum_{u \sim i} \widetilde{C}_{uj}(-r+\xi(j)+2)=0.
\]

{\bf Case 1.2} If $r-\xi(j)-1=1$, then ($r=\xi(j)+2$)
\begin{align*}
& \widetilde{C}_{ij}(r-\xi(j)-3) + \widetilde{C}_{ij}(r-\xi(j)-1) - \sum_{u \sim i} \widetilde{C}_{uj}(r-\xi(j)-2)=\widetilde{C}_{ij}(1), \\
& \widetilde{C}_{ij}(-r+\xi(j)+3) + \widetilde{C}_{ij}(-r+\xi(j)+1)- \sum_{u \sim i} \widetilde{C}_{uj}(-r+\xi(j)+2)=\widetilde{C}_{ij}(1) .
\end{align*}
Note that $r\leq \xi(i)-2$, which implies that $i\neq j$, otherwise if $i=j$, $\xi(j)+2=\xi(i)+2<\xi(i)-2$, a contradiction. By Proposition \ref{quantum Cartan matrix relations}, we have $\widetilde{C}_{ij}(1)=0$.

{\bf Case 1.3} If $r-\xi(j)-1\geq 2$, then by Proposition \ref{quantum Cartan matrix relations} and $\widetilde{C}_{ij}(m) = 0$ for $m\leq 0$
\begin{align*}
\widetilde{C}_{ij}(r-\xi(j)-3) + \widetilde{C}_{ij}(r-\xi(j)-1) - \sum_{u \sim i} \widetilde{C}_{uj}(r-\xi(j)-2)=0,\\
\widetilde{C}_{ij}(-r+\xi(j)+3) + \widetilde{C}_{ij}(-r+\xi(j)+1)- \sum_{u \sim i} \widetilde{C}_{uj}(-r+\xi(j)+2)=0.
\end{align*}
Thus in all three cases the sum of the first two terms on the right hand side of Equation~(\ref{step 1 formula}) is zero.

Next we discuss the last two summands on the right hand side of Equation (\ref{step 1 formula}).
 
{\bf Case 1.1'}  If $r-s+1\leq 0$, then
\begin{align*}
& \widetilde{C}_{ij}(r-s-1) + \widetilde{C}_{ij}(r-s+1) - \sum_{u \sim i} \widetilde{C}_{uj}(r-s)=0,\\
& \widetilde{C}_{ij}(-r+s+1) + \widetilde{C}_{ij}(-r+s-1) - \sum_{u \sim i}  \widetilde{C}_{uj}(-r+s)=0.
\end{align*}

{\bf Case 1.2'}  If $r-s+1=1$, then $r=s$ (implies $i\neq j$, otherwise $(i,s)=(j,t)$),
\begin{align*}
& \widetilde{C}_{ij}(r-s-1) + \widetilde{C}_{ij}(r-s+1) - \sum_{u \sim i} \widetilde{C}_{uj}(r-s)=\widetilde{C}_{ij}(1)=0, \\
& \widetilde{C}_{ij}(-r+s+1) + \widetilde{C}_{ij}(-r+s-1) - \sum_{u \sim i}  \widetilde{C}_{uj}(-r+s)=\widetilde{C}_{ij}(1)=0.
\end{align*}

{\bf Case 1.3'}  If $r-s+1\geq 2$, then 
\begin{align*}
& \widetilde{C}_{ij}(r-s-1) + \widetilde{C}_{ij}(r-s+1) - \sum_{u \sim i} \widetilde{C}_{uj}(r-s)=\widetilde{C}_{ij}(1)=0,\\
& \widetilde{C}_{ij}(-r+s+1) + \widetilde{C}_{ij}(-r+s-1) - \sum_{u \sim i}  \widetilde{C}_{uj}(-r+s)=\widetilde{C}_{ij}(1)=0.
\end{align*}
Therefore, the right hand side of Equation (\ref{step 1 formula}) is zero in case 1.

{\bf Case 2.} Suppose that $(i,r)=(j,s)$. Note that in this case 
\[
r-\xi(i)-1 \leq \xi(i)-2-\xi(i)-1=-3,
\] 
and we agree that $\widetilde{C}_{ij}(m) = 0$ for $m\leq 0$. We have 
\begin{align*}
& \widetilde{C}_{ij}(r-\xi(j)-3) + \widetilde{C}_{ij}(r-\xi(j)-1)- \sum_{u \sim i} \widetilde{C}_{uj}(r-\xi(j)-2)=0, \\
& \widetilde{C}_{ij}(-r+\xi(j)+3) + \widetilde{C}_{ij}(-r+\xi(j)+1)- \sum_{u \sim i} \widetilde{C}_{uj}(-r+\xi(j)+2)=0, \\
& \widetilde{C}_{ij}(r-s-1) + \widetilde{C}_{ij}(r-s+1) - \sum_{u \sim i} \widetilde{C}_{uj}(r-s)=\widetilde{C}_{ij}(1)=1, \quad (i=j) \\
& \widetilde{C}_{ij}(-r+s+1) + \widetilde{C}_{ij}(-r+s-1) - \sum_{u \sim i}  \widetilde{C}_{uj}(-r+s)=\widetilde{C}_{ij}(1)=1, \quad (i=j)
\end{align*}
where the last three equations use Proposition \ref{quantum Cartan matrix relations}.

In conclusion, 
\begin{gather*}
(B^{\leq \xi}_\ell)^T L^{\leq \xi}_\ell ((i,r), (j,s)) = \begin{cases}
2 & \text{if } (i,r)=(j,s), \\
0  & \text{otherwise}.
\end{cases}
\end{gather*}

It remains the case where $r=\xi(i)$. Suppose that $((i,\xi(i)),(j,s))\in \widehat{I}^{\leq \xi}_{\ell-1}\times \widehat{I}^{\leq \xi}_\ell$. If there is an arrow $(u,\xi(i)+1) \to (i,\xi(i))$, then there exists an arrow $(i,\xi(i)) \to (u,\xi(i)-1)$. In this case, 
\begin{gather*}
\begin{aligned}
L^{\leq \xi}_\ell ((u,\xi(i)+1), (j,s))  - & L^{\leq \xi}_\ell ((u,\xi(i)-1), (j,s))  =  \widetilde{C}_{uj}(\xi(j)-\xi(i)+2) \\
& \qquad   + \widetilde{C}_{uj}(\xi(i) - \xi(j)-2)  - \widetilde{C}_{uj}(s-\xi(i)) - \widetilde{C}_{uj}(\xi(i)-s).
\end{aligned}
\end{gather*}

By Equation (\ref{Lambda matrix entries}) and Proposition \ref{quantum Cartan matrix relations} again, we have the following equation: 

\begin{gather*}
\begin{aligned}
& (B^{\leq \xi}_\ell)^T L^{\leq \xi}_\ell ((i,\xi(i)), (j,s)) = - L^{\leq \xi}_\ell ((i,\xi(i)-2), (j,s))  \\
& +  \sum_{u \sim i}  \left( \widetilde{C}_{uj}(\xi(i)-\xi(j)-2) + \widetilde{C}_{uj}(-\xi(i)+\xi(j)+2)- \widetilde{C}_{uj}(\xi(i)-s) - \widetilde{C}_{uj}(-\xi(i)+s) \right)   \\
& = \left( -\widetilde{C}_{ij}(\xi(i)-\xi(j)-1) - \widetilde{C}_{ij}(-\xi(i)+\xi(j)+1) + \widetilde{C}_{ij}(\xi(i)-s+1) + \widetilde{C}_{ij}(-\xi(i)+s-1) \right)  \\
& + \left(-\widetilde{C}_{ij}(\xi(i)-\xi(j)-3) - \widetilde{C}_{ij}(-\xi(i)+\xi(j)+3) + \widetilde{C}_{ij}(\xi(i)-s-1) + \widetilde{C}_{ij}(-\xi(i)+s+1) \right)  \\
& + \sum_{u \sim i}  \left( \widetilde{C}_{uj}(\xi(i)-\xi(j)-2) + \widetilde{C}_{uj}(-\xi(i)+\xi(j)+2)- \widetilde{C}_{uj}(\xi(i)-s) - \widetilde{C}_{uj}(-\xi(i)+s) \right) \\
& = \left( -\widetilde{C}_{ij}(\xi(i)-\xi(j)-1) - \widetilde{C}_{ij}(\xi(i)-\xi(j)-3) + \sum_{u\sim i}  \widetilde{C}_{uj} (\xi(i)-\xi(j)-2)  \right) \\
& - \left( \widetilde{C}_{ij}(-\xi(i)+\xi(j)+1) + \widetilde{C}_{ij}(-\xi(i)+\xi(j)+3) - \sum_{u\sim i}  \widetilde{C}_{uj} (-\xi(i)+\xi(j)+2)  \right) \\
& + \left( \widetilde{C}_{ij}(\xi(i)-s-1) + \widetilde{C}_{ij}(\xi(i)-s+1) - \sum_{u\sim i}  \widetilde{C}_{uj} (\xi(i)-s) \right) \\
& + \left( \widetilde{C}_{ij}(-\xi(i)+s-1) + \widetilde{C}_{ij}(-\xi(i)+s+1) - \sum_{u\sim i}  \widetilde{C}_{uj} (-\xi(i)+s) \right) \\
& = \begin{cases}
2 & \text{if } (i,\xi(i))=(j,s), \\
0  & \text{otherwise},
\end{cases}
\end{aligned}
\end{gather*}
where the last equation is obtained by a similar case-by-case analysis as we did before.
\end{proof}

We are now ready to define the quantum cluster algebra. Let $\mathcal{A}^{\leq \xi}_{t,\ell}$ be the quantum cluster algebra with initial quantum seed $(\textbf{z}^{\leq \xi}_{t,\ell},L^{\leq \xi}_\ell,B^{\leq \xi}_\ell)$ and frozen vertices $\{ z^{\leq \xi}_{t,(i,p)} \mid (i,p) \in \widehat{I}^{\leq \xi}_\ell \setminus \widehat{I}^{\leq \xi}_{\ell-1}\}$, where
\[
\textbf{z}^{\leq \xi}_{t,\ell}=\{ z^{\leq \xi}_{t,(i,p)} \mid (i,p)\in \widehat{I}^{\leq \xi}_\ell \},
\] 
and $z^{\leq \xi}_{t,(i,p)}$, as well as its inverse, with $(i,p) \in \widehat{I}^{\leq \xi}_\ell$, are the generators of a quantum torus $\mathcal{T}^{\leq \xi}_{\ell}$, subject to 
\begin{align} \label{t-commutative relation in Tt}
z^{\leq \xi}_{t,(i,r)} *  z^{\leq \xi}_{t,(j,s)} = t^{L^{\leq \xi}_\ell((i,r),(j,s))}  z^{\leq \xi}_{t,(j,s)} *  z^{\leq \xi}_{t,(i,r)}.
\end{align}

The map
\begin{align*}
\eta: \mathcal{T}^{\leq \xi}_{\ell} & \longrightarrow \mathcal{Y}^{\leq \xi}_{t,\ell}, \\
z^{\leq \xi}_{t,(i,r)} & \longmapsto \prod_{k\geq 0, \atop  r+2k\leq \xi(i)} Y_{i,r+2k}, \text{ for } (i,r)\in \widehat{I}^{\leq \xi}_\ell,
\end{align*}
gives an isomorphism of quantum tori, because these generators have the same quasi-commutation relations (Equation (\ref{t-commutative relation in Yt}) and Equation (\ref{t-commutative relation in Tt})), and the map $\eta$ is invertible, with inverse:
\begin{align*}
\eta^{-1}: \mathcal{Y}^{\leq \xi}_{t,\ell}  & \longrightarrow \mathcal{T}^{\leq \xi}_{\ell}, \\
Y_{i,r} & \longmapsto \begin{cases}
z^{\leq \xi}_{t,(i,r)} (z^{\leq \xi}_{t,(i,r+2)})^{-1}  & \text{if $(i,r+2)\in \widehat{I}^{\leq \xi}_\ell$}, \\
z^{\leq \xi}_{t,(i,r)} & \text{otherwise}.
\end{cases} 
\end{align*}

\section{Quantum cluster algebra structures on quantum Grothendieck rings of subcategories}\label{quantum cluster algebras on subcategories}

In this section, we shall prove that there is a quantum cluster algebra structure on the quantum Grothendieck ring of $\mathscr{C}^{\leq \xi}_\ell$.

\subsection{Quantum Grothendieck ring $K_t(\mathscr{C}_{\mathbb{Z}})$}

We recall the definition of the $t$-deformation $K_t(\mathscr{C}_{\mathbb{Z}})$ (also called the quantum Grothendieck ring) of $K_0(\mathscr{C}_{\mathbb{Z}})$ \cite{HL15,Bit21}. We define a toric frame $M: \mathbb{Z}^{\widehat{I}}  \to \mathcal{Y}_{t}$ by 
\begin{align}\label{Our toric frame}
M\left(\sum_{(i,r)\in \widehat{I}} a_{i,r}e_{i,r}\right)=t^{\frac{1}{2}\sum_{(i,r)<(j,s)} a_{i,r} a_{j,s} \mathcal{N}_{ji}(s-r)} \overset{\to}{*}_{(i,r)\in \widehat{I}} Y^{a_{i,r}}_{i,r},
\end{align}
where $e_{i,r}$ is the standard basis vector of $\mathbb{Z}^{\widehat{I}}$, with 1 at $(i,r)$-position and 0 elsewhere, the product $ \overset{\to}{*}$ is ordered by a choice of an order on $\widehat{I}$ (notice that the result does not depend on the choice of the order).

Following Bittmann's notation \cite[Section 4.2]{Bit21}, we use the commutative monomial $\prod_{(i,r)\in \widehat{I}} Y^{a_{i,r}}_{i,r}$ to denote the expression in the left hand side of Equation (\ref{Our toric frame}). For each $(i,r)\in \widehat{I}$, let 
\[
A_{i,r-1}=Y_{i,r-2}Y_{i,r} \left(\prod_{j\sim i} Y_{j,r-1}\right)^{-1} \in \mathcal{Y}_{t}.
\]

For all $i\in I$, one can define $K_{i,t}$ as the subring of $\mathcal{Y}_{t}$ generated by ($(i,r), (j,s)\in \widehat{I}$)
\begin{align*}
Y_{i,r}(1+A^{-1}_{i,r+1}), \quad  Y_{j,s} \,\,(j\neq i). 
\end{align*}

Following \cite{Her03,Bit21}, for all $i\in I$, one defines a free $\mathcal{Y}_{t}$-modules 
\[
\mathcal{Y}^{l}_{t,i}:=\bigoplus_{(i,r)\in \widehat{I}} \mathcal{Y}_{t} \cdot S_{i,r},
\]
$\mathcal{Y}^{l}_{t,i}$ is a direct sum of $\widehat{I}$ copies of $\mathcal{Y}_{t}$, whose basis elements are denoted by $S_{i,r}$. Let $\mathcal{Y}_{t,i}$ be the quotient of $\mathcal{Y}^{l}_{t,i}$ by the left $\mathcal{Y}_{t}$-module generated by 
\[
Q_{i,r}:=A^{-1}_{i,r+1} S_{i,r+2} - t^2 S_{i,r}, \quad \text{for any } (i,r)\in \widehat{I}.
\]
It was shown in \cite[Lemma 4.3.1]{Bit21} that the module $\mathcal{Y}_{t,i}$ is free. Following \cite{Her03}, for all $i\in I$, there exists a $\mathbb{Z}[t^{\pm\frac{1}{2}}]$-linear map $S_{i,t}:\mathcal{Y}_{t} \to \mathcal{Y}_{t,i}$, which is a derivation and such that $K_{i,t}=\text{Ker}(S_{i,t})$. Let 
\[
K_t(\mathscr{C}_{\mathbb{Z}}):=\bigcap_{i\in I} K_{i,t}.
\]

Following \cite{Her04}, for a dominant monomial $m$, there exists a unique element $F_t(m)\in K_t(\mathscr{C}_{\mathbb{Z}})$ such that $m$ has multiplicity 1 and no other dominant monomial in $F_t(m)$. The quantum Grothendieck ring $K_t(\mathscr{C}_{\mathbb{Z}})$ has three distinguished bases: the classes of all simple modules, the classes of all standard modules, and all $F_t(m)$. Moreover, $K_t(\mathscr{C}_{\mathbb{Z}})$ is algebraically generated by the classes of all fundamental modules.

\subsection{The $(q,t)$-characters}

Given a dominant monomial $m\in \mathcal{P}_+$, write it as a commutative monomial in $\mathcal{Y}_t$: 
\[
m=\prod_{(i,r)\in \widehat{I}} Y^{a_{i,r}(m)}_{i,r}.
\]
Let $M(m)$ be the standard module associated to $m$, which is defined as a tensor product of fundamental modules $\bigotimes_{(i,r)\in \widehat{I}} (L(Y_{i,r}))^{\otimes a_{i,r}(m)}$. Following \cite{Bit21}, the $(q,t)$-character of $M(m)$, denoted by $[M(m)]_t$, is defined as follows:
\begin{align*}
[M(m)]_t = t^{\alpha(m)} \overset{\leftarrow}{*}_{r\in \mathbb{Z}} F_t \left(\prod_{i\in I} Y^{a_{i,r}(m)}_{i,r}\right) \in K_t(\mathscr{C}_{\mathbb{Z}}),
\end{align*}
where $\alpha(m)\in \frac{1}{2}\mathbb{Z}$ is such that $m$ occurs with multiplicity one in the expansion of $[M(m)]_t$ on the basis of the commutative monomials of $\mathcal{Y}_t$, and the product $ \overset{\leftarrow}{*}$ is ordered by decreasing $r\in \mathbb{Z}$. In particular, 
\[
[L(Y_{i,r})]_t=[M(Y_{i,r})]_t=F_t(Y_{i,r}),
\] 
and $[M(m)]_t$ reduces to $\chi_q(M(m))$ if $t=1$. The evaluation at $t=1$ of $[M(m)]_t$ recovers its $q$-character. 

As in \cite{Nak04}, there is a $\mathbb{Z}$-algebra anti-automorphism $\overline{\cdot}$ of $\mathcal{Y}_t$ defined by: for any $(i,r)\in \widehat{I}$, 
\[
\overline{t^{1/2}}=t^{-1/2}, \quad  \overline{Y_{i,r}}=Y_{i,r}.
\]
The anti-automorphism is called the bar-involution.

Nakajima proved the following theorem.
\begin{theorem}[{\cite{Nak04}}]
There exists a unique family $\{[L(m)]_t\}_{m\in \mathcal{P}_+}$ of elements of $K_t(\mathscr{C}_{\mathbb{Z}})$ such that for all $m\in \mathcal{P}_+$,
\begin{itemize}
\item[(1)] $\overline{[L(m)]_t}=[L(m)]_t$, 
\item[(2)] $[L(m)]_t = [M(m)]_t + \sum_{m'<m} a(m')[M(m')]_t$, where $a(m')\in t^{-1}\mathbb{Z}[t^{-1}]$, the order $m'<m$ is defined in (\ref{Nakajima partial order}).
\end{itemize} 
The evaluation at $t=1$ of the $(q,t)$-characters recovers the $q$-characters.
\end{theorem}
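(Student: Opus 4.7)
The plan is to follow the Kazhdan--Lusztig-type argument adapted to the quantum Grothendieck ring, which is the strategy originally employed by Nakajima. The key structural input is that the classes $\{[M(m)]_t\}_{m \in P_+}$ form a $\mathbb{Z}[t^{\pm 1/2}]$-basis of $K_t(\mathscr{C}_{\mathbb{Z}})$, and that for each fixed $m$ the set $\{m' \in P_+ : m' \leq m\}$ in the Nakajima partial order (\ref{Nakajima partial order}) is finite. These two facts together make an inductive construction possible. The crucial preliminary lemma that I would establish first is the \emph{bar-triangularity of standards}: $\overline{[M(m')]_t} = [M(m')]_t + \sum_{m'' < m'} r(m',m'') [M(m'')]_t$ with $r(m',m'') \in \mathbb{Z}[t^{\pm 1/2}]$. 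This will be extracted from the defining formula $[M(m)]_t = t^{\alpha(m)} \overleftarrow{*}_{r \in \mathbb{Z}} F_t(\prod_i Y_{i,r}^{a_{i,r}(m)})$ by unwinding the normalization $t^{\alpha(m)}$ and tracking how the anti-automorphism $\overline{\cdot}$ interacts with the ordered product $\overleftarrow{*}$ and with the Hernandez columns $F_t$.

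For uniqueness, suppose $X, X'$ both satisfy (1) and (2). Their difference $D = X - X'$ lies in $\sum_{m' < m} t^{-1}\mathbb{Z}[t^{-1}] \cdot [M(m')]_t$ and is bar-invariant. Taking a maximal $m'$ for which $D$ has nonzero coefficient and applying the bar-triangularity lemma, the coefficient of $[M(m')]_t$ in $\overline{D} = D$ lies in $t^{-1}\mathbb{Z}[t^{-1}] \cap \overline{t^{-1}\mathbb{Z}[t^{-1}]} = 0$, a contradiction. Iterating along the partial order yields $D = 0$.

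For existence, I would construct $[L(m)]_t$ by descending induction on the partial order among monomials below $m$. Set $X_0 = [M(m)]_t$ and compute $\overline{X_0} - X_0 = \sum_{m' < m} s(m') [M(m')]_t$ using the bar-triangularity lemma. Assuming $[L(m')]_t$ has already been constructed for every $m' < m$ with the required bar-invariance, apply the standard Kazhdan--Lusztig lemma: subtract an appropriate $\mathbb{Z}[t^{\pm 1/2}]$-combination of the previously constructed $[L(m')]_t$ from $X_0$, chosen so as to cancel the non-bar-invariant part of the expansion while keeping all new off-diagonal coefficients in $t^{-1}\mathbb{Z}[t^{-1}]$. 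Finiteness of the interval below $m$ guarantees the procedure terminates, and the uniqueness established above shows the output is canonical.

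Finally, specialization at $t = 1$ sends $K_t(\mathscr{C}_{\mathbb{Z}})$ to $K_0(\mathscr{C}_{\mathbb{Z}})$ and $[M(m)]_t$ to $\chi_q(M(m))$, so $[L(m)]_t|_{t=1}$ satisfies the classical triangular decomposition of $\chi_q(L(m))$ on the basis of standard $q$-characters established by Nakajima via the geometry of graded quiver varieties; uniqueness of this decomposition identifies the specialization with $\chi_q(L(m))$. The main obstacle in the whole argument is the bar-triangularity lemma for standard modules: once it is in hand, both the existence/uniqueness dichotomy and the specialization at $t = 1$ follow by routine Kazhdan--Lusztig bookkeeping.
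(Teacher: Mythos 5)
The paper does not prove this theorem; it states it and cites \cite{Nak04} for the proof. There is therefore no internal argument to compare against, and the relevant question is whether your sketch is a faithful account of Nakajima's argument.

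Your treatment of existence and uniqueness is essentially right and matches the structure of the standard Kazhdan--Lusztig machinery Nakajima uses: bar-triangularity of the $[M(m)]_t$ on the basis of standards, finiteness of the poset interval below $m$ in the Nakajima order, and a recursive correction of $[M(m)]_t$ by previously constructed $[L(m')]_t$. One technical point you pass over is that, although the coefficient ring is $\mathbb{Z}[t^{\pm 1/2}]$, the off-diagonal coefficients $r(m',m'')$ in the bar-triangularity lemma must land in $\mathbb{Z}[t^{\pm 1}]$ (integer powers only) for the normalization condition $a(m')\in t^{-1}\mathbb{Z}[t^{-1}]$ to cut out a unique element; this is what the normalization $t^{\alpha(m)}$ is for, and the verification that $\alpha(m)-\alpha(m')$ always produces integer shifts is part of the lemma, not a free byproduct of the anti-automorphism reversing the ordered product.

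The final assertion is where there is a genuine gap. You write that the $t=1$ specialization satisfies ``the classical triangular decomposition of $\chi_q(L(m))$'' and that ``uniqueness of this decomposition'' identifies the specialization with $\chi_q(L(m))$. This is circular. At $t=1$ the bar-involution becomes trivial, so there is no characterization that picks out $\chi_q(L(m))$ among all expressions of the form $\chi_q(M(m)) + \sum_{m'<m} b(m')\chi_q(M(m'))$; every element of $K_0(\mathscr{C}_\mathbb{Z})$ with leading standard $[M(m)]$ has such a decomposition with unique coefficients, and a priori $[L(m)]_t|_{t=1}$ is simply one such element with unknown $b(m')$. The identification $[L(m)]_t|_{t=1} = [L(m)]$ is the hard content of Nakajima's theorem: it is proved geometrically, via the realization of $[M(m)]_t$ and $[L(m)]_t$ in equivariant $K$-theory of graded quiver varieties, perverse sheaves, and the decomposition theorem, which in particular deliver the positivity $a(m')\in t^{-1}\mathbb{Z}_{\geq 0}[t^{-1}]$ and interpret $a(m')(1)$ as a multiplicity. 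Calling this ``routine Kazhdan--Lusztig bookkeeping'' misrepresents where the real difficulty lies; it is precisely the analogue of the Kazhdan--Lusztig conjecture in this setting, and an algebraic construction of the canonical basis does not by itself imply it.
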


\subsection{Truncated quantum $T$-system}

For any $m\in \mathcal{P}^{\leq \xi}_{+,\ell}$, we denote by $[L(m)]^{\leq \xi}_t$ the Laurent polynomial obtained from $[L(m)]_t$ by deleting monomials involving variables $Y_{i,r}$, with $(i,r)\not\in I^{\leq\xi}_\ell$.

The quantum Grothendieck ring $K_t(\mathscr{C}^{\leq \xi}_\ell)$ of $\mathscr{C}^{\leq \xi}_\ell$ is defined as a subalgebra of $K_t(\mathscr{C}_{\mathbb{Z}})$, which is algebraically generated by the truncated $(q,t)$-characters $[L(Y_{i,r})]^{\leq \xi}_t$ of the fundamental modules $L(Y_{i,r})$, with $(i,r)\in I^{\leq\xi}_\ell$.

For any $(i,r)\in \widehat{I}$, and $k \in \mathbb{Z}_{\geq 1}$, let 
\[
W^{(i)}_{k,r}=L\left(\prod_{\ell=0}^{k-1}Y_{i,r+2\ell}\right)
\] 
be a Kirillov-Reshetikhin module.

We recall the quantum $T$-system \cite{Nak03,HL15}, which is a $t$-deformation of the $T$-system \cite{KNS94,Nak03,Her06}, as follows.

\begin{proposition}[{\cite{Bit21,Nak03,HL15}}]  \label{quantum T-system}
For any $(i,r)\in \widehat{I}$, and $k \in \mathbb{Z}_{\geq 1}$, the following quantum $T$-system equation holds in $K_t(\mathscr{C}_{\mathbb{Z}})$:
\begin{gather*}
[W^{(i)}_{k,r}]_t * [W^{(i)}_{k,r+2}]_t = t^{\alpha(i,k)} [W^{(i)}_{k-1,r+2}]_t * [W^{(i)}_{k+1,r}]_t + t^{\gamma(i,k)} \underset{i\sim j}{*} [W^{(j)}_{k,r+1}]_t,
\end{gather*}
where $\alpha(i,k)=-1+\frac{1}{2}(\widetilde{C}_{ii}(2k-1)+\widetilde{C}_{ii}(2k+1))$, $\gamma(i,k)=\alpha(i,k)+1$.
\end{proposition}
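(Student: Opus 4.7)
The strategy is to lift the classical $T$-system (established by Nakajima and by Hernandez) to the quantum Grothendieck ring $K_t(\mathscr{C}_{\mathbb{Z}})$ by exploiting that every Kirillov--Reshetikhin module $W^{(i)}_{k,r}$ is minuscule, so that its $(q,t)$-character is the unique bar-invariant element of $K_t(\mathscr{C}_{\mathbb{Z}})$ whose leading dominant monomial is $\prod_{\ell=0}^{k-1}Y_{i,r+2\ell}$. Specializing at $t=1$, the identity must become the classical $T$-system, which pins down the algebraic shape of the relation; what remains is to determine the two scalar factors $t^{\alpha(i,k)}$ and $t^{\gamma(i,k)}$.

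First I would expand both sides in the commutative-monomial basis of $\mathcal{Y}_t$ via the toric frame \eqref{Our toric frame}. For the left-hand side, apply the quasi-commutation formula \eqref{t-multiplication} to move the $Y$-factors of $[W^{(i)}_{k,r+2}]_t$ past those of $[W^{(i)}_{k,r}]_t$; this produces a global power of $t$ determined by a double sum
\begin{equation*}
\sum_{0\le a,b\le k-1} \mathcal{N}_{ii}(2b-2a-2),
\end{equation*}
which telescopes through Proposition \ref{quantum Cartan matrix relations} and the definition of $\mathcal{N}_{ii}$ into a combination of the values $\widetilde{C}_{ii}(\pm(2k-1))$ and $\widetilde{C}_{ii}(\pm(2k+1))$. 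Symmetry of $\widetilde{C}(z)$ collapses these to $\widetilde{C}_{ii}(2k-1)+\widetilde{C}_{ii}(2k+1)$, yielding the numerical shape of $\alpha(i,k)$.

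Second, I would perform the analogous computations for each of the two summands on the right: the tensor product $[W^{(i)}_{k-1,r+2}]_t*[W^{(i)}_{k+1,r}]_t$ and the product $\underset{i\sim j}{*}[W^{(j)}_{k,r+1}]_t$. By construction, the leading dominant $Y$-monomial of each matches the corresponding term of the classical $T$-system, so after dividing out the overall $t$-power extracted above, the two sides of the proposed equation share the same dominant monomials with matching multiplicities. Theorem \ref{dominant monomials determine q-characters} then reduces the comparison of the quantum expressions to an identity at the level of bar-invariant elements; invoking Nakajima's uniqueness of bar-invariant lifts (with leading term prescribed) fixes the equality up to a uniquely determined scalar, and the scalars are read off from step one, producing $\gamma(i,k)=\alpha(i,k)+1$.

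The main obstacle will be the bookkeeping of the $t$-exponents: carrying out the quasi-commutation computation requires a careful double-sum identity for $\mathcal{N}_{ii}$ combined with repeated use of Proposition \ref{quantum Cartan matrix relations} and the periodicity $\widetilde{C}_{ij}(m)=\widetilde{C}_{ij}(m+2h)$. A secondary subtlety is ensuring bar-invariance is preserved throughout: since the $F_t$-expansion in \eqref{Our toric frame} depends on a chosen order on $\widehat{I}$, one must verify that the orderings used in the two factors on each side contribute cumulative $t$-shifts consistent with the classical limit. Once bar-invariance and the $t=1$ specialization have both been matched, the proof is completed by invoking Nakajima's theorem to conclude equality in $K_t(\mathscr{C}_{\mathbb{Z}})$.
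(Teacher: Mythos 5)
The paper does not prove this proposition; it recalls it as a known result from \cite{Bit21}, \cite{Nak03}, and \cite{HL15}, so there is no in-paper argument to compare against. Judged on its own merits, your sketch has the broad outline of the argument in those references, but the concluding step contains a genuine gap. You invoke ``Nakajima's uniqueness of bar-invariant lifts'' to pin the identity down, but the left-hand side $[W^{(i)}_{k,r}]_t * [W^{(i)}_{k,r+2}]_t$ is \emph{not} bar-invariant: the bar involution is an anti-automorphism of $\mathcal{Y}_t$, so a $*$-product of bar-invariant elements fails to be bar-invariant unless the factors commute, and the whole point of the coefficients $t^{\alpha(i,k)}$ and $t^{\gamma(i,k)}$ is that they do not. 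What is actually needed is the representation-theoretic input that the tensor product $W^{(i)}_{k,r}\otimes W^{(i)}_{k,r+2}$ has exactly the two composition factors appearing on the right-hand side; since a Kirillov--Reshetikhin module is special one has $[W]_t=F_t(\textbf{hw}(W))$, and this composition-series fact shows that the expansion of the left-hand side in the $F_t$-basis of $K_t(\mathscr{C}_{\mathbb{Z}})$ is supported on exactly the two expected dominant monomials. Only then is the identity reduced to computing two scalars, which is what your quasi-commutation computation is meant to do. Citing Theorem~\ref{dominant monomials determine q-characters} is not enough on its own: that result is a $t=1$ statement, and its quantum analogue requires knowing in advance which dominant monomials can occur at all.

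A secondary issue is the computation of the $t$-exponents. The toric frame \eqref{Our toric frame} contributes a factor of $\tfrac12$, the argument of $\mathcal{N}_{ii}$ should be the actual difference of spectral parameters of the monomials being reordered, and the sign depends on the chosen order on $\widehat{I}$. The value $\tfrac12(\widetilde{C}_{ii}(2k-1)+\widetilde{C}_{ii}(2k+1))$ emerges from this bookkeeping together with Proposition~\ref{quantum Cartan matrix relations}, but your double sum is asserted rather than verified; as written it is not self-evidently equal to the claimed constant. This is a computational rather than conceptual obstacle, but it is precisely the step that pins down the specific values of $\alpha(i,k)$ and $\gamma(i,k)$.
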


Since the truncated $(q,t)$-character map is an injective ring homomorphism, we obtain the following equation in $K_t(\mathscr{C}^{\leq \xi}_\ell)$. For any $(i,r)\in \widehat{I}^{\leq \xi}_\ell$, and $k \in \mathbb{Z}_{\geq 1}$,

\begin{gather}\label{truncated quantum T-system}
[W^{(i)}_{k,r}]^{\leq \xi}_t * [W^{(i)}_{k,r+2}]^{\leq \xi}_t = t^{\alpha(i,k)} [W^{(i)}_{k-1,r+2}]^{\leq \xi}_t * [W^{(i)}_{k+1,r}]^{\leq \xi}_t + t^{\gamma(i,k)} \underset{i\sim j}{*} [W^{(j)}_{k,r+1}]^{\leq \xi}_t,
\end{gather}
where $\alpha(i,k)$ and $\gamma(i,k)$ are defined as the ones in Proposition \ref{quantum T-system}.

\subsection{Quantum cluster algebra structure on $K_t(\mathscr{C}^{\leq \xi}_\ell)$}

We recall Hernandez and Leclerc's sequence of vertices \cite[Section 2.2.3]{HL16} in the quiver $\Gamma^{\leq \xi}$, where $\xi$ is a sink-source height function. Assume that $(i_1,i_2,\ldots,i_n)$ is an order on the columns of $\Gamma^{\leq \xi}$ such that $\xi(i_k)\geq \xi(i_l)$ if $k\leq l$. Hernandez and Leclerc's sequence $\mathscr{S}_{HL}$ is defined by reading each column, from top to bottom, in this order. At each step of this sequence, if applied to the cluster algebra (respectively, quantum cluster algebra) with initial quiver $\Gamma^{\leq \xi}$, the exchange relation (respectively, quantum exchange relation) is a $T$-system equation \cite[Section 3.2.3]{HL16} (respectively, quantum $T$-system equation \cite[Section 6.3]{Bit21}).

After identifying $z^{\leq \xi}_{t,(i,r)}$ with 
\begin{align}\label{identify z and Y}
\prod_{k\geq 0, \atop  r+2k\leq \xi(i)} Y_{i,r+2k},
\end{align} 
we obtain the following theorem.

\begin{theorem}\label{quantum Grothendieck ring admits a quantum cluster algebra structure} 
For  an arbitrary height function $\xi$, the truncated $(q,t)$-character map $[?]^{\leq \xi}_t: K_t(\mathscr{C}^{\leq \xi}_\ell) \to \mathcal{A}^{\leq \xi}_{t,\ell}$ induces a ring isomorphism from $K_t(\mathscr{C}^{\leq \xi}_\ell)$ to $\mathcal{A}^{\leq \xi}_{t,\ell}$, sending $[L(\prod_{k\geq 0, \atop  r+2k\leq \xi(i)} Y_{i,r+2k})]^{\leq \xi}_t$ to $z^{\leq \xi}_{t,(i,r)}$. In particular, $ K_t(\mathscr{C}^{\leq \xi}_\ell) $ is a quantum cluster algebra.
\end{theorem}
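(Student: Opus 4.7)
The plan is to combine Bittmann's theorem for the sink-source height function with the mutation equivalence of Lemma~\ref{mutation equivalent of Gamma} and the compatible pair condition from Proposition~\ref{Compatible pair}. Under the identification (\ref{identify z and Y}), the isomorphism $\eta$ carries the initial monomials $\prod_{k\geq 0,\, r+2k\leq \xi(i)} Y_{i,r+2k}\in \mathcal{Y}^{\leq \xi}_{t,\ell}$ to the generators $z_{i,r}$ of the quantum torus $\mathcal{T}^{\leq \xi}_\ell$, and this preserves quasi-commutation by the very derivation of Equation~(\ref{t-commutative relation in Yt}). Up to a power of $t^{1/2}$, the truncated $(q,t)$-character $[W^{(i)}_{k^\xi_{i,r},r}]^{\leq \xi}_t$ of the relevant Kirillov-Reshetikhin module is the commutative monomial $\prod Y_{i,r+2k}$ plus lower-order terms, so sending $z_{i,r}$ to $[L(\prod_{k\geq 0,\, r+2k\leq \xi(i)} Y_{i,r+2k})]^{\leq \xi}_t$ gives a well-defined embedding of the initial quantum seed into $K_t(\mathscr{C}^{\leq \xi}_\ell)$.

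For the sink-source height function $\xi_0$, the isomorphism $K_t(\mathscr{C}^{\leq \xi_0}_\ell)\cong \mathcal{A}^{\leq \xi_0}_{t,\ell}$ is already established in \cite{Bit21}, where the Hernandez-Leclerc sequence $\mathscr{S}_{HL}$ realizes the truncated quantum $T$-system (\ref{truncated quantum T-system}) via quantum cluster mutations. To extend this to arbitrary $\xi$, I would invoke Lemma~\ref{mutation equivalent of Gamma} to obtain a sequence $\mathscr{S}$ of source/sink mutations taking $\Gamma^-$ to $\Gamma^{\leq \xi}$ (at the level of principal quivers). Applying $\mathscr{S}$ to both sides of Bittmann's isomorphism: on the quantum cluster side, it produces the quantum seed $(\mathbf{z}^{\leq \xi}_{t,\ell}, L^{\leq \xi}_\ell, B^{\leq \xi}_\ell)$, where the matrix $B^{\leq \xi}_\ell$ matches $\Gamma^{\leq \xi}$ by Lemma~\ref{mutation equivalent of Gamma} and the compatibility with $L^{\leq \xi}_\ell$ is ensured by Proposition~\ref{Compatible pair}; on the representation-theoretic side, each source/sink mutation implements an instance of (\ref{truncated quantum T-system}) on Kirillov-Reshetikhin modules, producing a new quantum cluster whose variables remain truncated $(q,t)$-characters of KR modules. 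Since KR modules of the form $W^{(i)}_{k,r}$ for $(i,r)\in \widehat{I}^{\leq\xi}_\ell$ algebraically generate $K_t(\mathscr{C}^{\leq \xi}_\ell)$, the resulting homomorphism is surjective; injectivity follows from the $\mathbb{Z}[t^{\pm 1/2}]$-linear independence of commutative Laurent monomials in $\mathcal{Y}^{\leq \xi}_{t,\ell}$ combined with the fact that the leading monomial of $[L(m)]^{\leq \xi}_t$ is $m$.

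The main obstacle is to verify that each mutation along $\mathscr{S}$ genuinely coincides with a truncated quantum $T$-system equation rather than merely a $t=1$ specialization. Concretely, one must check that the $t$-exponents $\alpha(i,k), \gamma(i,k)$ appearing in Proposition~\ref{quantum T-system} agree with the $t$-powers produced by the quantum cluster exchange formula associated to the pair $(L^{\leq \xi}_\ell, B^{\leq \xi}_\ell)$ at every step of $\mathscr{S}$. This is the content of the long computation in the proof of Proposition~\ref{Compatible pair}: the case-by-case analysis of $\widetilde{C}_{ij}(m)$ and the identities of Proposition~\ref{quantum Cartan matrix relations} are precisely what forces the mutation-compatibility to propagate along $\mathscr{S}$ from $\xi_0$ to a general $\xi$. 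Once this is granted, the desired isomorphism follows, and the classes $[L(\prod_{k} Y_{i,r+2k})]^{\leq \xi}_t$ are identified with the initial quantum cluster variables $z_{i,r}$ as claimed.
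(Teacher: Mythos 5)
Your argument for the inclusion $K_t(\mathscr{C}^{\leq\xi}_\ell)\subset\mathcal{A}^{\leq\xi}_{t,\ell}$ is essentially the paper's: identify the ambient quantum tori via $\eta$, observe that the initial cluster variables $z_{i,r}$ correspond to Kirillov--Reshetikhin characters, and then run the source sequence of Lemma~\ref{mutation equivalent of Gamma} followed by $\mathscr{S}_{HL}$, matching each quantum mutation against the truncated quantum $T$-system (\ref{truncated quantum T-system}) so that every fundamental character $[L(Y_{i,p})]^{\leq\xi}_t$ is reached as a quantum cluster variable. Proposition~\ref{Compatible pair} plays exactly the role you assign it: it guarantees the pair $(L^{\leq\xi}_\ell,B^{\leq\xi}_\ell)$ is compatible so that all this happens in a genuine quantum cluster algebra. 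This part of your proposal is correct and matches the paper.

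The gap is in the reverse inclusion. You recast it as ``injectivity,'' arguing from $\mathbb{Z}[t^{\pm1/2}]$-linear independence of commutative Laurent monomials and the leading-term property of $[L(m)]^{\leq\xi}_t$. But once you view the map as the restriction of the quantum-torus isomorphism $\eta$ to the subalgebra $\mathcal{A}^{\leq\xi}_{t,\ell}$, injectivity is automatic and content-free. What actually has to be shown — and what your argument never addresses — is that $\eta(\mathcal{A}^{\leq\xi}_{t,\ell})\subset K_t(\mathscr{C}^{\leq\xi}_\ell)$, i.e., that \emph{every} quantum cluster variable (not merely those lying on the distinguished sink-source and $\mathscr{S}_{HL}$ mutation sequences) lies in the intersection of kernels of the $t$-deformed screening operators $S_{i,t}$ defining $K_t$. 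The mutation/$T$-system bookkeeping you describe only certifies a finite subset of the cluster variables; an arbitrary cluster variable obtained by an arbitrary mutation path is a priori just an element of the quantum torus, and one must argue, by induction on exchange relations, that it remains in $\bigcap_i\ker S_{i,t}$. This is the content of Bittmann's Proposition~6.4.2, cited by the paper for exactly this purpose, and it is the missing ingredient in your proposal.
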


\begin{proof}
It follows from \cite[Theorem 5.2.4]{Bit21} that it is true for a sink-source height function $\xi$. Even though in our setting $\xi$ is an arbitrary height function, the   proof is essentially the same as Bittmann's proof \cite{Bit21}. We sketch it as follows. 

The inclusion $K_t(\mathscr{C}^{\leq \xi}_\ell)\subset \mathcal{A}^{\leq \xi}_{t,\ell}$ follows from the fact that for each vertex $(i,p)\in \widehat{I}^{\leq \xi}_\ell$, the truncated $(q,t)$-character $[L(Y_{i,p})]^{\leq \xi}_{t}$ is a quantum cluster variable in $\mathcal{A}^{\leq \xi}_{t,\ell}$ via first using the source sequence defined in Lemme \ref{mutation equivalent of Gamma}, and then using the sequence $\mathscr{S}_{HL}\cap \widehat{I}^{\leq \xi}_\ell$, as well as the truncated quantum $T$-system equation (\ref{truncated quantum T-system}). The effect of the mutation sequence on two fixed columns of $\Gamma^{\leq \xi}_\ell$ is the same as the effect of an iteration of the mutation sequence on the corresponding quiver of rank 2. Bittmann \cite[Proposition 6.3.1]{Bit21} proved that the truncated $(q,t)$-characters $[L(Y_{i,r})]^{\leq \xi}_t$ of the fundamental module $L(Y_{i,r})$, for each $(i,r)\in I^{\leq\xi}_\ell$, is in $\mathcal{A}^{\leq \xi}_{t,\ell}$.

The reverse inclusion uses the interpretation of the $(q,t)$-character homomorphism as an intersection of kernels of $t$-deformed screening operators, and the induction on quantum cluster exchange relations, which was done by Bittmann \cite[Proposition 6.4.2]{Bit21}.
\end{proof}

\begin{remark}
If $\ell=\infty$, then Theorem \ref{quantum Grothendieck ring admits a quantum cluster algebra structure} has been proved recently by Fujita, Hernandez, Oh and Oya \cite[Theorem 5.16]{FHOO23} for general type, not only ADE type. 
\end{remark}

In order to be consistent with the notation appearing in the below of Proposition \ref{dominant monomials determine q-characters}, we denote by $K_0(\mathscr{C}^{\leq \xi}_\ell)$ and $\mathcal{A}^{\leq \xi}_{\ell}$ the specializations of $K_t(\mathscr{C}^{\leq \xi}_\ell)$ and $\mathcal{A}^{\leq \xi}_{t,\ell}$ at $t=1$, respectively.

In the following, we give an example of the quantum Grothendieck ring of a subcategory in type $A_2$.

\begin{example}
Let $\xi(1,2)=(-1,0)\in \mathbb{Z}^2$ in type $A_2$. The quantum Grothendieck ring $K_t(\mathscr{C}^{\leq \xi}_2)$ is a quantum cluster algebra with initial data shown in Figure \ref{The initial quantum cluster}. Its skew-symmetric matrix and compatible matrix are computed as follows.

\begin{align*}
& B^{\leq \xi}_2 = \begin{pmatrix}
0  & 1  & 1 & -1    \\ 
-1 & 0 &  0  & 1     \\
-1  & 0 &  0  & 1    \\
1  & -1 &  -1  & 0     \\
0  & 1 &  0  & -1   \\
0  & -1 &  0  & 0 
\end{pmatrix}, \quad 
L^{\leq \xi}_2 = \begin{pmatrix}
0 &  1 & 1  & 0  & 2 & 2  \\
-1  & 0 &  -1 & -1  & 0 & 2   \\
-1  & 1  & 0 & 1   & 2 & 0   \\ 
0 & 1 &  -1  & 0  & 2 & 2   \\
-2  & 0 &  -2  & -2  & 0 & 0   \\
-2  & -2 &  0  & -2 & 0 & 0 
\end{pmatrix}.
\end{align*}
Here the rows of $\widetilde{B}$, as well as the rows and columns of $\widetilde{L}$, are indexed by 
\begin{gather*}
\{ [L(Y_{1,-1})]_t, [L(Y_{1,-3}Y_{1,-1})]_t, [L(Y_{2,0})]_t, [L(Y_{2,-2}Y_{2,0})]_t, [L(Y_{2,-4}Y_{2,-2}Y_{2,0})]_t,  [L(Y_{1,-5}Y_{1,-3}Y_{1,-1})]_t\},
\end{gather*}
and the columns of $\widetilde{B}$ are indexed by 
\[
\{ [L(Y_{1,-1})]_t, [L(Y_{1,-3}Y_{1,-1})]_t, [L(Y_{2,0})]_t, [L(Y_{2,-2}Y_{2,0})]_t \}.
\]

\begin{figure}[H]
\begin{minipage}{0.5\textwidth}
\begin{xy}
(30,50)*+{[L(Y_{2,0})]_t}="a";
(0,40)*+{[L(Y_{1,-1})]_t}="b"; 
(30,30)*+{[L(Y_{2,-2}Y_{2,0})]_t}="c";
(0,20)*+{[L(Y_{1,-3}Y_{1,-1})]_t}="d"; 
(30,10)*+{\fbox{$[L(Y_{2,-4}Y_{2,-2}Y_{2,0})]_t$}}="e";
(0,0)*+{\fbox{$[L(Y_{1,-5}Y_{1,-3}Y_{1,-1})]_t$}}="f"; 
{\ar "a";"b"};
{\ar "b";"c"};
{\ar "c";"d"};
{\ar "d";"e"};
{\ar "f";"d"};
{\ar "d";"b"};
{\ar "e";"c"};
{\ar "c";"a"};
\end{xy}
\end{minipage}
\caption{An initial quantum extended cluster of $K_t(\mathscr{C}^{\leq \xi}_2)$ in type $A_2$.} \label{The initial quantum cluster}
\end{figure}
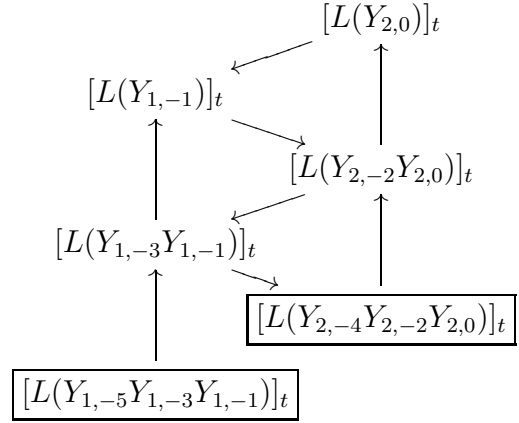
\end{example}

\section{On the classification of real simple modules in $\mathscr{C}^{\leq \xi}_\ell$} \label{the classification of real simple modules in Cl}
In this section, we establish connections between the additive categorifications of cluster algebras and the monoidal categorifications of cluster algebras. Any exchange relation appearing in additive categorifications of cluster algebras of type ADE is lifted to an exchange relation in monoidal categorifications of cluster algebras $\mathcal{A}^{\leq \xi}_1$, and, as a result, we completely  classify the real simple modules in $\mathscr{C}^{\leq \xi}_1$. Inspired by Hernandez and Leclerc's work \cite{HL10,HL16}, we propose two conjectures for the study of all the real simple modules in $\mathscr{C}^{\leq \xi}_\ell$.

In order to classify and study real simple modules in $\mathscr{C}^{\leq \xi}_\ell$, it is enough to study their $q$-characters. 

\subsection{Monoidal categorifications of cluster algebras} 

We recall the notion of a monoidal categorification  of a cluster algebra introduced by Hernandez and Leclerc \cite{HL10}.

\begin{definition}[{\cite[Definition 2.1]{HL10}}]
Let $\mathcal{A}$ be a cluster algebra and let $\mathscr{C}$ be an abelian monoidal category. The category $\mathscr{C}$ is a monoidal categorification of $\mathcal{A}$ if the Grothendieck ring of $\mathscr{C}$ is isomorphic to $\mathcal{A}$, and if 
\begin{itemize}
\item[(1)] the cluster monomials of $\mathcal{A}$ are the classes of all the real simple objects of $\mathscr{C}$;
\item[(2)] the cluster variables of $\mathcal{A}$ (including the frozen variables) are the classes of all the real prime simple objects of $\mathscr{C}$.
\end{itemize}
\end{definition}

The existence of a monoidal categorification of a cluster algebra implies the positivity of the cluster algebra and the linear independence of cluster monomials of the cluster algebra, see \cite[Proposition 2.2]{HL10}.

\begin{theorem}\label{monoidal categorification C1}
 The subcategory $\mathscr{C}^{\leq \xi}_1$ is a monoidal categorification of the cluster algebra $\mathcal{A}^{\leq \xi}_1$.
\end{theorem}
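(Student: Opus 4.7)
The plan is to deduce the theorem by combining the quantum cluster algebra structure from Theorem \ref{quantum Grothendieck ring admits a quantum cluster algebra structure} with the classification of real simple modules in Theorem \ref{main theorem2}, plus known ``easy direction'' results. First I would specialize Theorem \ref{quantum Grothendieck ring admits a quantum cluster algebra structure} at $t=1$ to obtain a ring isomorphism
\[
[?]^{\leq \xi}\colon K_0(\mathscr{C}^{\leq \xi}_1) \xrightarrow{\ \sim\ } \mathcal{A}^{\leq \xi}_1,
\]
sending the truncated $q$-character of $L\bigl(\prod_{k\geq 0,\, r+2k\leq \xi(i)} Y_{i,r+2k}\bigr)$ to the initial cluster variable $z_{i,r}$. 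This takes care of the first requirement in the definition of a monoidal categorification, so the remaining work is to identify cluster monomials with real simple objects and cluster variables with real prime simple objects.

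For the direction ``cluster monomial $\Rightarrow$ real simple'', I would cite the general results of Qin \cite{Qin17} and Kang--Kashiwara--Kim--Oh (and the refinements of \cite{KKKO18,KKOP22}): any reachable cluster monomial in a quantum cluster algebra obtained from a monoidal seed in $\mathscr{C}$ is the class of a real simple module, so in particular every cluster variable of $\mathcal{A}^{\leq \xi}_1$ is the class of a real prime simple object of $\mathscr{C}^{\leq \xi}_1$ (primeness follows because products of non-trivial simples give non-cluster variables, using unique factorization in the ambient polynomial ring via $q$-characters).

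The main obstacle, and the bulk of the proof, is the converse direction: every real simple module in $\mathscr{C}^{\leq \xi}_1$ has class equal to a cluster monomial. My strategy is to use the map $\Phi$ from indecomposable rigid objects of the cluster category $\mathcal{C}^{\leq \xi}_1$ to real prime simple modules of $\mathscr{C}^{\leq \xi}_1$ introduced in Section \ref{case l=1}, and to argue as follows. Since $A^{\leq \xi}_1$ is the path algebra of a Dynkin quiver, the cluster category is of finite type: its indecomposable rigid objects are parameterized by positive roots and shifted projectives, in bijection with cluster variables of $\mathcal{A}^{\leq \xi}_1$ via the cluster character. By Theorem \ref{main theorem2}, the exchange relation \eqref{cluster exchanges in the subcategory} in $K_0(\mathscr{C}^{\leq \xi}_1)$ matches the exchange relation \eqref{cluster exchanges in a cluster category} in the cluster category under $\Phi$; iterating these relations, starting from the initial seed consisting of classes $[\Phi(I(i))]$ of fundamental modules together with the frozen $[L(f_i)]$, one reaches every cluster variable. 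Hence every cluster variable of $\mathcal{A}^{\leq \xi}_1$ is of the form $[\Phi(T)]$ for some indecomposable rigid $T$, and every cluster monomial is of the form $[\bigotimes_k \Phi(T_k)^{\otimes m_k}]$ for a rigid object $\bigoplus T_k^{m_k}$. Conversely, any real simple $L(m)$ in $\mathscr{C}^{\leq \xi}_1$ tensor-factors uniquely into real prime simples; the preceding identification forces each prime factor to be in the image of $\Phi$, so $[L(m)]$ is a cluster monomial.

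The hard part is verifying that the iterated application of the lifted exchange relations exhausts all cluster variables and that $\Phi$ is both injective and compatible with tensor products of summands from a common cluster-tilting object; this is exactly where Theorem \ref{main theorem2}, Corollary \ref{a corollary of main theorem2}, the hereditary structure of $\mathrm{mod}\,A^{\leq \xi}_1$, and Proposition \ref{dominant monomials determine q-characters} are combined. Once this exhaustion is in place, the bijection
\[
\{\text{real prime simples in }\mathscr{C}^{\leq \xi}_1\}\ \longleftrightarrow\ \{\text{cluster variables of }\mathcal{A}^{\leq \xi}_1\}
\]
extends multiplicatively to real simples and cluster monomials, completing the monoidal categorification.
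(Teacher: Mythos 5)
Your proposal has a circularity problem. You propose to prove the converse direction (real simple $\Rightarrow$ cluster monomial) by invoking the map $\Phi$ from indecomposable rigid objects to real prime simples and iterating the lifted exchange relation of Theorem~\ref{main theorem2}. But in the paper's logical order, both $\Phi$ and Theorem~\ref{main theorem2} come \emph{after} Theorem~\ref{monoidal categorification C1} and depend on it: $\Phi$ is built from the bijection $\widetilde{\Phi}$ "induced by the isomorphism $\mathcal{A}(\Gamma)\cong K_0(\mathscr{C})$," and that there \emph{is} a well-defined bijection between cluster variables and real prime simple modules is precisely the content of Theorem~\ref{monoidal categorification C1}. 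Moreover the proof of Theorem~\ref{main theorem2} explicitly cites Theorem~\ref{monoidal categorification C1} several times (to deduce that $\Phi(L)$, $\Phi(M)$, $\Phi(M')$, $\Phi(N)$ and their tensor products with the $L(f_i)$ are real simple). So the iteration argument you describe cannot be carried out without already having the theorem you are trying to prove.

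The correct route, which your proposal misses, is a basis-counting argument that does not need $\Phi$ at all. After the isomorphism $K_0(\mathscr{C}^{\leq\xi}_1)\cong\mathcal{A}^{\leq\xi}_1$ and the easy direction (cluster monomials are classes of real simples, via \cite{Qin17,KKKO18}; cluster variables are classes of real prime simples, via \cite{GLS13} because the Grothendieck ring is a polynomial ring), one observes that $\mathcal{A}^{\leq\xi}_1$ is a cluster algebra of \emph{finite} type, so its cluster monomials form a $\mathbb{Z}$-basis. Under the ring isomorphism they are carried to a linearly independent set of classes of simple modules, and since the classes of all simple modules also form a basis, the two sets must coincide. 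This forces every simple module in $\mathscr{C}^{\leq\xi}_1$ to be real and its class to be a cluster monomial, with the prime ones corresponding to cluster variables. The finite-type/basis step is the key idea your proposal replaces with a circular iteration; without it, you have no non-circular way to establish the converse direction.
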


\begin{proof}
It follows from Theorem \ref{quantum Grothendieck ring admits a quantum cluster algebra structure} that the Grothendieck ring $K_0(\mathscr{C}^{\leq \xi}_1)$ of $\mathscr{C}^{\leq \xi}_1$ is isomorphic to the cluster algebra $\mathcal{A}^{\leq \xi}_1$. Following \cite{Qin17} or independently \cite{KKKO18},  under the isomorphism $\mathcal{A}^{\leq \xi}_1 \to K_0(\mathscr{C}^{\leq \xi}_1)$, the cluster monomials in $\mathcal{A}^{\leq \xi}_1$ are classes of real simple modules in $\mathscr{C}^{\leq \xi}_1$. Since $K_0(\mathscr{C}^{\leq \xi}_1)$ is isomorphic to a polynomial ring, applying \cite{GLS13}, the cluster variables in $\mathcal{A}^{\leq \xi}_1$ (including the frozen variables) are classes of real prime simple modules in $\mathscr{C}^{\leq \xi}_1$. 

In fact, the set of cluster (variables) monomials in $\mathcal{A}^{\leq \xi}_1$ is in bijection with the set of the classes of the (real prime) real simple modules in $\mathscr{C}^{\leq \xi}_1$. Indeed, since $\mathcal{A}^{\leq \xi}_1$ is a cluster algebra of finite type, the set of cluster monomials in $\mathcal{A}^{\leq \xi}_1$ forms a basis of $\mathcal{A}^{\leq \xi}_1$. Therefore the set of the images of cluster monomials in $\mathcal{A}^{\leq \xi}_1$ forms a basis of $K_0(\mathscr{C}^{\leq \xi}_1)$. On the other hand, the set of the classes of simple modules in $\mathscr{C}^{\leq \xi}_1$ gives a basis of $K_0(\mathscr{C}^{\leq \xi}_1)$. Hence 
\begin{align*}
\{ \text{the images of cluster monomials in $\mathcal{A}^{\leq \xi}_1$} \} &=\{ \text{the classes of real simple modules in $\mathscr{C}^{\leq \xi}_1$} \}  \\
& = \{ \text{the classes of simple modules in $\mathscr{C}^{\leq \xi}_1$} \}. 
\end{align*}
In particular, every simple module in $\mathscr{C}^{\leq \xi}_1$ is real. Suppose that $L(m)\in \mathscr{C}^{\leq \xi}_1$ is a real prime simple module. Then there is a unique cluster monomial $M$ in $\mathcal{A}^{\leq \xi}_1$ such that its image is $[L(m)]$. If $M$ is not a cluster variable in $\mathcal{A}^{\leq \xi}_1$, then a decomposition of $M$ as a cluster monomial implies a tensor decomposition of $[L(m)]$, which contradicts the primeness of $L(m)$. 
\end{proof}

\begin{remark} For  sink-source height functions $\xi$, Theorem \ref{monoidal categorification C1} was proved by Hernandez and Leclerc \cite{HL10} in type $A$ and type $D_4$, and  by Nakajima \cite{Nak03} in  types $A,D$ and $E$.
For linear height functions $\xi$, the theorem was proved by Hernandez and Leclerc \cite{HL13} 
in types $A$ and $D$.
For arbitrary height functions $\xi$, the theorem was proved by Brito and Chari \cite{BC19} in type $A$. Our proof for types $D$ and $E$  is new.
\end{remark}

Every real prime simple module in $\mathscr{C}^{\leq \xi}_1$ is called an \textit{Hernandez-Leclerc module} in honor of Hernandez and Leclerc's discovery on the connection between quantum affine algebras and cluster algebras. The terminology first appeared in \cite{BC19} for type $A$, and these modules of type $D$ and type $E$ are new.

It follows from Theorem \ref{monoidal categorification C1} that the set of cluster monomials in $\mathcal{A}^{\leq \xi}_1$ is in bijection with the set of  classes of  real simple modules in $\mathscr{C}^{\leq \xi}_1$. Every real simple module in $\mathscr{C}^{\leq \xi}_1$ is a tensor product of Hernandez-Leclerc modules from the same cluster in $K_0(\mathscr{C}^{\leq \xi}_1)$. Conversely, any tensor product of Hernandez-Leclerc modules from the same cluster in $K_0(\mathscr{C}^{\leq \xi}_1)$ is a real simple module. 

Moreover, Hernandez and Leclerc \cite{HL10,HL16} proposed a conjecture, which describes the simple objects of subcategories of $\mathscr{C}_{\mathbb{Z}}$ whose classes in the Grothendieck ring are cluster monomials. Inspired by this, we propose the following conjecture.

\begin{conjecture}\label{real simple objects are cluster monomials}
The classes of real simple modules in $\mathscr{C}^{\leq \xi}_\ell$ are cluster monomials in $K_0(\mathscr{C}^{\leq \xi}_\ell)$.
\end{conjecture}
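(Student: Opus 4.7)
The plan is to reduce Conjecture \ref{real simple objects are cluster monomials} to two sub-statements: (a) the class of every real prime simple module in $\mathscr{C}^{\leq \xi}_\ell$ equals a cluster variable of $K_0(\mathscr{C}^{\leq \xi}_\ell)\cong\mathcal{A}^{\leq \xi}_\ell$; and (b) if a real simple module $L(m)$ tensor-factors as $L(m)\cong L(m_1)\otimes\cdots\otimes L(m_k)$ into real prime simples, then the classes $[L(m_1)],\ldots,[L(m_k)]$ lie in a common cluster of $\mathcal{A}^{\leq \xi}_\ell$. Granted (a) and (b), any real simple module admits a (unique up to order, by \cite{KKKO18}) factorization into real prime simples, and its class is then a cluster monomial in the prescribed cluster.

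The direction that cluster monomials are classes of real simples is already known from \cite{Qin17,KKKO18,KKOP22}, so the true content is the reverse inclusion. To attack (a), I would extend the additive-categorification approach used for $\ell=1$ by constructing a map $\Phi_\ell$ from the set of isoclasses of reachable indecomposable rigid objects in the generalized cluster category of the Jacobian algebra $A^{\leq \xi}_\ell$ to the set of real prime simples in $\mathscr{C}^{\leq \xi}_\ell$, parallel to the construction in Section \ref{case l=1}. Under Conjecture \ref{character conjecture}, the truncated $q$-character of $\Phi_\ell(X)$ coincides with the $F$-polynomial of $X$ (suitably twisted by a $g$-vector monomial), so $\Phi_\ell$ would identify cluster variables on the additive side with classes of real prime simples on the monoidal side. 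Surjectivity of $\Phi_\ell$ should then follow inductively along mutation sequences by combining the cluster exchange relation \eqref{cluster exchanges in the subcategory} with the truncated quantum $T$-systems \eqref{truncated quantum T-system} that govern $K_t(\mathscr{C}^{\leq \xi}_\ell)$.

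For (b), one can invoke the normalized $R$-matrix criterion of Kashiwara-Kim-Oh-Park \cite{KKKO18,KKOP22}: a tensor product of real simples is simple precisely when the associated renormalized $R$-matrices are isomorphisms, and this property is preserved by quantum cluster mutation. Combined with the linear independence and unique-factorization properties of cluster monomials as a basis of $\mathcal{A}^{\leq \xi}_\ell$, this should force the real prime factors of a real simple to come from one cluster, since any two ``incompatible'' real prime simples (i.e.\ whose classes never share a cluster) could not appear simultaneously in a cluster monomial expansion.

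The main obstacle is surjectivity in (a) when $\mathcal{A}^{\leq \xi}_\ell$ is of infinite (or wild) cluster type, which is the generic situation for $\ell\geq 2$. In the finite-type cases treated in Section \ref{check our conjecture for small cases}, both sides can be enumerated explicitly and matched combinatorially; this is exactly the strategy that completes the $\ell=1$ case in Theorem \ref{monoidal categorification C1}. For generic $(\xi,\ell)$, no such enumeration is available, and one instead needs to prove directly that Derksen-Weyman-Zelevinsky mutation of the quiver with potential $(Q^{\leq \xi}_\ell,\mathfrak{P})$ transforms $F$-polynomials of rigid objects into $F$-polynomials that match truncated $q$-characters of real prime simples produced by the corresponding quantum $T$-system mutation in $K_t(\mathscr{C}^{\leq \xi}_\ell)$. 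Establishing this compatibility --- effectively a categorical lift of Conjecture \ref{character conjecture} across all mutation sequences --- appears to be the crux of the conjecture and is likely to require genuinely new input, such as a monoidal enhancement of $\mathcal{C}^{\leq \xi}_\ell$ or a geometric/perverse sheaf interpretation of the real prime simples along the lines of Hernandez and Leclerc's geometric $q$-character formula \cite[Conjecture 5.3]{HL16}.
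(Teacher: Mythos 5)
This statement is a conjecture, not a theorem, and the paper does not prove it in general: it establishes it only when $K_0(\mathscr{C}^{\leq\xi}_\ell)\cong\mathcal{A}^{\leq\xi}_\ell$ is of finite cluster type ($\ell=1$ in type $ADE$, $\ell\le 4$ in type $A_2$, and $\ell=2$ in types $A_3,A_4$). Your proposal is a roadmap rather than a proof, and to your credit you say so at the end. A few corrections to the comparison with what the paper actually does, and to the logic of your plan.

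You describe the finite-type proof as ``enumerate explicitly and match combinatorially,'' but the argument in Theorem~\ref{monoidal categorification C1} is an abstract basis comparison and requires no enumeration. By \cite{Qin17,KKKO18}, cluster monomials of $\mathcal{A}^{\leq\xi}_1$ are classes of real simple modules; in finite cluster type the cluster monomials form a $\mathbb{Z}$-basis of $\mathcal{A}^{\leq\xi}_1$, the classes of all simple modules form a $\mathbb{Z}$-basis of $K_0(\mathscr{C}^{\leq\xi}_1)$, and a spanning linearly independent subset of a basis must be the whole basis. This at once shows that every simple in $\mathscr{C}^{\leq\xi}_1$ is real and that the classes of real simples coincide with the cluster monomials. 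The same argument extends verbatim to the other finite-type cases listed above. The large tables in Section~\ref{check our conjecture for small cases} and the appendices support Conjectures~\ref{rigid objects equal real prime simple modules conjecture} and~\ref{character conjecture} (the explicit bijection of rigid objects with real prime simples and the $F$-polynomial $=$ $q$-character statement), not Conjecture~\ref{real simple objects are cluster monomials}, which is already handled by the basis argument.

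The more substantive issue is that your reduction to (a)+(b) hides the difficulty rather than resolving it. For (a) you want a map $\Phi_\ell$ from reachable rigid indecomposables of $\mathcal{C}^{\leq\xi}_\ell$ to real prime simples and you want it surjective; but asserting that every real prime simple lies in the image of $\Phi_\ell$, i.e.\ corresponds to a \emph{reachable} rigid object, is precisely the content of Conjecture~\ref{real simple objects are cluster monomials} for prime simples. Invoking Conjecture~\ref{character conjecture} does not help: that conjecture gives a $q$-character formula for a real simple \emph{assumed} to correspond to a cluster monomial, so it cannot certify that an arbitrary real prime simple so corresponds. For (b), the step ``$R$-matrix isomorphisms force the real prime factors into a common cluster'' is not a known fact but is essentially the monoidal categorification conjecture of Hernandez--Leclerc; in general cluster type nothing is established there, and in finite type it is obtained as a consequence of the basis argument above rather than by any $R$-matrix analysis. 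So both halves of your reduction reproduce the original difficulty. The accurate conclusion, which you do draw, is that outside the finite-type regime the conjecture remains open and appears to require input not yet available.
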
 

It follows from the proof of Theorem \ref{monoidal categorification C1} that Conjecture \ref{real simple objects are cluster monomials} holds for $\mathscr{C}^{\leq \xi}_1$.  By the same argument as in the proof of the case $\ell=1$, Conjecture \ref{real simple objects are cluster monomials} holds for 
arbitrary height functions $\xi$ in type $A_2$ if $\ell\le 4$, and in types $A_3$ and $A_4$ if $\ell=2$, because in these cases the Grothendieck rings of the subcategories are cluster algebras of finite type. Conjecture~ \ref{real simple objects are cluster monomials} is open for arbitrary height functions $\xi$ and other $\ell$.

For any simple module $L(m)\in \mathscr{C}^{\leq \xi}_\ell$, we denote by $\chi_q([L(m)])_{\leq \xi}$ the truncated $q$-character of $L(m)$, which is obtained from $\chi_q([L(m)])$ by keeping only the monomials in the variables $Y_{i,r}$, with $(i,r)\in \widehat{I}^{\leq \xi}_\ell$. It follows from \cite{FM01,HL16} that $\chi_q([L(m)])_{\leq \xi}$ can be written as
\begin{align} \label{truncated q-character foumula}
\chi_q([L(m)])_{\leq \xi}= m P_m,
\end{align}
where $P_m$ is a polynomial in the variables $\{A^{-1}_{i,r-1}|(i,r)\in \widehat{I}^{\leq \xi}_{\ell-1}\}$ with integer coefficients and constant term 1.

\subsection{The classification of real simple modules in $\mathscr{C}^{\leq \xi}_1$} \label{case l=1}
In this section, we  classify all the real simple modules in $\mathscr{C}^{\leq \xi}_1$ by establishing connections between the additive categorifications of cluster algebras of type ADE and the monoidal categorifications of cluster algebras $\mathcal{A}^{\leq \xi}_1$. 
 
Let $\xi$ be a fixed height function. Recall that $Q^{\leq \xi}_1$ is (isomorphic to) the principal quiver of $\Gamma^{\leq \xi}_1$ with vertex set $I$.
 Thus $Q^{\leq \xi}_1$ is a Dynkin quiver of type $\gamma$, and $\Gamma^{\leq \xi}_1$ contains two copies of $Q^{\leq \xi}_1$ as well as connecting arrows. See Figure~\ref{quiver Gamma in type A3} for an example.
For simplicity of notation, we denote by $Q$ (respectively, $\Gamma$, $\mathscr{C}$) the quiver $Q^{\leq \xi}_1$ (respectively, the quiver $\Gamma^{\leq \xi}_1$, the subcategory $\mathscr{C}^{\leq \xi}_1$), and denote by $\widetilde{\bf z}={\bf z} \cup {\bf z}'$ the set $\mathbf{z}^{\leq \xi}_1$ introduced in the  paragraph following Proposition \ref{dominant monomials determine q-characters}, where ${\bf z}=\{z_i \mid i\in I\}$ is the set of mutable variables and ${\bf z}'=\{z'_i \mid i\in I\}$ is the set of frozen variables. Under our new notation, when $\ell=1$, the identification (\ref{identify z and Y}) becomes $z_{i} \mapsto  Y_{i,\xi(i)}$, with $i\in I$.

Let $V \in \mathscr{C}$ be a simple module, and let $\textbf{hw}(V)$ denote the highest $l$-weight monomial in $\chi_q([V])$. It was shown in \cite{FM01} that, for any two simple modules $V_1$ and  $V_2$, we have 
\[
\textbf{hw}(V_1 \otimes V_2)=\textbf{hw}(V_1) \textbf{hw}(V_2).
\] 

Let $\mathcal{A}(Q)$ be the cluster algebra with initial quiver $Q$, and $\mathcal{C}$ the associated cluster category of \cite{BMRRT06}. For $i\in I$, let $f_i=Y_{i,\xi(i)-2}Y_{i,\xi(i)}$.
 The modules
\[
L(f_i)=L(Y_{i,\xi(i)-2}Y_{i,\xi(i)}), \, \text{with $i\in I$},
\]
are special Kirillov-Reshetikhin modules. It is well-known that Kirillov-Reshetikhin modules are real prime modules, see \cite{HL16}. 

We denote by $\mathcal{A}(\Gamma)$ (respectively, $K_0(\mathscr{C})$) the cluster algebra $\mathcal{A}(\Gamma)$ with initial extended cluster $(\widetilde{\bf z},\Gamma)$ (respectively, the Grothendieck ring of $\mathscr{C}$)  with the frozen vertices as those in $\Gamma\backslash Q$ (respectively, with the frozen vertices $\{[L(f_i)] \mid i\in I\}$ as a cluster algebra). The two cluster algebras have the same number of cluster variables, since they have the same principal quiver. It follows from \cite{BMRRT06,CCS06} that the number of cluster variables in $\mathcal{A}(Q)$ is the same as the number of indecomposable objects in $\mathcal{C}$, as well as the number of real prime simple modules excluding $\{ L(f_i) \mid i\in I\}$, because these $L(f_i)$ correspond to the frozen vertices,
in $\mathscr{C}$.

Suppose that $L(m)\in \mathscr{C}$ is a real prime simple module such that $\chi_q(L(m))_{\leq \xi}$ corresponds to an indecomposable $kQ$-module $M$. By Equations (\ref{separate formula for a cluster variable}) and (\ref{truncated q-character foumula}), as well as Fomin-Zelevinsky's separation of addition formula \cite{FZ07}, we have
\begin{align*}
\textbf{z}^{g(M)} \frac{F_M(\widehat{y}_1,\ldots,\widehat{y}_{n})|_{\mathcal{F}}}{F_M(y_1,\ldots,y_{n})|_{\mathbb{P}}}=mP_m,
\end{align*}
where $\mathbb{P}$ is the tropical semifield generated by $\{f_i \mid i \in I\}$, with tropical addition $\oplus$, and for any $j\in I$,
\[
y_i=f^{-1}_i\left( \prod_{j: j \to i} f_j \right), \text{ and } \widehat{y}_j=A^{-1}_{j,\xi(j)-1}. 
\] 

By the same argument as in \cite[Lemma 7.3]{HL10} and \cite[Proposition 4.16]{HL16}, we have
\begin{align}\label{g-vector and height weight and F-polynomial} 
\frac{\textbf{z}^{g(M)}}{F_M(y_1,\ldots,y_{n})|_{\mathbb{P}}}=m, \quad  F_M(\widehat{y}_1,\ldots,\widehat{y}_{n})|_{\mathcal{F}}=P_m.
\end{align}
In the sequel, we will use $F$-polynomials to compute renormalized truncated $q$-characters of simple modules.

We define a bijection $\Phi$ from the set of indecomposable (rigid) objects in $\mathcal{C}$ to the set of (real) prime simple modules in $\mathscr{C}$ as a composition of the  following two bijections:
\begin{gather} \label{a bijection Phi}
\xymatrix{ \left\{ \makecell{\text{Indecomposable (rigid)} \\ \text{objects in $\mathcal{C}$}} \right\}  \ar[rr]^{\Phi} \ar[dr]^-{\iota} && \left\{ \makecell{\text{(Real) prime simple} \\ \text{modules in $\mathscr{C}$}} \right\} \Big\backslash  \{ L(f_i) \mid i\in I \}  \\
 &  \left\{  \makecell{ \text{Cluster variables in the} \\ \text{cluster algebra $\mathcal{A}(\Gamma)$}} \right\} \ar[ur]^-{\widetilde{\Phi}} &} 
\end{gather}
where the map $\iota$ is defined by the cluster character or the CC map and Fomin-Zelevinsky's separation of addition  formula, and $\widetilde{\Phi}$ is the bijection induced by the isomorphism $\mathcal{A}(\Gamma) \cong K_0(\mathscr{C})$. Both the CC map and $\widetilde{\Phi}$ preserve clusters and cluster exchange relations in cluster algebras. We extend $\Phi$ to arbitrary rigid objects in $\mathcal{C}$ by 
\[
\Phi(M_1\oplus M_2)=\Phi(M_1) \otimes \Phi( M_2).
\]
Under the map $\Phi$, for any $i\in I$, $\Phi(P(i)[1])=L(Y_{i,\xi(i)})$.

Recall from Section \ref{cluster characters} that  an exchange pair in the cluster category $\mathcal{C}$ consists of  two indecomposable rigid objects $L$ and $N$ with $\text{dim}(\text{Ext}^1_{\mathcal{C}}(N,L))=1$. In this situation, there exists a unique pair of non-split exchange triangles 
\begin{align} \label{non-split triangles}
L\to M \to N\to L[1],  \quad  N \to M' \to L \to N[1]
\end{align}
in $\mathcal{C}$. 

\begin{lemma}\label{g-vector equation in an exchange pair}
 Let $(L,N)$ be an exchange pair in the cluster category of an acyclic quiver $Q$ with exchange triangles (\ref{non-split triangles}). Then either $g(M)=g(L)+g(N)$ or $g(M')=g(L)+g(N)$, and only one of two equations holds.
\end{lemma}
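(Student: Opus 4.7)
The plan is to apply Palu's $g$-vector formula (Proposition~\ref{Palu's g-vector formula}) to each of the two exchange triangles. For $L \xrightarrow{f} M \to N \to L[1]$ it produces
\[
g(M) \;=\; g(L) + g(N) - g(K) - g(K[1]),
\]
where $K \in \mathcal{C}$ lifts $\textup{Ker}(\textup{Hom}_{\mathcal{C}}(T,f))$, and analogously $g(M') = g(L)+g(N) - g(K') - g(K'[1])$ for the second triangle with some $K'$ lifting $\textup{Ker}(\textup{Hom}_{\mathcal{C}}(T, N\to M'))$. Since the $g$-vectors of nonzero indecomposables in $\mathcal{C}$ are linearly independent, the equality $g(M)=g(L)+g(N)$ (resp.\ $g(M')=g(L)+g(N)$) is equivalent to $K=0$ (resp.\ $K'=0$), so the lemma reduces to proving that exactly one of $K,K'$ vanishes.

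To decide which one vanishes, I would apply $\textup{Hom}_{\mathcal{C}}(T,-)$ to each exchange triangle and read off from the long exact sequence that $K$ lifts the image of the connecting map $\textup{Hom}_{\mathcal{C}}(T, N[-1]) \to \textup{Hom}_{\mathcal{C}}(T, L)$, and similarly for $K'$. I then split into cases depending on whether $L$ or $N$ lies in $\textup{add}\,T[1]$. If $L = P[1]$ for some indecomposable projective $P$, then $\textup{Hom}_{\mathcal{C}}(T,L) = \textup{Ext}^1_{\mathcal{C}}(T,P) = 0$ since $T$ is cluster-tilting, so $K=0$ and Corollary~\ref{g-vector equation associated an almost split sequence}(2) gives $g(M) = g(L)+g(N)$. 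For the other triangle, the connecting morphism $L\to N[1]$ corresponds via the shift to a nonzero morphism $P \to N$ representing the $1$-dimensional space $\textup{Ext}^1_{\mathcal{C}}(L,N) \cong \textup{Hom}_A(P,N)$, and its image in $\textup{Hom}_{\mathcal{C}}(T,N)$ is nonzero, forcing $K'\neq 0$. The case $N\in\textup{add}\,T[1]$ is symmetric. If both $L,N$ lie in $\textup{mod}\,A$, the 2-Calabi-Yau isomorphism $\textup{Ext}^1_{\mathcal{C}}(N,L) \cong \textup{Ext}^1_A(N,L) \oplus D\,\textup{Ext}^1_A(L,N)$ together with the hypothesis $\dim\textup{Ext}^1_{\mathcal{C}}(N,L)=1$ forces exactly one of the two module-Ext's to be one-dimensional; whichever direction carries the module extension yields a triangle induced by a short exact sequence in $\textup{mod}\,A$, so Corollary~\ref{g-vector equation associated an almost split sequence}(1) applies and produces the $g$-vector additivity there, while the other triangle's connecting morphism is supported in the dual summand, making its kernel nonzero.

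The main obstacle is verifying that the ``other'' kernel is nonzero in each case: concretely, that the image of the connecting map under $\textup{Hom}_{\mathcal{C}}(T,-)$ does not vanish. I would handle this by identifying each relevant connecting morphism with a nonzero basis vector of the $1$-dimensional space $\textup{Ext}^1_{\mathcal{C}}(L,N)$ or $\textup{Ext}^1_{\mathcal{C}}(N,L)$, using the equivalence $\mathcal{C}/(\tau T) \simeq \textup{mod}\,A$ to transport everything into module-theoretic statements where nonvanishing is visible. The uniqueness assertion (only one equality holds) then follows because the three cases are disjoint and each identifies a unique vanishing kernel; equivalently, if both $K$ and $K'$ were zero, then $g(M)=g(M')$, which by the identity $-\mathcal{B}_{Q_B}\alpha = g(M)-g(M')$ from \cite[Theorem 1.5]{ST16} would force the $c$-vector $\alpha=0$, contradicting the fact that $\alpha$ is the dimension vector of a nonzero morphism $h\colon \tau^{-1}L\to N$.
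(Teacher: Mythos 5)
Your existence argument (that at least one of the two equalities holds) is sound and follows essentially the same route as the paper: in each case you locate the triangle whose connecting morphism lies in the ``module'' direction of the 2-CY splitting of $\text{Ext}^1_{\mathcal{C}}$ (or has $L\in\text{add}\,T[1]$), so that the triangle is induced by a short exact sequence in $\text{mod}\,kQ$, and you invoke Corollary~\ref{g-vector equation associated an almost split sequence}. The paper's proof uses the isomorphism $\text{Hom}_{\mathcal{C}}(L,N[1]) \cong \text{Hom}_{\mathcal{D}}(L,N[1]) \oplus \text{Hom}_{\mathcal{D}}(L,\tau N)$ together with Serre duality, which is exactly the decomposition you appeal to via $\text{Ext}^1_{kQ}(L,N)\oplus D\,\text{Ext}^1_{kQ}(N,L)$.

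The gaps lie in the reduction and in the uniqueness step. Your claim that $g(M)=g(L)+g(N)$ is \emph{equivalent} to $K=0$ ``since the $g$-vectors of nonzero indecomposables in $\mathcal{C}$ are linearly independent'' rests on a false premise: there are many more indecomposables than $n$, and in fact $g(T_i)+g(T_i[1])=0$ for every indecomposable summand $T_i$ of $T$, so the correction term $g(K)+g(K[1])$ in Proposition~\ref{Palu's g-vector formula} can vanish for nonzero $K$. You therefore only obtain the implication $K=0\Rightarrow g(M)=g(L)+g(N)$ (which suffices for existence), but the lemma does \emph{not} reduce to showing ``exactly one of $K,K'$ vanishes.'' For uniqueness, the appeal to the identity $-\mathcal{B}_{Q_B}\alpha=g(M)-g(M')$ from \cite{ST16} is incomplete: $\mathcal{B}_{Q_B}$ is an $n\times n$ skew-symmetric matrix, hence singular whenever $n$ is odd (so already for $A_3$, $D_5$, $E_7$), and $\mathcal{B}_{Q_B}\alpha=0$ does not force $\alpha=0$. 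One must argue separately that $\alpha=\underline{\text{dim}}\,\text{Im}(h)$ avoids $\text{Ker}\,\mathcal{B}_{Q_B}$; absent such an argument, the possibility $g(M)=g(M')$ with $\alpha\neq 0$ is not excluded. (The paper itself dispatches uniqueness in the single phrase ``$M\ncong M'$,'' which silently invokes that $g$-vectors separate rigid objects, so this step is delicate in either treatment and deserves more than you have given it.)
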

\begin{proof}
Clearly only one of the equations can hold, because $M\ncong M'$.
We prove this lemma in two cases.

{\bf Case 1.} Let $L,N\in \text{mod}\,kQ$. 
Following \cite{BMRRT06}, we have 
\begin{align*}
D\text{Ext}^1_{\mathcal{C}}(N,L) \cong \text{Hom}_{\mathcal{C}}(L,N[1]) = \text{Hom}_{\mathcal{D}}(L,N[1]) \oplus \text{Hom}_{\mathcal{D}}(L,\tau N).
\end{align*}
If $\text{dim}_k(\text{Hom}_{\mathcal{D}}(L,N[1]))=1$, then the second triangle in (\ref{non-split triangles}) is induced by  a non-split short exact sequence  $0 \to N \to M' \to L \to 0$ in $\text{mod}\,kQ$. Corollary \ref{g-vector equation associated an almost split sequence} implies that $g(M)=g(L)+g(N)$.

Otherwise, $\text{dim}_k(\text{Hom}_{\mathcal{D}}(L,\tau N))=1$, and by the Serre duality formula $\text{Hom}_{\mathcal{D}}(L,\tau N) \cong D\text{Hom}_{\mathcal{D}}(N,L[1])$, we have 
\[
\text{dim}_k(\text{Hom}_{\mathcal{D}}(N,L[1]))=1.
\]
By the same argument as before, we obtain $g(M)=g(L)+g(N)$. 

{\bf Case 2.} We assume without loss of generality that $L=P(i)[1]$ for some $i\in I$, $N\in \text{mod}\,kQ$. We have 
\begin{align*}
P(i)[1] \to M \to N \to P(i)[2].
\end{align*} 
Corollary \ref{g-vector equation associated an almost split sequence} implies $g(M)=g(L)+g(N)$, as desired.
\end{proof}

\begin{remark} \label{remark 5.6}
In fact, Lemma~\ref{g-vector equation in an exchange pair} holds for any skew-symmetrizable not necessarily acyclic cluster algebra. More precisely, it follows from \cite[Corollary 5.5]{GHKK18} and \cite[Proposition~1.3]{NZ12} that either $g(M)=g(L)+g(N)$ or $g(M')=g(L)+g(N)$, where $x_L x_N=x_{M} + x_{M'}$ is  an exchange relation in the cluster algebra.
\end{remark}

We now assume without loss of generality that $g(M)=g(L)+g(N)$ (exchanging $L$ and $N$ if necessary) in the non-split exchange triangles (\ref{non-split triangles}). By the proof of Lemma \ref{g-vector equation in an exchange pair} this implies that $N\in \text{mod}\,kQ$.

The following result was shown by Hubery in an unpublished paper \cite[Lemma 13]{Hub06}. For convenience we include his proof here.

\begin{lemma} \label{lem Hubery}
Let $(L,N)$ be an exchange pair in $\mathcal{C}$ with $N\in \textup{mod}\,kQ$ and with  exchange triangles (\ref{non-split triangles}). Let $h\colon\tau^{-1}_{\mathcal{C}} L= L[-1] \to N$ be the morphism in the second triangle in (\ref{non-split triangles}).  Then
\begin{align}\label{Hubery's a result}
M'\cong \tau_{\mathcal{C}}\,\text{Ker}(h) \oplus \text{Coker}(h).
\end{align} 
\end{lemma}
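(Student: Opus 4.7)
The plan is to realize $M'$ as the cone of $h$ in $D^b(kQ)$ and then pass to the cluster category $\mathcal{C}$ via the identification $[1]=\tau_{\mathcal{C}}$. I would first argue that the convention $g(M)=g(L)+g(N)$, combined with the case analysis in the proof of Lemma~\ref{g-vector equation in an exchange pair}, forces one of two situations: either $L,N\in\text{mod}\,kQ$ with $L$ non-injective, so that the first exchange triangle comes from a short exact sequence $0\to L\to M\to N\to 0$ of modules; or $L=P(i)[1]$ for some $i\in I$. In both situations $\tau^{-1}L$ is a genuine $kQ$-module. Using the decomposition of $\text{Hom}_{\mathcal{C}}(\tau^{-1}L,N)$ into summands indexed by powers of the auto-equivalence $\tau^{-1}[1]$, together with the Auslander-Reiten duality $\text{Hom}_{kQ}(\tau^{-1}L,N)\cong D\text{Ext}^1_{kQ}(N,L)$, I would verify that the morphism $h$ appearing in the rotated second exchange triangle $\tau^{-1}L\xrightarrow{h} N\to M'\to L$ lifts to a genuine $kQ$-module homomorphism. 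This identification is the main subtle point, since a priori $h$ is only defined as a morphism in the $2$-Calabi-Yau category $\mathcal{C}$.

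Once $h$ has been identified as a module morphism, set $K=\text{Ker}(h)$ and $C=\text{Coker}(h)$ and factor $h$ through its image as $\tau^{-1}L\twoheadrightarrow \text{Im}(h)\hookrightarrow N$. The cone in $D^b(kQ)$ of the surjection $\tau^{-1}L\twoheadrightarrow \text{Im}(h)$ is $K[1]$, while the cone of the inclusion $\text{Im}(h)\hookrightarrow N$ is $C$. Applying the octahedral axiom to this factorization yields a distinguished triangle
\begin{equation*}
K[1]\longrightarrow \text{cone}(h)\longrightarrow C\longrightarrow K[2]
\end{equation*}
in $D^b(kQ)$. The image of $\text{cone}(h)$ under the triangulated quotient $D^b(kQ)\to\mathcal{C}$ is precisely $M'$, because $\tau^{-1}L[1]\cong L$ in $\mathcal{C}$ turns the cone triangle of $h$ into the rotated second exchange triangle, which characterizes $M'$ up to isomorphism.

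Finally, since $Q$ is Dynkin, $kQ$ is hereditary and $\text{Ext}^2_{kQ}(C,K)=0$. The connecting morphism $C\to K[2]$ therefore vanishes, the octahedral triangle splits as $\text{cone}(h)\cong K[1]\oplus C$ in $D^b(kQ)$, and pushing this isomorphism to $\mathcal{C}$ via $[1]=\tau_{\mathcal{C}}$ produces
\begin{equation*}
M'\cong \tau_{\mathcal{C}}\,\text{Ker}(h)\oplus \text{Coker}(h),
\end{equation*}
as claimed. I expect the hardest step to be the first paragraph's careful identification of $h$ with a module morphism; once that is secured, the remaining octahedral-plus-hereditary argument is essentially routine.
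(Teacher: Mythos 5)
Your proof is correct and follows essentially the same route as the paper's (and Hubery's original): factor $h$ through its image, invoke the octahedral axiom in $D^b(kQ)$, and use $\operatorname{Ext}^2_{kQ}=0$ to split the resulting triangle before passing to $\mathcal{C}$ via $\tau^{-1}[1]=\operatorname{id}$. The one place where you are marginally more careful than the paper's write-up is the identification step: you form $\operatorname{cone}(h)$ in $D^b(kQ)$ and then recognise its image in $\mathcal{C}$ as $M'$, whereas the paper writes the rotated second exchange triangle $\tau^{-1}L\to N\to M'\to L$ directly as a triangle "in $\mathcal{D}$" even though $M'$ is a priori only an object of the orbit category; this is exactly the point you flag and resolve. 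Your first-paragraph worry about lifting $h$ to a genuine $kQ$-morphism is also addressed in the paper, though less explicitly — it is built into the case analysis of Lemma~\ref{g-vector equation in an exchange pair}, which, under the convention $g(M)=g(L)+g(N)$, forces $\operatorname{Hom}_{\mathcal{D}}(L,N[1])=0$ and hence puts the $1$-dimensional space $\operatorname{Hom}_{\mathcal{C}}(\tau^{-1}L,N)$ entirely in the module-level summand.
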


\begin{proof}
Let $(L,N)$ be an exchange pair in $\mathcal{C}$ with $N\in \textup{mod}\,kQ$ and  $L$ arbitrary indecomposable. Thus either $L \in \text{mod}\,kQ$ or $L=P(i)[1]$ for some $i\in I$. In the former case, the first exchange triangle in (\ref{non-split triangles}) is induced by a short exact sequence $0\to L\to M\to N\to 0$  in $\textup{mod}\,kQ$. In particular,  $L$ is not injective and hence $\tau^{-1}L\in \textup{mod}\,kQ$. On the other hand, if $L=P(i)[1]$ for some $i\in I$, then $\tau^{-1}_{\mathcal{C}} L=P(i)$. Thus in both cases, we have that $\tau^{-1}L$ or $\tau^{-1}_{\mathcal{C}} L$ and the morphism $h$ are in $ \textup{mod}\,kQ$.

Let $K=\ker h$ and $C=\text{Coker}\,h$. Then there is an exact sequence
\[\xymatrix{0\ar[r]&K\ar[r]^i& \tau^{-1}L\ar[r]^h&N\ar[r]^p&C\ar[r]&0}.
\]
This yields two short exact sequences
\begin{align*}
 \eta\colon& \xymatrix{0\ar[r]&K\ar[r]^i&\tau^{-1}L\ar[r]^-{h_1}& X\ar[r]&0}\\
  \eta'\colon& \xymatrix@C28pt{0\ar[r]&X\ar[r]^{h_2}&N\ar[r]^{p}& C\ar[r]&0}
\end{align*}
with $h=h_2\circ h_1$.
In the derived category $\mathcal{D}$. We therefore have the following triangles
\begin{align*}
&  \xymatrix{\tau^{-1}L\ar[r]^-{h_1}& X\ar[r]^\eta&K[1]\ar[r]&\tau^{-1}L[1]}\\
 &\xymatrix@C39pt{X\ar[r]^{h_2}&N\ar[r]^{p}& C\ar[r]^{\eta'}&X[1]}\\
 &\xymatrix@C35pt {\tau^{-1}L\ar[r]^-{h}&N\ar[r] & M'\ar[r]&  L},
\end{align*}
where the first two are induced by $\eta$ and $\eta'$ and the third is the second triangle in (\ref{non-split triangles}). 
Applying the octahedral axiom of triangulated categories, we obtain the following triangle
in $\mathcal{D}$
\[\xymatrix@C35pt{K[1]\ar[r]&M'\ar[r]&C\ar[r]^-{\eta[1]\,\circ\,\eta'}&K[2].}\]
Observe that ${\eta[1]\,\circ\,\eta'}=0$, because $C,K\in \textup{mod}\,kQ$ and $kQ $ is hereditary. Therefore the last triangle splits, and we obtain $M'\cong K[1]\oplus C$ in $\mathcal{D}$. Thus in the cluster category, we have $M'\cong \tau_{\mathcal{C}} K\oplus C$, as desired.
\end{proof}

It follows from \cite{CK06,FK10} that  
\begin{align} \label{exchange relation associated to two non-split triangles}
X_L X_{N}=X_M + \textbf{y}^{\alpha} X_{M'},
\end{align} 
where $\alpha\in \mathbb{Z}_{\geq 0}^{I}$ is the dimension vector of the image of $h\colon \tau^{-1}_{\mathcal{C}} L \to N$, and $\textbf{y}^{\alpha}$ is a monomial in principal coefficient variables.

\begin{theorem}\label{main theorem2}
 Let $(L,N)$ be an exchange pair in $\mathcal{C}$ with $N\in \textup{mod}\,kQ$ and with exchange triangles (\ref{non-split triangles}). Then there is an exact sequence in $\mathscr{C}$
\begin{align}\label{Gamma1 exact sequence a}
0 \to \Phi(M') \bigotimes   \left(\otimes_{j\in I}L(f_j)^{\otimes d_j}\right)  \to \Phi(L) \otimes \Phi(N) \to \Phi(M) \bigotimes \left(\otimes_{i\in I} L(f_i)^{\otimes c_i} \right)  \to 0,
\end{align}
or the same sequence with arrows reversed, where all $c_i$ and $d_j$ satisfy the following conditions:
\begin{itemize}
\item[(1)] The vector $(c_i)_{i\in I}$ is such that 
\begin{align} \label{determine variables fi associated a general exchange pairs}  
\prod_{i \in I} f_i^{c_i} = \frac{F_{M}(y_1,\ldots,y_{n})|_{\mathbb{P}} }{ F_{M}(y_1,\ldots,y_{n})|_{\mathbb{P}} \oplus \left( \left( \prod_{i\in I} y^{\alpha_i}_i \right) F_{M'}(y_1,\ldots,y_{n})\right)|_{\mathbb{P}} },
\end{align}
where $\mathbb{P}$ is the tropical semifield generated by $f_i$, with $i \in I$, and $F_{M'}$ is a polynomial in variables $y_i|_{\mathbb{P}}=f^{-1}_i\left( \prod_{j:j \to i} f_j \right)$ $(i\in I)$.

\item[(2)] The vector $(d_i)_{i\in I}$ is such that  
\begin{align}\label{determine vectors dj associated to a general exchange pair} 
\prod_{j \in I} f_j^{d_j}= \frac{\textbf{hw}(\Phi(L))\textbf{hw}(\Phi(N))}{\textbf{hw}(\Phi(M'))}  \prod_{i \in I} A^{-\alpha_i}_{i,\xi(i)-1}.
\end{align}
\end{itemize}
In particular, we have 
\begin{align*} 
\textbf{hw}(\Phi(L))\textbf{hw}(\Phi(N))=\textbf{hw}(\Phi(M)) \left(\prod_{i \in I} f^{c_i}_i\right).
\end{align*}
\end{theorem}

\begin{proof}
 We will first show that the existence of the exact sequence (\ref{Gamma1 exact sequence a}) or the same sequence with arrows reversed follows from the following conditions.
\begin{itemize}
\item[(i)] In $K_0(\mathscr{C})$, there is an equation:
\begin{align}\label{Gamma1 equation2} 
[\Phi(L)] [\Phi(N)] = [\Phi(M)] \left(\prod_{i \in I} [L(f_i)]^{c_i} \right) + [\Phi(M')] \left(\prod_{j \in I} [L(f_j)]^{d_j} \right),
\end{align}
where $\Phi(M)=\prod_{i\in I} \Phi(E_i)^{b_i}$ if $M=\bigoplus_{i\in I} E^{\oplus b_i}_i$ with $E_i$ indecomposable and $b_i\geq 0$.

\item[(ii)] The modules $\Phi(M') \bigotimes   \left(\otimes_{j\in I}L(f_j)^{\otimes d_j}\right)$ and $\Phi(M) \bigotimes \left(\otimes_{i\in I} L(f_i)^{\otimes c_i}\right)$ are simple.

\item[(iii)] The highest $l$-weight monomial of $\Phi(M) \bigotimes \left(\otimes_{i\in I} L(f_i)^{\otimes c_i}\right)$ is the same with the highest $l$-weight monomial of $\Phi(L) \Phi(N)$, that is,  
\begin{align}\label{proof formula3}
\textbf{hw}(\Phi(L)) \textbf{hw}(\Phi(N))=\textbf{hw}(\Phi(M)) \prod_{i\in I} f_i^{c_i}.
\end{align}
\end{itemize}

Assume conditions (i)-(iii) hold. We want to show that there is an exact sequence as in the theorem. By definition of $\Phi$, the modules $\Phi(L)$ and $\Phi(N)$ are real prime simple modules. Because of conditions (i) and (ii),  the composition factors of $\Phi(L) \otimes \Phi(N)$ are $\Phi(M') \bigotimes  \left(\otimes_{j\in I}L(f_j)^{\otimes d_j}\right)$ and $\Phi(M) \bigotimes \left(\otimes_{i\in I} L(f_i)^{\otimes c_i} \right)$. So $\Phi(M) \bigotimes \left(\otimes_{i\in I} L(f_i)^{\otimes c_i} \right)$ is the unique simple top or submodule of $\Phi(L) \otimes \Phi(N)$ with the same highest $l$-weight monomial as $\Phi(L) \otimes \Phi(N)$ by condition (iii). Condition (i) implies that the remainder term $\mathcal{M}$ in the exact sequence must have the $q$-character of $\Phi(M') \bigotimes \left(\otimes_{j\in I}L(f_j)^{\otimes d_j}\right)$. Since $\Phi(M')\bigotimes \left(\otimes_{j\in I}L(f_j)^{\otimes d_j}\right)$ is simple by condition (ii), we have 
\begin{align*}
\mathcal{M} \cong \Phi(M') \prod_{j \in I} L(f_j)^{d_j}.
\end{align*}

So it suffices to show conditions (i)-(iii). Because of Theorem \ref{monoidal categorification C1}, the modules corresponding to cluster monomials in $\mathcal{A}(\Gamma)$ are real simple objects in $\mathscr{C}$. Hence $\Phi(L)$, $\Phi(M)$, $\Phi(M')$ and $\Phi(N)$ are real simple modules. Because $\{[L(f_i)] \mid i\in I\}$ is the set of frozen variables in $K_0(\mathscr{C})$, which belongs to each extended cluster, by Theorem \ref{monoidal categorification C1}, $\Phi(M')\bigotimes \left(\otimes_{j\in I}L(f_j)^{\otimes d_j}\right)$ and $\Phi(M) \bigotimes \left(\otimes_{i\in I} L(f_i)^{\otimes c_i}\right)$ are simple.

Next we show conditions (i) and (iii).

Every  $L(f_i)$ is a certain Kirillov-Reshetikhin module, so by the Frenkel-Mukhin algorithm \cite{FM01}, 
\[
\chi_{q}([L(f_i)])_{\leq \xi}=f_i=\textbf{hw}(L(f_i)).
\] 
Equation (\ref{Gamma1 equation2}) holds if and only if (by Proposition \ref{dominant monomials determine q-characters})
\[
\chi_{q}([\Phi(L)])_{\leq \xi} \chi_{q}([\Phi(N)])_{\leq \xi} = \chi_{q}([\Phi({M})])_{\leq \xi} \left(\prod_{i \in I} f^{c_i}_i\right) + \chi_{q}([\Phi({M'})])_{\leq \xi} \prod_{j \in I} f_j^{d_j}
\]
if and only if (by Equation (\ref{g-vector and height weight and F-polynomial}))
\[
\textbf{hw}(\Phi(L)) \textbf{hw}(\Phi(N))  F_{L} F_{N}= \textbf{hw}(\Phi({M})) F_M \left(\prod_{i \in I} f^{c_i}_i\right)  +  \textbf{hw}(\Phi({M'})) F_{M'} \left(\prod_{j \in I} f^{d_j}_j\right)
\]
if and only if 
\begin{align}\label{proof formula2}
F_{L} F_{N}=\frac{  \textbf{hw}(\Phi({M})) \left(\prod_{i \in I} f^{c_i}_i\right) }{ \textbf{hw}(\Phi(L)) \textbf{hw}(\Phi(N))} F_M   + \frac{ \textbf{hw}(\Phi({M'})) \left(\prod_{j \in I} f_j^{d_j}\right) }{\textbf{hw}(\Phi(L)) \textbf{hw}(\Phi(N))} F_{M'},
\end{align}
where each $F_M$ is a polynomial in variables $\widehat{y}_i:=A^{-1}_{i,\xi(i)-1}$ $(i\in I)$ with constant term $1$, and with positive integer coefficients, as well as a unique maximum degree term $\prod_{i\in I} \widehat{y}^{\text{dim}\,M_i}_i$ with coefficient 1. 
 The coefficient of $F_{M'}$ on the right hand side of Equation~(\ref{proof formula2}) is equal to $\mathbf{\widehat{y}}^\alpha$, by condition (2) of the theorem.

 By Equation (\ref{exchange relation associated to two non-split triangles}),
 we have 
\begin{align} \label{F-polynomials associated almost-split exact sequence2}
F_{L} F_N = F_M +  \left( \prod_{i\in I} \widehat{y}^{\alpha_i}_i \right) F_{M'}.
\end{align}
So we only need to show that the coefficient of $F_M$ in Equation (\ref{proof formula2}) is equal to 1. This is equivalent to proving Equation~(\ref{proof formula3}).

By Equation (\ref{g-vector and height weight and F-polynomial}), we have 
\begin{align*} 
& \textbf{hw}(\Phi(L))=\frac{\textbf{z}^{g(L)}}{F_{L}(y_1,\ldots,y_{n})|_{\mathbb{P}}}, \\
& \textbf{hw}(\Phi(N))=\frac{\textbf{z}^{g(N)}}{F_{N}(y_1,\ldots,y_{n})|_{\mathbb{P}}}, \\
& \textbf{hw}(\Phi({M})) = \frac{\textbf{z}^{g( M)}}{F_{M}(y_1,\ldots,y_{n})|_{\mathbb{P}}}, 
\end{align*}
where $\mathbb{P}$ is the tropical semifield generated by $y_i|_{\mathbb{P}}=f^{-1}_i\left( \prod_{j: j \to i} f_j \right)$, with $i \in I$. Hence 
 the left hand side of (\ref{proof formula3}) is equal to
\begin{align*}
\textbf{hw}(\Phi(L)) \textbf{hw}(\Phi(N)) & = \frac{\textbf{z}^{g(L)}\textbf{z}^{g(N)}}{ F_{L}(y_1,\ldots,y_{n})|_{\mathbb{P}} F_{N}(y_1,\ldots,y_{n})|_{\mathbb{P}}} \\
& = \frac{ \textbf{z}^{g(M)}}{F_{M}(y_1,\ldots,y_{n})|_{\mathbb{P}} \oplus \left( \left( \prod_{i\in I} y^{\alpha_i}_i \right) F_{M'}(y_1,\ldots,y_{n})\right)|_{\mathbb{P}} },
\end{align*}
where the last equation is obtained from Equation (\ref{F-polynomials associated almost-split exact sequence2}), and the equation of $g$-vectors 
\[
g(L)+g(N)=g(M).
\]

On the other hand, the right hand side of Equation (\ref{proof formula3}) is
\[
\textbf{hw}(\Phi(M)) \left(\prod_{i \in I} f^{c_i}_i\right)  = \frac{\mathbf{z}^{g(M)}}{F_M(y_1,\ldots,y_{n})|_{\mathbb{P}}}  \left( \prod_{i\in I} f_i^{c_i} \right).
\]
Thus Equation (\ref{proof formula3}) holds by condition (1) of the theorem. This shows Equations (\ref{Gamma1 equation2}) and (\ref{proof formula3}), and the proof is complete.\end{proof}

Equation (\ref{Gamma1 equation2}) gives all the exchange relations in $K_0(\mathscr{C})$, since $(L,N)$ is an exchange pair if and only if 
\[
\text {dim(Ext}^1_{\mathcal{C}}(L,N))=\text{dim(Ext}^1_{\mathcal{C}}(N,L))=1,
\] 
see Proposition \ref{characteristic of exchange pair}.

\subsubsection{ {The vector $c$}  }
In this section, we study a representation theoretic interpretation of the vector $(c_i)_{i\in I}$ in Equation (\ref{Gamma1 equation2}). We need a preparatory lemma.

\begin{lemma}\label{tropical F-polynomials}
Let $M$ be a $kQ$-module. Then
\[
F_M(y_1,\ldots,y_{n})|_{\mathbb{P}}={\bf f}^{-\underline{\text{dim}}(\text{soc}(M))},
\]
where $\text{soc}(M)$ is the socle of $M$ generated by all simple submodules of $M$, and  ${\bf f}^{d}=\prod_{i\in I}f^{d_i}_i$ for any $d\in \mathbb{Z}^I$.
\end{lemma}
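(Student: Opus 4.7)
The plan is to unfold both sides into a combinatorial statement about submodules and then settle it by two easy inequalities. First I would substitute $y_i|_{\mathbb{P}}=f_i^{-1}\prod_{j\to i}f_j$ into $F_M=\sum_e\chi(\mathrm{Gr}_e(M))\prod_i y_i^{e_i}$. Because each coefficient $\chi(\mathrm{Gr}_e(M))$ is a nonnegative integer (and thus acts as the multiplicative identity in $\mathbb{P}$ whenever it is nonzero), while tropical addition takes the coordinatewise minimum of exponents, a direct bookkeeping gives
\[
F_M(y_1,\ldots,y_n)\big|_{\mathbb{P}} = \prod_{k\in I} f_k^{m_k},\qquad m_k := \min\Bigl\{-e_k+\sum_{k\to i}e_i \,\Big|\, \chi(\mathrm{Gr}_e(M))\neq 0\Bigr\}.
\]
Thus the lemma reduces to showing $m_k=-\dim\mathrm{soc}(M)_k$ for each $k\in I$.

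For the upper bound $m_k\leq -\dim\mathrm{soc}(M)_k$, I take $e=(\dim\mathrm{soc}(M)_k)\,\epsilon_k$, the dimension vector of the $S(k)$-isotypic component of $\mathrm{soc}(M)$. Any submodule $N\subseteq M$ with $\underline{\dim}\,N=e$ is supported only at vertex $k$, so $M_\alpha(N_k)=0$ for every arrow $\alpha\colon k\to i$; hence $N_k\subseteq\bigcap_{\alpha:k\to i}\ker M_\alpha=\mathrm{soc}(M)_k$, and a dimension count forces $N_k=\mathrm{soc}(M)_k$. In particular $\mathrm{Gr}_e(M)$ is a single point, so $\chi=1\neq 0$, and this $e$ realizes the value $-\dim\mathrm{soc}(M)_k$.

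For the lower bound $m_k\geq -\dim\mathrm{soc}(M)_k$, let $e$ be any dimension vector with $\chi(\mathrm{Gr}_e(M))\neq 0$ and pick a submodule $N\subseteq M$ with $\underline{\dim}\,N=e$. Consider the linear map
\[
\phi_N\colon N_k\longrightarrow \bigoplus_{\alpha:k\to i} N_i,\qquad v\longmapsto (M_\alpha v)_\alpha.
\]
Its kernel is exactly $N_k\cap\mathrm{soc}(M)_k$, hence has dimension at most $\dim\mathrm{soc}(M)_k$. Rank-nullity then yields
\[
\dim N_k-\sum_{k\to i}\dim N_i\leq \dim\ker\phi_N\leq \dim\mathrm{soc}(M)_k,
\]
which is the desired inequality. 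The only substantive step is the identification $\ker\phi_N=N_k\cap\mathrm{soc}(M)_k$, which is immediate from the description of the socle as the intersection of kernels of outgoing arrows; the rest is tropical bookkeeping and elementary linear algebra, so I anticipate no real obstacle.
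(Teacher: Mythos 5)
Your proof is correct, and it takes a genuinely different route from the paper's. The paper first reduces to indecomposable injective modules $I(x)$, uses the Dynkin tree structure (each $\dim I(x)_j\le 1$, unique path $j\to\cdots\to x$) to argue by contradiction that no extra factor $f_j^{-1}$ can survive, then lifts to arbitrary indecomposables via injective envelopes and finally to all modules via multiplicativity $F_{M\oplus N}=F_M F_N$. You instead compute the tropical exponent $m_k$ of $f_k$ directly, identify $\mathrm{soc}(M)_k=\bigcap_{\alpha\colon k\to i}\ker M_\alpha$, and pin down $m_k=-\dim\mathrm{soc}(M)_k$ by two inequalities: rank--nullity applied to $\phi_N\colon N_k\to\bigoplus_{\alpha\colon k\to i}N_i$ for the lower bound, and the single-point Grassmannian at $e=\dim\mathrm{soc}(M)_k\,\epsilon_k$ for the upper bound. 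Your argument is shorter, avoids the slightly informal ``does not contain $f_j$ as a factor'' step in the paper, does not need multiplicativity as an auxiliary fact, and in fact makes no use of the Dynkin hypothesis at all---it applies verbatim to any finite-dimensional representation of an arbitrary finite quiver without relations. The one point you gloss over slightly is that the tropical evaluation discards integer coefficients, so the minimum in $m_k$ is taken over $e$ with $\chi(\mathrm{Gr}_e(M))\ne 0$ rather than over all $e$ with $\mathrm{Gr}_e(M)\ne\emptyset$; both of your bounds are phrased so that this distinction is harmless, but it would be worth one sentence to make that explicit.
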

\begin{proof}

Recall that $y_i|_{\mathbb{P}}=f^{-1}_i\left( \prod_{j: j \to i} f_j \right)$. If $\text{soc}(M)= S(i_1) \oplus \cdots \oplus S(i_k)$, then 
\[
F_M(y_1,\ldots,y_{n})= 1+ y_{i_1}+ \cdots + y_{i_k} + \text{higher order terms}.
\]
so $F_M(y_1,\ldots,y_{n})|_{\mathbb{P}} = f_{i_1}^{-1} \cdots f_{i_k}^{-1} \oplus 1 \oplus (\text{higher order terms})|_{\mathbb{P}}$. This shows that ${\bf f}^{-\underline{\text{dim}}(\text{soc}(M))}$ is a factor of $F_M(y_1,\ldots,y_{n})|_{\mathbb{P}}$.

Suppose first that $M=I(x)$ is an indecomposable injective $kQ$-module. For every vertex $j$ in $Q$, the dimension of the vector space $I(x)_j$ is equal to the number of paths from $j$ to $x$ in $Q$. Since $Q$ is a Dynkin quiver, there is at most one such path, hence $\underline{\dim}(I(x)_j) \le 1$. 

Assume now there exists a simple module $S(j)$ that is not a summand of $\text{soc}(I(x))$, such that the $F$-polynomial $F_{I(x)}$ contains a monomial $m$ that has a nontrivial factor $y_j^{a_j}$. By definition of the $F$-polynomial, there exists a submodule $M'\subset I(x)$ such that $\underline{\dim} (M'_j)=a_j$ and $M'_j\subset I(x)_j$. Thus  $a_j=1$ and there exists a path $j\to j_1\to\ldots\to j_t=x$. Since $M'$ is a submodule, it follows that $M'_{j_1}$ is also nonzero, and hence of dimension one. In particular, the monomial $m$ contains the factor $y_{j_1}$ as well. 
Then the specialization
\[
y_j y_{j_1}|_{\mathbb{P}} = f_j^{-1} \left( \prod_{h: h\to j}f_h \right) f_{j_1}^{-1} f_j \left( \prod_{h: h\to j_1,h\ne j}f_h \right)
\]
does not contain $f_j$ as a factor.
So in this case, our result is true.

Now suppose that $M$ is an indecomposable non-injective $kQ$-module. Then there exists a unique injective envelope $I(M)$ of $M$ such that
\[
F_{I(M)}(y_1,\ldots,y_n)|_{\mathbb{P}}={\bf f}^{-\underline{\text{dim}}(\text{soc}(I(M)))}={\bf f}^{-\underline{\text{dim}}(\text{soc}(M))}.
\] 
Since every submodule of $M$ is also a submodule of $I(m)$, we have that $F_M(y_1,\ldots,y_n)|_{\mathbb{P}}$ is a factor of ${\bf f}^{-\underline{\text{dim}}(\text{soc}(M))}$. 

Every $kQ$-module is a direct sum of certain indecomposable $kQ$-modules. Since $F_{M\oplus N}=F_{M}F_{N}$ and $\text{soc}(M\oplus N)=\text{soc}(M)\oplus \text{soc}(N)$ for two indecomposable modules $M$ and $N$, we have 
\[
F_{M\oplus N}(y_1,\ldots,y_n)|_{\mathbb{P}}=F_{M}(y_1,\ldots,y_n)|_{\mathbb{P}} F_{N}(y_1,\ldots,y_n)|_{\mathbb{P}} = {\bf f}^{-\underline{\text{dim}}(\text{soc}(M\oplus N))}.
\]
This completes the proof.
\end{proof}

\begin{corollary}\label{a corollary of tropical F-polynomials}
Let $M$ be a $kQ$-module. Then
\[
{\bf f}^{-\underline{\text{dim}}(\text{soc}(M))} = (1+ {\bf y}^{\underline{\text{dim}}(M)})|_{\mathbb{P}}.
\]
\end{corollary}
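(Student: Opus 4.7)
The plan is to match the exponent of each $f_j$ on both sides of the claimed identity. First I would unwind the right-hand side: from the specialisation $y_i\big|_{\mathbb{P}}=f_i^{-1}\prod_{k:\,k\to i}f_k$ one reads off
\[
\mathbf{y}^{\underline{\text{dim}}(M)}\big|_{\mathbb{P}}=\prod_{j\in I}f_j^{\sum_{i:\,j\to i}\dim M_i-\dim M_j},
\]
so, since tropical addition is coordinatewise minimum, the exponent of $f_j$ in $(1+\mathbf{y}^{\underline{\text{dim}}(M)})\big|_{\mathbb{P}}$ equals
\[
\min\Bigl(0,\;\sum_{i:\,j\to i}\dim M_i-\dim M_j\Bigr)\;=\;-\max\Bigl(0,\;\dim M_j-\sum_{i:\,j\to i}\dim M_i\Bigr).
\]

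\textbf{Matching with the socle.} Next I would identify this quantity with $-\dim\text{soc}(M)_j$. The multiplicity of the simple $S(j)$ in $\text{soc}(M)$ is the dimension of the kernel of the structure map $\phi_j\colon M_j\to\bigoplus_{i:\,j\to i}M_i$, hence $\dim\text{soc}(M)_j=\dim M_j-\operatorname{rank}(\phi_j)$. For the modules in the intended scope of the statement --- namely the indecomposable rigid Dynkin representations that are the summands of $L,M,N,M'$ in Theorem~\ref{main theorem2} --- I would argue that $\phi_j$ attains its maximal rank $\min\bigl(\dim M_j,\sum_{i:\,j\to i}\dim M_i\bigr)$, because any rank defect would exhibit a splittable copy of $S(j)$ and contradict indecomposability. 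This gives
\[
\dim\text{soc}(M)_j=\max\Bigl(0,\;\dim M_j-\sum_{i:\,j\to i}\dim M_i\Bigr),
\]
which matches the exponent computed above and so proves the corollary.

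\textbf{Alternative via Lemma~\ref{tropical F-polynomials} and main obstacle.} A more conceptual route is to apply Lemma~\ref{tropical F-polynomials}: it already gives $F_M\big|_{\mathbb{P}}=\mathbf{f}^{-\underline{\text{dim}}(\text{soc}(M))}$, so the corollary reduces to the identity $F_M\big|_{\mathbb{P}}=(1+\mathbf{y}^{\underline{\text{dim}}(M)})\big|_{\mathbb{P}}$, that is, that the two extreme terms of $F_M$ (the constant $1$ corresponding to $M'=0$ and the leading monomial $\mathbf{y}^{\underline{\text{dim}}(M)}$ corresponding to $M'=M$) already realise the tropical minimum in every $f_j$-coordinate. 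One inequality is automatic, since adding more summands can only decrease a tropical sum; for the reverse inequality I would need to verify that no proper non-zero submodule $M'\subsetneq M$ contributes a strictly smaller exponent in any $f_j$ than the two extremes do. This last check is again equivalent to the maximal-rank property of $\phi_j$, and I expect it to be the main technical point of the argument; every other step is routine bookkeeping in the tropical semifield.
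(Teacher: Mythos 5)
Your route and the paper's differ in presentation but pivot on the same claim, and both have the same gap there. Your computation of $\textbf{y}^{\underline{\text{dim}}(M)}\big|_{\mathbb{P}}$ is correct, and your reduction of the corollary to the socle identity
\[
\dim\text{soc}(M)_j=\max\Bigl(0,\ \dim M_j-\sum_{i:\,j\to i}\dim M_i\Bigr)
\]
— equivalently, that each structure map $\phi_j\colon M_j\to\bigoplus_{i:\,j\to i}M_i$ attains maximal rank — is exactly what has to be shown. The paper makes the same step under a different disguise: it asserts that every $i_l$ with $S(i_l)\subset\text{soc}(M)$ ``has no outgoing arrows,'' so that the factor $f_{i_l}^{-1}$ in $\textbf{y}^{\underline{\text{dim}}(M)}\big|_{\mathbb{P}}$ cannot be cancelled. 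That is a strictly stronger version of your maximal-rank claim, and neither holds for general $kQ$-modules.

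Your proposed justification (a rank defect ``would exhibit a splittable copy of $S(j)$ and contradict indecomposability'') does not close the gap: a non-zero element of $\ker\phi_j$ only produces a simple \emph{submodule} of $M$, not a direct summand, so indecomposability of $M$ gives no contradiction. Moreover the corollary is stated for arbitrary $kQ$-modules and is invoked in the proof of Theorem~\ref{equivalent descriptions of c} with $M=N\oplus\tau\,\text{Ker}(h)$, which is generally decomposable, so retreating to indecomposable summands is not available (and the identity is not multiplicative in $M$, precisely because $(1+ab)|_{\mathbb{P}}\neq(1+a)|_{\mathbb{P}}(1+b)|_{\mathbb{P}}$). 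In fact the statement as printed fails: take $Q\colon 1\to 2\to 3$ of type $A_3$ and $M=S(1)\oplus P(2)$, so $\underline{\text{dim}}(M)=(1,1,1)$ and $\text{soc}(M)=S(1)\oplus S(3)$. Then the left side is ${\bf f}^{-\underline{\text{dim}}(\text{soc}(M))}=f_1^{-1}f_3^{-1}$, while $\textbf{y}^{(1,1,1)}\big|_{\mathbb{P}}=f_1^{-1}\cdot(f_2^{-1}f_1)\cdot(f_3^{-1}f_2)=f_3^{-1}$, giving right side $1\oplus f_3^{-1}=f_3^{-1}$. The two sides disagree at $f_1$ precisely because the arrow $1\to 2$ acts as zero on the $S(1)$ summand even though $M_2\neq 0$, i.e.\ $\phi_1$ is not of maximal rank. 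Your alternative route via Lemma~\ref{tropical F-polynomials} reduces to the same check and runs into the identical obstruction; the only unconditional direction in either proof is the one-sided divisibility $(1+\textbf{y}^{\underline{\text{dim}}(M)})\big|_{\mathbb{P}}\succeq F_M\big|_{\mathbb{P}}={\bf f}^{-\underline{\text{dim}}(\text{soc}(M))}$. A correct statement therefore needs an additional hypothesis on $M$ (e.g.\ that no direct summand $S(j)$ occurs at a vertex $j$ whose out-neighbours support $M$), and the application in Theorem~\ref{equivalent descriptions of c} would then need to be re-examined for that hypothesis.
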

\begin{proof}
Suppose that $\text{soc}(M)= S(i_1) \oplus \cdots \oplus S(i_k)$. Since $S(i_l)$, with $1\leq l \leq k$, is a simple submodule of $M$, there are no morphisms outgoing from the index $i_l$ contributed by $S(i_l)$. This implies that the factor $f^{-1}_{i_l}$ in $1+ \textbf{y}^{\underline{\text{dim}}(M)}$ cannot be cancelled. Hence ${\bf f}^{-\underline{\text{dim}}(\text{soc}(M))}$ is a factor in $(1+ \textbf{y}^{\underline{\text{dim}}(M)})|_{\mathbb{P}}$. On the other hand, $(1+ \textbf{y}^{\underline{\text{dim}}(M)})|_{\mathbb{P}}$ is a factor in $F_M(y_1,\ldots,y_{n})|_{\mathbb{P}}={\bf f}^{-\underline{\text{dim}}(\text{soc}(M))}$. Our result follows.
\end{proof}

Let $M,N \in \mathcal{C}$. We use the  convention that $\text{soc}(P(j)[1])=0$ for $j\in I$. We define non-negative integers $n_j,m_j$ by $\text{soc}(M)= \oplus_{j\in I} S(j)^{m_j}$ and $\text{soc}(N)=\oplus_{j\in I} S(j)^{n_j}$ and let $\underline\delta (N,M)
=(\max(n_j-m_j,0))_{j\in I}.$

As a direct consequence of Lemma \ref{tropical F-polynomials}, we obtain a formula for the vector $(c_i)_{i\in I}$.

\begin{theorem}\label{equivalent descriptions of c}
In Equation (\ref{Gamma1 equation2}), the vector  $(c_i)_{i\in I}$ is 
\begin{align}\label{a formula of the vector ci in general case}
(c_i)_{i\in I} = \underline{\delta}(N\oplus \tau_{\mathcal{C}} \,\text{Ker}(h),M) \geq 0.
\end{align} 
In particular, if $L=\tau\,N$, 
\begin{align*}
(c_i)_{i\in I}=\underline{\delta}(N,M) \geq 0.
\end{align*}
 Moreover the following are equivalent:
\begin{itemize}
\item [(1)] The vector $(c_i)_{i\in I}$ is 0. 

\item [(2)] $F_{M}(y_1,\ldots,y_{n})|_{\mathbb{P}}=F_{M}(y_1,\ldots,y_{n})|_{\mathbb{P}} \oplus \left( {\bf y}^{\alpha} F_{M'}(y_1,\ldots,y_{n})\right)|_{\mathbb{P}}.$

\item [(3)] $\text{soc}(N\oplus \tau_{\mathcal{C}}\,\text{Ker}(h)) \text{ is a summand of } \text{soc}(M).$

\item [(4)] $\text{soc}(M) = \text{soc}(N) \oplus \text{soc}(L)$.

\item [(5)] $\textbf{hw}(\Phi(N)) \text{ is a factor in } \textbf{hw}(\Phi(M)).$
\end{itemize}
\end{theorem}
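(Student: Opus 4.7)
My plan is to tropicalize Equation (\ref{determine variables fi associated a general exchange pairs}) and combine Lemma \ref{tropical F-polynomials} with Lemma \ref{lem Hubery} to rewrite everything in terms of socles. First, I would tropicalize the cluster exchange relation $F_L F_N = F_M + \mathbf{y}^\alpha F_{M'}$ to obtain $F_L|_{\mathbb{P}} F_N|_{\mathbb{P}} = F_M|_{\mathbb{P}} \oplus (\mathbf{y}^\alpha F_{M'})|_{\mathbb{P}}$, which identifies the denominator on the right-hand side of (\ref{determine variables fi associated a general exchange pairs}). By Lemma \ref{tropical F-polynomials} (with the convention $\text{soc}(P(j)[1])=0$), each $F_X|_{\mathbb{P}}$ for $X\in\{L,M,N\}$ equals $\mathbf{f}^{-\underline{\dim}(\text{soc}(X))}$, and by Lemma \ref{lem Hubery}, $F_{M'}|_{\mathbb{P}} = \mathbf{f}^{-\underline{\dim}(\text{soc}(\tau\,\text{Ker}(h))) - \underline{\dim}(\text{soc}(\text{Coker}(h)))}$.

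The central technical step will be to prove the identity
\[
(\mathbf{y}^\alpha F_{M'})|_{\mathbb{P}} = \mathbf{f}^{-\underline{\dim}(\text{soc}(N\oplus\tau\,\text{Ker}(h)))},
\]
which, after substituting the formula for $F_{M'}|_{\mathbb{P}}$, is equivalent to $\mathbf{y}^\alpha|_{\mathbb{P}} \cdot F_{\text{Coker}(h)}|_{\mathbb{P}} = F_N|_{\mathbb{P}}$. Using Corollary \ref{a corollary of tropical F-polynomials} to express each $F_X|_{\mathbb{P}}$ as $1 \oplus \mathbf{y}^{\underline{\dim}(X)}|_{\mathbb{P}}$, together with the dimension relation $\alpha + \underline{\dim}(\text{Coker}(h)) = \underline{\dim}(N)$ coming from the short exact sequence $0 \to \text{Im}(h) \to N \to \text{Coker}(h) \to 0$, this reduces further to the componentwise tropical identity
\[
\min\bigl(\delta_i(\text{Im}(h)),\,\delta_i(N)\bigr) = \min(0,\,\delta_i(N)) \qquad \text{for all } i\in I,
\]
where $\delta_i(X) = -\dim X_i + \sum_{j\colon i\to j} \dim X_j$ is the $f_i$-exponent of $\mathbf{y}^{\underline{\dim}(X)}|_{\mathbb{P}}$. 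I would verify this by applying $\text{Hom}(S(i), -)$ and $\text{Ext}^1(S(i), -)$ to the above short exact sequence, rewriting $\delta_i(X) = \dim\text{Ext}^1(S(i),X) - \dim\text{soc}(X)_i$ via Auslander--Reiten duality in the hereditary category $\text{mod}\,kQ$, and exploiting the fact that $h$ is the distinguished (up to scalar) element of the one-dimensional space $\text{Hom}_{\mathcal{C}}(\tau^{-1}L, N)$.

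With the key identity in hand, the tropical arithmetic $f^a/(f^a \oplus f^b) = f^{\max(a-b,0)}$ applied to (\ref{determine variables fi associated a general exchange pairs}) will yield $\prod_{i}f_i^{c_i} = \mathbf{f}^{\max(\underline{\dim}(\text{soc}(N\oplus\tau\,\text{Ker}(h))) - \underline{\dim}(\text{soc}(M)),\,0)} = \mathbf{f}^{\underline{\delta}(N\oplus\tau\,\text{Ker}(h),M)}$, proving the main formula with non-negativity of $c_i$ immediate from the maximum. In the special case $L = \tau N$, the morphism $h\colon \tau^{-1} L \cong N \to N$ is an isomorphism, so $\text{Ker}(h) = 0$ and the formula specializes to $(c_i)_{i\in I} = \underline{\delta}(N,M)$. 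For the equivalences, $(1) \Leftrightarrow (2)$ is the direct tropical translation of $c_i = 0$ via Equation (\ref{determine variables fi associated a general exchange pairs}); $(2) \Leftrightarrow (3)$ follows from Lemma \ref{tropical F-polynomials} combined with the socle formula for $(\mathbf{y}^\alpha F_{M'})|_{\mathbb{P}}$ derived above; and $(3) \Leftrightarrow (4)$ uses Equation (\ref{g-vector and height weight and F-polynomial}), which gives $\textbf{hw}(\Phi(X)) = \mathbf{z}^{g(X)} \cdot \mathbf{f}^{\underline{\dim}(\text{soc}(X))}$, translating socle containment into divisibility of highest $\ell$-weight monomials. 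The main obstacle will be the componentwise tropical identity in the second paragraph, whose verification ultimately amounts to showing that the connecting map $\text{Ext}^1(S(i),\text{Im}(h)) \to \text{Ext}^1(S(i),N)$ induced by $\text{Im}(h) \hookrightarrow N$ vanishes for every simple $S(i)$, a specific structural property of the canonical exchange-pair morphism $h$.
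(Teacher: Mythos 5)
Your overall plan --- tropicalize Equation~(\ref{determine variables fi associated a general exchange pairs}), convert everything to socles via Lemma~\ref{tropical F-polynomials} and Lemma~\ref{lem Hubery}, and finish with the tropical arithmetic $f^a/(f^a\oplus f^b)=f^{\max(a-b,0)}$ --- is the same as the paper's, and the later portions (the specialization $L=\tau N$ and the four equivalences) are in the right shape. The gap is in your ``central technical step.'' The proposed identity $(\textbf{y}^\alpha F_{M'})|_{\mathbb{P}}={\bf f}^{-\underline{\text{dim}}(\text{soc}(N\oplus\tau\,\text{Ker}(h)))}$ is false in general, and so is the Ext-vanishing lemma you plan to use to establish it. Take $Q=1\to2$ of type $A_2$, $L=P(2)[1]$, $N=S(2)=P(2)$, so that $h\colon\tau^{-1}L=P(2)\to N$ is an isomorphism. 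Then $\text{Ker}(h)=\text{Coker}(h)=0$, $M'=0$, $\alpha=\underline{\text{dim}}\,S(2)=(0,1)$, and $(\textbf{y}^\alpha F_{M'})|_{\mathbb{P}}=y_2|_{\mathbb{P}}=f_1 f_2^{-1}$, which has a strictly positive $f_1$-exponent, whereas ${\bf f}^{-\underline{\text{dim}}\,\text{soc}(S(2))}=f_2^{-1}$. In the same example your componentwise identity becomes $\min(\delta_1(\text{Im}\,h),\delta_1(N))=\min(1,1)\ne\min(0,1)=\min(0,\delta_1(N))$, and the map $\text{Ext}^1(S(1),\text{Im}(h))\to\text{Ext}^1(S(1),N)$ you want to vanish is the identity on $\text{Ext}^1(S(1),S(2))\cong k$. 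Whenever $h$ is surjective, that map is an isomorphism and your lemma cannot hold.

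The correct statement, and what the paper proves, is the weaker one with the constant term retained: $(1+\textbf{y}^\alpha F_{M'})|_{\mathbb{P}}={\bf f}^{-\underline{\text{dim}}(\text{soc}(N\oplus\tau\,\text{Ker}(h)))}$. The paper does not compute $(\textbf{y}^\alpha F_{M'})|_{\mathbb{P}}$ directly; it sandwiches the tropicalization of $1+\textbf{y}^\alpha F_{M'}$. Because submodules of $\text{Coker}(h)$ biject with submodules of $N$ containing $\text{Im}(h)$, every monomial of $1+\textbf{y}^\alpha F_{M'}$ occurs in $F_{N\oplus\tau\,\text{Ker}(h)}$, so Lemma~\ref{tropical F-polynomials} gives one divisibility against $F_{N\oplus\tau\,\text{Ker}(h)}|_{\mathbb{P}}$. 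Conversely, both $1$ and the top monomial $\textbf{y}^{\underline{\text{dim}}(N\oplus\tau\,\text{Ker}(h))}$ occur in $1+\textbf{y}^\alpha F_{M'}$, so Corollary~\ref{a corollary of tropical F-polynomials} gives the reverse divisibility. This weaker identity suffices for the theorem because $F_M|_{\mathbb{P}}={\bf f}^{-\underline{\text{dim}}\,\text{soc}(M)}$ already has nonpositive exponents, hence $F_M|_{\mathbb{P}}\oplus(\textbf{y}^\alpha F_{M'})|_{\mathbb{P}}=F_M|_{\mathbb{P}}\oplus(1+\textbf{y}^\alpha F_{M'})|_{\mathbb{P}}$: any spurious positive exponent of $(\textbf{y}^\alpha F_{M'})|_{\mathbb{P}}$ is absorbed after taking $\oplus$ with $F_M|_{\mathbb{P}}$. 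With this one replacement the rest of your outline goes through unchanged.
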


\begin{proof}
It follows from Equation (\ref{Hubery's a result}) and the vector $\alpha$ in Equation (\ref{exchange relation associated to two non-split triangles}) that 
\begin{align*}
\textbf{y}^{\alpha} F_{M'}(y_1,\ldots,y_{n}) = \textbf{y}^{\alpha} F_{\text{Coker}(h)}(y_1,\ldots,y_{n}) F_{\tau_{\mathcal{C}}\,\text{Ker}(h)}(y_1,\ldots,y_{n}), 
\end{align*}
and $\textbf{y}^{\alpha} F_{\text{Coker}(h)}(y_1,\ldots,y_{n})$ has a unique maximum degree term $\textbf{y}^{\underline{\text{dim}}(N)}$.

The set of submodules of $\text{Coker}(h)$ is bijection with the set of submodules of $N$ containing $\text{Im}(h)$. Every term in $1+\textbf{y}^{\alpha} F_{M'}(y_1,\ldots,y_{n})$ appears as a term in $F_{N \oplus \tau_{\mathcal{C}}\,\text{Ker}(h)}(y_1,\ldots,y_{n})$. The tropicalization does not depend on the integer coefficients, and so 
\[
(1+\textbf{y}^{\alpha} F_{M'}(y_1,\ldots,y_{n}))|_{\mathbb{P}}
\] 
is a factor of $F_{N \oplus \tau_{\mathcal{C}}\,\text{Ker}(h)}|_{\mathbb{P}}(y_1,\ldots,y_n)={\bf f}^{-\underline{\text{dim}}(\text{soc}(N \oplus \tau_{\mathcal{C}}\,\text{Ker}(h)))}$. On the other hand, by Corollary \ref{a corollary of tropical F-polynomials}, 
\begin{align*}
(1+ \textbf{y}^{\underline{\text{dim}}(N\oplus \tau_{\mathcal{C}}\,\text{Ker}(h))})|_{\mathbb{P}}={\bf f}^{-\underline{\text{dim}}(\text{soc}(N \oplus \tau_{\mathcal{C}}\,\text{Ker}(h)))}
\end{align*}
is a factor of $(1+\textbf{y}^{\alpha} F_{M'}(y_1,\ldots,y_{n}))|_{\mathbb{P}}$. Hence 
\[
(1+\textbf{y}^{\alpha} F_{M'}(y_1,\ldots,y_{n}))|_{\mathbb{P}}={\bf f}^{-\underline{\text{dim}}(\text{soc}(N \oplus \tau_{\mathcal{C}}\,\text{Ker}(h)))}.
\] 
Note that the $F$-polynomial $F_{M}$ has a constant term 1. We have 
\begin{align}\label{the equivalence of (b) and (c)}
\begin{split}
& F_{M}(y_1,\ldots,y_{n})|_{\mathbb{P}} \oplus \left( \textbf{y}^{\alpha} F_{M'}(y_1,\ldots,y_{n})\right)|_{\mathbb{P}}   \\
& = F_{M}(y_1,\ldots,y_{n})|_{\mathbb{P}} \oplus \left(1\oplus  \textbf{y}^{\alpha} F_{M'}(y_1,\ldots,y_{n})\right)|_{\mathbb{P}} \\
& ={\bf f}^{\min\{-\underline{\text{dim}}(\text{soc}(M)), -\underline{\text{dim}}(\text{soc}(N \oplus \tau_{\mathcal{C}}\,\text{Ker}(h)))\}}.
\end{split}
\end{align}
Here $\min$ is taken in terms of components of vectors.

By Equation (\ref{determine variables fi associated a general exchange pairs}), and Lemma \ref{tropical F-polynomials}, we have
\begin{align}\label{the equivalence of (a) and (c)}
\begin{split}
\prod_{i\in I} f_i^{c_i} & = \frac{ F_{M}(y_1,\ldots,y_{n})|_{\mathbb{P}} }{ F_{M}(y_1,\ldots,y_{n})|_{\mathbb{P}} \oplus \left( \left( \prod_{i\in I} y^{\alpha_i}_i \right) F_{M'}(y_1,\ldots,y_{n})\right)|_{\mathbb{P}} } \\
& = \frac{{\bf f}^{-\underline{\text{dim}}(\text{soc}(M))}}{{\bf f}^{\min\{-\underline{\text{dim}}(\text{soc}(M)),-\underline{\text{dim}}(\text{soc}(N \oplus \tau_{\mathcal{C}}\,\text{Ker}(h)))\}}} \\
& = {\bf f}^{\max\{\underline{\text{dim}}(\text{soc}(M)),\underline{\text{dim}}(\text{soc}(N\oplus \tau_{\mathcal{C}}\,\text{Ker}(h)))\}-\underline{\text{dim}}(\text{soc}(M))} \\
& ={\bf f}^{\underline{{\delta}}(N \oplus \tau_{\mathcal{C}}\,\text{Ker}(h),M)},
\end{split}
\end{align}
where we agree that $\text{soc}(P(i)[1])=0$ for any $i\in I$. So Equation (\ref{a formula of the vector ci in general case}) follows.

The remainder of the proof is devoted to proving the equivalence of statements (1)--(5). The equivalence of the items (1) and (3) follows from Equation (\ref{the equivalence of (a) and (c)}). The item (2) holds if and only if (by Equation (\ref{the equivalence of (b) and (c)}) and Lemma \ref{tropical F-polynomials})
\[
{\bf f}^{-\underline{\text{dim}}(\text{soc}(M))} = {\bf f}^{\min\{-\underline{\text{dim}}(\text{soc}(M)), -\underline{\text{dim}}(\text{soc}(N \oplus \tau_{\mathcal{C}}\,\text{Ker}(h)))\}},
\]
if and only if
\[
\text{soc}(N\oplus \tau_{\mathcal{C}}\,\text{Ker}(h)) \text{ is a summand of } \text{soc}(M).
\]  

By Equation (\ref{g-vector and height weight and F-polynomial}) and Lemma \ref{tropical F-polynomials}, we have 
\begin{align}\label{hight weight equation of M, N, and L}
\begin{split}
\textbf{hw}(\Phi(M)) & = \frac{\textbf{z}^{g(M)}}{F_{M}(y_1,\ldots,y_{n})|_{\mathbb{P}}} =  \textbf{z}^{g(L)+g(N)} {\bf f}^{\underline{\text{dim}}(\text{soc}(M))}, \\
\textbf{hw}(\Phi(N)) & = \frac{\textbf{z}^{g(N)}}{F_{N}(y_1,\ldots,y_{n})|_{\mathbb{P}}}  =  \textbf{z}^{g(N)} {\bf f}^{\underline{\text{dim}}(\text{soc}(N))},  \\
\textbf{hw}(\Phi(L)) & = \frac{\textbf{z}^{g(L)}}{F_{L}(y_1,\ldots,y_{n})|_{\mathbb{P}}} = \textbf{z}^{g(L)} {\bf f}^{\underline{\text{dim}}(\text{soc}(L))}.
\end{split}
\end{align} 
It follows from Equation (\ref{proof formula3}) that the item (1) holds if and only if 
\[
\textbf{hw}(\Phi(L)) \textbf{hw}(\Phi(N))=\textbf{hw}(\Phi(M)),
\]
if and only if (by Equation (\ref{hight weight equation of M, N, and L}))
\[
\text{soc}(M) = \text{soc}(N) \oplus \text{soc}(L).
\]
So the items (1) and (4) are equivalent.

Now we prove that $(1) \Rightarrow (5) \Rightarrow (3)$. Assume that the item (1) holds. Then the item (5) follows from Equation (\ref{proof formula3}). Assume that the item (5) holds. By Equation (\ref{hight weight equation of M, N, and L}) and our assumption, 
\begin{align*}
\frac{\textbf{hw}(\Phi(M))}{\textbf{hw}(\Phi(N))} & = \textbf{z}^{g(M)-g(N)} {\bf f}^{\underline{\text{dim}}(\text{soc}(M)) - \underline{\text{dim}}(\text{soc}(N))} \\
& = \textbf{z}^{g(L)} {\bf f}^{\underline{\text{dim}}(\text{soc}(M)) - \underline{\text{dim}}(\text{soc}(N))} \\
& = \textbf{z}^{g(L)^+} \textbf{z}^{g(L)^-} {\bf f}^{\underline{\text{dim}}(\text{soc}(M)) - \underline{\text{dim}}(\text{soc}(N))},
\end{align*}
where $g(L)^+$ (respectively, $g(L)^-$) is the non-negative (respectively, non-positive) component of $g(L)$. Moreover, $g(L)^- = -\underline{\text{dim}}(\text{soc}(L))$, since the injective modules defining the non-positive vectors $g(L)^-$ are determined by $\text{soc}(L)$.

Recall that $f_i=Y_{i,\xi(i)}Y_{i,\xi(i)-2}$ for $i\in I$ and $\textbf{z}=\{ Y_{i,\xi(i)} \mid i\in I \}$. By assumption, $\textbf{z}^{g(L)^-} {\bf f}^{\underline{\text{dim}}(\text{soc}(M)) - \underline{\text{dim}}(\text{soc}(N))}$ is a monomial in the $Y_{i,a}$ with positive exponents. It follows that
\[
\underline{\text{dim}}(\text{soc}(M)) - \underline{\text{dim}}(\text{soc}(N)) -\underline{\text{dim}}(\text{soc}(L))  \geq 0,
\]
that is, $\text{soc}(N) \oplus \text{soc}(L)$ is a summand of $\text{soc}(M)$. Since $\text{Ker}(h)$ is a submodule of $\tau^{-1}L$ and $\tau_{\mathcal{C}}$ is an equivalence of $\mathcal{C}$, we have 
\[
0 \to \tau_{\mathcal{C}}\,\text{Ker}(h) \to L,
\] 
which implies that $\text{soc}(\tau_{\mathcal{C}}\,\text{Ker}(h))$ is a summand of $\text{soc}(L)$. So we have  
\[
\text{soc}(N \oplus \tau_{\mathcal{C}}\,\text{Ker}(h)) \text{ is a summand of } \text{soc}(M),
\]
and this shows (3).
\end{proof}

\subsubsection{The vector $d$}
In this subsection, we study a representation theoretic interpretation of the vector $(d_j)_{j\in I}$ in Equation (\ref{Gamma1 equation2}).

Let $\langle -,- \rangle: \mathbb{Z}^I\times \mathbb{Z}^I \to \mathbb{Z}^I$ be the Euler form of $kQ$. For any two finite-dimensional $kQ$-modules $L$ and $M$, we have
\[
\langle  \underline{ \textup{dim}}\,L, \underline{ \textup{dim}}\,M \rangle = \text{dim}\,\text{Hom}(L,M)-\text{dim}\,\text{Ext}^1(L,M).
\]


By definition, $\tau_{\mathcal{C}} M=\tau_{kQ} M$ if $M$ has no projective summands, and $\tau_{\mathcal{C}} P(i)=P(i)[1]$ for $i\in I$.

\begin{lemma}\label{a g-vectors equation}
Let $M$ be an indecomposable $kQ$-module. Then
\begin{align*}
-g(\tau_{\mathcal{C}} M)-g(M)=a(M),
\end{align*}
where $a(M)\in \mathbb{Z}^I$ is defined by 
\begin{align*}
a(M)_i=\sum_{j:j\to i} \underline{\text{dim}}(M_j) - \sum_{j:i\to j} \underline{\text{dim}}(M_j).
\end{align*}
\end{lemma}

\begin{proof}
The proof follows from Lemma 2.1~(2) and Lemma 2.3 in \cite{Pal08}. We explain it as follows. Note that the notation $-\text{coind}(M)$ in \cite{Pal08} corresponds to our $g(M)$. By Lemma~2.1~(2) and Lemma 2.3 in \cite{Pal08}, for any $i\in I$, 
\begin{align*}
-g(\tau_{\mathcal{C}} M)_i= -\langle M, S(i) \rangle = -\underline{\text{dim}}(M_i) + \sum_{j:j\to i} \underline{\text{dim}}(M_j). 
\end{align*}
Again by Lemma 2.3 in \cite{Pal08}, for any $i\in I$,
\begin{align*}
-g(M)_i= \langle S(i), M \rangle = \underline{\text{dim}}(M_i) - \sum_{j:i\to j} \underline{\text{dim}}(M_j). 
\end{align*}
Adding the two equations above, our result follows.
\end{proof}

In the following, we give a representation theoretic interpretation of $\prod_{i \in I} A^{-\underline{\dim}(M_i)}_{i,\xi(i)-1}$.

\begin{lemma}\label{lemma on representation theoretic interpretations of affine roots}
Let $M$ be a $kQ$-module. For any height function $\xi$ and $i\in I$, we have
\begin{align}\label{representation theoretic interpretations of affine roots}
\prod_{i \in I} A^{-\underline{\dim}(M_i)}_{i,\xi(i)-1} =  {\bf z}^{a(M)}  {\bf f}^{g(M)},
\end{align} 
where the vector $a(M)$ is defined in Lemma \ref{a g-vectors equation}.
\end{lemma}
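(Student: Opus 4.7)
The plan is to rewrite each factor $A_{i,\xi(i)-1}$ directly as a monomial in the variables $f_j$ and $z_j$, compute the product on the left-hand side by collecting exponents, and then identify the resulting exponents with $a(M)$ and $g(M)$.

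First I would unpack the definition $A_{i,\xi(i)-1}=Y_{i,\xi(i)-2}Y_{i,\xi(i)}\prod_{j\sim i}Y_{j,\xi(i)-1}^{-1}=f_i\prod_{j\sim i}Y_{j,\xi(i)-1}^{-1}$. Since $\xi$ is a height function and $Q$ is obtained via $i\to j$ iff $\xi(i)=\xi(j)+1$, for each neighbor $j\sim i$ in the Dynkin diagram there are two cases. If $j\to i$ in $Q$ then $\xi(j)=\xi(i)+1$ and $Y_{j,\xi(i)-1}=Y_{j,\xi(j)-2}=f_jz_j^{-1}$; if $i\to j$ in $Q$ then $\xi(j)=\xi(i)-1$ and $Y_{j,\xi(i)-1}=Y_{j,\xi(j)}=z_j$. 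Substituting yields
\[
A_{i,\xi(i)-1}^{-1}=f_i^{-1}\prod_{j\to i}f_j\cdot\prod_{i\to j}z_j\cdot\prod_{j\to i}z_j^{-1}.
\]

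Next I would raise this identity to the power $\dim M_i$ and take the product over $i\in I$, collecting terms. The exponent of each $z_j$ is
\[
\sum_{i:\,i\to j}\dim M_i-\sum_{i:\,j\to i}\dim M_i,
\]
which is exactly $a(M)_j$ by definition. The exponent of each $f_j$ is $-\dim M_j+\sum_{i:\,j\to i}\dim M_i$.

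Finally I would match this $f$-exponent with $g(M)_j$ by invoking Palu's formula (as already used in the proof of Lemma \ref{a g-vectors equation}): $-g(M)_i=\langle S(i),M\rangle=\dim M_i-\sum_{j:\,i\to j}\dim M_j$, which rearranges to $g(M)_j=-\dim M_j+\sum_{i:\,j\to i}\dim M_i$. This matches the $f$-exponent, completing the proof. The calculation is essentially bookkeeping; the only point requiring care is the correct translation between the orientation of $Q$ and the sign of $\xi(j)-\xi(i)$, which cleanly separates the $f$-contribution from the $z$-contribution.
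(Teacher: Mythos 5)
Your proposal is correct and follows essentially the same route as the paper's proof: unpack $A^{-1}_{i,\xi(i)-1}$ as $f_i^{-1}\prod_{j\sim i}Y_{j,\xi(i)-1}$, use the orientation of $Q$ (determined by $\xi$) to rewrite each $Y_{j,\xi(i)-1}$ as $z_j$ or $f_j z_j^{-1}$, and collect exponents, matching the $z$-exponent with $a(M)$ and the $f$-exponent with $g(M)$ via Palu's formula $-g(M)_j=\langle S(j),M\rangle$. The only difference is presentational: you make the bookkeeping slightly more explicit by writing out the exponent sums for each $z_j$ and $f_j$, which is harmless.
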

\begin{proof}
Recall that $A^{-1}_{i,r} = Y^{-1}_{i,r-1} Y^{-1}_{i,r+1} \left(\prod_{j:C_{ij}=-1} Y_{j,r} \right)$, the identification $z_{i}= Y_{i,\xi(i)}$, and $f_i=Y_{i,\xi(i)}Y_{i,\xi(i)-2}$  for $i\in I$, $r\in \mathbb{Z}$. For any $i,j\in I$ with $C_{ij}=-1$,
\begin{align*}
Y_{j,\xi(i)-1} = \begin{cases}
z_{j}  & \text{if $i \to j$ in $Q$},  \\
\frac{f_j}{z_{j}}  & \text{if $j \to i$ in $Q$}.
\end{cases}
\end{align*}
Each $A^{-1}_{i,\xi(i)-1}$ contributes one $f^{-1}_i$, $z_{j}=Y_{j,\xi(i)-1}$ for $i \to j$, and $\frac{f_j}{z_{j}}=Y_{j,\xi(i)-1}$ for $j \to i$. In the left hand side of Equation (\ref{representation theoretic interpretations of affine roots}), it contributes $(\sum_{j:j\to i} \underline{\text{dim}}(M_j) - \sum_{j:i\to j} \underline{\text{dim}}(M_j))$  $z_{i}$, and $(\underline{\text{dim}}(M_i)-\sum_{j:i\to j} \underline{\text{dim}}(M_j))$ $f^{-1}_i$. Our result follows.
\end{proof}

\begin{lemma} \label{a g-vectors equation associated the other triangle}
For the almost split triangle 
\begin{align}\label{non-split triangles NL}
& N \overset{u}{\to} M' \to L \to N[1], 
\end{align}
we have 
\[
g(M')=g(L)+g(N)+a(\text{Im}(h)).
\]
\end{lemma}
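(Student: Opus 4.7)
The plan is to reduce the computation of $g(M')$ to an additive formula built from three ingredients: (i) the decomposition $M' \cong \tau_{\mathcal{C}}\,\text{Ker}(h) \oplus \text{Coker}(h)$ provided by Lemma \ref{lem Hubery}, (ii) the linearity of $g$-vectors along short exact sequences in $\text{mod}\,kQ$ (Corollary \ref{g-vector equation associated an almost split sequence}(1)), and (iii) the identity $g(\tau X) = -g(X) - a(X)$ from Lemma \ref{a g-vectors equation}. Together with additivity of $g$ on direct sums, these ingredients should express $g(M')$ as an explicit $\mathbb{Z}$-linear combination of the $g$-vectors and $a$-vectors of the four canonical pieces $\text{Ker}(h)$, $\text{Im}(h)$, $\tau^{-1}L$, and $N$.

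First I would invoke Lemma \ref{lem Hubery} to write $g(M') = g(\tau\,\text{Ker}(h)) + g(\text{Coker}(h))$. Next I would extract from the morphism $h\colon \tau^{-1}L \to N$ the two short exact sequences
\begin{align*}
0 \to \text{Ker}(h) \to \tau^{-1}L \to \text{Im}(h) \to 0, \qquad 0 \to \text{Im}(h) \to N \to \text{Coker}(h) \to 0,
\end{align*}
and apply Corollary \ref{g-vector equation associated an almost split sequence}(1) to each, obtaining $g(\text{Coker}(h)) = g(N) - g(\text{Im}(h))$ and $g(\text{Ker}(h)) = g(\tau^{-1}L) - g(\text{Im}(h))$. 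Finally I would use Lemma \ref{a g-vectors equation} to eliminate $\tau^{\pm 1}$: it rewrites $g(\tau\,\text{Ker}(h))$ as $-g(\text{Ker}(h)) - a(\text{Ker}(h))$ and, applied to $\tau^{-1}L$, gives $g(\tau^{-1}L) = -g(L) - a(\tau^{-1}L)$. Substituting and telescoping, the two copies of $g(\text{Im}(h))$ cancel, and the remaining $a$-contribution collapses via $a(\tau^{-1}L) - a(\text{Ker}(h)) = a(\text{Im}(h))$, which is inherited from the first short exact sequence by $\mathbb{Z}$-linearity of the vector $a(-)$ in the dimension vector. What survives is exactly $g(L) + g(N) + a(\text{Im}(h))$.

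The main obstacle is the boundary cases: Lemma \ref{a g-vectors equation} is stated only for non-projective modules, but along the way we may encounter $L = P(i)[1]$ (so that $\tau^{-1}L = P(i)$ is projective in $\text{mod}\,kQ$) or $\text{Ker}(h)$ projective, where the formula requires reinterpretation in the cluster category using $\tau_{\mathcal{C}}(P(i)) = I(i)[-1]$. I would treat these cases by a short direct verification; alternatively, one can bypass them entirely by appealing to \cite[Theorem 1.5]{ST16}: since $(L,N)$ is an exchange pair with $c$-vector $\alpha = \underline{\text{dim}}(\text{Im}(h))$, one has $g(M') = g(M) + \mathcal{B}_{Q_B}\alpha$, and when the initial cluster-tilting object is $T = kQ$ the identification $\mathcal{B}_{Q_B}\alpha = a(\text{Im}(h))$ is immediate from the definition of $a(-)$ and the exchange matrix of $Q$. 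Combined with the standing assumption $g(M) = g(L) + g(N)$ of Lemma \ref{g-vector equation in an exchange pair}, this yields the lemma at once.
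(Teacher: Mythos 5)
Your proposal is correct, and it takes a genuinely different route from the one in the paper. The paper's argument is shorter: it applies Palu's Proposition \ref{Palu's g-vector formula} once, to the given triangle $N\overset{u}{\to} M'\to L\to N[1]$, cites \cite[Corollary 4.9]{Fei23} to identify $\text{Ker}(\text{Hom}_{\mathcal{C}}(kQ,u))$ with $\text{Im}(h)$, and then invokes Lemma \ref{a g-vectors equation} on the single module $\text{Im}(h)$. You instead start from Hubery's decomposition $M'\cong\tau\,\text{Ker}(h)\oplus\text{Coker}(h)$, push the two short exact sequences cut out by $h$ through Corollary \ref{g-vector equation associated an almost split sequence}, and apply Lemma \ref{a g-vectors equation} twice, to $\text{Ker}(h)$ and to $\tau^{-1}L$; the telescoping, using $\mathbb{Z}$-linearity of $a(-)$ in the dimension vector so that $a(\tau^{-1}L)=a(\text{Ker}(h))+a(\text{Im}(h))$, does produce $g(L)+g(N)+a(\text{Im}(h))$ as required. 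The cost is that your computation passes through $\tau^{-1}L$ and $\text{Ker}(h)$, which may be projective, and you correctly flag the need to extend Lemma \ref{a g-vectors equation} to those cases; the paper's route applies that lemma only at $\text{Im}(h)$ and is less exposed to this issue (and in fact the Palu identities invoked in that lemma's proof do not depend on the almost split sequence, so the non-projectivity hypothesis is not truly essential). Your fallback via \cite[Theorem 1.5]{ST16} together with the standing normalization $g(M)=g(L)+g(N)$ is the slickest option: with the initial cluster-tilting object, $(\mathcal{B}_{Q_B}\alpha)_i=\sum_{j\to i}\alpha_j-\sum_{i\to j}\alpha_j=a(\text{Im}(h))_i$, so $g(M')=g(M)+\mathcal{B}_{Q_B}\alpha=g(L)+g(N)+a(\text{Im}(h))$ in one line and with no boundary cases.
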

\begin{proof}
Recall that $h: \tau^{-1} L \to N$. By Proposition \ref{Palu's g-vector formula}, 
\[
g(M')=g(N)+g(L)-g(\text{Ker}(\text{Hom}_{\mathcal{C}}(kQ,u)))-g(\tau\,\text{Ker}(\text{Hom}_{\mathcal{C}}(kQ,u))), 
\] 
where $\text{Hom}_{\mathcal{C}}(kQ,-):\mathcal{C} \to \text{mod}\,kQ$ is a functor from $\mathcal{C}$ to $\text{mod}\,kQ$, which induces an equivalence $\mathcal{C}/kQ[1] \cong \text{mod}\,kQ$. It follows from \cite[Corollary 4.9]{Fei23} that $\text{Ker}(\text{Hom}_{\mathcal{C}}(kQ,u))=\text{Im}(h)$.   Lemma \ref{a g-vectors equation} yields $-g( \text{Im} \,h)-g(\tau\,\text{Im} \,h)=a( \text{Im} \,h)$, and the proof is complete.
\end{proof}

Denote by $g(N)^+$ (respectively, $g(N)^-$) the non-negative (respectively, non-positive) component of $g(N)$ for $N \in \text{mod}\,kQ$.

\begin{lemma}\label{inequations of positive and negative g-vectors} 
Let $0\to L \to M \to N\to 0$ be an exact sequence in $\text{mod}\,kQ$. Then 
\begin{align*}
g(M)^- \leq g(L)^-, \quad  g(M)^+ \geq g(N)^+.
\end{align*}
\end{lemma}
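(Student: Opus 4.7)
\noindent\textit{Proof plan.} The approach is to apply the functor $\text{Hom}(S(i),-)$ to the short exact sequence $0\to L\to M\to N\to 0$ for every $i\in I$ and extract the two inequalities from the resulting long exact sequence. Since $Q$ is a Dynkin quiver, $kQ$ is hereditary, so $\text{Ext}^{k}(S(i),-)=0$ for all $k\geq 2$, and the long exact sequence truncates to the six-term exact sequence
\[
0\to\text{Hom}(S(i),L)\to\text{Hom}(S(i),M)\to\text{Hom}(S(i),N)\to\text{Ext}^{1}(S(i),L)\to\text{Ext}^{1}(S(i),M)\to\text{Ext}^{1}(S(i),N)\to 0.
\]

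The key identification I would invoke is that for any $X\in\text{mod}\,kQ$, the multiplicities $a_i(X), b_i(X)$ appearing in the minimal injective coresolution $0\to X\to\bigoplus_i I(i)^{a_i(X)}\to\bigoplus_i I(i)^{b_i(X)}\to 0$ are given by
\[
a_i(X)=\dim\text{Hom}(S(i),X)=\dim(\text{soc}(X))_i,\qquad b_i(X)=\dim\text{Ext}^{1}(S(i),X),
\]
so that $g(X)_i=b_i(X)-a_i(X)$. In the natural decomposition $g(X)=g(X)^{+}+g(X)^{-}$ coming from the coresolution, the non-negative component has $i$-th entry $b_i(X)$ and the non-positive component has $i$-th entry $-a_i(X)$.

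Feeding the six-term sequence into this identification, the injectivity of $\text{Hom}(S(i),L)\hookrightarrow\text{Hom}(S(i),M)$ yields $a_i(L)\leq a_i(M)$, whence $g(M)^{-}_{i}=-a_i(M)\leq-a_i(L)=g(L)^{-}_{i}$; dually, the surjectivity of $\text{Ext}^{1}(S(i),M)\twoheadrightarrow\text{Ext}^{1}(S(i),N)$ yields $b_i(N)\leq b_i(M)$, whence $g(M)^{+}_{i}\geq g(N)^{+}_{i}$. Since this holds for every $i\in I$, both componentwise inequalities of the lemma follow. No substantive obstacle arises; the only subtle point is the identification of $g(X)^{\pm}$ with the injective coresolution data (rather than with a naive componentwise $\min/\max$ of $g(X)$, which in general would not suffice), and once this is settled the statement is an immediate consequence of the heredity of $kQ$.
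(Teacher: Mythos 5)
Your proof is correct, and for the second inequality it takes a genuinely different route from the paper's. For $g(M)^{-}\leq g(L)^{-}$ both you and the paper rely on $\text{soc}(L)\subseteq\text{soc}(M)$, i.e.\ on the injectivity of $\text{Hom}(S(i),L)\to\text{Hom}(S(i),M)$, so that part is the same argument in two languages. For $g(M)^{+}\geq g(N)^{+}$, however, the paper combines $g(M)=g(L)+g(N)$ with the first inequality and then invokes a ``no cancellation'' property of $g(N)$ (that $g(N)^{+}$ and $g(N)^{-}$ have disjoint supports), whereas you read $b_{i}(N)\leq b_{i}(M)$ directly off the surjectivity of $\text{Ext}^{1}(S(i),M)\to\text{Ext}^{1}(S(i),N)$, which is forced by $\text{Ext}^{2}(S(i),L)=0$. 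Your route is shorter and more robust: it uses neither the $g$-vector additivity nor the no-cancellation claim, and the latter can actually fail for decomposable modules (for instance $N=S(2)\oplus S(3)$ over $Q\colon 1\to 2\to 3$ has $a_{2}(N)=b_{2}(N)=1$), so the paper's argument for the second inequality implicitly needs an extra hypothesis on $N$ that your argument dispenses with. Your closing remark on the meaning of $g^{\pm}$ is also exactly on target: with the naive entrywise $\max/\min$ reading the statement is already false for $0\to S(1)\to S(1)\oplus S(2)\to S(2)\to 0$ over $1\to 2$, so one must take $g(X)^{+}=(b_{i}(X))_{i}$ and $g(X)^{-}=(-a_{i}(X))_{i}$ from the minimal injective coresolution, which is precisely what the paper's own proof of the first inequality and its later application of the lemma require.
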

\begin{proof}
It follows from Proposition \ref{Palu's g-vector formula} that $g(M)=g(L)+g(N)$, equivalently, 
\begin{align} \label{the g-vector formula is the need}
g(M)^- + g(M)^+  = g(L)^+ + g(L)^- +  g(N)^+ +  g(N)^-.
\end{align}
The injective modules defining the non-positive vectors $g(L)^-$ and $g(M)^-$ are determined by $\text{soc}(L)$ and $\text{soc}(M)$ respectively. Since $\text{soc}(L) \subset \text{soc}(M)$, we have $g(M)^- \leq g(L)^-$. By Equation (\ref{the g-vector formula is the need}), we obtain the following inequality
\[
g(M)^+  \geq  g(N)^+ + g(L)^+ +  g(N)^-.
\]
By definition of $g(N)^+$ and $g(N)^-$, there is no cancellation between $g(N)^+$ and $g(N)^-$. In other words, we assume that the positive coordinates in $g(N)^+$ are $\{i_1, \ldots, i_k\}$, and the negative coordinates in $g(N)^-$ are $\{j_1, \ldots, j_r\}$. Then $\{i_1, \ldots, i_k\} \cap \{j_1, \ldots, j_r\} =\emptyset$. All possible negative coordinates in $g(L)^+ + g(N)^-$ must be from $\{j_1, \ldots, j_r\}$, while the values of $g(N)^+$ at coordinates $\{j_1, \ldots, j_r\}$ are 0.  So
\[
g(M)^+  \geq   g(N)^+  + \max\{g(L)^+ + g(N)^-,0\} \geq  g(N)^+. \qedhere
\] 
\end{proof}

Combining the lemmas above, we obtain a formula of the vector $(d_j)_{j\in I}$.

\begin{theorem} \label{equivalent descriptions of d}
In Equation (\ref{Gamma1 equation2}), the vector $(d_j)_{j\in I}$ is 
\begin{align*}
(d_j)_{j\in I} = \underline{\text{dim}}(\text{soc}(L))+\underline{\text{dim}}(\text{soc}(N))+g(\text{Im}(h))-\underline{\text{dim}}(\text{soc}(M')).
\end{align*}
Moreover
\begin{align} \label{inequation of the exponents in general case}
0 \leq \underline{\text{dim}}(\text{soc}(L)) - \underline{\text{dim}}(\text{soc}(\tau_{\mathcal{C}}\,Ker(h))) \leq (d_j)_{j\in I}.
\end{align}
In particular, if  $L=\tau_{\mathcal{C}}\,N$, then $(d_j)_{j\in I} = \underline{\text{dim}}(\text{soc}(\tau_{\mathcal{C}}\,N))+\underline{\text{dim}}(\text{soc}(N))+g(N)$.
\end{theorem}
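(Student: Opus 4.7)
My plan is to establish the equality first by a direct substitution, then to address the two inequalities in~(\ref{inequation of the exponents in general case}) separately.

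For the equality, I would take the defining relation~(\ref{determine vectors dj assocaited to a general exchange pair}) for $\prod_{j\in I} f_j^{d_j}$ and substitute the formulas for $\textbf{hw}(\Phi(L))$, $\textbf{hw}(\Phi(N))$, and $\textbf{hw}(\Phi(M'))$ from Equation~(\ref{hight weight equation of M, N, and Ker(h)}). The ratio $\textbf{hw}(\Phi(L))\textbf{hw}(\Phi(N))/\textbf{hw}(\Phi(M'))$ becomes
$$\mathbf{z}^{g(L)+g(N)-g(M')}\,\mathbf{f}^{\underline{\text{dim}}(\text{soc}(L))+\underline{\text{dim}}(\text{soc}(N))-\underline{\text{dim}}(\text{soc}(M'))}.$$
Lemma~\ref{a g-vectors equation associated the other triangle} then replaces $g(L)+g(N)-g(M')$ by $-a(\text{Im}(h))$, and Lemma~\ref{lemma on representation theoretic interpretations of affine roots} (with $\alpha=\underline{\text{dim}}(\text{Im}(h))$) rewrites $\prod_{i\in I}A_{i,\xi(i)-1}^{-\alpha_i}$ as $\mathbf{z}^{a(\text{Im}(h))}\,\mathbf{f}^{g(\text{Im}(h))}$. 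The $\mathbf{z}$-powers cancel and the stated formula for $(d_j)_{j\in I}$ drops out.

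For the left-hand inequality $\underline{\text{dim}}(\text{soc}(\tau\,\text{Ker}(h)))\leq \underline{\text{dim}}(\text{soc}(L))$, I would split on the form of $L$. If $L=P(i)[1]$, then $\tau^{-1}L=P(i)$, so $\text{Ker}(h)\subseteq P(i)$ is itself projective because $kQ$ is hereditary and hence submodules of projectives are projective; consequently $\tau_{\mathcal{C}}\,\text{Ker}(h)\in kQ[1]$ has zero socle under our convention, while $\text{soc}(L)=0$ as well. If instead $L\in \text{mod}\,kQ$, I would exploit Serre duality $\text{Hom}_{\mathcal{D}}(S(i),\tau X)\cong D\,\text{Ext}^1_{kQ}(X,S(i))$ to reformulate the socle comparison as an $\text{Ext}^1$ comparison; the short exact sequence $0\to \text{Ker}(h)\to \tau^{-1}L\to \text{Im}(h)\to 0$ together with $\text{Ext}^2_{kQ}=0$ gives a surjection $\text{Ext}^1(\tau^{-1}L,S(i))\twoheadrightarrow \text{Ext}^1(\text{Ker}(h),S(i))$, which dualises to the claim.

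For the right-hand inequality I would use Lemma~\ref{lem Hubery} to split $\underline{\text{dim}}(\text{soc}(M'))=\underline{\text{dim}}(\text{soc}(\tau\,\text{Ker}(h)))+\underline{\text{dim}}(\text{soc}(\text{Coker}(h)))$; after substitution the claim reduces to
$$\underline{\text{dim}}(\text{soc}(\text{Coker}(h)))\leq \underline{\text{dim}}(\text{soc}(N))+g(\text{Im}(h))$$
coordinate-wise. I would derive this by applying $\text{Hom}(S(j),-)$ to $0\to \text{Im}(h)\to N\to \text{Coker}(h)\to 0$ and combining the resulting six-term exact sequence with Palu's identity $-g(X)_j=\langle S(j),X\rangle_{kQ}=\dim\text{Hom}(S(j),X)-\dim\text{Ext}^1(S(j),X)$ from~\cite{Pal08}. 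The special case $L=\tau N$ then follows immediately: since $\text{Hom}(N,N)\cong k$ the non-zero map $h$ must be an isomorphism, forcing $\text{Ker}(h)=\text{Coker}(h)=M'=0$ and $\text{Im}(h)=N$, and the general formula collapses to $\underline{\text{dim}}(\text{soc}(\tau N))+\underline{\text{dim}}(\text{soc}(N))+g(N)$. The most delicate point is the bookkeeping around the convention that $\text{soc}$ of a shifted projective is zero, which forces the case split for the left-hand inequality; beyond that the argument reduces to combining the already-established formulas for highest $\ell$-weights, $g$-vectors, and the Euler form.
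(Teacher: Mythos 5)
Your proof of the equality is essentially identical to the paper's: substitute~(\ref{hight weight equation of M, N, and Ker(h)}) and Lemma~\ref{lemma on representation theoretic interpretations of affine roots} into~(\ref{determine vectors dj assocaited to a general exchange pair}), then use Lemma~\ref{a g-vectors equation associated the other triangle} to cancel the $\mathbf{z}$-powers. The interesting differences are in the two inequalities.

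For the left-hand inequality $\underline{\text{dim}}(\text{soc}(\tau\,\text{Ker}(h)))\leq\underline{\text{dim}}(\text{soc}(L))$, the paper compresses the argument into one sentence (``$\tau$ is an equivalence of $\mathcal{C}$, hence $\tau\,\text{Ker}(h)\hookrightarrow L$''), which is correct but implicitly relies on exactly the hereditary/AR-duality input you spell out. Your case split on $L=P(i)[1]$ versus $L\in\text{mod}\,kQ$ (with the Serre-duality reformulation $\dim\text{Hom}(S(j),\tau X)=\dim\text{Ext}^1(X,S(j))$ and the surjection $\text{Ext}^1(\tau^{-1}L,S(j))\twoheadrightarrow\text{Ext}^1(\text{Ker}(h),S(j))$ coming from $\text{Ext}^2_{kQ}=0$) is a fully rigorous elaboration of the same idea; it also handles cleanly the situation where $\text{Ker}(h)$ has projective direct summands, since those summands get shifted in $\mathcal{C}$ and acquire zero socle by the convention.

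For the right-hand inequality, your route is genuinely different from the paper's. The paper passes to the decomposition $g(X)=g(X)^++g(X)^-$, uses $-g(X)^-=\underline{\text{dim}}(\text{soc}(X))$, and invokes the auxiliary Lemma~\ref{inequations of positive and negative g-vectors} (applied to $0\to\text{Im}(h)\to N\to\text{Coker}(h)\to 0$) to get $g(N)^+\geq g(\text{Coker}(h))^+$. You instead reduce the claim, via Hubery's decomposition of $M'$, to the coordinate-wise bound $\underline{\text{dim}}(\text{soc}(\text{Coker}(h)))\leq\underline{\text{dim}}(\text{soc}(N))+g(\text{Im}(h))$, and then read it off directly from the long exact sequence obtained by applying $\text{Hom}(S(j),-)$ to the same short exact sequence, combined with $g(\text{Im}(h))_j=-\langle S(j),\text{Im}(h)\rangle$. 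This is shorter, bypasses Lemma~\ref{inequations of positive and negative g-vectors} entirely, and avoids the (mildly delicate) identification $g(X)^-=-\underline{\text{dim}}(\text{soc}(X))$ for arbitrary $kQ$-modules; the two arguments are of course equivalent under the hood, since both rest on the vanishing of $\text{Ext}^2$ and the Euler-form description of $g$-vectors, but yours is the more elementary of the two. The treatment of the special case $L=\tau N$ is correct: $N$ is a brick, so the nonzero $h\colon N\to N$ is an isomorphism, $\text{Ker}(h)=\text{Coker}(h)=M'=0$, and the formula collapses as claimed.
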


\begin{proof}
By  definition of the vector $(d_i)_{i\in I}$,  
\begin{gather}\label{eq 519a}
\begin{aligned}
\prod_{j \in I} f_j^{d_j} & = \frac{\textbf{hw}(\Phi(L))\textbf{hw}(\Phi(N))}{\textbf{hw}(\Phi(M'))}  \prod_{i \in I}  A^{-\alpha_i}_{i,\xi(i)-1} \\
& = \frac{\textbf{z}^{g(L)+g(N)-g(M')+a(\text{Im}(h))}{\bf f}^{g(\text{Im}(h))} F_{M'}(y_1,\ldots,y_{n})|_{\mathbb{P}}}{ F_{L}(y_1,\ldots,y_{n})|_{\mathbb{P}} F_{N}(y_1,\ldots,y_{n})|_{\mathbb{P}}} \\
& = {\bf f}^{\underline{\text{dim}}(\text{soc}(L))+\underline{\text{dim}}(\text{soc}(N))+g(\text{Im}(h))-\underline{\text{dim}}(\text{soc}(M'))}, 
\end{aligned}
\end{gather} 
where the second equality follows from Equation (\ref{g-vector and height weight and F-polynomial}) and Lemma \ref{lemma on representation theoretic interpretations of affine roots}, the last equality follows from Lemma \ref{tropical F-polynomials} and Lemma \ref{a g-vectors equation associated the other triangle}.

It follows from Equation (\ref{Hubery's a result}) that 
\begin{align}\label{eq 519}
\begin{split}
& \underline{\text{dim}}(\text{soc}(N))+g(\text{Im}(h))-\underline{\text{dim}}(\text{soc}(M')) \\
& = \underline{\text{dim}}(\text{soc}(N))+g(\text{Im}(h))-\underline{\text{dim}}(\text{soc}(\tau_{\mathcal{C}}\,\text{Ker}(h))) - \underline{\text{dim}}(\text{soc}(\text{Coker}(h))).
\end{split}
\end{align}

Applying Lemma \ref{inequations of positive and negative g-vectors} to the following exact sequence 
\[
0 \to \text{Im}(h) \to N \to \text{Coker}(h) \to 0
\]
in $\text{mod}\,kQ$, we have $g(N)^+ \geq g(\text{Coker}(h))^+$.  Substituting this in Equation (\ref{eq 519}) yields
\begin{align*}
& \underline{\text{dim}}(\text{soc}(N))+g(\text{Im}(h))-\underline{\text{dim}}(\text{soc}(M')) \\
& = (\underline{\text{dim}}(\text{soc}(N))+g(N)) - (g(\text{Coker}(h)) + \underline{\text{dim}}(\text{soc}(\text{Coker}(h)))) - \underline{\text{dim}}(\text{soc}(\tau_{\mathcal{C}}\,\text{Ker}(h))) \\
& = g(N)^+  - g(\text{Coker}(h))^+ - \underline{\text{dim}}(\text{soc}(\tau_{\mathcal{C}}\,\text{Ker}(h)) \\
& \geq - \underline{\text{dim}}(\text{soc}(\tau_{\mathcal{C}}\,\text{Ker}(h)),
\end{align*}
where the second equality holds because $\underline{\text{dim}}(\text{soc}(N))$ is equal to the absolute value of the negative part of $g(N)$, for $N\in \text{mod}\,kQ$.

Hence
\begin{gather*}
\begin{aligned}
\underline{\text{dim}}(\text{soc}(L)) - \underline{\text{dim}}(\text{soc}(\tau_{\mathcal{C}}\,\text{Ker}(h)) & \leq \underline{\text{dim}}(\text{soc}(L))+\underline{\text{dim}}(\text{soc}(N))+g(\text{Im}(h))-\underline{\text{dim}}(\text{soc}(M')) \\
& = (d_j)_{j\in I},
\end{aligned}
\end{gather*}
where the last equality follows from Equation (\ref{eq 519a}). Since $Ker(h)$ is a submodule of $\tau^{-1}L$ and $\tau_{\mathcal{C}}$ is an equivalent of $\mathcal{C}$, we have $0\to \tau_{\mathcal{C}}\,Ker(h)\to L$, and so $\text{soc}(\tau_{\mathcal{C}}\,Ker(h)) \subseteq \text{soc}(L)$. This implies that $(d_j)_{j\in I} \geq 0$.
\end{proof}

\subsubsection{ An algorithm for the highest $l$-weight monomials of Hernandez-Leclerc modules. }

In this section, similar to the knitting algorithm of Auslander-Reiten quivers, we give an algorithm to compute the highest $l$-weight monomials of Hernandez-Leclerc modules. 

Consider the following non-split exchange triangles
\begin{align*}                                    
& P(i)[1] \to  0  \to  I(i) \to P(i)[2]=I(i), \\
& I(i) \to \oplus_{j:i\to j} P(j)[1] \bigoplus \oplus_{j:j\to i} I(j) \to P(i)[1], 
\end{align*}
where $h$ is the morphism $\tau^{-1} P(i)[1] \cong P(i) \to I(i)$. Then $\text{Im}\,h=S(i)$, and $\text{Ker}\,h=\oplus_{j:i\to j} P(j)$. By Theorems \ref{equivalent descriptions of c} and \ref{equivalent descriptions of d},
\begin{gather}
\begin{align*}
(c_i)_{i\in I} & = \underline{\delta}(I(i) \oplus \tau_{\mathcal{C}}\,\text{Ker}(h),0)=\underline{\text{dim}}(S(i)), \\
(d_j)_{j\in I} & = \underline{\text{dim}}(\text{soc}(P(i)[1]))+\underline{\text{dim}}(\text{soc}(I(i)))+g(S(i))-\underline{\text{dim}}(\text{soc}(\oplus_{j:i\to j} P(j)[1] \bigoplus \oplus_{j:j\to i} I(j))) \\
& = \underline{\text{dim}}(\text{soc}(I(i)))+g(S(i))-\underline{\text{dim}}(\oplus_{j:j\to i} \text{soc}(I(j))) \\
& = 0,
\end{align*}
\end{gather}
where the last equation follows from $0\to S(i) \to I(i) \to \oplus_{j:j\to i}I(j) \to 0$. With these values for $c_i,d_j$, our Theorem \ref{main theorem2} yields
\begin{align}\label{$T$-system equation in our setting}
[\Phi(P(i)[1])] [\Phi(I(i))] & = [L(f_i)] + \prod_{j:i\to j} [\Phi(P(j)[1])] \prod_{j:j\to i} [\Phi(I(j))].
\end{align}

Recall that $\Phi(P(i)[1])=L(Y_{i,\xi(i)})$ for $i\in I$. Moreover, Equation (\ref{proof formula3}) yields
\[
\textbf{hw}(\Phi(P(i)[1])) \textbf{hw}(\Phi(I(i))) = f_i
\] 
and hence $\textbf{hw}(\Phi(I(i)))=Y_{i,\xi(i)-2}$. We have shown the following.
\begin{proposition}\label{images of indecomposable injective modules under Phi}
 For any indecomposable injective $I(i)$,
\begin{align}\label{the real prime simple modules associated to indecomposable injective modules}
\Phi(I(i))=L(Y_{i,\xi(i)-2}). 
\end{align}\qed
\end{proposition} 
\begin{remark}\label{one case of our Equation from T system}
 It follows from the definition of $\xi$ in Section \ref{subcategories} that $\Phi(P(j)[1])=L(Y_{j,\xi(i)-1})$ for $i \to j$, and $\Phi(I(j))=L(Y_{j,\xi(i)-1})$ for $j \to i$. Thus our Equation (\ref{$T$-system equation in our setting}) is nothing but an equation from $T$-system:
\begin{align*}
[L(Y_{i,\xi(i)-2})] [L(Y_{i,\xi(i)})] =  [L(f_i)] + \prod_{j:j\sim i} [L(Y_{j,\xi(i)-1})].
\end{align*}
\end{remark}

Applying Theorem \ref{main theorem2} to the case that $L=\tau\,N$, and using the fact that in this case $\alpha=\underline{\text{dim}}(N)$ and $M'=0$, we obtain the following corollary.

\begin{corollary}\label{a corollary of main theorem2}
Let $N$ be an indecomposable non-projective $kQ$-module. For an almost split sequence 
\[
0 \longrightarrow \tau N \longrightarrow M  \longrightarrow N \longrightarrow 0,
\]
we have an exact sequence in $\mathscr{C}$:
\begin{align}\label{Gamma1 exact sequence}
0 \to \bigotimes_{j\in I} L(f_j)^{\otimes d_j}  \to \Phi(\tau N) \otimes \Phi(N) \to  \Phi(M)  \otimes   \left(\bigotimes_{i\in I} L(f_i)^{\otimes c_i} \right) \to 0,
\end{align}
or the same sequence with arrows reversed, where all $c_i$ and $d_j$ are non-negative integers defined by:  
\begin{itemize}
\item[(1)] $(c_i)_{i\in I}=\underline{\delta}(N,M)$,
\item[(2)] $(d_j)_{j\in I} = \underline{\text{dim}}(\text{soc}(\tau N))+\underline{\text{dim}}(\text{soc}(N))+g(N)$.
\end{itemize}
 Moreover the following are equivalent:
\begin{itemize}
\item [(1)] The vector $(c_i)_{i\in I}$ is 0. 

\item [(2)] $F_{M}(y_1,\ldots,y_{n})|_{\mathbb{P}}=F_{M}(y_1,\ldots,y_{n})|_{\mathbb{P}} \oplus \left( {\bf y}^{\underline{\text{dim}}(N)} \right)|_{\mathbb{P}}.$

\item [(3)] $\text{soc}(N) \text{ is a summand of } \text{soc}(M).$

\item [(4)] $\text{soc}(M) = \text{soc}(N) \oplus \text{soc}(\tau N)$.

\item [(5)] $\textbf{hw}(\Phi(N)) \text{ is a factor in } \textbf{hw}(\Phi(M)).$
\end{itemize}
\end{corollary}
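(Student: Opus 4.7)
The plan is to derive this corollary as a direct specialization of Theorem \ref{main theorem2} (together with Theorems \ref{equivalent descriptions of c} and \ref{equivalent descriptions of d}) to the exchange pair $(L,N) = (\tau N, N)$. First I would verify that $(\tau N, N)$ is indeed an exchange pair in the sense of Section \ref{cluster characters}: by the 2-Calabi-Yau property of $\mathcal{C}$ and the Auslander-Reiten formula,
\[
\dim \textup{Ext}^1_{\mathcal{C}}(N, \tau N) \;=\; \dim D\,\textup{Hom}_{\mathcal{C}}(\tau N, \tau N) \;=\; \dim D\,\textup{End}_{\mathcal{C}}(N) \;=\; 1
\]
since $N$ is indecomposable. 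Then the first exchange triangle $\tau N \to M \to N \to \tau N[1]$ in $\mathcal{C}$ is precisely the triangle induced by the almost split sequence $0 \to \tau N \to M \to N \to 0$ in $\textup{mod}\,kQ$, so Lemma \ref{g-vector equation in an exchange pair} applies and the orientation hypothesis of Theorem \ref{main theorem2} (that $g(M) = g(L) + g(N)$) is automatic by Corollary \ref{g-vector equation associated an almost split sequence}(1).

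Next I would analyze the second exchange triangle $N \to M' \to \tau N \to N[1]$. The morphism $h \colon \tau^{-1}L = N \to N$ appearing in Lemma \ref{lem Hubery} is a nonzero endomorphism of the indecomposable module $N$, hence an isomorphism, so $\textup{Ker}(h) = 0$, $\textup{Coker}(h) = 0$, and $\textup{Im}(h) = N$. Lemma \ref{lem Hubery} then forces $M' = \tau\,\textup{Ker}(h) \oplus \textup{Coker}(h) = 0$, and the $c$-vector from Section \ref{cluster characters} specializes to $\alpha = \underline{\textup{dim}}(\textup{Im}(h)) = \underline{\textup{dim}}(N)$.

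With these identifications the exact sequence (\ref{Gamma1 exact sequence}) follows immediately from (\ref{Gamma1 exact sequence a}) of Theorem \ref{main theorem2}: the left-hand factor $\Phi(M')$ collapses to the trivial module, leaving $\bigotimes_j L(f_j)^{\otimes d_j}$. The formulas for the coefficients are then obtained by plugging $\tau\,\textup{Ker}(h) = 0$ into Theorem \ref{equivalent descriptions of c}, giving
\[
(c_i)_{i\in I} \;=\; \underline{\delta}(N \oplus 0,\, M) \;=\; \underline{\delta}(N, M),
\]
and plugging $\textup{Im}(h) = N$, $\textup{soc}(L) = \textup{soc}(\tau N)$, $\textup{soc}(M') = 0$ into Theorem \ref{equivalent descriptions of d}, giving
\[
(d_j)_{j\in I} \;=\; \underline{\textup{dim}}(\textup{soc}(\tau N)) + \underline{\textup{dim}}(\textup{soc}(N)) + g(N).
\]

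Finally, the four-way equivalence for the vanishing of $(c_i)_{i\in I}$ is obtained by specializing the corresponding equivalence in Theorem \ref{equivalent descriptions of c}. With $\tau\,\textup{Ker}(h) = 0$, conditions (3) and (4) reduce to the stated form, while condition (2) reads $F_M|_{\mathbb{P}} = F_M|_{\mathbb{P}} \oplus ({\bf y}^{\underline{\textup{dim}}(N)})|_{\mathbb{P}}$, using $F_{M'} = F_0 = 1$ and $\alpha = \underline{\textup{dim}}(N)$. Since the argument is entirely a specialization of already-proved results, I do not anticipate any substantive obstacle; the only bookkeeping point is to check that $\Phi$ extends consistently to the zero object (taking $\Phi(0) = \mathbb{C}$, $\textbf{hw}(0) = 1$), which is the natural convention already used implicitly in Theorem \ref{main theorem2}.
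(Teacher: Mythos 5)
Your proof is correct and follows essentially the same route as the paper: the authors derive the corollary by specializing Theorem~\ref{main theorem2} to $L=\tau N$, using that $M'=0$ and $\alpha=\underline{\dim}\,N$, together with the ``in particular'' clauses already recorded in Theorems~\ref{equivalent descriptions of c} and~\ref{equivalent descriptions of d}. The only thing you spell out that the paper leaves implicit is the verification that $(\tau N,N)$ is an exchange pair and that $h\colon N\to N$ is forced to be an isomorphism since $\mathrm{End}_{kQ}(N)=k$; both points are accurate, so your argument is a faithful (slightly more detailed) account of the intended proof.
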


The exact sequence (\ref{Gamma1 exact sequence}) implies that the following Equation (\ref{Gamma1 equation}) holds in $K_0(\mathscr{C})$:
\begin{align} \label{Gamma1 equation}
[\Phi(\tau N)] [\Phi(N)] = [\Phi(M)] \prod_{i\in I} [L(f_i)]^{c_i} + \prod_{j \in I} [L(f_j)]^{d_j}.
\end{align}

Proposition \ref{images of indecomposable injective modules under Phi} and Corollary \ref{a corollary of main theorem2} give in fact an algorithm to compute the highest $l$-weight monomials of Hernandez-Leclerc modules. We start with the images $\Phi(I(i))$ of the indecomposable injective modules $I(i)$ and apply Corollary~\ref{a corollary of main theorem2} until we reach the images $\Phi(P(i))$ of the indecomposable projective modules $P(i)$. As a consequence, Hernandez-Leclerc modules (excluding initial and frozen Hernandez-Leclerc modules) in $\mathscr{C}$ can be read off from the Auslander-Reiten quiver of $\text{mod}\,kQ$. We give an example of type $D_4$ to explain it.

\begin{example}
Let $\xi(1,2,3,4)=(4,3,2,2)\in \mathbb{Z}^4$. By definition, our quiver $Q$ is the following quiver 
\[
\xymatrix@C35pt@R15pt{ && 3 \\ 1 \ar [r] & 2 \ar[ur] \ar[dr] &  \\  && 4.}
\] 

Following \cite{Sch08,Sch14}, a geometric model of the Auslander-Reiten quiver of the cluster category of $Q$ can be realized by arcs in  the punctured square $P^{\bullet}_4$, see Figure \ref{D4clustercategory} and Section \ref{a geometric realization of cluster category}. The Auslander-Reiten quiver of the cluster category of $Q$ is shown in Figure~\ref{the Auslander-Reiten quiver of linear D4}. Following \cite{BMR07}, the Auslander-Reiten quiver of $\text{mod}\,kQ$ can be obtained from Figure~\ref{the Auslander-Reiten quiver of linear D4} by deleting $P(i)[1]$, with $i=1,2,3,4$.

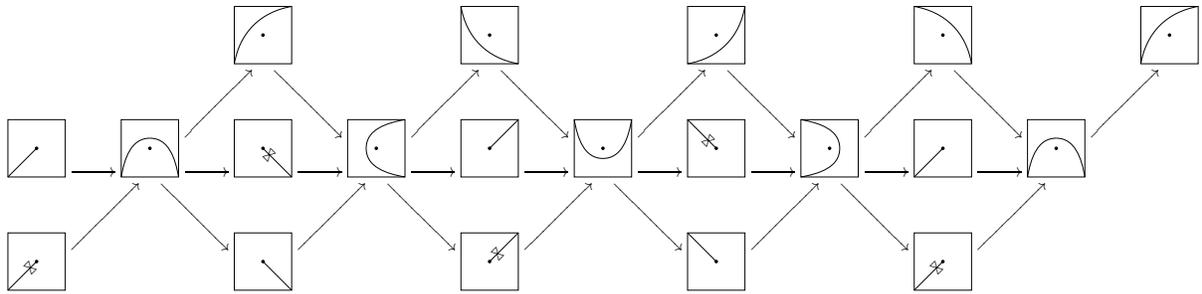
\begin{figure}[H]
\center
\begin{gather*}
\xymatrix{
&& \begin{tikzpicture}
\fill circle (1pt);
\draw  (45:0.8cm) \foreach \x in {135,225,315}{
 -- (\x:0.8cm) } -- cycle (135:0.8cm);
\draw (225:0.8) to [out=80,in=190] (45:0.8); 
\end{tikzpicture} \ar[rd]  &&  
\begin{tikzpicture}
\fill circle (1pt);
\draw  (45:0.8cm) \foreach \x in {135,225,315}{
 -- (\x:0.8cm) } -- cycle (135:0.8cm);
\draw (135:0.8) to [out=-80,in=170] (315:0.8); 
\end{tikzpicture} \ar[rd]  &&  
\begin{tikzpicture}
\fill circle (1pt);
\draw  (45:0.8cm) \foreach \x in {135,225,315}{
 -- (\x:0.8cm) } -- cycle (135:0.8cm);
\draw (45:0.8) to [out=-100,in=10] (225:0.8); 
\end{tikzpicture} \ar[rd] && 
\begin{tikzpicture}
\fill circle (1pt);
\draw  (45:0.8cm) \foreach \x in {135,225,315}{
 -- (\x:0.8cm) } -- cycle (135:0.8cm);
\draw (315:0.8) to [out=100,in=-10] (135:0.8);  
\end{tikzpicture} \ar[rd] && \begin{tikzpicture}
\fill circle (1pt);
\draw  (45:0.8cm) \foreach \x in {135,225,315}{
 -- (\x:0.8cm) } -- cycle (135:0.8cm);
\draw (225:0.8) to [out=80,in=190] (45:0.8); 
\end{tikzpicture}  \\
\begin{tikzpicture}
\fill circle (1pt);
\draw  (45:0.8cm) \foreach \x in {135,225,315}{
 -- (\x:0.8cm) } -- cycle (135:0.8cm);
\path (225:0.8) edge (225:0); 
\end{tikzpicture}  \ar[r] &  \begin{tikzpicture}
\fill circle (1pt);
\draw  (45:0.8cm) \foreach \x in {135,225,315}{
 -- (\x:0.8cm) } -- cycle (135:0.8cm);
\draw (225:0.8) to [out=80,in=180] (90:0.2); 
\draw (315:0.8) to [out=100,in=0] (90:0.2);   
\end{tikzpicture}  \ar[rd] \ar[ru] \ar[r] & \begin{tikzpicture}
\fill circle (1pt);
\draw  (45:0.8cm) \foreach \x in {135,225,315}{
 -- (\x:0.8cm) } -- cycle (135:0.8cm);
\path (315:0.8) edge (315:0); 
\node[rotate=45]  at (315:0.2) {$\substack{\bowtie}$};
\end{tikzpicture} \ar [r] & \begin{tikzpicture}
\fill circle (1pt);
\draw  (45:0.8cm) \foreach \x in {135,225,315}{
 -- (\x:0.8cm) } -- cycle (135:0.8cm);
\draw (315:0.8) to [out=170,in=-90] (180:0.2); 
\draw (45:0.8) to [out=190,in=90] (180:0.2);   
\end{tikzpicture} \ar[rd] \ar[ru] \ar[r]  & \begin{tikzpicture}
\fill circle (1pt);
\draw  (45:0.8cm) \foreach \x in {135,225,315}{
 -- (\x:0.8cm) } -- cycle (135:0.8cm);
\path (45:0.8) edge (45:0); 
\end{tikzpicture} \ar[r]  & \begin{tikzpicture}
\fill circle (1pt);
\draw  (45:0.8cm) \foreach \x in {135,225,315}{
 -- (\x:0.8cm) } -- cycle (135:0.8cm);
\draw (135:0.8) to [out=-80,in=180] (-90:0.2); 
\draw (45:0.8) to [out=-100,in=0] (-90:0.2);  
\end{tikzpicture} \ar[rd] \ar[ru] \ar[r]  &   \begin{tikzpicture}
\fill circle (1pt);
\draw  (45:0.8cm) \foreach \x in {135,225,315}{
 -- (\x:0.8cm) } -- cycle (135:0.8cm);
\path (135:0.8) edge (135:0); 
\node[rotate=45]  at (135:0.2) {$\substack{\bowtie}$};
\end{tikzpicture}  \ar[r]  & \begin{tikzpicture}
\fill circle (1pt);
\draw  (45:0.8cm) \foreach \x in {135,225,315}{
 -- (\x:0.8cm) } -- cycle (135:0.8cm);
\draw (225:0.8) to [out=10,in=-90] (0:0.2); 
\draw (135:0.8) to [out=-10,in=90] (0:0.2);  
\end{tikzpicture} \ar[rd] \ar[ru] \ar[r]  & 
\begin{tikzpicture}
\fill circle (1pt);
\draw  (45:0.8cm) \foreach \x in {135,225,315}{
 -- (\x:0.8cm) } -- cycle (135:0.8cm);
\path (225:0.8) edge (225:0); 
\end{tikzpicture}  \ar[r] &  \begin{tikzpicture}
\fill circle (1pt);
\draw  (45:0.8cm) \foreach \x in {135,225,315}{
 -- (\x:0.8cm) } -- cycle (135:0.8cm);
\draw (225:0.8) to [out=80,in=180] (90:0.2); 
\draw (315:0.8) to [out=100,in=0] (90:0.2);   
\end{tikzpicture} \ar[ru] &  \\
\begin{tikzpicture}
\fill circle (1pt);
\draw  (45:0.8cm) \foreach \x in {135,225,315}{
 -- (\x:0.8cm) } -- cycle (135:0.8cm);
\path (225:0.8) edge (225:0); 
\node[rotate=-45] at (225:0.2) {$\substack{\bowtie}$};
\end{tikzpicture} \ar[ru]  && 
\begin{tikzpicture}
\fill circle (1pt);
\draw  (45:0.8cm) \foreach \x in {135,225,315}{
 -- (\x:0.8cm) } -- cycle (135:0.8cm);
\path (315:0.8) edge (315:0); 
\end{tikzpicture} \ar[ru]  
&& \begin{tikzpicture}
\fill circle (1pt);
\draw  (45:0.8cm) \foreach \x in {135,225,315}{
 -- (\x:0.8cm) } -- cycle (135:0.8cm);
\path (45:0.8) edge (45:0); 
\node[rotate=-45] at (45:0.2) {$\substack{\bowtie}$};
\end{tikzpicture} \ar[ru]   
&& \begin{tikzpicture}
\fill circle (1pt);
\draw  (45:0.8cm) \foreach \x in {135,225,315}{
 -- (\x:0.8cm) } -- cycle (135:0.8cm);
\path (135:0.8) edge (135:0); 
\end{tikzpicture} \ar[ru]  
&& \begin{tikzpicture}
\fill circle (1pt);
\draw  (45:0.8cm) \foreach \x in {135,225,315}{
 -- (\x:0.8cm) } -- cycle (135:0.8cm);
\path (225:0.8) edge (225:0); 
\node[rotate=-45] at (225:0.2) {$\substack{\bowtie}$};
\end{tikzpicture} \ar[ru]  
&& }
\end{gather*}
\caption{A geometric model of the Auslander-Reiten quiver of the cluster category of $Q$. The vertices on the far left are to be identified with the vertices on the far right, and they correspond to the initial clusters.}\label{D4clustercategory}
\end{figure}

\begin{figure}
\center
\begin{tikzcd}[column sep=small]
&& \substack{\,1 \, \\ \,2\, \\ 34} \substack{[1]} \ar[dr]  && \substack{\,1 \, \\ \,2\, \\ 34} \ar[dr] && \substack{2} \ar[dr]  &&  \substack{1} \ar[dr] && \substack{\,1 \, \\ \,2\, \\ 34} \substack{[1]} \\
\substack{3}\substack{[1]} \ar[r]  & \substack{\,2\, \\ 34}\substack{[1]} \ar[ur] \ar[dr]\ar[r] & \substack{3} \ar[r] & \substack{\,2\, \\ 34} \ar[ur]  \ar[r] \ar[dr] & \substack{2 \\ 4} \ar[r]  &  \substack{\,1\, \\ \,22\, \\ 34} \ar[ur]  \ar[r] \ar[dr]  &  \substack{1 \\ 2 \\ 3}  \ar[r] &  \substack{1 \\ 2} \ar[ur] \ar[r] \ar[dr] & \substack{3}\substack{[1]} \ar[r]  & \substack{\,2\, \\ 34}\substack{[1]} \ar[ur] & \\
\substack{4}\substack{[1]}  \ar[ur] && \substack{4} \ar[ur] && \substack{2\\3} \ar[ur]  &&  \substack{1 \\ 2 \\ 4} \ar[ur] && \substack{4}\substack{[1]} \ar[ur] && 
\end{tikzcd}
\caption{The Auslander-Reiten quiver of the cluster category of $Q$.}\label{the Auslander-Reiten quiver of linear D4}
\end{figure}
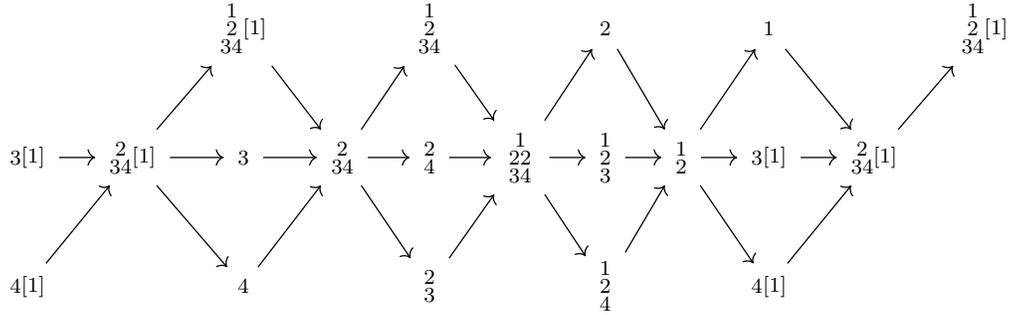

By Theorem \ref{equivalent descriptions of c} and Theorem \ref{equivalent descriptions of d}, for the almost split sequence in $\text{mod}\,kQ$
\[
0 \to \substack{2} \to  \substack{1\\2} \to \substack{1} \to 0,
\] 
we have 
\begin{align*}
& (c_i)_{i\in I}= \underline{\delta}(\substack{1},\substack{1\\2})=(1,0,0,0)^T, \\
& (d_j)_{j\in I} = \underline{\text{dim}}(\text{soc}(\substack{2}))+\underline{\text{dim}}(\text{soc}(\substack{1}))+g(\substack{1})=(0,1,0,0)^T. 
\end{align*}
The indecomposable modules $\substack{1}$ and $\substack{1\\2}$ are injective modules, thus by Equation (\ref{the real prime simple modules associated to indecomposable injective modules}) 
\[
\Phi(\substack{1})=L(Y_{1,2}) \text{ and } \Phi(\substack{1\\2})=L(Y_{2,1}). 
\]
Equation (\ref{proof formula3}) yields
\[
\textbf{hw}(\Phi(\substack{2})) \textbf{hw}(\Phi(\substack{1}))=\textbf{hw}(\Phi(\substack{1\\2})) \prod_{i\in I} f_i^{c_i}
\]
and thus
$\textbf{hw}(\Phi(\substack{2}))
Y_{1,2}
=Y_{2,1} f_1 .$ By definition of $f_1$, we have $f_1=Y_{1,2}Y_{1,4}$ and thus $\textbf{hw}(\Phi(\substack{2})) =Y_{2,1}Y_{1,2}Y_{1,4}Y_{1,2}^{-1}=Y_{1,4}Y_{2,1}$.  Therefore 
 $\Phi(\substack{2})=L(Y_{1,4}Y_{2,1})$, and the corresponding  Equation (\ref{Gamma1 equation}) in $K_0(\mathscr{C})$ is,
\[
[L(Y_{1,4}Y_{2,1})] [L(Y_{1,2})] = [L(Y_{2,1})] [L(f_1)] + [L(f_2)].
\]
In a similar way, we can use  the other meshes in the Auslander-Reiten quiver of mod\,$kQ$ in order to compute the prime simple modules in $\mathscr{C}$. In total, there are 20 real prime simple modules in $\mathscr{C}$ including the four frozen modules $\{L(f_1), L(f_2), L(f_3), L(f_4)\}$. The 16 unfrozen modules, including the four initial modules 
$
L(Y_{1,4}), L(Y_{2,3}), L(Y_{3,2}), L(Y_{4,2}),
$
are listed in Figure \ref{Real prime simple modules from the Auslander-Reiten quiver of D4}. Every mesh in the sense of the Auslander-Reiten quiver corresponds to an exact sequence in $\mathscr{C}$ and to an equation in $K_0(\mathscr{C})$.

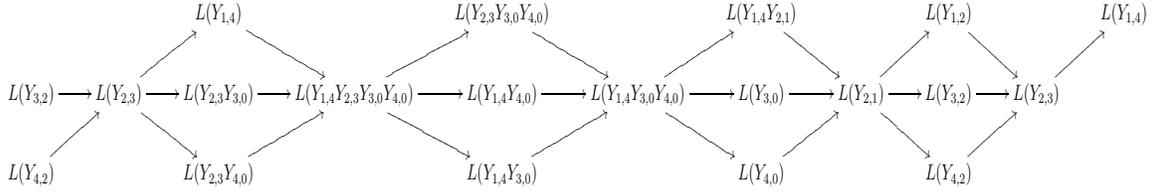
\begin{figure}
\center
\resizebox{.5\width}{.7\height}{
\xymatrix{
&& L(Y_{1,4}) \ar[dr]  &&  L(Y_{2,3}Y_{3,0}Y_{4,0}) \ar[dr] &&   L(Y_{1,4}Y_{2,1})  \ar[dr]  &&  L(Y_{1,2}) \ar[dr] &&  L(Y_{1,4}) \\
L(Y_{3,2}) \ar[r] & L(Y_{2,3}) \ar[ur]  \ar[r] \ar[dr] & L(Y_{2,3}Y_{3,0}) \ar[r] &  L(Y_{1,4}Y_{2,3}Y_{3,0}Y_{4,0}) \ar[ur]  \ar[r] \ar[dr] & L(Y_{1,4}Y_{4,0}) \ar[r]  &  L(Y_{1,4}Y_{3,0}Y_{4,0}) \ar[ur]  \ar[r] \ar[dr]  & L(Y_{3,0})  \ar[r] &  L(Y_{2,1}) \ar[ur]  \ar[r] \ar[dr] & L(Y_{3,2}) \ar[r] & L(Y_{2,3}) \ar[ur] & \\
L(Y_{4,2}) \ar[ur] && L(Y_{2,3}Y_{4,0}) \ar[ur] &&  L(Y_{1,4}Y_{3,0}) \ar[ur]  &&  L(Y_{4,0}) \ar[ur] && L(Y_{4,2}) \ar[ur] && }} 
\caption{The 16 real prime simple modules in $\mathscr{C}$.}\label{Real prime simple modules from the Auslander-Reiten quiver of D4}
\end{figure}

The complete list of the remaining equations of type (\ref{Gamma1 equation}) are as follows.
\begin{gather*}
\begin{aligned}[]
[L(Y_{1,4}Y_{3,0}Y_{4,0})] [L(Y_{2,1})] & = [L(Y_{1,4}Y_{2,1})] [L(Y_{3,0})] [L(Y_{4,0})] + [L(f_3)] [L(f_4)], \\
[L(Y_{2,3}Y_{3,0}Y_{4,0})] [L(Y_{1,4}Y_{2,1})] & = [L(Y_{1,4}Y_{3,0}Y_{4,0})] [L(f_2)] + [L(f_1)][L(f_3)][L(f_4)], \\
[L(Y_{1,4}Y_{4,0})]  [L(Y_{3,0})] & =  [L(Y_{1,4}Y_{3,0}Y_{4,0})] + [L(f_4)], \\
[L(Y_{1,4}Y_{3,0})]  [L(Y_{4,0})] &  =  [L(Y_{1,4}Y_{3,0}Y_{4,0})] + [L(f_3)], \\
[L(Y_{1,4}Y_{2,3}Y_{3,0}Y_{4,0})]  [L(Y_{1,4}Y_{3,0}Y_{4,0})] & = [L(Y_{2,3}Y_{3,0}Y_{4,0})]  [L(Y_{1,4}Y_{4,0})]  [L(Y_{1,4}Y_{3,0})] + [L(f_1)][L(f_3)][L(f_4)], \\
[L(Y_{2,3}Y_{3,0})]  [L(Y_{1,4}Y_{4,0})] &  = [L(Y_{1,4}Y_{2,3}Y_{3,0}Y_{4,0})] + [L(f_1)][L(f_3)], \\
[L(Y_{2,3}Y_{4,0})]  [L(Y_{1,4}Y_{3,0})] & = [L(Y_{1,4}Y_{2,3}Y_{3,0}Y_{4,0})] + [L(f_1)][L(f_4)]. 
\end{aligned}
\end{gather*}
\end{example}

\subsection{Two conjectures for the case $\ell>1$}

Let $\xi$ be an arbitrary height function and $\ell\ge 2$. Then $K_0(\mathscr{C}^{\leq \xi}_\ell)$ is the cluster algebra with initial quiver $\Gamma^{\leq \xi}_\ell$. Recall that in Section \ref{our Jacobian algebras}, $Q^{\leq \xi}_\ell$ is the principal quiver of $\Gamma^{\leq \xi}_\ell$ with vertex set $\widehat{I}^{\leq \xi}_{\ell-1}$, $J^{\leq \xi}_\ell$ is the Jacobian ideal of $\mathbb{C}Q^{\leq \xi}_\ell$, and  $A^{\leq \xi}_\ell=\mathbb{C}Q^{\leq \xi}_\ell/J^{\leq \xi}_\ell$ is the truncated Jacobian algebra. The category $\text{mod}\,A^{\leq \xi}_\ell$ is equivalent to the category of finite-dimensional representations of the bound quiver $(Q^{\leq \xi}_\ell,J^{\leq \xi}_\ell)$. We denote by $\mathcal{C}^{\leq \xi}_\ell$ the associated cluster category of $Q^{\leq \xi}_\ell$.

The $F$-polynomial $F_M$ of $M\in \mathcal{C}^{\leq \xi}_\ell$ is defined as in (\ref{F-polynomials formula}). For a vertex $(i,r)$ of $Q^{\leq \xi}_\ell$, we define $\widehat{y}_{i,r}=A^{-1}_{i,r-1}$. For any $(i,r)\in \widehat{I}^{\leq \xi}_\ell \setminus \widehat{I}^{\leq \xi}_{\ell-1}$, we define 
\[
f_{i.r} = Y_{i,r}Y_{i,r+2}\cdots Y_{i,\xi(i)}.
\]

Inspired by Hernandez and Leclerc's work, we propose the following conjectures.

\begin{conjecture}\label{rigid objects equal real prime simple modules conjecture}
\hfill
\begin{itemize}
\item[(1)] There is a bijection between the set of indecomposable rigid objects in the cluster category $\mathcal{C}^{\leq \xi}_\ell$ and the set of real prime simple modules in $\mathscr{C}^{\leq \xi}_\ell$ that are not of the form $L(f_{i,r})$. 

\item[(2)] There is a bijection between the set of rigid objects in the cluster category $\mathcal{C}^{\leq \xi}_\ell$ and the set of real simple modules in $\mathscr{C}^{\leq \xi}_\ell$ that have no tensor factors of the form $L(f_{i,r})$. 
\end{itemize} 
\end{conjecture}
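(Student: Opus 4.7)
The plan is to construct a map $\Phi_\ell$ from the set of reachable indecomposable rigid objects of $\mathcal{C}^{\leq \xi}_\ell$ to the classes of simple modules in $\mathscr{C}^{\leq \xi}_\ell$ that are not of the form $L(f_{i,r})$, by first applying the cluster character with respect to the initial cluster-tilting object of $\mathcal{C}^{\leq \xi}_\ell$ associated with the Jacobian algebra $A^{\leq \xi}_\ell$, and then composing with the inverse of the Grothendieck-ring isomorphism $K_0(\mathscr{C}^{\leq \xi}_\ell) \xrightarrow{\sim} \mathcal{A}^{\leq \xi}_\ell$ provided by the specialization of Theorem~\ref{quantum Grothendieck ring admits a quantum cluster algebra structure} at $t=1$. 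Each reachable indecomposable rigid object $M$ then corresponds to a non-frozen cluster variable of $\mathcal{A}^{\leq \xi}_\ell$, and hence determines the class $[\Phi_\ell(M)]$ of a simple module.

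The ``easy'' direction, showing that $\Phi_\ell(M)$ is a real prime simple module, would follow from the results of Qin~\cite{Qin17} and Kang--Kashiwara--Kim--Oh~\cite{KKKO18} (see also \cite{KKOP22}), which imply that any reachable cluster monomial of $\mathcal{A}^{\leq \xi}_\ell$ is the class of a real simple module in $\mathscr{C}^{\leq \xi}_\ell$. The primeness of $\Phi_\ell(M)$ for indecomposable $M$ would come from the multiplicativity of the cluster character, $X_{M \oplus N} = X_M X_N$, together with the general fact (flowing from a monoidal categorification) that a tensor product $L(m_1) \otimes L(m_2)$ of real simple modules is simple if and only if the classes $[L(m_1)]$ and $[L(m_2)]$ lie in a common cluster. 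This also yields Part~(2) from Part~(1), by factoring a real simple module into its real prime tensor factors and observing that the simplicity of the tensor product forces the corresponding indecomposable rigid summands to be compatible, i.e.\ to sit inside a common cluster-tilting object.

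The hard direction is surjectivity: every real prime simple module of $\mathscr{C}^{\leq \xi}_\ell$ not of the form $L(f_{i,r})$ should equal $\Phi_\ell(M)$ for some reachable indecomposable rigid $M\in \mathcal{C}^{\leq \xi}_\ell$. In the small cases singled out in the introduction---$\ell = 1$ in type $ADE$, $\ell \le 4$ in type $A_2$, and $\ell = 2$ in types $A_3$ and $A_4$---the cluster algebra $\mathcal{A}^{\leq \xi}_\ell$ is of finite cluster type. In this regime the approach would mirror the proof of Theorem~\ref{monoidal categorification C1}: the cluster monomials form a $\mathbb{Z}$-basis of $\mathcal{A}^{\leq \xi}_\ell$ (finite-type theorem of Caldero--Keller), while the classes of real simple modules form a basis of $K_0(\mathscr{C}^{\leq \xi}_\ell)$, and the isomorphism of Theorem~\ref{quantum Grothendieck ring admits a quantum cluster algebra structure} identifies these two bases by matching highest $\ell$-weight monomials with initial cluster expansions. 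A dimension/counting argument then forces $\Phi_\ell$ to be a bijection.

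The main obstacle is precisely this surjectivity step beyond the finite-type range. In general cluster type the cluster monomials are only a proper subset of natural bases of $\mathcal{A}^{\leq \xi}_\ell$ (such as the generic basis indexed by decorated representations of $(Q^{\leq \xi}_\ell, J^{\leq \xi}_\ell)$ or the triangular/dual canonical basis), and one must rule out that any real simple module corresponds to a non-rigid generic element. Equivalently, one must show that if $L(m) \otimes L(m)$ is simple then the decorated representation attached to $L(m)$ under the bijection with a chosen basis is rigid. This is the analogue, in our setting, of the deepest part of the Hernandez--Leclerc program, and is likely to require geometric input from Nakajima quiver varieties---through the $(q,t)$-character formula expected in Conjecture~\ref{character conjecture}---or input from the KLR/quiver Hecke side via the R-matrix techniques of Kashiwara--Kim--Oh--Park.
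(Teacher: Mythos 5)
This statement is labelled as a \emph{conjecture} in the paper, not a theorem, and the paper does not give a proof; it only (a) states the conjectural bijection, (b) verifies it in the cases where $\mathcal{A}^{\leq \xi}_\ell$ is a cluster algebra of finite type (namely $\ell=1$ in type $ADE$, $\ell\leq 4$ in type $A_2$, and $\ell=2$ in types $A_3$ and $A_4$), and (c) proposes Conjecture~\ref{character conjecture} as the next ingredient one would need in general. Your proposal reconstructs precisely this state of affairs: the map $\Phi_\ell$ via the cluster character composed with the specialization at $t=1$ of the isomorphism of Theorem~\ref{quantum Grothendieck ring admits a quantum cluster algebra structure}, the ``cluster monomials are real simple'' direction from \cite{Qin17,KKKO18,KKOP22}, the counting argument in finite cluster type (where cluster monomials form a basis and are matched against the basis of classes of real simple modules), and the genuine open problem of surjectivity beyond finite type. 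This is all consistent with the paper, and you correctly identify that closing the gap would require a statement like Conjecture~\ref{character conjecture}, geometric input from quiver varieties, or KLR/R-matrix methods; so there is nothing to ``fix'' here, since the paper itself does not claim a proof.

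One small caution concerns your derivation of Part~(2) from Part~(1). You invoke a factorization of every real simple module into real prime tensor factors whose classes sit in a common cluster, but in general cluster type that compatibility is itself part of what Conjecture~\ref{rigid objects equal real prime simple modules conjecture}~(2) asserts, not a consequence of~(1): a priori a real simple module could be a tensor product of real primes corresponding to indecomposable rigid objects that are not summands of a common cluster-tilting object, unless one already knows that rigidity (i.e.\ $\mathrm{Ext}^1$-compatibility) is equivalent to the simplicity of the tensor product in this setting. In the finite-type verifications of Section~\ref{check our conjecture for small cases} this is handled by the basis/counting argument that works for both~(1) and~(2) simultaneously, which is the cleaner way to phrase it. Apart from that, your assessment of the obstacles matches the paper's.
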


If Conjecture~\ref{rigid objects equal real prime simple modules conjecture} is true then the bijection is completely determined by $F$-polynomials  and $g$-vectors. More precisely, let $L(m)\in \mathscr{C}^{\leq \xi}_\ell$ be a real prime simple module corresponding to a cluster variable $X_M$, with $M\in \mathcal{C}^{\leq \xi}_\ell$. We have 
\[
\textbf{z}_\ell^{g(M)} \frac{F_M((\widehat{y}_{i,r})_{(i,r)\in \widehat{I}^{\leq \xi}_{\ell-1}})|_{\mathcal{F}}}{F_M((y_{i,r})_{(i,r)\in \widehat{I}^{\leq \xi}_{\ell-1}})|_{\mathbb{P}}}=mP_m,
\]
where $\textbf{z}_\ell$ is the specialization of quantum cluster variables in $\textbf{z}^{\leq \xi}_{t,\ell}$ at $t=1$ (here we ignore the superscript $^{\leq \xi}$ for a fixed $\xi$), 
\begin{align*} 
y_{i,r} = \begin{cases}
1 & \text{if $(i,r)\in \widehat{I}^{\leq \xi}_{\ell-2}$}, \\
f^{-1}_{i,r-2} \prod_{(j,s)\to (i,r)} f_{j,s-2} & \text{if $(i,r)\in \widehat{I}^{\leq \xi}_{\ell-1} \setminus \widehat{I}^{\leq \xi}_{\ell-2}$}, \\
\end{cases}
\end{align*}
and $\mathbb{P}$ is the tropical semifield generated by $f_{i,r}$, with $(i,r)\in \widehat{I}^{\leq \xi}_\ell \setminus \widehat{I}^{\leq \xi}_{\ell-1}$, and with tropical addition $\oplus$. It is clear that $y_{i,r} \in \mathbb{P}$. By the same argument with (\ref{g-vector and height weight and F-polynomial}), we have
\begin{align*} 
\frac{\textbf{z}_\ell^{g(M)}}{F_M((y_{i,r})_{(i,r)\in \widehat{I}^{\leq \xi}_{\ell-1}})|_{\mathbb{P}}}=m, \quad  P_m=F_M((\widehat{y}_{i,r})_{(i,r)\in \widehat{I}^{\leq \xi}_{\ell-1}})|_{\mathcal{F}}.
\end{align*}

After identifying $z_{i,r}$ with $\prod_{k\geq 0, r+2k\leq \xi(i)} Y_{i,r+2k}$, the dominant monomial $m$ can be expressed as a Laurent monomial in $z_{i,r}$ with $(i,r)\in \widehat{I}^{\leq \xi}_\ell$, that is, 
\[
m=\prod_{(i,r)\in \widehat{I}^{\leq \xi}_\ell} z^{d_{i,r}}_{i,r}
\] 
for some integers $d_{i,r}$. 

Define an $A^{\leq \xi}_\ell$-module $K^{\leq \xi}_\ell(m)$ as a general kernel from $\bigoplus_{(i,r)\in \widehat{I}^{\leq \xi}_{\ell-1}} (I^{\leq \xi}_{(i,r),\ell})^{\max\{-d_{i,r},0\}}$ to $\bigoplus_{(i,r)\in \widehat{I}^{\leq \xi}_{\ell-1}} (I^{\leq \xi}_{(i,r),\ell})^{\max\{d_{i,r},0\}}$, thus
\[
0 \to K^{\leq \xi}_\ell(m) \to \bigoplus_{(i,r)\in \widehat{I}^{\leq \xi}_{\ell-1}} (I^{\leq \xi}_{(i,r),\ell})^{\max\{-d_{i,r},0\}} \to  \bigoplus_{(i,r)\in \widehat{I}^{\leq \xi}_{\ell-1}} (I^{\leq \xi}_{(i,r),\ell})^{\max\{d_{i,r},0\}},
\]
where $I^{\leq \xi}_{(i,r),\ell}$ is the injective $A^{\leq \xi}_\ell$-module with simple socle $S_{(i,r)}$.

\begin{conjecture} \label{character conjecture}  
Let $L(m)\in \mathscr{C}^{\leq \xi}_\ell$ be a real simple module (without tensor factor $L(f_{i,r})$ for any $(i,r)\in \widehat{I}^{\leq \xi}_\ell \setminus \widehat{I}^{\leq \xi}_{\ell-1}$). Then
\[
\widetilde{\chi}_{q}(L(m))_{\leq \xi}=F_{K^{\leq \xi}_\ell(m)}((\widehat{y}_{i,r})_{(i,r)\in \widehat{I}^{\leq \xi}_{\ell-1}}).
\]
\end{conjecture}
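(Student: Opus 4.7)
The plan is to reduce Conjecture~\ref{character conjecture} to a cluster-algebraic identity via Conjecture~\ref{rigid objects equal real prime simple modules conjecture} and Fomin-Zelevinsky's separation of addition formula \cite{FZ07}. Assuming Conjecture~\ref{rigid objects equal real prime simple modules conjecture}(2), to each real simple module $L(m)\in\mathscr{C}^{\leq\xi}_\ell$ without frozen tensor factors we associate a reachable rigid object $M\in\mathcal{C}^{\leq\xi}_\ell$; under the cluster algebra isomorphism $K_0(\mathscr{C}^{\leq\xi}_\ell)\cong\mathcal{A}^{\leq\xi}_\ell$ obtained from Theorem~\ref{quantum Grothendieck ring admits a quantum cluster algebra structure} at $t=1$, the class $[L(m)]$ is sent to the cluster monomial $X_M$. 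The separation of addition formula, valid for Jacobian algebras by \cite{DWZ10}, then gives
\[
X_M = \textbf{z}_\ell^{g(M)} \, \frac{F_M((\widehat{y}_{i,r}))|_{\mathcal{F}}}{F_M((y_{i,r}))|_{\mathbb{P}}}.
\]

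Next I would match factors. Since the $f_{i,r}$ for frozen $(i,r)\in\widehat{I}^{\leq\xi}_\ell\setminus\widehat{I}^{\leq\xi}_{\ell-1}$ coincide with $z_{i,r}$ under the identification of Section~\ref{quantum cluster algebra structure on quantum Grothendieck ring of a subcategory}, the tropical denominator $F_M((y_{i,r}))|_{\mathbb{P}}$ is a Laurent monomial in the $z_{i,r}$ indexed by frozen vertices only. Writing $m=\prod_{(i,r)\in\widehat{I}^{\leq\xi}_\ell}z_{i,r}^{d_{i,r}}$, this forces $d_{i,r}=g(M)_{i,r}$ for every mutable vertex $(i,r)\in\widehat{I}^{\leq\xi}_{\ell-1}$. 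The equalities
\[
m=\frac{\textbf{z}_\ell^{g(M)}}{F_M((y_{i,r}))|_{\mathbb{P}}}, \qquad P_m=F_M((\widehat{y}_{i,r}))|_{\mathcal{F}}
\]
then follow by the same no-cancellation argument as in \cite[Lemma~7.3]{HL10} and \cite[Proposition~4.16]{HL16}: the first factor is a Laurent monomial in the $Y_{i,r}$ whose exponents encode the dominant highest $\ell$-weight $m$, whereas $F_M(\widehat{y})$ is a polynomial in $\{A^{-1}_{i,r-1}\}$ with constant term $1$, so no cancellation between the two factors is possible.

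It remains to prove that $F_M=F_{K^{\leq\xi}_\ell(m)}$. By construction, the injective copresentation defining $K^{\leq\xi}_\ell(m)$ realizes the $g$-vector $(d_{i,r})_{(i,r)\in\widehat{I}^{\leq\xi}_{\ell-1}}=g(M)$. I would then invoke the general principle that a reachable rigid object in a $2$-Calabi-Yau triangulated category with a cluster-tilting object is determined up to isomorphism by its $g$-vector, together with the genericity of $K^{\leq\xi}_\ell(m)$ in its $g$-vector stratum, to conclude that $M\cong K^{\leq\xi}_\ell(m)$ in $\mathcal{C}^{\leq\xi}_\ell$, whence the equality of $F$-polynomials. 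The principal obstacle is of course Conjecture~\ref{rigid objects equal real prime simple modules conjecture} itself, which supplies the rigid object $M$ and identifies $[L(m)]$ with a cluster monomial; a secondary, more technical difficulty is to verify that the generic kernel $K^{\leq\xi}_\ell(m)$ really lies in the rigid reachable stratum of $\mathrm{mod}\,A^{\leq\xi}_\ell$, and not merely in the $g$-vector stratum. For $\ell=1$ both points are handled via the hereditary structure exploited in Theorem~\ref{main theorem2}; for $\ell\geq 2$ one would combine the partial verifications of Section~\ref{check our conjecture for small cases} with an induction on mutation along the sequences used in the proof of Theorem~\ref{quantum Grothendieck ring admits a quantum cluster algebra structure}.
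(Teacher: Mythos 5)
This statement is a conjecture; the paper does not prove it in general but verifies it for $\ell=1$ and a few small-rank cases, and sketches in the surrounding text the same conditional reduction you describe. Your plan correctly reproduces that reduction: assuming Conjecture~\ref{rigid objects equal real prime simple modules conjecture}, the cluster character formula together with separation of additions yields $P_m = F_M((\widehat{y}_{i,r}))$ for the rigid object $M\in\mathcal{C}^{\leq\xi}_\ell$ with $[L(m)]=X_M$, and uniqueness of the $z$-expansion of $m$ forces $d_{i,r}=g(M)_{i,r}$ at mutable vertices, exactly as you say. At that point Conjecture~\ref{character conjecture} collapses to the single equality $F_M = F_{K^{\leq\xi}_\ell(m)}$.

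Your final step is where the argument stops being a proof. Two separate points deserve care. First, $K^{\leq\xi}_\ell(m)$ is an $A^{\leq\xi}_\ell$-module; the object you need to compare it against is $\text{Hom}_{\mathcal{C}^{\leq\xi}_\ell}(T,M)\in\text{mod}\,A^{\leq\xi}_\ell$, not $M\in\mathcal{C}^{\leq\xi}_\ell$ itself, since the cluster character of $M$ is built from the $F$-polynomial of that module. Second, and more substantively, the Dehy--Keller/Plamondon principle that reachable rigid objects are parameterized by their $g$-vectors tells you that if $K^{\leq\xi}_\ell(m)\cong\text{Hom}_{\mathcal{C}}(T,M')$ for some reachable rigid $M'$, then $M'\cong M$; it does not tell you that the generic kernel of a generic map of injectives with copresentation shape $(d_{i,r})$ is of that form, nor that it has the cluster-monomial $F$-polynomial. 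For $\ell=1$ the hereditary structure of $A^{\leq\xi}_1=kQ$ discharges this: every indecomposable is rigid, the minimal injective copresentation is determined, and Theorem~\ref{main theorem2} gives the explicit bijection. For $\ell\geq 2$ the Jacobian algebra is not hereditary, a generic module in the copresentation stratum is a priori neither rigid nor reachable, and its $F$-polynomial need not agree with $F_M$. Establishing that identification is precisely the open content of Conjecture~\ref{character conjecture} once Conjecture~\ref{rigid objects equal real prime simple modules conjecture} is granted, so what you have is a correct and clear articulation of the reduction, not a proof.
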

When $\xi$ is a sink-source function and $\ell=\infty$, Conjecture \ref{character conjecture} becomes Hernandez-Leclerc's geometric $q$-character formulas conjecture \cite[Conjecture 5.3]{HL16}. In particular, if $L(m)\in \mathscr{C}^{\leq \xi}_\ell$ is a real simple module corresponding to a cluster monomial, then Conjecture \ref{character conjecture} holds. For any height function $\xi$ and $\ell=1$, Conjecture \ref{rigid objects equal real prime simple modules conjecture} and Conjecture \ref{character conjecture} follow from Theorem \ref{monoidal categorification C1}. 


Using Palu's work on cluster characters \cite{Pal08}, for each pair of non-split triangles 
\[
L\to M \to N\to L[1], \quad N \to M' \to L \to N[1]
\] 
in $\mathcal{C}^{\leq \xi}_\ell$, we have 
\begin{align*}
X_L X_N= X_M + \textbf{y}^{\alpha} X_{M'},  
\end{align*}
where $\alpha\in \mathbb{Z}_{\geq 0}^{\widehat{I}^{\leq \xi}_{\ell-1}}$ is the rank vector of the image of the morphism $\tau^{-1}L \to N$, and $\textbf{y}^{\alpha}$ is  a monomial in principal coefficient variables $y_i$, with $i\in I$.

Assume that Conjecture~\ref{rigid objects equal real prime simple modules conjecture} holds. By Remark~\ref{remark 5.6} we may also assume $g(M)=g(L)+g(N)$. Let $L, N$ be two indecomposable rigid objects in $\mathcal{C}^{\leq \xi}_\ell$ with $\text{dim}(\text{Ext}^1_{\mathcal{C}^{\leq \xi}_\ell}(L,N))=1$. By the same argument as in Theorem \ref{main theorem2}, we have a relation in $K_0(\mathscr{C}^{\leq \xi}_\ell)$: 
\begin{align}\label{exchange equation for any subcategory}
[\Phi(L)] [\Phi(N)] = [\Phi(M)] \left( \prod_{(i,r)\in \widehat{I}^{\leq \xi}_\ell \setminus \widehat{I}^{\leq \xi}_{\ell-1}} [L(f_{i,r})]^{c_{i,r}} \right) + [\Phi(M')] \left( \prod_{(i,r)\in \widehat{I}^{\leq \xi}_\ell \setminus \widehat{I}^{\leq \xi}_{\ell-1}} [L(f_{i,r})]^{d_{i,r}} \right),
\end{align}
where all $c_{i,r}$ and $d_{i,r}$ are some non-negative integers such that  
 
\begin{align*}
& \prod_{(i,r)\in \widehat{I}^{\leq \xi}_\ell \setminus \widehat{I}^{\leq \xi}_{\ell-1}} f_{i,r}^{c_{i,r}} = \frac{ F_{M}((y_{i,r})_{(i,r)\in \widehat{I}^{\leq \xi}_{\ell-1}})|_{\mathbb{P}}}{F_{M}((y_{i,r})_{(i,r)\in \widehat{I}^{\leq \xi}_{\ell-1}})|_{\mathbb{P}} \oplus \left( (\prod_{(i,r)\in \widehat{I}^{\leq \xi}_{\ell-1}} y^{\text{dim}\,M_{i,r}}_{i,r}) F_{M'}((y_{i,r})_{(i,r)\in \widehat{I}^{\leq \xi}_{\ell-1}}) \right) |_{\mathbb{P}}}, \\
& \prod_{(i,r)\in \widehat{I}^{\leq \xi}_\ell \setminus \widehat{I}^{\leq \xi}_{\ell-1}}  f_{i,r}^{d_{i,r}} = \frac{\textbf{hw}(\Phi(L)) \textbf{hw}(\Phi(N))}{\textbf{hw}(\Phi(M'))}  \prod_{(i,r)\in \widehat{I}^{\leq \xi}_{\ell-1}} A^{-\alpha_{i,r}}_{i,r-1}.
\end{align*}
In particular,  
\begin{align*}
\textbf{hw}(\Phi(L)) \textbf{hw}(\Phi(N))=\textbf{hw}(\Phi(M)) \prod_{(i,r)\in \widehat{I}^{\leq \xi}_\ell \setminus \widehat{I}^{\leq \xi}_{\ell-1}} f_{i,r}^{c_{i,r}}.
\end{align*}

As a conclusion, if Conjecture \ref{rigid objects equal real prime simple modules conjecture} holds, the highest $l$-weight monomial of the new real prime simple module can be recursively computed by Equation (\ref{exchange equation for any subcategory}) starting from initial Kirillov-Reshetikhin modules. Conjecture \ref{character conjecture} predicts an explicit correspondence 
\[
L(m) \mapsto K^{\leq \xi}_\ell(m)
\] 
for any real simple module $L(m) \in \mathscr{C}^{\leq \xi}_\ell$.

\section{Conjecture \ref{rigid objects equal real prime simple modules conjecture} and Conjecture \ref{character conjecture} for $\mathscr{C}^{\leq \xi}_4$ in type $A_2$ and $\mathscr{C}^{\leq \xi}_2$ in type $A_4$}\label{check our conjecture for small cases}

In this section, we prove Conjecture \ref{rigid objects equal real prime simple modules conjecture} and Conjecture \ref{character conjecture} for any height function $\xi$, $\ell\leq 4$ of type $A_2$, and $\ell=2$ of type $A_3$ and type $A_4$.

For $\ell \leq 4$ of type $A_2$, it is enough to prove our conjectures for any height function $\xi$ and $\ell=4$, because subcategories $\mathscr{C}^{\leq \xi}_\ell$ with $\ell<4$ are contained in the subcategory $\mathscr{C}^{\leq \xi}_4$. For $\ell=2$ of type $A_3$ and type $A_4$, by the same reasoning, it is enough to prove our conjectures for any height function $\xi$ and $\ell=2$ of type $A_4$. In both cases, the truncated Jacobi algebras are cluster-tilted algebras of type $E_8$ \cite{Sch14}. 

For the simplicity of notation, for any $i\in I$, $r\in \mathbb{Z}$, we denote by $i_r$ the variable $Y_{i,r}$,  and denote by $m$ the simple module $L(m)$ in this section.

\subsection{The subcategory $\mathscr{C}^{\leq \xi}_4$ in type $A_2$}

Let $\xi(1,2)=(0,-1)$ (up to switching $1$ and $2$ and up to parameter shift). The cluster algebra $K_0(\mathscr{C}^{\leq \xi}_4)$ has the initial quiver shown in Figure~\ref{the initial quiver (left) and its principal quiver (right) A4}, $Q^{\leq \xi}_4$ is its principal quiver, see Figure~\ref{the initial quiver (left) and its principal quiver (right) A4} (note that we use labelings $1,2,3,4,5,6,7,8$ instead of $(1,0)$, $(2,-1)$, $(1,-2)$, $(2,-3)$, $(1,-4)$, $(2,-5)$, $(1,-6)$, $(2,-7)$ respectively). Then the truncated Jacobian algebra 
\[
A^{\leq \xi}_4 = \mathbb{C}Q^{\leq \xi}_4/\langle de, fg, hi, ea, ad+fk, ke+gb, bf+hl, lg+ic, ch+jm, mi, ef, gh, ij \rangle
\] 
is a cluster-tilted algebra of type $E_8$. The indecomposable rigid objects in $\mathcal{C}^{\leq \xi}_4$ are shown in Figure \ref{the cluster category C4 in A2}. The real prime simple modules (excluding the frozen modules) in $\mathscr{C}^{\leq \xi}_4$ are shown in Figure \ref{all the real prime simple modules C4 in A2}.  The two sets are in bijection under the map $L(m) \to K^{\leq \xi}_\ell(m)$.

\begin{figure}
\resizebox{.8\width}{.8\height}{ 
\begin{minipage}{0.5\textwidth}
\begin{xy}
(-25,50)*+{(1,0)}="a";
(0,50)*+{(2,-1)}="b"; 
(-25,30)*+{(1,-2)}="c";
(0,30)*+{(2,-3)}="d"; 
(-25,10)*+{(1,-4)}="e";
(0,10)*+{(2,-5)}="f";
(-25,-10)*+{(1,-6)}="g";
(0,-10)*+{(2,-7)}="h";
(-25,-30)*+{\fbox{(1,-8)}}="i";
(0,-30)*+{\fbox{(2,-9)}}="j";
{\ar "a";"b"};
{\ar "b";"c"};
{\ar "c";"d"};
{\ar "d";"e"};
{\ar "f";"d"};
{\ar "d";"b"};
{\ar "e";"c"};
{\ar "c";"a"};
{\ar "g";"e"};
{\ar "e";"f"};
{\ar "h";"f"};
{\ar "f";"g"};
{\ar "g";"h"};
{\ar "i";"g"};
{\ar "j";"h"};
{\ar "h";"i"};
\end{xy}
\end{minipage} 
\qquad 
\begin{minipage}{0.5\textwidth}
\begin{xy}
(-25,50)*+{1}="a";
(0,50)*+{2}="b"; 
(-25,30)*+{3}="c";
(0,30)*+{4}="d"; 
(-25,10)*+{5}="e";
(0,10)*+{6}="f";
(-25,-10)*+{7}="g";
(0,-10)*+{8}="h";
(-27,40)*+{a}="gg";
(-27,20)*+{b}="g1";
(-27,0)*+{c}="gg1";
(-12.5,52.5)*+{d}="hh";
(-12.5,37)*+{e}="i";
(-12.5,27.5)*+{f}="j";
(-12.5,17)*+{g}="i1";
(-12.5,7)*+{h}="i2";
(-12.5,-3)*+{i}="ii";
(-12.5,-13)*+{j}="i3";
(2,40)*+{k}="ij";
(2,20)*+{l}="jj";
(2.2,0)*+{m}="k";
(-25,-30)*+{}="i";
(0,-30)*+{}="j";
{\ar "a";"b"};
{\ar "b";"c"};
{\ar "c";"d"};
{\ar "d";"e"};
{\ar "f";"d"};
{\ar "d";"b"};
{\ar "e";"c"};
{\ar "c";"a"};
{\ar "e";"f"};
{\ar "g";"e"};
{\ar "f";"g"};
{\ar "h";"f"};
{\ar "g";"h"};
\end{xy}
\end{minipage}} 
\caption{The initial quiver (left) of $K_0(\mathscr{C}^{\leq \xi}_4)$ and its principal quiver $Q^{\leq \xi}_4$ (right).} \label{the initial quiver (left) and its principal quiver (right) A4}
\end{figure}
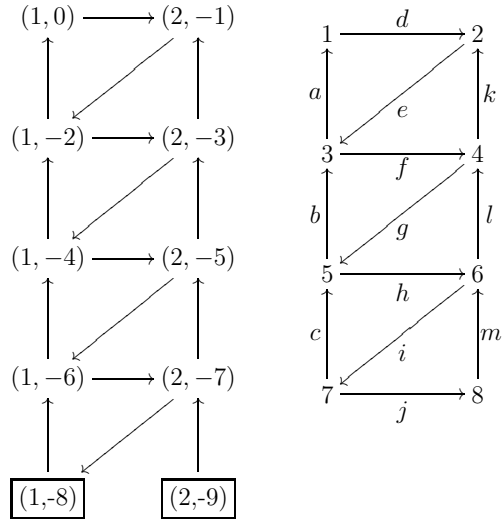

\begin{figure}\huge
\centerline{
\resizebox{.25\width}{.25\height}{ 
\xymatrix@R10pt@C10pt{
 &&  \makecell[c]{00\\00\\0-1\\00}  \ar[rdd] && \makecell[c]{01\\11\\11\\10} \ar[rdd]\ar[rdd] && \makecell[c]{00\\-10\\00\\00} \ar[rdd] 
 &&  \makecell[c]{11\\11\\00\\00}  \ar[rdd]  && \makecell[c]{00\\01\\11\\10} \ar[rdd]  &&  \makecell[c]{00\\00\\-10\\00} \ar[rdd]  && \makecell[c]{11\\11\\11\\00} \ar[rdd] && \makecell[c]{00\\01\\01\\01} \ar[rdd] && \makecell[c]{10\\10\\00\\00} \ar[rdd] && \makecell[c]{00\\11\\11\\00} \ar[rdd] && \makecell[c]{00\\00\\01\\01} \ar[rdd] && \makecell[c]{10\\10\\10\\00}\ar[rdd] && \makecell[c]{00\\11\\11\\11} \ar[rdd] &&  \makecell[c]{00\\0-1\\00\\00} \ar[rdd] && \makecell[c]{01\\11\\10\\00} \ar[rdd] && \makecell[c]{00\\00\\11\\11} \ar[rdd] && \makecell[c]{00\\00\\0-1\\00} && \\
 \\
& \makecell[c]{00\\00\\10\\11} \ar[rdd] \ar[ruu] &&  \makecell[c]{01\\11\\10\\10} \ar[rdd]\ar[ruu] &&  \makecell[c]{01\\01\\11\\10} \ar[rdd] \ar[ruu] &&  \makecell[c]{11\\01\\00\\00}  \ar[ruu]\ar[rdd] && \makecell[c]{11\\12\\11\\10} \ar[ruu]\ar[rdd] && \makecell[c]{00\\01\\01\\10} \ar[ruu]\ar[rdd] && \makecell[c]{11\\11\\01\\00} \ar[ruu]\ar[rdd] && \makecell[c]{11\\12\\12\\01} \ar[ruu]\ar[rdd] && \makecell[c]{10\\11\\01\\01} \ar[ruu]\ar[rdd] && \makecell[c]{10\\21\\11\\00} \ar[ruu]\ar[rdd] && \makecell[c]{00\\11\\12\\01} \ar[ruu]\ar[rdd] && \makecell[c]{10\\10\\11\\01} \ar[ruu]\ar[rdd] &&  \makecell[c]{10\\21\\21\\11} \ar[ruu]\ar[rdd] &&  \makecell[c]{00\\10\\11\\11} \ar[ruu]\ar[rdd] && \makecell[c]{01\\10\\10\\00} \ar[ruu]\ar[rdd] && \makecell[c]{01\\11\\21\\11} \ar[ruu]\ar[rdd] && \makecell[c]{00\\00\\10\\11} \ar[ruu] & && \\
 \\
\makecell[c]{01\\11\\20\\11} \ar[r] \ar[ruu] \ar[rdd] & \makecell[c]{01\\11\\10\\11} \ar[r] & \ar[r] \makecell[c]{01\\11\\20\\21} \ar[ruu]\ar[rdd] &  \ar[r] \makecell[c]{00\\00\\10\\10} & \ar[r] \makecell[c]{01\\01\\10\\10} \ar[ruu] \ar[rdd] &  \makecell[c]{01\\01\\00\\00} \ar[r] & \ar[r]  \makecell[c]{12\\02\\11\\10} \ar[ruu] \ar[rdd] &  \ar[r] \makecell[c]{11\\01\\11\\10} & \ar[r]  \makecell[c]{11\\02\\11\\10} \ar[rdd] \ar[ruu] &  \ar[r]  \makecell[c]{00\\01\\00\\00}  & \ar[r] \makecell[c]{11\\12\\01\\10} \ar[rdd]\ar[ruu] &  \ar[r] \makecell[c]{11\\11\\01\\10} & \ar[r]  \makecell[c]{11\\12\\02\\10} \ar[rdd]\ar[ruu]&  \ar[r] \makecell[c]{00\\01\\01\\00} & \ar[r] \makecell[c]{11\\12\\02\\01} \ar[rdd]\ar[ruu] &  \ar[r] \makecell[c]{11\\11\\01\\01} & \ar[r] \makecell[c]{21\\22\\12\\01} \ar[rdd]\ar[ruu]&  \ar[r] \makecell[c]{10\\11\\11\\00} & \ar[r] \makecell[c]{10\\22\\12\\01} \ar[rdd]\ar[ruu]&  \ar[r] \makecell[c]{00\\11\\01\\01} & \ar[r] \makecell[c]{10\\21\\12\\01} \ar[rdd]\ar[ruu] & \ar[r] \makecell[c]{10\\10\\11\\00}  &\ar[r] \makecell[c]{10\\21\\22\\01} \ar[rdd]\ar[ruu]&  \ar[r] \makecell[c]{00\\11\\11\\01} & \ar[r]  \makecell[c]{10\\21\\22\\12} \ar[rdd]\ar[ruu]&  \ar[r] \makecell[c]{10\\10\\11\\11} & \ar[r]  \makecell[c]{10\\20\\21\\11} \ar[rdd]\ar[ruu]&  \ar[r] \makecell[c]{00\\10\\10\\00} & \ar[r]  \makecell[c]{01\\20\\21\\11} \ar[rdd]\ar[ruu] &  \ar[r] \makecell[c]{01\\10\\11\\11} &\ar[r] \makecell[c]{01\\10\\21\\11} \ar[rdd]\ar[ruu] &  \ar[r] \makecell[c]{00\\00\\10\\00} &\ar[r] \makecell[c]{01\\11\\20\\11} \ar[rdd]\ar[ruu]  &  \makecell[c]{01\\11\\10\\11} &&& \\
 \\
& \makecell[c]{01\\11\\20\\10} \ar[ruu] \ar[rdd] && \makecell[c]{01\\01\\10\\11} \ar[ruu] \ar[rdd] &&  \makecell[c]{11\\01\\10\\10} \ar[ruu] \ar[rdd] && \makecell[c]{01\\02\\11\\10} \ar[ruu] \ar[rdd] &&  \makecell[c]{11\\01\\01\\10}  \ar[ruu] \ar[rdd] &&  \makecell[c]{11\\12\\01\\00} \ar[ruu] \ar[rdd] &&  \makecell[c]{11\\12\\02\\11} \ar[ruu] \ar[rdd] && \makecell[c]{10\\11\\01\\00} \ar[ruu] \ar[rdd] && \makecell[c]{11\\22\\12\\01} \ar[ruu] \ar[rdd] && \makecell[c]{10\\11\\12\\01} \ar[ruu] \ar[rdd] && \makecell[c]{10\\21\\11\\01} \ar[ruu] \ar[rdd]  && \makecell[c]{10\\21\\22\\11} \ar[ruu] \ar[rdd] && \makecell[c]{00\\10\\11\\01} \ar[ruu] \ar[rdd] && \makecell[c]{11\\20\\21\\11} \ar[ruu] \ar[rdd] && \makecell[c]{00\\10\\21\\11} \ar[ruu] \ar[rdd] && \makecell[c]{01\\10\\10\\11} \ar[ruu] \ar[rdd] && \makecell[c]{01\\11\\20\\10} \ar[rdd] &&& \\
 \\
&& \makecell[c]{01\\01\\10\\00} \ar[ruu] \ar[rdd] &&  \makecell[c]{11\\01\\10\\11} \ar[ruu] \ar[rdd] &&  \makecell[c]{00\\01\\10\\10} \ar[ruu] \ar[rdd] && \makecell[c]{01\\01\\01\\10} \ar[ruu] \ar[rdd] &&  \makecell[c]{11\\01\\01\\00} \ar[ruu] \ar[rdd] &&  \makecell[c]{11\\12\\01\\01} \ar[ruu] \ar[rdd] && \makecell[c]{10\\11\\01\\10} \ar[ruu] \ar[rdd] &&  \makecell[c]{00\\11\\01\\00} \ar[ruu] \ar[rdd] && \makecell[c]{11\\11\\12\\01} \ar[ruu] \ar[rdd] && \makecell[c]{10\\11\\11\\01} \ar[ruu] \ar[rdd] && \makecell[c]{10\\21\\11\\11} \ar[ruu] \ar[rdd] && \makecell[c]{00\\10\\11\\00} \ar[ruu] \ar[rdd] && \makecell[c]{01\\10\\11\\01} \ar[ruu] \ar[rdd] && \makecell[c]{10\\10\\21\\11} \ar[ruu] \ar[rdd] && \makecell[c]{00\\10\\10\\11} \ar[ruu] \ar[rdd] && \makecell[c]{01\\10\\10\\10} \ar[ruu]  \ar[rdd] && \makecell[c]{01\\01\\10\\00} \ar[rdd] && \\
\\
&&& \makecell[c]{11\\01\\10\\00} \ar[ruu] \ar[rdd] && \makecell[c]{00\\01\\10\\11}  \ar[ruu] \ar[rdd] && \makecell[c]{00\\00\\00\\10} \ar[ruu] \ar[rdd] && \makecell[c]{01\\01\\01\\00} \ar[ruu] \ar[rdd] && \makecell[c]{11\\01\\01\\01} \ar[ruu] \ar[rdd] &&  \makecell[c]{10\\11\\00\\00} \ar[ruu] \ar[rdd] && \makecell[c]{00\\11\\01\\10} \ar[ruu] \ar[rdd] && \makecell[c]{00\\00\\01\\00} \ar[ruu] \ar[rdd] &&  \makecell[c]{11\\11\\11\\01} \ar[ruu] \ar[rdd] && \makecell[c]{10\\11\\11\\11} \ar[ruu] \ar[rdd] && \makecell[c]{00\\10\\00\\00} \ar[ruu] \ar[rdd] &&  \makecell[c]{01\\10\\11\\00} \ar[ruu] \ar[rdd] && \makecell[c]{00\\00\\11\\01} \ar[ruu] \ar[rdd] && \makecell[c]{10\\10\\10\\11} \ar[ruu] \ar[rdd] &&  \makecell[c]{00\\10\\10\\10} \ar[ruu] \ar[rdd] && \makecell[c]{01\\00\\00\\00} \ar[ruu] \ar[rdd] && \makecell[c]{11\\01\\10\\00}  \ar[rdd] & \\
 \\
&&&& \makecell[c]{00\\01\\10\\00} \ar[ruu] && \makecell[c]{00\\00\\00\\11} \ar[ruu] && \makecell[c]{00\\00\\00\\0-1} \ar[ruu] && \makecell[c]{01\\01\\01\\01} \ar[ruu] && \makecell[c]{10\\00\\00\\00} \ar[ruu] &&  \makecell[c]{00\\11\\00\\00} \ar[ruu]  && \makecell[c]{00\\00\\01\\10} \ar[ruu] && \makecell[c]{00\\00\\00\\-10} \ar[ruu] &&  \makecell[c]{11\\11\\11\\11} \ar[ruu]  && \makecell[c]{0-1\\00\\00\\00} \ar[ruu]   &&  \makecell[c]{01\\10\\00\\00} \ar[ruu]  &&  \makecell[c]{00\\00\\11\\00} \ar[ruu] &&  \makecell[c]{00\\00\\00\\01} \ar[ruu]  &&  \makecell[c]{10\\10\\10\\10} \ar[ruu]  && \makecell[c]{-10\\00\\00\\00} \ar[ruu]   && \makecell[c]{11\\00\\00\\00} \ar[ruu]   && \makecell[c]{00\\01\\10\\00}}}}
\caption{The cluster category $\mathcal{C}^{\leq \xi}_4$ of $kQ^{\leq \xi}_4$, and the objects on the far left are to be identified with the objects on the far right.} \label{the cluster category C4 in A2}
\end{figure}

\pagebreak
\begin{figure}
\Large
\centerline{
\resizebox{.085\width}{.3\height}{
\xymatrix@C1pt@R35pt{
 &&  \makecell[c]{2_{-5}2_{-3}2_{-1}} \ar[rdd] &&   \makecell[c]{1_{-4}1_{-6}1_{-8}} \ar[rdd]\ar[rdd] &&   \makecell[c]{1_{-2}1_{0}} \ar[rdd] && \makecell[c]{2_{-3}2_{-5}} \ar[rdd] && \makecell[c]{1_{-6}1_{-8}} \ar[rdd] &&  \makecell[c]{1_{0}1_{-2}1_{-4}} \ar[rdd] && \makecell[c]{2_{-3}2_{-5}2_{-7}} \ar[rdd] &&  \makecell[c]{1_{0}1_{-2}2_{-5}2_{-7}2_{-9}}  \ar[rdd] && \makecell[c]{1_{-4}1_{-2}} \ar[rdd] && \makecell[c]{2_{-5}2_{-7}} \ar[rdd]&& \makecell[c]{1_{0}1_{-2}1_{-4}2_{-7}2_{-9}} \ar[rdd] && \makecell[c]{1_{-2}1_{-4}1_{-6}} \ar[rdd] && \makecell[c]{2_{-5}2_{-7}2_{-9}} \ar[rdd]&& \makecell[c]{2_{-3}2_{-1}} \ar[rdd] && \makecell[c]{1_{-4}1_{-6}} \ar[rdd] && \makecell[c]{2_{-7}2_{-9}} \ar[rdd] && \makecell[c]{2_{-5}2_{-3}2_{-1}} && \\
\\
&  \makecell[c]{2_{-1}2_{-3}1_{-6}2_{-9}}  \ar[rdd] \ar[ruu] &&  \makecell[c]{2_{-1}2_{-3}1_{-4}2_{-5}1_{-6}1_{-8}} \ar[rdd]\ar[ruu] && \makecell[c]{1_{0}2_{-3}1_{-6}1_{-8}} \ar[rdd] \ar[ruu] && \makecell[c]{1_{0}1_{-2}2_{-3}2_{-5}} \ar[ruu]\ar[rdd] && \makecell[c]{2_{-3}2_{-5}1_{-6}1_{-8}} \ar[ruu]\ar[rdd] &&  \makecell[c]{1_{0}1_{-2}2_{-5}1_{-8}} \ar[ruu]\ar[rdd] && \makecell[c]{1_{0}1_{-2}2_{-3}1_{-4}2_{-5}2_{-7}} \ar[ruu] \ar[rdd] && \makecell[c]{1_{0}1_{-2}2_{-3}2_{-5}^{2}2_{-7}^{2}2_{-9}} \ar[ruu]\ar[rdd]  &&  \makecell[c]{1_{0}1_{-2}^{2}1_{-4}2_{-5}2_{-7}2_{-9}} \ar[ruu]\ar[rdd] &&  \makecell[c]{1_{-2}1_{-4}2_{-5}2_{-7}} \ar[ruu]\ar[rdd] &&  \makecell[c]{1_{0}1_{-2}1_{-4}2_{-5}2_{-7}^{2}2_{-9}} \ar[ruu]\ar[rdd] &&  \makecell[c]{1_{0}1_{-2}^{2}1_{-4}^{2}1_{-6}2_{-7}2_{-9}} \ar[ruu]\ar[rdd] && \makecell[c]{1_{-2}1_{-4}2_{-5}1_{-6}2_{-7}2_{-9}} \ar[ruu]\ar[rdd] && \makecell[c]{2_{-1}1_{-4}2_{-7}2_{-9}} \ar[ruu]\ar[rdd] && \makecell[c]{2_{-1}2_{-3}1_{-4}1_{-6}} \ar[ruu]\ar[rdd] && \makecell[c]{1_{-4}1_{-6}2_{-7}2_{-9}} \ar[ruu]\ar[rdd]  &&  \makecell[c]{2_{-1}2_{-3}1_{-6}2_{-9}} \ar[ruu] & && \\
 \\
\makecell[c]{2_{-1}2_{-3}1_{-4}1_{-6}^{2}2_{-9}}  \ar[r] \ar[ruu] \ar[rdd] & \makecell[c]{1_{-4}1_{-6}2_{-9}} \ar[r] & \ar[r] \makecell[c]{2_{-1}2_{-3}1_{-4}1_{-6}^{2}1_{-8}2_{-9}} \ar[ruu]\ar[rdd] &  \ar[r] \makecell[c]{2_{-1}2_{-3}1_{-6}1_{-8}} & \ar[r]  \makecell[c]{1_{0}2_{-1}2_{-3}^{2}2_{-5}1_{-6}1_{-8}}  \ar[ruu] \ar[rdd] &  \ar[r]  \makecell[c]{1_{0}2_{-3}2_{-5}}  & \ar[r] \makecell[c]{1_{0}2_{-3}^{2}2_{-5}1_{-6}1_{-8}} \ar[ruu] \ar[rdd]&  \ar[r] \makecell[c]{2_{-3}1_{-6}1_{-8}} & \ar[r] \makecell[c]{1_{0}1_{-2}2_{-3}2_{-5}1_{-6}1_{-8}} \ar[rdd] \ar[ruu]&  \ar[r] \makecell[c]{1_{0}1_{-2}2_{-5}} & \ar[r] \makecell[c]{1_{0}1_{-2}2_{-3}2_{-5}^{2}1_{-8}} \ar[rdd]\ar[ruu]&  \ar[r] \makecell[c]{2_{-3}2_{-5}1_{-8}} & \ar[r] \makecell[c]{1_{0}1_{-2}2_{-3}2_{-5}^{2}2_{-7}1_{-8}} \ar[rdd]\ar[ruu] &  \ar[r] \makecell[c]{1_{0}1_{-2}2_{-5}2_{-7}} & \ar[r] \makecell[c]{1_{0}^{2}1_{-2}^{2}2_{-3}1_{-4}2_{-5}^{2}2_{-7}^{2}2_{-9}} \ar[rdd]\ar[ruu] &  \ar[r] \makecell[c]{1_{0}1_{-2}2_{-3}1_{-4}2_{-5}2_{-7}2_{-9}} & \ar[r]\makecell[c]{1_01_{-2}^{2}2_{-3}1_{-4}2_{-5}^{2}2_{-7}^{2}2_{-9}} \ar[rdd]\ar[ruu] &  \ar[r] \makecell[c]{1_{-2}2_{-5}2_{-7}}  & \ar[r] \makecell[c]{1_{0}1_{-2}^{2}1_{-4}2_{-5}^{2}2_{-7}^{2}2_{-9}} \ar[rdd]\ar[ruu] &  \ar[r] \makecell[c]{1_{0}1_{-2}1_{-4}2_{-5}2_{-7}2_{-9}} & \ar[r] \makecell[c]{1_{0}1_{-2}^{2}1_{-4}^{2}2_{-5}2_{-7}^{2}2_{-9}} \ar[rdd]\ar[ruu] &  \ar[r] \makecell[c]{1_{-2}1_{-4}2_{-7}} &\ar[r] \makecell[c]{1_{0}1_{-2}^{2}1_{-4}^{2}2_{-5}1_{-6}2_{-7}^{2}2_{-9}} \ar[rdd]\ar[ruu]&  \ar[r] \makecell[c]{1_{0}1_{-2}1_{-4}2_{-5}1_{-6}2_{-7}2_{-9}} &\ar[r]   \makecell[c]{1_{0}1_{-2}^{2}1_{-4}^{2}2_{-5}1_{-6}2_{-7}^{2}2_{-9}^{2}} \ar[rdd]\ar[ruu]&  \ar[r]  \makecell[c]{1_{-2}1_{-4}2_{-7}2_{-9}} &\ar[r] \makecell[c]{2_{-1}1_{-2}1_{-4}^{2}1_{-6}2_{-7}2_{-9}} \ar[rdd]\ar[ruu] &  \ar[r] \makecell[c]{2_{-1}1_{-4}1_{-6}} &\ar[r] \makecell[c]{2_{-1}1_{-4}^{2}1_{-6}2_{-7}2_{-9}} \ar[rdd]\ar[ruu] &  \ar[r] \makecell[c]{1_{-4}2_{-7}2_{-9}} &\ar[r] \makecell[c]{2_{-1}2_{-3}1_{-4}1_{-6}2_{-7}2_{-9}} \ar[rdd]\ar[ruu] &  \ar[r] \makecell[c]{2_{-1}2_{-3}1_{-6}} & \ar[r] \makecell[c]{2_{-1}2_{-3}1_{-4}1_{-6}^{2}2_{-9}} \ar[rdd]\ar[ruu]  &  \makecell[c]{1_{-4}1_{-6}2_{-9}} &&& \\
 \\
& \makecell[c]{2_{-1}2_{-3}1_{-4}1_{-6}^{2}1_{-8}} \ar[ruu] \ar[rdd] && \makecell[c]{1_{0}2_{-3}1_{-6}2_{-9}} \ar[ruu] \ar[rdd] && \makecell[c]{2_{-1}2_{-3}^{2}2_{-5}1_{-6}1_{-8}} \ar[ruu] \ar[rdd] && \makecell[c]{1_{0}2_{-3}2_{-5}1_{-6}1_{-8}} \ar[ruu] \ar[rdd] && \makecell[c]{1_{0}1_{-2}2_{-3}2_{-5}1_{-8}} \ar[ruu] \ar[rdd] && \makecell[c]{1_{0}1_{-2}2_{-3}2^{2}_{-5}2_{-7}} \ar[ruu] \ar[rdd] &&  \makecell[c]{1_{0}1_{-2}2_{-3}2_{-5}^{2}2_{-7}1_{-8}2_{-9}} \ar[ruu] \ar[rdd] &&  \makecell[c]{1_{0}1_{-2}^{2}1_{-4}2_{-5}2_{-7}} \ar[ruu] \ar[rdd] &&  \makecell[c]{1_{0}1_{-2}2_{-3}1_{-4}2_{-5}^{2}2_{-7}^{2}2_{-9}} \ar[ruu] \ar[rdd] &&  \makecell[c]{1_{0}1_{-2}^{2}1_{-4}2_{-5}2_{-7}^{2}2_{-9}} \ar[ruu] \ar[rdd] && \makecell[c]{1_{0}1_{-2}^{2}1_{-4}^{2}2_{-5}1_{-6}2_{-7}2_{-9}} \ar[ruu] \ar[rdd] &&  \makecell[c]{1_{-2}1_{-4}2_{-5}2_{-7}^{2}2_{-9}}  \ar[ruu] \ar[rdd] &&   \makecell[c]{1_02_{-1}1_{-2}1_{-4}^{2}1_{-6}2_{-7}2_{-9}} \ar[ruu] \ar[rdd] && \makecell[c]{1_{-2}1_{-4}^{2}1_{-6}2_{-7}2_{-9}} \ar[ruu] \ar[rdd] && \makecell[c]{2_{-1}1_{-4}1_{-6}2_{-7}2_{-9}} \ar[ruu] \ar[rdd] && \makecell[c]{2_{-1}2_{-3}1_{-4}1_{-6}2_{-9}} \ar[ruu] \ar[rdd] && \makecell[c]{2_{-1}2_{-3}1_{-4}1_{-6}^{2}1_{-8}} \ar[rdd] &&& \\
 \\
&& \makecell[c]{1_{0}2_{-3}1_{-6}} \ar[ruu] \ar[rdd] && \makecell[c]{2_{-3}1_{-6}2_{-9}} \ar[ruu] \ar[rdd]  &&  \makecell[c]{2_{-1}2_{-3}2_{-5}1_{-6}1_{-8}}  \ar[ruu] \ar[rdd] &&  \makecell[c]{1_{0}2_{-3}2_{-5}1_{-8}} \ar[ruu] \ar[rdd] && \makecell[c]{1_{0}1_{-2}2_{-3}2_{-5}2_{-7}} \ar[ruu] \ar[rdd] && \makecell[c]{1_{0}1_{-2}2_{-3}2^{2}_{-5}2_{-7}2_{-9}} \ar[ruu] \ar[rdd] && \makecell[c]{1_{-8}2_{-5}1_{-2}} \ar[ruu] \ar[rdd] && \makecell[c]{1_{0}1_{-2}1_{-4}2_{-5}2_{-7}} \ar[ruu] \ar[rdd] && \makecell[c]{1_{0}1_{-2}2_{-3}1_{-4}2_{-5}2_{-7}^{2}2_{-9}}  \ar[ruu] \ar[rdd] && \makecell[c]{1_{0}1_{-2}^{2}1_{-4}2_{-5}1_{-6}2_{-7}2_{-9}} \ar[ruu] \ar[rdd] && \makecell[c]{1_{-2}1_{-4}2_{-5}2_{-7}2_{-9}} \ar[ruu] \ar[rdd] && \makecell[c]{2_{-1}1_{-4}2_{-7}} \ar[ruu] \ar[rdd] && \makecell[c]{1_{0}1_{-2}1_{-4}^{2}1_{-6}2_{-7}2_{-9}} \ar[ruu] \ar[rdd] && \makecell[c]{1_{-2}1_{-4}1_{-6}2_{-7}2_{-9}} \ar[ruu] \ar[rdd] &&  \makecell[c]{2_{-1}1_{-4}1_{-6}2_{-9}} \ar[ruu] \ar[rdd] && \makecell[c]{2_{-1}2_{-3}1_{-4}1_{-6}1_{-8}} \ar[ruu]  \ar[rdd] && \makecell[c]{1_{0}2_{-3}1_{-6}} \ar[rdd] && \\
\\
&&& \makecell[c]{2_{-3}1_{-6}} \ar[ruu] \ar[rdd] &&  \makecell[c]{1_{-6}2_{-9}}  \ar[ruu] \ar[rdd] &&  \makecell[c]{2_{-1}2_{-3}2_{-5}1_{-8}} \ar[ruu] \ar[rdd] && \makecell[c]{1_{0}2_{-3}2_{-5}2_{-7}} \ar[ruu] \ar[rdd] && \makecell[c]{1_{0}1_{-2}2_{-3}2_{-5}2_{-7}2_{-9}} \ar[ruu] \ar[rdd] && \makecell[c]{2_{-5}1_{-2}} \ar[ruu] \ar[rdd] && \makecell[c]{1_{-8}2_{-5}}  \ar[ruu] \ar[rdd] &&  \makecell[c]{1_{0}1_{-2}1_{-4}2_{-7}} \ar[ruu] \ar[rdd] &&  \makecell[c]{1_{0}1_{-2}2_{-3}1_{-4}2_{-5}1_{-6}2_{-7}2_{-9}} \ar[ruu] \ar[rdd] && \makecell[c]{1_{-2}2_{-5}2_{-7}2_{-9}} \ar[ruu] \ar[rdd] && \makecell[c]{2_{-1}1_{-4}} \ar[ruu] \ar[rdd] && \makecell[c]{1_{-4}2_{-7}} \ar[ruu] \ar[rdd] && \makecell[c]{1_{0}1_{-2}1_{-4}1_{-6}2_{-7}2_{-9}} \ar[ruu] \ar[rdd] && \makecell[c]{1_{-2}1_{-4}1_{-6}2_{9}} \ar[ruu] \ar[rdd] && \makecell[c]{2_{-1}1_{-4}1_{-6}1_{-8}} \ar[ruu] \ar[rdd] && \makecell[c]{1_{0}2_{-3}} \ar[ruu] \ar[rdd] &&  \makecell[c]{2_{-3}1_{-6}} \ar[rdd] & \\
 \\
&&&&  \makecell[c]{1_{-6}}  \ar[ruu] && \makecell[c]{2_{-9}} \ar[ruu] && \makecell[c]{2_{-1}2_{-3}2_{-5}2_{-7}} \ar[ruu] && \makecell[c]{1_{0}2_{-3}2_{-5}2_{-7}2_{-9}} \ar[ruu] && \makecell[c]{1_{-2}} \ar[ruu] && \makecell[c]{2_{-5}} \ar[ruu]  && \makecell[c]{1_{-8}} \ar[ruu] && \makecell[c]{1_{-6}1_{-4}1_{-2}1_{0}}  \ar[ruu] &&  \makecell[c]{2_{-3}2_{-5}2_{-7}2_{-9}} \ar[ruu]  && \makecell[c]{2_{-1}} \ar[ruu]   && \makecell[c]{1_{-4}} \ar[ruu]  && \makecell[c]{2_{-7}} \ar[ruu] &&  \makecell[c]{1_{0}1_{-2}1_{-4}1_{-6}2_{-9}} \ar[ruu]  && \makecell[c]{1_{-2}1_{-4}1_{-6}1_{-8}} \ar[ruu]  && \makecell[c]{1_{0}} \ar[ruu]  && \makecell[c]{2_{-3}} \ar[ruu]   && \makecell[c]{1_{-6}}}}}
\caption{All the real prime simple modules in $\mathscr{C}^{\leq \xi}_4$, excluding the frozen real prime simple modules $1_{-8}1_{-6}1_{-4}1_{-2}1_0$ and $2_{-9}2_{-7}2_{-5}2_{-3}2_{-1}$.} \label{all the real prime simple modules C4 in A2}
\end{figure}
\pagebreak

\subsection{The subcategory $\mathscr{C}^{\leq \xi}_2$ in type $A_4$}

The Dynkin diagrams $A_n,D_n$ and $E_6$ admit a nontrivial involution $\sigma$. For example, in type $A_n$, the involution acts as a reflection in the centre point of the Dynkin diagram (which may or may not be a vertex). It induces an automorphism of $\mathbb{Z}\mathcal{P}$ such that $Y_{i,r} \to Y_{\sigma(i),r}$.

In type $A_4$, there are $2^3=8$ height functions $\xi$ up to parameter shift. Under the canonical involution, it is enough to prove Conjecture \ref{rigid objects equal real prime simple modules conjecture} and Conjecture \ref{character conjecture} for $\xi(1,2,3,4)=(-2,-1,0,-1)$, $\xi(1,2,3,4)=(0,-1,-2,-3)$, $\xi(1,2,3,4)=(0,-1,-2,-1)$, and $\xi(1,2,3,4)=(0,-1,0,-1)$.

Let $\xi(1,2,3,4)=(-2,-1,0,-1)$. The initial quiver and the principal quiver of the cluster algebra $K_0(\mathscr{C}^{\leq \xi}_2)$ are shown in Figure \ref{the initial quiver (left) and its principal quiver (right) 1}. The indecomposable rigid objects in $\mathcal{C}^{\leq \xi}_2$ are shown in Figure \ref{the cluster category C21 in A4}. The real prime simple modules (excluding the frozen modules) in $\mathscr{C}^{\leq \xi}_2$ are shown in Figure \ref{all the real prime simple modules C21 in A4}. The two sets are in bijection under the map $L(m) \to K^{\leq \xi}_\ell(m)$.

\begin{figure}
\resizebox{.8\width}{.8\height}{ 
\begin{minipage}{0.5\textwidth}
\begin{xy}
(-25,50)*+{(1,-2)}="a";
(0,50)*+{(2,-1)}="b"; 
(-25,30)*+{(1,-4)}="c";
(0,30)*+{(2,-3)}="d"; 
(-25,10)*+{\fbox{(1,-6)}}="e";
(0,10)*+{\fbox{(2,-5)}}="f";
(25,50)*+{(3,0)}="g";
(50,50)*+{(4,-1)}="h"; 
(25,30)*+{(3,-2)}="i";
(50,30)*+{(4,-3)}="j"; 
(25,10)*+{\fbox{(3,-4)}}="k";
(50,10)*+{\fbox{(4,-5)}}="l";
{\ar "b";"a"}; {\ar "g";"b"}; {\ar "g";"h"};
{\ar "a";"d"};
{\ar "d";"c"};{\ar "i";"d"};{\ar "i";"j"};
{\ar "c";"f"};{\ar "f";"d"};
{\ar "d";"b"};
{\ar "e";"c"};
{\ar "c";"a"};
{\ar "k";"i"};{\ar "i";"g"};
{\ar "l";"j"};{\ar "j";"h"};
{\ar "b";"i"};{\ar "h";"i"};
{\ar "d";"k"};{\ar "j";"k"};
\end{xy}
\end{minipage} 
\qquad 
\begin{minipage}{0.5\textwidth}
\begin{xy}
(-25,50)*+{1}="a";
(0,50)*+{2}="b"; 
(-25,30)*+{5}="c";
(0,30)*+{6}="d"; 
(25,50)*+{3}="g";
(50,50)*+{4}="h"; 
(25,30)*+{7}="i";
(50,30)*+{8}="j"; 
{\ar "b";"a"}; {\ar "g";"b"}; {\ar "g";"h"};
{\ar "a";"d"};
{\ar "d";"c"};{\ar "i";"d"};{\ar "i";"j"};
{\ar "d";"b"};
{\ar "c";"a"};
{\ar "i";"g"};
{\ar "j";"h"};
{\ar "b";"i"};
{\ar "h";"i"};
\end{xy}
\end{minipage}} 
\caption{The initial quiver (left) of $K_0(\mathscr{C}^{\leq \xi}_2)$ and its principal quiver (right).} \label{the initial quiver (left) and its principal quiver (right) 1}
\end{figure}
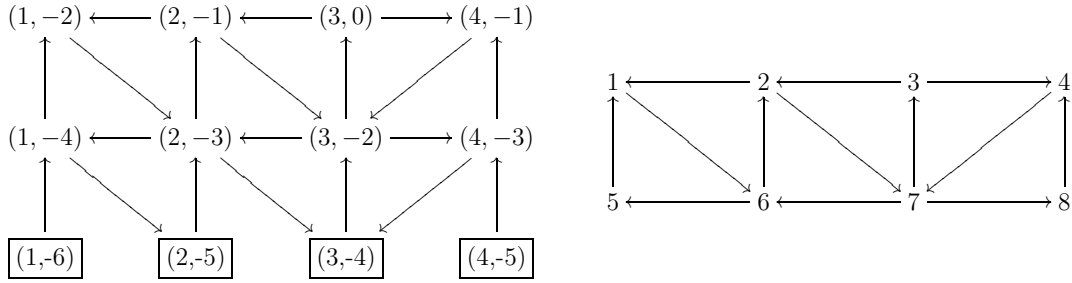

\begin{figure}\huge
\centerline{
\resizebox{.18\width}{.27\height}{ 
\xymatrix@C5pt@R5pt{
&& \makecell[c]{0000\\00-10} \ar[rdd] & & \makecell[c]{1111\\1111}  \ar[rdd]\ar[rdd] && \makecell[c]{0110\\0000} \ar[rdd] &&  \makecell[c]{0000\\1100} \ar[rdd]&&   \makecell[c]{0111\\0111}  \ar[rdd]&& \makecell[c]{0010\\0000} \ar[rdd]&&  \makecell[c]{0000\\1111} \ar[rdd]   && \makecell[c]{0110\\0110} \ar[rdd] && \makecell[c]{0-100\\0000} \ar[rdd] && \makecell[c]{1100\\0111} \ar[rdd] && \makecell[c]{0010\\0010} \ar[rdd]  && \makecell[c]{00-10\\0000} \ar[rdd] && \makecell[c]{1111\\0110} \ar[rdd] &&  \makecell[c]{0000\\0-100} \ar[rdd]&& \makecell[c]{1100\\1100}  \ar[rdd] && \makecell[c]{0111\\0010}  \ar[rdd] && \makecell[c]{0000\\00-10}  && \\
 \\
& \makecell[c]{0111\\0000}   \ar[rdd] \ar[ruu] && \makecell[c]{1111\\1101} \ar[rdd]\ar[ruu] && \makecell[c]{1221\\1111} \ar[rdd] \ar[ruu] &&  \makecell[c]{0110\\1100} \ar[ruu]\ar[rdd] &&  \makecell[c]{0111\\1211}  \ar[ruu]\ar[rdd] &&  \makecell[c]{0121\\0111}  \ar[ruu]\ar[rdd]&&  \makecell[c]{0010\\1111} \ar[ruu]\ar[rdd]&&  \makecell[c]{0110\\1221} \ar[ruu]\ar[rdd]&& \makecell[c]{0010\\0110} \ar[ruu]\ar[rdd]&& \makecell[c]{1000\\0111} \ar[ruu]\ar[rdd] && \makecell[c]{1110\\0121} \ar[ruu]\ar[rdd] && \makecell[c]{0000\\0010} \ar[ruu]\ar[rdd] && \makecell[c]{1101\\0110}\ar[ruu]\ar[rdd] &&  \makecell[c]{1111\\0010} \ar[ruu]\ar[rdd] &&  \makecell[c]{1100\\1000}  \ar[ruu]\ar[rdd] && \makecell[c]{1211\\1110}  \ar[ruu]\ar[rdd]  && \makecell[c]{0111\\0000} \ar[ruu] & && \\
 \\
\makecell[c]{1211\\1100} \ar[r] \ar[ruu] \ar[rdd] & \makecell[c]{1111\\1100}  \ar[r] & \ar[r] \makecell[c]{1222\\1101} \ar[ruu]\ar[rdd] &  \ar[r]  \makecell[c]{0111\\0001} & \ar[r]  \makecell[c]{1221\\1101} \ar[ruu] \ar[rdd] &  \ar[r]  \makecell[c]{1110\\1100} &\ar[r]  \makecell[c]{1221\\2211} \ar[ruu] \ar[rdd] &  \ar[r]  \makecell[c]{0111\\1111} & \ar[r]  \makecell[c]{0221\\1211} \ar[rdd]\ar[ruu]&  \ar[r] \makecell[c]{0110\\0100} & \ar[r] \makecell[c]{0121\\1211} \ar[rdd]\ar[ruu]&  \ar[r] \makecell[c]{0011\\1111} &\ar[r] \makecell[c]{0121\\1222} \ar[rdd]\ar[ruu]&  \ar[r] \makecell[c]{0110\\0111} &\ar[r] \makecell[c]{0120\\1221}  \ar[rdd]\ar[ruu]&  \ar[r] \makecell[c]{0010\\1110}  & \ar[r] \makecell[c]{0010\\1221} \ar[rdd]\ar[ruu] &  \ar[r] \makecell[c]{0000\\0111} &\ar[r] \makecell[c]{1010\\0221} \ar[rdd]\ar[ruu]&  \ar[r] \makecell[c]{1010\\0110} & \ar[r] \makecell[c]{1010\\0121} \ar[rdd]\ar[ruu]&  \ar[r] \makecell[c]{0000\\0011} &\ar[r] \makecell[c]{1100\\0121} \ar[rdd]\ar[ruu]&  \ar[r] \makecell[c]{1100\\0110} &\ar[r] \makecell[c]{1101\\0120} \ar[rdd]\ar[ruu]&  \ar[r] \makecell[c]{0001\\0010} &\ar[r]  \makecell[c]{1101\\0010} \ar[rdd]\ar[ruu]&  \ar[r] \makecell[c]{1100\\0000} &\ar[r]  \makecell[c]{2211\\1010} \ar[rdd]\ar[ruu] &  \ar[r] \makecell[c]{1111\\1010}  &\ar[r] \makecell[c]{1211\\1010}  \ar[rdd]\ar[ruu] &  \ar[r] \makecell[c]{0100\\0000}  &\ar[r] \makecell[c]{1211\\1100} \ar[rdd]\ar[ruu]  &  \makecell[c]{1111\\1100} &&& \\
 \\
&  \makecell[c]{1211\\1101} \ar[ruu] \ar[rdd] &&  \makecell[c]{1221\\1100} \ar[ruu] \ar[rdd]&&  \makecell[c]{0111\\1101} \ar[ruu] \ar[rdd] && \makecell[c]{1221\\1211} \ar[ruu] \ar[rdd]&& \makecell[c]{0121\\1111} \ar[ruu] \ar[rdd] && \makecell[c]{0110\\1211} \ar[ruu] \ar[rdd]&& \makecell[c]{0121\\1221}  \ar[ruu] \ar[rdd] &&  \makecell[c]{0010\\0111} \ar[ruu] \ar[rdd] &&  \makecell[c]{1010\\1221} \ar[ruu] \ar[rdd] && \makecell[c]{0010\\0121} \ar[ruu] \ar[rdd] && \makecell[c]{1000\\0110} \ar[ruu] \ar[rdd] && \makecell[c]{1101\\0121} \ar[ruu] \ar[rdd] && \makecell[c]{1100\\0010} \ar[ruu] \ar[rdd] && \makecell[c]{1101\\1010} \ar[ruu] \ar[rdd] &&   \makecell[c]{1211\\0010} \ar[ruu] \ar[rdd]&& \makecell[c]{1111\\1000} \ar[ruu] \ar[rdd] && \makecell[c]{1211\\1101}  \ar[rdd] &&& \\
 \\
&&  \makecell[c]{1210\\1100} \ar[ruu] \ar[rdd] &&  \makecell[c]{0111\\1100} \ar[ruu] \ar[rdd] &&  \makecell[c]{0111\\0101} \ar[ruu] \ar[rdd] && \makecell[c]{1121\\1111}\ar[ruu] \ar[rdd] && \makecell[c]{0110\\1111} \ar[ruu] \ar[rdd] && \makecell[c]{0110\\1210} \ar[ruu] \ar[rdd] && \makecell[c]{0011\\0111} \ar[ruu] \ar[rdd] && \makecell[c]{1010\\0111} \ar[ruu] \ar[rdd] && \makecell[c]{0010\\1121} \ar[ruu] \ar[rdd] &&  \makecell[c]{0000\\0110} \ar[ruu] \ar[rdd] &&  \makecell[c]{1001\\0110} \ar[ruu] \ar[rdd] && \makecell[c]{1100\\0011} \ar[ruu] \ar[rdd] && \makecell[c]{1100\\1010} \ar[ruu] \ar[rdd] &&  \makecell[c]{0101\\0010} \ar[ruu] \ar[rdd] &&  \makecell[c]{1111\\0000} \ar[ruu] \ar[rdd] && \makecell[c]{1111\\1001}  \ar[ruu]  \ar[rdd] && \makecell[c]{1210\\1100} \ar[rdd] && \\
\\
&&&  \makecell[c]{0100\\1100} \ar[ruu] \ar[rdd]&&  \makecell[c]{0111\\0100} \ar[ruu] \ar[rdd]&&  \makecell[c]{0011\\0001} \ar[ruu] \ar[rdd]&& \makecell[c]{1110\\1111} \ar[ruu] \ar[rdd] && \makecell[c]{0110\\1110} \ar[ruu] \ar[rdd]  && \makecell[c]{0000\\0100}  \ar[ruu] \ar[rdd] && \makecell[c]{1011\\0111}  \ar[ruu] \ar[rdd]&& \makecell[c]{0010\\0011} \ar[ruu] \ar[rdd] && \makecell[c]{0000\\1110} \ar[ruu] \ar[rdd] && \makecell[c]{0001\\0110} \ar[ruu] \ar[rdd] && \makecell[c]{1000\\0000} \ar[ruu] \ar[rdd] && \makecell[c]{1100\\1011} \ar[ruu] \ar[rdd] && \makecell[c]{0100\\0010} \ar[ruu] \ar[rdd] &&  \makecell[c]{0001\\0000} \ar[ruu] \ar[rdd] &&  \makecell[c]{1111\\0001}  \ar[ruu] \ar[rdd]&& \makecell[c]{1110\\1000} \ar[ruu] \ar[rdd] && \makecell[c]{0100\\1100} \ar[rdd] & \\
 \\
&&&&  \makecell[c]{0100\\0100} \ar[ruu] &&  \makecell[c]{0011\\0000} \ar[ruu] && \makecell[c]{0000\\0001} \ar[ruu] && \makecell[c]{1110\\1110} \ar[ruu] && \makecell[c]{-1000\\0000}\ar[ruu] && \makecell[c]{1000\\0100}\ar[ruu]  &&  \makecell[c]{0011\\0011} \ar[ruu] &&  \makecell[c]{000-1\\0000} \ar[ruu] && \makecell[c]{0001\\1110} \ar[ruu]  && \makecell[c]{0000\\-1000} \ar[ruu]   && \makecell[c]{1000\\1000} \ar[ruu]  && \makecell[c]{0100\\0011} \ar[ruu] && \makecell[c]{0000\\000-1} \ar[ruu]  &&  \makecell[c]{0001\\0001} \ar[ruu]  &&  \makecell[c]{1110\\0000}  \ar[ruu]   && \makecell[c]{0000\\1000} \ar[ruu]   && \makecell[c]{0100\\0100}
}}}
\caption{The cluster category $\mathcal{C}^{\leq \xi}_2$ of $kQ^{\leq \xi}_2$, and the objects on the far left are to be identified with the objects on the far right.} \label{the cluster category C21 in A4}
\end{figure}

\begin{figure} \large
\resizebox{.1\width}{.4\height}{ 
\xymatrix@C5pt@R15pt{
&&  \makecell[c]{3_{-2}3_0} \ar[rdd] && \makecell[c]{3_{0}1_{-4}4_{-3}1_{-6}4_{-5}}  \ar[rdd]\ar[rdd] && \makecell[c]{2_{-3}} \ar[rdd]  &&  \makecell[c]{3_03_{-2}1_{-6}} \ar[rdd]  &&   \makecell[c]{3_0 2_{-3}4_{-3}2_{-5}4_{-5}}  \ar[rdd] && \makecell[c]{3_{-2}} \ar[rdd]  &&  \makecell[c]{3_03_{-2}1_{-6}4_{-5}} \ar[rdd]   && \makecell[c]{2_{-3}2_{-5}} \ar[rdd] && \makecell[c]{2_{-1}} \ar[rdd] && \makecell[c]{3_0 3_{-2} 2_{-5} 4_{-5}} \ar[rdd] && \makecell[c]{3_{-4}3_{-2}} \ar[rdd]  && \makecell[c]{3_0} \ar[rdd] && \makecell[c]{2_{-5}} \ar[rdd] &&  \makecell[c]{2_{-3}2_{-1}} \ar[rdd]&& \makecell[c]{1_{-6}1_{-4}3_0}  \ar[rdd] && \makecell[c]{3_{-4}}  \ar[rdd] && \makecell[c]{3_{-2}3_0}  && \\
 \\
& \makecell[c]{2_{-3}3_04_{-3}} \ar[rdd] \ar[ruu]  &&  \makecell[c]{3^2_{0}1_{-4}3_{-2}4_{-3}1_{-6}4_{-5}} \ar[rdd]\ar[ruu] &&   \makecell[c]{3_{0}1_{-4}2_{-3}4_{-3}1_{-6}4_{-5}} \ar[rdd] \ar[ruu]  &&  \makecell[c]{2_{-3}3_03_{-2}1_{-6}}  \ar[ruu]\ar[rdd] &&  \makecell[c]{3^{2}_0 3_{-2} 2_{-3}4_{-3}1_{-6}2_{-5}4_{-5}}  \ar[ruu]\ar[rdd]  &&  \makecell[c]{3_03_{-2}2_{-3}4_{-3}2_{-5}4_{-5}}  \ar[ruu]\ar[rdd]&&  \makecell[c]{3_03^{2}_{-2}1_{-6}4_{-5}} \ar[ruu]\ar[rdd]&&  \makecell[c]{3_03_{-2}2_{-3}2_{-5}1_{-6}4_{-5}} \ar[ruu]\ar[rdd]&& \makecell[c]{1_{-2}2_{-5}3_{-2}} \ar[ruu]\ar[rdd]&& \makecell[c]{2_{-1} 3_0 3_{-2} 2_{-5} 4_{-5}} \ar[ruu]\ar[rdd] && \makecell[c]{2_{-5} 3_{-2} 4_{-5}} \ar[ruu]\ar[rdd] && \makecell[c]{2_{-1}3_{-4}4_{-1}} \ar[ruu]\ar[rdd] && \makecell[c]{2_{-5}3_0}\ar[ruu]\ar[rdd] &&  \makecell[c]{1_{-4}2_{-1}3_{-4}} \ar[ruu]\ar[rdd] &&  \makecell[c]{1_{-6}1_{-4}2_{-3}2_{-1}3_0}  \ar[ruu]\ar[rdd] && \makecell[c]{1_{-6}1_{-4}3_03_{-4}}  \ar[ruu]\ar[rdd]  && \makecell[c]{2_{-3}3_04_{-3}} \ar[ruu] & && \\
 \\
 \makecell[c]{3_0^{2} 1_{-4} 2_{-3} 4_{-3} 1_{-6}}  \ar[r] \ar[ruu] \ar[rdd] & \makecell[c]{3_{0}1_{-4}4_{-3}1_{-6}}  \ar[r] & \ar[r] \makecell[c]{3_{0}^{2}1_{-4}2_{-3}4_{-3}^{2}1_{-6}4_{-5}} \ar[ruu]\ar[rdd] &  \ar[r] \makecell[c]{3_{0}2_{-3}4_{-3}4_{-5}} & \ar[r]  \makecell[c]{3^{2}_{0}1_{-4}3_{-2}2_{-3}4_{-3}1_{-6}4_{-5}} \ar[ruu] \ar[rdd] &   \ar[r]  \makecell[c]{3_{0}1_{-4}3_{-2}1_{-6}} & \ar[r]  \makecell[c]{3^2_01_{-4}2_{-3}3_{-2}4_{-3}1^2_{-6}4_{-5}} \ar[ruu] \ar[rdd] &  \ar[r]  \makecell[c]{3_02_{-3}4_{-3}1_{-6}4_{-5}} & \ar[r]  \makecell[c]{3^{2}_0 2^{2}_{-3} 3_{-2}4_{-3}2_{-5}1_{-6}4_{-5}} \ar[rdd]\ar[ruu]&  \ar[r] \makecell[c]{3_0 2_{-3} 3_{-2}2_{-5}} & \ar[r] \makecell[c]{3^{2}_03^{2}_{-2}2_{-3}2_{-5}4_{-3}1_{-6}4_{-5}} \ar[rdd]\ar[ruu] & \ar[r] \makecell[c]{3_03_{-2}4_{-3}1_{-6}4_{-5}} &\ar[r] \makecell[c]{3^{2}_03^{2}_{-2}2_{-3}4_{-3}1_{-6}2_{-5}4^2_{-5}} \ar[rdd]\ar[ruu]&  \ar[r] \makecell[c]{3_03_{-2}2_{-3}2_{-5}4_{-5}} &\ar[r] \makecell[c]{3_03^{2}_{-2}2_{-3}1_{-6}2_{-5}4_{-5}}  \ar[rdd]\ar[ruu]&  \ar[r] \makecell[c]{3_{-2}1_{-6}}  & \ar[r] \makecell[c]{1_{-2}3_03^{2}_{-2}1_{-6}2_{-5}4_{-5}} \ar[rdd]\ar[ruu] &  \ar[r] \makecell[c]{1_{-2}3_03_{-2}2_{-5}4_{-5}} &\ar[r] \makecell[c]{1_{-2} 3_0 3^{2}_{-2} 2^{2}_{-5} 4_{-5}} \ar[rdd]\ar[ruu]&  \ar[r] \makecell[c]{3_{-2} 2_{-5}} & \ar[r] \makecell[c]{2_{-1}2_{-5}3_{-2}4_{-5}} \ar[rdd]\ar[ruu]&  \ar[r] \makecell[c]{4_{-5}2_{-1}} &\ar[r] \makecell[c]{ 2_{-1}2_{-5}4_{-1}4_{-5}} \ar[rdd]\ar[ruu]&  \ar[r] \makecell[c]{2_{-5}4_{-1}} &\ar[r] \makecell[c]{2_{-1}3_{-4}4_{-1}2_{-5}} \ar[rdd]\ar[ruu]&  \ar[r] \makecell[c]{3_{-4}2_{-1}} &\ar[r]  \makecell[c]{1_{-4}2_{-1}3_{-4}3_0} \ar[rdd]\ar[ruu]&  \ar[r] \makecell[c]{1_{-4}3_0} &\ar[r]  \makecell[c]{1_{-6}1^{2}_{-4}2_{-1}3_{-4}3_0} \ar[rdd]\ar[ruu] &  \ar[r] \makecell[c]{1_{-6}1_{-4}2_{-1}3_{-4}}  &\ar[r] \makecell[c]{1_{-6}1_{-4}2_{-3}2_{-1}3_{-4}3_0}  \ar[rdd]\ar[ruu] &  \ar[r] \makecell[c]{2_{-3}3_0}  &\ar[r] \makecell[c]{1_{-6}1_{-4}2_{-3}3^{2}_04_{-3}} \ar[rdd]\ar[ruu]  &  \makecell[c]{1_{-6}1_{-4}3_04_{-3}} &&& \\
 \\
& \makecell[c]{1_{-6}1_{-4}2_{-3}3^{2}_{0}4_{-3}4_{-5}} \ar[ruu] \ar[rdd] &&  \makecell[c]{3_{0}1_{-4}2_{-3}4_{-3}1_{-6}}  \ar[ruu] \ar[rdd] &&  \makecell[c]{2_{-3}3^2_{0}3_{-2}4_{-3}4_{-5}1_{-6}} \ar[ruu] \ar[rdd] && \makecell[c]{3^2_{0}1_{-4}2_{-3}3_{-2}4_{-3}1_{-6}2_{-5}4_{-5}} \ar[ruu] \ar[rdd] && \makecell[c]{3_03_{-2}2_{-3}4_{-3}1_{-6}4_{-5}} \ar[ruu] \ar[rdd] && \makecell[c]{3^2_03^2_{-2}2_{-3}1_{-6}2_{-5}4_{-5}} \ar[ruu] \ar[rdd] && \makecell[c]{3_03_{-2}2_{-3}2_{-5}4_{-3}4_{-5}1_{-6}}  \ar[ruu] \ar[rdd] &&  \makecell[c]{1_{-2}3_03^{2}_{-2}2_{-5}4_{-5}} \ar[ruu] \ar[rdd] &&  \makecell[c]{3_03^{2}_{-2}1_{-6}2_{-5}4_{-5}} \ar[ruu] \ar[rdd] && \makecell[c]{1_{-2} 2_{-5} 3_{-2} 4_{-5}} \ar[ruu] \ar[rdd] && \makecell[c]{2_{-1}2_{-5}4_{-1}} \ar[ruu] \ar[rdd] && \makecell[c]{2_{-1}2_{-5}4_{-5}} \ar[ruu] \ar[rdd] && \makecell[c]{1_{-4}2_{-1}3_{-4}4_{-1}} \ar[ruu] \ar[rdd] && \makecell[c]{1_{-6}1_{-4}2_{-1}3_{-4}3_0} \ar[ruu] \ar[rdd] &&  \makecell[c]{1_{-4}3_03_{-4}} \ar[ruu] \ar[rdd]&& \makecell[c]{1_{-6}1_{-4}2_{-3}2_{-1}3_04_{-3}} \ar[ruu] \ar[rdd] && \makecell[c]{1_{-6}1_{-4}2_{-3}3^{2}_04_{-3}4_{-5}}  \ar[rdd] &&& \\
 \\
&&  \makecell[c]{3_{0}1_{-4}2_{-3}1_{-6}} \ar[ruu] \ar[rdd] && \makecell[c]{3_{0}2_{-3}4_{-3}1_{-6}}   \ar[ruu] \ar[rdd]  &&  \makecell[c]{3^2_{0}2_{-3}3_{-2}4_{-3}2_{-5}4_{-5}} \ar[ruu] \ar[rdd] &&  \makecell[c]{3_0 3_{-2}4_{-3}1_{-4}1_{-6}4_{-5}} \ar[ruu] \ar[rdd]  && \makecell[c]{3_0 3_{-2}2_{-3}1_{-6}4_{-5}} \ar[ruu] \ar[rdd] && \makecell[c]{2_{-3}3_03_{-2}2_{-5}1_{-6}} \ar[ruu] \ar[rdd] && \makecell[c]{1_{-2}3_03_{-2}4_{-3}2_{-5}4_{-5}} \ar[ruu] \ar[rdd] && \makecell[c]{3_03^{2}_{-2}2_{-5}4_{-5}} \ar[ruu] \ar[rdd] && \makecell[c]{1_{-6}3_{-2}4_{-5}} \ar[ruu] \ar[rdd] &&  \makecell[c]{1_{-2} 2_{-5} 4_{-1}} \ar[ruu] \ar[rdd] &&  \makecell[c]{2_{-1}2_{-5}} \ar[ruu] \ar[rdd] && \makecell[c]{1_{-4}2_{-1}4_{-5}} \ar[ruu] \ar[rdd] && \makecell[c]{1_{-6}1_{-4}2_{-1}3_{-4}4_{-1}} \ar[ruu] \ar[rdd] &&  \makecell[c]{3_{-4}3_0} \ar[ruu] \ar[rdd] &&  \makecell[c]{1_{-4}3_04_{-3}} \ar[ruu] \ar[rdd] && \makecell[c]{1_{-6}1_{-4}2_{-3}2_{-1}4_{-5}4_{-3}3_0}  \ar[ruu]  \ar[rdd] && \makecell[c]{1_{-6}1_{-4}2_{-3}3_0} \ar[rdd] && \\
\\
&&&  \makecell[c]{3_02_{-3}1_{-6}} \ar[ruu] \ar[rdd]&&   \makecell[c]{3_{0}2_{-3}4_{-3}2_{-5}} \ar[ruu] \ar[rdd]  &&  \makecell[c]{3_{0}3_{-2}4_{-3}4_{-5}} \ar[ruu] \ar[rdd]&& \makecell[c]{3_{0}3_{-2}4_{-5}1_{-4}1_{-6}} \ar[ruu] \ar[rdd]  && \makecell[c]{2_{-3}1_{-6}} \ar[ruu] \ar[rdd]  && \makecell[c]{1_{-2}3_03_{-2}2_{-5}}  \ar[ruu] \ar[rdd] && \makecell[c]{3_03_{-2}4_{-3}2_{-5}4_{-5}}  \ar[ruu] \ar[rdd]&& \makecell[c]{3_{-2}4_{-5}} \ar[ruu] \ar[rdd] && \makecell[c]{1_{-6}4_{-1}} \ar[ruu] \ar[rdd] && \makecell[c]{1_{-2}2_{-5}} \ar[ruu] \ar[rdd] && \makecell[c]{1_{-4}2_{-1}} \ar[ruu] \ar[rdd] && \makecell[c]{1_{-6}1_{-4}2_{-1}4_{-5}} \ar[ruu] \ar[rdd] && \makecell[c]{3_{-4}4_{-1}} \ar[ruu] \ar[rdd] &&  \makecell[c]{4_{-3}3_0} \ar[ruu] \ar[rdd] &&  \makecell[c]{4_{-5}4_{-3}3_01_{-4}}  \ar[ruu] \ar[rdd]&& \makecell[c]{1_{-6}1_{-4}2_{-3}2_{-1}} \ar[ruu] \ar[rdd] && \makecell[c]{1_{-6}2_{-3}3_0} \ar[rdd] & \\
 \\
&&&&  \makecell[c]{2_{-5}2_{-3}3_{0}} \ar[ruu] &&  \makecell[c]{4_{-3}} \ar[ruu]  && \makecell[c]{3_{0}3_{-2}4_{-5}} \ar[ruu] && \makecell[c]{1_{-4}1_{-6}} \ar[ruu] && \makecell[c]{1_{-2}}  \ar[ruu]   && \makecell[c]{3_03_{-2}2_{-5}}\ar[ruu]  &&  \makecell[c]{4_{-3}4_{-5}} \ar[ruu] &&  \makecell[c]{4_{-1}} \ar[ruu] && \makecell[c]{1_{-6}} \ar[ruu]  && \makecell[c]{1_{-4}1_{-2}} \ar[ruu]   && \makecell[c]{1_{-6}1_{-4}2_{-1}} \ar[ruu]  && \makecell[c]{4_{-5}} \ar[ruu] && \makecell[c]{4_{-3}4_{-1}} \ar[ruu]  &&  \makecell[c]{4_{-5}4_{-3}3_0} \ar[ruu]  &&  \makecell[c]{1_{-4}}  \ar[ruu]   && \makecell[c]{1_{-6}2_{-3}2_{-1}} \ar[ruu]   && \makecell[c]{2_{-5}2_{-3}3_0}
}}
\caption{All the real prime simple modules in $\mathscr{C}^{\leq \xi}_2$, excluding the frozen real prime simple modules $1_{-6}1_{-4}1_{-2}$,  $2_{-5}2_{-3}2_{-1}$, $3_{-4}3_{-2}3_{0}$, and $4_{-5}4_{-3}4_{-1}$.} \label{all the real prime simple modules C21 in A4}
\end{figure}
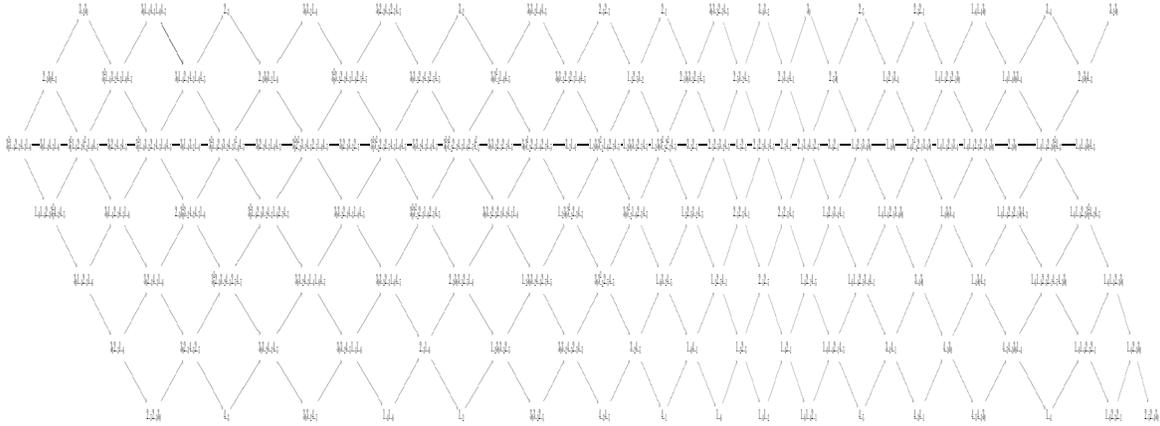

Let $\xi(1,2,3,4)=(0,-1,-2,-3)$. The initial quiver and the principal quiver of the cluster algebra $K_0(\mathscr{C}^{\leq \xi}_2)$ are shown in Figure \ref{the initial quiver (left) and its principal quiver (right) 2}. The indecomposable rigid objects in $\mathcal{C}^{\leq \xi}_2$ are shown in Figure \ref{the cluster category C22 in A4}. The real prime simple modules (excluding the frozen modules) in $\mathscr{C}^{\leq \xi}_2$ are shown in Figure \ref{all the real prime simple modules C22 in A4}.  The two sets are in bijection under the map $L(m) \to K^{\leq \xi}_\ell(m)$.

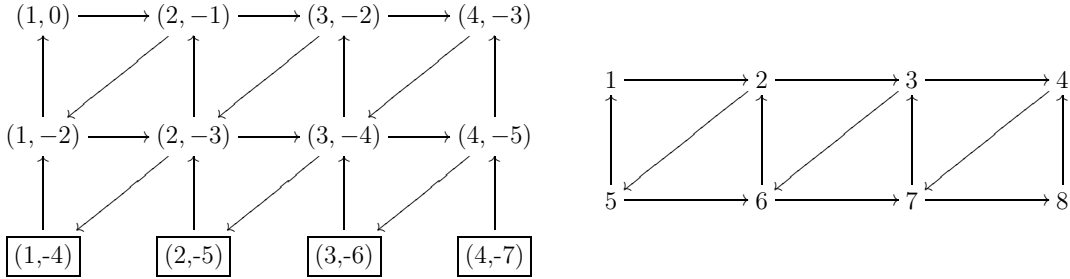
\begin{figure}
\resizebox{.8\width}{.8\height}{ 
\begin{minipage}{0.5\textwidth}
\begin{xy}
(-25,50)*+{(1,0)}="a";
(0,50)*+{(2,-1)}="b"; 
(-25,30)*+{(1,-2)}="c";
(0,30)*+{(2,-3)}="d"; 
(-25,10)*+{\fbox{(1,-4)}}="e";
(0,10)*+{\fbox{(2,-5)}}="f";
(25,50)*+{(3,-2)}="g";
(50,50)*+{(4,-3)}="h"; 
(25,30)*+{(3,-4)}="i";
(50,30)*+{(4,-5)}="j"; 
(25,10)*+{\fbox{(3,-6)}}="k";
(50,10)*+{\fbox{(4,-7)}}="l";
{\ar "a";"b"}; {\ar "b";"g"}; {\ar "g";"h"};
{\ar "b";"c"};
{\ar "c";"d"};{\ar "d";"i"};{\ar "i";"j"};
{\ar "d";"e"};{\ar "f";"d"};
{\ar "d";"b"};
{\ar "e";"c"};
{\ar "c";"a"};
{\ar "k";"i"};{\ar "i";"g"};
{\ar "l";"j"};{\ar "j";"h"};
{\ar "g";"d"};{\ar "h";"i"};
{\ar "i";"f"};{\ar "j";"k"};
\end{xy}
\end{minipage} 
\qquad 
\begin{minipage}{0.5\textwidth}
\begin{xy}
(-25,50)*+{1}="a";
(0,50)*+{2}="b"; 
(-25,30)*+{5}="c";
(0,30)*+{6}="d"; 
(25,50)*+{3}="g";
(50,50)*+{4}="h"; 
(25,30)*+{7}="i";
(50,30)*+{8}="j"; 
{\ar "a";"b"}; {\ar "b";"g"}; {\ar "g";"h"};
{\ar "b";"c"};
{\ar "c";"d"};{\ar "d";"i"};{\ar "i";"j"};
{\ar "d";"b"};
{\ar "c";"a"};
{\ar "i";"g"};
{\ar "j";"h"};
{\ar "g";"d"};
{\ar "h";"i"};
\end{xy}
\end{minipage}} 
\caption{The initial quiver (left) of $K_0(\mathscr{C}^{\leq \xi}_2)$ and its principal quiver (right).} \label{the initial quiver (left) and its principal quiver (right) 2}
\end{figure}

\begin{figure}\Large
\centerline{
\resizebox{.2\width}{.3\height}{ 
\xymatrix@C5pt@R5pt{ 
 &&  \makecell[c]{0000\\00-10} \ar[rdd] && \makecell[c]{0011\\0011} \ar[rdd]\ar[rdd] && \makecell[c]{0110\\1100} \ar[rdd] && \makecell[c]{0000\\0-100} \ar[rdd] &&  \makecell[c]{0111\\0111} \ar[rdd] &&  \makecell[c]{1110\\0000} \ar[rdd] && \makecell[c]{0000\\0011} \ar[rdd] && \makecell[c]{0110\\0110}\ar[rdd] && \makecell[c]{1100\\0000} \ar[rdd] && \ar[rdd]  \makecell[c]{0000\\0111} &&  \makecell[c]{1110\\1110} \ar[rdd] && \makecell[c]{00-10\\0000}  \ar[rdd] &&  \makecell[c]{0011\\0110} \ar[rdd] && \makecell[c]{1100\\1100} \ar[rdd] && \makecell[c]{0-100\\0000} \ar[rdd] && \makecell[c]{0111\\1110} \ar[rdd] && \makecell[c]{0000\\00-10} \\
 \\
 & \makecell[c]{0111\\1100} \ar[rdd] \ar[ruu] && \makecell[c]{0011\\0001} \ar[rdd]\ar[ruu] && \makecell[c]{0121\\1111} \ar[rdd] \ar[ruu] && \makecell[c]{0110\\1000} \ar[ruu]\ar[rdd] && \makecell[c]{0111\\0011} \ar[ruu]\ar[rdd] && \makecell[c]{1221\\0111} \ar[ruu]\ar[rdd] && \makecell[c]{1110\\0011}  \ar[ruu]\ar[rdd] && \makecell[c]{0110\\0121}  \ar[ruu]\ar[rdd] && \makecell[c]{1210\\0110}\ar[ruu]\ar[rdd] && \makecell[c]{1100\\0111}  \ar[ruu]\ar[rdd] &&  \makecell[c]{1110\\1221}  \ar[ruu]\ar[rdd] &&  \makecell[c]{1100\\1110}  \ar[ruu]\ar[rdd] && \makecell[c]{0001\\0110} \ar[ruu]\ar[rdd] && \makecell[c]{1111\\1210} \ar[ruu]\ar[rdd] && \makecell[c]{1000\\1100}\ar[ruu]\ar[rdd] && \makecell[c]{0011\\1110} \ar[ruu]\ar[rdd]  && \makecell[c]{0111\\1100} \ar[ruu] \\
 \\
\makecell[c]{0011\\1100} \ar[r] \ar[ruu] \ar[rdd] & \ar[r]  \makecell[c]{0011\\0000}  & \ar[r]  \makecell[c]{0122\\1101}  \ar[ruu]\ar[rdd] &  \ar[r] \makecell[c]{0111\\1101} & \ar[r] \makecell[c]{0121\\1101} \ar[ruu] \ar[rdd]  &  \ar[r] \makecell[c]{0010\\0000} & \ar[r] \makecell[c]{0121\\1011} \ar[ruu] \ar[rdd]&  \ar[r] \makecell[c]{0111\\1011}  & \ar[r] \makecell[c]{0221\\1011} \ar[rdd]\ar[ruu]&  \ar[r]  \makecell[c]{0110\\0000}  & \ar[r]  \makecell[c]{1221\\0011}  \ar[rdd]\ar[ruu]&  \ar[r]  \makecell[c]{1111\\0011} & \ar[r] \makecell[c]{1221\\0122} \ar[rdd]\ar[ruu] &  \ar[r] \makecell[c]{0110\\0111} &\ar[r] \makecell[c]{1220\\0121} \ar[rdd]\ar[ruu] &  \ar[r] \makecell[c]{1110\\0010} & \ar[r]\makecell[c]{1210\\0121} \ar[rdd]\ar[ruu]&  \ar[r] \makecell[c]{0100\\0111} & \ar[r] \makecell[c]{1210\\0221} \ar[rdd]\ar[ruu] &  \ar[r]  \makecell[c]{1110\\0110} &\ar[r] \makecell[c]{2210\\1221} \ar[rdd]\ar[ruu]  &  \ar[r] \makecell[c]{1100\\1111}  & \ar[r]  \makecell[c]{1100\\1221}  \ar[rdd]\ar[ruu] &  \ar[r] \makecell[c]{0000\\0110} & \ar[r] \makecell[c]{1101\\1220} \ar[rdd]\ar[ruu] &  \ar[r] \makecell[c]{1101\\1110} & \ar[r] \makecell[c]{1101\\1210} \ar[rdd]\ar[ruu] &  \ar[r] \makecell[c]{0000\\0100} & \ar[r] \makecell[c]{1011\\1210} \ar[rdd]\ar[ruu] &  \ar[r] \makecell[c]{1011\\1110} & \ar[r] \makecell[c]{1011\\2210} \ar[rdd]\ar[ruu] &  \ar[r] \makecell[c]{0000\\1100} & \ar[r] \makecell[c]{0011\\1100} \ar[rdd]\ar[ruu]  &  \makecell[c]{0011\\0000} \\
 \\
& \makecell[c]{0011\\1101}  \ar[ruu] \ar[rdd] && \makecell[c]{0121\\1100} \ar[ruu] \ar[rdd] && \makecell[c]{0111\\1001} \ar[ruu] \ar[rdd] && \makecell[c]{0121\\0011}  \ar[ruu] \ar[rdd] && \makecell[c]{1221\\1011} \ar[ruu] \ar[rdd] && \makecell[c]{0110\\0011} \ar[ruu] \ar[rdd] && \makecell[c]{1221\\0121} \ar[ruu] \ar[rdd]  && \makecell[c]{1210\\0111} \ar[ruu] \ar[rdd] && \makecell[c]{1110\\0121} \ar[ruu] \ar[rdd] &&  \makecell[c]{1210\\1221} \ar[ruu] \ar[rdd] && \makecell[c]{1100\\0110} \ar[ruu] \ar[rdd] && \makecell[c]{1101\\1221} \ar[ruu] \ar[rdd] && \makecell[c]{1100\\1210} \ar[ruu] \ar[rdd] && \makecell[c]{1001\\1110} \ar[ruu] \ar[rdd] && \makecell[c]{0011\\1210} \ar[ruu] \ar[rdd] && \makecell[c]{1011\\1100}\ar[ruu] \ar[rdd] && \makecell[c]{0011\\1101} \ar[rdd] \\
 \\
&& \makecell[c]{0010\\1100} \ar[ruu] \ar[rdd] && \makecell[c]{0111\\1000} \ar[ruu] \ar[rdd] &&  \makecell[c]{0111\\0001} \ar[ruu] \ar[rdd] && \makecell[c]{1121\\0011} \ar[ruu] \ar[rdd] && \makecell[c]{0110\\1011}  \ar[ruu] \ar[rdd] &&  \makecell[c]{0110\\0010} \ar[ruu] \ar[rdd] && \makecell[c]{1211\\0111} \ar[ruu] \ar[rdd] && \makecell[c]{1110\\0111} \ar[ruu] \ar[rdd] && \makecell[c]{1110\\1121} \ar[ruu] \ar[rdd] &&  \makecell[c]{0100\\0110} \ar[ruu] \ar[rdd] &&  \makecell[c]{1101\\0110} \ar[ruu] \ar[rdd] &&  \makecell[c]{1100\\1211} \ar[ruu] \ar[rdd] && \makecell[c]{1000\\1110} \ar[ruu] \ar[rdd] && \makecell[c]{0001\\1110} \ar[ruu] \ar[rdd] && \makecell[c]{0011\\0100} \ar[ruu] \ar[rdd] && \makecell[c]{1011\\1101} \ar[ruu]  \ar[rdd] && \makecell[c]{0010\\1100} \ar[rdd] \\
\\
&&& \makecell[c]{0000\\1000} \ar[ruu] \ar[rdd] &&  \makecell[c]{0111\\0000} \ar[ruu] \ar[rdd]  && \makecell[c]{1111\\0001} \ar[ruu] \ar[rdd] &&  \makecell[c]{0010\\0011} \ar[ruu] \ar[rdd] && \makecell[c]{0110\\1010} \ar[ruu] \ar[rdd] && \makecell[c]{0100\\0000} \ar[ruu] \ar[rdd] && \makecell[c]{1111\\0111} \ar[ruu] \ar[rdd] && \makecell[c]{1110\\1111} \ar[ruu] \ar[rdd] && \makecell[c]{0000\\0010} \ar[ruu] \ar[rdd] &&  \makecell[c]{0101\\0110} \ar[ruu] \ar[rdd] &&  \makecell[c]{1100\\0100}  \ar[ruu] \ar[rdd] &&  \makecell[c]{1000\\1111} \ar[ruu] \ar[rdd] && \makecell[c]{0000\\1110}\ar[ruu] \ar[rdd] && \makecell[c]{0001\\0000} \ar[ruu] \ar[rdd] && \makecell[c]{0011\\0101} \ar[ruu] \ar[rdd] && \makecell[c]{1010\\1100} \ar[ruu] \ar[rdd] && \makecell[c]{0000\\1000} \ar[rdd] \\
 \\
&&&& \makecell[c]{-1000\\0000} \ar[ruu] && \makecell[c]{1111\\0000} \ar[ruu] && \makecell[c]{0000\\0001} \ar[ruu] && \makecell[c]{0010\\0010} \ar[ruu] &&  \makecell[c]{0100\\1000} \ar[ruu] &&  \makecell[c]{0000\\-1000} \ar[ruu]  && \makecell[c]{1111\\1111} \ar[ruu] && \makecell[c]{000-1\\0000} \ar[ruu] && \makecell[c]{0001\\0010}  \ar[ruu]  &&  \makecell[c]{0100\\0100} \ar[ruu]  &&   \makecell[c]{1000\\0000} \ar[ruu]  && \makecell[c]{0000\\1111} \ar[ruu] && \makecell[c]{0000\\000-1}  \ar[ruu]  && \makecell[c]{0001\\0001} \ar[ruu]  &&  \makecell[c]{0010\\0100} \ar[ruu]  &&  \makecell[c]{1000\\1000} \ar[ruu]   && \makecell[c]{-1000\\0000}
}}}
\caption{The cluster category $\mathcal{C}^{\leq \xi}_2$ of $kQ^{\leq \xi}_2$, and the objects on the far left are to be identified with the objects on the far right.} \label{the cluster category C22 in A4}
\end{figure}

\begin{figure}
\centerline{
\resizebox{.1\width}{.4\height}{ 
\xymatrix{ 
 &&  \makecell[c]{3_{-4}3_{-2}} \ar[rdd] && \makecell[c]{2_{-1}4_{-5}4_{-7}} \ar[rdd]\ar[rdd] &&  \makecell[c]{2_{-5}} \ar[rdd] &&  \makecell[c]{2_{-3}2_{-1}} \ar[rdd] &&  \makecell[c]{1_{0}4_{-5}4_{-7}} \ar[rdd] &&  \makecell[c]{3_{-4}}  \ar[rdd]  &&  \makecell[c]{2_{-1}2_{-3}4_{-7}}  \ar[rdd] &&  \makecell[c]{1_{0}Y3_{-4}3_{-6}}  \ar[rdd]  && \makecell[c]{2_{-3}} \ar[rdd] &&  \makecell[c]{1_{0}1_{-2}4_{-7}}  \ar[rdd] &&  \makecell[c]{3_{-4}3_{-6}} \ar[rdd] && \makecell[c]{3_{-2}} \ar[rdd] && \makecell[c]{1_{0}1_{-2}3_{-6}} \ar[rdd] && \makecell[c]{2_{-3}2_{-5}} \ar[rdd] && \makecell[c]{2_{-1}} \ar[rdd] && \makecell[c]{3_{-6}}  \ar[rdd] &&  \makecell[c]{3_{-4}3_{-2}} \\
 \\
 & \makecell[c]{3_{-2}4_{-5}2_{-5}}  \ar[rdd] \ar[ruu] && \makecell[c]{2_{-1}3_{-2}3_{-4}4_{-5}4_{-7}} \ar[rdd]\ar[ruu] && \makecell[c]{2_{-1}4_{-5}2_{-5}4_{-7}} \ar[rdd] \ar[ruu] &&  \makecell[c]{2_{-1}3_{-4}1_{-4}} \ar[ruu]\ar[rdd] &&  \makecell[c]{1_{0}2_{-1}2_{-3}4_{-5}4_{-7}}  \ar[ruu]\ar[rdd] &&  \makecell[c]{1_{0}3_{-4}4_{-5}4_{-7}} \ar[ruu]\ar[rdd] &&  \makecell[c]{2_{-1}2_{-3}3_{-4}4_{-7}} \ar[ruu]\ar[rdd] &&  \makecell[c]{1_{0}2_{-1}2_{-3}3_{-4}3_{-6}4_{-7}} \ar[ruu]\ar[rdd] &&  \makecell[c]{1_{0}2_{-3}3_{-4}3_{-6}}  \ar[ruu]\ar[rdd] &&  \makecell[c]{1_{0}1_{-2}2_{-3}4_{-7}}  \ar[ruu]\ar[rdd] &&  \makecell[c]{1_{0}1_{-2}3_{-4}3_{-6}4_{-7}}  \ar[ruu]\ar[rdd] &&  \makecell[c]{4_{-3}2_{-3}3_{-6}}  \ar[ruu]\ar[rdd] && \makecell[c]{1_0 3_{-2}1_{-2}3_{-6}} \ar[ruu]\ar[rdd] && \makecell[c]{1_{0}1_{-2}2_{-3}2_{-5}3_{-6}} \ar[ruu]\ar[rdd] && \makecell[c]{3_{-2}1_{-2}2_{-5}} \ar[ruu]\ar[rdd]  &&  \makecell[c]{2_{-1}3_{-6}} \ar[ruu]\ar[rdd]  &&  \makecell[c]{3_{-2}4_{-5}2_{-5}}  \ar[ruu] \\
 \\
\makecell[c]{2_{-1}3_{-2}4_{-5}2_{-5}} \ar[r] \ar[ruu] \ar[rdd] & \makecell[c]{2_{-1}4_{-5}}  \ar[r] & \ar[r] \makecell[c]{2_{-1}3_{-2}4_{-5}^{2}2_{-5}4_{-7}} \ar[ruu]\ar[rdd] &  \ar[r] \makecell[c]{3_{-2}4_{-5}2_{-5}4_{-7}}  &\ar[r] \makecell[c]{2_{-1}3_{-2}3_{-4}4_{-5}2_{-5}4_{-7}} \ar[ruu] \ar[rdd]  &  \ar[r] \makecell[c]{2_{-1}3_{-4}}  &\ar[r]  \makecell[c]{2_{-1}^{2}3_{-4}4_{-5}1_{-4}4_{-7}}  \ar[ruu] \ar[rdd] &  \ar[r] \makecell[c]{2_{-1}4_{-5}1_{-4}4_{-7}}  & \ar[r]  \makecell[c]{1_{0}2_{-1}3_{-4}4_{-5}1_{-4}4_{-7}}  \ar[rdd]\ar[ruu]&  \ar[r] \makecell[c]{1_{0}3_{-4}}  & \ar[r]  \makecell[c]{1_{0}2_{-1}2_{-3}3_{-4}4_{-5}4_{-7}} \ar[rdd]\ar[ruu] &  \ar[r] \makecell[c]{2_{-1}2_{-3}4_{-5}4_{-7}}  & \ar[r] \makecell[c]{1_{0}2_{-1}2_{-3}3_{-4}4_{-5}4_{-7}^{2}}  \ar[rdd]\ar[ruu] &  \ar[r] \makecell[c]{1_{0}2_{-1}2_{-3}3_{-4}^{2}3_{-6}4_{-7}}  &  \ar[r]  \makecell[c]{1_{0}3_{-4}4_{-7}} \ar[rdd]\ar[ruu] &  \ar[r]  \makecell[c]{2_{-1}2_{-3}3_{-4}3_{-6}} & \ar[r]\makecell[c]{1_{0}2_{-1}2_{-3}^{2}3_{-4}3_{-6}4_{-7}}  \ar[rdd]\ar[ruu] &  \ar[r]  \makecell[c]{1_{0}2_{-3}4_{-7}}  & \ar[r] \makecell[c]{1_{0}^{2}1_{-2}2_{-3}3_{-4}3_{-6}4_{-7}} \ar[rdd]\ar[ruu] &  \ar[r] \makecell[c]{1_{0}1_{-2}3_{-4}3_{-6}}  & \ar[r] \makecell[c]{1_{0}1_{-2}2_{-3}3_{-4}3_{-6}4_{-7}}   \ar[rdd]\ar[ruu] &  \ar[r]  \makecell[c]{2_{-3}4_{-7}}  & \ar[r] \makecell[c]{1_{0}4_{-3}1_{-2}2_{-3}3_{-6}4_{-7}}  \ar[rdd]\ar[ruu] &  \ar[r] \makecell[c]{1_{0}4_{-3}1_{-2}3_{-6}} & \ar[r] \makecell[c]{1_{0}4_{-3}1_{-2}2_{-3}3_{-6}^{2}}  \ar[rdd]\ar[ruu] &  \ar[r] \makecell[c]{2_{-3}3_{-6}} & \ar[r] \makecell[c]{1_{0}3_{-2}1_{-2}2_{-3}2_{-5}3_{-6}}  \ar[rdd]\ar[ruu] &  \ar[r] \makecell[c]{1_{0}3_{-2}1_{-2}2_{-5}}  & \ar[r] \makecell[c]{1_{0}3_{-2}1_{-2}^{2}2_{-5}3_{-6}}  \ar[rdd]\ar[ruu] &  \ar[r] \makecell[c]{3_{-2}1_{-2}2_{-5}3_{-6}}  &\ar[r]  \makecell[c]{3_{-2}1_{-2}2_{-5}3_{-6}} \ar[rdd]\ar[ruu] &  \ar[r] \makecell[c]{3_{-2}2_{-5}} &\ar[r] \makecell[c]{2_{-1}3_{-2}4_{-5}2_{-5}} \ar[rdd]\ar[ruu]  & \makecell[c]{2_{-1}4_{-5}} \\
 \\
 & \makecell[c]{2_{-1}3_{-2}4_{-5}2_{-5}4_{-7}} \ar[ruu] \ar[rdd] && \makecell[c]{2_{-1}4_{-5}2_{-5}} \ar[ruu] \ar[rdd] && \makecell[c]{2_{-1}3_{-2}3_{-4}4_{-5}1_{-4}4_{-7}} \ar[ruu] \ar[rdd] &&  \makecell[c]{1_{0}2_{-1}3_{-4}4_{-5}4_{-7}} \ar[ruu] \ar[rdd] && \makecell[c]{2_{-1}3_{-4}4_{-5}1_{-4}4_{-7}} \ar[ruu] \ar[rdd] &&  \makecell[c]{1_{0}2_{-1}2_{-3}3_{-4}4_{-7}}  \ar[ruu] \ar[rdd] &&  \makecell[c]{1_{0}2_{-1}2_{-3}3_{-4}4_{-5}3_{-6}4_{-7}} \ar[ruu] \ar[rdd] && \makecell[c]{1_{0}2_{-3}3_{-4}4_{-7}} \ar[ruu] \ar[rdd] &&  \makecell[c]{1_{0}2_{-1}1_{-2}2_{-3}3_{-4}3_{-6}4_{-7}} \ar[ruu] \ar[rdd] &&  \makecell[c]{1_{0}2_{-3}3_{-4}3_{-6}4_{-7}} \ar[ruu] \ar[rdd] &&  \makecell[c]{1_{0}4_{-3}1_{-2}2_{-3}3_{-6}} \ar[ruu] \ar[rdd] &&  \makecell[c]{1_{0}1_{-2}2_{-3}3_{-6}4_{-7}} \ar[ruu] \ar[rdd] &&  \makecell[c]{1_{0}4_{-3}1_{-2}2_{-3}2_{-5}3_{-6}} \ar[ruu] \ar[rdd] && \makecell[c]{3_{-2}1_{-2}3_{-6}} \ar[ruu] \ar[rdd] &&  \makecell[c]{1_{0}3_{-2}1_{-2}2_{-5}3_{-6}}  \ar[ruu] \ar[rdd] &&  \makecell[c]{3_{-2}1_{-2}4_{-5}2_{-5}} \ar[ruu] \ar[rdd] && \makecell[c]{2_{-1}3_{-2}4_{-5}2_{-5}4_{-7}} \ar[rdd] \\
 \\
&& \makecell[c]{2_{-5}2_{-1}} \ar[ruu] \ar[rdd] &&  \makecell[c]{2_{-1}4_{-5}1_{-4}}  \ar[ruu] \ar[rdd] &&  \makecell[c]{1_{0}3_{-2}3_{-4}4_{-5}4_{-7}} \ar[ruu] \ar[rdd] && \makecell[c]{2_{-1}3_{-4}4_{-5}4_{-7}} \ar[ruu] \ar[rdd] && \makecell[c]{2_{-1}3_{-4}1_{-4}4_{-7}} \ar[ruu] \ar[rdd] && \makecell[c]{1_{0}2_{-1}2_{-3}3_{-4}3_{-6}} \ar[ruu] \ar[rdd] && \makecell[c]{1_{0}2_{-3}4_{-5}4_{-7}} \ar[ruu] \ar[rdd] && \makecell[c]{1_{0}1_{-2}3_{-4}4_{-7}} \ar[ruu] \ar[rdd] &&  \makecell[c]{2_{-1}2_{-3}3_{-4}3_{-6}4_{-7}} \ar[ruu] \ar[rdd] && \makecell[c]{1_{0}4_{-3}2_{-3}3_{-6}} \ar[ruu] \ar[rdd] && \makecell[c]{1_{0}1_{-2}2_{-3}3_{-6}} \ar[ruu] \ar[rdd] && \makecell[c]{1_{0}1_{-2}2_{-3}2_{-5}4_{-7}} \ar[ruu] \ar[rdd] && \makecell[c]{4_{-3}1_{-2}3_{-6}} \ar[ruu] \ar[rdd] && \makecell[c]{3_{-2}3_{-6}} \ar[ruu] \ar[rdd] && \makecell[c]{1_{0}3_{-2}1_{-2}4_{-5}2_{-5}} \ar[ruu] \ar[rdd] && \makecell[c]{3_{-2}1_{-2}4_{-5}2_{-5}4_{-7}} \ar[ruu]  \ar[rdd] && \makecell[c]{2_{-5}2_{-1}} \ar[rdd] \\
\\
&&& \makecell[c]{2_{-1}1_{-4}}  \ar[ruu] \ar[rdd] && \makecell[c]{1_{0}4_{-5}} \ar[ruu] \ar[rdd] && \makecell[c]{3_{-2}3_{-4}4_{-5}4_{-7}} \ar[ruu] \ar[rdd] && \makecell[c]{2_{-1}3_{-4}4_{-7}} \ar[ruu] \ar[rdd] &&  \makecell[c]{2_{-1}3_{-4}1_{-4}3_{-6}} \ar[ruu] \ar[rdd] &&  \makecell[c]{1_{0}2_{-3}} \ar[ruu] \ar[rdd] &&  \makecell[c]{1_{0}1_{-2}4_{-5}4_{-7}} \ar[ruu] \ar[rdd] &&  \makecell[c]{3_{-4}4_{-7}} \ar[ruu] \ar[rdd] && \makecell[c]{2_{-1}4_{-3}2_{-3}3_{-6}} \ar[ruu] \ar[rdd] &&  \makecell[c]{1_{0}2_{-3}3_{-6}} \ar[ruu] \ar[rdd] &&  \makecell[c]{1_{0}1_{-2}2_{-3}2_{-5}} \ar[ruu] \ar[rdd] && \makecell[c]{1_{-2}4_{-7}} \ar[ruu] \ar[rdd] && \makecell[c]{4_{-3}3_{-6}} \ar[ruu] \ar[rdd] && \makecell[c]{ 3_{-2}4_{-5}} \ar[ruu] \ar[rdd] && \makecell[c]{1_{0}3_{-2}1_{-2}4_{-5}2_{-5}4_{-7}} \ar[ruu] \ar[rdd] && \makecell[c]{1_{-2}2_{-5}} \ar[ruu] \ar[rdd] && \makecell[c]{2_{-1}1_{-4}} \ar[rdd] \\
 \\
&&&& \makecell[c]{1_{0}}  \ar[ruu] && \makecell[c]{4_{-5}} \ar[ruu] && \makecell[c]{3_{-2}3_{-4}4_{-7}} \ar[ruu] && \makecell[c]{2_{-1}3_{-4}3_{-6}} \ar[ruu] && \makecell[c]{1_{-4}} \ar[ruu] && \makecell[c]{1_{-2}1_{0}}\ar[ruu]  && \makecell[c]{4_{-7}4_{-5}} \ar[ruu] && \makecell[c]{4_{-3}} \ar[ruu] && \makecell[c]{2_{-1}2_{-3}3_{-6}} \ar[ruu]  && \makecell[c]{1_{0}2_{-3}2_{-5}} \ar[ruu]   &&   \makecell[c]{1_{-2}} \ar[ruu]  && \makecell[c]{4_{-7}} \ar[ruu] && \makecell[c]{4_{-5}4_{-3}} \ar[ruu]  &&  \makecell[c]{3_{-2}4_{-5}4_{-7}} \ar[ruu]  && \makecell[c]{1_{0}1_{-2}2_{-5}} \ar[ruu]  && \makecell[c]{1_{-4}1_{-2}} \ar[ruu]  &&  \makecell[c]{1_{0}}
}}}
\caption{All the real prime simple modules in $\mathscr{C}^{\leq \xi}_2$, excluding the frozen real prime simple modules $1_{-4}1_{-2}1_0$,  $2_{-5}2_{-3}2_{-1}$, $3_{-6}3_{-4}3_{-2}$, and $4_{-7}4_{-5}4_{-3}$.} \label{all the real prime simple modules C22 in A4}
\end{figure}

Let $\xi(1,2,3,4)=(0,-1,-2,-1)$. The initial quiver and the principal quiver of the cluster algebra $K_0(\mathscr{C}^{\leq \xi}_2)$ are shown in Figure \ref{the initial quiver (left) and its principal quiver (right) 3}. The indecomposable rigid objects in $\mathcal{C}^{\leq \xi}_2$ are shown in Figure \ref{the cluster category C23 in A4}. The real prime simple modules (excluding the frozen modules) in $\mathscr{C}^{\leq \xi}_2$ are shown in Figure \ref{all the real prime simple modules C23 in A4}.  The two sets are in bijection under the map $L(m) \to K^{\leq \xi}_\ell(m)$.

\begin{figure}
\resizebox{.8\width}{.8\height}{ 
\begin{minipage}{0.5\textwidth}
\begin{xy}
(-25,50)*+{(1,0)}="a";
(0,50)*+{(2,-1)}="b"; 
(-25,30)*+{(1,-2)}="c";
(0,30)*+{(2,-3)}="d"; 
(-25,10)*+{\fbox{(1,-4)}}="e";
(0,10)*+{\fbox{(2,-5)}}="f";
(25,50)*+{(3,-2)}="g";
(50,50)*+{(4,-1)}="h"; 
(25,30)*+{(3,-4)}="i";
(50,30)*+{(4,-3)}="j"; 
(25,10)*+{\fbox{(3,-6)}}="k";
(50,10)*+{\fbox{(4,-5)}}="l";
{\ar "a";"b"}; {\ar "b";"g"}; {\ar "h";"g"};
{\ar "b";"c"};
{\ar "c";"d"};{\ar "d";"i"};{\ar "j";"i"};
{\ar "d";"e"};{\ar "f";"d"};
{\ar "d";"b"};
{\ar "e";"c"};
{\ar "c";"a"};
{\ar "k";"i"};{\ar "i";"g"};
{\ar "l";"j"};{\ar "j";"h"};
{\ar "g";"d"};{\ar "g";"j"};
{\ar "i";"f"};{\ar "i";"l"};
\end{xy}
\end{minipage} 
\qquad 
\begin{minipage}{0.5\textwidth}
\begin{xy}
(-25,50)*+{1}="a";
(0,50)*+{2}="b"; 
(-25,30)*+{5}="c";
(0,30)*+{6}="d"; 
(25,50)*+{3}="g";
(50,50)*+{4}="h"; 
(25,30)*+{7}="i";
(50,30)*+{8}="j"; 
{\ar "a";"b"}; {\ar "b";"g"}; {\ar "h";"g"};
{\ar "b";"c"};
{\ar "c";"d"};{\ar "d";"i"};{\ar "j";"i"};
{\ar "d";"b"};
{\ar "c";"a"};
{\ar "i";"g"};
{\ar "j";"h"};
{\ar "g";"d"};
{\ar "g";"j"};
\end{xy}
\end{minipage}} 
\caption{The initial quiver (left) of $K_0(\mathscr{C}^{\leq \xi}_2)$ and its principal quiver (right).} \label{the initial quiver (left) and its principal quiver (right) 3}
\end{figure}

\begin{figure}
\centerline{
\resizebox{.2\width}{.3\height}{ 
\xymatrix{ 
 && \makecell[c]{0000\\0010} \ar[rdd]  && \makecell[c]{0111\\0111} \ar[rdd]\ar[rdd] && \makecell[c]{1100\\0000} \ar[rdd] &&  \makecell[c]{0000\\0110} \ar[rdd] && \makecell[c]{1111\\1111} \ar[rdd]&& \makecell[c]{00-10\\0000} \ar[rdd] && \makecell[c]{0010\\0111}  \ar[rdd] && \makecell[c]{1100\\1100} \ar[rdd] && \makecell[c]{0-100\\0000} \ar[rdd] && \makecell[c]{0110\\1111} \ar[rdd] && \makecell[c]{0000\\00-10} \ar[rdd] && \makecell[c]{0010\\0010}  \ar[rdd] && \makecell[c]{0111\\1100} \ar[rdd] && \makecell[c]{0000\\0-100} \ar[rdd] && \makecell[c]{0110\\0110}\ar[rdd] && \makecell[c]{1111\\0000}  \ar[rdd] && \makecell[c]{0000\\0010} \\
 \\
&  \makecell[c]{1111\\0010}  \ar[rdd] \ar[ruu] && \makecell[c]{0111\\0121} \ar[rdd]\ar[ruu] && \makecell[c]{1211\\0111} \ar[rdd] \ar[ruu] && \makecell[c]{1100\\0110} \ar[ruu]\ar[rdd] && \makecell[c]{1111\\1221} \ar[ruu]\ar[rdd] &&  \makecell[c]{1101\\1111} \ar[ruu]\ar[rdd] &&  \makecell[c]{0000\\0111} \ar[ruu]\ar[rdd] && \makecell[c]{1110\\1211} \ar[ruu]\ar[rdd] && \makecell[c]{1000\\1100} \ar[ruu]\ar[rdd] && \makecell[c]{0010\\1111} \ar[ruu]\ar[rdd] && \makecell[c]{0110\\1101} \ar[ruu]\ar[rdd] && \makecell[c]{0010\\0000}  \ar[ruu]\ar[rdd] && \makecell[c]{0121\\1110} \ar[ruu]\ar[rdd] && \makecell[c]{0111\\1000} \ar[ruu]\ar[rdd] && \makecell[c]{0110\\0010} \ar[ruu]\ar[rdd] && \makecell[c]{1221\\0110} \ar[ruu]\ar[rdd]  &&  \makecell[c]{1111\\0010}  \ar[ruu] \\
 \\
\makecell[c]{1221\\0120} \ar[r] \ar[ruu] \ar[rdd] & \makecell[c]{0111\\0110} \ar[r] & \ar[r] \makecell[c]{1222\\0121} \ar[ruu]\ar[rdd] &   \makecell[c]{1111\\0011} \ar[r]   &\ar[r] \makecell[c]{1211\\0121} \ar[ruu] \ar[rdd]  &  \ar[r] \makecell[c]{0100\\0110} & \ar[r] \makecell[c]{1211\\0221} \ar[ruu] \ar[rdd] &  \ar[r] \makecell[c]{1111\\0111}  & \ar[r] \makecell[c]{2211\\1221} \ar[rdd]\ar[ruu]&  \ar[r] \makecell[c]{1100\\1110} & \ar[r] \makecell[c]{1101\\1221} \ar[rdd]\ar[ruu]&  \ar[r] \makecell[c]{0001\\0111} & \ar[r] \makecell[c]{1101\\1222} \ar[rdd]\ar[ruu] &  \ar[r] \makecell[c]{1100\\1111} & \ar[r] \makecell[c]{1100\\1211} \ar[rdd] \ar[ruu] &  \ar[r] \makecell[c]{0000\\0100} & \ar[r]\makecell[c]{1010\\1211} \ar[rdd]\ar[ruu] &  \ar[r] \makecell[c]{1010\\1111} & \ar[r] \makecell[c]{1010\\2211} \ar[rdd]\ar[ruu] &  \ar[r] \makecell[c]{0000\\1100} & \ar[r] \makecell[c]{0010\\1101} \ar[rdd]\ar[ruu]&  \ar[r] \makecell[c]{0010\\0001} &\ar[r] \makecell[c]{0120\\1101} \ar[rdd]\ar[ruu] & \ar[r] \makecell[c]{0110\\1100} &\ar[r] \makecell[c]{0121\\1100} \ar[rdd]\ar[ruu] &  \ar[r] \makecell[c]{0011\\0000} & \ar[r] \makecell[c]{0121\\1010}  \ar[rdd]\ar[ruu] &  \ar[r] \makecell[c]{0110\\1010}  & \ar[r] \makecell[c]{0221\\1010} \ar[rdd]\ar[ruu] &  \ar[r] \makecell[c]{0111\\0000} &\ar[r] \makecell[c]{1221\\0010}  \ar[rdd]\ar[ruu] &  \ar[r] \makecell[c]{1110\\0010} &\ar[r] \makecell[c]{1221\\0120} \ar[rdd]\ar[ruu]  &   \makecell[c]{0111\\0110} \\
 \\
& \makecell[c]{1221\\0121} \ar[ruu] \ar[rdd] && \makecell[c]{1211\\0110} \ar[ruu] \ar[rdd] && \makecell[c]{1111\\0121} \ar[ruu] \ar[rdd] &&  \makecell[c]{1211\\1221} \ar[ruu] \ar[rdd] && \makecell[c]{1101\\0111} \ar[ruu] \ar[rdd] &&  \makecell[c]{1100\\1221} \ar[ruu] \ar[rdd] && \makecell[c]{1101\\1211}  \ar[ruu] \ar[rdd] &&  \ar[ruu] \makecell[c]{1000\\1111} \ar[rdd] &&  \makecell[c]{0010\\1211} \ar[ruu] \ar[rdd] && \makecell[c]{1010\\1101} \ar[ruu] \ar[rdd] && \makecell[c]{0010\\1100} \ar[ruu] \ar[rdd] && \makecell[c]{0121\\1101}  \ar[ruu] \ar[rdd] && \makecell[c]{0110\\1000} \ar[ruu] \ar[rdd] && \makecell[c]{0121\\0010} \ar[ruu] \ar[rdd] && \makecell[c]{1221\\1010} \ar[ruu] \ar[rdd] && \makecell[c]{0111\\0010} \ar[ruu] \ar[rdd] && \makecell[c]{1221\\0121} \ar[rdd] \\
 \\
&&  \makecell[c]{1210\\0110}  \ar[ruu] \ar[rdd] && \makecell[c]{1111\\0110} \ar[ruu] \ar[rdd] &&  \makecell[c]{1111\\1121}  \ar[ruu] \ar[rdd] && \makecell[c]{0101\\0111} \ar[ruu] \ar[rdd] && \makecell[c]{1100\\0111} \ar[ruu] \ar[rdd] && \makecell[c]{1100\\1210} \ar[ruu] \ar[rdd] && \makecell[c]{1001\\1111} \ar[ruu] \ar[rdd] && \makecell[c]{0000\\1111} \ar[ruu] \ar[rdd] &&   \makecell[c]{0010\\0101} \ar[ruu] \ar[rdd] && \makecell[c]{1010\\1100} \ar[ruu] \ar[rdd] && \makecell[c]{0011\\1100} \ar[ruu] \ar[rdd] && \makecell[c]{0110\\1001} \ar[ruu] \ar[rdd] && \makecell[c]{0110\\0000}\ar[ruu] \ar[rdd]  && \makecell[c]{1121\\0010} \ar[ruu] \ar[rdd] && \makecell[c]{0111\\1010} \ar[ruu] \ar[rdd] && \makecell[c]{0111\\0011} \ar[ruu]  \ar[rdd] && \makecell[c]{1210\\0110}\ar[rdd] \\
\\
&&& \makecell[c]{1110\\0110} \ar[ruu] \ar[rdd] && \makecell[c]{1111\\1110} \ar[ruu] \ar[rdd] &&  \makecell[c]{0001\\0011} \ar[ruu] \ar[rdd] && \makecell[c]{0100\\0111} \ar[ruu] \ar[rdd] && \makecell[c]{1100\\0100}  \ar[ruu] \ar[rdd] && \makecell[c]{1000\\1110} \ar[ruu] \ar[rdd] && \makecell[c]{0001\\1111}  \ar[ruu] \ar[rdd]&& \makecell[c]{0000\\0001} \ar[ruu] \ar[rdd] &&  \makecell[c]{0010\\0100} \ar[ruu] \ar[rdd] &&  \makecell[c]{1011\\1100} \ar[ruu] \ar[rdd] && \makecell[c]{0000\\1000} \ar[ruu] \ar[rdd] && \makecell[c]{0110\\0001} \ar[ruu] \ar[rdd] && \makecell[c]{1110\\0000} \ar[ruu] \ar[rdd] &&  \makecell[c]{0011\\0010} \ar[ruu] \ar[rdd]   &&  \makecell[c]{0111\\1011} \ar[ruu] \ar[rdd]   &&  \makecell[c]{0100\\0000} \ar[ruu] \ar[rdd] && \makecell[c]{1110\\0110} \ar[rdd] \\
 \\
&&&& \makecell[c]{1110\\1110} \ar[ruu] && \makecell[c]{0001\\0000} \ar[ruu] &&  \makecell[c]{0000\\0011} \ar[ruu] && \makecell[c]{0100\\0100} \ar[ruu] && \makecell[c]{1000\\0000} \ar[ruu] && \makecell[c]{0000\\1110} \ar[ruu]  && \makecell[c]{0001\\0001} \ar[ruu] && \makecell[c]{000-1\\0000} \ar[ruu] && \makecell[c]{0011\\0100} \ar[ruu]  && \makecell[c]{1000\\1000} \ar[ruu]   && \makecell[c]{-1000\\0000} \ar[ruu]  && \makecell[c]{1110\\0001} \ar[ruu] &&  \makecell[c]{0000\\000-1}  \ar[ruu]  && \makecell[c]{0011\\0011} \ar[ruu]  && \makecell[c]{0100\\1000} \ar[ruu]   && \makecell[c]{0000\\-1000} \ar[ruu]  &&  \makecell[c]{1110\\1110}
}}}
\caption{The cluster category $\mathcal{C}^{\leq \xi}_2$ of $kQ^{\leq \xi}_2$, and the objects on the far left are to be identified with the objects on the far right.} \label{the cluster category C23 in A4}
\end{figure}

\begin{figure}\large
\centerline{
\resizebox{.086\width}{.3\height}{ 
\xymatrix{ 
&& \makecell[c]{2_{-1}4_{-1}2_{-3}4_{-3}3_{-6}} \ar[rdd]  &&  \makecell[c]{1_{0}3_{-4}3_{-6}} \ar[rdd] \ar[rdd] &&  \makecell[c]{2_{-3}} \ar[rdd] 
&& \makecell[c]{1_0 4_{-1} 1_{-2} 4_{-3} 3_{-6}} \ar[rdd] &&  \makecell[c]{3_{-4}3_{-6}} \ar[rdd]  && \makecell[c]{3_{-2}} \ar[rdd] &&  \makecell[c]{1_{0}1_{-2}3_{-6}} \ar[rdd] &&  \makecell[c]{2_{-5}2_{-3}} \ar[rdd] && \makecell[c]{2_{-1}} \ar[rdd] && \makecell[c]{3_{-6}} \ar[rdd] && \makecell[c]{3_{-4}3_{-2}} \ar[rdd] &&  \makecell[c]{2_{-1}4_{-1}3_{-4}3_{-6}} \ar[rdd] &&  \makecell[c]{2_{-5}}  \ar[rdd] && \makecell[c]{2_{-3}2_{-1}} \ar[rdd] && \makecell[c]{1_{0}4_{-1}3_{-4}3_{-6}} \ar[rdd] && \makecell[c]{3_{-4}} \ar[rdd] && \makecell[c]{2_{-1}4_{-1}2_{-3}4_{-3}3_{-6}}  \\
 \\
 & \makecell[c]{2_{-1}4_{-1}2_{-3}3_{-4}4_{-3}3_{-6}}  \ar[rdd] \ar[ruu] && \makecell[c]{1_{0}2_{-1}4_{-1}2_{-3}3_{-4}4_{-3}3_{-6}^{2}} \ar[rdd]\ar[ruu] &&  \makecell[c]{1_{0}2_{-3}3_{-4}3_{-6}} \ar[rdd] \ar[ruu] &&  \makecell[c]{1_{0}4_{-1}1_{-2}2_{-3}4_{-3}3_{-6}}  \ar[ruu]\ar[rdd] && \makecell[c]{1_{0}4_{-1}1_{-2}3_{-4}4_{-3}3_{-6}^{2}}  \ar[ruu]\ar[rdd] &&  \makecell[c]{2_{-3}3_{-6}4_{-3}}  \ar[ruu]\ar[rdd] &&  \makecell[c]{1_{0} 3_{-2}1_{-2}3_{-6}} \ar[ruu]\ar[rdd] &&   \makecell[c]{1_0 1_{-2}2_{-3}2_{-5}3_{-6}} \ar[ruu]\ar[rdd] && \makecell[c]{1_{-2}2_{-5}3_{-2}} \ar[ruu]\ar[rdd] && \makecell[c]{2_{-1}3_{-6}} \ar[ruu]\ar[rdd] && \makecell[c]{3_{-2}2_{-5}4_{-5}} \ar[ruu]\ar[rdd] && \makecell[c]{2_{-1}3_{-4}4_{-1}} \ar[ruu]\ar[rdd] && \makecell[c]{2_{-1}4_{-1}3_{-4}2_{-5}3_{-6}} \ar[ruu]\ar[rdd] && \makecell[c]{2_{-1}3_{-4}1_{-4}} \ar[ruu]\ar[rdd] &&  \makecell[c]{1_{0}2_{-1}4_{-1}2_{-3}3_{-4}3_{-6}} \ar[ruu]\ar[rdd] && \makecell[c]{1_{0}4_{-1}3_{-4}^{2}3_{-6}} \ar[ruu]\ar[rdd]  && \makecell[c]{2_{-1}4_{-1}2_{-3}3_{-4}4_{-3}3_{-6}}\ar[ruu] \\
 \\
\makecell[c]{1_{0}2_{-1}4_{-1}^{2}2_{-3}3_{-4}^{2}4_{-3}3_{-6}^{2}} \ar[r] \ar[ruu] \ar[rdd] & \makecell[c]{1_{0}4_{-1}3_{-4}4_{-3} 3_{-6}} \ar[r] & \ar[r] \makecell[c]{1_{0}2_{-1}4_{-1}2_{-3}3_{-4}^{2}4_{-3}3_{-6}^{2}}  \ar[ruu]\ar[rdd]  &  \ar[r] \makecell[c]{2_{-1}2_{-3}3_{-4} 3_{-6}} & \ar[r] \makecell[c]{1_{0}2_{-1}4_{-1}2_{-3}^{2}3_{-4}4_{-3}3_{-6}^{2}}  \ar[ruu] \ar[rdd]  &  \ar[r] \makecell[c]{1_{0}4_{-1}2_{-3}4_{-3} 3_{-6}} & \ar[r] \makecell[c]{1_{0}^{2}4_{-1}1_{-2}2_{-3}3_{-4}4_{-3}3_{-6}^{2}}  \ar[ruu] \ar[rdd]&  \ar[r] \makecell[c]{1_{0}1_{-2}3_{-4}3_{-6}} & \ar[r] \makecell[c]{1_{0}4_{-1}1_{-2}2_{-3}3_{-4}4_{-3}3_{-6}^{2}} \ar[rdd]\ar[ruu]  &  \ar[r] \makecell[c]{4_{-1}2_{-3}4_{-3}3_{-6}}  & \ar[r] \makecell[c]{1_{0}4_{-1}1_{-2}2_{-3}4_{-3}^{2}3_{-6}^{2}} \ar[rdd]\ar[ruu ] &  \ar[r] \makecell[c]{1_{0}1_{-2}3_{-6}4_{-3}}  &\ar[r] \makecell[c]{1_{0}1_{-2}2_{-3}4_{-3}3_{-6}^{2}} \ar[rdd]\ar[ruu] &  \ar[r] \makecell[c]{2_{-3}3_{-6}} & \ar[r] \makecell[c]{1_{0}3_{-2}1_{-2}2_{-3}2_{-5}3_{-6}} \ar[rdd]\ar[ruu] &  \ar[r] \makecell[c]{1_{0}1_{-2}2_{-5}3_{-2}} & \ar[r]  \makecell[c]{1_{0}3_{-2}1_{-2}^{2}2_{-5}3_{-6}} \ar[rdd]\ar[ruu] &  \ar[r] \makecell[c]{1_{-2}3_{-6}} & \ar[r] \makecell[c]{1_{-2}2_{-5}3_{-2}3_{-6}} \ar[rdd]\ar[ruu] &  \ar[r] \makecell[c]{3_{-2}2_{-5}} & \ar[r] \makecell[c]{2_{-1}3_{-2}2_{-5}4_{-5}} \ar[rdd]\ar[ruu] &  \ar[r] \makecell[c]{2_{-1}4_{-5}}  & \ar[r] \makecell[c]{2_{-1}4_{-1}2_{-5}4_{-5}} \ar[rdd]\ar[ruu] &  \ar[r] \makecell[c]{4_{-1}2_{-5}}  &\ar[r] \makecell[c]{2_{-1}4_{-1}3_{-4}2_{-5}}  \ar[rdd]\ar[ruu] &  \ar[r] \makecell[c]{2_{-1}3_{-4}} & \ar[r] \makecell[c]{2_{-1}^{2}4_{-1}3_{-4}^{2}1_{-4}3_{-6}}  \ar[rdd] \ar[ruu] &  \ar[r] \makecell[c]{2_{-1}4_{-1}3_{-4}1_{-4}3_{-6}} & \ar[r] \makecell[c]{1_{0}2_{-1}4_{-1}3_{-4}^{2}1_{-4}3_{-6}} \ar[rdd]\ar[ruu] &  \ar[r] \makecell[c]{1_{0}3_{-4}}  &\ar[r] \makecell[c]{1_{0}2_{-1}4_{-1}2_{-3}3_{-4}^{2}3_{-6}} \ar[rdd]\ar[ruu] &  \ar[r] \makecell[c]{2_{-1}4_{-1}2_{-3}3_{-4}3_{-6}} &\ar[r] \makecell[c]{1_{0}2_{-1}4_{-1}^{2}2_{-3}3_{-4}^{2}4_{-3}3_{-6}^{2}} \ar[rdd]\ar[ruu]  & \makecell[c]{1_{0}4_{-1}3_{-4}4_{-3} 3_{-6}} \\
 \\
& \makecell[c]{1_{0}2_{-1}4_{-1}2_{-3}3_{-4}^{2}3_{-6}^{2}} \ar[ruu] \ar[rdd] && \makecell[c]{1_{0}4_{-1}2_{-3}3_{-4}4_{-3}3_{-6}} \ar[ruu] \ar[rdd] &&  \makecell[c]{1_{0}2_{-1}4_{-1}1_{-2}2_{-3}3_{-4}4_{-3}3_{-6}^{2}} \ar[ruu] \ar[rdd] &&  \makecell[c]{1_{0}4_{-1}2_{-3}3_{-4}4_{-3}3_{-6}^{2}} \ar[ruu] \ar[rdd] && \makecell[c]{1_{0}1_{-2}2_{-3}4_{-3}3_{-6}} \ar[ruu] \ar[rdd] && \makecell[c]{1_{0}4_{-1}1_{-2}2_{-3}4_{-3}3_{-6}^{2}} \ar[ruu] \ar[rdd] &&  \makecell[c]{1_{0}1_{-2}2_{-3}4_{-3}2_{-5}3_{-6}} \ar[ruu] \ar[rdd] && \makecell[c]{3_{-2}1_{-2}3_{-6}} \ar[ruu] \ar[rdd] && \makecell[c]{1_{0}1_{-2}2_{-5}3_{-2}3_{-6}} \ar[ruu] \ar[rdd] && \makecell[c]{1_{-2}2_{-5}3_{-2}4_{-5}} \ar[ruu] \ar[rdd] &&  \makecell[c]{2_{-1} 4_{-1}2_{-5}} \ar[ruu] \ar[rdd] && \makecell[c]{2_{-1}2_{-5}4_{-5}} \ar[ruu] \ar[rdd] && \makecell[c]{2_{-1}4_{-1}3_{-4}1_{-4}} \ar[ruu] \ar[rdd] && \makecell[c]{1_{0}2_{-1}4_{-1}3_{-4}^{2}3_{-6}} \ar[ruu] \ar[rdd] && \makecell[c]{2_{-1}4_{-1}3_{-4}^{2}1_{-4}3_{-6}}  \ar[ruu] \ar[rdd] && \makecell[c]{1_{0}2_{-1}4_{-1}2_{-3}3_{-4}4_{-3}3_{-6}} \ar[ruu] \ar[rdd] && \makecell[c]{1_{0}2_{-1}4_{-1}2_{-3}3_{-4}^{2}3_{-6}^{2}} \ar[rdd] \\
 \\
&& \makecell[c]{1_{0}4_{-1}2_{-3}3_{-4}3_{-6}} \ar[ruu] \ar[rdd] &&  \makecell[c]{1_{0}4_{-1}1_{-2}3_{-4}4_{-3}3_{-6}} \ar[ruu] \ar[rdd] && \makecell[c]{2_{-1}4_{-1}2_{-3}3_{-4}4_{-3}3_{-6}^{2}} \ar[ruu] \ar[rdd] && \makecell[c]{1_{0}2_{-3}3_{-6}4_{-3}} \ar[ruu] \ar[rdd] && \makecell[c]{1_{0}1_{-2}2_{-3}3_{-6}} \ar[ruu] \ar[rdd] && \makecell[c]{1_{0}4_{-1}1_{-2}2_{-3}4_{-3}2_{-5}3_{-6}} \ar[ruu] \ar[rdd] &&  \makecell[c]{1_{-2} 3_{-6}4_{-3}} \ar[ruu] \ar[rdd] && \makecell[c]{3_{-2}1_{-2}3_{-6}} \ar[ruu] \ar[rdd] &&  \makecell[c]{1_{0}1_{-2}2_{-5}3_{-2}4_{-5}} \ar[ruu] \ar[rdd] &&  \makecell[c]{4_{-1}1_{-2}2_{-5}} \ar[ruu] \ar[rdd] && \makecell[c]{2_{-1}2_{-5}} \ar[ruu] \ar[rdd] && \makecell[c]{2_{-1}1_{-4}4_{-5}} \ar[ruu] \ar[rdd] && \makecell[c]{1_{0}3_{-4}4_{-1}} \ar[ruu] \ar[rdd] &&  \makecell[c]{2_{-1}4_{-1}3_{-4}^{2}3_{-6}} \ar[ruu] \ar[rdd] &&  \makecell[c]{2_{-1}4_{-1}3_{-4}4_{-3}1_{-4}3_{-6}} \ar[ruu] \ar[rdd] && \makecell[c]{1_{0}2_{-1}2_{-3}3_{-4}3_{-6}} \ar[ruu]  \ar[rdd] &&  \makecell[c]{1_{0}4_{-1}2_{-3}3_{-4}3_{-6}} \ar[rdd] \\
\\
&&& \makecell[c]{1_{0}4_{-1}1_{-2}3_{-4}3_{-6}} \ar[ruu] \ar[rdd] && \makecell[c]{4_{-1}3_{-4}4_{-3}3_{-6}} \ar[ruu] \ar[rdd] && \makecell[c]{2_{-1} 2_{-3}4_{-3}3_{-6}} \ar[ruu] \ar[rdd] && \makecell[c]{1_{0}2_{-3}3_{-6}}  \ar[ruu] \ar[rdd] && \makecell[c]{1_{0}1_{-2}2_{-3}2_{-5}} \ar[ruu] \ar[rdd] && \makecell[c]{4_{-1}1_{-2}4_{-3}3_{-6}} \ar[ruu] \ar[rdd] && \makecell[c]{4_{-3}3_{-6}} \ar[ruu] \ar[rdd] && \makecell[c]{3_{-2}4_{-5}} \ar[ruu] \ar[rdd] && \makecell[c]{1_{0}1_{-2}2_{-5}4_{-1}} \ar[ruu] \ar[rdd] && \makecell[c]{1_{-2}2_{-5}} \ar[ruu] \ar[rdd] && \makecell[c]{2_{-1}1_{-4}} \ar[ruu] \ar[rdd] &&  \makecell[c]{1_{0}4_{-5}} \ar[ruu] \ar[rdd] && \makecell[c]{4_{-1}3_{-4}} \ar[ruu] \ar[rdd] && \makecell[c]{2_{-1}4_{-1}3_{-4}4_{-3}3_{-6}} \ar[ruu] \ar[rdd] &&  \makecell[c]{2_{-1}3_{-4}1_{-4}3_{-6}} \ar[ruu] \ar[rdd] && \makecell[c]{1_{0}2_{-3}} \ar[ruu] \ar[rdd] && \makecell[c]{1_{0}4_{-1}1_{-2}3_{-4}3_{-6}} \ar[rdd] \\
 \\
&&&& \makecell[c]{4_{-1}3_{-4}3_{-6}} \ar[ruu] && \makecell[c]{4_{-3}} \ar[ruu] && \makecell[c]{2_{-1} 2_{-3}3_{-6}} \ar[ruu] && \makecell[c]{1_{0}2_{-3}2_{-5}} \ar[ruu] && \makecell[c]{1_{-2}} \ar[ruu] && \makecell[c]{4_{-1}4_{-3}3_{-6}} \ar[ruu]  &&  \makecell[c]{4_{-3}4_{-5}} \ar[ruu] &&  
\makecell[c]{4_{-1}}\ar[ruu] && \makecell[c]{1_{0}1_{-2}2_{-5}} \ar[ruu]  && \makecell[c]{1_{-4}1_{-2}} \ar[ruu]   && \makecell[c]{1_0} \ar[ruu]  && \makecell[c]{4_{-5}} \ar[ruu] && \makecell[c]{4_{-3}4_{-1}} \ar[ruu]  && \makecell[c]{2_{-1}3_{-4}3_{-6}}  \ar[ruu]  && \makecell[c]{1_{-4}} \ar[ruu]   && \makecell[c]{1_{-2}1_{0}} \ar[ruu]   &&  \makecell[c]{4_{-1}3_{-4}3_{-6}} 
}}}
\caption{All the real prime simple modules in $\mathscr{C}^{\leq \xi}_2$, excluding the frozen real prime simple modules $1_{-4}1_{-2}1_0$,  $2_{-5}2_{-3}2_{-1}$, $3_{-6}3_{-4}3_{-2}$, and $4_{-5}4_{-3}4_{-1}$.} \label{all the real prime simple modules C23 in A4}
\end{figure}

Let $\xi(1,2,3,4)=(0,-1,0,-1)$. The initial quiver and the principal quiver of the cluster algebra $K_0(\mathscr{C}^{\leq \xi}_2)$ are shown in Figure \ref{the initial quiver (left) and its principal quiver (right) 4}. The indecomposable rigid objects in $\mathcal{C}^{\leq \xi}_2$ are shown in Figure \ref{the cluster category C24 in A4}. The real prime simple modules (excluding the frozen modules) in $\mathscr{C}^{\leq \xi}_2$ are shown in Figure \ref{all the real prime simple modules C24 in A4}.  The two set has a bijection under the map $L(m) \to K^{\leq \xi}_\ell(m)$.

\begin{figure}
\resizebox{.8\width}{.8\height}{ 
\begin{minipage}{0.5\textwidth}
\begin{xy}
(-25,50)*+{(1,0)}="a";
(0,50)*+{(2,-1)}="b"; 
(-25,30)*+{(1,-2)}="c";
(0,30)*+{(2,-3)}="d"; 
(-25,10)*+{\fbox{(1,-4)}}="e";
(0,10)*+{\fbox{(2,-5)}}="f";
(25,50)*+{(3,0)}="g";
(50,50)*+{(4,-1)}="h"; 
(25,30)*+{(3,-2)}="i";
(50,30)*+{(4,-3)}="j"; 
(25,10)*+{\fbox{(3,-4)}}="k";
(50,10)*+{\fbox{(4,-5)}}="l";
{\ar "a";"b"}; {\ar "g";"b"}; {\ar "g";"h"};
{\ar "b";"c"};
{\ar "c";"d"};{\ar "i";"d"};{\ar "i";"j"};
{\ar "d";"e"};{\ar "f";"d"};
{\ar "d";"b"};
{\ar "e";"c"};
{\ar "c";"a"};
{\ar "k";"i"};{\ar "i";"g"};
{\ar "l";"j"};{\ar "j";"h"};
{\ar "b";"i"};{\ar "h";"i"};
{\ar "d";"k"};{\ar "j";"k"};
\end{xy}
\end{minipage} 
\qquad 
\begin{minipage}{0.5\textwidth}
\begin{xy}
(-25,50)*+{1}="a";
(0,50)*+{2}="b"; 
(-25,30)*+{5}="c";
(0,30)*+{6}="d"; 
(25,50)*+{3}="g";
(50,50)*+{4}="h"; 
(25,30)*+{7}="i";
(50,30)*+{8}="j"; 
{\ar "a";"b"}; {\ar "g";"b"}; {\ar "g";"h"};
{\ar "b";"c"};
{\ar "c";"d"};{\ar "i";"d"};{\ar "i";"j"};
{\ar "d";"b"};
{\ar "c";"a"};
{\ar "i";"g"};
{\ar "j";"h"};
{\ar "b";"i"};
{\ar "h";"i"};
\end{xy}
\end{minipage}} 
\caption{The initial quiver (left) of $K_0(\mathscr{C}^{\leq \xi}_2)$ and its principal quiver (right).} \label{the initial quiver (left) and its principal quiver (right) 4}
\end{figure}

\begin{figure}\Large
\centerline{
\resizebox{.22\width}{.3\height}{ 
\xymatrix@C5pt@C5pt{ 
 && \makecell[c]{0100\\1111} \ar[rdd]  && \makecell[c]{0010\\0010}  \ar[rdd]\ar[rdd] && \makecell[c]{00-10\\0000} \ar[rdd] &&  \makecell[c]{0111\\1110}  \ar[rdd] && \makecell[c]{0000\\0-100}\ar[rdd] && \makecell[c]{0100\\0100}  \ar[rdd] &&  \makecell[c]{1111\\0010} \ar[rdd] && \makecell[c]{0000\\00-10} \ar[rdd] && \makecell[c]{0111\\0111}  \ar[rdd] && \makecell[c]{1110\\0000}  \ar[rdd] && \makecell[c]{0000\\0100} \ar[rdd] && \makecell[c]{1111\\1111}  \ar[rdd] && \makecell[c]{0010\\0000}  \ar[rdd] && \makecell[c]{0000\\0111} \ar[rdd] &&  \makecell[c]{1110\\1110} \ar[rdd]  && \makecell[c]{0-100\\0000} \ar[rdd] && \makecell[c]{0100\\1111}  \\
\\
 & \makecell[c]{0000\\1111}  \ar[rdd] \ar[ruu] & & \makecell[c]{0110\\1121} \ar[rdd]\ar[ruu]  && \makecell[c]{0000\\0010}  \ar[rdd] \ar[ruu] && \makecell[c]{0101\\1110}  \ar[ruu]\ar[rdd]  &&  \makecell[c]{0111\\1010}  \ar[ruu]\ar[rdd] && \makecell[c]{0100\\0000}  \ar[ruu]\ar[rdd] &&  \makecell[c]{1211\\0110}  \ar[ruu]\ar[rdd] && \makecell[c]{1111\\0000} \ar[ruu]\ar[rdd] && \makecell[c]{0111\\0101} \ar[ruu]\ar[rdd] && \makecell[c]{1221\\0111} \ar[ruu]\ar[rdd] && \makecell[c]{1110\\0100} \ar[ruu]\ar[rdd] && \makecell[c]{1111\\1211} \ar[ruu] \ar[rdd] && \makecell[c]{1121\\1111}  \ar[ruu]\ar[rdd] && \makecell[c]{0010\\0111} \ar[ruu]\ar[rdd] && \makecell[c]{1110\\1221}  \ar[ruu]\ar[rdd] && \makecell[c]{1010\\1110} \ar[ruu]\ar[rdd]  && \makecell[c]{0000\\1111}  \ar[ruu] \\
 \\
\makecell[c]{1010\\2221} \ar[r] \ar[ruu] \ar[rdd] & \makecell[c]{0010\\1110} \ar[r] & \ar[r] \makecell[c]{0010\\1121}  \ar[ruu]\ar[rdd] &  \ar[r]  \makecell[c]{0000\\0011} &\ar[r]  \makecell[c]{0100\\1121}  \ar[ruu] \ar[rdd]  &  \ar[r] \makecell[c]{0100\\1110}  &\ar[r] \makecell[c]{0101\\1120}  \ar[ruu] \ar[rdd]  &  \ar[r] \makecell[c]{0001\\0010}  & \ar[r] \makecell[c]{0101\\1010}  \ar[rdd]\ar[ruu] &  \ar[r] \makecell[c]{0100\\1000}  & \ar[r] \makecell[c]{0211\\1010} \ar[rdd]\ar[ruu]&  \ar[r] \makecell[c]{0111\\0010}  & \ar[r] \makecell[c]{1211\\0010} \ar[rdd]\ar[ruu]  &  \ar[r] \makecell[c]{1100\\0000}  &\ar[r] \ar[r] \makecell[c]{1211\\0100} \ar[rdd]\ar[ruu] &  \ar[r] \makecell[c]{0111\\0100} & \ar[r] \makecell[c]{1222\\0101}  \ar[rdd]\ar[ruu] &  \ar[r] \makecell[c]{1111\\0001}  &\ar[r] \makecell[c]{1221\\0101} \ar[rdd]\ar[ruu] &  \ar[r] \makecell[c]{0110\\0100} &\ar[r]  \makecell[c]{1221\\0211} \ar[rdd]\ar[ruu]&  \ar[r]  \makecell[c]{1111\\0111}  & \ar[r]  \makecell[c]{2221\\1211}  \ar[rdd]\ar[ruu]&   \ar[r] \makecell[c]{1110\\1100}  &\ar[r]  \makecell[c]{1121\\1211}  \ar[rdd] \ar[ruu] &  \ar[r] \makecell[c]{0011\\0111} & \ar[r]  \makecell[c]{1121\\1222} \ar[rdd]\ar[ruu] &  \ar[r] \makecell[c]{1110\\1111}  &\ar[r] \makecell[c]{1120\\1221}  \ar[rdd]\ar[ruu] &  \ar[r] \makecell[c]{0010\\0110}  & \ar[r]  \makecell[c]{1010\\1221}  \ar[rdd]\ar[ruu] &  \ar[r] \makecell[c]{1000\\1111} & \ar[r]  \makecell[c]{1010\\2221}  \ar[rdd]\ar[ruu]  &  \makecell[c]{0010\\1110}  \\
 \\
 & \makecell[c]{1010\\1121}  \ar[ruu] \ar[rdd] &&  \makecell[c]{0000\\1110} \ar[ruu] \ar[rdd] &&  \makecell[c]{0101\\1121} \ar[ruu] \ar[rdd] && \makecell[c]{0100\\1010} \ar[ruu] \ar[rdd]  &&  \makecell[c]{0101\\0010}  \ar[ruu] \ar[rdd] && \makecell[c]{1211\\1010} \ar[ruu] \ar[rdd] &&  \makecell[c]{0111\\0000}  \ar[ruu] \ar[rdd] && \makecell[c]{1211\\0101} \ar[ruu] \ar[rdd] && \makecell[c]{1221\\0100}  \ar[ruu] \ar[rdd] &&  \makecell[c]{1111\\0101}  \ar[ruu] \ar[rdd] && \makecell[c]{1221\\1211}  \ar[ruu] \ar[rdd] && \makecell[c]{1121\\0111} \ar[ruu] \ar[rdd] && \makecell[c]{1110\\1211}  \ar[ruu] \ar[rdd] && \makecell[c]{1121\\1221}  \ar[ruu] \ar[rdd] && \makecell[c]{1010\\1111} \ar[ruu] \ar[rdd] && \makecell[c]{0010\\1221} \ar[ruu] \ar[rdd] && \makecell[c]{1010\\1121} \ar[rdd] \\
 \\
&& \makecell[c]{1000\\1110}  \ar[ruu] \ar[rdd] && \makecell[c]{0001\\1110} \ar[ruu] \ar[rdd] && \makecell[c]{0100\\1011} \ar[ruu] \ar[rdd] &&  \makecell[c]{0100\\0010}  \ar[ruu] \ar[rdd] &&  \makecell[c]{1101\\0010} \ar[ruu] \ar[rdd] &&  \makecell[c]{0111\\1000}  \ar[ruu] \ar[rdd] && \makecell[c]{0111\\0001} \ar[ruu] \ar[rdd] && \makecell[c]{1210\\0100} \ar[ruu] \ar[rdd] &&  \makecell[c]{1111\\0100}  \ar[ruu] \ar[rdd] && \makecell[c]{1111\\1101} \ar[ruu] \ar[rdd] && \makecell[c]{0121\\0111}  \ar[ruu] \ar[rdd] && \makecell[c]{1110\\0111} \ar[ruu] \ar[rdd] && \makecell[c]{1110\\1210}  \ar[ruu] \ar[rdd] && \makecell[c]{1011\\1111}  \ar[ruu] \ar[rdd] && \makecell[c]{0010\\1111}  \ar[ruu] \ar[rdd] && \makecell[c]{0010\\0121}  \ar[ruu]  \ar[rdd] && \makecell[c]{1000\\1110}  \ar[rdd] \\
\\
&&& \makecell[c]{1001\\1110} \ar[ruu] \ar[rdd] &&  \makecell[c]{0000\\1000} \ar[ruu] \ar[rdd] && \makecell[c]{0100\\0011}  \ar[ruu] \ar[rdd] && \makecell[c]{1100\\0010}  \ar[ruu] \ar[rdd] && \makecell[c]{0001\\0000} \ar[ruu] \ar[rdd] &&  \makecell[c]{0111\\1001} \ar[ruu] \ar[rdd] && \makecell[c]{0110\\0000}  \ar[ruu] \ar[rdd] && \makecell[c]{1100\\0100}  \ar[ruu] \ar[rdd] && \makecell[c]{1111\\1100}  \ar[ruu] \ar[rdd] && \makecell[c]{0011\\0001} \ar[ruu] \ar[rdd] && \makecell[c]{0110\\0111}  \ar[ruu] \ar[rdd] && \makecell[c]{1110\\0110} \ar[ruu] \ar[rdd] && \makecell[c]{1000\\1100}  \ar[ruu] \ar[rdd] && \makecell[c]{0011\\1111}  \ar[ruu] \ar[rdd] &&  \makecell[c]{0010\\0011}  \ar[ruu] \ar[rdd] && \makecell[c]{0000\\0110}  \ar[ruu] \ar[rdd] && \makecell[c]{1001\\1110}  \ar[rdd] \\
 \\
&&&& \makecell[c]{1000\\1000} \ar[ruu] &&  \makecell[c]{-1000\\0000} \ar[ruu] &&  \makecell[c]{1100\\0011}  \ar[ruu] && \makecell[c]{0000\\000-1}  \ar[ruu] && \makecell[c]{0001\\0001} \ar[ruu] && \makecell[c]{0110\\1000}  \ar[ruu]  &&  \makecell[c]{0000\\-1000}  \ar[ruu] && \makecell[c]{1100\\1100}  \ar[ruu] &&  \makecell[c]{0011\\0000}  \ar[ruu]  && \makecell[c]{0000\\0001} \ar[ruu]   && \makecell[c]{0110\\0110} \ar[ruu]  && \makecell[c]{1000\\0000} \ar[ruu] &&  \makecell[c]{0000\\1100} \ar[ruu]  && \makecell[c]{0011\\0011}  \ar[ruu]  && \makecell[c]{000-1\\0000} \ar[ruu]   && \makecell[c]{0001\\0110}  \ar[ruu]   && \makecell[c]{1000\\1000}
}}}
\caption{The cluster category $\mathcal{C}^{\leq \xi}_2$ of $kQ^{\leq \xi}_2$, and the objects on the far left are to be identified with the objects on the far right.} \label{the cluster category C24 in A4}
\end{figure}

\begin{figure}
\centerline{
\resizebox{.1\width}{.3\height}{ 
\xymatrix{ 
&& \makecell[c]{3_{0}3_{-2}2_{-5}4_{-5}} \ar[rdd]  && \makecell[c]{3_{-4}3_{-2}}   \ar[rdd]\ar[rdd] &&  \makecell[c]{3_{0}}  \ar[rdd] && \makecell[c]{2_{-5}} \ar[rdd] && \makecell[c]{2_{-3}2_{-1}} \ar[rdd] && \makecell[c]{1_{0}3_{0}2_{-3}2_{-5}}  \ar[rdd] &&  \makecell[c]{3_{-4}}  \ar[rdd] && \makecell[c]{3_{-2}3_0} \ar[rdd] && \makecell[c]{1_{0}3_{0}2_{-3}4_{-3}2_{-5}4_{-5}} \ar[rdd] &&  \makecell[c]{2_{-3}} \ar[rdd] && \makecell[c]{1_{0}3_{0}1_{-2}3_{-2}2_{-5}}  \ar[rdd] && \makecell[c]{3_{0}2_{-3}4_{-3}2_{-5}4_{-5}} \ar[rdd] && \makecell[c]{3_{-2}}  \ar[rdd] && \makecell[c]{1_{0}3_{0}1_{-2}3_{-2}2_{-5}4_{-5}}  \ar[rdd] && \makecell[c]{2_{-5}2_{-3}}  \ar[rdd] &&  \makecell[c]{2_{-1}}  \ar[rdd] && \makecell[c]{3_{0}3_{-2}2_{-5}4_{-5}}   \\
 \\
 & \makecell[c]{2_{-1}3_{0}3_{-2}2_{-5}4_{-5}}  \ar[rdd] \ar[ruu] &&  \makecell[c]{3_{-2}2_{-5}4_{-5}}  \ar[rdd]\ar[ruu]  && \makecell[c]{2_{-1}3_{-4}4_{-1}} \ar[rdd] \ar[ruu] &&  \makecell[c]{2_{-5}3_{0}} \ar[ruu]\ar[rdd]  &&  \makecell[c]{1_{-4}2_{-1}3_{-4}} \ar[ruu]\ar[rdd] &&  \makecell[c]{1_{0}2_{-3}3_{0}} \ar[ruu]\ar[rdd] && \makecell[c]{1_{0}3_{0}2_{-3}2_{-5}3_{-4}}  \ar[ruu]\ar[rdd] &&  \makecell[c]{2_{-3}3_{0}4_{-3}}  \ar[ruu]\ar[rdd] && \makecell[c]{1_{0}3_{0}^{2}2_{-3}3_{-2}4_{-3}2_{-5}4_{-5}} \ar[ruu]\ar[rdd] &&  \makecell[c]{1_{0}3_{0}2_{-3}^{2}4_{-3}2_{-5}4_{-5}} \ar[ruu]\ar[rdd] &&  \makecell[c]{1_{0}3_{0}1_{-2}2_{-3}3_{-2}2_{-5}} \ar[ruu]\ar[rdd] && \makecell[c]{1_{0}3_{0}^{2}1_{-2}2_{-3}3_{-2}4_{-3}2_{-5}^{2}4_{-5}}  \ar[ruu]\ar[rdd] &&  \makecell[c]{3_{0}2_{-3}3_{-2}4_{-3}2_{-5}4_{-5}} \ar[ruu]\ar[rdd] &&  \makecell[c]{1_{0}3_{0}1_{-2}3_{-2}^{2}2_{-5}4_{-5}} \ar[ruu]\ar[rdd] && \makecell[c]{1_{0}3_{0}1_{-2}2_{-3}3_{-2}2_{-5}^{2}4_{-5}} \ar[ruu]\ar[rdd] &&  \makecell[c]{1_{-2}2_{-5}3_{-2}} \ar[ruu]\ar[rdd]  && \makecell[c]{2_{-1}3_{0}3_{-2}2_{-5}4_{-5}}  \ar[ruu] \\
 \\
\makecell[c]{3_{0}1_{-2}3_{-2}^{2}2_{-5}^{2}4_{-5}} \ar[r] \ar[ruu] \ar[rdd] &  \makecell[c]{3_{-2}2_{-5}} \ar[r] & \ar[r]  \makecell[c]{2_{-1}3_{-2}2_{-5}4_{-5}} \ar[ruu]\ar[rdd]  &  \ar[r] \makecell[c]{2_{-1}4_{-5}}  &\ar[r]  \makecell[c]{2_{-1}4_{-1}2_{-5}4_{-5}} \ar[ruu] \ar[rdd]  &  \ar[r]  \makecell[c]{4_{-1}2_{-5}} &\ar[r] \makecell[c]{2_{-1}4_{-1}2_{-5}3_{-4}} \ar[ruu] \ar[rdd] &  \ar[r] \makecell[c]{2_{-1}3_{-4}}  & \ar[r] \makecell[c]{2_{-1}3_{0}1_{-4}3_{-4}} \ar[rdd]\ar[ruu]&  \ar[r] \makecell[c]{3_{0}1_{-4}}  & \ar[r] \makecell[c]{1_{0}3_{0}1_{-4}3_{-4}} \ar[rdd]\ar[ruu] &  \ar[r] \makecell[c]{1_{0}3_{-4}} &\ar[r] \makecell[c]{1_{0}3_{0}2_{-3}3_{-4}} \ar[rdd]\ar[ruu]&  \ar[r] \makecell[c]{3_{0}2_{-3}} & \ar[r] \makecell[c]{1_{0}3_{0}^{2}2_{-3}^{2}4_{-3}2_{-5}} \ar[rdd] \ar[ruu]  &  \ar[r] \makecell[c]{1_{0}3_{0}2_{-3}4_{-3}2_{-5}}  & \ar[r]  \makecell[c]{1_{0}3_{0}^{2}2_{-3}^{2}4_{-3}^{2}2_{-5}4_{-5}} \ar[rdd]\ar[ruu] &  \ar[r] \makecell[c]{3_{0}2_{-3}4_{-3}4_{-5}} &\ar[r] \makecell[c]{1_{0}3_{0}^{2}2_{-3}^{2}3_{-2}4_{-3}2_{-5}4_{-5}}  \ar[rdd]\ar[ruu] &  \ar[r] \makecell[c]{1_{0}3_{0}2_{-3}3_{-2}2_{-5}}  & \ar[r]  \makecell[c]{1_{0}^{2}3_{0}^{2}1_{-2}2_{-3}^{2}3_{-2}4_{-3}2_{-5}^{2}4_{-5}} \ar[rdd]\ar[ruu] &  \ar[r]  \makecell[c]{1_{0}3_{0}1_{-2}2_{-3}4_{-3}2_{-5}4_{-5}}  & \ar[r]  \makecell[c]{1_{0}3_{0}^{2}1_{-2}2_{-3}^{2}3_{-2}4_{-3}2_{-5}^{2}4_{-5}} \ar[rdd]\ar[ruu]&  \ar[r] \makecell[c]{3_{0}2_{-3}3_{-2}2_{-5}}  &\ar[r] \makecell[c]{1_{0}3_{0}^{2}1_{-2}2_{-3}3_{-2}^{2}4_{-3}2_{-5}^{2}4_{-5}} \ar[rdd]\ar[ruu]&  \ar[r]  \makecell[c]{1_{0}3_{0}1_{-2}3_{-2}4_{-3}2_{-5}4_{-5}} & \ar[r]  \makecell[c]{1_{0}3_{0}^{2}1_{-2}2_{-3}3_{-2}^{2}4_{-3}2_{-5}^{2}4_{-5}^{2}} \ar[rdd]\ar[ruu]&  \ar[r] \makecell[c]{3_{0}2_{-3}3_{-2}2_{-5}4_{-5}}  &\ar[r] \makecell[c]{1_{0}3_{0}1_{-2}2_{-3}3_{-2}^{2}2_{-5}^{2}4_{-5}} \ar[rdd]\ar[ruu] &  \ar[r] \makecell[c]{1_{0}1_{-2}3_{-2}2_{-5}} &\ar[r]  \makecell[c]{1_{0}3_{0}1_{-2}^{2}3_{-2}^{2}2_{-5}^{2}4_{-5}} \ar[rdd]\ar[ruu] &  \ar[r] \makecell[c]{3_{0}1_{-2}3_{-2}2_{-5}4_{-5}}  &\ar[r] \makecell[c]{3_{0}1_{-2}3_{-2}^{2}2_{-5}^{2}4_{-5}} \ar[rdd]\ar[ruu]  &  \makecell[c]{3_{-2}2_{-5}}  \\
 \\
& \makecell[c]{1_{-2}3_{-2}2_{-5}4_{-5}}  \ar[ruu] \ar[rdd] && \makecell[c]{2_{-1}4_{-1}2_{-5}}  \ar[ruu] \ar[rdd] && \makecell[c]{2_{-1}2_{-5}4_{-5}} \ar[ruu] \ar[rdd] && \makecell[c]{2_{-1}4_{-1}1_{-4}3_{-4}} \ar[ruu] \ar[rdd] &&  \makecell[c]{1_{0}3_{0}3_{-4}}  \ar[ruu] \ar[rdd] && \makecell[c]{1_{-4}3_{0}3_{-4}} \ar[ruu] \ar[rdd] && \makecell[c]{1_{0}3_{0}2_{-3}4_{-3}} \ar[ruu] \ar[rdd] && \makecell[c]{1_{0}3_{0}^{2}2_{-3}^{2}4_{-3}2_{-5}4_{-5}} \ar[ruu] \ar[rdd] && \makecell[c]{1_{0}3_{0}2_{-3}^{2}4_{-3}2_{-5}} \ar[ruu] \ar[rdd] && \makecell[c]{1_{0}3_{0}^{2}1_{-2}2_{-3}3_{-2}4_{-3}2_{-5}4_{-5}} \ar[ruu] \ar[rdd] && \makecell[c]{1_{0}3_{0}^{2}2_{-3}^{2}3_{-2}4_{-3}2_{-5}^{2}4_{-5}} \ar[ruu] \ar[rdd] && \makecell[c]{1_{0}3_{0}1_{-2}2_{-3}3_{-2}4_{-3}2_{-5}4_{-5}} \ar[ruu] \ar[rdd] && \makecell[c]{1_{0}3_{0}^{2}1_{-2}2_{-3}3_{-2}^{2}2_{-5}^{2}4_{-5}} \ar[ruu] \ar[rdd] && \makecell[c]{1_{0}3_{0}1_{-2}2_{-3}3_{-2}4_{-3}2_{-5}^{2}4_{-5}} \ar[ruu] \ar[rdd] && \makecell[c]{3_{0}1_{-2}3_{-2}^{2}2_{-5}4_{-5}} \ar[ruu] \ar[rdd] && \makecell[c]{1_{0}3_{0}1_{-2}3_{-2}^{2}2_{-5}^{2}4_{-5}} \ar[ruu] \ar[rdd] && \makecell[c]{1_{-2}3_{-2}2_{-5}4_{-5}} \ar[rdd] \\
 \\
&&  \makecell[c]{4_{-1}1_{-2}2_{-5}}  \ar[ruu] \ar[rdd] && \makecell[c]{2_{-1}2_{-5}} \ar[ruu] \ar[rdd] && \makecell[c]{2_{-1}1_{-4}4_{-5}} \ar[ruu] \ar[rdd] && \makecell[c]{1_{0}4_{-1}3_{-4}} \ar[ruu] \ar[rdd] &&  \makecell[c]{3_{0}3_{-4}}  \ar[ruu] \ar[rdd] && \makecell[c]{3_{0}4_{-3}1_{-4}} \ar[ruu] \ar[rdd] && \makecell[c]{1_{0}3_{0}2_{-3}4_{-3}4_{-5}}  \ar[ruu] \ar[rdd] &&  \makecell[c]{1_{0}3_{0}2_{-3}^{2}2_{-5}} \ar[ruu] \ar[rdd] &&  \makecell[c]{1_{0}3_{0}1_{-2}2_{-3}4_{-3}2_{-5}} \ar[ruu] \ar[rdd] && \makecell[c]{3_{0}^{2}2_{-3}3_{-2}4_{-3}2_{-5}4_{-5}} \ar[ruu] \ar[rdd] &&  \makecell[c]{1_{0}3_{0}2_{-3}3_{-2}4_{-3}2_{-5}4_{-5}} \ar[ruu] \ar[rdd]  && \makecell[c]{1_{0}3_{0}1_{-2}2_{-3}3_{-2}2_{-5}4_{-5}} \ar[ruu] \ar[rdd] &&  \makecell[c]{1_{0}3_{0}1_{-2}2_{-3}3_{-2}2_{-5}^{2}} \ar[ruu] \ar[rdd] && \makecell[c]{3_{0}1_{-2}3_{-2}4_{-3}2_{-5}4_{-5}} \ar[ruu] \ar[rdd] && \makecell[c]{3_{0}3_{-2}^{2}2_{-5}4_{-5}} \ar[ruu] \ar[rdd] && \makecell[c]{1_{0}1_{-2}3_{-2}2_{-5}4_{-5}} \ar[ruu]  \ar[rdd] && \makecell[c]{4_{-1}1_{-2}2_{-5}} \ar[rdd] \\
\\
&&& \makecell[c]{1_{-2}2_{-5}} \ar[ruu] \ar[rdd] && \makecell[c]{2_{-1}1_{-4}} \ar[ruu] \ar[rdd] && \makecell[c]{1_{0}4_{-5}} \ar[ruu] \ar[rdd] && \makecell[c]{4_{-1}3_{-4}} \ar[ruu] \ar[rdd] && \makecell[c]{3_{0}4_{-3}}  \ar[ruu] \ar[rdd] && \makecell[c]{3_{0}4_{-3}1_{-4}4_{-5}} \ar[ruu] \ar[rdd] && \makecell[c]{1_{0}2_{-3}} \ar[ruu] \ar[rdd] &&  \makecell[c]{1_{0}3_{0}1_{-2}2_{-3}2_{-5}} \ar[ruu] \ar[rdd] && \makecell[c]{3_{0}2_{-3}4_{-3}2_{-5}}  \ar[ruu] \ar[rdd] && \makecell[c]{3_{0}3_{-2}4_{-3}4_{-5}} \ar[ruu] \ar[rdd] &&  \makecell[c]{1_{0}3_{0}2_{-3}3_{-2}2_{-5}4_{-5}} \ar[ruu] \ar[rdd] && \makecell[c]{1_{0}1_{-2}2_{-3}2_{-5}} \ar[ruu] \ar[rdd] &&  \makecell[c]{3_{0}1_{-2}3_{-2}2_{-5}} \ar[ruu] \ar[rdd] && \makecell[c]{3_{0}3_{-2}4_{-3}2_{-5}4_{-5}} \ar[ruu] \ar[rdd] && \makecell[c]{3_{-2}4_{-5}} \ar[ruu] \ar[rdd] && \makecell[c]{1_{0}4_{-1}1_{-2}2_{-5}} \ar[ruu] \ar[rdd] && \makecell[c]{1_{-2}2_{-5}}  \ar[rdd] \\
 \\
&&&&  \makecell[c]{1_{-2}1_{-4}}  \ar[ruu] &&  \makecell[c]{1_{0}}  \ar[ruu] && \makecell[c]{4_{-5}} \ar[ruu] && \makecell[c]{4_{-3}4_{-1}} \ar[ruu] && \makecell[c]{3_{0}4_{-3}4_{-5}} \ar[ruu] && \makecell[c]{1_{-4}} \ar[ruu]  &&  \makecell[c]{1_{-2}1_0} \ar[ruu] && \makecell[c]{3_02_{-3}2_{-5}}  \ar[ruu] && \makecell[c]{4_{-3}} \ar[ruu]  && \makecell[c]{3_03_{-2}4_{-5}} \ar[ruu]   && \makecell[c]{1_02_{-3}2_{-5}} \ar[ruu]  && \makecell[c]{1_{-2}} \ar[ruu] && \makecell[c]{3_03_{-2}2_{-5}} \ar[ruu]  && \makecell[c]{4_{-3}4_{-5}}  \ar[ruu]  && \makecell[c]{4_{-1}} \ar[ruu]   && \makecell[c]{1_{0}1_{-2}2_{-5}}  \ar[ruu]   &&   \makecell[c]{1_{-2}1_{-4}}
}}}
\caption{All the real prime simple modules in $\mathscr{C}^{\leq \xi}_2$, excluding the frozen real prime simple modules $1_{-4}1_{-2}1_0$,  $2_{-5}2_{-3}2_{-1}$, $3_{-4}3_{-2}3_0$, and $4_{-5}4_{-3}4_{-1}$.} \label{all the real prime simple modules C24 in A4}
\end{figure}

\section{The highest $l$-weight monomials of Hernandez-Leclerc modules} \label{The highest l-monomials of Hernandez-Leclerc modules}
In this section, we use Theorem \ref{main theorem2} and Equation (\ref{Gamma1 equation2}) to obtain the highest $l$-weight monomials of unfrozen Hernandez-Leclerc modules in type ADE. 

We use the following notation throughout. Let $Q$ ($=Q^{\leq \xi}_1$ for some $\xi$ in the previous section) be an orientation of $\gamma$, and $\mathcal{C}_Q$ the cluster category associated to $Q$. For $j\in I$, let $d_j=\delta_{j,j_\diamond}$,
\[
j_\diamond=\min\{ k \geq j \mid \xi(k-1)=\xi(k+1)\}, \,\, j_\bullet=\max\{k<j \mid \xi(k-1)=\xi(k+1)\}.
\] 
We agree that $j_\diamond=n$ if there  is no $ k \geq j$ such that $\xi(k-1)=\xi(k+1)$, and  $j_\bullet=1$ if there is no $k < j$ such that $\xi(k-1)=\xi(k+1)$.

\subsection{Type $A_n$}

Let $1\leq i \leq j \leq n$. Every positive root in type $A_n$ has the form $\beta=\sum_{i\leq k \leq j} \alpha_k$, so we identify $\beta\in \Phi^+$ with a segment $[i,j]$. We denote by $[-i]$ the negative simple root $-\alpha_i$. 

Then for all $i\le j$, the two cluster variables $x([i,j])$ and $x([-j])$ form an exchange pair. It follows from the geometric realization of $\mathcal{C}_Q$ in Section \ref{a geometric realization of cluster category} that the corresponding indecomposable rigid objects in $\mathcal{C}_Q$ satisfy the following triangles: for $i=j\in I$, 
\begin{align*}
& [j] \to  [-(j-1)] \oplus [-(j+1)]^{d_j} \to [-j] \to [j][\textbf{1}], \\ 
& [-j] \to [-(j+1)]^{1-d_j} \to [j] \to [-j][\textbf{1}],
\end{align*}
or the triangles obtained by switching the roles of $[j]$ and $[-j]$; and for $j>i$, 
\begin{align*}
&  [i,j] \to  [i,j-1] \oplus   [-(j+1)]^{1-d_j}  \to [-j] \to  [i,j] [\textbf{1}], \\
&  [-j] \to [i,\max\{i-1,j_\bullet-1\}]^{1-\delta_{i,j_\bullet}} \oplus   [-(j+1)]^{d_j}  \to [i,j] \to  [-j][\textbf{1}]
\end{align*}
or the triangles obtained by switching the roles of  $[i,j]$ and $[-j]$, where we use the convention $[i,i-1]=[-(i-1)]$, and $[\textbf{1}]: \mathcal{C}_Q \to \mathcal{C}_Q$ is the shift functor. Following \cite{CK06}, we have the following exchange relations: for $j\in I$, 
\begin{align}\label{exchange relation (1) in An}
x([j]) x([-j])=x([-(j-1)]) x([-(j+1)])^{d_j}+x([-(j+1)])^{1-d_j},
\end{align}
and for $j>i$, 
\begin{align}\label{exchange relation (2) in An}
\begin{split}
x([i,j]) x([-j]) = & x([i,\max\{i-1,j_\bullet-1\}])^{1-\delta_{i,j_\bullet}} x([-(j+1)])^{d_j} \\
&  + x([i,j-1]) x([-(j+1)])^{1-d_j}.
\end{split}
\end{align}

In particular, if $Q$ is an alternating quiver, our formulas reduce to these formulas considered in \cite{CFZ02,FZ03}. By Equation (\ref{Gamma1 equation2}), the exchange relations (\ref{exchange relation (1) in An}) and (\ref{exchange relation (2) in An}) can be lifted into the following relations in $K_0(\mathscr{C}^{\leq \xi}_1)$: for $j\in I$,
\begin{gather} 
\Phi([j]) \Phi([-j])= \begin{cases}
\Phi([-(j-1)]) \Phi([-(j+1)])^{d_j} f_j  + \text{other terms}  &  \text{if $\xi(j-1)>\xi(j)$},  \\
\Phi([-(j+1)])^{1-d_j} f_j + \text{other terms}  & \text{if $\xi(j-1)<\xi(j)$}, 
\end{cases} \label{AC1 initial and non-initial objects}
\end{gather}
and for $j>i$,
\begin{gather}
\Phi([i,j]) \Phi([-j])=\begin{cases}
\Phi([i,\max\{i-1,j_\bullet-1\}])^{1-\delta_{i,j_\bullet}} \Phi([-(j+1)])^{d_j}f_j +\text{other terms} & \text{if $\xi(j-1)>\xi(j)$},  \\
\Phi([i,j-1]) \Phi([-(j+1)])^{1-d_j} + \text{other terms} & \text{if $\xi(j-1)<\xi(j)$}.
\end{cases} \label{AC1 other objects}
\end{gather}

Assume that $i_1<i_2<\cdots<i_{k}$ is an ordered enumeration of the following set 
\begin{align} \label{index enumeration}
\{ p: i<p< j, \text{ $p$ is a source or a sink}\}.
\end{align}
Define 
\[
Y(i,j)=Y_{i_1,a_1} Y_{i_2,a_2} \cdots Y_{i_k,a_k}
\] 
for open intervals $(i,j)$, where $a_\ell=\xi(i_\ell)$ if $\xi(i_\ell)=\xi(i_\ell+1)+1$, and $a_\ell=\xi(i_\ell)-2$ if $\xi(i_\ell)=\xi(i_\ell+1)-1$ for $1 \leq \ell \leq k$. Similarly, one can define it for arbitrary intervals. If the set (\ref{index enumeration}) is empty, we set $Y(i,j)=1$. 

\begin{proposition}\label{monoidal categorification AC1}
 The bijection defined in (\ref{a bijection Phi}) is given explicitly as follows: for any $j\in I$, 
\begin{gather}
\begin{align*}
& \Phi([-j])=L(Y_{j,\xi(j)}), \\ 
& \Phi([j])=\begin{cases}
L(Y_{j-1,\xi(j-1)} Y_{j,\xi(j)-2} Y^{d_j}_{j+1,\xi(j+1)})  & \text{if $\xi(j-1)>\xi(j)$}, \\
L(Y_{j,\xi(j)-2}Y^{1-d_j}_{j+1,\xi(j+1)})  & \text{if $\xi(j-1)<\xi(j)$}, 
\end{cases}
 \end{align*}
\end{gather}
and for $1 \leq i<j \leq n$, 
\begin{gather}
\begin{align*}
& \Phi([i,j])=\begin{cases}
L(Y^{(1-d_i)}_{i-1,\xi(i-1)} Y(i,j) Y_{j,\xi(j)-2} Y^{d_j}_{j+1,\xi(j+1)}) & \text{if $\xi(i)>\xi(i+1)$ and $\xi(j-1)>\xi(j)$}, \\
L(Y^{(1-d_i)}_{i-1,\xi(i-1)} Y(i,j) Y^{(1-d_j)}_{j+1,\xi(j+1)}) & \text{if $\xi(i)>\xi(i+1)$ and $\xi(j-1)<\xi(j)$}, \\
L(Y^{d_i}_{i-1,\xi(i-1)} Y_{i,\xi(i)-2} Y(i,j) Y_{j,\xi(j)-2} Y^{d_j}_{j+1,\xi(j+1)})  & \text{if $\xi(i)<\xi(i+1)$ and $\xi(j-1)>\xi(j)$}, \\
L(Y^{d_i}_{i-1,\xi(i-1)} Y_{i,\xi(i)-2} Y(i,j) Y^{(1-d_j)}_{j+1,\xi(j+1)})  & \text{if $\xi(i)<\xi(i+1)$ and $\xi(j-1)<\xi(j)$}.
\end{cases}
\end{align*}
\end{gather}
Here $Y_{i,p}=1$ for $i\not\in I$, $p\in \mathbb{C}^*$.
\end{proposition}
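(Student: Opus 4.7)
The strategy is to apply Theorem \ref{main theorem2} recursively along the exchange relations (\ref{AC1 initial and non-initial objects}) and (\ref{AC1 other objects}), using the base case $\Phi([-j])=L(Y_{j,\xi(j)})$, which is the definition of $\Phi$ on the shifted projective $P(j)[\mathbf{1}]$ (recorded in the paragraph preceding Section \ref{case l=1}). The key tool is the identity
\[
\textbf{hw}(\Phi(L))\,\textbf{hw}(\Phi(N)) \;=\; \textbf{hw}(\Phi(M))\,\prod_{i\in I} f_i^{c_i}
\]
from Theorem \ref{main theorem2}, together with the fact that each $\Phi(X)$ is simple, so it is determined by its highest $\ell$-weight monomial.

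First I would settle the simple roots $[j]$, which correspond to the simple $kQ$-modules $S(j)$. Apply (\ref{AC1 initial and non-initial objects}) to the exchange pair $([j], [-j])$; the middle terms in $\mathcal{C}_Q$ and $\mathrm{Ker}(h)$ are read off directly from the corresponding mesh in the AR quiver of $kQ$. Computing $c$ via Theorem \ref{equivalent descriptions of c} from $\underline{\delta}(N\oplus \tau\,\mathrm{Ker}(h),M)$ and solving the highest-weight equation splits into two cases according to the sign of $\xi(j-1)-\xi(j)$ (i.e.\ whether $j$ is a source or a sink of $Q$), each further refined by the value of $d_j$. These directly yield the two formulas for $\Phi([j])$ claimed in the proposition; in the sink case one may alternatively invoke Proposition \ref{images of idecomposable injective modules under Phi} as a consistency check.

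Next, the segments $[i,j]$ with $j>i$ are treated by induction on the length $j-i$. For the inductive step, apply (\ref{AC1 other objects}) to the exchange pair $([i,j], [-j])$: all the factors appearing on the right-hand side are $\Phi$-images either of initial objects $[-k]$ (computed in the base case) or of strictly shorter segments (known by the inductive hypothesis). Theorem \ref{main theorem2} then determines $\textbf{hw}(\Phi([i,j]))$ as the ratio of the known right-hand-side leading monomial by $\textbf{hw}(\Phi([-j]))=Y_{j,\xi(j)}$, after multiplying by $\prod f_i^{c_i}$ with $f_j=Y_{j,\xi(j)}Y_{j,\xi(j)-2}$. The four cases in the statement correspond exactly to the four orientation patterns at the endpoints $i$ and $j$.

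The main obstacle is the combinatorial bookkeeping behind the definition of $Y(i,j)$: when the segment is extended $[i,j-1]\leadsto [i,j]$, a new factor $Y_{k,\xi(k)}$ or $Y_{k,\xi(k)-2}$ enters the highest-weight monomial only when the extension has passed through an interior sink or source $k$, and not at every vertex. Verifying this reduces to tracking the socles of the middle terms in the exchange triangles and applying Theorem \ref{equivalent descriptions of c}; the thin structure of indecomposables over a type~$A$ quiver forces most potential contributions to cancel, leaving only those accumulated at turnaround points of $\xi$. Once this cancellation is checked, the four cases of the proposition emerge in parallel from the single inductive step.
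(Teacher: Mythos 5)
Your proposal matches the paper's proof in essence: both fix the base case $\Phi([-j])=L(Y_{j,\xi(j)})$, settle $\Phi([j])$ via the exchange relation (\ref{AC1 initial and non-initial objects}), and then induct on the length $j-i$ using (\ref{AC1 other objects}) together with the highest-weight identity from Theorem \ref{main theorem2}. The only difference is one of presentation — you invoke Theorem \ref{equivalent descriptions of c} to explain where the exponents of $f_i$ come from, while the paper folds that computation directly into the displayed exchange relations — so no substantive gap.
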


\begin{proof}
For any $j\in I$, the images $\Phi([-j]), \Phi([j])$ follow from Equation (\ref{AC1 initial and non-initial objects}). If $j>i>j_\bullet$, then by Equation (\ref{AC1 other objects}) 
\begin{align*}
\Phi([i,j]) = \begin{cases}
[L(Y_{i-1,\xi(i-1)}Y_{j,\xi(j)-2} Y_{j+1,\xi(j+1)}^{d_j})] & \text{if $\xi(j-1)>\xi(j)$}, \\
L(Y_{i,\xi(i)-2} Y^{(1-d_j)}_{j+1,\xi(j+1)})  &  \text{if $\xi(j-1)<\xi(j)$}.
\end{cases}
\end{align*}
For $j>i=j_\bullet$, by Equation (\ref{AC1 other objects})
\begin{align*}
\Phi([i,j]) = \begin{cases}
[L(Y_{j,\xi(j)-2} Y_{j+1,\xi(j+1)}^{d_j})] & \text{if $\xi(j-1)>\xi(j)$}, \\
L(Y_{i-1,\xi(i-1)} Y_{i,\xi(i)-2} Y^{(1-d_j)}_{j+1,\xi(j+1)})  &  \text{if $\xi(j-1)<\xi(j)$}.
\end{cases}
\end{align*}
Here the condition $\xi(j-1)>\xi(j)$ (respectively, $\xi(j-1)<\xi(j)$) implies $\xi(i)>\xi(i+1)$ (respectively, $\xi(i)<\xi(i+1)$).

For $i<j_\bullet$ and $\xi(j-1)>\xi(j)$, we assume by induction on the length of $j-i$ that
\begin{gather*}
\Phi([i,j_\bullet-1])=\begin{cases}
L(Y^{(1-d_i)}_{i-1,\xi(i-1)} Y(i,j_\bullet-1) Y_{j_\bullet-1,\xi(j_\bullet-1)-2} Y^{d_{j_\bullet-1}}_{j_\bullet,\xi(j_\bullet)}) & \text{if $\xi(i)>\xi(i+1)$ and $\xi(j_\bullet-2)>\xi(j_\bullet-1)$}, \\
L(Y^{(1-d_i)}_{i-1,\xi(i-1)} Y(i,j_\bullet-1) Y^{(1-d_{j_\bullet-1})}_{j_\bullet,\xi(j_\bullet)}) & \text{if $\xi(i)>\xi(i+1)$ and $\xi(j_\bullet-2)<\xi(j_\bullet-1)$}, \\
L(Y^{d_i}_{i-1,\xi(i-1)} Y_{i,\xi(i)-2} Y(i,j_\bullet-1) Y_{j_\bullet-1,\xi(j_\bullet-1)-2} Y^{d_{j_\bullet-1}}_{j_\bullet,\xi(j_\bullet)})  & \text{if $\xi(i)<\xi(i+1)$ and $\xi(j_\bullet-2)>\xi(j_\bullet-1)$}, \\
L(Y^{d_i}_{i-1,\xi(i-1)} Y_{i,\xi(i)-2} Y(i,j_\bullet-1) Y^{(1-d_{j_\bullet-1})}_{j_\bullet,\xi(j_\bullet)})  & \text{if $\xi(i)<\xi(i+1)$ and $\xi(j_\bullet-2)<\xi(j_\bullet-1)$}.
\end{cases}
\end{gather*}
It follows from Equation (\ref{AC1 other objects}) that 
\begin{gather*}
\Phi([i,j])=\begin{cases}
L(Y^{(1-d_i)}_{i-1,\xi(i-1)} Y(i,j) Y_{j,\xi(j)-2} Y_{j+1,\xi(j+1)}^{d_j}) & \text{if $\xi(i)>\xi(i+1)$ and $\xi(j-1)>\xi(j)$}, \\
L(Y^{d_i}_{i-1,\xi(i-1)} Y_{i,\xi(i)-2} Y(i,j) Y_{j,\xi(j)-2} Y_{j+1,\xi(j+1)}^{d_j})  & \text{if $\xi(i)<\xi(i+1)$ and $\xi(j-1)>\xi(j)$}.
\end{cases}
\end{gather*}

For $i<j_\bullet$ and $\xi(j-1)<\xi(j)$,  we assume by induction on the length of $j-i$ that 
\begin{gather*}
\Phi([i,j-1])=\begin{cases}
L(Y^{(1-d_i)}_{i-1,\xi(i-1)} Y(i,j-1) Y_{j-1,\xi(j-1)-2} Y^{d_{j-1}}_{j,\xi(j)}) & \text{if $\xi(i)>\xi(i+1)$ and $\xi(j-2)>\xi(j-1)$}, \\
L(Y^{(1-d_i)}_{i-1,\xi(i-1)} Y(i,j-1) Y^{(1-d_{j-1})}_{j,\xi(j)}) & \text{if $\xi(i)>\xi(i+1)$ and $\xi(j-2)<\xi(j-1)$}, \\
L(Y^{d_i}_{i-1,\xi(i-1)} Y_{i,\xi(i)-2} Y(i,j-1) Y_{j-1,\xi(j-1)-2} Y^{d_{j-1}}_{j,\xi(j)})  & \text{if $\xi(i)<\xi(i+1)$ and $\xi(j-2)>\xi(j-1)$}, \\
L(Y^{d_i}_{i-1,\xi(i-1)} Y_{i,\xi(i)-2} Y(i,j-1) Y^{(1-d_{j-1})}_{j,\xi(j)})  & \text{if $\xi(i)<\xi(i+1)$ and $\xi(j-2)<\xi(j-1)$}.
\end{cases}
\end{gather*}
Using Equation (\ref{AC1 other objects}) again, we obtain
\begin{gather*}
\Phi([i,j])=\begin{cases}
L(Y^{(1-d_i)}_{i-1,\xi(i-1)} Y(i,j) Y_{j+1,\xi(j+1)}^{(1-d_j)}) & \text{if $\xi(i)>\xi(i+1)$ and $\xi(j-1)<\xi(j)$}, \\
L(Y^{d_i}_{i-1,\xi(i-1)} Y_{i,\xi(i)-2} Y(i,j) Y^{(1-d_j)}_{j+1,\xi(j+1)})  & \text{if $\xi(i)<\xi(i+1)$ and $\xi(j-1)<\xi(j)$}.
\end{cases}
\end{gather*}
\end{proof}

It follows from Equation (\ref{Gamma1 equation}) that all Hernandez-Leclerc modules can be obtained by an induction on the distance between the module $[i,j]$ and a certain injective module in an Auslander-Reiten quiver. It is well-known that every mesh in an Auslander-Reiten quiver is one of the three possibilities shown in Figure \ref{three forms of meshes in AR quiver An}.

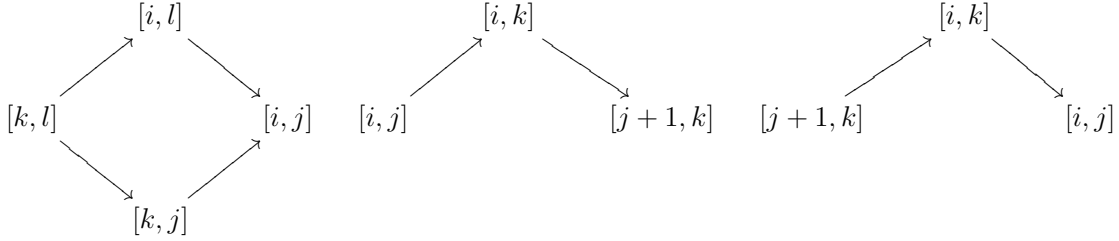
\begin{figure}
\resizebox{.9\width}{.9\height}{ 
\xymatrix{
& [i,l]  \ar[dr] &  \\
[k,l] \ar[ur] \ar[dr] &  & [i,j] \\
& [k,j] \ar[ur]  &} \quad
\xymatrix{
& [i,k] \ar[dr] &  \\
[i,j] \ar[ur] && [j+1,k]} \quad
\xymatrix{
& [i,k] \ar[dr] & \\
[j+1,k] \ar[ur] &  & [i,j]}}
\caption{All possible meshes in the Auslander-Reiten quivers of type $A_n$: integers $i,k \leq j,l$ for the left mesh, and  integers $i<j<k$ for the remainder two meshes.}\label{three forms of meshes in AR quiver An}
\end{figure} 

\begin{theorem} \label{Hernandez-Leclerc module of type A}
An Hernandez-Leclerc module of type $A_n$ is exactly the simple $U_{q}(\widehat{\mathfrak{g}})$-module with the highest $l$-weight monomial equal to
\begin{align} \label{typeAHL}
Y_{i_1,a_1}Y_{i_2,a_2}\ldots Y_{i_k,a_k},
\end{align}
where $k\in\mathbb{Z}_{\geq 1}$, $i_j\in \{1,2,\ldots,n\}$, $a_j\in \mathbb{Z}$ for $j=1,2,\ldots,k$, and
\begin{itemize}
\item[(1)] $i_1<i_2<\cdots<i_k$,
\item[(2)] $(a_{j}-a_{j-1})(a_{j+1}-a_{j})<0$ for $2\leq j\leq k-1$,
\item[(3)] $|a_{j}-a_{j-1}|=i_{j}-i_{j-1}+2$ for $2\leq j\leq k$.
\end{itemize}
\end{theorem}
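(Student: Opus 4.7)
The plan has two directions to establish. First I would show that every Hernandez-Leclerc module has highest $\ell$-weight monomial of the stated form. By Theorem \ref{monoidal categorification C1} the non-frozen real prime simple modules in $\mathscr{C}^{\leq \xi}_1$ are in bijection with indecomposable rigid objects in the cluster category via the map $\Phi$, and their highest $\ell$-weight monomials are computed explicitly in Proposition \ref{monoidal categorification AC1}. In each of the six subcases of that proposition the monomial has a central factor $Y(i,j)$ running over the sinks and sources of $\xi$ strictly between $i$ and $j$, together with boundary factors at indices $i-1$, $i$, $j$ or $j+1$ governed by the local shape of $\xi$.

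Condition (1) is immediate because the sinks and sources of $\xi$ form a strictly increasing sequence. For condition (2), consecutive critical points alternate (sink-source-sink-$\cdots$); since the $a$-coordinate of a sink $p$ equals $\xi(p)$ while that of a source is $\xi(p)-2$, and $\xi$ is strictly monotonic with unit steps between consecutive critical points, the differences $a_\ell - a_{\ell-1}$ alternate in sign. For condition (3), the unit-step monotonicity of $\xi$ yields $|\xi(i_\ell)-\xi(i_{\ell-1})| = i_\ell - i_{\ell-1}$, and the $\pm 2$ shift coming from the sink/source convention adjusts this to $i_\ell - i_{\ell-1}+2$. The possible boundary factors at $i-1$, $i$, $j$, $j+1$ extend the same alternation and spacing pattern, which one verifies by a short case check.

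For the reverse direction, given a monomial $m = Y_{i_1,a_1}\cdots Y_{i_k,a_k}$ satisfying (1)-(3), one constructs a height function $\xi$ and an indecomposable rigid object $M$ in the cluster category of $Q^{\leq \xi}_1$ with $\textbf{hw}(\Phi(M)) = m$. The sign of $a_2-a_1$ dictates whether $i_1$ is a sink or source of $\xi$, and conditions (2)-(3) then force $\xi$ to propagate uniquely by unit steps across the interval $[i_1, i_k]$ (with arbitrary choice outside). The endpoints of $M = [i,j]$ (or $M=[j]$, $[-j]$) are recovered from $i_1, i_k$ together with whether the boundary factors $Y_{i_1-1,\xi(i_1-1)}$ and $Y_{i_k+1,\xi(i_k+1)}$ occur in $m$, and Proposition \ref{monoidal categorification AC1} matches $m$ against $\textbf{hw}(\Phi(M))$.

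The main obstacle is the reverse direction, which requires delicate bookkeeping to match the six subcases of Proposition \ref{monoidal categorification AC1} to the combinatorial data of $m$. A cleaner alternative is to proceed by induction on $k$: start with the fundamental case $k=1$ and adjoin one factor at a time, applying the exchange relations (\ref{AC1 initial and non-initial objects}) and (\ref{AC1 other objects}) to verify inductively that every highest $\ell$-weight monomial lies in the family (\ref{typeAHL}), and conversely that every monomial in (\ref{typeAHL}) is attained along such a chain of exchanges.
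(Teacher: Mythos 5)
Your strategy — reading the highest $\ell$-weight monomial off Proposition \ref{monoidal categorification AC1} for the forward direction, and constructing a piecewise-linear height function together with an interval module for the reverse — is exactly the route the paper takes. But there is a sign error in your forward-direction argument that, if carried out, breaks the verification of conditions (2) and (3). From the definition of $Y(i,j)$ in Section \ref{The highest l-monomials of Hernandez-Leclerc modules}, a critical point $p$ contributes $Y_{p,\xi(p)}$ when $\xi(p)>\xi(p+1)$, i.e.\ when $p$ is a \emph{source}, and $Y_{p,\xi(p)-2}$ when $\xi(p)<\xi(p+1)$, i.e.\ when $p$ is a \emph{sink} — the opposite of what you wrote. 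With the correct assignment, if $p$ is a source and $p'>p$ is the next critical point (a sink) then $a_{p'}-a_p=-(p'-p)-2$, and if $p''>p'$ is the following source then $a_{p''}-a_{p'}=(p''-p')+2$; this yields precisely the alternation in (2) and the spacing in (3). With your swapped assignment you would instead get $a_{p'}-a_p=-(p'-p)+2$, which has no fixed sign (it vanishes when $p'-p=2$) and the wrong magnitude, so neither (2) nor (3) follows. The fix is only to swap the two roles, but as stated your justification does not go through.

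A smaller imprecision in the reverse direction: you propose to read off the endpoints of $M$ from "whether the boundary factors $Y_{i_1-1,\xi(i_1-1)}$ and $Y_{i_k+1,\xi(i_k+1)}$ occur in $m$", but $m$ has no factors at indices $i_1-1$ or $i_k+1$. What actually selects between $M=[i_1,\cdot]$ and $M=[i_1+1,\cdot]$ (and likewise on the right) is the sign of $a_2-a_1$ (respectively $a_k-a_{k-1}$): that sign determines whether $Y_{i_1,a_1}$ is matched against the left-boundary term $Y_{i-1,\xi(i-1)}$ or the interior term $Y_{i,\xi(i)-2}$ in the formulas of Proposition \ref{monoidal categorification AC1}. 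This is exactly the four-way case split on the signs of $a_1-a_2$ and $a_{k-1}-a_k$ that the paper carries out explicitly.
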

\begin{proof}
It follows from Proposition \ref{monoidal categorification AC1} that for a fixed height function $\xi$, every Hernandez-Leclerc module has the required highest $l$-weight monomial. 

In the following, we show that given a monomial $m$ of type (\ref{typeAHL}), there exists an Hernandez-Leclerc module with the highest $l$-weight monomial $m$.

If $k=1$, we take the height function $\xi$ such that $\xi(i_1)=a_1$. Then by Proposition \ref{monoidal categorification AC1}, $\Phi([-i_1])=L(Y_{i_1,a_1})$. 

For $k=2$, if $a_1>a_2$, then we choose a height function $\xi$ such that $\xi$ is linear in intervals $[1,i_1], [i_1,n]$, where $\xi(i_1)=a_{1}>\xi(i_2)=a_{1}-i_2+i_1$. It follows from Proposition \ref{monoidal categorification AC1} that $\Phi([i_1+1,i_2])=L(Y_{i_1,a_1}Y_{i_2,a_2})$.

For $k=2$, if $a_1<a_2$, then we choose a height function $\xi$ such that $\xi$ is linear in intervals $[1,n]$, where $\xi(i_1)=a_{2}-i_2+i_1<\xi(i_2)=a_{2}$. It follows from Proposition \ref{monoidal categorification AC1} that $\Phi([i_1,i_2-1])=L(Y_{i_1,a_1}Y_{i_2,a_2})$.  

Assume now that $k\geq 3$. If $a_1>a_2$ and $a_{k-1}>a_k$, then we choose a height function $\xi$ such that $\xi$ is linear in intervals $[1,i_1], [i_1,i_2], \ldots, [i_{k-1},n]$, where $\xi(i_1)=a_{1}>\xi(i_2)=a_{1}-i_2+i_1$. It follows from Proposition \ref{monoidal categorification AC1} that $
  \Phi([i_1+1,i_k]) $ is equal to
\begin{gather}
\begin{align*}
\begin{cases}
L(Y^{(1-d_{i_1+1})}_{i_1,\xi(i_1)} Y(i_1+1,i_k) Y_{i_k,\xi(i_k)-2} Y^{d_{i_k}}_{i_k+1,\xi(i_k+1)}) & \text{if $\xi(i_1+1)>\xi(i_1+2)$ and $\xi(i_k-1)>\xi(i_k)$}, \\
L(Y^{d_{i_1+1}}_{i_1,\xi(i_1)} Y_{i_1+1,\xi(i_1+1)-2} Y(i_1+1,i_k) Y_{i_k,\xi(i_k)-2} Y^{d_{i_k}}_{i_k+1,\xi(i_k+1)})  & \text{if $\xi(i_1+1)=\xi(i_2)<\xi(i_1+2)$ and $\xi(i_k-1)>\xi(i_k)$}.
\end{cases}
\end{align*}
\end{gather}
Note that $i_1+1$ is not source or sink in the first case, $i_1+1$ is sink in the second case, and $i_k$ is not source or sink in both cases. So $\Phi([i_1+1,i_k])=L(Y_{i_1,a_1}Y_{i_2,a_2}\ldots Y_{i_k,a_k})$.

If $a_1>a_2$ and $a_{k-1}<a_k$, then we choose a height function $\xi$ such that $\xi$ is linear in intervals $[1,i_1], [i_1,i_2], \ldots, [i_{k-1},n]$, where $\xi(i_1)=a_{1}>\xi(i_2)=a_{1}-i_2+i_1$. It follows from Proposition \ref{monoidal categorification AC1} that $ \Phi([i_1+1,i_k-1])$ is equal to 

\begin{gather}
\begin{align*}
&\begin{cases}
L(Y^{(1-d_{i_1+1})}_{i_1,\xi(i_1)} Y(i_1+1,i_k-1) Y_{i_k-1,\xi(i_k-1)-2} Y^{d_{i_k-1}}_{i_k,\xi(i_k)}) & \text{if $\xi(i_1+1)>\xi(i_1+2)$ and $\xi(i_k-2)>\xi(i_k-1)=\xi(i_{k-1})$}, \\
L(Y^{(1-d_{i_1+1})}_{i_1,\xi(i_1)} Y(i_1+1,i_k-1) Y^{(1-d_{i_k-1})}_{i_k,\xi(i_k)}) & \text{if $\xi(i_1+1)>\xi(i_1+2)$ and $\xi(i_k-2)<\xi(i_k-1)$}, \\
L(Y^{d_{i_1+1}}_{i_1,\xi(i_1)} Y_{i_1+1,\xi(i_1+1)-2} Y(i_1+1,i_k-1) Y_{i_k-1,\xi(i_k-1)-2} Y^{d_{i_k-1}}_{i_k,\xi(i_k)})  & \text{if $\xi(i_1+1)=\xi(i_2)<\xi(i_1+2)$ and $\xi(i_k-2)>\xi(i_k-1)=\xi(i_{k-1})$}, \\
L(Y^{d_{i_1+1}}_{i_1,\xi(i_1)} Y_{i_1+1,\xi(i_1+1)-2} Y(i_1+1,i_k-1) Y^{(1-d_{i_k-1})}_{i_k,\xi(i_k)})  & \text{if $\xi(i_1+1)=\xi(i_2)<\xi(i_1+2)$ and $\xi(i_k-2)<\xi(i_k-1)$}.
\end{cases}
\end{align*}
\end{gather}
Note that $\xi(i_k-2)<\xi(i_k-1)$ implies that $\xi(i_k-1) \neq \xi(i_{k-1})$.  In each case, we obtain $\Phi([i_1+1,i_k-1])=L(Y_{i_1,a_1}Y_{i_2,a_2}\ldots Y_{i_k,a_k})$.

If $a_1<a_2$ and $a_{k-1}>a_k$, then we choose a height function $\xi$ such that $\xi$ is linear in intervals $[1,i_2], \ldots, [i_{k-1},n]$, where $\xi(i_1)=a_{2}-i_2+i_1<\xi(i_2)=a_{2}$. It follows from Proposition \ref{monoidal categorification AC1} that 
\[
\Phi([i_1,i_k])=L(Y^{d_{i_1}}_{i_1-1,\xi(i_1-1)} Y_{i_1,\xi(i_1)-2} Y(i_1,i_k) Y_{i_k,\xi(i_k)-2} Y^{d_{i_k}}_{i_k+1,\xi(i_k+1)}).
\]
Note that $i_1$ and $i_k$ are not sources or sinks, and $a_{k-1}>a_k$ implies that $\xi(i_k-1)>\xi(i_k)$. So $\Phi([i_1,i_k])=L(Y_{i_1,a_1}Y_{i_2,a_2}\ldots Y_{i_k,a_k})$.  

If $a_1<a_2$ and $a_{k-1}<a_k$, then we choose a height function $\xi$ such that $\xi$ is linear in intervals $[1,i_2], \ldots, [i_{k-1},n]$, where $\xi(i_1)=a_{2}-i_2+i_1<\xi(i_2)=a_{2}$. It follows from Proposition \ref{monoidal categorification AC1} that $\Phi([i_1,i_k-1])$ is equal to 
\begin{gather}
\begin{align*}
\begin{cases}
L(Y^{d_{i_1}}_{i_1-1,\xi(i_1-1)} Y_{i_1,\xi(i_1)-2} Y(i_1,i_k-1) Y_{i_k-1,\xi(i_k-1)-2} Y^{d_{i_k-1}}_{i_k,\xi(i_k)})  & \text{if $\xi(i_1)<\xi(i_1+1)$ and $\xi(i_k-2)>\xi(i_k-1)=\xi(i_{k-1})$}, \\
L(Y^{d_{i_1}}_{i_1-1,\xi(i_1-1)} Y_{i_1,\xi(i_1)-2} Y(i_1,i_k-1) Y^{(1-d_{i_k-1})}_{i_k,\xi(i_k)})  & \text{if $\xi(i_1)<\xi(i_1+1)$ and $\xi(i_k-2)<\xi(i_k-1)$}.
\end{cases}
\end{align*}
\end{gather}
Note that $\xi(i_k-2)<\xi(i_k-1)$ implies that $\xi(i_k-1) \neq \xi(i_{k-1})$. In each case, we obtain $\Phi([i_1,i_k-1])=L(Y_{i_1,a_1}Y_{i_2,a_2}\ldots Y_{i_k,a_k})$. 
\end{proof}

\begin{remark}
An example on Hernandez-Leclerc modules of type $A_n$ appeared in \cite{HL10}. These modules were then named and studied by Brito and Chari \cite{BC19}, using an approach that is different from ours.
\end{remark}

\subsection{Type $D_n$}
For any height function $\xi$, we have $|\xi(n)-\xi(n-1)|=2 \text{ or } 0$. We will use the following labeling of positive roots of type $D_n$ in \cite{Bou02}. Let $V=\mathbb{R}^n$ be the $n$-dimensional real space with canonical basis $\{ \varepsilon_i \mid 1\leq i\leq n\}$. We identify a positive root $\beta\in \Phi^+$ with $\{i,\pm j\}$ if
\begin{align*}
\beta=\begin{cases}
\varepsilon_i - \varepsilon_j=\sum_{i\leq k<j} \alpha_k  &  (1\leq i < j \leq n), \\
\varepsilon_i + \varepsilon_n=\sum_{i\leq k\leq n-2} \alpha_k +\alpha_n &  (1\leq i < n), \\
\varepsilon_i + \varepsilon_j=\sum_{i\leq k<j} \alpha_k + 2\sum_{j\leq k\leq n-2}\alpha_k + \alpha_{n-1}+\alpha_n & (1\leq i < j < n).
\end{cases}
\end{align*}
For simplicity of notation, we write $\alpha_{i,j}$ for $\{i,-(j+1)\}$ if $1\leq i \leq j \leq n-1$, and $\alpha_{i,n}=\{i,n\}$. 
Note that   $\alpha_{i,j}$ and $\alpha_{i,n}$ together describe all the roots that are of type $A$,  whereas $\{i,j\}$ describes all roots that are supported on every vertex of the Dynkin diagram.
It follows from the geometric realization of $\mathcal{C}_Q$ in Section \ref{a geometric realization of cluster category} and \cite{CK06} that the following exchange relations hold.
\begin{align}
 x[\alpha_j] x[-\alpha_j] & =  x[-\alpha_{j-1}] x[-\alpha_{j+1}]^{d_j} + x[-\alpha_{j+1}]^{1-d_j}\quad \text{ for $1\leq j \leq n-3$},   \label{exchange relation (1) Dn} \\
 x[\alpha_{n-2}] x[-\alpha_{n-2}] & = x[-\alpha_{n-3}] x[-\alpha_{n-1}]^{\delta_{\xi(n-3),\xi(n-1)}} x[-\alpha_{n}]^{\delta_{\xi(n-3),\xi(n)}} \nonumber \\ 
& + x[-\alpha_{n-1}]^{1-\delta_{\xi(n-3),\xi(n-1)}} x[-\alpha_{n}]^{1-\delta_{\xi(n-3),\xi(n)}},    \label{exchange relation (2) Dn} \\
x[\alpha_{n-1}] x[-\alpha_{n-1}] &=  x[-\alpha_{n-2}] + 1,    \label{exchange relation (3) Dn} \\
x[\alpha_{n}] x[-\alpha_{n}] &=  x[-\alpha_{n-2}] + 1.    \label{exchange relation (4) Dn}
\end{align}

For $1 \leq i<j \leq n-3$, we have
\begin{align}\label{exchange relation (5) Dn}
x[\alpha_{i,j}] x[-\alpha_j]=x[\alpha_{i,\max\{i-1,j_\bullet-1\}}]^{1-\delta_{i,j_\bullet}} x[-\alpha_{j+1}]^{d_j} + x[\alpha_{i,j-1}] x[-\alpha_{j+1}]^{1-d_j}.
\end{align}

For $1 \leq i<n-2$, $\xi(n-1)=\xi(n)$, we have
\begin{gather}
\begin{aligned}\label{exchange relation (6) Dn}
x[\alpha_{i,n-2}] x[-\alpha_{n-2}]=x[\alpha_{i,\max\{i-1,(n-2)_\bullet-1\}}]^{1-\delta_{i,(n-2)_\bullet}} x[-\alpha_{n-1}]^{d_{n-2}}  x[-\alpha_{n}]^{d_{n-2}} + x[\alpha_{i,n-3}] x[-\alpha_{n-1}]^{1-d_{n-2}} x[-\alpha_{n}]^{1-d_{n-2}}. 
\end{aligned}
\end{gather}

For $1 \leq i<n-2$, $|\xi(n-1)-\xi(n)|=2$,  we have
\begin{gather}
\begin{aligned} \label{exchange relation (7) Dn}
x[\alpha_{i,n-2}] x[-\alpha_{n-2}]=\begin{cases}
x[\alpha_{i,\max\{i-1,(n-2)_\bullet-1\}}]^{1-\delta_{i,(n-2)_\bullet}} x[-\alpha_{n}] + x[\alpha_{i,n-3}] x[-\alpha_{n-1}]  &  \text{if $\xi(n-3)=\xi(n)$}, \\
x[\alpha_{i,\max\{i-1,(n-2)_\bullet-1\}}]^{1-\delta_{i,(n-2)_\bullet}} x[-\alpha_{n-1}] + x[\alpha_{i,n-3}] x[-\alpha_n]  &  \text{if $\xi(n-1)=\xi(n-3)$}.
\end{cases}
\end{aligned}
\end{gather}

For $i\leq n-2$,  we have
\begin{gather}
\begin{aligned}\label{exchange relation (8) Dn}
& x[\alpha_{i,n-1}] x[-\alpha_{n-1}] = \begin{cases}
x[\alpha_{i,n-3}]^{1-\delta_{i,(n-2)}} + x[\alpha_{i,n-2}]  & \text{if $\xi(n)=\xi(n-1)>\xi(n-2)<\xi(n-3)$}, \\
x[\alpha_{i,\max\{i-1,(n-2)_\bullet-1\}}]^{1-\delta_{i,(n-2)_\bullet}} + x[\alpha_{i,n-2}]  & \text{if $\xi(n)=\xi(n-1)>\xi(n-2)>\xi(n-3)$}, \\
x[\alpha_{i,n-3}]^{1-\delta_{i,(n-2)}} + x[\alpha_{i,n-2}]  & \text{if $\xi(n)=\xi(n-1)<\xi(n-2)>\xi(n-3)$}, \\
x[\alpha_{i,\max\{i-1,(n-2)_\bullet-1\}}]^{1-\delta_{i,(n-2)_\bullet}} + x[\alpha_{i,n-2}]  & \text{if $\xi(n)=\xi(n-1)<\xi(n-2)<\xi(n-3)$}, \\
x[\alpha_{i,\max\{i-1,(n-2)_\bullet-1\}}]^{1-\delta_{i,(n-2)_\bullet}} x[-\alpha_{n}]  +  x[\alpha_{i,n-2}]  & \text{if $\xi(n-3)=\xi(n)>\xi(n-2)>\xi(n-1)$}, \\
x[\alpha_{i,n-3}]^{1-\delta_{i,(n-2)}} x[-\alpha_{n}]  +  x[\alpha_{i,n-2}]  & \text{if $\xi(n-3)=\xi(n-1)<\xi(n-2)<\xi(n)$}, \\
x[\alpha_{i,\max\{i-1,(n-2)_\bullet-1\}}]^{1-\delta_{i,(n-2)_\bullet}} x[-\alpha_{n}] + x[\alpha_{i,n-2}] & \text{if $\xi(n-3)=\xi(n)<\xi(n-2)<\xi(n-1)$}, \\
x[\alpha_{i,n-3}]^{1-\delta_{i,(n-2)}} x[-\alpha_{n}] + x[\alpha_{i,n-2}]  & \text{if $\xi(n-3)=\xi(n-1)>\xi(n-2)>\xi(n)$}.
\end{cases} 
\end{aligned}
\end{gather}

\begin{gather}
\begin{aligned}\label{exchange relation (9) Dn}
& x[\alpha_{i,n}] x[-\alpha_n] = \begin{cases}
x[\alpha_{i,n-3}]^{1-\delta_{i,(n-2)}} + x[\alpha_{i,n-2}]  & \text{if $\xi(n)=\xi(n-1)>\xi(n-2)<\xi(n-3)$}, \\
x[\alpha_{i,\max\{i-1,(n-2)_\bullet-1\}}]^{1-\delta_{i,(n-2)_\bullet}} + x[\alpha_{i,n-2}]  & \text{if $\xi(n)=\xi(n-1)>\xi(n-2)>\xi(n-3)$}, \\
x[\alpha_{i,n-3}]^{1-\delta_{i,(n-2)}} + x[\alpha_{i,n-2}]  & \text{if $\xi(n)=\xi(n-1)<\xi(n-2)>\xi(n-3)$}, \\
x[\alpha_{i,\max\{i-1,(n-2)_\bullet-1\}}]^{1-\delta_{i,(n-2)_\bullet}} + x[\alpha_{i,n-2}]  & \text{if $\xi(n)=\xi(n-1)<\xi(n-2)<\xi(n-3)$}, \\
x[\alpha_{i,\max\{i-1,(n-2)_\bullet-1\}}]^{1-\delta_{i,(n-2)_\bullet}} x[-\alpha_{n-1}] +  x[\alpha_{i,n-2}]  & \text{if $\xi(n-3)=\xi(n-1)>\xi(n-2)>\xi(n)$}, \\
x[\alpha_{i,n-3}]^{1-\delta_{i,(n-2)}} x[-\alpha_{n-1}]  +  x[\alpha_{i,n-2}]  & \text{if $\xi(n-3)=\xi(n)<\xi(n-2)<\xi(n-1)$}, \\
x[\alpha_{i,\max\{i-1,(n-2)_\bullet-1\}}]^{1-\delta_{i,(n-2)_\bullet}} x[-\alpha_{n-1}] + x[\alpha_{i,n-2}] & \text{if $\xi(n-3)=\xi(n-1)<\xi(n-2)<\xi(n)$}, \\
x[\alpha_{i,n-3}]^{1-\delta_{i,(n-2)}} x[-\alpha_{n-1}] + x[\alpha_{i,n-2}]  & \text{if $\xi(n-3)=\xi(n)>\xi(n-2)>\xi(n-1)$}.
\end{cases} 
\end{aligned}
\end{gather}

Recall that $\{i,n-1\}$ denotes the positive root $\alpha_{i}+\alpha_{i+1}+\ldots+\alpha_{n-2}+\alpha_{n-1}+\alpha_{n}$. It follows from the geometric  realization of $\mathcal{C}_Q$ (for instance, see Figure \ref{compatible of $[i,n-1]$ and $[i,n-2]$}) that $\{i,n-1\}$ and $\alpha_{i,n-2}$ form an exchange pair if $\xi(n-1)=\xi(n)$, but not so for $|\xi(n)-\xi(n-1)|=2$. For $1\leq i < n-1$, $\xi(n-1)=\xi(n)$,  we have
\begin{gather}
\begin{aligned}\label{exchange relation (10) Dn}
x[\{i,n-1\}] x[\alpha_{i,n-2}] =x[\alpha_{i,n-1}] x[\alpha_{i,n}]+x[\alpha_{i,\max\{i-1,(n-2)_\bullet-1\}}]^{1-\delta_{i,(n-2)_\bullet}} x[\alpha_{i,n-3}]^{1-\delta_{i,(n-2)}}.
\end{aligned}
\end{gather}

\begin{figure}
\begin{tikzpicture}[node distance={13mm}] 
\draw (22.5:1.75cm) -- (67.5:1.75cm)  (112.5:1.75cm) -- (157.5:1.75cm) -- (-157.5:1.75cm) -- (-112.5:1.75cm) -- (-67.5:1.75cm) -- (-22.5:1.75cm) -- (22.5:1.75cm);
\draw[dashed] (67.5:1.75cm) -- (112.5:1.75cm);
\node[thick] (A) at (0,0) {$\substack{\bullet}$};
\draw (-112.5:1.75cm) to [out=80,in=100,looseness=4.8] (-67.5:1.75cm);
\draw (-112.5:1.75cm) -- (0,0);
\draw (-112.5:1.75cm) to [out=45,in=-60,looseness=1] (0,0);
\node at (-90:0.8cm) {$\substack{\bowtie}$};
\draw (-157.5:1.75cm) to [out=45,in=90,looseness=2.4] (-67.5:1.75cm);
\draw (-157.5:1.75cm) to [out=55,in=120,looseness=1.5] (-22.5:1.75cm);
\draw (157.5:1.75cm) to [out=15,in=110,looseness=1.2] (-22.5:1.75cm);
\draw (157.5:1.75cm) to [out=30,in=150,looseness=1.2] (22.5:1.75cm);
\draw (157.5:1.75cm) to [out=40,in=-170,looseness=1] (67.5:1.75cm);
\draw (67.5:1.75cm) to [out=210,in=160,looseness=1.3] (-67.5:1.75cm);
\draw[red] (67.5:1.75cm) to [out=-60,in=15,looseness=1] (-112.5:1.75cm);
\end{tikzpicture} 
\begin{tikzpicture}[node distance={13mm}] 
\draw (22.5:1.75cm) -- (67.5:1.75cm)  (112.5:1.75cm) -- (157.5:1.75cm) -- (-157.5:1.75cm) -- (-112.5:1.75cm) -- (-67.5:1.75cm) -- (-22.5:1.75cm) -- (22.5:1.75cm);
\draw[dashed] (67.5:1.75cm) -- (112.5:1.75cm);
\node[thick] (A) at (0,0) {$\substack{\bullet}$};
\draw (-112.5:1.75cm) to [out=80,in=100,looseness=4.8] (-67.5:1.75cm);
\draw (-67.5:1.75cm) -- (0,0);
\draw (-67.5:1.75cm) to [out=135,in=-120,looseness=1] (0,0);
\node at (-90:0.8cm) {$\substack{\bowtie}$};
\draw (-157.5:1.75cm) to [out=45,in=90,looseness=2.4] (-67.5:1.75cm);
\draw (-157.5:1.75cm) to [out=55,in=120,looseness=1.5] (-22.5:1.75cm);
\draw (157.5:1.75cm) to [out=15,in=110,looseness=1.2] (-22.5:1.75cm);
\draw (157.5:1.75cm) to [out=30,in=150,looseness=1.2] (22.5:1.75cm);
\draw (157.5:1.75cm) to [out=40,in=-170,looseness=1] (67.5:1.75cm);
\draw (67.5:1.75cm) to [out=-60,in=15,looseness=1] (-112.5:1.75cm);
\draw[red] (67.5:1.75cm) to [out=210,in=160,looseness=1.3] (-67.5:1.75cm);
\end{tikzpicture}
\begin{tikzpicture}[node distance={13mm}] 
\draw (22.5:1.75cm) -- (67.5:1.75cm)  (112.5:1.75cm) -- (157.5:1.75cm) -- (-157.5:1.75cm) -- (-112.5:1.75cm) -- (-67.5:1.75cm) -- (-22.5:1.75cm) -- (22.5:1.75cm);
\draw[dashed] (67.5:1.75cm) -- (112.5:1.75cm);
\node[thick] (A) at (0,0) {$\substack{\bullet}$};
\draw (-67.5:1.75cm) -- (0,0)  (-112.5:1.75cm) -- (0,0);
\draw (-112.5:1.75cm) to [out=80,in=100,looseness=4.8] (-67.5:1.75cm);
\draw (-157.5:1.75cm) to [out=45,in=90,looseness=2.4] (-67.5:1.75cm);
\draw (-157.5:1.75cm) to [out=55,in=120,looseness=1.5] (-22.5:1.75cm);
\draw (157.5:1.75cm) to [out=15,in=110,looseness=1.2] (-22.5:1.75cm);
\draw (157.5:1.75cm) to [out=30,in=150,looseness=1.2] (22.5:1.75cm);
\draw (157.5:1.75cm) to [out=40,in=-170,looseness=1] (67.5:1.75cm);
\node at (-79:0.8cm) {$\substack{n}$};
\draw[red] (67.5:1.75cm) to [out=-150,in=120,looseness=1] (0,0);
\draw (67.5:1.75cm) to [out=-70,in=25,looseness=1] (0,0);
\node at (50:1.1cm) {$\substack{\bowtie}$};
\end{tikzpicture}
\begin{tikzpicture}[node distance={13mm}] 
\draw (22.5:1.75cm) -- (67.5:1.75cm)  (112.5:1.75cm) -- (157.5:1.75cm) -- (-157.5:1.75cm) -- (-112.5:1.75cm) -- (-67.5:1.75cm) -- (-22.5:1.75cm) -- (22.5:1.75cm);
\draw[dashed] (67.5:1.75cm) -- (112.5:1.75cm);
\node[thick] (A) at (0,0) {$\substack{\bullet}$};
\draw (-67.5:1.75cm) -- (0,0)  (-112.5:1.75cm) -- (0,0);
\draw (-112.5:1.75cm) to [out=80,in=100,looseness=4.8] (-67.5:1.75cm);
\draw (-157.5:1.75cm) to [out=45,in=90,looseness=2.4] (-67.5:1.75cm);
\draw (-157.5:1.75cm) to [out=55,in=120,looseness=1.5] (-22.5:1.75cm);
\draw (157.5:1.75cm) to [out=15,in=110,looseness=1.2] (-22.5:1.75cm);
\draw (157.5:1.75cm) to [out=30,in=150,looseness=1.2] (22.5:1.75cm);
\draw (157.5:1.75cm) to [out=40,in=-170,looseness=1] (67.5:1.75cm);
\node at (-100:0.8cm) {$\substack{n}$};
\draw[red] (67.5:1.75cm) to [out=-150,in=120,looseness=1] (0,0);
\draw (67.5:1.75cm) to [out=-70,in=25,looseness=1] (0,0);
\node at (50:1.1cm) {$\substack{\bowtie}$};
\end{tikzpicture}
\caption{The number of crossings of $\{i,n-1\}$ and $\alpha_{i,n-2}$. Here we choose a triangulation $T$ of $P^{\bullet}_n$, the black arc not in $T$ denotes $\{i,n-1\}$, and the red arc denotes $\alpha_{i,n-2}$.} \label{compatible of $[i,n-1]$ and $[i,n-2]$}
\end{figure}
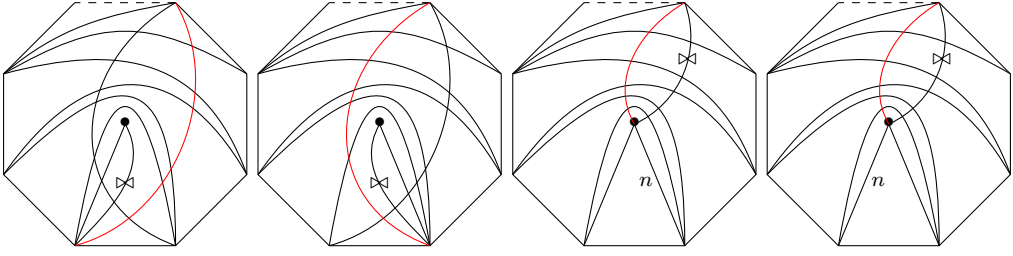

For $1\leq i \leq n-2$, $|\xi(n)-\xi(n-1)|=2$, 
\begin{gather}
\begin{aligned} \label{exchange relation (11) Dn}
& x[\{i,n-1\}] x[-\alpha_{n-1}] = x[\alpha_{i,n}] + \begin{cases} 
x[\alpha_{i,n-3}]^{1-\delta_{i,(n-2)}} & \text{if $\xi(n-1)=\xi(n-3)$},  \\
x[\alpha_{i,\max\{i-1,(n-2)_\bullet-1\}}]^{1-\delta_{i,(n-2)_\bullet}} & \text{if $\xi(n)=\xi(n-3)$}, 
\end{cases} \\
& x[\{i,n-1\}] x[-\alpha_{n}] =  x[\alpha_{i,n-1}] + \begin{cases} 
x[\alpha_{i,\max\{i-1,(n-2)_\bullet-1\}}]^{1-\delta_{i,(n-2)_\bullet}} & \text{if $\xi(n-1)=\xi(n-3)$},  \\
x[\alpha_{i,n-3}]^{1-\delta_{i,(n-2)}} & \text{if $\xi(n)=\xi(n-3)$}.
\end{cases}
\end{aligned}
\end{gather}

Next we deal with the image $\Phi(\{i,j\})$ for $1\leq i < j \leq n-2$. 
Recall that $\{i,j\}$ is the positive root $\alpha_{i}+\alpha_{i+1}+\ldots+\alpha_{j-1}+2\alpha_j+\ldots+2\alpha_{n-2}+\alpha_{n-1}+\alpha_{n}$. It follows from the geometric realization of $\mathcal{C}_Q$ (for instance, see Figure \ref{compatible of $[i,j]$ and $[i,-j]$}) that $\{i,j\}$ and $\{i,-j\}$ form an exchange pair. For $\xi(n-1)=\xi(n)$ and $1\leq i < j \leq n-2$, 
\begin{gather}
\begin{aligned} \label{exchange relation (12) Dn}
x[\{i,j\}] x[\{i,-j\}]= x[\{i,-n\}] x[\{i,n\}] + \text{other terms}.
\end{aligned}
\end{gather}
For $|\xi(n-1)-\xi(n)|=2$ and $1\leq i < j \leq n-2$,
\begin{gather}
\begin{aligned}  \label{exchange relation (13) Dn}
x[\{i,j\}] x[\{i,-j\}]=x[\{i,n-1\}] x[\{i,-(n-1)\}] + \text{other terms}.
\end{aligned}
\end{gather}

\begin{figure}
\begin{tikzpicture}[node distance={13mm}] 
\draw (22.5:1.75cm) -- (67.5:1.75cm)  (112.5:1.75cm) -- (157.5:1.75cm) -- (-157.5:1.75cm) -- (-112.5:1.75cm) -- (-67.5:1.75cm) -- (-22.5:1.75cm) -- (22.5:1.75cm);
\draw[dashed] (67.5:1.75cm) -- (112.5:1.75cm);
\node[thick] (A) at (0,0) {$\substack{\bullet}$};
\draw (-112.5:1.75cm) to [out=80,in=100,looseness=4.8] (-67.5:1.75cm);
\draw (-112.5:1.75cm) -- (0,0);
\draw (-112.5:1.75cm) to [out=45,in=-60,looseness=1] (0,0);
\node at (-90:0.8cm) {$\substack{\bowtie}$};
\draw (-157.5:1.75cm) to [out=45,in=90,looseness=2.4] (-67.5:1.75cm);
\draw (-157.5:1.75cm) to [out=55,in=120,looseness=1.5] (-22.5:1.75cm);
\draw (157.5:1.75cm) to [out=15,in=110,looseness=1.2] (-22.5:1.75cm);
\draw (157.5:1.75cm) to [out=30,in=150,looseness=1.2] (22.5:1.75cm);
\draw (157.5:1.75cm) to [out=40,in=-170,looseness=1] (67.5:1.75cm);
\draw (67.5:1.75cm) to [out=225,in=180,looseness=2.5] (-22.5:1.75cm);
\draw[red] (67.5:1.75cm) to [out=-60,in=60,looseness=1] (-67.5:1.75cm);
\end{tikzpicture} 
\begin{tikzpicture}[node distance={13mm}] 
\draw (22.5:1.75cm) -- (67.5:1.75cm)  (112.5:1.75cm) -- (157.5:1.75cm) -- (-157.5:1.75cm) -- (-112.5:1.75cm) -- (-67.5:1.75cm) -- (-22.5:1.75cm) -- (22.5:1.75cm);
\draw[dashed] (67.5:1.75cm) -- (112.5:1.75cm);
\node[thick] (A) at (0,0) {$\substack{\bullet}$};
\draw (-112.5:1.75cm) to [out=80,in=100,looseness=4.8] (-67.5:1.75cm);
\draw (-67.5:1.75cm) -- (0,0);
\draw (-67.5:1.75cm) to [out=135,in=-120,looseness=1] (0,0);
\node at (-90:0.8cm) {$\substack{\bowtie}$};
\draw (-157.5:1.75cm) to [out=45,in=90,looseness=2.4] (-67.5:1.75cm);
\draw (-157.5:1.75cm) to [out=55,in=120,looseness=1.5] (-22.5:1.75cm);
\draw (157.5:1.75cm) to [out=15,in=110,looseness=1.2] (-22.5:1.75cm);
\draw (157.5:1.75cm) to [out=30,in=150,looseness=1.2] (22.5:1.75cm);
\draw (157.5:1.75cm) to [out=40,in=-170,looseness=1] (67.5:1.75cm);
\draw (67.5:1.75cm) to [out=225,in=180,looseness=2.5] (-22.5:1.75cm);
\draw[red] (67.5:1.75cm) to [out=-60,in=60,looseness=1] (-67.5:1.75cm);
\end{tikzpicture} 
\begin{tikzpicture}[node distance={13mm}] 
\draw (22.5:1.75cm) -- (67.5:1.75cm)  (112.5:1.75cm) -- (157.5:1.75cm) -- (-157.5:1.75cm) -- (-112.5:1.75cm) -- (-67.5:1.75cm) -- (-22.5:1.75cm) -- (22.5:1.75cm);
\draw[dashed] (67.5:1.75cm) -- (112.5:1.75cm);
\node[thick] (A) at (0,0) {$\substack{\bullet}$};
\draw (-67.5:1.75cm) -- (0,0)  (-112.5:1.75cm) -- (0,0);
\draw (-112.5:1.75cm) to [out=80,in=100,looseness=4.8] (-67.5:1.75cm);
\draw (-157.5:1.75cm) to [out=45,in=90,looseness=2.4] (-67.5:1.75cm);
\draw (-157.5:1.75cm) to [out=55,in=120,looseness=1.5] (-22.5:1.75cm);
\draw (157.5:1.75cm) to [out=15,in=110,looseness=1.2] (-22.5:1.75cm);
\draw (157.5:1.75cm) to [out=30,in=150,looseness=1.2] (22.5:1.75cm);
\draw (157.5:1.75cm) to [out=40,in=-170,looseness=1] (67.5:1.75cm);
\node at (-79:0.8cm) {$\substack{n}$};
\draw (67.5:1.75cm) to [out=225,in=180,looseness=2.5] (-22.5:1.75cm);
\draw[red] (67.5:1.75cm) to [out=-60,in=60,looseness=1] (-67.5:1.75cm);
\end{tikzpicture}
\begin{tikzpicture}[node distance={13mm}] 
\draw (22.5:1.75cm) -- (67.5:1.75cm)  (112.5:1.75cm) -- (157.5:1.75cm) -- (-157.5:1.75cm) -- (-112.5:1.75cm) -- (-67.5:1.75cm) -- (-22.5:1.75cm) -- (22.5:1.75cm);
\draw[dashed] (67.5:1.75cm) -- (112.5:1.75cm);
\node[thick] (A) at (0,0) {$\substack{\bullet}$};
\draw (-67.5:1.75cm) -- (0,0)  (-112.5:1.75cm) -- (0,0);
\draw (-112.5:1.75cm) to [out=80,in=100,looseness=4.8] (-67.5:1.75cm);
\draw (-157.5:1.75cm) to [out=45,in=90,looseness=2.4] (-67.5:1.75cm);
\draw (-157.5:1.75cm) to [out=55,in=120,looseness=1.5] (-22.5:1.75cm);
\draw (157.5:1.75cm) to [out=15,in=110,looseness=1.2] (-22.5:1.75cm);
\draw (157.5:1.75cm) to [out=30,in=150,looseness=1.2] (22.5:1.75cm);
\draw (157.5:1.75cm) to [out=40,in=-170,looseness=1] (67.5:1.75cm);
\node at (-100:0.8cm) {$\substack{n}$};
\draw (67.5:1.75cm) to [out=225,in=180,looseness=2.5] (-22.5:1.75cm);
\draw[red] (67.5:1.75cm) to [out=-60,in=60,looseness=1] (-67.5:1.75cm);
\end{tikzpicture}
\caption{The number of crossings of $\{i,n-3\}$ and $\{i,-(n-3)\}$ is equal to 1. Here we choose a triangulation $T$ of $P^{\bullet}_n$, the black arc not in $T$ denotes $\{i,n-3\}$, and the red arc denotes $\{i,-(n-3)\}$.} \label{compatible of $[i,j]$ and $[i,-j]$}
\end{figure}
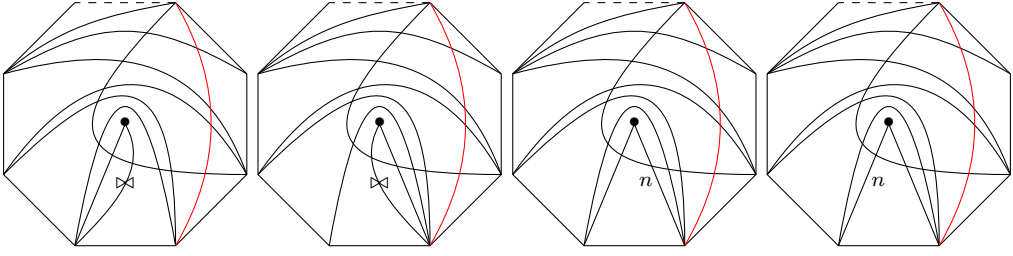

By Equation (\ref{Gamma1 equation2}), the formulas (\ref{exchange relation (1) Dn})--(\ref{exchange relation (4) Dn}) can be lifted to equations in $K_0(\mathscr{C}^{\leq \xi}_1)$. In particular, we have

\begin{gather}
\begin{align*}
& \Phi(\{j,-(j+1)\})=\begin{cases}
L(Y_{j-1,\xi(j-1)} Y_{j,\xi(j)-2} Y^{d_j}_{j+1,\xi(j+1)})  & \text{if $1\leq j \leq n-3$, $\xi(j-1)>\xi(j)$,} \\
L(Y_{j,\xi(j)-2} Y^{1-d_j}_{j+1,\xi(j+1)})  & \text{if $1\leq j \leq n-3$, $\xi(j-1)<\xi(j)$,} 
\end{cases}\\
& \Phi(\{n-2,-(n-1)\}) = L(Y^{\delta_{\xi(n-3),\xi(n-2)+1}}_{n-3,\xi(n-3)}Y_{n-2,\xi(n-2)-2}Y^{\delta_{\xi(n-1),\xi(n-2)+1}}_{n-1,\xi(n-1)}Y^{\delta_{\xi(n),\xi(n-2)+1}}_{n,\xi(n)}),  \\
& \Phi(\{n-1,-n\})=L(Y^{\delta_{\xi(n-2),\xi(n-1)+1}}_{n-2,\xi(n-2)}Y_{n-1,\xi(n-1)-2}), \\
& \Phi(\{n-1,n\})=L(Y^{\delta_{\xi(n-2),\xi(n)+1}}_{n-2,\xi(n-2)}Y_{n,\xi(n)-2}).
\end{align*}
\end{gather}

The root $\{i,-(j+1)\}$ for $1 \leq i<j \leq n-3$ is in fact a type $A$ positive root, and Equation (\ref{exchange relation (5) Dn}) is the same as Equation (\ref{exchange relation (2) in An}). Applying Proposition \ref{monoidal categorification AC1},  we have 
\begin{gather}
\begin{align*}
\Phi(\{i,-(j+1)\})=\begin{cases}
L(Y^{(1-d_i)}_{i-1,\xi(i-1)} Y(i,j) Y_{j,\xi(j)-2} Y^{d_j}_{j+1,\xi(j+1)}) & \text{if $\xi(i)>\xi(i+1)$ and $\xi(j-1)>\xi(j)$}, \\
L(Y^{(1-d_i)}_{i-1,\xi(i-1)} Y(i,j) Y^{(1-d_j)}_{j+1,\xi(j+1)}) & \text{if $\xi(i)>\xi(i+1)$ and $\xi(j-1)<\xi(j)$}, \\
L(Y^{d_i}_{i-1,\xi(i-1)} Y_{i,\xi(i)-2} Y(i,j) Y_{j,\xi(j)-2} Y^{d_j}_{j+1,\xi(j+1)})  & \text{if $\xi(i)<\xi(i+1)$ and $\xi(j-1)>\xi(j)$}, \\
L(Y^{d_i}_{i-1,\xi(i-1)} Y_{i,\xi(i)-2} Y(i,j) Y^{(1-d_j)}_{j+1,\xi(j+1)})  & \text{if $\xi(i)<\xi(i+1)$ and $\xi(j-1)<\xi(j)$}.
\end{cases}
\end{align*}
\end{gather} 

If $\xi(n-1)=\xi(n)$, we set $\overline{n-1}=\{n-1,n\}$, $Y_{\overline{n-1},\xi(\overline{n-1})}=Y_{n-1,\xi(n-1)}Y_{n,\xi(n)}$, and $d'_{n-2}=d_{n-2}$. If $|\xi(n)-\xi(n-1)|=2$, we set $\overline{n-1}=n-1$ if $\xi(n-1)>\xi(n-2)$, $\overline{n-1}=n$ if $\xi(n)>\xi(n-2)$, and $d'_{n-2}=1$. In this setting, Equations (\ref{exchange relation (6) Dn}) and (\ref{exchange relation (7) Dn}) are the same as Equation (\ref{exchange relation (2) in An}). By Proposition \ref{monoidal categorification AC1} again, for $1 \leq i \leq n-3$, 
\begin{gather}
\begin{align*}
& \Phi(\{i,-(n-1)\})=\begin{cases}
L(Y^{(1-d_i)}_{i-1,\xi(i-1)} Y(i,n-2) Y_{n-2,\xi(n-2)-2} Y^{d'_{n-2}}_{\overline{n-1},\xi(\overline{n-1})}) & \text{if $\xi(i)>\xi(i+1)$ and $\xi(n-3)>\xi(n-2)$}, \\
L(Y^{(1-d_i)}_{i-1,\xi(i-1)} Y(i,n-2) Y^{(1-d_{n-2})}_{\overline{n-1},\xi(\overline{n-1})}) & \text{if $\xi(i)>\xi(i+1)$ and $\xi(n-3)<\xi(n-2)$}, \\
L(Y^{d_i}_{i-1,\xi(i-1)} Y_{i,\xi(i)-2} Y(i,n-2) Y_{n-2,\xi(n-2)-2} Y^{d'_{n-2}}_{\overline{n-1},\xi(\overline{n-1})}) & \text{if $\xi(i)<\xi(i+1)$ and $\xi(n-3)>\xi(n-2)$}, \\
L(Y^{d_i}_{i-1,\xi(i-1)} Y_{i,\xi(i)-2} Y(i,n-2) Y^{(1-d_{n-2})}_{\overline{n-1},\xi(\overline{n-1})})  & \text{if $\xi(i)<\xi(i+1)$ and $\xi(n-3)<\xi(n-2)$}.
\end{cases}
\end{align*}
\end{gather}
Substitute $Y_{\overline{n-1},\xi(\overline{n-1})}$ and $d'_{n-2}$ with the relevant definitions, 
\begin{gather*}
\begin{aligned}
& \Phi(\{i,-(n-1)\})=\begin{cases}
L(Y^{(1-d_i)}_{i-1,\xi(i-1)} Y(i,n-2) Y^{1-d_{n-2}}_{n-2,\xi(n-2)-2})  & \text{if $\xi(i)>\xi(i+1)$ and $\xi(n-2)>\xi(n-1)=\xi(n)$}, \\
L(Y^{d_i}_{i-1,\xi(i-1)} Y_{i,\xi(i)-2} Y(i,n-2) Y^{1-d_{n-2}}_{n-2,\xi(n-2)-2})  & \text{if $\xi(i)<\xi(i+1)$ and $\xi(n-2)>\xi(n-1)=\xi(n)$}, \\
L(Y^{(1-d_i)}_{i-1,\xi(i-1)} Y(i,n) Y_{n,\xi(n)}) & \text{if $\xi(i)>\xi(i+1)$ and $\xi(n-2)<\xi(n-1)=\xi(n)$}, \\
L(Y^{d_i}_{i-1,\xi(i-1)} Y_{i,\xi(i)-2} Y(i,n) Y_{n,\xi(n)})  & \text{if $\xi(i)<\xi(i+1)$ and $\xi(n-2)<\xi(n-1)=\xi(n)$}, \\
L(Y^{(1-d_i)}_{i-1,\xi(i-1)} Y(i,n-2) Y_{n-2,\xi(n-2)-2} Y_{n,\xi(n)}) & \text{if $\xi(i)>\xi(i+1)$ and $\xi(n-3)=\xi(n)>\xi(n-2)>\xi(n-1)$},  \\
L(Y^{(1-d_i)}_{i-1,\xi(i-1)} Y(i,n-1) Y_{n,\xi(n)})  & \text{if $\xi(i)>\xi(i+1)$ and  $\xi(n)>\xi(n-2)>\xi(n-1)=\xi(n-3)$},  \\
L(Y^{d_i}_{i-1,\xi(i-1)} Y_{i,\xi(i)-2} Y(i,n-2) Y_{n-2,\xi(n-2)-2} Y_{n,\xi(n)})  & \text{if $\xi(i)<\xi(i+1)$ and $\xi(n-3)=\xi(n)>\xi(n-2)>\xi(n-1)$},  \\
L(Y^{d_i}_{i-1,\xi(i-1)} Y_{i,\xi(i)-2} Y(i,n-1) Y_{n,\xi(n)})  & \text{if $\xi(i)<\xi(i+1)$ and  $\xi(n)>\xi(n-2)>\xi(n-1)=\xi(n-3)$},  \\
L(Y^{(1-d_i)}_{i-1,\xi(i-1)} Y(i,n-2) Y_{n-2,\xi(n-2)-2} Y_{n-1,\xi(n-1)}) & \text{if $\xi(i)>\xi(i+1)$ and $\xi(n-3)=\xi(n-1)>\xi(n-2)>\xi(n)$}, \\
L(Y^{(1-d_i)}_{i-1,\xi(i-1)} Y(i,n-2) Y_{n-1,\xi(n-1)})  & \text{if $\xi(i)>\xi(i+1)$ and $\xi(n-1)>\xi(n-2)>\xi(n-3)=\xi(n)$},  \\
L(Y^{d_i}_{i-1,\xi(i-1)} Y_{i,\xi(i)-2} Y(i,n-2) Y_{n-2,\xi(n-2)-2} Y_{n-1,\xi(n-1)})  & \text{if $\xi(i)<\xi(i+1)$ and $\xi(n-3)=\xi(n-1)>\xi(n-2)>\xi(n)$},  \\
[L(Y^{d_i}_{i-1,\xi(i-1)} Y_{i,\xi(i)-2} Y(i,n-2) Y_{n-1,\xi(n-1)})  & \text{if $\xi(i)<\xi(i+1)$ and $\xi(n-1)>\xi(n-2)>\xi(n-3)=\xi(n)$}. 
\end{cases}
\end{aligned}
\end{gather*}

By Equation (\ref{Gamma1 equation2}), Equation (\ref{exchange relation (8) Dn}) can be lifted into the following form: if $i\leq n-2$, $\xi(n-2)>\xi(n-1)$, 
then $
\Phi(\{i,-n\}) [L(Y_{n-1,\xi(n-1)})] $ is equal to 
\begin{gather}
\begin{align*}\begin{cases}
\Phi(\{i,-(n-2)\}) [L(Y_{n,\xi(n)})]^{\delta_{\xi(n-1),\xi(n)-2}} f_{n-1}+\text{other terms} & \text{if $\xi(n-3)=\xi(n-1)$, $i\leq n-2$}, \\
 [L(Y^{(1-d_i)}_{i-1,\xi(i-1)})]  [L(Y^{\delta_{\xi(n-1),\xi(n)-2}}_{n,\xi(n)})] f_{n-1}+\text{other terms} & \text{if $\xi(n-3)\neq\xi(n-1)$, $(n-1)_\bullet\leq i\leq n-2$}, \\
\Phi(\{i,-(n-1)_\bullet\}) [L(Y^{\delta_{\xi(n-1),\xi(n)-2}}_{n,\xi(n)})] f_{n-1}+\text{other terms} & \text{if $\xi(n-3)\neq\xi(n-1)$, $i<(n-1)_\bullet$}, 
\end{cases}
\end{align*}
\end{gather}
and if $i\leq n-2$,  $\xi(n-2)<\xi(n-1)$, then
\begin{align*}
\Phi(\{i,-n\}) [L(Y_{n-1,\xi(n-1)})] = \Phi(\{i,-(n-1)\}) [L(Y^{\delta_{\xi(n-1),\xi(n)-2}}_{n,\xi(n)})] +\text{other terms}.
\end{align*}
The terms in the right hand side of the above formulas have been defined earlier.  By Equation (\ref{Gamma1 equation2}), for $1\leq i \leq n-2$, we obtain that 
$\Phi(\{i,-n\})$ is equal to 
\begin{gather*}
\begin{cases}
L(Y^{(1-d_i)}_{i-1,\xi(i-1)} Y(i,n-1) Y_{n-1,\xi(n-1)-2}) & \text{if $\xi(i)>\xi(i+1)$ and $\xi(n-2)>\xi(n-1)=\xi(n)$}, \\
L(Y^{d_i}_{i-1,\xi(i-1)} Y_{i,\xi(i)-2} Y(i,n-1) Y_{n-1,\xi(n-1)-2})  & \text{if $\xi(i)<\xi(i+1)$ and $\xi(n-2)>\xi(n-1)=\xi(n)$}, \\
L(Y^{(1-d_i)}_{i-1,\xi(i-1)} Y(i,n-1) Y_{n,\xi(n)}) & \text{if $\xi(i)>\xi(i+1)$ and $\xi(n-2)<\xi(n-1)=\xi(n)$}, \\
L(Y^{d_i}_{i-1,\xi(i-1)} Y_{i,\xi(i)-2} Y(i,n-1) Y_{n,\xi(n)})  & \text{if $\xi(i)<\xi(i+1)$ and $\xi(n-2)<\xi(n-1)=\xi(n)$},  \\
L(Y^{(1-d_i)}_{i-1,\xi(i-1)} Y(i,n-1) Y_{n-1,\xi(n-1)-2} Y_{n,\xi(n)}) & \text{if $\xi(i)>\xi(i+1)$ and $\xi(n-3)=\xi(n)>\xi(n-2)>\xi(n-1)$},  \\
L(Y^{(1-d_i)}_{i-1,\xi(i-1)} Y(i,n-2) Y_{n-2,\xi(n-2)} Y_{n-1,\xi(n-1)-2} Y_{n,\xi(n)})  & \text{if $\xi(i)>\xi(i+1)$ and  $\xi(n)>\xi(n-2)>\xi(n-1)=\xi(n-3)$},  \\
L(Y^{d_i}_{i-1,\xi(i-1)} Y_{i,\xi(i)-2} Y(i,n-1) Y_{n-1,\xi(n-1)-2} Y_{n,\xi(n)})  & \text{if $\xi(i)<\xi(i+1)$ and $\xi(n-3)=\xi(n)>\xi(n-2)>\xi(n-1)$}, \\
L(Y^{d_i}_{i-1,\xi(i-1)} Y_{i,\xi(i)-2} Y(i,n-2) Y_{n-2,\xi(n-2)} Y_{n-1,\xi(n-1)-2} Y_{n,\xi(n)})  & \text{if $\xi(i)<\xi(i+1)$ and  $\xi(n)>\xi(n-2)>\xi(n-1)=\xi(n-3)$}, \\
L(Y^{(1-d_i)}_{i-1,\xi(i-1)} Y(i,n-2) Y_{n-2,\xi(n-2)-2}) & \text{if $\xi(i)>\xi(i+1)$ and $\xi(n-3)=\xi(n-1)>\xi(n-2)>\xi(n)$}, \\
L(Y^{(1-d_i)}_{i-1,\xi(i-1)} Y(i,n-2))  & \text{if $\xi(i)>\xi(i+1)$ and $\xi(n-1)>\xi(n-2)>\xi(n-3)=\xi(n)$}, \\
L(Y^{d_i}_{i-1,\xi(i-1)} Y_{i,\xi(i)-2} Y(i,n-2) Y_{n-2,\xi(n-2)-2})  & \text{if $\xi(i)<\xi(i+1)$ and $\xi(n-3)=\xi(n-1)>\xi(n-2)>\xi(n)$}, \\
L(Y^{d_i}_{i-1,\xi(i-1)} Y_{i,\xi(i)-2} Y(i,n-2))   & \text{if $\xi(i)<\xi(i+1)$ and $\xi(n-1)>\xi(n-2)>\xi(n-3)=\xi(n)$}.
\end{cases} 
\end{gather*} 

Equations (\ref{exchange relation (8) Dn}) and (\ref{exchange relation (9) Dn}) are obtained from each other by exchanging $\xi(n)$ with $\xi(n-1)$, and $x[-\alpha_{n}]$ with $x[-\alpha_{n-1}]$. As a result, by exchanging $\xi(n)$ with $\xi(n-1)$, and $Y_{n-1,a}$ with $Y_{n,a}$ for some $a\in \mathbb{C}^*$, we obtain for $1\leq i \leq n-2$, 
that $\Phi(\{i,n\})$ is equal to 
\begin{gather*}
\begin{cases}
L(Y^{(1-d_i)}_{i-1,\xi(i-1)} Y(i,n-1) Y_{n,\xi(n)-2}) & \text{if $\xi(i)>\xi(i+1)$ and $\xi(n-2)>\xi(n-1)=\xi(n)$}, \\
L(Y^{d_i}_{i-1,\xi(i-1)} Y_{i,\xi(i)-2} Y(i,n-1) Y_{n,\xi(n)-2})  & \text{if $\xi(i)<\xi(i+1)$ and $\xi(n-2)>\xi(n-1)=\xi(n)$}, \\
L(Y^{(1-d_i)}_{i-1,\xi(i-1)} Y(i,n-1) Y_{n-1,\xi(n-1)}) & \text{if $\xi(i)>\xi(i+1)$ and $\xi(n-2)<\xi(n-1)=\xi(n)$}, \\
L(Y^{d_i}_{i-1,\xi(i-1)} Y_{i,\xi(i)-2} Y(i,n-1) Y_{n-1,\xi(n-1)})  & \text{if $\xi(i)<\xi(i+1)$ and $\xi(n-2)<\xi(n-1)=\xi(n)$},  \\
L(Y^{(1-d_i)}_{i-1,\xi(i-1)} Y(i,n-2) Y_{n-2,\xi(n-2)-2}) & \text{if $\xi(i)>\xi(i+1)$ and $\xi(n-3)=\xi(n)>\xi(n-2)>\xi(n-1)$}, \\
L(Y^{(1-d_i)}_{i-1,\xi(i-1)} Y(i,n-2))  & \text{if $\xi(i)>\xi(i+1)$ and  $\xi(n)>\xi(n-2)>\xi(n-1)=\xi(n-3)$}, \\
L(Y^{d_i}_{i-1,\xi(i-1)} Y_{i,\xi(i)-2} Y(i,n-2) Y_{n-2,\xi(n-2)-2})  & \text{if $\xi(i)<\xi(i+1)$ and $\xi(n-3)=\xi(n)>\xi(n-2)>\xi(n-1)$}, \\
L(Y^{d_i}_{i-1,\xi(i-1)} Y_{i,\xi(i)-2} Y(i,n-2))  & \text{if $\xi(i)<\xi(i+1)$ and  $\xi(n)>\xi(n-2)>\xi(n-1)=\xi(n-3)$}, \\
L(Y^{(1-d_i)}_{i-1,\xi(i-1)} Y(i,n-2) Y_{n-1,\xi(n-1)} Y_{n,\xi(n)-2}) & \text{if $\xi(i)>\xi(i+1)$ and $\xi(n-3)=\xi(n-1)>\xi(n-2)>\xi(n)$}, \\
L(Y^{(1-d_i)}_{i-1,\xi(i-1)} Y(i,n-2) Y_{n-2,\xi(n-2)} Y_{n-1,\xi(n-1)} Y_{n,\xi(n)-2})  & \text{if $\xi(i)>\xi(i+1)$ and $\xi(n-1)>\xi(n-2)>\xi(n-3)=\xi(n)$}, \\
L(Y^{d_i}_{i-1,\xi(i-1)} Y_{i,\xi(i)-2} Y(i,n-2) Y_{n-1,\xi(n-1)} Y_{n,\xi(n)-2})  & \text{if $\xi(i)<\xi(i+1)$ and $\xi(n-3)=\xi(n-1)>\xi(n-2)>\xi(n)$}, \\
L(Y^{d_i}_{i-1,\xi(i-1)} Y_{i,\xi(i)-2} Y(i,n-2) Y_{n-2,\xi(n-2)} Y_{n-1,\xi(n-1)} Y_{n,\xi(n)-2})  & \text{if $\xi(i)<\xi(i+1)$ and $\xi(n-1)>\xi(n-2)>\xi(n-3)=\xi(n)$}.
\end{cases}
\end{gather*}

For $1\leq i < n-1$, by  Equation (\ref{Gamma1 equation2}), Equations (\ref{exchange relation (10) Dn}) and (\ref{exchange relation (11) Dn}), we have the following formulas:
\begin{gather}
\begin{align*}
& \Phi(\{i,n-1\}) \Phi(\{i,-(n-1)\})=\begin{cases}
\Phi(\{i,-n\}) \Phi(\{i,n\}) f_{n-2} + \text{other terms} & \text{if $\xi(n-3)>\xi(n-2)>\xi(n-1)=\xi(n)$ or } \\
& \text{if $\xi(n-3)<\xi(n-2)>\xi(n-1)=\xi(n)$, $i=n-2$}, \\
\Phi(\{i,-n\}) \Phi(\{i,n\}) + \text{other terms} & \text{if $\xi(n-3)<\xi(n-2)>\xi(n-1)=\xi(n)$, $i\leq n-3$ or} \\
& \text{if $\xi(n-2)<\xi(n-1)=\xi(n)$}.
\end{cases} \\
& \Phi(\{i,n-1\}) [L(Y_{n-1,\xi(n-1)})]= \Phi(\{i,n\}) + \text{other terms} \quad \text{ if $\xi(n-1)-\xi(n)=2$}, \\
& \Phi(\{i,n-1\}) [L(Y_{n,\xi(n)})]= \Phi(\{i,-n\}) + \text{other terms} \quad \text{ if $\xi(n)-\xi(n-1)=2$}.
\end{align*}
\end{gather}
By the previous argument, the images $\Phi(\{i,-n\})$ and $\Phi(\{i,n\})$ have been defined. As a conclusion, we have $\Phi(\{i,n-1\})$ is equal to
\begin{gather}\label{the map Phi(i,n-1)}
\begin{cases}
L(Y^{(1-d_i)}_{i-1,\xi(i-1)} Y(i,n) Y_{n-2,\xi(n-2)} Y_{n,\xi(n)-2})  & \text{if $\xi(i)>\xi(i+1)$ and $\xi(n-2)>\xi(n-1)=\xi(n)$}, \\
L(Y^{d_i}_{i-1,\xi(i-1)} Y_{i,\xi(i)-2} Y(i,n) Y_{n-2,\xi(n-2)} Y_{n,\xi(n)-2})  & \text{if $\xi(i)<\xi(i+1)$ and  $\xi(n-2)>\xi(n-1)=\xi(n)$},  \\
L(Y^{(1-d_i)}_{i-1,\xi(i-1)} Y(i,n-2) Y^{d_{n-2}}_{n-2,\xi(n-2)-2}) & \text{if $\xi(i)>\xi(i+1)$ and $\xi(n-2)<\xi(n-1)=\xi(n)$},  \\
L(Y^{d_i}_{i-1,\xi(i-1)} Y_{i,\xi(i)-2} Y(i,n-2) Y^{d_{n-2}}_{n-2,\xi(n-2)-2})  & \text{if $\xi(i)<\xi(i+1)$ and $\xi(n-2)<\xi(n-1)=\xi(n)$}, \\
L(Y^{(1-d_i)}_{i-1,\xi(i-1)} Y(i,n-2) Y_{n-1,\xi(n-1)-2}) & \text{if $\xi(i)>\xi(i+1)$ and $\xi(n-3)=\xi(n)>\xi(n-2)>\xi(n-1)$}, \\
L(Y^{(1-d_i)}_{i-1,\xi(i-1)} Y(i,n-2) Y_{n-2,\xi(n-2)} Y_{n-1,\xi(n-1)-2})  & \text{if $\xi(i)>\xi(i+1)$ and  $\xi(n)>\xi(n-2)>\xi(n-1)=\xi(n-3)$},  \\
L(Y^{d_i}_{i-1,\xi(i-1)} Y_{i,\xi(i)-2} Y(i,n-2) Y_{n-1,\xi(n-1)-2})  & \text{if $\xi(i)<\xi(i+1)$ and $\xi(n-3)=\xi(n)>\xi(n-2)>\xi(n-1)$}, \\
L(Y^{d_i}_{i-1,\xi(i-1)} Y_{i,\xi(i)-2} Y(i,n-2) Y_{n-2,\xi(n-2)} Y_{n-1,\xi(n-1)-2})  & \text{if $\xi(i)<\xi(i+1)$ and  $\xi(n)>\xi(n-2)>\xi(n-1)=\xi(n-3)$}, \\
L(Y^{(1-d_i)}_{i-1,\xi(i-1)} Y(i,n-2) Y_{n,\xi(n)-2}) & \text{if $\xi(i)>\xi(i+1)$ and $\xi(n-3)=\xi(n-1)>\xi(n-2)>\xi(n)$},  \\
L(Y^{(1-d_i)}_{i-1,\xi(i-1)} Y(i,n-2) Y_{n-2,\xi(n-2)} Y_{n,\xi(n)-2}) & \text{if $\xi(i)>\xi(i+1)$ and $\xi(n-1)>\xi(n-2)>\xi(n-3)=\xi(n)$}, \\
L(Y^{d_i}_{i-1,\xi(i-1)} Y_{i,\xi(i)-2} Y(i,n-2) Y_{n,\xi(n)-2}) & \text{if $\xi(i)<\xi(i+1)$ and $\xi(n-3)=\xi(n-1)>\xi(n-2)>\xi(n)$},  \\
L(Y^{d_i}_{i-1,\xi(i-1)} Y_{i,\xi(i)-2} Y(i,n-2) Y_{n-2,\xi(n-2)} Y_{n,\xi(n)-2}) & \text{if $\xi(i)<\xi(i+1)$ and $\xi(n-1)>\xi(n-2)>\xi(n-3)=\xi(n)$}.
\end{cases}
\end{gather}

By Equation (\ref{Gamma1 equation2}), and Equations (\ref{exchange relation (12) Dn}) and (\ref{exchange relation (13) Dn}), we have, for  $\xi(n-1)=\xi(n)$, $1\leq i < j \leq n-2$, 
\begin{gather}
\begin{align*}
& \Phi(\{i,j\}) \Phi(\{i,-j\})=\begin{cases}
f^{1-d_{j-1}}_{j-1} \Phi(\{i,-n\}) \Phi(\{i,n\}) + \text{other terms} & \text{if $\xi(j-1)>\xi(j)$}, \\
f^{1-d_j}_{j} \Phi(\{i,-n\}) \Phi(\{i,n\}) + \text{other terms} & \text{if $\xi(j-1)<\xi(j)$},
\end{cases}
\end{align*}
\end{gather}
and for $|\xi(n-1)-\xi(n)|=2$, $1\leq i < j \leq n-2$, 
\begin{gather}
\begin{align*}
& \Phi(\{i,j\}) \Phi(\{i,-j\})=\begin{cases}
f^{1-d_{j-1}}_{j-1} \Phi(\{i,-(n-1)\}) \Phi(\{i,n-1\})  + \text{other terms} & \text{if $\xi(j-1)>\xi(j)$}, \\
f^{1-d_j}_{j} \Phi(\{i,-(n-1)\}) \Phi(\{i,n-1\}) + \text{other terms} & \text{if $\xi(j-1)<\xi(j)$}.
\end{cases}
\end{align*}
\end{gather}
The images $\Phi(\{i,-n\})$, $\Phi(\{i,n\})$, $\Phi(\{i,-(n-1)\})$, $\Phi(\{i,n-1\})$, and $\Phi(\{i,-j\})$ have been defined. So we have 
$\Phi(\{i,j\})$ is equal to 
\begin{gather}\label{the map Phi(i,j)}
\begin{cases}
L(Y^{(1-d_i)}_{i-1,\xi(i-1)} Y(i,n] Y[j,n-2] Y_{j-1,\xi(j-1)})  & \text{if $\xi(i)>\xi(i+1)$, $\xi(j-1)>\xi(j)$, and $\xi(n-1)=\xi(n)$}, \\
L(Y^{(1-d_i)}_{i-1,\xi(i-1)} Y(i,n] Y(j,n-2] Y^{1-d_j}_{j,\xi(j)-2})  & \text{if $\xi(i)>\xi(i+1)$, $\xi(j-1)<\xi(j)$, and $\xi(n-1)=\xi(n)$}, \\
L(Y^{d_i}_{i-1,\xi(i-1)} Y_{i,\xi(i)-2} Y(i,n]  Y[j,n-2] Y_{j-1,\xi(j-1)})  & \text{if $\xi(i)<\xi(i+1)$, $\xi(j-1)>\xi(j)$, and $\xi(n-1)=\xi(n)$},  \\
L(Y^{d_i}_{i-1,\xi(i-1)} Y_{i,\xi(i)-2} Y(i,n] Y(j,n-2] Y^{1-d_j}_{j,\xi(j)-2})  & \text{if $\xi(i)<\xi(i+1)$, $\xi(j-1)<\xi(j)$, and $\xi(n-1)=\xi(n)$}, \\
L(Y^{(1-d_i)}_{i-1,\xi(i-1)} Y(i,n] Y_{n-2,\xi(n-2)-2} Y[j,n-2) Y_{j-1,\xi(j-1)})  & \text{if $\xi(i)>\xi(i+1)$, $\xi(j-1)>\xi(j)$, $\xi(n-1) \neq \xi(n)$, and $\xi(n-3)>\xi(n-2)$}, \\
L(Y^{(1-d_i)}_{i-1,\xi(i-1)} Y(i,n] Y_{n-2,\xi(n-2)-2} Y(j,n-2) Y^{1-d_j}_{j,\xi(j)-2})  & \text{if $\xi(i)>\xi(i+1)$, $\xi(j-1)<\xi(j)$, $\xi(n-1)\neq\xi(n)$, and $\xi(n-3)>\xi(n-2)$}, \\
L(Y^{(1-d_i)}_{i-1,\xi(i-1)} Y(i,n] Y_{n-2,\xi(n-2)} Y[j,n-2) Y_{j-1,\xi(j-1)})  & \text{if $\xi(i)>\xi(i+1)$, $\xi(j-1)>\xi(j)$, $\xi(n-1) \neq \xi(n)$, and $\xi(n-3)<\xi(n-2)$}, \\
L(Y^{(1-d_i)}_{i-1,\xi(i-1)} Y(i,n] Y_{n-2,\xi(n-2)} Y(j,n-2) Y^{1-d_j}_{j,\xi(j)-2}) & \text{if $\xi(i)>\xi(i+1)$, $\xi(j-1)<\xi(j)$, $\xi(n-1)\neq\xi(n)$, and $\xi(n-3)<\xi(n-2)$}, \\
L(Y^{d_i}_{i-1,\xi(i-1)} Y_{i,\xi(i)-2} Y(i,n] Y_{n-2,\xi(n-2)-2} Y[j,n-2) Y_{j-1,\xi(j-1)})  & \text{if $\xi(i)<\xi(i+1)$, $\xi(j-1)>\xi(j)$, $\xi(n-1)\neq\xi(n)$, and $\xi(n-3)>\xi(n-2)$},  \\
L(Y^{d_i}_{i-1,\xi(i-1)} Y_{i,\xi(i)-2} Y(i,n] Y_{n-2,\xi(n-2)-2} Y(j,n-2) Y^{1-d_j}_{j,\xi(j)-2})  & \text{if $\xi(i)<\xi(i+1)$, $\xi(j-1)<\xi(j)$, $\xi(n-1)\neq\xi(n)$, and $\xi(n-3)>\xi(n-2)$}, \\
L(Y^{d_i}_{i-1,\xi(i-1)} Y_{i,\xi(i)-2} Y(i,n] Y_{n-2,\xi(n-2)} Y[j,n-2) Y_{j-1,\xi(j-1)})  & \text{if $\xi(i)<\xi(i+1)$, $\xi(j-1)>\xi(j)$, $\xi(n-1)\neq\xi(n)$, and $\xi(n-3)<\xi(n-2)$},  \\
L(Y^{d_i}_{i-1,\xi(i-1)} Y_{i,\xi(i)-2} Y(i,n] Y_{n-2,\xi(n-2)} Y(j,n-2) Y^{1-d_j}_{j,\xi(j)-2})  & \text{if $\xi(i)<\xi(i+1)$, $\xi(j-1)<\xi(j)$, $\xi(n-1)\neq\xi(n)$, and $\xi(n-3)<\xi(n-2)$}.
\end{cases}
\end{gather}
Here $Y_{i,p}=1$ for $i\not\in I$.

Analogous to the case of type $A$, we can use the Auslander-Reiten quivers of type $D$ to compute the Hernandez-Leclerc modules starting with the images of the injective modules $\Phi(I(i))$ and applying Corollary \ref{a corollary of main theorem2} until we reach the images of the projective modules $\Phi(P(i))$. Note however, that in type $D$, the meshes are more complicated than ones of type $A$.

\begin{theorem} \label{monoidal categorification DC1}
An Hernandez-Leclerc module of type $D_n$ is exactly the simple $U_{q}(\widehat{\mathfrak{g}})$-module with the highest $l$-weight monomial equal to 
one of the following forms:
\begin{align}  
& \begin{cases} 
Y_{i_1,a_1}Y_{i_2,a_2}\ldots Y_{i_k,a_k}, \\
Y_{i_1,a_1}Y_{i_2,a_2}\ldots Y_{i_k,a_k} Y_{n-1,a_{k+1}},  \\
Y_{i_1,a_1}Y_{i_2,a_2}\ldots Y_{i_k,a_k} Y_{n,a_{k+1}},  \\
Y_{i_1,a_1}Y_{i_2,a_2}\ldots Y_{i_k,a_k} Y_{n-1,a_{k+1}} Y_{n,a_{k+1}},  
\end{cases}  \label{Dnfom1}  \\
& \begin{cases} 
Y_{i_1,a_1} \ldots Y_{i_\ell,a_\ell} Y^2_{i_{\ell+1},a_{\ell+1}} \ldots Y^2_{i_k,a_k}  Y_{n-1,a_{k+1}} Y_{n,a_{k+1}}, \\
Y_{i_1,a_1} \ldots Y_{i_\ell,a_\ell} Y^2_{i_{\ell+1},a_{\ell+1}} \ldots Y^2_{i_k,a_k}  Y_{n-1,a_{k+1}} Y_{n,a_{k+1}} Y_{i_{\ell'},a_{\ell'}},  \\
\end{cases} \label{Dnfom2}  \\
& \begin{cases}
Y_{i_1,a_1} \ldots Y_{i_\ell,a_\ell} Y^2_{i_{\ell+1},a_{\ell+1}} \ldots Y^2_{i_k,a_k} Y_{n-2,a_{k+1}} Y_{n-1,a_{k+1}\pm 3} Y_{n,a_{k+1} \mp 1}, \\
Y_{i_1,a_1} \ldots Y_{i_\ell,a_\ell} Y^2_{i_{\ell+1},a_{\ell+1}} \ldots Y^2_{i_k,a_k} Y_{n-2,a_{k+1}} Y_{n-1,a_{k+1}\pm 3} Y_{n,a_{k+1} \mp 1} Y_{i_{\ell'},a_{\ell'}}, \\
Y_{i_1,a_1} \ldots Y_{i_\ell,a_\ell} Y^2_{i_{\ell+1},a_{\ell+1}} \ldots Y^2_{i_k,a_k} Y_{n-2,a_{k+1}} Y_{n,a_{k+1}\pm3} Y_{n-1,a_{k}\mp1}, \\
Y_{i_1,a_1} \ldots Y_{i_\ell,a_\ell} Y^2_{i_{\ell+1},a_{\ell+1}} \ldots Y^2_{i_k,a_k} Y_{n-2,a_{k+1}} Y_{n,a_{k+1}\pm3} Y_{n-1,a_{k+1}\mp1} Y_{i_{\ell'},a_{\ell'}}, 
\end{cases}   \label{Dnfom3}
\end{align}
where $k\in\mathbb{Z}_{\geq 1}$, $1\leq i_1<i_2<\cdots<i_k \leq n-2$ in (\ref{Dnfom1}) and (\ref{Dnfom2}), and $1\leq i_1<i_2<\cdots<i_k < n-2$ in (\ref{Dnfom3}), $a_j\in \mathbb{Z}$ for $j=1,2,\ldots,k$ satisfy 
\begin{itemize}
\item[(1)] $(a_{j}-a_{j-1})(a_{j+1}-a_{j})<0$ for $2\leq j\leq k$,
\item[(2)] $|a_{j}-a_{j-1}|=i_{j}-i_{j-1}+2$ for $2\leq j\leq k$,
\item[(3)] $|a_{k+1}-a_k|=n-i_k+1$ in (\ref{Dnfom1}) and (\ref{Dnfom2}), $|a_{k+1}-a_k|=n-i_k$ in (\ref{Dnfom3}), 
\item[(4)] there exists $k \geq \ell\geq 1$ such that $i_\ell < i_{\ell'} < i_{\ell+1}$ (we have the convention $i_{k+1}=n-1$), $|a_{\ell'}-a_\ell|=i_{\ell'}-i_\ell$, and $|a_{\ell+1}-a_{\ell'}|=i_{\ell+1}-i_{\ell'}+2$ in (\ref{Dnfom2}), 
\item[(5)] there exists $k \geq \ell\geq 1$ such that $i_\ell < i_{\ell'} < i_{\ell+1}$ (we have the convention $i_{k+1}=n-2$), $|a_{\ell'}-a_\ell|=i_{\ell'}-i_\ell$, and $|a_{\ell+1}-a_{\ell'}|=i_{\ell+1}-i_{\ell'}+2$ in (\ref{Dnfom3}).
\end{itemize}
\end{theorem}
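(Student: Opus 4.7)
The plan is to mimic the proof of Theorem~\ref{Hernandez-Leclerc module of type A} by combining two ingredients: (i) the explicit case-by-case formulas for $\Phi(\beta)$ derived earlier in this subsection at every positive root $\beta$ of $D_n$, and (ii) the freedom to vary the height function $\xi$ so that any prescribed monomial on the list can be realized as $\textbf{hw}(\Phi(\beta))$ for a suitable choice of $\xi$ and $\beta$. The forward direction (every Hernandez-Leclerc module has one of the listed highest $\ell$-weight monomials) amounts to reading off $\textbf{hw}(\Phi(\beta))$ from the tables derived above and checking that the result fits one of the shapes (\ref{Dnfom1})--(\ref{Dnfom3}); the backward direction consists in constructing, for each monomial $m$ on the list, a height function $\xi$ and a positive root $\beta$ so that $\Phi(\beta)=L(m)$.

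For the forward direction I will organize the positive roots into three groups. The ``type $A$ roots'' $\{i,-(j+1)\}$ with $i\le j\le n-3$, together with the boundary roots $\{i,\pm(n-1)\}$ and $\{i,\pm n\}$, produce monomials of the form (\ref{Dnfom1}): the displayed expressions for $\Phi$ are products of distinct $Y_{i_p,a_p}$ with the same adjacency rules as in type $A$, possibly capped at the end by one or both of $Y_{n-1,*}, Y_{n,*}$, and conditions (1)--(3) of the theorem match exactly the constraints $|a_{p+1}-a_p|=i_{p+1}-i_p+2$ forced by the piecewise linearity of $\xi$. The roots $\{i,j\}$ with $1\le i<j\le n-2$ produce the doubled factors $Y^2_{i_{\ell+1},a_{\ell+1}}\cdots Y^2_{i_k,a_k}$ because the support of $\{i,j\}$ in the Dynkin diagram has multiplicity two from vertex $j$ to vertex $n-2$: when $\xi(n-1)=\xi(n)$ the two spin factors $Y_{n-1,*}Y_{n,*}$ share a common parameter, giving (\ref{Dnfom2}); when $|\xi(n-1)-\xi(n)|=2$ they separate by four units and combine with $Y_{n-2,*}$ into the shifted pattern of (\ref{Dnfom3}). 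The extra factor $Y_{i_{\ell'},a_{\ell'}}$ encodes the contribution of the ``return leg'' of the support of $\{i,j\}$ from vertex $n-2$ back down to vertex $j$.

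For the backward direction, given a monomial $m$ of one of the listed shapes, I will construct $\xi$ explicitly by fixing $\xi(i_p)$ to be the unique value compatible with $a_p$ and the prescribed source/sink pattern on each subinterval $[i_p,i_{p+1}]$ of the Dynkin diagram (slope $+1$ or $-1$ according to the sign of $a_{p+1}-a_p$), and extending $\xi$ to the branching vertices $n-1,n$ so that $\xi(n-1)=\xi(n)$ in cases (\ref{Dnfom1})--(\ref{Dnfom2}) and $|\xi(n-1)-\xi(n)|=2$ in case (\ref{Dnfom3}). The target root is then $\{i_1,-i_k\}$ (or one of the boundary variants $\{i_1,\pm(n-1)\}$, $\{i_1,\pm n\}$) in case (\ref{Dnfom1}), and $\{i_1,i_k\}$ (with endpoint adjustments dictated by the relative heights near vertices $n-2,n-1,n$) in cases (\ref{Dnfom2}) and (\ref{Dnfom3}); the auxiliary index $i_{\ell'}$ labels exactly where the return leg crosses through a source or sink of $\xi$. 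Plugging these choices into the corresponding formula recovers $m$, and uniqueness of the highest $\ell$-weight is guaranteed by the bijection of Theorem~\ref{monoidal categorification C1}.

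The main obstacle will be the verification of Form (\ref{Dnfom3}). The asymmetric condition $|\xi(n-1)-\xi(n)|=2$ splits into the eight subcases displayed in the formulas for $\Phi(\{i,n-1\})$, $\Phi(\{i,-n\})$, $\Phi(\{i,n\})$ and \eqref{the map Phi(i,j)}, and each subcase produces a different ``spin shift'' of $\pm 3$ or $\mp 1$ in the last three factors of the monomial. I will need to check that every choice of signs $\pm,\mp$ allowed by the theorem statement is attained by exactly one of these subcases, and that no monomial outside the list is produced. The bookkeeping is tedious but fully determined by the formulas already proved; no new conceptual input is needed beyond Theorem~\ref{main theorem2} and the bijection $\Phi$ established in Theorem~\ref{monoidal categorification C1}.
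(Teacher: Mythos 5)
Your proposal matches the paper's proof: the forward direction reads each monomial off the earlier explicit formulas for $\Phi$ at the type-$A$ roots, the boundary roots $\{i,\pm(n-1)\}$, $\{i,\pm n\}$, and the long roots $\{i,j\}$; and the backward direction builds a piecewise-linear height function $\xi$ together with a root realizing the monomial, with the dichotomy $\xi(n-1)=\xi(n)$ versus $|\xi(n-1)-\xi(n)|=2$ producing (\ref{Dnfom2}) versus (\ref{Dnfom3}) exactly as you describe. One detail to correct before the bookkeeping is carried out: for monomials of shapes (\ref{Dnfom2}) and (\ref{Dnfom3}) the second index of the long root $\{i,j\}$ must sit where the doubled exponents begin, so the paper takes $j\in\{i_\ell,\,i_{\ell'},\,i_{\ell'}+1,\,i_{\ell+1}+1\}$ according to the signs of $a_1-a_2$ and $a_\ell-a_{\ell+1}$ and to whether the extra factor $Y_{i_{\ell'},a_{\ell'}}$ is present; taking the root to be $\{i_1,i_k\}$, even ``with endpoint adjustments near $n-2,n-1,n$,'' would place the boundary between single and doubled support at $i_k$ and produce the wrong monomial. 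With that fix the plan is identical to the paper's.
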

\begin{proof}
It follows from the definition of our $\Phi(\{i,j\})$ that for a fixed height function $\xi$, every Hernandez-Leclerc module has the required highest $l$-weight monomial. 

In the following, we show that given a monomial $m$ of types (\ref{Dnfom1})--(\ref{Dnfom3}), there exists an Hernandez-Leclerc module with the highest $l$-weight monomial $m$. 

The case for (\ref{Dnfom1}) follows from the treatment of type $A$, and additional technical requirements: the second and third monomials need to let $|\xi(n)-\xi(n-1)|=2$, and the fourth monomial needs to let $\xi(n)=\xi(n-1)$.

Next, we deal with the case (\ref{Dnfom2}). If $a_1>a_2$, $a_{\ell}>a_{\ell+1}$, and $\xi(n-1)=\xi(n)$, we choose a height function $\xi$ such that $\xi$ is linear in intervals $[1,i_2], [i_2,i_3], \ldots, [i_k,n]$, where $\xi(i_1)=a_{1}>\xi(i_2)=a_{1}-i_2+i_1$. For $i_1+1<i_2$ and $i_{\ell'}+1\leq n-2$, by formula (\ref{the map Phi(i,j)}), we have 
\begin{gather*}
\begin{aligned}
\Phi(\{i_1+1,i_\ell \}) & = L(Y^{(1-d_{i_1+1})}_{i_1,\xi(i_1)} Y(i_1+1,n] Y(i_\ell,n-2] Y^{1-d_{i_\ell}}_{i_\ell,\xi(i_\ell)-2}) \\
& = L(Y_{i_1,a_1} \ldots Y_{i_\ell,a_\ell} Y^2_{i_{\ell+1},a_{\ell+1}} \ldots Y^2_{i_k,a_k}  Y_{n-1,a_{k+1}} Y_{n,a_{k+1}}), \\
\Phi(\{i_1+1,i_{\ell'}+1\}) & =  L(Y^{(1-d_{i_1+1})}_{i_1,\xi(i_1)} Y(i_1+1,n] Y[i_{\ell'}+1,n-2] Y_{i_{\ell'},\xi(i_{\ell'})}) \\
& =  L(Y_{i_1,a_1} \ldots Y_{i_\ell,a_\ell} Y^2_{i_{\ell+1},a_{\ell+1}} \ldots Y^2_{i_k,a_k}  Y_{n-1,a_{k+1}} Y_{n,a_{k+1}} Y_{i_{\ell'},a_{\ell'}}).
\end{aligned}
\end{gather*}
If $i_1+1<i_2$ and $i_{\ell'}+1=n-1$ (implying $\ell=k$), by formula (\ref{the map Phi(i,n-1)}), we have
\begin{gather*}
\begin{aligned}
\Phi(\{i_1+1,n-1) & =  L(Y^{(1-d_{i_1+1})}_{i_1,\xi(i_1)} Y(i_1+1,n) Y_{n,\xi(n)-2} Y_{n-2,\xi(n-2)}) \\
& =  L(Y_{i_1,a_1} \ldots Y_{i_k,a_k} Y_{n-1,a_{k+1}} Y_{n,a_{k+1}} Y_{n-2,a_{\ell'}}).
\end{aligned}
\end{gather*}

For $i_1+1=i_2$ and $i_{\ell'}+1<n-2$, by formula (\ref{the map Phi(i,j)}), we have 
\begin{gather*}
\begin{aligned}
\Phi(\{i_2,i_\ell\}) & = L(Y^{d_{i_2}}_{i_1,\xi(i_1)} Y_{i_2,\xi(i_2)-2} Y(i_2,n] Y(i_\ell,n-2] Y^{1-d_{i_\ell}}_{i_\ell,\xi(i_\ell)-2})  \\
& = L(Y_{i_1,a_1} \ldots Y_{i_\ell,a_\ell} Y^2_{i_{\ell+1},a_{\ell+1}} \ldots Y^2_{i_k,a_k}  Y_{n-1,a_{k+1}} Y_{n,a_{k+1}}), \\ 
\Phi(\{i_2,i_{\ell'}+1\}) & = L(Y^{d_{i_2}}_{i_1,\xi(i_1)} Y_{i_2,\xi(i_2)-2} Y(i_2,n]  Y[i_{\ell'}+1,n-2] Y_{i_{\ell'},\xi(i_{\ell'})}) \\
& = L(Y_{i_1,a_1} \ldots Y_{i_\ell,a_\ell} Y^2_{i_{\ell+1},a_{\ell+1}} \ldots Y^2_{i_k,a_k}  Y_{n-1,a_{k+1}} Y_{n,a_{k+1}} Y_{i_{\ell'},a_{\ell'}}).
\end{aligned}
\end{gather*}
If $i_1+1=i_2$ and $i_{\ell'}+1=n-1$ (implying $\ell=k$), by formula (\ref{the map Phi(i,n-1)}), we have
\begin{gather*}
\begin{aligned}
\Phi(\{i_1+1,n-1) & =  L(Y^{d_{i_1+1}}_{i_1,\xi(i_1)} Y_{i_1+1,\xi(i_1+1)-2} Y(i_1+1,n) Y_{n,\xi(n)-2} Y_{n-2,\xi(n-2)}) \\
& =  L(Y_{i_1,a_1} \ldots Y_{i_k,a_k} Y_{n-1,a_{k+1}} Y_{n,a_{k+1}} Y_{n-2,a_{\ell'}}).
\end{aligned}
\end{gather*}

If $a_1>a_2$, $a_{\ell}<a_{\ell+1}$, and $\xi(n-1)=\xi(n)$, we choose a height function $\xi$ such that $\xi$ is linear in intervals $[1,i_2], [i_2,i_3], \ldots, [i_k,n]$, where $\xi(i_1)=a_{1}>\xi(i_2)=a_{1}-i_2+i_1$. For $i_1+1<i_2$ and $i_{\ell+1}+1\leq n-2$, by formula (\ref{the map Phi(i,j)}), we have    
\begin{gather*}
\begin{aligned}
\Phi(\{i_1+1,i_{\ell+1}+1\}) & = L(Y^{(1-d_{i_1+1})}_{i_1,\xi(i_1)} Y(i_1+1,n] Y[i_{\ell+1}+1,n-2] Y_{i_{\ell+1},\xi(i_{\ell+1})})  \\
& = L(Y_{i_1,a_1} \ldots Y_{i_\ell,a_\ell} Y^2_{i_{\ell+1},a_{\ell+1}} \ldots Y^2_{i_k,a_k}  Y_{n-1,a_{k+1}} Y_{n,a_{k+1}}), \\
\Phi(\{i_1+1,i_{\ell'}\}) & = L(Y^{(1-d_{i_1+1})}_{i_1,\xi(i_1)} Y(i_1+1,n] Y(i_{\ell'},n-2] Y^{1-d_{i_{\ell'}}}_{i_{\ell'},\xi(i_{\ell'})-2}) \\
& = L(Y_{i_1,a_1} \ldots Y_{i_\ell,a_\ell} Y^2_{i_{\ell+1},a_{\ell+1}} \ldots Y^2_{i_k,a_k}  Y_{n-1,a_{k+1}} Y_{n,a_{k+1}} Y_{i_{\ell'},a_{\ell'}}). 
\end{aligned}
\end{gather*}
For $i_1+1=i_2$ and $i_{\ell+1}+1\leq n-2$, by formula (\ref{the map Phi(i,j)}), we have    
\begin{gather*}
\begin{aligned}
\Phi(\{i_2,i_{\ell+1}+1\}) & = L(Y^{d_{i_2}}_{i_1,\xi(i_1)} Y_{i_2,\xi(i_2)-2} Y(i_2,n]  Y[i_{\ell+1}+1,n-2] Y_{i_{\ell+1},\xi(i_{\ell+1})}) \\
& = L(Y_{i_1,a_1} \ldots Y_{i_\ell,a_\ell} Y^2_{i_{\ell+1},a_{\ell+1}} \ldots Y^2_{i_k,a_k}  Y_{n-1,a_{k+1}} Y_{n,a_{k+1}}), \\
\Phi(\{i_2,i_{\ell'}\}) & = L(Y^{d_{i_2}}_{i_1,\xi(i_1)} Y_{i_2,\xi(i_2)-2} Y(i_2,n] Y(i_{\ell'},n-2] Y^{1-d_{i_{\ell'}}}_{i_{\ell'},\xi(i_{\ell'})-2}) \\
& = L(Y_{i_1,a_1} \ldots Y_{i_\ell,a_\ell} Y^2_{i_{\ell+1},a_{\ell+1}} \ldots Y^2_{i_k,a_k}  Y_{n-1,a_{k+1}} Y_{n,a_{k+1}} Y_{i_{\ell'},a_{\ell'}}).
\end{aligned}
\end{gather*}
For $i_1+1<i_2$ or $i_1+1=i_2$ and $i_{\ell+1}+1=n$ (implying $\ell=k$), $\Phi(\{i_1+1,n\})$ and $\Phi(\{i_2,n\})$ are the cases of type (\ref{Dnfom1}). For $i_1+1<i_2$ or $i_1+1=i_2$ and $i_{\ell+1}+1=n-1$ (implying $\ell+1=k$), by formula (\ref{the map Phi(i,n-1)}), 
\begin{gather*}
\begin{aligned}
\Phi(\{i_1+1,n-1\}) & = L(Y^{(1-d_{i_1+1})}_{i_1,\xi(i_1)} Y(i_1+1,n) Y_{n-2,\xi(n-2)} Y_{n,\xi(n)-2}) \\
& = Y_{i_1,a_1} \ldots Y_{i_\ell,a_\ell} Y^2_{n-2,\xi(n-2)} Y_{n-1,a_{k+1}} Y_{n,a_{k+1}}, \\
\Phi(\{i_2,n-1\}) & =  L(Y^{d_{i_2}}_{i_1,\xi(i_1)} Y_{i_2,\xi(i_2)-2} Y(i_2,n) Y_{n-2,\xi(n-2)} Y_{n,\xi(n)-2}) \\
& =  Y_{i_1,a_1} \ldots Y_{i_\ell,a_\ell} Y^2_{n-2,\xi(n-2)} Y_{n-1,a_{k+1}} Y_{n,a_{k+1}}.
\end{aligned}
\end{gather*}

If $a_1<a_2$, $a_{\ell}>a_{\ell+1}$, and $\xi(n-1)=\xi(n)$, then we choose a height function $\xi$ such that $\xi$ is linear in intervals $[1,i_2], [i_2,i_3], \ldots, [i_k,n]$, where $\xi(i_1)=a_{2}-i_2+i_1<\xi(i_2)=a_{2}$. For $i_{\ell+1}+1\leq n-2$, by formula (\ref{the map Phi(i,j)}), we have

\begin{gather*}
\begin{aligned}
\Phi(\{i_1,i_{\ell}\}) & = L(Y^{d_{i_1}}_{i_1-1,\xi(i_1-1)} Y_{i_1,\xi(i_1)-2} Y(i_1,n] Y(i_{\ell},n-2] Y^{1-d_{i_{\ell}}}_{i_{\ell},\xi(i_{\ell})-2}) \\
& = L(Y_{i_1,a_1} \ldots Y_{i_\ell,a_\ell} Y^2_{i_{\ell+1},a_{\ell+1}} \ldots Y^2_{i_k,a_k}  Y_{n-1,a_{k+1}} Y_{n,a_{k+1}}), \\
\Phi(\{i_1,i_{\ell'}+1\}) & = L(Y^{d_{i_1}}_{i_1-1,\xi(i_1-1)} Y_{i_1,\xi(i_1)-2} Y(i_1,n]  Y[i_{\ell'}+1,n-2] Y_{i_{\ell'},\xi(i_{\ell'})}) \\
& = L(Y_{i_1,a_1} \ldots Y_{i_\ell,a_\ell} Y^2_{i_{\ell+1},a_{\ell+1}} \ldots Y^2_{i_k,a_k}  Y_{n-1,a_{k+1}} Y_{n,a_{k+1}} Y_{i_{\ell'},a_{\ell'}}).
\end{aligned}
\end{gather*}
If $i_{\ell'}+1=n-1$ (implying $\ell=k$), by formula (\ref{the map Phi(i,n-1)}), we have
\begin{gather*}
\begin{aligned}
\Phi(\{i_1,n-1) & =  L(Y^{d_{i_1}}_{i_1-1,\xi(i_1-1)} Y_{i_1,\xi(i_1)-2} Y(i_1,n) Y_{n-2,\xi(n-2)} Y_{n,\xi(n)-2})  \\
& =  L(Y_{i_1,a_1} \ldots Y_{i_k,a_k} Y_{n-1,a_{k+1}} Y_{n,a_{k+1}} Y_{n-2,\xi(n-2)}).
\end{aligned}
\end{gather*}

If $a_1<a_2$, $a_{\ell}<a_{\ell+1}$, and $\xi(n-1)=\xi(n)$, then we choose a height function $\xi$ such that $\xi$ is linear in intervals $[1,i_2], \ldots, [i_k,n]$, where $\xi(i_1)=a_{2}-i_2+i_1<\xi(i_2)=a_{2}$. For $i_{\ell+1}+1\leq n-2$, by formula (\ref{the map Phi(i,j)}), we have  
\begin{gather*}
\begin{aligned}
\Phi(\{i_1,i_{\ell+1}+1\}) & = L(Y^{d_{i_1}}_{i_1-1,\xi(i_1-1)} Y_{i_1,\xi(i_1)-2} Y(i_1,n]  Y[i_{\ell+1}+1,n-2] Y_{i_{\ell+1},\xi(i_{\ell+1})}) \\
& = L(Y_{i_1,a_1} \ldots Y_{i_\ell,a_\ell} Y^2_{i_{\ell+1},a_{\ell+1}} \ldots Y^2_{i_k,a_k}  Y_{n-1,a_{k+1}} Y_{n,a_{k+1}}), \\
\Phi(\{i_1,i_{\ell'}\}) & = L(Y^{d_{i_1}}_{i_1-1,\xi(i_1-1)} Y_{i_1,\xi(i_1)-2} Y(i_1,n] Y(i_{\ell'},n-2] Y^{1-d_{i_{\ell'}}}_{i_{\ell'},\xi(i_{\ell'})-2}) \\
& = L(Y_{i_1,a_1} \ldots Y_{i_\ell,a_\ell} Y^2_{i_{\ell+1},a_{\ell+1}} \ldots Y^2_{i_k,a_k}  Y_{n-1,a_{k+1}} Y_{n,a_{k+1}} Y_{i_{\ell'},a_{\ell'}}).
\end{aligned}
\end{gather*}
For $i_{\ell+1}+1=n$ (implying $\ell=k$), $\Phi(\{i_1,n\})$ is the case of type (\ref{Dnfom1}). For $i_{\ell+1}+1=n-1$ (implying $\ell+1=k$), by formula (\ref{the map Phi(i,n-1)}), 
\begin{gather*}
\begin{aligned}
\Phi(\{i_1,n-1\}) & = L(Y^{d_{i_1}}_{i_1-1,\xi(i_1-1)} Y_{i_1,\xi(i_1)-2} Y(i_1,n) Y_{n-2,\xi(n-2)} Y_{n,\xi(n)-2})  \\
& =  Y_{i_1,a_1} \ldots Y_{i_\ell,a_\ell} Y^2_{n-2,\xi(n-2)} Y_{n-1,a_{k+1}} Y_{n,a_{k+1}}.
\end{aligned}
\end{gather*}

The differences between the monomials in (\ref{Dnfom2}) and the monomials in (\ref{Dnfom3}) are $|\xi(n)-\xi(n-1)|=0$ and $|\xi(n)-\xi(n-1)|=2$. In the proof of the existence of $\xi$ for (\ref{Dnfom2}), we let $\xi(n-1) \neq \xi(n)$, and choose the corresponding formula in (\ref{the map Phi(i,n-1)}) or (\ref{the map Phi(i,j)}). Then for a monomial $m$ in (\ref{Dnfom3}), we find a height function $\xi$ such that an Hernandez-Leclerc module has the highest $l$-weight monomial $m$.
\end{proof}
 
\begin{remark}
For a linear height function $\xi$ in type $D_n$, the Hernandez-Leclerc modules of the form
\[
L(Y_{i,\xi(i)}Y_{j,\xi(j)}Y_{n-1,\xi(n-1)-2}Y_{n,\xi(n)-2}), \,\,\,  \text{with $0\leq i < j \leq n-2$},
\] 
were introduced and studied in \cite{HL13}. They belong to the second case of (\ref{Dnfom2}) in Theorem~\ref{monoidal categorification DC1} if we let $k=1$.
\end{remark}

\subsection{Types $E_6$, $E_7$, and $E_8$}\label{sect 73}
The Dynkin diagram of type $E_6$ contains a copy of the Dynkin diagram of type $D_4$ and two copies of the Dynkin diagram of type $D_5$. There are $42=(36+6)$ cluster variables, among them there are 7 cluster variables which are not  of type $A$ or $D$. These are in one-to-one correspondence with the following positive roots: 
\begin{align} \label{E6positiveroot}
(\substack{ 1 \\ 11111}), 
(\substack{ 1 \\ 11211}),  
(\substack{ 1 \\ 12211}), 
(\substack{ 1 \\ 11221}), 
(\substack{ 1 \\ 12221}), 
(\substack{ 1 \\ 12321}),
(\substack{ 2 \\ 12321}).
\end{align}
An arbitrary orientation of $\gamma$ of type $E_6$ gives a type $E_6$ quiver. So the number of Dynkin quivers of type $E_6$ is $2^5=32$. Up to spectral parameter shift, the total number of the highest $l$-weight monomials of Hernandez-Leclerc modules of type $E_6$ which are not of type $A$ or $D$ is therefore equal to  $2^5\times 7=224$.
These are listed in Appendix~\ref{Hernandez-Leclerc modules of type E6}.

The Dynkin diagram of type $E_7$ contains some copies of the Dynkin diagram of type $D_4, D_5, D_6$ or $E_6$. There are $70=(63+7)$ cluster variables, among them there are 16 cluster variables which are not ones of the type $A$, $D$, or $E_6$. These are in one-to-one correspondence with the following positive roots:  
\begin{align} \label{E7positiveroot}
\begin{split}
& (\substack{\hskip -0.2cm 1 \\ 111111}),  (\substack{\hskip -0.2cm 1 \\ 112111}),  (\substack{\hskip -0.2cm 1 \\ 112211}), (\substack{\hskip -0.2cm 1 \\ 122111}),   (\substack{\hskip -0.2cm 1 \\ 112221}), (\substack{\hskip -0.2cm 1 \\ 122211}), (\substack{\hskip -0.2cm 1 \\ 122221}), (\substack{\hskip -0.2cm 1  \\ 123211}),   \\
& (\substack{\hskip -0.2cm 1 \\ 123221}), (\substack{\hskip -0.2cm 2 \\ 123211}), (\substack{\hskip -0.2cm 1 \\ 123321}), (\substack{\hskip -0.2cm 2 \\ 123221}), 
(\substack{\hskip -0.2cm 2 \\ 123321}), (\substack{\hskip -0.2cm 2 \\ 124321}),  (\substack{\hskip -0.2cm 2 \\ 134321}), (\substack{\hskip -0.2cm 2 \\ 234321}).
\end{split}
\end{align}
Up to spectral parameter shift, the total number of the highest $l$-weight monomials of Hernandez-Leclerc modules of type $E_7$ which are not ones of type $A$, $D$ or $E_6$ is $2^6\times 16=1024$. These are listed in Appendix~\ref{Hernandez-Leclerc modules of type E7}.

The Dynkin diagram of type $E_8$ contains some copies of the Dynkin diagram of type $D_4, D_5, D_6, D_7$ or $E_7$. There are $128=(120+8)$ cluster variables, among them there are 44 cluster variables which are not ones of the type $A$, $D$, or $E_7$. These are in one-to-one correspondence with the following positive roots: 
\begin{align} \label{E8positiveroot}
\begin{split}
& (\substack{\hskip -0.3cm 1 \\ 1111111}),  (\substack{\hskip -0.3cm 1 \\ 1121111}),  (\substack{\hskip -0.3cm 1 \\ 1221111}), (\substack{\hskip -0.3cm 1 \\ 1122111}),   (\substack{\hskip -0.3cm 1 \\ 1222111}), (\substack{\hskip -0.3cm 1 \\ 1122211}), (\substack{\hskip -0.3cm 1  \\ 1232111}),  (\substack{\hskip -0.3cm 1 \\ 1222211}),  (\substack{\hskip -0.3cm 1 \\ 1122221}), \\
& (\substack{\hskip -0.3cm 2 \\ 1232111}),  (\substack{\hskip -0.3cm 1 \\ 1232211}), (\substack{\hskip -0.3cm 1 \\ 1222221}), (\substack{\hskip -0.3cm 2 \\ 1232211}), (\substack{\hskip -0.3cm 1 \\ 1233211}),  (\substack{\hskip -0.3cm 1 \\ 1232221}), (\substack{\hskip -0.3cm 2 \\ 1233211}),  (\substack{\hskip -0.3cm 2 \\ 1232221}), (\substack{\hskip -0.3cm 1 \\ 1233221}),  \\
& (\substack{\hskip -0.3cm 2 \\ 1243211}),  (\substack{\hskip -0.3cm 2 \\ 1233221}),  (\substack{\hskip -0.3cm 1 \\ 1233321}),  (\substack{\hskip -0.3cm 2 \\ 1343211}), (\substack{\hskip -0.3cm 2 \\ 1243221}),  (\substack{\hskip -0.3cm 2 \\ 1233321}),  (\substack{\hskip -0.3cm 2 \\ 2343211}), (\substack{\hskip -0.3cm 2 \\ 1343221}), (\substack{\hskip -0.3cm 2 \\ 1243321}),   \\
& (\substack{\hskip -0.3cm 2 \\ 2343221}), (\substack{\hskip -0.3cm 2 \\ 1343321}), (\substack{\hskip -0.3cm 2\\ 1244321}),  (\substack{\hskip -0.3cm 2 \\ 2343321}), (\substack{\hskip -0.3cm 2 \\ 1344321}), (\substack{\hskip -0.3cm 2 \\ 1354321}),  (\substack{\hskip -0.3cm 2 \\ 2344321}),  (\substack{\hskip -0.3cm 3 \\ 1354321}),   (\substack{\hskip -0.3cm 2 \\ 2354321}),   \\
&  (\substack{\hskip -0.3cm 3 \\ 2354321}),  (\substack{\hskip -0.3cm 2 \\ 2454321}),  (\substack{\hskip -0.3cm 3 \\ 2454321}),  (\substack{\hskip -0.3cm 3 \\ 2464321}), (\substack{\hskip -0.3cm 3 \\ 2465321}),  (\substack{\hskip -0.3cm 3 \\ 2465421}), (\substack{\hskip -0.3cm 3 \\ 2465431}), (\substack{\hskip -0.3cm 3 \\ 2465432}).
\end{split}
\end{align}
Up to spectral parameter shift, the total number of the highest $l$-weight monomials of Hernandez-Leclerc modules of type $E_8$ which are not of type $A$, $D$ or $E_7$ is $2^7\times 44=5632$. These are listed in Appendix~\ref{Hernandez-Leclerc modules of type E8}.

For a quiver $Q$, its Auslander-Reiten quiver can be constructed according to the content introduced in Section \ref{Auslander-Reiten quivers}. We make use of Corollary \ref{a corollary of main theorem2} to obtain the highest $l$-weight monomials of Hernandez-Leclerc modules of type $E$, starting with the images $\Phi(I(i))$ of the indecomposable injective modules $I(i)$ and apply Corollary \ref{a corollary of main theorem2} until we reach the images $\Phi(P(i))$ of the indecomposable projective modules $P(i)$, see Appendices \ref{Hernandez-Leclerc modules of type E6}, \ref{Hernandez-Leclerc modules of type E7}, and \ref{Hernandez-Leclerc modules of type E8}.

In the process of the computation of these highest $l$-weight monomials, to avoid mistakes, we refer to Keller's quiver mutation applet \cite{Kel}. For any height function $\xi$, we create a Dynkin quiver $Q$ (in the java, we input one more vertex than what we actually need), choose the menu ``Repetition--Highest weights", and then input ``2" in the pop-up windows. These extra vertices are frozen vertices.  Suppose that $(i_1, i_2, \ldots i_n)$ is a source sequence adapted to $Q$ such that $c=s_{i_1} s_{i_2} \cdots s_{i_n}$ is the Coxeter element. We mutate the sequence $(i_1, i_2, \ldots i_n)$ 7 times for type $E_6$, 10 times for type $E_7$, and 16 times for type $E_8$. Our Corollary \ref{a corollary of main theorem2} guarantees that the highest $l$-weight monomials of Hernandez-Leclerc modules can be inductively computed according to the formula (\ref{Gamma1 equation}). 

When $Q$ is a sink-source quiver, the highest $l$-weight monomials of Hernandez-Leclerc modules of type $E_6$ which are not of type $A$ or $D$ (respectively, of type $E_7$ which are not ones of type $A$, $D$ or $E_6$, of type $E_8$ which are not of type $A$, $D$, or $E_7$) have a beautiful correspondence with a subset of almost positive roots (\ref{E6positiveroot}) (respectively, (\ref{E7positiveroot}), (\ref{E8positiveroot})), that is 
\begin{align}\label{E6 correspsondence}
\alpha=\sum_{i\in I} a_i \alpha_i \mapsto \Phi(\alpha)=[L(\prod_{i\in I_{\text{source}}} Y^{a_i}_{i,\xi(i)} \prod_{j\in I_{\text{sink}}} Y^{a_j}_{j,\xi(j)-2})],
\end{align}
where $I_{\text{source}}$ is the set of source vertices in $Q$, $I_{\text{sink}}$ is the set of sink vertices in $Q$. Note that the correspondence (\ref{E6 correspsondence}) does not mean that the denominator of $\Phi(\alpha)$ as the Laurent expression of a cluster variable is parameterized by $\alpha$. For a general quiver $Q$, we hope that there is a bijection given by an explicit formula.

\section*{Acknowledgements}

The authors thank Professor Jiarui Fei for valuable discussions on $c$-vectors of an exchange pair in a cluster category and for pointing out the reference \cite{Hub06} to us. We thank Professor Changjian Fu for valuable discussions and a proof of Lemma \ref{g-vector equation in an exchange pair} and Professor David Hernandez for his encouragement on the computation of the highest $l$-weight monomials of Hernandez-Leclerc modules of type $E_8$ and pointing out the reference \cite{KKKO15} to us. We thank Professor Fan Qin for telling us after our paper was uploaded to Arxiv that our Conjecture \ref{rigid objects equal real prime simple modules conjecture} is equivalent to an equivalence between the additive reachability conjecture and the multiplicative reachability conjecture, see Remark 5.9 in \cite{Qin20}.

\clearpage

\begin{appendix} 

\setcounter{table}{0}  
\setcounter{figure}{0}

\section{Type $E_{6}$} \label{Hernandez-Leclerc modules of type E6}

In this appendix, we list the highest $l$-weight monomials of the Hernandez-Leclerc modules of type $E_6$  which are not  of type $A$ or $D$ (up to spectral parameter shift). We have seen in Section~\ref{sect 73} that there are $2^5\times 7$ of them, but we can reduce this number to $2^4\times 7=16\times 7$, because of the duality between a quiver $Q$ and its opposite quiver $Q^{\text op}$. Each of the rows below contains 7 different highest weights.
For simplicity,  we denote by $i_r$ the variable $Y_{i,r}$ for any $i\in I$, $r\in \mathbb{Z}$.

\begin{gather}
\begin{align*}
&(1)  1_{0}2_{\pm5}6_{0}, 1_{0}2_{\pm5}4_{\pm4}6_{0}, 1_{0}2_{\pm5}4_{\pm4}5_{\pm1}6_{0}, 1_{0}2_{\pm5}3_{\pm1}4_{\pm4}6_{0}, 1_{0}2_{\pm5}3_{\pm1}4_{\pm4}5_{\pm1}6_{0},  1_{0}2_{\pm5}3_{\pm1}4^2_{\pm4}5_{\pm1}6_{0},  1_{0}2^2_{\pm5}3_{\pm1}4_{\pm4}5_{\pm1}6_{0}, \\
&(2)  1_0 2_{\pm 1} 6_{\pm 6},  1_{0} 2_{\pm 1} 4_{\pm4} 6_{\pm6}, 1_{0} 2_{\pm1} 5_{\pm5} 6_{\pm6},  1_0 2_{\pm1} 3_{\pm1} 4_{\pm4} 6_{\pm6},  1_{0} 2_{\pm1} 3_{\pm1} 5_{\pm5} 6_{\pm6}, 1_{0} 2_{\pm1} 3_{\pm1} 4_{\pm4} 5_{\pm5} 6_{\pm6},  1_{0} 2^2_{\pm1} 3_{\pm1} 4_{\pm4} 5_{\pm5} 6_{\pm6}, \\
&(3)  1_{0}2_{\pm5}6_{\pm6}, 1_{0}2_{\pm5}3_{\pm1}6_{\pm6}, 1_{0}2_{\pm5}4_{\pm2}6_{\pm6},  1_{0}2_{\pm5}3_{\pm1}5_{\pm5}6_{\pm6},  1_{0}2_{\pm5}4_{\pm2}5_{\pm5}6_{\pm6},  1_{0}2_{\pm5}3_{\pm1}4_{\pm2}5_{\pm5}6_{\pm6}, 1_{0}2^2_{\pm5}3_{\pm1}4_{\pm2}5_{\pm5}6_{\pm6}, \\
&(4)  1_{0}2_{\pm1}4_{\pm4}6_{0}, 1_{0}2_{\pm1}4^2_{\pm4}6_{0}, 1_{0}2_{\pm1}3_{\pm1}4^2_{\pm4}6_{0},  1_{0}2_{\pm1}4^2_{\pm4}5_{\pm1}6_{0}, 1_{0}2_{\pm1}3_{\pm1}4^2_{\pm4}5_{\pm1}6_{0}, 1_{0}2_{\pm1}3_{\pm1}4^3_{\pm4}5_{\pm1}6_{0}, 1_{0}2^{2}_{\pm1}3_{\pm1}4^3_{\pm4}5_{\pm1}6_{0}, \\
&(5)  1_{\pm5}2_{\pm4}5_06_{\pm3}, 1_{\pm5}2_{\pm4}4_{\pm1}5_06_{\pm3}, 1_{\pm5}2_{\pm4}5^2_06_{\pm3}, 1_{\pm5}2_{\pm4}3_{\pm4}5^2_06_{\pm3}, 1_{\pm5}2_{\pm4}3_{\pm4}4_{\pm 1}5_06_{\pm3}, \\
& \,\,\quad  1_{\pm5}2_{\pm4}3_{\pm4}4_{\pm1}5^2_06_{\pm3},  1_{\pm5}2^2_{\pm4}3_{\pm4}4_{\pm1}5^2_06_{\pm3}, \\
&(6) 1_{\pm2}2_{\pm1}3_{\pm5}6_{0}, 1_{\pm2}2_{\pm1}3_{\pm5}4_{\pm4}6_{0},  1_{\pm2}2_{\pm1}3_{\pm5}4_{\pm4}5_{\pm1}6_{0}, 1_{\pm2}2_{\pm1}3^2_{\pm5}6_{0}, 1_{\pm2}2_{\pm1}3^2_{\pm5}5_{\pm1}6_{0}, \\
 & \,\,\quad  1_{\pm2}2_{\pm1}3^2_{\pm5}4_{\pm4}5_{\pm1}6_{0}, 1_{\pm2}2^2_{\pm1}3^2_{\pm5}4_{\pm4}5_{\pm1}6_{0}, \\
& (7)  1_{\pm2}2_{\pm5}3_{\pm5}6_{0},  1_{\pm2}2_{\pm5}3_{\pm5}4_{\pm2}6_{0},  1_{\pm2}2_{\pm5}3_{\pm5}5_{\pm1}6_{0}, 1_{\pm2}2_{\pm5}3^2_{\pm5}4_{\pm2}6_{0},  1_{\pm2}2_{\pm5}3^2_{\pm5}5_{\pm1}6_{0}, \\
& \,\,\quad  1_{\pm2}2_{\pm5}3^2_{\pm5}4_{\pm2}5_{\pm1}6_{0},  1_{\pm2}2^2_{\pm5}3^2_{\pm5}4_{\pm2}5_{\pm1}6_{0}, \\
&(8)  1_{0}2_{\pm1}5_{\pm5}6_{\pm2},  1_{0}2_{\pm1}4_{\pm4}5_{\pm5}6_{\pm2}, 1_{0}2_{\pm1}5^2_{\pm5}6_{\pm2},  1_{0}2_{\pm1}3_{\pm1}4_{\pm4}5_{\pm5}6_{\pm2},  1_{0}2_{\pm1}3_{\pm1}5^2_{\pm5}6_{\pm2}, \\
& \,\,\quad  1_{0}2_{\pm1}3_{\pm1}4_{\pm4}5^2_{\pm5}6_{\pm2}, 1_{0}2^2_{\pm1}3_{\pm1}4_{\pm4}5^2_{\pm5}6_{\pm2}, \\
&(9)  1_{0}2_{\pm5}5_{\pm5}6_{\pm2}, 1_{0}2_{\pm5}4_{\pm2}5_{\pm5}6_{\pm2}, 1_{0}2_{\pm5}4_{\pm2}5^2_{\pm5}6_{\pm2}, 1_{0}2_{\pm5}3_{\pm1}5_{\pm5}6_{\pm2}, 1_{0}2_{\pm5}3_{\pm1}5^2_{\pm5}6_{\pm2},  \\
& \,\,\quad   1_{0}2_{\pm5}3_{\pm1}4_{\pm2}5^2_{\pm5}6_{\pm2}, 1_{0}2^2_{\pm5}3_{\pm1}4_{\pm2}5^2_{\pm5}6_{\pm2}, \\
&(10)  1_{\pm3}2_03_05_{\pm4}6_{\pm1}, 1_{\pm3} 2_0 3_0 4_{\pm3} 5_{\pm4} 6_{\pm1}, 1_{\pm3} 2_0 3^2_0 4_{\pm3} 5_{\pm4} 6_{\pm1},  1_{\pm3} 2_0 3_0 5^2_{\pm4} 6_{\pm1},  1_{\pm3}2_{0}3^2_{0}5^2_{\pm4}6_{\pm1}, \\
& \,\,\,\,\,  \quad  1_{\pm3} 2_0 3^2_0 4_{\pm3} 5^2_{\pm4} 6_{\pm1},  1_{\pm3} 2^2_0 3^2_0 4_{\pm3} 5^2_{\pm4} 6_{\pm1}, \\
& (11)  1_{\pm4}2_{\pm3}4_{0}5_{\pm3}6_{0},  1_{\pm4}2_{\pm3}4^2_{0}5_{\pm3}6_{0}, 1_{\pm4}2_{\pm3}4^2_{0}5^2_{\pm3}6_{0}, 1_{\pm4}2_{\pm3}3_{\pm3}4^2_{0}5_{\pm3}6_{0}, 1_{\pm4}2_{\pm3}3_{\pm3}4^2_{0}5^2_{\pm3}6_{0}, \\
& \,\,\,\,\, \quad  1_{\pm4}2_{\pm3}3_{\pm3}4^3_{0}5^2_{\pm3}6_{0},  1_{\pm4}2^2_{\pm3}3_{\pm3}4^3_{0}5^2_{\pm3}6_{0}, \\
& (12)  1_{\pm 3} 2_0 3_{0} 4_{\pm 3} 5_0 6_{\pm3},  1_{\pm3} 2_0 3_{0} 4_{\pm 3}^2 5_{0} 6_{\pm 3}, 1_{\pm3} 2_0 3^2_0 4^2_{\pm3} 5_{0} 6_{\pm3},  1_{\pm3} 2_0 3_{0}  4^2_{\pm3} 5^2_{0} 6_{\pm3},  1_{\pm3} 2_0 3^2_{0} 4^2_{\pm3} 5^2_{0} 6_{\pm3}, \\
& \,\,\,\,\, \quad  1_{\pm3} 2_0 3^2_{0} 4^3_{\pm3} 5^2_{0} 6_{\pm3},  1_{\pm3} 2^2_{0} 3^2_{0} 4^3_{\pm3} 5^2_{0} 6_{\pm3}, \\
& (13) 1_{0}2_{\pm5}5_{\pm1}6_{\pm4}, 1_{0}2_{\pm5}4_{\pm4}5_{\pm1}6_{\pm4}, 1_{0}2_{\pm5}4_{\pm4}5^2_{\pm1}6_{\pm4}, 1_{0}2_{\pm5}3_{\pm1}4_{\pm4}5_{\pm1}6_{\pm4}, 1_{0}2_{\pm5}3_{\pm1}4_{\pm4}5^2_{\pm1}6_{\pm4}, \\
& \,\,\,\,\, \quad  1_{0}2_{\pm5}3_{\pm1}4^2_{\pm4}5^2_{\pm1}6_{\pm4},  1_{0}2^2_{\pm5}3_{\pm1}4_{\pm4}5^2_{\pm1}6_{\pm4}, \\
& (14)  1_{0}2_{\pm 1}4_{\pm 4}5_{\pm 1}6_{\pm 4},  1_{0}2_{\pm 1}4^2_{\pm 4}5_{\pm 1}6_{\pm 4},  1_{0}2_{\pm 1}4^2_{\pm 4}5^2_{\pm 1}6_{\pm 4},  1_{0}2_{\pm 1}3_{\pm 1}4^2_{\pm 4}5_{\pm 1}6_{\pm 4}, 1_{0}2_{\pm 1}3_{\pm 1}4^2_{\pm 4}5^2_{\pm 1}6_{\pm 4}, \\
& \,\,\,\,\, \quad  1_{0}2_{\pm 1}3_{\pm 1}4^3_{\pm 4}5^2_{\pm 1}6_{\pm 4},   1_{0}2^2_{\pm 1}3_{\pm 1}4^3_{\pm 4}5^2_{\pm 1}6_{\pm 4}, \\
& (15)  1_{\pm4}2_{\pm1}3_{\pm1}4_{\pm4}6_{0}, 1_{\pm4}2_{\pm1}3_{\pm1}4^2_{\pm4}6_{0},  1_{\pm4} 2_{\pm1} 3_{\pm1}^{2} 4_{\pm4}^{2} 6_{0}, 1_{\pm4}2_{\pm1}3_{\pm1}4^2_{\pm4}5_{\pm1}6_{0}, 1_{\pm4}2_{\pm1}3^2_{\pm1}4^2_{\pm4}5_{\pm1}6_{0}, \\
& \,\,\,\,\, \quad  1_{\pm4}2_{\pm1}3^2_{\pm1}4^3_{\pm4}5_{\pm1}6_{0}, 1_{\pm4}2^2_{\pm1}3^2_{\pm1}4^3_{\pm4}5_{\pm1}6_{0}, \\
& (16) 1_{\pm3}2_{\pm4}3_{0}5_{\pm4}6_{\pm1}, 1_{\pm3}2_{\pm4}3_{0}4_{\pm1}5_{\pm4}6_{\pm1}, 1_{\pm3}2_{\pm4}3^2_{0}5_{\pm4}6_{\pm1},  1_{\pm3}2_{\pm4}3_{0}4_{\pm1}5^2_{\pm4}6_{\pm1}, 1_{\pm3}2_{\pm4}3^2_{0}5^2_{\pm4}6_{\pm1},  \\
& \,\,\,\,\, \quad  1_{\pm3}2_{\pm4}3^2_{0}4_{\pm1}5^2_{\pm4}6_{\pm1}, 1_{\pm3}2^2_{\pm4}3^2_{0}4_{\pm1}5^2_{\pm4}6_{\pm1}.
\end{align*}
\end{gather}

\clearpage

\section{Type $E_{7}$} \label{Hernandez-Leclerc modules of type E7}

The highest $l$-weight monomials of the Hernandez-Leclerc modules of type $E_7$ which are not of type $A$ $D$ or $E_6$ (up to spectral parameter shift) are  as follows. Each paragraph contains 16 different highest weights.
\begin{gather}
\begin{align*}
& (1) 1_{\pm3}2_03_04_{\pm3}5_07_{\pm4}, 1_{\pm3}2_03_04^2_{\pm3}5_07_{\pm4}, 1_{\pm3}2_03^2_04^2_{\pm3}5_07_{\pm4},  1_{\pm3}2_03_04^2_{\pm3}5^2_07_{\pm4}, 1_{\pm3}2_03^2_04^2_{\pm3}5^2_07_{\pm4}, 1_{\pm3}2_03_04^2_{\pm3}5^2_06_{\pm3}7_{\pm4}, \\
& \quad 1_{\pm3}2_03^2_04^3_{\pm3}5^2_07_{\pm4}, 1_{\pm3}2_03^2_04^2_{\pm3}5^2_06_{\pm3}7_{\pm4}, 1_{\pm3}2^2_03^2_04^3_{\pm3}5^2_07_{\pm4}, 1_{\pm3}2_03^2_04^3_{\pm3}5^2_06_{\pm3}7_{\pm4},  1_{\pm3}2^2_03^2_04^3_{\pm3}5^2_06_{\pm3}7_{\pm4}, \\
& \quad 1_{\pm3}2_03^2_04^3_{\pm3}5^3_06_{\pm3}7_{\pm4},  1_{\pm3}2^2_03^2_04^3_{\pm3}5^3_06_{\pm3}7_{\pm4}, 1_{\pm3}2^2_03^2_04^4_{\pm3}5^3_06_{\pm3}7_{\pm4}, 1_{\pm3}2^2_03^3_04^4_{\pm3}5^3_06_{\pm3}7_{\pm4}, 1^2_{\pm3}2^2_03^3_04^4_{\pm3}5^3_06_{\pm3}7_{\pm4}, \\
& (2)  1_{\pm5}2_{\pm4}5_{0}6_{\pm3}7_{0}, 1_{\pm5}2_{\pm4}4_{\pm1}5_{0}6_{\pm3}7_{0}, 1_{\pm5}2_{\pm4}3_{\pm4}4_{\pm1}5_{0}6_{\pm3}7_{0}, 1_{\pm5}2_{\pm4}5^2_{0}6_{\pm3}7_{0}, 1_{\pm5}2_{\pm4}3_{\pm4}5^2_{0}6_{\pm3}7_{0}, 1_{\pm5}2_{\pm4}5^2_{0}6^2_{\pm3}7_{0},   \\ 
& \quad 1_{\pm5}2_{\pm4}3_{\pm4}4_{\pm1}5^2_{0}6_{\pm3}7_{0}, 1_{\pm5}2_{\pm4}3_{\pm4}5^2_{0}6^2_{\pm3}7_{0}, 1_{\pm5}2^2_{\pm4}3_{\pm4}4_{\pm1}5^2_{0}6_{\pm3}7_{0}, 1_{\pm5}2_{\pm4}3_{\pm4}4_{\pm1}5^2_{0}6^2_{\pm3}7_{0}, 1_{\pm5}2^2_{\pm4}3_{\pm4}4_{\pm1}5^2_{0}6^2_{\pm3}7_{0},   \\
& \quad 1_{\pm5}2_{\pm4}3_{\pm4}5^2_{3}6^2_{\pm3}7_{0}, 1_{\pm5}2^2_{\pm4}3_{\pm4}5^2_{3}6^2_{\pm3}7_{0}, 1_{\pm5}2^2_{\pm4}3_{\pm4}4_{\pm1}5^2_{3}6^2_{\pm3}7_{0}, 1_{\pm5}2^2_{\pm4}3^2_{\pm4}4_{\pm1}5^2_{3}6^2_{\pm3}7_{0},  1^2_{\pm5}2^2_{\pm4}3_{\pm4}4_{\pm1}5^2_{3}6^2_{\pm3}7_{0}, \\
& (3) 1_{\pm3}2_{\pm4}3_{0}5_{0}7_{\pm4}, 1_{\pm3}2_{\pm4}3_{0}4_{\pm3}5_{0}7_{\pm4}, 1_{\pm3}2_{\pm4}3^2_{0}4_{\pm3}5_{0}7_{\pm4}, 1_{\pm3}2_{\pm4}3_{0}4_{\pm3}5^2_{0}7_{\pm4}, 1_{\pm3}2_{\pm4}3^2_{0}4_{\pm3}5^2_{0}7_{\pm4}, 1_{\pm3}2_{\pm4}3_{0}4_{\pm3}5^2_{0}6_{\pm3}7_{\pm4},   \\   
& \quad 1_{\pm3}2_{\pm4}3^2_{0}4^2_{\pm3}5^2_{0}7_{\pm4}, 1_{\pm3}2_{\pm4}3^2_{0}4_{\pm3}5^2_{0}6_{\pm3}7_{\pm4}, 1_{\pm3}2^2_{\pm4}3^2_{0}4_{\pm3}5^2_{0}7_{\pm4}, 1_{\pm3}2_{\pm4}3^2_{0}4^2_{\pm3}5^2_{0}6_{\pm3}7_{\pm4}, 1_{\pm3}2^2_{\pm4}3^2_{0}4_{\pm3}5^2_{0}6_{\pm3}7_{\pm4},   \\  
& \quad 1_{\pm3}2_{\pm4}3^2_{0}4^2_{\pm3}5^3_{0}6_{\pm3}7_{\pm4}, 1_{\pm3}2^2_{\pm4}3^2_{0}4_{\pm3}5^3_{0}6_{\pm3}7_{\pm4}, 1_{\pm3}2^2_{\pm4}3^2_{0}4^2_{\pm3}5^3_{0}6_{\pm3}7_{\pm4}, 1_{\pm3}2^2_{\pm4}3^3_{0}4^2_{\pm3}5^3_{0}6_{\pm3}7_{\pm4}, 1^2_{\pm3}2^2_{\pm4}3^3_{0}4^2_{\pm3}5^3_{0}6_{\pm3}7_{\pm4}, \\
& (4) 1_{\pm3}2_{\pm4}3_{0}5_{\pm4}7_{0}, 1_{\pm3}2_{\pm4}3_{0}4_{\pm1}5_{\pm4}7_{0}, 1_{\pm3}2_{\pm4}3^2_{0}5_{\pm4}7_{0}, 1_{\pm3}2_{\pm4}3_{0}4_{\pm1}5^2_{\pm4}7_{0}, 1_{\pm3}2_{\pm4}3^2_{0}5^2_{\pm4}7_{0}, 1_{\pm3}2_{\pm4}3_{0}4_{\pm1}5^2_{\pm4}6_{\pm1}7_{0},   \\  
& \quad 1_{\pm3}2_{\pm4}3^2_{0}4_{\pm1}5^2_{\pm4}7_{0}, 1_{\pm3}2_{\pm4}3^2_{0}5^2_{\pm4}6_{\pm1}7_{0}, 1_{\pm3}2^2_{\pm4}3^2_{0}4_{\pm1}5^2_{\pm4}7_{0}, 1_{\pm3}2_{\pm4}3^2_{0}4_{\pm1}5^2_{\pm4}6_{\pm1}7_{0}, 1_{\pm3}2^2_{\pm4}3^2_{0}4_{\pm1}5^2_{\pm4}6_{\pm1}7_{0},   \\ 
& \quad 1_{\pm3}2_{\pm4}3^2_{0}4_{\pm1}5^3_{\pm4}6_{\pm1}7_{0}, 1_{\pm3}2^2_{\pm4}3^2_{0}4_{\pm1}5^3_{\pm4}6_{\pm1}7_{0}, 1_{\pm3}2^2_{\pm4}3^2_{0}4^2_{\pm1}5^3_{\pm4}6_{\pm1}7_{0}, 1_{\pm3}2^2_{\pm4}3^3_{0}4_{\pm1}5^3_{\pm4}6_{\pm1}7_{0}, 1^2_{\pm3}2^2_{\pm4}3^3_{0}4_{\pm1}5^3_{\pm4}6_{\pm1}7_{0}, \\
& (5) 1_{\pm5}2_{0}6_{\pm5}7_{\pm2}, 1_{\pm5}2_{0}4_{\pm1}6_{\pm5}7_{\pm2},  1_{\pm5}2_{0}3_{\pm4}4_{\pm1}6_{\pm5}7_{\pm2}, 1_{\pm5}2_{0}4_{\pm1}5_{\pm4}6_{\pm5}7_{\pm2},  1_{\pm5}2_{0}4_{\pm1}6^2_{\pm5}7_{\pm2}, 1_{\pm5}2_{0}3_{\pm4}4_{\pm1}6^2_{\pm5}7_{\pm2}, \\
& \quad 1_{\pm5} 2_{0} 3_{\pm4} 4_{\pm1} 5_{\pm4} 6_{\pm5} 7_{\pm2}, 1_{\pm5} 2_{0} 3_{\pm4} 4^2_{\pm1} 5_{\pm4} 6_{\pm5} 7_{\pm2}, 1_{\pm5} 2^2_{0} 3_{\pm4}4_{\pm1} 5_{\pm4} 6_{\pm5} 7_{\pm2}, 1_{\pm5} 2_{0} 3_{\pm4} 4^2_{\pm1} 6^2_{\pm5} 7_{\pm2}, 1_{\pm5} 2_{0} 3_{\pm4} 4^2_{\pm1} 5_{\pm4} 6^2_{\pm5} 7_{\pm2},\\
& \quad  1_{\pm5}2^2_{0}3_{\pm4}4_{\pm1}6^2_{\pm5}7_{\pm2}, 1_{\pm5}2^2_{0}3_{\pm4}4_{\pm1}5_{\pm4}6^2_{\pm5}7_{\pm2}, 1_{\pm5}2^2_{0}3_{\pm4}4^2_{\pm1}5_{\pm4}6^2_{\pm5}7_{\pm2}, 1_{\pm5}2^2_{0}3^2_{\pm4}4^2_{\pm1}5_{\pm4}6^2_{\pm5}7_{\pm2}, 1^2_{\pm5}2^2_{0}3_{\pm4}4^2_{\pm1}5_{\pm4}6^2_{\pm5}7_{\pm2},
\\
& (6) 1_{\pm5}2_{0}7_{\pm6}, 1_{\pm5}2_{0}4_{\pm1}7_{\pm6},   1_{\pm5}2_{0}3_{\pm4}4_{\pm1}7_{\pm6}, 1_{\pm5}2_{0}4_{\pm1}5_{\pm4}7_{\pm6}, 1_{\pm5}2_{0}4_{\pm1}6_{\pm5}7_{\pm6},  1_{\pm5}2_{0}3_{\pm4}4_{\pm1}5_{\pm4}7_{\pm6},  1_{\pm5}2_{0}3_{\pm4}4_{\pm1}6_{\pm5}7_{\pm6}, \\ 
& \quad 1_{\pm5}2_{0}3_{\pm4}4^2_{\pm1}5_{\pm4}7_{\pm6},  1_{\pm5}2^2_{0}3_{\pm4}4_{\pm1}5_{\pm4}7_{\pm6},  1_{\pm5}2_{0}3_{\pm4}4^2_{\pm1}6_{\pm5}7_{\pm6}, 1_{\pm5}2^2_{0}3_{\pm4}4_{\pm1}6_{\pm5}7_{\pm6},  1_{\pm5}2_{0}3_{\pm4}4^2_{\pm1}5_{\pm4}6_{\pm5}7_{\pm6},  \\  
& \quad 1_{\pm5}2^2_{0}3_{\pm4}4_{\pm1}5_{\pm4}6_{\pm5}7_{\pm6}, 1_{\pm5}2^2_{0}3_{\pm4}4^2_{\pm1}5_{\pm4}6_{\pm5}7_{\pm6},  1_{\pm5}2^2_{0}3^2_{\pm4}4^2_{\pm1}5_{\pm4}6_{\pm5}7_{\pm6}, 1^2_{\pm5}2^2_{0}3_{\pm4}4^2_{\pm1}5_{\pm4}6_{\pm5}7_{\pm6}, \\
& (7) 1_{\pm5}2_{\pm4}5_{0}7_{\pm4},  1_{\pm5}2_{\pm4}4_{\pm1}5_{0}7_{\pm4},  1_{\pm5}2_{\pm4}3_{\pm4}4_{\pm1}5_{0}7_{\pm4},  1_{\pm5}2_{\pm4}5^2_{0}7_{\pm4}, 1_{\pm5}2_{\pm4}3_{\pm4}5^2_{0}7_{\pm4},   1_{\pm5}2_{\pm4}3_{\pm4}4_{\pm1}5^2_{0}7_{\pm4}, \\  
& \quad  1_{\pm5}2_{\pm4}5^2_{0}6_{\pm3}7_{\pm4}, 1_{\pm5}2_{\pm4}3_{\pm4}5^2_{0}6_{\pm3}7_{\pm4}, 1_{\pm5}2^2_{\pm4}3_{\pm4}4_{\pm1}5^2_{0}7_{\pm4}, 1_{\pm5}2_{\pm4}3_{\pm4}4_{\pm1}5^2_{0}6_{\pm3}7_{\pm4}, 1_{\pm5}2^2_{\pm4}3_{\pm4}4_{\pm1}5^2_{0}6_{\pm3}7_{\pm4},   \\
& \quad 1_{\pm5}2_{\pm4}3_{\pm4}5^3_{0}6_{\pm3}7_{\pm4}, 1_{\pm5}2^2_{\pm4}3_{\pm4}5^3_{0}6_{\pm3}7_{\pm4}, 1_{\pm5}2^2_{\pm4}3_{\pm4}4_{\pm1}5^3_{0}6_{\pm3}7_{\pm4}, 1_{\pm5}2^2_{\pm4}3^2_{\pm4}4_{\pm1}5^3_{0}6_{\pm3}7_{\pm4}, 1^2_{\pm5}2^2_{\pm4}3_{\pm4}4_{\pm1}5^3_{0}6_{\pm3}7_{\pm4}, \\
& (8) 1_{\pm5}2_{0}5_{\pm4}7_{0},  1_{\pm5}2_{0}4_{\pm1}5_{\pm4}7_{0},  1_{\pm5}2_{0}3_{\pm4}4_{\pm1}5_{\pm4}7_{0}, 1_{\pm5}2_{0}4_{\pm1}5^2_{\pm4}7_{0},  1_{\pm5}2_{0}3_{\pm4}4_{\pm1}5^2_{\pm4}7_{0},  1_{\pm5}2_{0}4_{\pm1}5^2_{\pm4}6_{\pm1}7_{0},   \\ 
& \quad 1_{\pm5}2_{0}3_{\pm4}4^2_{\pm1}5^2_{\pm4}7_{0}, 1_{\pm5}2_{0}3_{\pm4}4_{\pm1}5^2_{\pm4}6_{\pm1}7_{0},  1_{\pm5}2^2_{0}3_{\pm4}4_{\pm1}5^2_{\pm4}7_{0}, 1_{\pm5}2_{0}3_{\pm4}4^2_{\pm1}5^2_{\pm4}6_{\pm1}7_{0}, 1_{\pm5}2^2_{0}3_{\pm4}4_{\pm1}5^2_{\pm4}6_{\pm1}7_{0},   \\
& \quad 1_{\pm5}2_{0}3_{\pm4}4^2_{\pm1}5^3_{\pm4}6_{\pm1}7_{0},  1_{\pm5}2^2_{0}3_{\pm4}4_{\pm1}5^3_{\pm4}6_{\pm1}7_{0}, 1_{\pm5}2^2_{0}3_{\pm4}4^2_{\pm1}5^3_{\pm4}6_{\pm1}7_{0}, 1_{\pm5}2^2_{0}3^2_{\pm4}4^2_{\pm1}5^3_{\pm4}6_{\pm1}7_{0}, 1^2_{\pm5}2^2_{0}3_{\pm4}4^2_{\pm1}5^3_{\pm4}6_{\pm1}7_{0}, \\
& (9) 1_{\pm7}2_{\pm2}7_{0},   1_{\pm7}2_{\pm2}4_{\pm5}7_{0},   1_{\pm7}2_{\pm2}3_{\pm6}7_{0},  1_{\pm7}2_{\pm2}4_{\pm5}5_{\pm2}7_{0}, 1_{\pm7}2_{\pm2}3_{\pm6}5_{\pm2}7_{0}, 1_{\pm7}2_{\pm2}4_{\pm5}6_{\pm1}7_{0},  1_{\pm7}2_{\pm2}3_{\pm6}4_{\pm5}5_{\pm2}7_{0},   \\ 
& \quad 1_{\pm7}2_{\pm2}3_{\pm6}6_{\pm1}7_{0}, 1_{\pm7}2^2_{\pm2}3_{\pm6}4_{\pm5}5_{\pm2}7_{0},  1_{\pm7}2_{\pm2}3_{\pm6}4_{\pm5}6_{\pm1}7_{0},  1_{\pm7}2^2_{\pm2}3_{\pm6}4_{\pm5}6_{\pm1}7_{0},  1_{\pm7}2_{\pm2}3_{\pm6}4_{\pm5}5_{\pm2}6_{\pm1}7_{0},  \\ 
& \quad 1_{\pm7}2^2_{\pm2}3_{\pm6}4_{\pm5}5_{\pm2}6_{\pm1}7_{0}, 1_{\pm7}2^2_{\pm2}3_{\pm6}4^2_{\pm5}5_{\pm2}6_{\pm1}7_{0}, 1_{\pm7}2^2_{\pm2}3^2_{\pm6}4_{\pm5}5_{\pm2}6_{\pm1}7_{0}, 1^2_{\pm7}2^2_{\pm2}3_{\pm6}4_{\pm5}5_{\pm2}6_{\pm1}7_{0}, \\
& (10)  1_{\pm3}2_{0}3_{0}7_{\pm6}, 1_{\pm3}2_{0}3_{0}4_{\pm3}7_{\pm6}, 1_{\pm3}2_{0}3^2_{0}4_{\pm3}7_{\pm6}, 1_{\pm3}2_{0}3_{0}5_{\pm4}7_{\pm6}, 1_{\pm3}2_{0}3^2_{0}5_{\pm4}7_{\pm6}, 1_{\pm3}2_{0}3_{0}6_{\pm5}7_{\pm6},  1_{\pm3}2_{0}3^2_{0}6_{\pm5}7_{\pm6},  \\ 
& \quad  1_{\pm3}2_{0}3^2_{0}4_{\pm3}5_{\pm4}7_{\pm6},  1_{\pm3}2^2_{0}3^2_{0}4_{\pm3}5_{\pm4}7_{\pm6}, 1_{\pm3}2_{0}3^2_{0}4_{\pm3}6_{\pm5}7_{\pm6}, 1_{\pm3}2^2_{0}3^2_{0}4_{\pm3}6_{\pm5}7_{\pm6},  1_{\pm3}2_{0}3^2_{0}5_{\pm4}6_{\pm5}7_{\pm6},  \\ 
& \quad 1_{\pm3}2^2_{0}3^2_{0}5_{\pm4}6_{\pm5}7_{\pm6}, 1_{\pm3}2^2_{0}3^2_{0}4_{\pm3}5_{\pm4}6_{\pm5}7_{\pm6}, 1_{\pm3}2^2_{0}3^3_{0}4_{\pm3}5_{\pm4}6_{\pm5}7_{\pm6}, 1^2_{\pm3}2^2_{0}3^3_{0}4_{\pm3}5_{\pm4}6_{\pm5}7_{\pm6}, \\
& (11) 1_{\pm3}2_{0}3_{0}5_{\pm4}7_{0}, 1_{\pm3}2_{0}3_{0}4_{\pm3}5_{\pm4}7_{0}, 1_{\pm3}2_{0}3^2_{0}4_{\pm3}5_{\pm4}7_{0}, 1_{\pm3}2_{0}3_{0}5^2_{\pm4}7_{0}, 1_{\pm3}2_{0}3^2_{0}5^2_{\pm4}7_{0}, 1_{\pm3}2_{0}3_{0}5^2_{\pm4}6_{\pm1}7_{0},   \\ 
& \quad 1_{\pm3}2_{0}3^2_{0}4_{\pm3}5^2_{\pm4}7_{0}, 1_{\pm3}2_{0}3^2_{0}5^2_{\pm4}6_{\pm1}7_{0}, 1_{\pm3}2^2_{0}3^2_{0}4_{\pm3}5^2_{\pm4}7_{0}, 1_{\pm3}2_{0}3^2_{0}4_{\pm3}5^2_{\pm4}6_{\pm1}7_{0}, 1_{\pm3}2^2_{0}3^2_{0}4_{\pm3}5^2_{\pm4}6_{\pm1}7_{0},   \\ 
& \quad 1_{\pm3}2_{0}3^2_{0}5^3_{\pm4}6_{\pm1}7_{0}, 1_{\pm3}2^2_{0}3^2_{0}5^3_{\pm4}6_{\pm1}7_{0},  1_{\pm3}2^2_{0}3^2_{0}4_{\pm3}5^3_{\pm4}6_{\pm1}7_{0},  1_{\pm3}2^2_{0}3^3_{0}4_{\pm3}5^3_{\pm4}6_{\pm1}7_{0}, 1^2_{\pm3}2^2_{0}3^3_{0}4_{\pm3}5^3_{\pm4}6_{\pm1}7_{0},
\end{align*}
\end{gather}

\begin{gather}
\begin{align*}
& (12)  1_{\pm3}2_{0}3_{0}6_{\pm5}7_{\pm2},   1_{\pm3}2_{0}3_{0}4_{\pm3}6_{\pm5}7_{\pm2},  1_{\pm3}2_{0}3^2_{0}4_{\pm3}6_{\pm5}7_{\pm2}, 1_{\pm3}2_{0}3_{0}5_{\pm4}6_{\pm5}7_{\pm2}, 1_{\pm3}2_{0}3^2_{0}5_{\pm4}6_{\pm5}7_{\pm2}, 1_{\pm3}2_{0}3_{0}6^2_{\pm5}7_{\pm2},   \\
& \quad 1_{\pm3}2_{0}3^2_{0}4_{\pm3}5_{\pm4}6_{\pm5}7_{\pm2},  1_{\pm3}2_{0}3^2_{0}6^2_{\pm5}7_{\pm2}, 1_{\pm3}2^2_{0}3^2_{0}4_{\pm3}5_{\pm4}6_{\pm5}7_{\pm2}, 1_{\pm3}2_{0}3^2_{0}4_{\pm3}6^2_{\pm5}7_{\pm2},  1_{\pm3}2^2_{0}3^2_{0}4_{\pm3}6^2_{\pm5}7_{\pm2},   \\
& \quad 1_{\pm3}2_{0}3^2_{0}5_{\pm4}6^2_{\pm5}7_{\pm2}, 1_{\pm3}2^2_{0}3^2_{0}5_{\pm4}6^2_{\pm5}7_{\pm2}, 1_{\pm3}2^2_{0}3^2_{0}4_{\pm3}5_{\pm4}6^2_{\pm5}7_{\pm2}, 1_{\pm3}2^2_{0}3^3_{0}4_{\pm3}5_{\pm4}6^2_{\pm5}7_{\pm2},  1^2_{\pm3}2^2_{0}3^3_{0}4_{\pm3}5_{\pm4}6^2_{\pm5}7_{\pm2}, \\
& (13) 1_{\pm7}2_{\pm6}7_{0}, 1_{\pm7}2_{\pm6}4_{\pm3}7_{0},  1_{\pm7}2_{\pm6}3_{\pm6}4_{\pm3}7_{0}, 1_{\pm7}2_{\pm6}5_{\pm2}7_{0}, 1_{\pm7}2_{\pm6}3_{\pm6}5_{\pm2}7_{0}, 1_{\pm7}2_{\pm6}6_{\pm1}7_{0},  1_{\pm7}2_{\pm6}3_{\pm6}4_{\pm3}5_{\pm2}7_{0},  \\
& \quad 1_{\pm7}2_{\pm6}3_{\pm6}6_{\pm1}7_{0}, 1_{\pm7}2^2_{\pm6}3_{\pm6}4_{\pm3}5_{\pm2}7_{0}, 1_{\pm7}2_{\pm6}3_{\pm6}4_{\pm3}6_{\pm1}7_{0}, 1_{\pm7}2^2_{\pm6}3_{\pm6}4_{\pm3}6_{\pm1}7_{0}, 1_{\pm7}2_{\pm6}3_{\pm6}5_{\pm2}6_{\pm1}7_{0}, \\ 
& \quad 1_{\pm7}2^2_{\pm6}3_{\pm6}5_{\pm2}6_{\pm1}7_{0}, 1_{\pm7}2^2_{\pm6}3_{\pm6}4_{\pm3}5_{\pm2}6_{\pm1}7_{0},  1_{\pm7}2^2_{\pm6}3^2_{\pm6}4_{\pm3}5_{\pm2}6_{\pm1}7_{0},  1^2_{\pm7}2^2_{\pm6}3_{\pm6}4_{\pm3}5_{\pm2}6_{\pm1}7_{0}, \\
& (14) 1_{\pm3}2_{\pm4}3_{0}7_{\pm6}, 1_{\pm3}2_{\pm4}3_{0}4_{\pm1}7_{\pm6}, 1_{\pm3}2_{\pm4}3_{0}^27_{\pm6},  1_{\pm3}2_{\pm4}3_{0}4_{\pm1}5_{\pm4}7_{\pm6}, 1_{\pm3}2_{\pm4}3_{0}^25_{\pm4}7_{\pm6}, 1_{\pm3}2_{\pm4}3_{0}4_{\pm1}6_{\pm5}7_{\pm6},  1_{\pm3}2_{\pm4}3_{0}^26_{\pm5}7_{\pm6}, \\  
& \quad  1_{\pm3}2_{\pm4}3_{0}^24_{\pm1}5_{\pm4}7_{\pm6}, 1_{\pm3}2^2_{\pm4}3_{0}^24_{\pm1}5_{\pm4}7_{\pm6}, 1_{\pm3}2_{\pm4}3_{0}^24_{\pm1}6_{\pm5}7_{\pm6}, 1_{\pm3}2^2_{\pm4}3_{0}^24_{\pm1}6_{\pm5}7_{\pm6},  1_{\pm3}2_{\pm4}3_{0}^24_{\pm1}5_{\pm4}6_{\pm5}7_{\pm6},  \\
& \quad 1_{\pm3}2^2_{\pm4}3_{0}^24_{\pm1}5_{\pm4}6_{\pm5}7_{\pm6}, 1_{\pm3}2^2_{\pm4}3_{0}^24_{\pm1}^25_{\pm4}6_{\pm5}7_{\pm6}, 1_{\pm3}2^2_{\pm4}3_{0}^34_{\pm1}5_{\pm4}6_{\pm5}7_{\pm6}, 1^2_{\pm3}2^2_{\pm4}3_{0}^34_{\pm1}5_{\pm4}6_{\pm5}7_{\pm6}, \\
& (15)  1_{\pm5}2_{\pm6}3_{\pm2}7_{0}, 1_{\pm5}2_{\pm6}3_{\pm2}4_{\pm5}7_{0}, 1_{\pm5}2_{\pm6}3^2_{\pm2}4_{\pm5}7_{0}, 1_{\pm5}2_{\pm6}3_{\pm2}4_{\pm5}5_{\pm2}7_{0}, 1_{\pm5}2_{\pm6}3^2_{\pm2}4_{\pm5}5_{\pm2}7_{0}, 1_{\pm5}2_{\pm6}3_{\pm2}4_{\pm5}6_{\pm1}7_{0},   \\ 
& \quad 1_{\pm5}2_{\pm6}3^2_{\pm2}4^2_{\pm5}5_{\pm2}7_{0}, 1_{\pm5}2_{\pm6}3^2_{\pm2}4_{\pm5}6_{\pm1}7_{0}, 1_{\pm5}2^2_{\pm6}3^2_{\pm2}4_{\pm5}5_{\pm2}7_{0}, 1_{\pm5}2_{\pm6}3^2_{\pm2}4^2_{\pm5}6_{\pm1}7_{0}, 1_{\pm5}2^2_{\pm6}3^2_{\pm2}4_{\pm5}6_{\pm1}7_{0},   \\
& \quad 1_{\pm5}2_{\pm6}3^2_{\pm2}4^2_{\pm5}5_{\pm2}6_{\pm1}7_{0}, 1_{\pm5}2^2_{\pm6}3^2_{\pm2}4_{\pm5}5_{\pm2}6_{\pm1}7_{0}, 1_{\pm5}2^2_{\pm6}3^2_{\pm2}4^2_{\pm5}5_{\pm2}6_{\pm1}7_{0}, 1_{\pm5}2^2_{\pm6}3^3_{\pm2}4^2_{\pm5}5_{\pm2}6_{\pm1}7_{0}, 1^2_{\pm5}2^2_{\pm6}3^3_{\pm2}4^2_{\pm5}5_{\pm2}6_{\pm1}7_{0}, \\
& (16) 1_{0}2_{\pm5}5_{\pm5}6_{\pm2}7_{\pm5},  1_{0}2_{\pm5}4_{\pm2}5_{\pm5}6_{\pm2}7_{\pm2}, 1_{0}2_{\pm5}3_{\pm1}5_{\pm5}6_{\pm2}7_{\pm5}, 1_{0}2_{\pm5}4_{\pm2}5^2_{\pm5}6_{\pm2}7_{\pm5}, 1_{0}2_{\pm5}3_{\pm1}5^2_{\pm5}6_{\pm2}7_{\pm5}, 1_{0}2_{\pm5}4_{\pm2}5^2_{\pm5}6^2_{\pm2}7_{\pm5},   \\  
& \quad 1_{0}2_{\pm5}3_{\pm1}4_{\pm2}5^2_{\pm5}6_{\pm2}7_{\pm5}, 1_{0}2_{\pm5}3_{\pm1}5^2_{\pm5}6^2_{\pm2}7_{\pm5}, 1_{0}2^2_{\pm5}3_{\pm1}4_{\pm2}5^2_{\pm5}6_{\pm2}7_{\pm5}, 1_{0}2_{\pm5}3_{\pm1}4_{\pm2}5^2_{\pm5}6^2_{\pm2}7_{\pm5}, 1_{0}2^2_{\pm5}3_{\pm1}4_{\pm2}5^2_{\pm5}6^2_{\pm2}7_{\pm5},   \\
& \quad 1_{0}2_{\pm5}3_{\pm1}4_{\pm2}5^3_{\pm5}6^2_{\pm2}7_{\pm5}, 1_{0}2^2_{\pm5}3_{\pm1}4_{\pm2}5^3_{\pm5}6^2_{\pm2}7_{\pm5}, 1_{0}2^2_{\pm5}3_{\pm1}4^2_{\pm2}5^3_{\pm5}6^2_{\pm2}7_{\pm5}, 1_{0}2^2_{\pm5}3^2_{\pm1}4_{\pm2}5^3_{\pm5}6^2_{\pm2}7_{\pm5}, 1^2_{0}2^2_{\pm5}3_{\pm1}4_{\pm2}5^3_{\pm5}6^2_{\pm2}7_{\pm5}, \\
& (17) 1_{\pm3}2_{0}3_{0}5_{\pm4}6_{\pm1}7_{\pm4}, 1_{\pm3}2_{0}3_{0}4_{\pm3}5_{\pm4}6_{\pm1}7_{\pm4}, 1_{\pm3}2_{0}3^2_{0}4_{\pm3}5_{\pm4}6_{\pm1}7_{\pm4}, 1_{\pm3}2_{0}3_{0}5^2_{\pm4}6_{\pm1}7_{\pm4}, 1_{\pm3}2_{0}3^2_{0}5^2_{\pm4}6_{\pm1}7_{\pm4}, 1_{\pm3}2_{0}3_{0}5^2_{\pm4}6^2_{\pm1}7_{\pm4},   \\ 
& \quad 1_{\pm3}2_{0}3^2_{0}4_{\pm3}5^2_{\pm4}6_{\pm1}7_{\pm4}, 1_{\pm3}2_{0}3^2_{0}5^2_{\pm4}6^2_{\pm1}7_{\pm4},  1_{\pm3}2^2_{0}3^2_{0}4_{\pm3}5^2_{\pm4}6_{\pm1}7_{\pm4}, 1_{\pm3}2_{0}3^2_{0}4_{\pm3}5^2_{\pm4}6^2_{\pm1}7_{\pm4}, 1_{\pm3}2^2_{0}3^2_{0}4_{\pm3}5^2_{\pm4}6^2_{\pm1}7_{\pm4},   \\ 
& \quad 1_{\pm3}2_{0}3^2_{0}5^3_{\pm4}6^2_{\pm1}7_{\pm4}, 1_{\pm3}2^2_{0}3^2_{0}5^3_{\pm4}6^2_{\pm1}7_{\pm4}, 1_{\pm3}2^2_{0}3^2_{0}4_{\pm3}5^3_{\pm4}6^2_{\pm1}7_{\pm4}, 1_{\pm3}2^2_{0}3^3_{0}4_{\pm3}5^3_{\pm4}6^2_{\pm1}7_{\pm4}, 1^2_{\pm3}2^2_{0}3^3_{0}4_{\pm3}5^3_{\pm4}6^2_{\pm1}7_{\pm4}, \\
& (18) 1_{\pm6}2_{\pm1}6_{0}7_{\pm3}, 1_{\pm6}2_{\pm1}4_{\pm4}6_{0}7_{\pm3}, 1_{\pm6}2_{\pm1}3_{\pm5}6_{0}7_{\pm3}, 1_{\pm6}2_{\pm1}4_{\pm4}5_{\pm1}6_{0}7_{\pm3}, 1_{\pm6}2_{\pm1}3_{\pm5}5_{\pm1}6_{0}7_{\pm3}, 1_{\pm6}2_{\pm1}4_{\pm4}6^2_{0}7_{\pm3},   \\  
& \quad 1_{\pm6}2_{\pm1}3_{\pm5}4_{\pm4}5_{\pm1}6_{0}7_{\pm3}, 1_{\pm6}2_{\pm1}3_{\pm5}6^2_{0}7_{\pm3}, 1_{\pm6}2^2_{\pm1}3_{\pm5}4_{\pm4}5_{\pm1}6_{0}7_{\pm3}, 1_{\pm6}2_{\pm1}3_{\pm5}4_{\pm4}6^2_{0}7_{\pm3},  1_{\pm6}2^2_{\pm1}3_{\pm5}4_{\pm4}6^2_{0}7_{\pm3},   \\
& \quad 1_{\pm6}2_{\pm1}3_{\pm5}4_{\pm4}5_{\pm1}6^2_{0}7_{\pm3}, 1_{\pm6}2^2_{\pm1}3_{\pm5}4_{\pm4}5_{\pm1}6^2_{0}7_{\pm3}, 1_{\pm6}2^2_{\pm1}3_{\pm5}4^2_{\pm4}5_{\pm1}6^2_{0}7_{\pm3}, 1_{\pm6}2^2_{\pm1}3^2_{\pm5}4_{\pm4}5_{\pm1}6^2_{0}7_{\pm3}, 1^2_{\pm6}2^2_{\pm1}3_{\pm5}4_{\pm4}5_{\pm1}6^2_{0}7_{\pm3}, \\
& (19) 1_{\pm3}2_{\pm4}3_{0}5_{0}6_{\pm3}7_{0},  1_{\pm3}2_{\pm4}3_{0}4_{\pm3}5_{0}6_{\pm3}7_{0}, 1_{\pm3}2_{\pm4}3^2_{0}4_{\pm3}5_{0}6_{\pm3}7_{0},  1_{\pm3}2_{\pm4}3_{0}4_{\pm3}5^2_{0}6_{\pm3}7_{0},  1_{\pm3}2_{\pm4}3^2_{0}4_{\pm3}5^2_{0}6_{\pm3}7_{0},  1_{\pm3}2_{\pm4}3_{0}4_{\pm3}5^2_{0}6^2_{\pm3}7_{0},   \\
& \quad 1_{\pm3}2_{\pm4}3^2_{0}4^2_{\pm3}5^2_{0}6_{\pm3}7_{0}, 1_{\pm3}2_{\pm4}3^2_{0}4_{\pm3}5^2_{0}6^2_{\pm3}7_{0},  1_{\pm3}2^2_{\pm4}3^2_{0}4_{\pm3}5^2_{0}6_{\pm3}7_{0},  1_{\pm3}2_{\pm4}3^2_{0}4^2_{\pm3}5^2_{0}6^2_{\pm3}7_{0},   1_{\pm3}2^2_{\pm4}3^2_{0}4_{\pm3}5^2_{0}6^2_{\pm3}7_{0}, \\ 
& \quad 1_{\pm3}2_{\pm4}3^2_{0}4^2_{\pm3}5^3_{0}6^2_{\pm3}7_{0}, 1_{\pm3}2^2_{\pm4}3^2_{0}4_{\pm3}5^3_{0}6^2_{\pm3}7_{0}, 1_{\pm3}2^2_{\pm4}3^2_{0}4^2_{\pm3}5^3_{0}6^2_{\pm3}7_{0}, 1_{\pm3}2^2_{\pm4}3^3_{0}4^2_{\pm3}5^3_{0}6^2_{\pm3}7_{0}, 1^2_{\pm3}2^2_{\pm4}3^3_{0}4^2_{\pm3}5^3_{0}6^2_{\pm3}7_{0}, \\
& (20)  1_{\pm 3}2_03_04_{\pm3}5_06_{\pm3}7_0,  1_{\pm3}2_{0}3_{0}4_{\pm3}^{2} 5_{0} 6_{\pm3}7_{0}, 1_{\pm3}2_{0}3_{0}4_{\pm3}^{2}5_{0}^{2}6_{\pm3}7_{0}, 1_{\pm3}2_{0}3_{0}^{2}4_{\pm3}^{2}5_{0}6_{\pm3}7_{0}, 1_{\pm3} 2_{0} 3_{0}^{2}4_{\pm3}^{2}5_{0}^{2}6_{\pm3}7_{0}, 1_{\pm3}2_{0}3_{0}4_{\pm3}^{2}5_{0}^{2}6_{\pm3}^{2}7_{0}, \\
& \quad 1_{\pm3} 2_{0} 3_{0}^{2} 4_{\pm3}^{3} 5_{0}^{2} 6_{\pm3} 7_{0}, 1_{\pm3}2_{0}3_{0}^{2}4_{\pm3}^{3}5_{0}^{2}6_{\pm3}^{2}7_{0}, 1_{\pm3} 2_{0}^{2} 3_{0}^{2} 4_{\pm3}^{3} 5_{0}^{2}  6_{\pm3} 7_{0}, 1_{\pm3}2_{0}3_{0}^{2}4_{\pm3}^{2} 5_{0}^{2} 6_{\pm3}^{2}7_{0},  1_{\pm3}2_{0} 3_{0}^{2}4_{\pm3}^{3}5_{0}^{3}6_{\pm3}^{2}7_{0}, \\
& \quad 1_{\pm3}2_{0}^{2}3_{0}^{2}4_{\pm3}^{3}5_{0}^{2}6_{\pm3}^{2}7_{0}, 1_{\pm3}2_{0}^{2}3_{0}^{2}4_{\pm3}^{3}5_{0}^{3}6_{\pm3}^{2}7_{0},  1_{\pm3}2_{0}^{2}3_{0}^{2}4_{\pm3}^{4}5_{0}^{3}6_{\pm3}^{2}7_{0}, 1_{\pm3}2_{0}^{2}3_{0}^{3}4_{\pm3}^{4}5_{0}^{3}6_{\pm3}^{2}7_{0}, 1_{\pm3}^{2}2_{0}^{2}3_{0}^{3}4_{\pm3}^{4}5_{0}^{3}6_{\pm3}^{2}7_{0}, \\
& (21) 1_{\pm4}2_{\pm3}4_{0}7_{\pm5}, 1_{\pm4}2_{\pm3}4^2_{0}7_{\pm5}, 1_{\pm4}2_{\pm3}3_{\pm3}4^2_{0}7_{\pm5}, 1_{\pm4}2_{\pm3}4^2_{0}5_{\pm3}7_{\pm5}, 1_{\pm4}2_{\pm3}3_{\pm3}4^2_{0}5_{\pm3}7_{\pm5}, 1_{\pm4}2_{\pm3}4^2_{0}6_{\pm4}7_{\pm5},   \\ 
& \quad 1_{\pm4}2_{\pm3}3_{\pm3}4^3_{0}5_{\pm3}7_{\pm5}, 1_{\pm4}2_{\pm3}3_{\pm3}4^2_{0}6_{\pm4}7_{\pm5}, 1_{\pm4}2^2_{\pm3}3_{\pm3}4^3_{0}5_{\pm3}7_{\pm5}, 1_{\pm4}2_{\pm3}3_{\pm3}4^3_{0}6_{\pm4}7_{\pm5}, 1_{\pm4}2^2_{\pm3}3_{\pm3}4^3_{0}6_{\pm4}7_{\pm5},   \\ 
& \quad 1_{\pm4}2_{\pm3}3_{\pm3}4^3_{0}5_{\pm3}6_{\pm4}7_{\pm5}, 1_{\pm4}2^2_{\pm3}3_{\pm3}4^3_{0}5_{\pm3}6_{\pm4}7_{\pm5}, 1_{\pm4}2^2_{\pm3}3_{\pm3}4^4_{0}5_{\pm3}6_{\pm4}7_{\pm5}, 1_{\pm4}2^2_{\pm3}3^2_{\pm3}4^4_{0}5_{\pm3}6_{\pm4}7_{\pm5}, 1^2_{\pm4}2^2_{\pm3}3_{\pm3}4^4_{0}5_{\pm3}6_{\pm4}7_{\pm5}, \\
& (22) 1_{\pm5}2_{\pm4}4_{\pm1}5_{\pm4}7_{0},  1_{\pm5}2_{\pm4}4^2_{\pm1}5_{\pm4}7_{0}, 1_{\pm5}2_{\pm4}3_{\pm4}4^2_{\pm1}5_{\pm4}7_{0},   1_{\pm5}2_{\pm4}4^2_{\pm1}5^2_{\pm4}7_{0}, 1_{\pm5}2_{\pm4}3_{\pm4}4^2_{\pm1}5^2_{\pm4}7_{0}, 1_{\pm5}2_{\pm4}4^2_{\pm1}5^2_{\pm4}6_{\pm1}7_{0},   \\
& \quad 1_{\pm5}2_{\pm4}3_{\pm4}4^3_{\pm1}5^2_{\pm4}7_{0},  1_{\pm5}2_{\pm4}3_{\pm4}4^2_{\pm1}5^2_{\pm4}6_{\pm1}7_{0},  1_{\pm5}2^2_{\pm4}3_{\pm4}4^3_{\pm1}5^2_{\pm4}7_{0}, 1_{\pm5}2_{\pm4}3_{\pm4}4^3_{\pm1}5^2_{\pm4}6_{\pm1}7_{0}, 1_{\pm5}2^2_{\pm4}3_{\pm4}4^3_{\pm1}5^2_{\pm4}6_{\pm1}7_{0},   \\  
& \quad 1_{\pm5}2_{\pm4}3_{\pm4}4^3_{\pm1}5^3_{\pm4}6_{\pm1}7_{0},  1_{\pm5}2^2_{\pm4}3_{\pm4}4^3_{\pm1}5^3_{\pm4}6_{\pm1}7_{0},  1_{\pm5}2^2_{\pm4}3_{\pm4}4^4_{\pm1}5^3_{\pm4}6_{\pm1}7_{0}, 1_{\pm5}2^2_{\pm4}3^2_{\pm4}4^4_{\pm1}5^3_{\pm4}6_{\pm1}7_{0}, 1^2_{\pm5}2^2_{\pm4}3_{\pm4}4^4_{\pm1}5^3_{\pm4}6_{\pm1}7_{0}, \\
& (23) 1_{\pm3}2_{\pm4}3_06_{\pm5}7_{\pm2},  1_{\pm3}2_{\pm4}3_04_{\pm1}6_{\pm5}7_{\pm2}, 1_{\pm3}2_{\pm4}3^2_06_{\pm5}7_{\pm2}, 1_{\pm3}2_{\pm4}3_04_{\pm1}5_{\pm4}6_{\pm5}7_{\pm2}, 1_{\pm3}2_{\pm4}3^2_05_{\pm4}6_{\pm5}7_{\pm2}, 1_{\pm3}2_{\pm4}3_04_{\pm1}6^2_{\pm5}7_{\pm2},   \\
& \quad 1_{\pm3}2_{\pm4}3^2_04_{\pm1}5_{\pm4}6_{\pm5}7_{\pm2}, 1_{\pm3}2_{\pm4}3^2_06^2_{\pm5}7_{\pm2}, 1_{\pm3}2^2_{\pm4}3^2_04_{\pm1}5_{\pm4}6_{\pm5}7_{\pm2}, 1_{\pm3}2_{\pm4}3^2_04_{\pm1}6^2_{\pm5}7_{\pm2}, 1_{\pm3}2^2_{\pm4}3^2_04_{\pm1}6^2_{\pm5}7_{\pm2},   \\
& \quad 1_{\pm3}2_{\pm4}3^2_04_{\pm1}5_{\pm4}6^2_{\pm5}7_{\pm2}, 1_{\pm3}2^2_{\pm4}3^2_04_{\pm1}5_{\pm4}6^2_{\pm5}7_{\pm2}, 1_{\pm3}2^2_{\pm4}3^2_04^2_{\pm1}5_{\pm4}6^2_{\pm5}7_{\pm2}, 1_{\pm3}2^2_{\pm4}3^3_04_{\pm1}5_{\pm4}6^2_{\pm5}7_{\pm2}, 1^2_{\pm3}2^2_{\pm4}3^3_04_{\pm1}5_{\pm4}6^2_{\pm5}7_{\pm2}, \\
& (24) 1_{\pm6}2_{\pm5}6_{0}7_{\pm3}, 1_{\pm6}2_{\pm5}4_{\pm2}6_{0}7_{\pm3},  1_{\pm6}2_{\pm5}3_{\pm5}4_{\pm2}6_{0}7_{\pm3},  1_{\pm6}2_{\pm5}5_{\pm1}6_{0}7_{\pm3}, 1_{\pm6}2_{\pm5}3_{\pm5}5_{\pm1}6_{0}7_{\pm3}, 1_{\pm6}2_{\pm5}6^2_{0}7_{\pm3},   \\
& \quad 1_{\pm6}2_{\pm5}3_{\pm5}4_{\pm2}5_{\pm1}6_{0}7_{\pm3},  1_{\pm6}2_{\pm5}3_{\pm5}6^2_{0}7_{\pm3}, 1_{\pm6}2^2_{\pm5}3_{\pm5}4_{\pm2}5_{\pm1}6_{0}7_{\pm3}, 1_{\pm6}2_{\pm5}3_{\pm5}4_{\pm2}6^2_{0}7_{\pm3}, 1_{\pm6}2^2_{\pm5}3_{\pm5}4_{\pm2}6^2_{0}7_{\pm3},   \\ 
& \quad 1_{\pm6}2_{\pm5}3_{\pm5}5_{\pm1}6^2_{0}7_{\pm3}, 1_{\pm6}2^2_{\pm5}3_{\pm5}5_{\pm1}6^2_{0}7_{\pm3}, 1_{\pm6}2^2_{\pm5}3_{\pm5}4_{\pm2}5_{\pm1}6^2_{0}7_{\pm3}, 1_{\pm6}2^2_{\pm5}3^2_{\pm5}4_{\pm2}5_{\pm1}6^2_{0}7_{\pm3}, 1^2_{\pm6}2^2_{\pm5}3_{\pm5}4_{\pm2}5_{\pm1}6^2_{0}7_{\pm3},
\end{align*}
\end{gather}

\begin{gather}
\begin{align*}
& (25) 1_{\pm4}2_{\pm3}4_{\pm0}6_{\pm4}7_{\pm1}, 1_{\pm4}2_{\pm3}4^2_{\pm0}6_{\pm4}7_{\pm1}, 1_{\pm4}2_{\pm3}3_{\pm3}4^2_{\pm0}6_{\pm4}7_{\pm1}, 1_{\pm4}2_{\pm3}4^2_{\pm0}5_{\pm3}6_{\pm4}7_{\pm1},  1_{\pm4}2_{\pm3}3_{\pm3}4^2_{\pm0}5_{\pm3}6_{\pm4}7_{\pm1}, 1_{\pm4}2_{\pm3}4^2_{\pm0}6^2_{\pm4}7_{\pm1},   \\ 
& \quad 1_{\pm4}2_{\pm3}3_{\pm3}4^3_{\pm0}5_{\pm3}6_{\pm4}7_{\pm1}, 1_{\pm4}2_{\pm3}3_{\pm3}4^2_{\pm0}6^2_{\pm4}7_{\pm1}, 1_{\pm4}2^2_{\pm3}3_{\pm3}4^3_{\pm0}5_{\pm3}6_{\pm4}7_{\pm1}, 1_{\pm4}2_{\pm3}3_{\pm3}4^3_{\pm0}6^2_{\pm4}7_{\pm1},  1_{\pm4}2^2_{\pm3}3_{\pm3}4^3_{\pm0}6^2_{\pm4}7_{\pm1},   \\  
& \quad 1_{\pm4}2_{\pm3}3_{\pm3}4^3_{\pm0}6^2_{\pm4}7_{\pm1}, 1_{\pm4}2^2_{\pm3}3_{\pm3}4^3_{\pm0}5_{\pm3}6^2_{\pm4}7_{\pm1},  1_{\pm4}2^2_{\pm3}3_{\pm3}4^4_{\pm0}5_{\pm3}6^2_{\pm4}7_{\pm1}, 1_{\pm4}2^2_{\pm3}3^2_{\pm3}4^4_{\pm0}5_{\pm3}6^2_{\pm4}7_{\pm1}, 1^2_{\pm4}2^2_{\pm3}3_{\pm3}4^4_{\pm0}5_{\pm3}6^2_{\pm4}7_{\pm1}, \\
& (26)  1_{\pm5}2_{\pm2}3_{\pm2}4_{\pm5}7_{0},  1_{\pm5}2_{\pm2}3_{\pm2}4^2_{\pm5}7_{0}, 1_{\pm5}2_{\pm2}3^2_{\pm2}4^2_{\pm5}7_{0}, 1_{\pm5}2_{\pm2}3_{\pm2}4^2_{\pm5}5_{\pm2}7_{0}, 1_{\pm5}2_{\pm2}3^2_{\pm2}4^2_{\pm5}5_{\pm2}7_{0}, 1_{\pm5}2_{\pm2}3_{\pm2}4^2_{\pm5}6_{\pm1}7_{0},   \\
& \quad 1_{\pm5}2_{\pm2}3^2_{\pm2}4^3_{\pm5}5_{\pm2}7_{0},  1_{\pm5}2_{\pm2}3^2_{\pm2}4^2_{\pm5}6_{\pm1}7_{0}, 1_{\pm5}2^2_{\pm2}3^2_{\pm2}4^3_{\pm5}5_{\pm2}7_{0}, 1_{\pm5}2_{\pm2}3^2_{\pm2}4^3_{\pm5}6_{\pm1}7_{0}, 1_{\pm5}2^2_{\pm2}3^2_{\pm2}4^3_{\pm5}6_{\pm1}7_{0},   \\
& \quad 1_{\pm5}2_{\pm2}3^2_{\pm2}4^3_{\pm5}5_{\pm2}6_{\pm1}7_{0}, 1_{\pm5}2^2_{\pm2}3^2_{\pm2}4^3_{\pm5}5_{\pm2}6_{\pm1}7_{0}, 1_{\pm5}2^2_{\pm2}3^2_{\pm2}4^4_{\pm5}5_{\pm2}6_{\pm1}7_{0}, 1_{\pm5}2^2_{\pm2}3^3_{\pm2}4^4_{\pm5}5_{\pm2}6_{\pm1}7_{0}, 1^2_{\pm5}2^2_{\pm2}3^3_{\pm2}4^4_{\pm5}5_{\pm2}6_{\pm1}7_{0}, \\
& (27) 1_{\pm5}2_{0}5_{0}7_{\pm4},  1_{\pm5}2_{0}4_{\pm3}5_{0}7_{\pm4},  1_{\pm5}2_{0}3_{\pm4}5_{0}7_{\pm4}, 1_{\pm5}2_{0}4_{\pm3}5^2_{0}7_{\pm4}, 1_{\pm5}2_{0}3_{\pm4}5^2_{0}7_{\pm4}, 1_{\pm5}2_{0}4_{\pm3}5^2_{0}6_{\pm3}7_{\pm4}, 1_{\pm5}2_{0}3_{\pm4}4_{\pm3}5^2_{0}7_{\pm4},  \\ 
& \quad 1_{\pm5}2_{0}3_{\pm4}5^2_{0}6_{\pm3}7_{\pm4}, 1_{\pm5}2^2_{0}3_{\pm4}4_{\pm3}5^2_{0}7_{\pm4}, 1_{\pm5}2_{0}3_{\pm4}4_{\pm3}5^2_{0}6_{\pm3}7_{\pm4}, 1_{\pm5}2^2_{0}3_{\pm4}4_{\pm3}5^2_{0}6_{\pm3}7_{\pm4},  1_{\pm5}2_{0}3_{\pm4}4_{\pm3}5^3_{0}6_{\pm3}7_{\pm4},  \\ 
& \quad 1_{\pm5}2^2_{0}3_{\pm4}4_{\pm3}5^3_{0}6_{\pm3}7_{\pm4}, 1_{\pm5}2^2_{0}3_{\pm4}4^2_{\pm3}5^3_{0}6_{\pm3}7_{\pm4}, 1_{\pm5}2^2_{0}3^2_{\pm4}4_{\pm3}5^3_{0}6_{\pm3}7_{\pm4}, 1^2_{\pm5}2^2_{0}3_{\pm4}4_{\pm3}5^3_{0}6_{\pm3}7_{\pm4}, \\
& (28)  1_{\pm 3}2_{\pm4}3_05_{\pm4}6_{\pm1}7_{\pm4},  1_{\pm3}2_{\pm4}3_04_{\pm1}5_{\pm4}6_{\pm1}7_{\pm4}, 1_{\pm3}2_{\pm4}3^2_05_{\pm4}6_{\pm1}7_{\pm4}, 1_{\pm3}2_{\pm4}3_04_{\pm1}5^2_{\pm4}6_{\pm1}7_{\pm4}, 1_{\pm3}2_{\pm4}3^2_05^2_{\pm4}6_{\pm1}7_{\pm4}, 1_{\pm3}2_{\pm4}3_04_{\pm1}5^2_{\pm4}6^{2}_{\pm1}7_{\pm4}, \\
& \quad 1_{\pm3}2_{\pm4}3^2_04_{\pm1}5^2_{\pm4}6_{\pm1}7_{\pm4}, 1_{\pm3}2_{\pm4}3^2_05^2_{\pm4}6^{2}_{\pm1}7_{\pm4}, 1_{\pm3}2^2_{\pm4}3^2_04_{\pm1}5^2_{\pm4}6_{\pm1}7_{\pm4}, 1_{\pm3}2_{\pm4}3^2_04_{\pm1}5^2_{\pm4}6^{2}_{\pm1}7_{\pm4}, 1_{\pm3}2^2_{\pm4}3^2_04_{\pm1}5^2_{\pm4}6^{2}_{\pm1}7_{\pm4}, \\
& \quad 1_{\pm3}2_{\pm4}3^2_04_{\pm1}5^3_{\pm4}6^{2}_{\pm1}7_{\pm4}, 1_{\pm3}2^2_{\pm4}3^2_04_{\pm1}5^3_{\pm4}6^{2}_{\pm1}7_{\pm4}, 1_{\pm3}2^2_{\pm4}3^2_04^2_{\pm1}5^3_{\pm4}6^{2}_{\pm1}7_{\pm4}, 1_{\pm3}2^2_{\pm4}3^3_04_{\pm1}5^3_{\pm4}6^{2}_{\pm1}7_{\pm4}, 1_{\pm3}^{2}2^2_{\pm4}3^3_04_{\pm1}5^3_{\pm4}6^{2}_{\pm1}7_{\pm4}, \\
& (29) 1_{\pm4}2_{\pm5}3_{\pm1}6_{0}7_{\pm3}, 1_{\pm4}2_{\pm5}3_{\pm1}4_{\pm4}6_{0}7_{\pm3},  1_{\pm4}2_{\pm5}3^2_{\pm1}4_{\pm4}6_{0}7_{\pm3}, 1_{\pm4}2_{\pm5}3_{\pm1}4_{\pm4}5_{\pm1}6_{0}7_{\pm3}, 1_{\pm4}2_{\pm5}3^2_{\pm1}4_{\pm4}5_{\pm1}6_{0}7_{\pm3}, 1_{\pm4}2_{\pm5}3_{\pm1}4_{\pm4}6^2_{0}7_{\pm3},   \\  
& \quad 1_{\pm4}2_{\pm5}3^2_{\pm1}4^2_{\pm4}5_{\pm1}6_{0}7_{\pm3}, 1_{\pm4}2_{\pm5}3^2_{\pm1}4_{\pm4}6^2_{0}7_{\pm3},  1_{\pm4}2^2_{\pm5}3^2_{\pm1}4_{\pm4}5_{\pm1}6_{0}7_{\pm3}, 1_{\pm4}2_{\pm5}3^2_{\pm1}4^2_{\pm4}6^2_{0}7_{\pm3}, 1_{\pm4}2^2_{\pm5}3^2_{\pm1}4_{\pm4}6^2_{0}7_{\pm3},   \\ 
& \quad 1_{\pm4}2_{\pm5}3^2_{\pm1}4^2_{\pm4}5_{\pm1}6^2_{0}7_{\pm3}, 1_{\pm4}2^2_{\pm5}3^2_{\pm1}4_{\pm4}5_{\pm1}6^2_{0}7_{\pm3}, 1_{\pm4}2^2_{\pm5}3^2_{\pm1}4^2_{\pm4}5_{\pm1}6^2_{0}7_{\pm3}, 1_{\pm4}2^2_{\pm5}3^3_{\pm1}4^2_{\pm4}5_{\pm1}6^2_{0}7_{\pm3}, 1^2_{\pm4}2^2_{\pm5}3^3_{\pm1}4^2_{\pm4}5_{\pm1}6^2_{0}7_{\pm3}, \\
& (30) 1_{\pm5}2_{0}5_{\pm4}6_{\pm1}7_{\pm4},  1_{\pm5}2_{0}4_{\pm1}5_{\pm4}6_{\pm1}7_{\pm4}, 1_{\pm5}2_{0}3_{\pm4}4_{\pm1}5_{\pm4}6_{\pm1}7_{\pm4}, 1_{\pm5}2_{0}4_{\pm1}5^2_{\pm4}6_{\pm1}7_{\pm4}, 1_{\pm5}2_{0}3_{\pm4}4_{\pm1}5^2_{\pm4}6_{\pm1}7_{\pm4}, 1_{\pm5}2_{0}4_{\pm1}5^2_{\pm4}6^2_{\pm1}7_{\pm4},   \\
& \quad 1_{\pm5}2_{0}3_{\pm4}4^2_{\pm1}5^2_{\pm4}6_{\pm1}7_{\pm4}, 1_{\pm5}2_{0}3_{\pm4}4_{\pm1}5^2_{\pm4}6^2_{\pm1}7_{\pm4}, 1_{\pm5}2^2_{0}3_{\pm4}4_{\pm1}5^2_{\pm4}6_{\pm1}7_{\pm4}, 1_{\pm5}2_{0}3_{\pm4}4^2_{\pm1}5^2_{\pm4}6^2_{\pm1}7_{\pm4}, 1_{\pm5}2^2_{0}3_{\pm4}4_{\pm1}5^2_{\pm4}6^2_{\pm1}7_{\pm4},   \\ 
& \quad 1_{\pm5}2_{0}3_{\pm4}4^2_{\pm1}5^3_{\pm4}6^2_{\pm1}7_{\pm4}, 1_{\pm5}2^2_{0}3_{\pm4}4_{\pm1}5^3_{\pm4}6^2_{\pm1}7_{\pm4}, 1_{\pm5}2^2_{0}3_{\pm4}4^2_{\pm1}5^3_{\pm4}6^2_{\pm1}7_{\pm4}, 1_{\pm5}2^2_{0}3^2_{\pm4}4^2_{\pm1}5^3_{\pm4}6^2_{\pm1}7_{\pm4}, 1^2_{\pm5}2^2_{0}3_{\pm4}4^2_{\pm1}5^3_{\pm4}6^2_{\pm1}7_{\pm4},  \\
& (31) 1_{\pm4}2_{\pm3}4_{0}5_{\pm3}6_{0}7_{\pm3}, 1_{\pm4}2_{\pm3}4^2_{0}5_{\pm3}6_{0}7_{\pm3},  1_{\pm4}2_{\pm3}3_{\pm3}4^2_{0}5_{\pm3}6_{0}7_{\pm3}, 1_{\pm4}2_{\pm3}4^2_{0}5^2_{\pm3}6_{0}7_{\pm3}, 1_{\pm4}2_{\pm3}3_{\pm3}4^2_{0}5^2_{\pm3}6_{0}7_{\pm3}, 1_{\pm4}2_{\pm3}4^2_{0}5^2_{\pm3}6^2_{0}7_{\pm3},   \\ 
& \quad  1_{\pm4}2_{\pm3}3_{\pm3}4^3_{0}5^2_{\pm3}6_{0}7_{\pm3},  1_{\pm4}2_{\pm3}3_{\pm3}4^2_{0}5^2_{\pm3}6^2_{0}7_{\pm3}, 1_{\pm4}2^2_{\pm3}3_{\pm3}4^3_{0}5^2_{\pm3}6_{0}7_{\pm3}, 1_{\pm4}2_{\pm3}3_{\pm3}4^3_{0}5^2_{\pm3}6^2_{0}7_{\pm3}, 1_{\pm4}2^2_{\pm3}3_{\pm3}4^3_{0}5^2_{\pm3}6^2_{0}7_{\pm3},   \\ 
& \quad  1_{\pm4}2_{\pm3}3_{\pm3}4^3_{0}5^3_{\pm3}6^2_{0}7_{\pm3},  1_{\pm4}2^2_{\pm3}3_{\pm3}4^3_{0}5^3_{\pm3}6^2_{0}7_{\pm3}, 1_{\pm4}2^2_{\pm3}3_{\pm3}4^4_{0}5^3_{\pm3}6^2_{0}7_{\pm3}, 1_{\pm4}2^2_{\pm3}3^2_{\pm3}4^4_{0}5^3_{\pm3}6^2_{0}7_{\pm3}, 1^2_{\pm4}2^2_{\pm3}3_{\pm3}4^4_{0}5^3_{\pm3}6^2_{0}7_{\pm3}, \\
& (32)  1_{\pm4}2_{\pm1}3_{\pm1}4_{\pm4}6_{0}7_{\pm3},  1_{\pm4}2_{\pm1}3_{\pm1}4^2_{\pm4}6_{0}7_{\pm3}, 1_{\pm4}2_{\pm1}3^2_{\pm1}4^2_{\pm4}6_{0}7_{\pm3},  1_{\pm4}2_{\pm1}3_{\pm1}4^2_{\pm4}5_{\pm1}6_{0}7_{\pm3},  1_{\pm4}2_{\pm1}3^2_{\pm1}4^2_{\pm4}5_{\pm1}6_{0}7_{\pm3}, 1_{\pm4}2_{\pm1}3_{\pm1}4^2_{\pm4}6^2_{0}7_{\pm3},   \\
& \quad 1_{\pm4}2_{\pm1}3^2_{\pm1}4^3_{\pm4}5_{\pm1}6_{0}7_{\pm3}, 1_{\pm4}2_{\pm1}3^2_{\pm1}4^2_{\pm4}6^2_{0}7_{\pm3},  1_{\pm4}2^2_{\pm1}3^2_{\pm1}4^3_{\pm4}5_{\pm1}6_{0}7_{\pm3}, 1_{\pm4}2_{\pm1}3^2_{\pm1}4^3_{\pm4}6^2_{0}7_{\pm3}, 1_{\pm4}2^2_{\pm1}3^2_{\pm1}4^3_{\pm4}6^2_{0}7_{\pm3},   \\
& \quad 1_{\pm4}2_{\pm1}3^2_{\pm1}4^3_{\pm4}5_{\pm1}6^2_{0}7_{\pm3},  1_{\pm4}2^2_{\pm1}3^2_{\pm1}4^3_{\pm4}5_{\pm1}6^2_{0}7_{\pm3}, 1_{\pm4}2^2_{\pm1}3^2_{\pm1}4^4_{\pm4}5_{\pm1}6^2_{0}7_{\pm3}, 1_{\pm4}2^2_{\pm1}3^3_{\pm1}4^4_{\pm4}5_{\pm1}6^2_{0}7_{\pm3},  1^2_{\pm4}2^2_{\pm1}3^3_{\pm1}4^4_{\pm4}5_{\pm1}6^2_{0}7_{\pm3}.
\end{align*}
\end{gather}

\clearpage

\section{Type $E_{8}$} \label{Hernandez-Leclerc modules of type E8}

The highest $l$-weight monomials of the Hernandez-Leclerc modules of type $E_8$ which are not of type $A$, $D$ or $E_7$ (up to spectral parameter shift) are as follows.  Each paragraph contains 44 different highest weights.
\begin{gather}
\begin{align*}
& (1) 1_0 2_{\pm5} 8_{\pm8}, 
1_0 3_{\pm1} 2_{\pm5} 8_{\pm8},  
1_0 4_{\pm2} 2_{\pm5} 8_{\pm8}, 
1_0 4_{\pm2} 2_{\pm5} 5_{\pm5} 8_{\pm8}, 
1_0 3_{\pm1} 2_{\pm5} 5_{\pm5} 8_{\pm8}, 
1_0 4_{\pm2} 2_{\pm5} 6_{\pm6} 8_{\pm8}, 
1_0 4_{\pm2} 2_{\pm5} 7_{\pm7} 8_{\pm8}, \\
& 1_0 3_{\pm1} 2_{\pm5} 7_{\pm7} 8_{\pm8}, 
1_0 3_{\pm1} 4_{\pm2} 2_{\pm5} 5_{\pm5} 8_{\pm8}, 
1_0 3_{\pm1} 4_{\pm2} 2_{\pm5} 7_{\pm7} 8_{\pm8}, 
1_0 3_{\pm1} 4_{\pm2} 2_{\pm5} 6_{\pm6} 8_{\pm8}, 
1_0 3_{\pm1} 2_{\pm5} 6_{\pm6} 8_{\pm8}, \\  
& 1_0 3_{\pm1} 4_{\pm2} 2_{\pm5} 6_{\pm6} 7_{\pm7} 8_{\pm8}, 
1_0 3_{\pm1} 4_{\pm2} 2_{\pm5}^{2} 5_{\pm5} 6_{\pm6} 8_{\pm8}, 
1_0 3_{\pm1} 4_{\pm2}^{2} 2_{\pm5}^{2} 5_{\pm5} 6_{\pm6} 8_{\pm8},
1_0 3_{\pm1}^{2} 4_{\pm2} 2_{\pm5}^{2} 5_{\pm5} 6_{\pm6} 8_{\pm8}, \\
& 1_0 3_{\pm1} 4_{\pm2} 2_{\pm5}^{2} 5_{\pm5} 7_{\pm7} 8_{\pm8}, 
1_0 3_{\pm1}^{2} 4_{\pm2} 2_{\pm5}^{2} 5_{\pm5} 7_{\pm7} 8_{\pm8},
1_0 3_{\pm1} 4_{\pm2}^{2} 2_{\pm5}^{2} 6_{\pm6} 7_{\pm7} 8_{\pm8},
1_0^{2} 3_{\pm1} 4_{\pm2} 2_{\pm5}^{2} 5_{\pm5} 7_{\pm7} 8_{\pm8}, \\
&  1_0 3_{\pm1}^{2} 4_{\pm2} 2_{\pm5}^{2} 6_{\pm6} 7_{\pm7} 8_{\pm8},
1_0^{2} 3_{\pm1} 4_{\pm2} 2_{\pm5}^{2} 5_{\pm5} 6_{\pm6} 8_{\pm8},
1_0 3_{\pm1} 4_{\pm2} 2_{\pm5}^{2} 6_{\pm6} 7_{\pm7} 8_{\pm8}, 
1_0^{2} 3_{\pm1} 4_{\pm2} 2_{\pm5}^{2} 6_{\pm6} 7_{\pm7} 8_{\pm8}, \\
& 1_0 3_{\pm1} 4_{\pm2} 2_{\pm5}^{2} 7_{\pm7} 8_{\pm8},
1_0 3_{\pm1} 4_{\pm2} 2_{\pm5}^{2} 6_{\pm6} 8_{\pm8},
1_0 3_{\pm1} 4_{\pm2} 2_{\pm5}^{2} 5_{\pm5} 8_{\pm8},
1_0 3_{\pm1} 4_{\pm2} 2_{\pm5} 5_{\pm5} 6_{\pm6} 8_{\pm8},
1_0 3_{\pm1} 4_{\pm2} 2_{\pm5} 5_{\pm5} 7_{\pm7} 8_{\pm8},\\
&  1_0 3_{\pm1} 4_{\pm2}^{2} 2_{\pm5}^{2} 5_{\pm5} 6_{\pm6} 7_{\pm7} 8_{\pm8}, 
1_0^{2} 3_{\pm1} 4_{\pm2}^{2} 2_{\pm5}^{2} 5_{\pm5} 6_{\pm6} 7_{\pm7} 8_{\pm8},
1_0^{2} 3_{\pm1}^{2} 4_{\pm2} 2_{\pm5}^{2} 5_{\pm5} 6_{\pm6} 7_{\pm7} 8_{\pm8},  
1_0 3_{\pm1}^{2} 4_{\pm2}^{2} 2_{\pm5}^{2} 5_{\pm5} 6_{\pm6} 7_{\pm7} 8_{\pm8}, \\ 
& 1_0^{2} 3_{\pm1}^{2} 4_{\pm2}^{2} 2_{\pm5}^{3} 5_{\pm5} 6_{\pm6} 7_{\pm7} 8_{\pm8}, 
1_0 3_{\pm1}^{2} 4_{\pm2} 2_{\pm5}^{2} 5_{\pm5} 6_{\pm6} 7_{\pm7} 8_{\pm8},  
1_0 3_{\pm1}^{2} 4_{\pm2}^{2} 2_{\pm5}^{3} 5_{\pm5} 6_{\pm6} 7_{\pm7} 8_{\pm8}, 
1_0^{2} 3_{\pm1} 4_{\pm2} 2_{\pm5}^{2} 5_{\pm5} 6_{\pm6} 7_{\pm7} 8_{\pm8}, \\
& 1_0^{2} 3_{\pm1} 4_{\pm2}^{2} 2_{\pm5}^{3} 5_{\pm5} 6_{\pm6} 7_{\pm7} 8_{\pm8},  
1_0^{2} 3_{\pm1}^{2} 4_{\pm2} 2_{\pm5}^{3} 5_{\pm5} 6_{\pm6} 7_{\pm7} 8_{\pm8},
1_0^{2} 3_{\pm1}^{2} 4_{\pm2}^{2} 2_{\pm5}^{3} 5_{\pm5}^{2}6_{\pm6} 7_{\pm7} 8_{\pm8}, 
1_0^{2} 3_{\pm1}^{2} 4_{\pm2}^{2} 2_{\pm5}^{3} 5_{\pm5} 6_{\pm6}^{2} 7_{\pm7} 8_{\pm8}, \\
& 1_0 3_{\pm1} 4_{\pm2}^{2} 2_{\pm5}^{2} 5_{\pm5} 7_{\pm7} 8_{\pm8},
1_0^{2} 3_{\pm1}^{2} 4_{\pm2}^{2} 2_{\pm5}^{3} 5_{\pm5} 6_{\pm6} 7_{\pm7}^{2} 8_{\pm8},
1_0^{2} 3_{\pm1}^{2} 4_{\pm2}^{2} 2_{\pm5}^{3} 5_{\pm5} 6_{\pm6} 7_{\pm7} 8_{\pm8}^{2}. \\
& \quad \\
& (2) 1_0 8_{\pm4} 2_{\pm5} 7_{\pm7}, 
1_0 3_{\pm1} 8_{\pm4} 2_{\pm5} 5_{\pm5} 7_{\pm7}, 
1_{0} 3_{\pm1} 4_{\pm2} 8_{\pm4} 2_{\pm5}^{2} 5_{\pm5} 6_{\pm6} 7_{\pm7}, 
1_0^{2} 3_{\pm1} 4_{\pm2} 8_{\pm4} 2_{\pm5}^{2} 5_{\pm5} 6_{\pm6} 7_{\pm7}^{2},
1_{0} 3_{\pm1} 8_{\pm4} 2_{\pm5} 6_{\pm6} 7_{\pm7},  \\
& 1_0 3_{\pm1} 4_{\pm2} 8_{\pm4} 2_{\pm5}^{2} 5_{\pm5} 7_{\pm7}^{2},
1_0^{2} 3_{\pm1}^{2} 4_{\pm2} 8_{\pm4} 2_{\pm5}^{3} 5_{\pm5} 6_{\pm6} 7_{\pm7}^{2},
1_0 3_{\pm1}^{2} 4_{\pm2} 8_{\pm4} 2_{\pm5}^{2} 5_{\pm5} 6_{\pm6} 7_{\pm7}^{2}, 
1_0 3_{\pm1} 4_{\pm2} 8_{\pm4} 2_{\pm5}^{2} 6_{\pm6} 7_{\pm7}^{2}, \\
&  1_0 3_{\pm1} 8_{\pm4} 2_{\pm5} 7_{\pm7}, 
1_0 3_{\pm1} 8_{\pm4} 2_{\pm5} 7_{\pm7}^{2},
1_0 4_{\pm2} 8_{\pm4} 2_{\pm5} 5_{\pm5} 7_{\pm7},
1_0^{2} 3_{\pm1} 4_{\pm2} 8_{\pm4} 2_{\pm5}^{2} 5_{\pm5} 6_{\pm6} 7_{\pm7}, 
1_0^{2} 3_{\pm1}^{2} 4_{\pm2}^{2} 8_{\pm4} 2_{\pm5}^{3} 5_{\pm5}^{2} 6_{\pm6} 7_{\pm7}^{2}, \\
& 1_0 3_{\pm1} 4_{\pm2} 8_{\pm4} 2_{\pm5} 5_{\pm5} 6_{\pm6} 7_{\pm7},
1_0^{2} 3_{\pm1} 4_{\pm2}^{2} 8_{\pm4} 2_{\pm5}^{3} 5_{\pm5} 6_{\pm6} 7_{\pm7}^{2},
1_0^{2} 3_{\pm1} 4_{\pm2} 8_{\pm4} 2_{\pm5}^{2} 5_{\pm5} 7_{\pm7}^{2}, 
1_0 3_{\pm1} 4_{\pm2} 8_{\pm4} 2_{\pm5}^{2} 5_{\pm5} 7_{\pm7}, \\
& 1_0^{2} 3_{\pm1}^{2} 4_{\pm2}^{2} 8_{\pm4} 2_{\pm5}^{3} 5_{\pm5} 6_{\pm6}^{2} 7_{\pm7}^{2},
1_0^{2} 3_{\pm1}^{2} 4_{\pm2} 8_{\pm4} 2_{\pm5}^{2} 5_{\pm5} 6_{\pm6} 7_{\pm7}^{2},
1_0 3_{\pm1}^{2} 4_{\pm2} 8_{\pm4} 2_{\pm5}^{2} 5_{\pm5} 6_{\pm6} 7_{\pm7}, 
1_0 3_{\pm1} 4_{\pm2}^{2} 8_{\pm4} 2_{\pm5}^{2} 5_{\pm5} 6_{\pm6} 7_{\pm7}^{2}, \\
& 1_0^{2} 3_{\pm1}^{2} 4_{\pm2}^{2} 8_{\pm4} 2_{\pm5}^{3} 5_{\pm5} 6_{\pm6} 7_{\pm7}^{3},
1_0 3_{\pm1} 4_{\pm2} 8_{\pm4} 2_{\pm5} 5_{\pm5} 7_{\pm7}^{2},
1_0 3_{\pm1}^{2} 4_{\pm2}^{2} 8_{\pm4} 2_{\pm5}^{3} 5_{\pm5} 6_{\pm6} 7_{\pm7}^{2},
1_0 4_{\pm2} 8_{\pm4} 2_{\pm5} 6_{\pm6} 7_{\pm7}, \\
& 1_0^{2} 3_{\pm1} 4_{\pm2} 8_{\pm4} 2_{\pm5}^{2} 6_{\pm6} 7_{\pm7}^{2},
1_0^{2} 3_{\pm1}^{2} 4_{\pm2}^{2} 8_{\pm4}^{2} 2_{\pm5}^{3} 5_{\pm5} 6_{\pm6} 7_{\pm7}^{3},
1_0 3_{\pm1} 4_{\pm2} 8_{\pm4} 2_{\pm5}^{2} 6_{\pm6} 7_{\pm7}, 
1_0^{2} 3_{\pm1} 4_{\pm2}^{2} 8_{\pm4} 2_{\pm5}^{2} 5_{\pm5} 6_{\pm6} 7_{\pm7}^{2}, \\
& 1_0 4_{\pm2} 8_{\pm4} 2_{\pm5} 7_{\pm7}^{2},
1_0 3_{\pm1}^{2} 4_{\pm2} 8_{\pm4} 2_{\pm5}^{2} 5_{\pm5} 7_{\pm7}^{2}, 
1_0^{2} 3_{\pm1}^{2} 4_{\pm2}^{2} 8_{\pm4} 2_{\pm5}^{3} 5_{\pm5} 6_{\pm6} 7_{\pm7}^{2}, 
1_0 3_{\pm1}^{2} 4_{\pm2} 8_{\pm4} 2_{\pm5}^{2} 6_{\pm6} 7_{\pm7}^{2},
1_0 3_{\pm1} 4_{\pm2} 8_{\pm4} 2_{\pm5}^{2} 7_{\pm7}^{2}, \\
& 1_0 3_{\pm1} 4_{\pm2}^{2} 8_{\pm4} 2_{\pm5}^{2} 5_{\pm5} 6_{\pm6} 7_{\pm7}, 
1_0 3_{\pm1}^{2} 4_{\pm2}^{2} 8_{\pm4} 2_{\pm5}^{2} 5_{\pm5} 6_{\pm6} 7_{\pm7}^{2},
1_0 3_{\pm1} 4_{\pm2}^{2} 8_{\pm4} 2_{\pm5}^{2} 5_{\pm5} 7_{\pm7}^{2}, 
1_0 3_{\pm1} 4_{\pm2} 8_{\pm4} 2_{\pm5} 6_{\pm6} 7_{\pm7}^{2}, \\
& 1_0 3_{\pm1} 4_{\pm2} 8_{\pm4} 2_{\pm5} 5_{\pm5} 7_{\pm7}, 
1_0 3_{\pm1} 4_{\pm2}^{2} 8_{\pm4} 2_{\pm5}^{2} 6_{\pm6} 7_{\pm7}^{2},
1_0 3_{\pm1} 4_{\pm2} 8_{\pm4} 2_{\pm5} 6_{\pm6} 7_{\pm7},
1_0 3_{\pm1} 4_{\pm2} 8_{\pm4} 2_{\pm5} 7_{\pm7}^{2},
1_0 4_{\pm2} 8_{\pm4} 2_{\pm5} 7_{\pm7}. \\
& \quad \\
& (3) 1_{0} 8_{\pm2}2_{\pm5} 6_{\pm6},
1_{0} 4_{\pm2} 8_{\pm2}2_{\pm5} 6_{\pm6},
1_{0}3_{\pm1} 8_{\pm2}2_{\pm5} 6_{\pm6},
1_{0} 3_{\pm1} 8_{\pm2}2_{\pm5} 5_{\pm5} 6_{\pm6},
1_{0} 3_{\pm1} 4_{\pm2} 8_{\pm2} 2_{\pm5}^{2} 5_{\pm5} 6_{\pm6}^{2}, 
1_{0} 4_{\pm2} 8_{\pm2}2_{\pm5} 5_{\pm5} 6_{\pm6}, \\
& 1_{0}3_{\pm1} 8_{\pm2}2_{\pm5} 6_{\pm6}^{2},
1_{0}^{2} 3_{\pm1} 4_{\pm2} 7_{\pm3} 8_{\pm2}2_{\pm5}^{2}5_{\pm5} 6_{\pm6}^{3},
1_{0}^{2} 3_{\pm1} 4_{\pm2} 8_{\pm2}2_{\pm5}^{2}5_{\pm5} 6_{\pm6}^{2}, 
1_{0} 3_{\pm1} 4_{\pm2} 8_{\pm2}2_{\pm5} 5_{\pm5} 6_{\pm6}^{2}, 
1_{0} 3_{\pm1} 4_{\pm2} 8_{\pm2}2_{\pm5} 6_{\pm6}^{2},  \\
&  1_{0} 4_{\pm2} 8_{\pm2}2_{\pm5} 6_{\pm6}^{2},
1_{0} 3_{\pm1} 4_{\pm2} 7_{\pm3} 8_{\pm2}2_{\pm5}^{2}5_{\pm5} 6_{\pm6}^{2},
1_{0}^{2} 3_{\pm1}^{2}4_{\pm2} 7_{\pm3} 8_{\pm2}2_{\pm5}^{3}5_{\pm5} 6_{\pm6}^{3},
1_{0} 3_{\pm1} 4_{\pm2} 8_{\pm2}2_{\pm5}^{2}5_{\pm5} 6_{\pm6},  
1_{0} 4_{\pm2} 7_{\pm3} 8_{\pm2}2_{\pm5} 6_{\pm6}^{2},  \\
& 1_{0} 3_{\pm1} 4_{\pm2} 8_{\pm2}2_{\pm5} 5_{\pm5} 6_{\pm6},
1_{0} 3_{\pm1}^{2}4_{\pm2} 7_{\pm3} 8_{\pm2}2_{\pm5}^{2}5_{\pm5} 6_{\pm6}^{3}, 
1_{0} 3_{\pm1} 4_{\pm2} 7_{\pm3} 8_{\pm2}2_{\pm5}^{2}6_{\pm6}^{3},
1_{0}^{2} 3_{\pm1}^{2}4_{\pm2}^{2}7_{\pm3} 8_{\pm2}2_{\pm5}^{3}5_{\pm5}^{2}6_{\pm6}^{3}, \\
& 1_{0}^{2} 3_{\pm1} 4_{\pm2}^{2}7_{\pm3} 8_{\pm2}2_{\pm5}^{3}5_{\pm5} 6_{\pm6}^{3},
1_{0} 3_{\pm1} 7_{\pm3} 8_{\pm2}2_{\pm5} 6_{\pm6}^{2},
1_{0}^{2} 3_{\pm1} 4_{\pm2} 7_{\pm3} 8_{\pm2}2_{\pm5}^{2}5_{\pm5} 6_{\pm6}^{2},
1_{0} 3_{\pm1}^{2}4_{\pm2} 8_{\pm2}2_{\pm5}^{2}5_{\pm5} 6_{\pm6}^{2}, \\
& 1_{0}^{2} 3_{\pm1}^{2}4_{\pm2}^{2}7_{\pm3} 8_{\pm2}2_{\pm5}^{3}5_{\pm5} 6_{\pm6}^{4},
1_{0} 3_{\pm1} 4_{\pm2} 8_{\pm2}2_{\pm5}^{2}6_{\pm6}^{2},
1_{0}^{2}3_{\pm1}^{2}4_{\pm2} 7_{\pm3} 8_{\pm2}2_{\pm5}^{2}5_{\pm5} 6_{\pm6}^{3},
1_{0} 3_{\pm1} 4_{\pm2}^{2}7_{\pm3} 8_{\pm2}2_{\pm5}^{2}5_{\pm5} 6_{\pm6}^{3}, \\
& 1_{0}^{2} 3_{\pm1}^{2}4_{\pm2}^{2}7_{\pm3} 8_{\pm2}^{2} 2_{\pm5}^{3} 5_{\pm5} 6_{\pm6}^{4},
1_{0} 3_{\pm1} 4_{\pm2} 7_{\pm3} 8_{\pm2}2_{\pm5} 5_{\pm5} 6_{\pm6}^{2},
1_{0} 3_{\pm1}^{2}4_{\pm2}^{2}7_{\pm3} 8_{\pm2}2_{\pm5}^{3}5_{\pm5} 6_{\pm6}^{3},
1_{0} 3_{\pm1} 4_{\pm2}^{2}8_{\pm2}2_{\pm5}^{2}5_{\pm5} 6_{\pm6}^{2}, \\
& 1_{0}^{2} 3_{\pm1} 4_{\pm2} 7_{\pm3} 8_{\pm2}2_{\pm5}^{2}6_{\pm6}^{3},
1_{0}^{2} 3_{\pm1}^{2}4_{\pm2}^{2}7_{\pm3}^{2}8_{\pm2}2_{\pm5}^{3}5_{\pm5} 6_{\pm6}^{4},
1_{0}^{2} 3_{\pm1} 4_{\pm2}^{2}7_{\pm3} 8_{\pm2}2_{\pm5}^{2}5_{\pm5} 6_{\pm6}^{3},
1_{0} 3_{\pm1} 4_{\pm2} 7_{\pm3} 8_{\pm2}2_{\pm5} 6_{\pm6}^{3}, \\
& 1_{0} 3_{\pm1}^{2}4_{\pm2} 7_{\pm3} 8_{\pm2}2_{\pm5}^{2}5_{\pm5} 6_{\pm6}^{2},
1_{0}^{2}3_{\pm1}^{2}4_{\pm2}^{2}7_{\pm3} 8_{\pm2}2_{\pm5}^{3}5_{\pm5} 6_{\pm6}^{3},
1_{0} 3_{\pm1}^{2}4_{\pm2} 7_{\pm3} 8_{\pm2}2_{\pm5}^{2}6_{\pm6}^{3}, 
1_{0} 3_{\pm1} 4_{\pm2} 7_{\pm3} 8_{\pm2}2_{\pm5}^{2} 6_{\pm6}^{2}, \\
& 1_{0} 3_{\pm1}^{2}4_{\pm2}^{2}7_{\pm3} 8_{\pm2}2_{\pm5}^{2}5_{\pm5} 6_{\pm6}^{3}, 
1_{0} 3_{\pm1} 4_{\pm2}^{2}7_{\pm3} 8_{\pm2}2_{\pm5}^{2}5_{\pm5} 6_{\pm6}^{2},
1_{0} 3_{\pm1} 4_{\pm2}^{2}7_{\pm3} 8_{\pm2}2_{\pm5}^{2}6_{\pm6}^{3},
1_{0} 3_{\pm1} 4_{\pm2} 7_{\pm3} 8_{\pm2}2_{\pm5} 6_{\pm6}^{2}.
\end{align*}
\end{gather}


\begin{gather}
\begin{align*}
& (4)  1_0 7_{\pm3} 2_{\pm5} 6_{\pm6} 8_{\pm6},
1_0 3_{\pm1} 7_{\pm3} 2_{\pm5} 5_{\pm5} 6_{\pm6} 8_{\pm6},
1_0 3_{\pm1} 4_{\pm2} 7_{\pm3} 2_{\pm5}^{2}5_{\pm5} 6_{\pm6}^{2}8_{\pm6},
1_0 4_{\pm2} 7_{\pm3} 2_{\pm5} 5_{\pm5} 6_{\pm6} 8_{\pm6}, \\
& 1_0 3_{\pm1} 7_{\pm3} 2_{\pm5} 6_{\pm6}^{2}8_{\pm6},
1_0^{2}3_{\pm1} 4_{\pm2} 7_{\pm3}^{2}2_{\pm5}^{2}5_{\pm5} 6_{\pm6}^{3}8_{\pm6},
1_0^{2}3_{\pm1} 4_{\pm2} 7_{\pm3} 2_{\pm5}^{2}5_{\pm5} 6_{\pm6}^{2}8_{\pm6},
1_0 3_{\pm1} 4_{\pm2} 7_{\pm3} 2_{\pm5} 5_{\pm5} 6_{\pm6}^{2}8_{\pm6}, \\
& 1_0 3_{\pm1} 4_{\pm2} 7_{\pm3}^{2}2_{\pm5}^{2}5_{\pm5} 6_{\pm6}^{2}8_{\pm6},
1_0^{2}3_{\pm1}^{2}4_{\pm2} 7_{\pm3}^{2}2_{\pm5}^{3}5_{\pm5} 6_{\pm6}^{3}8_{\pm6},
1_0 3_{\pm1} 4_{\pm2} 7_{\pm3} 2_{\pm5}^{2}5_{\pm5} 6_{\pm6} 8_{\pm6},
1_0 3_{\pm1}^{2}4_{\pm2} 7_{\pm3}^{2}2_{\pm5}^{2}5_{\pm5} 6_{\pm6}^{3}8_{\pm6}, \\
& 1_0 4_{\pm2} 7_{\pm3} 2_{\pm5} 6_{\pm6}^{2}8_{\pm6},
1_0 3_{\pm1} 4_{\pm2} 7_{\pm3}^{2}2_{\pm5}^{2}6_{\pm6}^{3}8_{\pm6},
1_0^{2}3_{\pm1}^{2}4_{\pm2}^{2}7_{\pm3}^{2}2_{\pm5}^{3}5_{\pm5}^{2}6_{\pm6}^{3}8_{\pm6},
1_0^{2}3_{\pm1} 4_{\pm2}^{2}7_{\pm3}^{2}2_{\pm5}^{3}5_{\pm5} 6_{\pm6}^{3}8_{\pm6}, \\
& 1_0 3_{\pm1} 7_{\pm3}^{2}2_{\pm5} 6_{\pm6}^{2}8_{\pm6},
1_0^{2}3_{\pm1} 4_{\pm2} 7_{\pm3}^{2}2_{\pm5}^{2}5_{\pm5} 6_{\pm6}^{2}8_{\pm6},
1_0 3_{\pm1}^{2}4_{\pm2} 7_{\pm3} 2_{\pm5}^{2}5_{\pm5} 6_{\pm6}^{2}8_{\pm6},
1_0^{2}3_{\pm1}^{2}4_{\pm2}^{2}7_{\pm3}^{2}2_{\pm5}^{3}5_{\pm5} 6_{\pm6}^{4}8_{\pm6},\\
& 1_0 3_{\pm1} 4_{\pm2} 7_{\pm3} 2_{\pm5}^{2}6_{\pm6}^{2}8_{\pm6},
1_0^{2}3_{\pm1}^{2}4_{\pm2} 7_{\pm3}^{2}2_{\pm5}^{2}5_{\pm5} 6_{\pm6}^{3}8_{\pm6},
1_0 3_{\pm1} 4_{\pm2}^{2}7_{\pm3}^{2}2_{\pm5}^{2}5_{\pm5} 6_{\pm6}^{3}8_{\pm6},
1_0^{2}3_{\pm1}^{2}4_{\pm2}^{2}7_{\pm3}^{3}2_{\pm5}^{3}5_{\pm5} 6_{\pm6}^{4}8_{\pm6}^{2}, \\
& 1_0 3_{\pm1} 4_{\pm2} 7_{\pm3}^{2}2_{\pm5} 5_{\pm5} 6_{\pm6}^{2}8_{\pm6},
1_0 3_{\pm1}^{2}4_{\pm2}^{2}7_{\pm3}^{2}2_{\pm5}^{3}5_{\pm5} 6_{\pm6}^{3}8_{\pm6},
1_0 3_{\pm1} 4_{\pm2}^{2}7_{\pm3} 2_{\pm5}^{2}5_{\pm5} 6_{\pm6}^{2}8_{\pm6},
1_0^{2}3_{\pm1} 4_{\pm2} 7_{\pm3}^{2}2_{\pm5}^{2}6_{\pm6}^{3}8_{\pm6}, \\
& 1_0^{2}3_{\pm1}^{2}4_{\pm2}^{2}7_{\pm3}^{3}2_{\pm5}^{3}5_{\pm5} 6_{\pm6}^{4}8_{\pm6},
1_0^{2}3_{\pm1} 4_{\pm2}^{2}7_{\pm3}^{2}2_{\pm5}^{2}5_{\pm5} 6_{\pm6}^{3}8_{\pm6},
1_0 3_{\pm1} 4_{\pm2} 7_{\pm3}^{2}2_{\pm5} 6_{\pm6}^{3}8_{\pm6},
1_0 3_{\pm1} 7_{\pm3} 2_{\pm5} 6_{\pm6} 8_{\pm6}, \\
& 1_0 3_{\pm1}^{2}4_{\pm2} 7_{\pm3}^{2}2_{\pm5}^{2}5_{\pm5} 6_{\pm6}^{2}8_{\pm6},
1_0^{2}3_{\pm1}^{2}4_{\pm2}^{2}7_{\pm3}^{2}2_{\pm5}^{3}5_{\pm5} 6_{\pm6}^{3}8_{\pm6},
1_0 3_{\pm1} 4_{\pm2} 7_{\pm3} 2_{\pm5} 5_{\pm5} 6_{\pm6} 8_{\pm6},
1_0 3_{\pm1}^{2}4_{\pm2} 7_{\pm3}^{2}2_{\pm5}^{2}6_{\pm6}^{3}8_{\pm6}, \\
& 1_0 4_{\pm2} 7_{\pm3}^{2}2_{\pm5} 6_{\pm6}^{2}8_{\pm6},
1_0 3_{\pm1} 4_{\pm2} 7_{\pm3}^{2}2_{\pm5}^{2}6_{\pm6}^{2}8_{\pm6},
1_0 3_{\pm1}^{2}4_{\pm2}^{2}7_{\pm3}^{2}2_{\pm5}^{2}5_{\pm5} 6_{\pm6}^{3}8_{\pm6},
1_0 3_{\pm1} 4_{\pm2}^{2}7_{\pm3}^{2}2_{\pm5}^{2}5_{\pm5} 6_{\pm6}^{2}8_{\pm6}, \\
& 1_0 3_{\pm1} 4_{\pm2} 7_{\pm3} 2_{\pm5} 6_{\pm6}^{2}8_{\pm6},
1_0 3_{\pm1} 4_{\pm2}^{2}7_{\pm3}^{2}2_{\pm5}^{2}6_{\pm6}^{3}8_{\pm6},
1_0 4_{\pm2} 7_{\pm3} 2_{\pm5} 6_{\pm6} 8_{\pm6},
1_0 3_{\pm1} 4_{\pm2} 7_{\pm3}^{2}2_{\pm5} 6_{\pm6}^{2}8_{\pm6}. \\
%
& \quad \\
& (5)  1_{0} 6_{\pm2} 2_{\pm5} 5_{\pm5} 8_{\pm6},  
1_{0} 4_{\pm2} 6_{\pm2} 2_{\pm5} 5_{\pm5} 8_{\pm6},  
1_{0} 3_{\pm1} 6_{\pm2} 2_{\pm5} 5_{\pm5}^{2} 8_{\pm6},  
1_{0} 3_{\pm1} 4_{\pm2} 6_{\pm2}^{2} 2_{\pm5}^{2} 5_{\pm5}^{3} 7_{\pm2} 8_{\pm6},   \\
& 1_{0} 4_{\pm2} 6_{\pm2} 2_{\pm5} 5_{\pm5}^{2} 8_{\pm6},  
1_{0} 3_{\pm1} 6_{\pm2}^{2} 2_{\pm5} 5_{\pm5}^{2} 7_{\pm2} 8_{\pm6},  
1_{0}^{2} 3_{\pm1} 4_{\pm2} 6_{\pm2}^{3} 2_{\pm5}^{2} 5_{\pm5}^{4} 7_{\pm2} 8_{\pm6},  
1_{0}^{2} 3_{\pm1} 4_{\pm2} 6_{\pm2}^{2} 2_{\pm5}^{2} 5_{\pm5}^{3} 7_{\pm2} 8_{\pm6},   \\
& 1_{0} 3_{\pm1} 4_{\pm2} 6_{\pm2}^{2} 2_{\pm5}^{2} 5_{\pm5}^{3} 8_{\pm6},  
1_{0}^{2} 3_{\pm1}^{2} 4_{\pm2} 6_{\pm2}^{3} 2_{\pm5}^{3} 5_{\pm5}^{4} 7_{\pm2} 8_{\pm6},  
1_{0} 3_{\pm1} 4_{\pm2} 6_{\pm2} 2_{\pm5}^{2} 5_{\pm5}^{2} 8_{\pm6},  
1_{0} 3_{\pm1} 4_{\pm2} 6_{\pm2}^{2} 2_{\pm5} 5_{\pm5}^{3} 7_{\pm2} 8_{\pm6},  \\
& 1_{0} 3_{\pm1}^{2} 4_{\pm2} 6_{\pm2}^{3} 2_{\pm5}^{2} 5_{\pm5}^{4} 7_{\pm2} 8_{\pm6},  
1_{0} 4_{\pm2} 6_{\pm2}^{2} 2_{\pm5} 5_{\pm5}^{2} 7_{\pm2} 8_{\pm6},  
1_{0}^{2} 3_{\pm1}^{2} 4_{\pm2}^{2} 6_{\pm2}^{3} 2_{\pm5}^{3} 5_{\pm5}^{5} 7_{\pm2} 8_{\pm6},  
1_{0} 3_{\pm1} 4_{\pm2} 6_{\pm2}^{3} 2_{\pm5}^{2} 5_{\pm5}^{3} 7_{\pm2} 8_{\pm6},  \\
& 1_{0}^{2} 3_{\pm1} 4_{\pm2}^{2} 6_{\pm2}^{3} 2_{\pm5}^{3} 5_{\pm5}^{4} 7_{\pm2} 8_{\pm6},  
1_{0} 3_{\pm1} 6_{\pm2}^{2} 2_{\pm5} 5_{\pm5}^{2} 8_{\pm6},  
1_{0} 3_{\pm1}^{2} 4_{\pm2} 6_{\pm2}^{2} 2_{\pm5}^{2} 5_{\pm5}^{3} 7_{\pm2} 8_{\pm6},  
1_{0}^{2} 3_{\pm1}^{2} 4_{\pm2}^{2} 6_{\pm2}^{4} 2_{\pm5}^{3} 5_{\pm5}^{5} 7_{\pm2}^{2} 8_{\pm6},  \\
& 1_{0} 3_{\pm1} 4_{\pm2} 6_{\pm2}^{2} 2_{\pm5}^{2} 5_{\pm5}^{2} 7_{\pm2} 8_{\pm6},  
1_{0}^{2} 3_{\pm1} 4_{\pm2} 6_{\pm2}^{2} 2_{\pm5}^{2} 5_{\pm5}^{3} 8_{\pm6},  
1_{0}^{2} 3_{\pm1}^{2} 4_{\pm2} 6_{\pm2}^{3} 2_{\pm5}^{2} 5_{\pm5}^{4} 7_{\pm2} 8_{\pm6},  
1_{0} 3_{\pm1} 4_{\pm2}^{2} 6_{\pm2}^{3} 2_{\pm5}^{2} 5_{\pm5}^{4} 7_{\pm2} 8_{\pm6},  \\
& 1_{0}^{2} 3_{\pm1}^{2} 4_{\pm2}^{2} 6_{\pm2}^{4} 2_{\pm5}^{3} 5_{\pm5}^{5} 7_{\pm2} 8_{\pm6}]^{2};
1_{0} 3_{\pm1} 4_{\pm2} 6_{\pm2}^{2} 2_{\pm5} 5_{\pm5}^{3} 8_{\pm6},  
1_{0} 3_{\pm1}^{2} 4_{\pm2}^{2} 6_{\pm2}^{3} 2_{\pm5}^{3} 5_{\pm5}^{4} 7_{\pm2} 8_{\pm6},  
1_{0}^{2} 3_{\pm1} 4_{\pm2} 6_{\pm2}^{3} 2_{\pm5}^{2} 5_{\pm5}^{3} 7_{\pm2} 8_{\pm6},   \\
& 1_{0}^{2} 3_{\pm1}^{2} 4_{\pm2}^{2} 6_{\pm2}^{4} 2_{\pm5}^{3} 5_{\pm5}^{5} 7_{\pm2} 8_{\pm6},  
1_{0} 3_{\pm1} 4_{\pm2}^{2} 6_{\pm2}^{2} 2_{\pm5}^{2} 5_{\pm5}^{3} 7_{\pm2} 8_{\pm6},  
1_{0}^{2} 3_{\pm1} 4_{\pm2}^{2} 6_{\pm2}^{3} 2_{\pm5}^{2} 5_{\pm5}^{4} 7_{\pm2} 8_{\pm6},  
1_{0} 3_{\pm1} 6_{\pm2} 2_{\pm5} 5_{\pm5} 8_{\pm6},   \\
& 1_{0} 3_{\pm1}^{2} 4_{\pm2} 6_{\pm2}^{2} 2_{\pm5}^{2} 5_{\pm5}^{3} 8_{\pm6},  
1_{0}^{2} 3_{\pm1}^{2} 4_{\pm2}^{2} 6_{\pm2}^{3} 2_{\pm5}^{3} 5_{\pm5}^{4} 7_{\pm2} 8_{\pm6},  
1_{0} 3_{\pm1} 4_{\pm2} 6_{\pm2} 2_{\pm5} 5_{\pm5}^{2} 8_{\pm6},  
1_{0} 3_{\pm1} 4_{\pm2} 6_{\pm2}^{3} 2_{\pm5} 5_{\pm5}^{3} 7_{\pm2} 8_{\pm6},  \\
& 1_{0} 3_{\pm1}^{2} 4_{\pm2} 6_{\pm2}^{3} 2_{\pm5}^{2} 5_{\pm5}^{3} 7_{\pm2} 8_{\pm6},  
1_{0} 4_{\pm2} 6_{\pm2}^{2} 2_{\pm5} 5_{\pm5}^{2} 8_{\pm6},  
1_{0} 3_{\pm1}^{2} 4_{\pm2}^{2} 6_{\pm2}^{3} 2_{\pm5}^{2} 5_{\pm5}^{4} 7_{\pm2} 8_{\pm6},  
1_{0} 3_{\pm1} 4_{\pm2} 6_{\pm2}^{2} 2_{\pm5}^{2} 5_{\pm5}^{2} 8_{\pm6},  \\
& 1_{0} 3_{\pm1} 4_{\pm2}^{2} 6_{\pm2}^{2} 2_{\pm5}^{2} 5_{\pm5}^{3} 8_{\pm6},  
1_{0} 3_{\pm1} 4_{\pm2} 6_{\pm2}^{2} 2_{\pm5} 5_{\pm5}^{2} 7_{\pm2} 8_{\pm6},  
1_{0} 3_{\pm1} 4_{\pm2}^{2} 6_{\pm2}^{3} 2_{\pm5}^{2} 5_{\pm5}^{3} 7_{\pm2} 8_{\pm6},  
1_{0} 3_{\pm1} 4_{\pm2} 6_{\pm2}^{2} 2_{\pm5} 5_{\pm5}^{2} 8_{\pm6}.  \\
%
& \quad \\
& (6)  1_0 6_{\pm2} 8_{\pm2} 2_{\pm5} 5_{\pm5} 7_{\pm5},
1_0 3_{\pm1} 6_{\pm2} 8_{\pm2} 2_{\pm5} 5_{\pm5}^{2} 7_{\pm5},
1_0 3_{\pm1} 4_{\pm2} 6_{\pm2}^{2} 8_{\pm2} 2_{\pm5}^{2} 5_{\pm5}^{3}  7_{\pm5}^{2},
1_0 4_{\pm2} 6_{\pm2} 8_{\pm2} 2_{\pm5} 5_{\pm5}^{2} 7_{\pm5}, \\
& 1_0 3_{\pm1} 6_{\pm2}^{2} 8_{\pm2} 2_{\pm5} 5_{\pm5}^{2} 7_{\pm5}^{2},
1_0^{2} 3_{\pm1} 4_{\pm2} 6_{\pm2}^{3}  8_{\pm2} 2_{\pm5}^{2} 5_{\pm5}^{4} 7_{\pm5}^{2},
1_0^{2} 3_{\pm1} 4_{\pm2} 6_{\pm2}^{2} 8_{\pm2} 2_{\pm5}^{2} 5_{\pm5}^{3} 7_{\pm5}^{2},
1_0 3_{\pm1} 4_{\pm2} 6_{\pm2}^{2} 8_{\pm2} 2_{\pm5}^{2} 5_{\pm5}^{3} 7_{\pm5}, \\
& 1_0^{2} 3_{\pm1}^{2} 4_{\pm2} 6_{\pm2}^{3}  8_{\pm2} 2_{\pm5}^{3}  5_{\pm5}^{4}  7_{\pm5}^{2},
1_0 3_{\pm1} 4_{\pm2} 6_{\pm2} 8_{\pm2} 2_{\pm5}^{2} 5_{\pm5}^{2} 7_{\pm5},
1_0 3_{\pm1} 4_{\pm2} 6_{\pm2}^{2} 8_{\pm2} 2_{\pm5} 5_{\pm5}^{3}  7_{\pm5}^{2},
1_0 3_{\pm1}^{2} 4_{\pm2} 6_{\pm2}^{3}  8_{\pm2} 2_{\pm5}^{2} 5_{\pm5}^{4}  7_{\pm5}^{2}, \\
& 1_0 4_{\pm2} 6_{\pm2}^{2} 8_{\pm2} 2_{\pm5} 5_{\pm5}^{2} 7_{\pm5}^{2},
1_0^{2} 3_{\pm1}^{2} 4_{\pm2}^{2} 6_{\pm2}^{3}  8_{\pm2} 2_{\pm5}^{3}  5_{\pm5}^{5}  7_{\pm5}^{2},
1_0 3_{\pm1} 4_{\pm2} 6_{\pm2}^{3}  8_{\pm2} 2_{\pm5}^{2} 5_{\pm5}^{3}  7_{\pm5}^{2},
1_0^{2} 3_{\pm1} 4_{\pm2}^{2} 6_{\pm2}^{3}  8_{\pm2} 2_{\pm5}^{3}  5_{\pm5}^{4}  7_{\pm5}^{2}, \\
& 1_0 3_{\pm1} 6_{\pm2}^{2} 8_{\pm2} 2_{\pm5} 5_{\pm5}^{2} 7_{\pm5},
1_0 3_{\pm1}^{2} 4_{\pm2} 6_{\pm2}^{2} 8_{\pm2} 2_{\pm5}^{2} 5_{\pm5}^{3}  7_{\pm5}^{2},
1_0^{2} 3_{\pm1}^{2} 4_{\pm2}^{2} 6_{\pm2}^{4}  8_{\pm2} 2_{\pm5}^{3}  5_{\pm5}^{5}  7_{\pm5}]^{3},
1_0 3_{\pm1} 4_{\pm2} 6_{\pm2}^{2} 8_{\pm2} 2_{\pm5}^{2} 5_{\pm5}^{2} 7_{\pm5}^{2}, \\
& 1_0^{2} 3_{\pm1} 4_{\pm2} 6_{\pm2}^{2} 8_{\pm2} 2_{\pm5}^{2} 5_{\pm5}^{3}  7_{\pm5},
1_0^{2} 3_{\pm1}^{2} 4_{\pm2} 6_{\pm2}^{3}  8_{\pm2} 2_{\pm5}^{2} 5_{\pm5}^{4}  7_{\pm5}^{2},
1_0 3_{\pm1} 4_{\pm2}^{2} 6_{\pm2}^{3}  8_{\pm2} 2_{\pm5}^{2} 5_{\pm5}^{4}  7_{\pm5}^{2},
1_0^{2} 3_{\pm1}^{2} 4_{\pm2}^{2} 6_{\pm2}^{4}  8_{\pm2}^{2} 2_{\pm5}^{3}  5_{\pm5}^{5}  7_{\pm5}^{3}, \\
& 1_0 3_{\pm1} 4_{\pm2} 6_{\pm2}^{2} 8_{\pm2} 2_{\pm5} 5_{\pm5}^{3} 7_{\pm5},
1_0 3_{\pm1}^{2} 4_{\pm2}^{2} 6_{\pm2}^{3}  8_{\pm2} 2_{\pm5}^{3}  5_{\pm5}^{4}  7_{\pm5}^{2},
1_0^{2} 3_{\pm1} 4_{\pm2} 6_{\pm2}^{3}  8_{\pm2} 2_{\pm5}^{2} 5_{\pm5}^{3} 7_{\pm5}^{2},
1_0^{2} 3_{\pm1}^{2} 4_{\pm2}^{2} 6_{\pm2}^{4}  8_{\pm2} 2_{\pm5}^{3}  5_{\pm5}^{5}  7_{\pm5}^{2}, \\
& 1_0 3_{\pm1} 4_{\pm2}^{2} 6_{\pm2}^{2} 8_{\pm2} 2_{\pm5}^{2} 5_{\pm5}^{3}  7_{\pm5}^{2},
1_0^{2} 3_{\pm1} 4_{\pm2}^{2} 6_{\pm2}^{3}  8_{\pm2} 2_{\pm5}^{2} 5_{\pm5}^{4}  7_{\pm5}^{2},
1_0 3_{\pm1} 6_{\pm2} 8_{\pm2} 2_{\pm5} 5_{\pm5} 7_{\pm5},
1_0 3_{\pm1}^{2} 4_{\pm2} 6_{\pm2}^{2} 8_{\pm2} 2_{\pm5}^{2} 5_{\pm5}^{3} 7_{\pm5}, \\
& 1_0^{2} 3_{\pm1}^{2} 4_{\pm2}^{2} 6_{\pm2}^{3}  8_{\pm2} 2_{\pm5}^{3}  5_{\pm5}^{4}  7_{\pm5}^{2},
1_0 3_{\pm1} 4_{\pm2} 6_{\pm2} 8_{\pm2} 2_{\pm5} 5_{\pm5}^{2} 7_{\pm5},
1_0 3_{\pm1} 4_{\pm2} 6_{\pm2}^{3}  8_{\pm2} 2_{\pm5} 5_{\pm5}^{3}  7_{\pm5}^{2},
1_0 3_{\pm1}^{2} 4_{\pm2} 6_{\pm2}^{3}  8_{\pm2} 2_{\pm5}^{2} 5_{\pm5}^{3}  7_{\pm5}^{2}, \\
& 1_0 4_{\pm2} 6_{\pm2}^{2} 8_{\pm2} 2_{\pm5} 5_{\pm5}^{2} 7_{\pm5},
1_0 3_{\pm1}^{2} 4_{\pm2}^{2} 6_{\pm2}^{3}  8_{\pm2} 2_{\pm5}^{2} 5_{\pm5}^{4}  7_{\pm5}^{2},
1_0 3_{\pm1} 4_{\pm2} 6_{\pm2}^{2} 8_{\pm2} 2_{\pm5}^{2} 5_{\pm5}^{2} 7_{\pm5},
1_0 3_{\pm1} 4_{\pm2}^{2} 6_{\pm2}^{2} 8_{\pm2} 2_{\pm5}^{2} 5_{\pm5}^{3}  7_{\pm5}, \\
& 1_0 3_{\pm1} 4_{\pm2} 6_{\pm2}^{2} 8_{\pm2} 2_{\pm5} 5_{\pm5}^{2} 7_{\pm5}^{2},
1_0 3_{\pm1} 4_{\pm2}^{2} 6_{\pm2}^{3}  8_{\pm2} 2_{\pm5}^{2} 5_{\pm5}^{3}  7_{\pm5}^{2},
1_0 4_{\pm2} 6_{\pm2} 8_{\pm2} 2_{\pm5} 5_{\pm5} 7_{\pm5},
1_0 3_{\pm1} 4_{\pm2} 6_{\pm2}^{2} 8_{\pm2} 2_{\pm5} 5_{\pm5}^{2} 7_{\pm5}.
\end{align*}
\end{gather}

\begin{gather}
\begin{align*}
& (7)  1_{0} 7_{\pm1} 2_{\pm5} 5_{\pm5} 8_{\pm4},
1_{0} 4_{\pm2} 7_{\pm1} 2_{\pm5} 5_{\pm5}^{2} 8_{\pm4},
1_{0} 3_{\pm1} 4_{\pm2} 7_{\pm1}^{2} 2_{\pm5}^{2} 5_{\pm5}^{3} 8_{\pm4}, 
1_{0} 3_{\pm1} 4_{\pm2} 6_{\pm2} 7_{\pm1} 2_{\pm5} 5_{\pm5}^{3} 8_{\pm4},\\
& 1_{0} 3_{\pm1} 4_{\pm2} 6_{\pm2} 7_{\pm1} 2_{\pm5}^{2} 5_{\pm5}^{3} 8_{\pm4},
1_{0} 3_{\pm1} 7_{\pm1}^{2} 2_{\pm5} 5_{\pm5}^{2} 8_{\pm4},
1_{0}^{2} 3_{\pm1} 4_{\pm2} 6_{\pm2} 7_{\pm1}^{2} 2_{\pm5}^{2} 5_{\pm5}^{4} 8_{\pm4},
1_{0} 3_{\pm1} 6_{\pm2} 7_{\pm1} 2_{\pm5} 5_{\pm5}^{2} 8_{\pm4}, \\
& 1_{0} 3_{\pm1} 4_{\pm2} 7_{\pm1} 2_{\pm5}^{2} 5_{\pm5}^{2} 8_{\pm4},
1_{0}^{2} 3_{\pm1} 4_{\pm2} 7_{\pm1}^{2} 2_{\pm5}^{2} 5_{\pm5}^{3} 8_{\pm4},
1_{0}^{2} 3_{\pm1}^{2} 4_{\pm2} 6_{\pm2} 7_{\pm1}^{2} 2_{\pm5}^{3} 5_{\pm5}^{4} 8_{\pm4},
1_{0} 3_{\pm1} 4_{\pm2} 7_{\pm1}^{2} 2_{\pm5} 5_{\pm5}^{3} 8_{\pm4},\\
& 1_{0} 3_{\pm1} 7_{\pm1} 2_{\pm5} 5_{\pm5}^{2} 8_{\pm4},
1_{0} 3_{\pm1}^{2} 4_{\pm2} 6_{\pm2} 7_{\pm1}^{2} 2_{\pm5}^{2} 5_{\pm5}^{4} 8_{\pm4},
1_{0}^{2} 3_{\pm1} 4_{\pm2} 6_{\pm2} 7_{\pm1} 2_{\pm5}^{2} 5_{\pm5}^{3} 8_{\pm4},
1_{0}^{2} 3_{\pm1}^{2} 4_{\pm2}^{2} 6_{\pm2} 7_{\pm1}^{2} 2_{\pm5}^{3} 5_{\pm5}^{5} 8_{\pm4}, \\
& 1_{0} 4_{\pm2} 7_{\pm1}^{2} 2_{\pm5} 5_{\pm5}^{2} 8_{\pm4},
1_{0} 3_{\pm1} 4_{\pm2} 6_{\pm2} 7_{\pm1}^{2} 2_{\pm5}^{2} 5_{\pm5}^{3} 8_{\pm4},
1_{0}^{2} 3_{\pm1} 4_{\pm2}^{2} 6_{\pm2} 7_{\pm1}^{2} 2_{\pm5}^{3} 5_{\pm5}^{4} 8_{\pm4},
1_{0} 3_{\pm1} 7_{\pm1} 2_{\pm5} 5_{\pm5} 8_{\pm4}, \\
& 1_{0} 3_{\pm1}^{2} 4_{\pm2} 7_{\pm1}^{2} 2_{\pm5}^{2} 5_{\pm5}^{3} 8_{\pm4},
1_{0}^{2} 3_{\pm1}^{2} 4_{\pm2}^{2} 6_{\pm2} 7_{\pm1}^{3} 2_{\pm5}^{3} 5_{\pm5}^{5} 8_{\pm4}]^{2};
1_{0} 3_{\pm1} 4_{\pm2} 7_{\pm1}^{2} 2_{\pm5}^{2} 5_{\pm5}^{2} 8_{\pm4},
1_{0}^{2} 3_{\pm1}^{2} 4_{\pm2} 6_{\pm2} 7_{\pm1}^{2} 2_{\pm5}^{2} 5_{\pm5}^{4} 8_{\pm4}, \\
& 1_{0} 3_{\pm1} 4_{\pm2}^{2} 6_{\pm2} 7_{\pm1}^{2} 2_{\pm5}^{2} 5_{\pm5}^{4} 8_{\pm4},
1_{0}^{2} 3_{\pm1}^{2} 4_{\pm2}^{2} 6_{\pm2} 7_{\pm1}^{3} 2_{\pm5}^{3} 5_{\pm5}^{5} 8_{\pm4},
1_{0} 3_{\pm1}^{2} 4_{\pm2} 6_{\pm2} 7_{\pm1} 2_{\pm5}^{2} 5_{\pm5}^{3} 8_{\pm4},
1_{0} 3_{\pm1}^{2} 4_{\pm2}^{2} 6_{\pm2} 7_{\pm1}^{2} 2_{\pm5}^{3} 5_{\pm5}^{4} 8_{\pm4}, \\
& 1_{0} 4_{\pm2} 6_{\pm2} 7_{\pm1} 2_{\pm5} 5_{\pm5}^{2} 8_{\pm4},
1_{0}^{2} 3_{\pm1} 4_{\pm2} 6_{\pm2} 7_{\pm1}^{2} 2_{\pm5}^{2} 5_{\pm5}^{3} 8_{\pm4},
1_{0}^{2} 3_{\pm1}^{2} 4_{\pm2}^{2} 6_{\pm2}^{2} 7_{\pm1}^{2} 2_{\pm5}^{3} 5_{\pm5}^{5} 8_{\pm4},
1_{0} 3_{\pm1} 4_{\pm2} 6_{\pm2} 7_{\pm1} 2_{\pm5}^{2} 5_{\pm5}^{2} 8_{\pm4}, \\
& 1_{0} 3_{\pm1} 4_{\pm2} 7_{\pm1} 2_{\pm5} 5_{\pm5}^{2} 8_{\pm4},
1_{0} 3_{\pm1} 4_{\pm2}^{2} 7_{\pm1}^{2} 2_{\pm5}^{2} 5_{\pm5}^{3} 8_{\pm4},
1_{0}^{2} 3_{\pm1} 4_{\pm2}^{2} 6_{\pm2} 7_{\pm1}^{2} 2_{\pm5}^{2} 5_{\pm5}^{4} 8_{\pm4},
1_{0}^{2} 3_{\pm1}^{2} 4_{\pm2}^{2} 6_{\pm2} 7_{\pm1}^{2} 2_{\pm5}^{3} 5_{\pm5}^{4} 8_{\pm4}, \\
& 1_{0} 3_{\pm1} 4_{\pm2} 6_{\pm2} 7_{\pm1}^{2} 2_{\pm5} 5_{\pm5}^{3} 8_{\pm4},
1_{0} 3_{\pm1}^{2} 4_{\pm2} 6_{\pm2} 7_{\pm1}^{2} 2_{\pm5}^{2} 5_{\pm5}^{3} 8_{\pm4},
1_{0} 3_{\pm1} 4_{\pm2}^{2} 6_{\pm2} 7_{\pm1} 2_{\pm5}^{2} 5_{\pm5}^{3} 8_{\pm4},
1_{0} 3_{\pm1}^{2} 4_{\pm2}^{2} 6_{\pm2} 7_{\pm1}^{2} 2_{\pm5}^{2} 5_{\pm5}^{4} 8_{\pm4}, \\
& 1_{0} 4_{\pm2} 7_{\pm1} 2_{\pm5} 5_{\pm5} 8_{\pm4},
1_{0} 3_{\pm1} 4_{\pm2} 7_{\pm1}^{2} 2_{\pm5} 5_{\pm5}^{2} 8_{\pm4},
1_{0} 3_{\pm1} 4_{\pm2}^{2} 6_{\pm2} 7_{\pm1}^{2} 2_{\pm5}^{2} 5_{\pm5}^{3} 8_{\pm4},
1_{0} 3_{\pm1} 4_{\pm2} 6_{\pm2} 7_{\pm1} 2_{\pm5} 5_{\pm5}^{2} 8_{\pm4}. \\
& \quad \\
& (8)  1_{0} 8_{0} 2_{\pm5} 5_{\pm5},
1_{0} 3_{\pm1} 8_{0} 2_{\pm5} 5_{\pm5}^{2},
1_{0} 4_{\pm2} 8_{0} 2_{\pm5} 5_{\pm5}^{2},
1_{0} 3_{\pm1} 4_{\pm2} 7_{\pm1} 8_{0} 2_{\pm5}^{2} 5_{\pm5}^{3}, 
1_{0} 4_{\pm2} 8_{0} 2_{\pm5} 5_{\pm5},
1_{0} 3_{\pm1} 4_{\pm2} 7_{\pm1} 8_{0} 2_{\pm5} 5_{\pm5}^{2}, \\
& 1_{0} 3_{\pm1} 8_{0} 2_{\pm5} 5_{\pm5},
1_{0} 3_{\pm1} 4_{\pm2}^{2} 6_{\pm2} 7_{\pm1} 8_{0} 2_{\pm5}^{2} 5_{\pm5}^{3},
1_{0} 3_{\pm1} 4_{\pm2} 6_{\pm2} 8_{0} 2_{\pm5} 5_{\pm5}^{2},
1_{0} 3_{\pm1} 4_{\pm2} 6_{\pm2} 8_{0} 2_{\pm5}^{2} 5_{\pm5}^{3},
1_{0} 3_{\pm1} 7_{\pm1} 8_{0} 2_{\pm5} 5_{\pm5}^{2}, \\
&  1_{0} 3_{\pm1} 4_{\pm2} 8_{0} 2_{\pm5}^{2} 5_{\pm5}^{2},
1_{0}^{2} 3_{\pm1} 4_{\pm2} 7_{\pm1} 8_{0} 2_{\pm5}^{2} 5_{\pm5}^{3},
1_{0}^{2} 3_{\pm1}^{2} 4_{\pm2} 6_{\pm2} 7_{\pm1} 8_{0} 2_{\pm5}^{3} 5_{\pm5}^{4},
1_{0} 3_{\pm1} 4_{\pm2} 7_{\pm1} 8_{0} 2_{\pm5} 5_{\pm5}^{3}, \\
& 1_{0} 3_{\pm1}^{2} 4_{\pm2} 6_{\pm2} 7_{\pm1} 8_{0} 2_{\pm5}^{2} 5_{\pm5}^{4},
1_{0}^{2} 3_{\pm1} 4_{\pm2} 6_{\pm2} 8_{0} 2_{\pm5}^{2} 5_{\pm5}^{3},
1_{0}^{2} 3_{\pm1}^{2} 4_{\pm2}^{2} 6_{\pm2} 7_{\pm1} 8_{0} 2_{\pm5}^{3} 5_{\pm5}^{5},
1_{0} 3_{\pm1} 4_{\pm2} 6_{\pm2} 8_{0} 2_{\pm5} 5_{\pm5}^{3}, \\
& 1_{0} 4_{\pm2} 7_{\pm1} 8_{0} 2_{\pm5} 5_{\pm5}^{2},
1_{0} 3_{\pm1} 4_{\pm2} 6_{\pm2} 7_{\pm1} 8_{0} 2_{\pm5}^{2} 5_{\pm5}^{3},
1_{0}^{2} 3_{\pm1} 4_{\pm2}^{2} 6_{\pm2} 7_{\pm1} 8_{0} 2_{\pm5}^{3} 5_{\pm5}^{4},
1_{0} 3_{\pm1} 6_{\pm2} 8_{0} 2_{\pm5} 5_{\pm5}^{2},  \\
& 1_{0} 3_{\pm1}^{2} 4_{\pm2} 7_{\pm1} 8_{0} 2_{\pm5}^{2} 5_{\pm5}^{3},
1_{0}^{2} 3_{\pm1}^{2} 4_{\pm2}^{2} 6_{\pm2} 7_{\pm1} 8_{0}^{2} 2_{\pm5}^{3} 5_{\pm5}^{5},
1_{0} 3_{\pm1} 4_{\pm2} 7_{\pm1} 8_{0} 2_{\pm5}^{2} 5_{\pm5}^{2},
1_{0}^{2} 3_{\pm1}^{2} 4_{\pm2} 6_{\pm2} 7_{\pm1} 8_{0} 2_{\pm5}^{2} 5_{\pm5}^{4}, \\
& 1_{0} 3_{\pm1} 4_{\pm2}^{2} 6_{\pm2} 7_{\pm1} 8_{0} 2_{\pm5}^{2} 5_{\pm5}^{4},
1_{0}^{2} 3_{\pm1}^{2} 4_{\pm2}^{2} 6_{\pm2} 7_{\pm1}^{2} 8_{0} 2_{\pm5}^{3} 5_{\pm5}^{5},
1_{0} 3_{\pm1}^{2} 4_{\pm2} 6_{\pm2} 8_{0} 2_{\pm5}^{2} 5_{\pm5}^{3},
1_{0} 3_{\pm1}^{2} 4_{\pm2}^{2} 6_{\pm2} 7_{\pm1} 8_{0} 2_{\pm5}^{3} 5_{\pm5}^{4}, \\
& 1_{0} 4_{\pm2} 6_{\pm2} 8_{0} 2_{\pm5} 5_{\pm5}^{2},
1_{0}^{2} 3_{\pm1} 4_{\pm2} 6_{\pm2} 7_{\pm1} 8_{0} 2_{\pm5}^{2} 5_{\pm5}^{3},
1_{0}^{2} 3_{\pm1}^{2} 4_{\pm2}^{2} 6_{\pm2}^{2} 7_{\pm1} 8_{0} 2_{\pm5}^{3} 5_{\pm5}^{5},
1_{0} 3_{\pm1} 4_{\pm2} 6_{\pm2} 8_{0} 2_{\pm5}^{2} 5_{\pm5}^{2}, \\
& 1_{0} 3_{\pm1} 4_{\pm2} 8_{0} 2_{\pm5} 5_{\pm5}^{2},
1_{0} 3_{\pm1} 4_{\pm2}^{2} 7_{\pm1} 8_{0} 2_{\pm5}^{2} 5_{\pm5}^{3},
1_{0}^{2} 3_{\pm1} 4_{\pm2}^{2} 6_{\pm2} 7_{\pm1} 8_{0} 2_{\pm5}^{2} 5_{\pm5}^{4},
1_{0}^{2} 3_{\pm1}^{2} 4_{\pm2}^{2} 6_{\pm2} 7_{\pm1} 8_{0} 2_{\pm5}^{3} 5_{\pm5}^{4},\\
& 1_{0} 3_{\pm1} 4_{\pm2} 6_{\pm2} 7_{\pm1} 8_{0} 2_{\pm5} 5_{\pm5}^{3},
1_{0} 3_{\pm1}^{2} 4_{\pm2} 6_{\pm2} 7_{\pm1} 8_{0} 2_{\pm5}^{2} 5_{\pm5}^{3},
1_{0} 3_{\pm1} 4_{\pm2}^{2} 6_{\pm2} 8_{0} 2_{\pm5}^{2} 5_{\pm5}^{3},
1_{0}^{2} 3_{\pm1} 4_{\pm2} 6_{\pm2} 7_{\pm1} 8_{0} 2_{\pm5}^{2} 5_{\pm5}^{4}, \\
& 1_{0} 3_{\pm1}^{2} 4_{\pm2}^{2} 6_{\pm2} 7_{\pm1} 8_{0} 2_{\pm5}^{2} 5_{\pm5}^{4}. \\
& \quad \\
& (9)  1_{0} 5_{\pm1} 2_{\pm5} 4_{\pm4} 8_{\pm6},
1_{0} 3_{\pm1} 5_{\pm1}^{2} 2_{\pm5} 4_{\pm4}^{2} 6_{\pm4} 8_{\pm6},
1_{0} 3_{\pm1} 5_{\pm1}^{3} 2_{\pm5}^{2} 4_{\pm4}^{2} 6_{\pm4} 7_{\pm5} 8_{\pm6},
1_{0} 5_{\pm1}^{2} 2_{\pm5} 4_{\pm4} 6_{\pm4} 8_{\pm6}, \\
& 1_{0} 3_{\pm1} 5_{\pm1}^{2} 2_{\pm5} 4_{\pm4}^{2} 7_{\pm5} 8_{\pm6},
1_{0}^{2} 3_{\pm1} 5_{\pm1}^{4} 2_{\pm5}^{2} 4_{\pm4}^{3} 6_{\pm4} 7_{\pm5} 8_{\pm6},
1_{0} 3_{\pm1} 5_{\pm1}^{3} 2_{\pm5}^{2} 4_{\pm4}^{2} 6_{\pm4} 8_{\pm6},
1_{0}^{2} 3_{\pm1} 5_{\pm1}^{3} 2_{\pm5}^{2} 4_{\pm4}^{2} 6_{\pm4} 7_{\pm5} 8_{\pm6},\\
& 1_{0}^{2} 3_{\pm1}^{2} 5_{\pm1}^{4} 2_{\pm5}^{3} 4_{\pm4}^{3} 6_{\pm4} 7_{\pm5} 8_{\pm6},
1_{0} 3_{\pm1} 5_{\pm1}^{2} 2_{\pm5}^{2} 4_{\pm4} 6_{\pm4} 8_{\pm6},
1_{0} 3_{\pm1} 5_{\pm1}^{3} 2_{\pm5} 4_{\pm4}^{2} 6_{\pm4} 7_{\pm5} 8_{\pm6},
1_{0} 5_{\pm1}^{2} 2_{\pm5} 4_{\pm4} 7_{\pm5} 8_{\pm6}, \\
& 1_{0} 3_{\pm1}^{2} 5_{\pm1}^{4} 2_{\pm5}^{2} 4_{\pm4}^{3} 6_{\pm4} 7_{\pm5} 8_{\pm6},
1_{0}^{2} 3_{\pm1}^{2} 5_{\pm1}^{5} 2_{\pm5}^{3} 4_{\pm4}^{3} 6_{\pm4}^{2} 7_{\pm5} 8_{\pm6},
1_{0} 3_{\pm1} 5_{\pm1}^{3} 2_{\pm5}^{2} 4_{\pm4}^{2} 7_{\pm5} 8_{\pm6},
1_{0} 3_{\pm1} 5_{\pm1}^{2} 2_{\pm5} 4_{\pm4}^{2} 8_{\pm6}, \\
& 1_{0} 3_{\pm1}^{2} 5_{\pm1}^{3} 2_{\pm5}^{2} 4_{\pm4}^{2} 6_{\pm4} 7_{\pm5} 8_{\pm6},
1_{0}^{2} 3_{\pm1} 5_{\pm1}^{4} 2_{\pm5}^{3} 4_{\pm4}^{2} 6_{\pm4} 7_{\pm5} 8_{\pm6},
1_{0}^{2} 3_{\pm1}^{2} 5_{\pm1}^{5} 2_{\pm5}^{3} 4_{\pm4}^{3} 6_{\pm4} 7_{\pm5}^{2} 8_{\pm6},
1_{0} 3_{\pm1} 5_{\pm1}^{2} 2_{\pm5}^{2} 4_{\pm4} 7_{\pm5} 8_{\pm6}, \\
& 1_{0}^{2} 3_{\pm1} 5_{\pm1}^{3} 2_{\pm5}^{2} 4_{\pm4}^{2} 6_{\pm4} 8_{\pm6},
1_{0} 3_{\pm1} 5_{\pm1}^{4} 2_{\pm5}^{2} 4_{\pm4}^{2} 6_{\pm4} 7_{\pm5} 8_{\pm6},
1_{0}^{2} 3_{\pm1}^{2} 5_{\pm1}^{4} 2_{\pm5}^{2} 4_{\pm4}^{3} 6_{\pm4} 7_{\pm5} 8_{\pm6},
1_{0}^{2} 3_{\pm1}^{2} 5_{\pm1}^{5} 2_{\pm5}^{3} 4_{\pm4}^{3} 6_{\pm4} 7_{\pm5} 8_{\pm6}^{2}, \\
&1_{0} 3_{\pm1} 5_{\pm1}^{3} 2_{\pm5} 4_{\pm4}^{2} 6_{\pm4} 8_{\pm6},
1_{0}^{2} 3_{\pm1} 5_{\pm1}^{3} 2_{\pm5}^{2} 4_{\pm4}^{2} 7_{\pm5} 8_{\pm6},
1_{0} 3_{\pm1}^{2} 5_{\pm1}^{4} 2_{\pm5}^{3} 4_{\pm4}^{2} 6_{\pm4} 7_{\pm5} 8_{\pm6},
1_{0}^{2} 3_{\pm1}^{2} 5_{\pm1}^{5} 2_{\pm5}^{3} 4_{\pm4}^{3} 6_{\pm4} 7_{\pm5} 8_{\pm6}, \\
& 1_{0} 3_{\pm1} 5_{\pm1}^{3} 2_{\pm5}^{2} 4_{\pm4} 6_{\pm4} 7_{\pm5} 8_{\pm6},
1_{0} 3_{\pm1} 5_{\pm1} 2_{\pm5} 4_{\pm4} 8_{\pm6},
1_{0} 3_{\pm1}^{2} 5_{\pm1}^{3} 2_{\pm5}^{2} 4_{\pm4}^{2} 6_{\pm4} 8_{\pm6},
1_{0}^{2} 3_{\pm1} 5_{\pm1}^{4} 2_{\pm5}^{2} 4_{\pm4}^{2} 6_{\pm4} 7_{\pm5} 8_{\pm6}, \\
& 1_{0}^{2} 3_{\pm1}^{2} 5_{\pm1}^{4} 2_{\pm5}^{3} 4_{\pm4}^{2} 6_{\pm4} 7_{\pm5} 8_{\pm6},
1_{0} 3_{\pm1} 5_{\pm1}^{2} 2_{\pm5} 4_{\pm4} 6_{\pm4} 8_{\pm6},
1_{0} 3_{\pm1} 5_{\pm1}^{3} 2_{\pm5} 4_{\pm4}^{2} 7_{\pm5} 8_{\pm6},
1_{0} 5_{\pm1}^{2} 2_{\pm5} 4_{\pm4} 8_{\pm6}, \\
&  1_{0} 3_{\pm1}^{2} 5_{\pm1}^{3} 2_{\pm5}^{2} 4_{\pm4}^{2} 7_{\pm5} 8_{\pm6},
1_{0} 3_{\pm1}^{2} 5_{\pm1}^{4} 2_{\pm5}^{2} 4_{\pm4}^{2} 6_{\pm4} 7_{\pm5} 8_{\pm6},
1_{0} 3_{\pm1} 5_{\pm1}^{2} 2_{\pm5}^{2} 4_{\pm4} 8_{\pm6},
1_{0} 3_{\pm1} 5_{\pm1}^{2} 2_{\pm5} 4_{\pm4} 7_{\pm5} 8_{\pm6}, \\
& 1_{0} 3_{\pm1} 5_{\pm1}^{3} 2_{\pm5}^{2} 4_{\pm4} 6_{\pm4} 8_{\pm6},
1_{0} 3_{\pm1} 5_{\pm1}^{3} 2_{\pm5}^{2} 4_{\pm4} 7_{\pm5} 8_{\pm6},
1_{0} 5_{\pm1} 2_{\pm5} 8_{\pm6},
1_{0} 3_{\pm1} 5_{\pm1}^{2} 2_{\pm5} 4_{\pm4} 8_{\pm6}.
\end{align*}
\end{gather}

\begin{gather}
\begin{align*}
& (10)  1_{0} 5_{\pm1} 8_{\pm2} 2_{\pm5} 4_{\pm4} 7_{\pm5},
1_{0} 3_{\pm1} 5_{\pm1}^{2} 8_{\pm2} 2_{\pm5} 4_{\pm4}^{2} 6_{\pm4} 7_{\pm5},
1_{0} 3_{\pm1} 5_{\pm1}^{3} 8_{\pm2} 2_{\pm5}^{2} 4_{\pm4}^{2} 6_{\pm4} 7_{\pm5}^{2},
1_{0} 5_{\pm1}^{2} 8_{\pm2} 2_{\pm5} 4_{\pm4} 6_{\pm4} 7_{\pm5}, \\
& 1_{0} 3_{\pm1} 5_{\pm1}^{2} 8_{\pm2} 2_{\pm5} 4_{\pm4}^{2} 7_{\pm5}^{2},
1_{0}^{2} 3_{\pm1} 5_{\pm1}^{4} 8_{\pm2} 2_{\pm5}^{2} 4_{\pm4}^{3} 6_{\pm4} 7_{\pm5}^{2},
1_{0} 3_{\pm1} 5_{\pm1}^{3} 8_{\pm2} 2_{\pm5}^{2} 4_{\pm4}^{2} 6_{\pm4} 7_{\pm5},
1_{0}^{2} 3_{\pm1} 5_{\pm1}^{3} 8_{\pm2} 2_{\pm5}^{2} 4_{\pm4}^{2} 6_{\pm4} 7_{\pm5}^{2}, \\
& 1_{0}^{2} 3_{\pm1}^{2} 5_{\pm1}^{4} 8_{\pm2} 2_{\pm5}^{3} 4_{\pm4}^{3} 6_{\pm4} 7_{\pm5}^{2},
1_{0} 3_{\pm1} 5_{\pm1}^{2} 8_{\pm2} 2_{\pm5}^{2} 4_{\pm4} 6_{\pm4} 7_{\pm5},
1_{0} 3_{\pm1} 5_{\pm1}^{3} 8_{\pm2} 2_{\pm5} 4_{\pm4}^{2} 6_{\pm4} 7_{\pm5}^{2},
1_{0} 5_{\pm1}^{2} 8_{\pm2} 2_{\pm5} 4_{\pm4} 7_{\pm5}^{2}, \\
& 1_{0} 3_{\pm1}^{2} 5_{\pm1}^{4} 8_{\pm2} 2_{\pm5}^{2} 4_{\pm4}^{3} 6_{\pm4} 7_{\pm5}^{2},
1_{0}^{2} 3_{\pm1}^{2} 5_{\pm1}^{5} 8_{\pm2} 2_{\pm5}^{3} 4_{\pm4}^{3} 6_{\pm4}^{2} 7_{\pm5}^{2},
1_{0} 3_{\pm1} 5_{\pm1}^{3} 8_{\pm2} 2_{\pm5}^{2} 4_{\pm4}^{2} 7_{\pm5}^{2},
1_{0} 3_{\pm1} 5_{\pm1}^{2} 8_{\pm2} 2_{\pm5} 4_{\pm4}^{2} 7_{\pm5}, \\
& 1_{0} 3_{\pm1}^{2} 5_{\pm1}^{3} 8_{\pm2} 2_{\pm5}^{2} 4_{\pm4}^{2} 6_{\pm4} 7_{\pm5}^{2},
1_{0}^{2} 3_{\pm1} 5_{\pm1}^{4} 8_{\pm2} 2_{\pm5}^{3} 4_{\pm4}^{2} 6_{\pm4} 7_{\pm5}^{2},
1_{0}^{2} 3_{\pm1}^{2} 5_{\pm1}^{5} 8_{\pm2} 2_{\pm5}^{3} 4_{\pm4}^{3} 6_{\pm4} 7_{\pm5}^{3},
1_{0} 3_{\pm1} 5_{\pm1}^{2} 8_{\pm2} 2_{\pm5}^{2} 4_{\pm4} 7_{\pm5}^{2}, \\
& 1_{0}^{2} 3_{\pm1} 5_{\pm1}^{3} 8_{\pm2} 2_{\pm5}^{2} 4_{\pm4}^{2} 6_{\pm4} 7_{\pm5},
1_{0} 3_{\pm1} 5_{\pm1}^{4} 8_{\pm2} 2_{\pm5}^{2} 4_{\pm4}^{2} 6_{\pm4} 7_{\pm5}^{2},
1_{0}^{2} 3_{\pm1}^{2} 5_{\pm1}^{4} 8_{\pm2} 2_{\pm5}^{2} 4_{\pm4}^{3} 6_{\pm4} 7_{\pm5}^{2},
1_{0}^{2} 3_{\pm1}^{2} 5_{\pm1}^{5} 8_{\pm2}^{2} 2_{\pm5}^{3} 4_{\pm4}^{3} 6_{\pm4} 7_{\pm5}^{3}, \\
& 1_{0} 3_{\pm1} 5_{\pm1}^{3} 8_{\pm2} 2_{\pm5} 4_{\pm4}^{2} 6_{\pm4} 7_{\pm5},
1_{0}^{2} 3_{\pm1} 5_{\pm1}^{3} 8_{\pm2} 2_{\pm5}^{2} 4_{\pm4}^{2} 7_{\pm5}^{2},
1_{0} 3_{\pm1}^{2} 5_{\pm1}^{4} 8_{\pm2} 2_{\pm5}^{3} 4_{\pm4}^{2} 6_{\pm4} 7_{\pm5}^{2},
1_{0}^{2} 3_{\pm1}^{2} 5_{\pm1}^{5} 8_{\pm2} 2_{\pm5}^{3} 4_{\pm4}^{3} 6_{\pm4} 7_{\pm5}^{2}, \\
& 1_{0} 3_{\pm1} 5_{\pm1}^{3} 8_{\pm2} 2_{\pm5}^{2} 4_{\pm4} 6_{\pm4} 7_{\pm5}^{2},
1_{0} 3_{\pm1} 5_{\pm1} 8_{\pm2} 2_{\pm5} 4_{\pm4} 7_{\pm5},
1_{0} 3_{\pm1}^{2} 5_{\pm1}^{3} 8_{\pm2} 2_{\pm5}^{2} 4_{\pm4}^{2} 6_{\pm4} 7_{\pm5},
1_{0}^{2} 3_{\pm1} 5_{\pm1}^{4} 8_{\pm2} 2_{\pm5}^{2} 4_{\pm4}^{2} 6_{\pm4} 7_{\pm5}^{2}, \\
& 1_{0}^{2} 3_{\pm1}^{2} 5_{\pm1}^{4} 8_{\pm2} 2_{\pm5}^{3} 4_{\pm4}^{2} 6_{\pm4} 7_{\pm5}^{2},
1_{0} 3_{\pm1} 5_{\pm1}^{2} 8_{\pm2} 2_{\pm5} 4_{\pm4} 6_{\pm4} 7_{\pm5},
1_{0} 3_{\pm1} 5_{\pm1}^{3} 8_{\pm2} 2_{\pm5} 4_{\pm4}^{2} 7_{\pm5}^{2},
1_{0} 5_{\pm1}^{2} 8_{\pm2} 2_{\pm5} 4_{\pm4} 7_{\pm5}, \\
& 1_{0} 3_{\pm1}^{2} 5_{\pm1}^{3} 8_{\pm2} 2_{\pm5}^{2} 4_{\pm4}^{2} 7_{\pm5}^{2},
1_{0} 3_{\pm1}^{2} 5_{\pm1}^{4} 8_{\pm2} 2_{\pm5}^{2} 4_{\pm4}^{2} 6_{\pm4} 7_{\pm5}^{2},
1_{0} 3_{\pm1} 5_{\pm1}^{2} 8_{\pm2} 2_{\pm5}^{2} 4_{\pm4} 7_{\pm5},
1_{0} 3_{\pm1} 5_{\pm1}^{2} 8_{\pm2} 2_{\pm5} 4_{\pm4} 7_{\pm5}^{2}, \\
& 1_{0} 3_{\pm1} 5_{\pm1}^{3} 8_{\pm2} 2_{\pm5}^{2} 4_{\pm4} 6_{\pm4} 7_{\pm5},
1_{0} 3_{\pm1} 5_{\pm1}^{3} 8_{\pm2} 2_{\pm5}^{2} 4_{\pm4} 7_{\pm5}^{2},
1_{0} 5_{\pm1} 8_{\pm2} 2_{\pm5} 7_{\pm5},
1_{0} 3_{\pm1} 5_{\pm1}^{2} 8_{\pm2} 2_{\pm5} 4_{\pm4} 7_{\pm5}. \\
& \qquad   \\
& (11)  1_{0} 5_{\pm1} 7_{\pm1} 2_{\pm5} 4_{\pm4} 6_{\pm4} 8_{\pm4},
1_{0} 3_{\pm1} 5_{\pm1}^{2} 7_{\pm1} 2_{\pm5} 4_{\pm4}^{2} 6_{\pm4}^{2} 8_{\pm4},
1_{0} 5_{\pm1}^{2} 7_{\pm1} 2_{\pm5} 4_{\pm4} 6_{\pm4}^{2} 8_{\pm4},
1_{0} 3_{\pm1} 5_{\pm1}^{3} 7_{\pm1}^{2} 2_{\pm5}^{2} 4_{\pm4}^{2} 6_{\pm4}^{3} 8_{\pm4},\\
& 1_{0} 3_{\pm1} 5_{\pm1}^{2} 7_{\pm1}^{2} 2_{\pm5} 4_{\pm4}^{2} 6_{\pm4}^{2} 8_{\pm4},
1_{0} 3_{\pm1} 5_{\pm1}^{3} 7_{\pm1} 2_{\pm5}^{2} 4_{\pm4}^{2} 6_{\pm4}^{2} 8_{\pm4},
1_{0}^{2} 3_{\pm1} 5_{\pm1}^{4} 7_{\pm1}^{2} 2_{\pm5}^{2} 4_{\pm4}^{3} 6_{\pm4}^{3} 8_{\pm4},
1_{0} 3_{\pm1} 5_{\pm1}^{2} 7_{\pm1} 2_{\pm5} 4_{\pm4}^{2} 6_{\pm4} 8_{\pm4}, \\
& 1_{0} 3_{\pm1} 5_{\pm1}^{2} 7_{\pm1} 2_{\pm5}^{2} 4_{\pm4} 6_{\pm4}^{2} 8_{\pm4},
1_{0}^{2} 3_{\pm1} 5_{\pm1}^{3} 7_{\pm1}^{2} 2_{\pm5}^{2} 4_{\pm4}^{2} 6_{\pm4}^{3} 8_{\pm4},
1_{0}^{2} 3_{\pm1}^{2} 5_{\pm1}^{4} 7_{\pm1}^{2} 2_{\pm5}^{3} 4_{\pm4}^{3} 6_{\pm4}^{3} 8_{\pm4},
1_{0} 3_{\pm1} 5_{\pm1}^{3} 7_{\pm1}^{2} 2_{\pm5} 4_{\pm4}^{2} 6_{\pm4}^{3} 8_{\pm4}, \\
& 1_{0}^{2} 3_{\pm1} 5_{\pm1}^{3} 7_{\pm1} 2_{\pm5}^{2} 4_{\pm4}^{2} 6_{\pm4}^{2} 8_{\pm4},
1_{0} 3_{\pm1}^{2} 5_{\pm1}^{4} 7_{\pm1}^{2} 2_{\pm5}^{2} 4_{\pm4}^{3} 6_{\pm4}^{3} 8_{\pm4},
1_{0}^{2} 3_{\pm1}^{2} 5_{\pm1}^{5} 7_{\pm1}^{2} 2_{\pm5}^{3} 4_{\pm4}^{3} 6_{\pm4}^{4} 8_{\pm4},
1_{0} 3_{\pm1} 5_{\pm1}^{3} 7_{\pm1} 2_{\pm5} 4_{\pm4}^{2} 6_{\pm4}^{2} 8_{\pm4}, \\
& 1_{0} 5_{\pm1}^{2} 7_{\pm1}^{2} 2_{\pm5} 4_{\pm4} 6_{\pm4}^{2} 8_{\pm4},
1_{0} 3_{\pm1} 5_{\pm1}^{3} 7_{\pm1}^{2} 2_{\pm5}^{2} 4_{\pm4}^{2} 6_{\pm4}^{2} 8_{\pm4},
1_{0} 3_{\pm1} 5_{\pm1} 7_{\pm1} 2_{\pm5} 4_{\pm4} 6_{\pm4} 8_{\pm4},
1_{0} 3_{\pm1}^{2} 5_{\pm1}^{3} 7_{\pm1}^{2} 2_{\pm5}^{2} 4_{\pm4}^{2} 6_{\pm4}^{3} 8_{\pm4}, \\
& 1_{0}^{2} 3_{\pm1} 5_{\pm1}^{4} 7_{\pm1}^{2} 2_{\pm5}^{3} 4_{\pm4}^{2} 6_{\pm4}^{3} 8_{\pm4},
1_{0}^{2} 3_{\pm1}^{2} 5_{\pm1}^{5} 7_{\pm1}^{3} 2_{\pm5}^{3} 4_{\pm4}^{3} 6_{\pm4}^{4} 8_{\pm4}^{2},
1_{0} 3_{\pm1} 5_{\pm1}^{2} 7_{\pm1}^{2} 2_{\pm5}^{2} 4_{\pm4} 6_{\pm4}^{2} 8_{\pm4},
1_{0} 3_{\pm1} 5_{\pm1}^{4} 7_{\pm1}^{2} 2_{\pm5}^{2} 4_{\pm4}^{2} 6_{\pm4}^{3} 8_{\pm4}, \\
&1_{0}^{2} 3_{\pm1}^{2} 5_{\pm1}^{4} 7_{\pm1}^{2} 2_{\pm5}^{2} 4_{\pm4}^{3} 6_{\pm4}^{3} 8_{\pm4},
1_{0}^{2} 3_{\pm1}^{2} 5_{\pm1}^{5} 7_{\pm1}^{3} 2_{\pm5}^{3} 4_{\pm4}^{3} 6_{\pm4}^{4} 8_{\pm4},
1_{0} 3_{\pm1}^{2} 5_{\pm1}^{3} 7_{\pm1} 2_{\pm5}^{2} 4_{\pm4}^{2} 6_{\pm4}^{2} 8_{\pm4},
1_{0} 5_{\pm1}^{2} 7_{\pm1} 2_{\pm5} 4_{\pm4} 6_{\pm4} 8_{\pm4}, \\
& 1_{0}^{2} 3_{\pm1} 5_{\pm1}^{3} 7_{\pm1}^{2} 2_{\pm5}^{2} 4_{\pm4}^{2} 6_{\pm4}^{2} 8_{\pm4},
1_{0} 3_{\pm1}^{2} 5_{\pm1}^{4} 7_{\pm1}^{2} 2_{\pm5}^{3} 4_{\pm4}^{2} 6_{\pm4}^{3} 8_{\pm4},
1_{0}^{2} 3_{\pm1}^{2} 5_{\pm1}^{5} 7_{\pm1}^{2} 2_{\pm5}^{3} 4_{\pm4}^{3} 6_{\pm4}^{3} 8_{\pm4},
1_{0} 3_{\pm1} 5_{\pm1}^{2} 7_{\pm1} 2_{\pm5}^{2} 4_{\pm4} 6_{\pm4} 8_{\pm4}, \\
& 1_{0} 3_{\pm1} 5_{\pm1}^{2} 7_{\pm1} 2_{\pm5} 4_{\pm4} 6_{\pm4}^{2} 8_{\pm4},
1_{0} 3_{\pm1} 5_{\pm1}^{3} 7_{\pm1}^{2} 2_{\pm5}^{2} 4_{\pm4} 6_{\pm4}^{3} 8_{\pm4},
1_{0}^{2} 3_{\pm1} 5_{\pm1}^{4} 7_{\pm1}^{2} 2_{\pm5}^{2} 4_{\pm4}^{2} 6_{\pm4}^{3} 8_{\pm4},
1_{0}^{2} 3_{\pm1}^{2} 5_{\pm1}^{4} 7_{\pm1}^{2} 2_{\pm5}^{3} 4_{\pm4}^{2} 6_{\pm4}^{3} 8_{\pm4}, \\
& 1_{0} 3_{\pm1} 5_{\pm1}^{3} 7_{\pm1}^{2} 2_{\pm5} 4_{\pm4}^{2} 6_{\pm4}^{2} 8_{\pm4},
1_{0} 3_{\pm1} 5_{\pm1}^{3} 7_{\pm1} 2_{\pm5}^{2} 4_{\pm4} 6_{\pm4}^{2} 8_{\pm4},
1_{0} 3_{\pm1}^{2} 5_{\pm1}^{3} 7_{\pm1}^{2} 2_{\pm5}^{2} 4_{\pm4}^{2} 6_{\pm4}^{2} 8_{\pm4},
1_{0} 3_{\pm1}^{2} 5_{\pm1}^{4} 7_{\pm1}^{2} 2_{\pm5}^{2} 4_{\pm4}^{2} 6_{\pm4}^{3} 8_{\pm4}, \\
& 1_{0} 5_{\pm1} 7_{\pm1} 2_{\pm5} 6_{\pm4} 8_{\pm4},
1_{0} 3_{\pm1} 5_{\pm1}^{2} 7_{\pm1}^{2} 2_{\pm5} 4_{\pm4} 6_{\pm4}^{2} 8_{\pm4},
1_{0} 3_{\pm1} 5_{\pm1}^{3} 7_{\pm1}^{2} 2_{\pm5}^{2} 4_{\pm4} 6_{\pm4}^{2} 8_{\pm4},
1_{0} 3_{\pm1} 5_{\pm1}^{2} 7_{\pm1} 2_{\pm5} 4_{\pm4} 6_{\pm4} 8_{\pm4}. \\
& \quad \\
& (12)  1_{0} 5_{\pm1} 8_{0} 2_{\pm5} 4_{\pm4} 6_{\pm4},
1_{0} 3_{\pm1} 5_{\pm1}^{2} 8_{0} 2_{\pm5} 4_{\pm4}^{2} 6_{\pm4}^{2},
1_{0} 5_{\pm1}^{2} 8_{0} 2_{\pm5} 4_{\pm4} 6_{\pm4}^{2},
1_{0} 3_{\pm1} 5_{\pm1}^{3} 7_{\pm1} 8_{0} 2_{\pm5}^{2} 4_{\pm4}^{2} 6_{\pm4}^{3}, \\
& 1_{0} 3_{\pm1} 5_{\pm1}^{2} 7_{\pm1} 8_{0} 2_{\pm5} 4_{\pm4}^{2} 6_{\pm4}^{2},
1_{0} 3_{\pm1} 5_{\pm1}^{3} 8_{0} 2_{\pm5}^{2} 4_{\pm4}^{2} 6_{\pm4}^{2},
1_{0}^{2} 3_{\pm1} 5_{\pm1}^{4} 7_{\pm1} 8_{0} 2_{\pm5}^{2} 4_{\pm4}^{3} 6_{\pm4}^{3},
1_{0} 3_{\pm1} 5_{\pm1}^{2} 8_{0} 2_{\pm5} 4_{\pm4}^{2} 6_{\pm4}, \\
& 1_{0} 3_{\pm1} 5_{\pm1}^{2} 8_{0} 2_{\pm5}^{2} 4_{\pm4} 6_{\pm4}^{2},
1_{0}^{2} 3_{\pm1} 5_{\pm1}^{3} 7_{\pm1} 8_{0} 2_{\pm5}^{2} 4_{\pm4}^{2} 6_{\pm4}^{3},
1_{0}^{2} 3_{\pm1}^{2} 5_{\pm1}^{4} 7_{\pm1} 8_{0} 2_{\pm5}^{3} 4_{\pm4}^{3} 6_{\pm4}^{3},
1_{0} 3_{\pm1} 5_{\pm1}^{3} 7_{\pm1} 8_{0} 2_{\pm5} 4_{\pm4}^{2} 6_{\pm4}^{3}, \\
& 1_{0}^{2} 3_{\pm1} 5_{\pm1}^{3} 8_{0} 2_{\pm5}^{2} 4_{\pm4}^{2} 6_{\pm4}^{2},
1_{0} 3_{\pm1}^{2} 5_{\pm1}^{4} 7_{\pm1} 8_{0} 2_{\pm5}^{2} 4_{\pm4}^{3} 6_{\pm4}^{3},
1_{0}^{2} 3_{\pm1}^{2} 5_{\pm1}^{5} 7_{\pm1} 8_{0} 2_{\pm5}^{3} 4_{\pm4}^{3} 6_{\pm4}^{4},
1_{0} 3_{\pm1} 5_{\pm1}^{3} 8_{0} 2_{\pm5} 4_{\pm4}^{2} 6_{\pm4}^{2}, \\
& 1_{0} 5_{\pm1}^{2} 7_{\pm1} 8_{0} 2_{\pm5} 4_{\pm4} 6_{\pm4}^{2},
1_{0} 3_{\pm1} 5_{\pm1}^{3} 7_{\pm1} 8_{0} 2_{\pm5}^{2} 4_{\pm4}^{2} 6_{\pm4}^{2},
1_{0} 3_{\pm1} 5_{\pm1} 8_{0} 2_{\pm5} 4_{\pm4} 6_{\pm4},
1_{0} 3_{\pm1}^{2} 5_{\pm1}^{3} 7_{\pm1} 8_{0} 2_{\pm5}^{2} 4_{\pm4}^{2} 6_{\pm4}^{3}, \\
& 1_{0}^{2} 3_{\pm1} 5_{\pm1}^{4} 7_{\pm1} 8_{0} 2_{\pm5}^{3} 4_{\pm4}^{2} 6_{\pm4}^{3},
1_{0}^{2} 3_{\pm1}^{2} 5_{\pm1}^{5} 7_{\pm1} 8_{0}^{2} 2_{\pm5}^{3} 4_{\pm4}^{3} 6_{\pm4}^{4},
1_{0} 3_{\pm1} 5_{\pm1}^{2} 7_{\pm1} 8_{0} 2_{\pm5}^{2} 4_{\pm4} 6_{\pm4}^{2},
1_{0} 3_{\pm1} 5_{\pm1}^{4} 7_{\pm1} 8_{0} 2_{\pm5}^{2} 4_{\pm4}^{2} 6_{\pm4}^{3}, \\
& 1_{0}^{2} 3_{\pm1}^{2} 5_{\pm1}^{4} 7_{\pm1} 8_{0} 2_{\pm5}^{2} 4_{\pm4}^{3} 6_{\pm4}^{3},
1_{0}^{2} 3_{\pm1}^{2} 5_{\pm1}^{5} 7_{\pm1}^{2} 8_{0} 2_{\pm5}^{3} 4_{\pm4}^{3} 6_{\pm4}^{4},
1_{0} 3_{\pm1}^{2} 5_{\pm1}^{3} 8_{0} 2_{\pm5}^{2} 4_{\pm4}^{2} 6_{\pm4}^{2},
1_{0} 5_{\pm1}^{2} 8_{0} 2_{\pm5} 4_{\pm4} 6_{\pm4}, \\
& 1_{0}^{2} 3_{\pm1} 5_{\pm1}^{3} 7_{\pm1} 8_{0} 2_{\pm5}^{2} 4_{\pm4}^{2} 6_{\pm4}^{2},
1_{0} 3_{\pm1}^{2} 5_{\pm1}^{4} 7_{\pm1} 8_{0} 2_{\pm5}^{3} 4_{\pm4}^{2} 6_{\pm4}^{3},
1_{0}^{2} 3_{\pm1}^{2} 5_{\pm1}^{5} 7_{\pm1} 8_{0} 2_{\pm5}^{3} 4_{\pm4}^{3} 6_{\pm4}^{3},
1_{0} 3_{\pm1} 5_{\pm1}^{2} 8_{0} 2_{\pm5}^{2} 4_{\pm4} 6_{\pm4}, \\
& 1_{0} 3_{\pm1} 5_{\pm1}^{2} 8_{0} 2_{\pm5} 4_{\pm4} 6_{\pm4}^{2},
1_{0} 3_{\pm1} 5_{\pm1}^{3} 7_{\pm1} 8_{0} 2_{\pm5}^{2} 4_{\pm4} 6_{\pm4}^{3},
1_{0}^{2} 3_{\pm1} 5_{\pm1}^{4} 7_{\pm1} 8_{0} 2_{\pm5}^{2} 4_{\pm4}^{2} 6_{\pm4}^{3},
1_{0}^{2} 3_{\pm1}^{2} 5_{\pm1}^{4} 7_{\pm1} 8_{0} 2_{\pm5}^{3} 4_{\pm4}^{2} 6_{\pm4}^{3}, \\
& 1_{0} 3_{\pm1} 5_{\pm1}^{3} 7_{\pm1} 8_{0} 2_{\pm5} 4_{\pm4}^{2} 6_{\pm4}^{2},
1_{0} 3_{\pm1} 5_{\pm1}^{3} 8_{0} 2_{\pm5}^{2} 4_{\pm4} 6_{\pm4}^{2},
1_{0} 3_{\pm1}^{2} 5_{\pm1}^{3} 7_{\pm1} 8_{0} 2_{\pm5}^{2} 4_{\pm4}^{2} 6_{\pm4}^{2},
1_{0} 3_{\pm1}^{2} 5_{\pm1}^{4} 7_{\pm1} 8_{0} 2_{\pm5}^{2} 4_{\pm4}^{2} 6_{\pm4}^{3}, \\
& 1_{0} 5_{\pm1} 8_{0} 2_{\pm5} 6_{\pm4},
1_{0} 3_{\pm1} 5_{\pm1}^{2} 7_{\pm1} 8_{0} 2_{\pm5} 4_{\pm4} 6_{\pm4}^{2},
1_{0} 3_{\pm1} 5_{\pm1}^{3} 7_{\pm1} 8_{0} 2_{\pm5}^{2} 4_{\pm4} 6_{\pm4}^{2},
1_{0} 3_{\pm1} 5_{\pm1}^{2} 8_{0} 2_{\pm5} 4_{\pm4} 6_{\pm4}. \\
\end{align*}
\end{gather}
\begin{gather}
\begin{align*}
& (13)  1_{0} 6_{0} 8_{0} 2_{\pm5} 4_{\pm4} 7_{\pm3},
1_{0} 3_{\pm1} 6_{0}^{2} 8_{0} 2_{\pm5} 4_{\pm4}^{2} 7_{\pm3}^{2},
1_{0} 6_{0}^{2} 8_{0} 2_{\pm5} 4_{\pm4} 7_{\pm3}^{2},
1_{0} 3_{\pm1} 6_{0}^{3} 8_{0} 2_{\pm5}^{2} 4_{\pm4}^{2} 7_{\pm3}^{2}, \\
& 1_{0} 3_{\pm1} 6_{0}^{2} 8_{0} 2_{\pm5} 4_{\pm4}^{2} 7_{\pm3},
1_{0} 3_{\pm1} 5_{\pm1} 6_{0}^{2} 8_{0} 2_{\pm5}^{2} 4_{\pm4}^{2} 7_{\pm3}^{2},
1_{0}^{2} 3_{\pm1} 5_{\pm1} 6_{0}^{3} 8_{0} 2_{\pm5}^{2} 4_{\pm4}^{3} 7_{\pm3}^{2},
1_{0} 3_{\pm1} 5_{\pm1} 6_{0} 8_{0} 2_{\pm5} 4_{\pm4}^{2} 7_{\pm3}, \\
& 1_{0} 3_{\pm1} 6_{0}^{2} 8_{0} 2_{\pm5}^{2} 4_{\pm4} 7_{\pm3}^{2},
1_{0}^{2} 3_{\pm1} 6_{0}^{3} 8_{0} 2_{\pm5}^{2} 4_{\pm4}^{2} 7_{\pm3}^{2},
1_{0}^{2} 3_{\pm1}^{2} 5_{\pm1} 6_{0}^{3} 8_{0} 2_{\pm5}^{3} 4_{\pm4}^{3} 7_{\pm3}^{2},
1_{0}^{2} 3_{\pm1} 5_{\pm1} 6_{0}^{2} 8_{0} 2_{\pm5}^{2} 4_{\pm4}^{2} 7_{\pm3}^{2}, \\
& 1_{0} 3_{\pm1} 6_{0}^{3} 8_{0} 2_{\pm5} 4_{\pm4}^{2} 7_{\pm3}^{2},
1_{0} 3_{\pm1}^{2} 5_{\pm1} 6_{0}^{3} 8_{0} 2_{\pm5}^{2} 4_{\pm4}^{3} 7_{\pm3}^{2},
1_{0}^{2} 3_{\pm1}^{2} 5_{\pm1} 6_{0}^{4}  8_{0} 2_{\pm5}^{3} 4_{\pm4}^{3} 7_{\pm3}^{3},
1_{0} 3_{\pm1} 5_{\pm1} 6_{0}^{2} 8_{0} 2_{\pm5} 4_{\pm4}^{2} 7_{\pm3}^{2}, \\
& 1_{0} 6_{0}^{2} 8_{0} 2_{\pm5} 4_{\pm4} 7_{\pm3},
1_{0} 3_{\pm1} 6_{0} 8_{0} 2_{\pm5} 4_{\pm4} 7_{\pm3},
1_{0} 3_{\pm1}^{2} 6_{0}^{3} 8_{0} 2_{\pm5}^{2} 4_{\pm4}^{2} 7_{\pm3}^{2},
1_{0} 3_{\pm1} 5_{\pm1} 6_{0}^{2} 8_{0} 2_{\pm5}^{2} 4_{\pm4}^{2} 7_{\pm3}, \\
& 1_{0}^{2} 3_{\pm1} 5_{\pm1} 6_{0}^{3} 8_{0} 2_{\pm5}^{3} 4_{\pm4}^{2} 7_{\pm3}^{2},
1_{0}^{2} 3_{\pm1}^{2} 5_{\pm1} 6_{0}^{4}  8_{0}^{2} 2_{\pm5}^{3} 4_{\pm4}^{3} 7_{\pm3}^{3},
1_{0} 3_{\pm1} 6_{0}^{2} 8_{0} 2_{\pm5}^{2} 4_{\pm4} 7_{\pm3},
1_{0} 3_{\pm1} 5_{\pm1} 6_{0}^{3} 8_{0} 2_{\pm5}^{2} 4_{\pm4}^{2} 7_{\pm3}^{2}, \\
& 1_{0}^{2} 3_{\pm1}^{2} 5_{\pm1} 6_{0}^{3} 8_{0} 2_{\pm5}^{2} 4_{\pm4}^{3} 7_{\pm3}^{2},
1_{0}^{2} 3_{\pm1}^{2} 5_{\pm1} 6_{0}^{4}  8_{0} 2_{\pm5}^{3} 4_{\pm4}^{3} 7_{\pm3}^{2},
1_{0} 5_{\pm1} 6_{0} 8_{0} 2_{\pm5} 4_{\pm4} 7_{\pm3},
1_{0}^{2} 3_{\pm1} 5_{\pm1} 6_{0}^{2} 8_{0} 2_{\pm5}^{2} 4_{\pm4}^{2} 7_{\pm3}, \\
& 1_{0} 3_{\pm1}^{2} 5_{\pm1} 6_{0}^{2} 8_{0} 2_{\pm5}^{2} 4_{\pm4}^{2} 7_{\pm3}^{2},
1_{0} 3_{\pm1}^{2} 5_{\pm1} 6_{0}^{3} 8_{0} 2_{\pm5}^{3} 4_{\pm4}^{2} 7_{\pm3}^{2},
1_{0}^{2} 3_{\pm1}^{2} 5_{\pm1}^{2} 6_{0}^{3} 8_{0} 2_{\pm5}^{3} 4_{\pm4}^{3} 7_{\pm3}^{2},
1_{0} 3_{\pm1} 5_{\pm1} 6_{0} 8_{0} 2_{\pm5}^{2} 4_{\pm4} 7_{\pm3}, \\
& 1_{0} 3_{\pm1} 6_{0}^{2} 8_{0} 2_{\pm5} 4_{\pm4} 7_{\pm3}^{2},
1_{0} 3_{\pm1} 6_{0}^{3} 8_{0} 2_{\pm5}^{2} 4_{\pm4} 7_{\pm3}^{2},
1_{0}^{2} 3_{\pm1} 5_{\pm1} 6_{0}^{3} 8_{0} 2_{\pm5}^{2} 4_{\pm4}^{2} 7_{\pm3}^{2},
1_{0}^{2} 3_{\pm1}^{2} 5_{\pm1} 6_{0}^{3} 8_{0} 2_{\pm5}^{3} 4_{\pm4}^{2} 7_{\pm3}^{2}, \\
& 1_{0} 3_{\pm1} 5_{\pm1} 6_{0}^{2} 8_{0} 2_{\pm5}^{2} 4_{\pm4} 7_{\pm3}^{2},
1_{0} 3_{\pm1} 5_{\pm1} 6_{0}^{2} 8_{0} 2_{\pm5} 4_{\pm4}^{2} 7_{\pm3},
1_{0} 3_{\pm1}^{2} 5_{\pm1} 6_{0}^{2} 8_{0} 2_{\pm5}^{2} 4_{\pm4}^{2} 7_{\pm3},
1_{0} 3_{\pm1}^{2} 5_{\pm1} 6_{0}^{3} 8_{0} 2_{\pm5}^{2} 4_{\pm4}^{2} 7_{\pm3}^{2}, \\
& 1_{0} 6_{0} 8_{0} 2_{\pm5} 7_{\pm3},
1_{0} 3_{\pm1} 6_{0}^{2} 8_{0} 2_{\pm5} 4_{\pm4} 7_{\pm3},
1_{0} 3_{\pm1} 5_{\pm1} 6_{0}^{2} 8_{0} 2_{\pm5}^{2} 4_{\pm4} 7_{\pm3},
1_{0} 3_{\pm1} 5_{\pm1} 6_{0} 8_{0} 2_{\pm5} 4_{\pm4} 7_{\pm3}. \\
& \quad \\
& (13)  1_{0} 2_{\pm6} 7_{0} 8_{\pm3},
1_{0} 2_{\pm6} 4_{\pm5} 7_{0} 8_{\pm3},
1_{0} 3_{\pm2} 7_{0}^{2} 2_{\pm6} 4_{\pm5}^{2} 8_{\pm3},
1_{0} 7_{0}^{2} 2_{\pm6} 4_{\pm5} 8_{\pm3},
1_{0} 3_{\pm2} 6_{\pm1} 7_{0} 2_{\pm6} 4_{\pm5}^{2} 8_{\pm3}, \\
& 1_{0} 3_{\pm2} 6_{\pm1} 7_{0}^{2} 2_{\pm6}^{2} 4_{\pm5}^{2} 8_{\pm3}, 
1_{0} 6_{\pm1} 7_{0} 2_{\pm6} 4_{\pm5} 8_{\pm3},
1_{0} 3_{\pm2} 5_{\pm2} 7_{0} 2_{\pm6} 4_{\pm5}^{2} 8_{\pm3},
1_{0} 3_{\pm2} 5_{\pm2} 7_{0}^{2} 2_{\pm6}^{2} 4_{\pm5}^{2} 8_{\pm3}, \\
& 1_{0}^{2} 3_{\pm2} 5_{\pm2} 6_{\pm1} 7_{0}^{2} 2_{\pm6}^{2} 4_{\pm5}^{3} 8_{\pm3},
1_{0} 3_{\pm2} 5_{\pm2} 6_{\pm1} 7_{0} 2_{\pm6}^{2} 4_{\pm5}^{2} 8_{\pm3},
1_{0} 3_{\pm2} 7_{0} 2_{\pm6} 4_{\pm5} 8_{\pm3},
1_{0} 3_{\pm2} 7_{0}^{2} 2_{\pm6}^{2} 4_{\pm5} 8_{\pm3}, \\
& 1_{0}^{2} 3_{\pm2} 6_{\pm1} 7_{0}^{2} 2_{\pm6}^{2} 4_{\pm5}^{2} 8_{\pm3},
1_{0}^{2} 3_{\pm2}^{2} 5_{\pm2} 6_{\pm1} 7_{0}^{2} 2_{\pm6}^{3} 4_{\pm5}^{3} 8_{\pm3},
1_{0} 3_{\pm2} 6_{\pm1} 7_{0} 2_{\pm6}^{2} 4_{\pm5} 8_{\pm3},
1_{0} 5_{\pm2} 7_{0} 2_{\pm6} 4_{\pm5} 8_{\pm3}, \\
& 1_{0}^{2} 3_{\pm2} 5_{\pm2} 7_{0}^{2} 2_{\pm6}^{2} 4_{\pm5}^{2} 8_{\pm3},
1_{0} 3_{\pm2} 6_{\pm1} 7_{0}^{2} 2_{\pm6} 4_{\pm5}^{2} 8_{\pm3},
1_{0} 3_{\pm2}^{2} 5_{\pm2} 6_{\pm1} 7_{0}^{2} 2_{\pm6}^{2} 4_{\pm5}^{3} 8_{\pm3},
1_{0}^{2} 3_{\pm2}^{2} 5_{\pm2} 6_{\pm1} 7_{0}^{3} 2_{\pm6}^{3} 4_{\pm5}^{3} 8_{\pm3}^{2}, \\
& 1_{0} 3_{\pm2} 5_{\pm2} 7_{0}^{2} 2_{\pm6} 4_{\pm5}^{2} 8_{\pm3},
1_{0} 3_{\pm2}^{2} 6_{\pm1} 7_{0}^{2} 2_{\pm6}^{2} 4_{\pm5}^{2} 8_{\pm3},
1_{0}^{2} 3_{\pm2} 5_{\pm2} 6_{\pm1} 7_{0}^{2} 2_{\pm6}^{3} 4_{\pm5}^{2} 8_{\pm3},
1_{0}^{2} 3_{\pm2}^{2} 5_{\pm2} 6_{\pm1} 7_{0}^{3} 2_{\pm6}^{3} 4_{\pm5}^{3} 8_{\pm3}, \\
& 1_{0} 3_{\pm2} 5_{\pm2} 6_{\pm1} 7_{0}^{2} 2_{\pm6}^{2} 4_{\pm5}^{2} 8_{\pm3},
1_{0}^{2} 3_{\pm2} 5_{\pm2} 6_{\pm1} 7_{0} 2_{\pm6}^{2} 4_{\pm5}^{2} 8_{\pm3},
1_{0}^{2} 3_{\pm2}^{2} 5_{\pm2} 6_{\pm1} 7_{0}^{2} 2_{\pm6}^{2} 4_{\pm5}^{3} 8_{\pm3}, \\
& 1_{0} 3_{\pm2} 5_{\pm2} 6_{\pm1} 7_{0} 2_{\pm6} 4_{\pm5}^{2} 8_{\pm3},
1_{0} 3_{\pm2} 5_{\pm2} 7_{0} 2_{\pm6}^{2} 4_{\pm5} 8_{\pm3},
1_{0} 3_{\pm2}^{2} 5_{\pm2} 7_{0}^{2} 2_{\pm6}^{2} 4_{\pm5}^{2} 8_{\pm3},
1_{0} 3_{\pm2}^{2} 5_{\pm2} 6_{\pm1} 7_{0}^{2} 2_{\pm6}^{3} 4_{\pm5}^{2} 8_{\pm3}, \\
& 1_{0}^{2} 3_{\pm2}^{2} 5_{\pm2}^{2} 6_{\pm1} 7_{0}^{2} 2_{\pm6}^{3} 4_{\pm5}^{3} 8_{\pm3},
1_{0} 3_{\pm2}^{2} 5_{\pm2} 6_{\pm1} 7_{0} 2_{\pm6}^{2} 4_{\pm5}^{2} 8_{\pm3},
1_{0} 3_{\pm2} 7_{0}^{2} 2_{\pm6} 4_{\pm5} 8_{\pm3},
1_{0}^{2} 3_{\pm2}^{2} 5_{\pm2} 6_{\pm1}^{2} 7_{0}^{2} 2_{\pm6}^{3} 4_{\pm5}^{3} 8_{\pm3},  \\
& 1_{0} 3_{\pm2} 6_{\pm1} 7_{0}^{2} 2_{\pm6}^{2} 4_{\pm5} 8_{\pm3},
1_{0}^{2} 3_{\pm2} 5_{\pm2} 6_{\pm1} 7_{0}^{2} 2_{\pm6}^{2} 4_{\pm5}^{2} 8_{\pm3},
1_{0}^{2} 3_{\pm2}^{2} 5_{\pm2} 6_{\pm1} 7_{0}^{2} 2_{\pm6}^{3} 4_{\pm5}^{2} 8_{\pm3},
1_{0} 3_{\pm2} 6_{\pm1} 7_{0} 2_{\pm6} 4_{\pm5} 8_{\pm3}, \\
& 1_{0} 3_{\pm2} 5_{\pm2} 7_{0}^{2} 2_{\pm6}^{2} 4_{\pm5} 8_{\pm3},
1_{0} 3_{\pm2}^{2} 5_{\pm2} 6_{\pm1} 7_{0}^{2} 2_{\pm6}^{2} 4_{\pm5}^{2} 8_{\pm3},
1_{0} 3_{\pm2} 5_{\pm2} 6_{\pm1} 7_{0} 2_{\pm6}^{2} 4_{\pm5} 8_{\pm3},
1_{0} 3_{\pm2} 5_{\pm2} 7_{0} 2_{\pm6} 4_{\pm5} 8_{\pm3}. \\
& \quad \\
& (15)  1_{\pm2} 2_{\pm7}  8_{0},
1_{\pm2} 8_{0} 2_{\pm7} 4_{\pm6},
1_{\pm2} 3_{\pm3} 7_{\pm1} 8_{0} 2_{\pm7} 4_{\pm6}^{2},
1_{\pm2} 7_{\pm1} 8_{0} 2_{\pm7} 4_{\pm6},
1_{\pm2} 3_{\pm3} 6_{\pm2} 8_{0} 2_{\pm7} 4_{\pm6}^{2}, 
1_{\pm2} 3_{\pm3} 5_{\pm3} 8_{0} 2_{\pm7} 4_{\pm6}^{2}, \\
& 1_{\pm2} 6_{\pm2} 8_{0} 2_{\pm7} 4_{\pm6},
1_{\pm2} 3_{\pm3} 6_{\pm2} 7_{\pm1} 8_{0} 2_{\pm7}^{2}4_{\pm6}^{2},
1_{\pm2} 3_{\pm3} 5_{\pm3} 7_{\pm1} 8_{0} 2_{\pm7}^{2}4_{\pm6}^{2}, 
1_{\pm2} 3_{\pm3} 5_{\pm3} 6_{\pm2} 8_{0} 2_{\pm7}^{2}4_{\pm6}^{2},  \\
& 1_{\pm2} 3_{\pm3} 8_{0} 2_{\pm7} 4_{\pm6},
1_{\pm2} 5_{\pm3} 8_{0} 2_{\pm7} 4_{\pm6}, 
1_{\pm2} 3_{\pm3} 6_{\pm2} 8_{0} 2_{\pm7}^{2}4_{\pm6}, 
1_{\pm2}^{2}3_{\pm3} 6_{\pm2} 7_{\pm1} 8_{0} 2_{\pm7}^{2}4_{\pm6}^{2},
1_{\pm2}^{2}3_{\pm3}^{2}5_{\pm3} 6_{\pm2} 7_{\pm1} 8_{0} 2_{\pm7}^{3}  4_{\pm6}^{3}, \\
& 1_{\pm2}^{2}3_{\pm3} 5_{\pm3} 7_{\pm1} 8_{0} 2_{\pm7}^{2}4_{\pm6}^{2},
1_{\pm2} 3_{\pm3} 6_{\pm2} 7_{\pm1} 8_{0} 2_{\pm7} 4_{\pm6}^{2},
1_{\pm2} 3_{\pm3}^{2}5_{\pm3} 6_{\pm2} 7_{\pm1} 8_{0} 2_{\pm7}^{2}4_{\pm6}^{3},
1_{\pm2}^{2}3_{\pm3}^{2}5_{\pm3} 6_{\pm2} 7_{\pm1} 8_{0}^{2}2_{\pm7}^{3}  4_{\pm6}^{3}, \\
& 1_{\pm2} 3_{\pm3} 5_{\pm3} 7_{\pm1} 8_{0} 2_{\pm7} 4_{\pm6}^{2},
1_{\pm2} 3_{\pm3}^{2}6_{\pm2} 7_{\pm1} 8_{0} 2_{\pm7}^{2}4_{\pm6}^{2},
1_{\pm2}^{2}3_{\pm3} 5_{\pm3} 6_{\pm2} 7_{\pm1} 8_{0} 2_{\pm7}^{3}  4_{\pm6}^{2},
1_{\pm2}^{2}3_{\pm3}^{2}5_{\pm3} 6_{\pm2} 7_{\pm1}^{2}8_{0} 2_{\pm7}^{3}  4_{\pm6}^{3}, \\
& 1_{\pm2} 3_{\pm3} 5_{\pm3} 6_{\pm2} 7_{\pm1} 8_{0} 2_{\pm7}^{2}4_{\pm6}^{2},
1_{\pm2}^{2}3_{\pm3} 5_{\pm3} 6_{\pm2} 8_{0} 2_{\pm7}^{2}4_{\pm6}^{2},
1_{\pm2}^{2}3_{\pm3}^{2}5_{\pm3} 6_{\pm2} 7_{\pm1} 8_{0} 2_{\pm7}^{2}4_{\pm6}^{3},
1_{\pm2}^{2}3_{\pm3}^{2}5_{\pm3} 6_{\pm2}^{2}7_{\pm1} 8_{0} 2_{\pm7}^{3}  4_{\pm6}^{3}, \\
& 1_{\pm2} 3_{\pm3} 5_{\pm3} 6_{\pm2} 8_{0} 2_{\pm7} 4_{\pm6}^{2},
1_{\pm2} 3_{\pm3} 5_{\pm3} 8_{0} 2_{\pm7}^{2}4_{\pm6},
1_{\pm2} 3_{\pm3}^{2}5_{\pm3} 7_{\pm1} 8_{0} 2_{\pm7}^{2}4_{\pm6}^{2},
1_{\pm2} 3_{\pm3}^{2}5_{\pm3} 6_{\pm2} 7_{\pm1} 8_{0} 2_{\pm7}^{3}  4_{\pm6}^{2}, \\
& 1_{\pm2}^{2}3_{\pm3}^{2}5_{\pm3}^{2}6_{\pm2} 7_{\pm1} 8_{0} 2_{\pm7}^{3}  4_{\pm6}^{3},
1_{\pm2} 3_{\pm3}^{2}5_{\pm3} 6_{\pm2} 8_{0} 2_{\pm7}^{2}4_{\pm6}^{2},
1_{\pm2} 3_{\pm3} 7_{\pm1} 8_{0} 2_{\pm7} 4_{\pm6}, 
1_{\pm2} 3_{\pm3} 7_{\pm1} 8_{0} 2_{\pm7}^{2}4_{\pm6}, \\
& 1_{\pm2} 3_{\pm3} 6_{\pm2} 7_{\pm1} 8_{0} 2_{\pm7}^{2}4_{\pm6},
1_{\pm2}^{2}3_{\pm3} 5_{\pm3} 6_{\pm2} 7_{\pm1} 8_{0} 2_{\pm7}^{2}4_{\pm6}^{2},
1_{\pm2}^{2}3_{\pm3}^{2}5_{\pm3} 6_{\pm2} 7_{\pm1} 8_{0} 2_{\pm7}^{3}  4_{\pm6}^{2},
1_{\pm2} 3_{\pm3} 6_{\pm2} 8_{0} 2_{\pm7} 4_{\pm6}, \\
& 1_{\pm2} 3_{\pm3} 5_{\pm3} 7_{\pm1} 8_{0} 2_{\pm7}^{2}4_{\pm6},
1_{\pm2} 3_{\pm3}^{2}5_{\pm3} 6_{\pm2} 7_{\pm1} 8_{0} 2_{\pm7}^{2}4_{\pm6}^{2},
1_{\pm2} 3_{\pm3} 5_{\pm3} 6_{\pm2} 8_{0} 2_{\pm7}^{2}4_{\pm6},
1_{\pm2} 3_{\pm3} 5_{\pm3} 8_{0} 2_{\pm7} 4_{\pm6}, \\
& 1_{\pm2}^{2}3_{\pm3} 5_{\pm3} 6_{\pm2} 7_{\pm1} 8_{0} 2_{\pm7}^{2}4_{\pm6}^{3}.
\end{align*}
\end{gather}

\begin{gather}
\begin{align*}
& (16)  1_{0} 2_{\pm1} 8_{\pm8}, 
1_{0} 2_{\pm1} 4_{\pm4} 8_{\pm8},
1_{0} 2_{\pm1} 3_{\pm1} 4_{\pm4} 5_{\pm5} 8_{\pm8},
1_{0} 2_{\pm1}^{2}3_{\pm1} 4_{\pm4} 5_{\pm5} 6_{\pm6} 8_{\pm8},
1_{0}^{2}2_{\pm1}^{2}3_{\pm1} 4_{\pm4} 5_{\pm5} 6_{\pm6} 7_{\pm7} 8_{\pm8}, \\
& 1_{0} 2_{\pm1} 3_{\pm1} 4_{\pm4} 6_{\pm6} 8_{\pm8},
1_{0} 2_{\pm1}^{2}3_{\pm1} 4_{\pm4} 5_{\pm5} 7_{\pm7} 8_{\pm8},
1_{0}^{2}2_{\pm1}^{3}3_{\pm1}^{2}4_{\pm4}^{2}5_{\pm5} 6_{\pm6} 7_{\pm7} 8_{\pm8},
1_{0} 2_{\pm1}^{2}3_{\pm1}^{2}4_{\pm4} 5_{\pm5} 6_{\pm6} 7_{\pm7} 8_{\pm8}, \\
& 1_{0} 2_{\pm1}^{2}3_{\pm1} 4_{\pm4} 6_{\pm6} 7_{\pm7} 8_{\pm8},
1_{0} 2_{\pm1} 3_{\pm1} 4_{\pm4} 7_{\pm7} 8_{\pm8},
1_{0} 2_{\pm1} 5_{\pm5} 8_{\pm8},
1_{0} 2_{\pm1} 3_{\pm1} 6_{\pm6} 8_{\pm8},
1_{0}^{2}2_{\pm1}^{2}3_{\pm1} 4_{\pm4} 5_{\pm5} 6_{\pm6} 8_{\pm8}, \\
& 1_{0}^{2}2_{\pm1}^{3}3_{\pm1}^{2}4_{\pm4} 5_{\pm5}^{2}6_{\pm6} 7_{\pm7} 8_{\pm8},
1_{0}^{2}2_{\pm1}^{3}3_{\pm1} 4_{\pm4} 5_{\pm5} 6_{\pm6} 7_{\pm7} 8_{\pm8},
1_{0}^{2}2_{\pm1}^{2}3_{\pm1} 4_{\pm4} 5_{\pm5} 7_{\pm7} 8_{\pm8},
1_{0} 2_{\pm1}^{2}3_{\pm1} 4_{\pm4} 5_{\pm5} 8_{\pm8}, \\
& 1_{0} 2_{\pm1} 3_{\pm1} 5_{\pm5} 6_{\pm6} 8_{\pm8},
1_{0}^{2}2_{\pm1}^{3}3_{\pm1}^{2}4_{\pm4} 5_{\pm5} 6_{\pm6}^{2}7_{\pm7} 8_{\pm8},
1_{0}^{2}2_{\pm1}^{2}3_{\pm1}^{2}4_{\pm4} 5_{\pm5} 6_{\pm6} 7_{\pm7} 8_{\pm8},
1_{0} 2_{\pm1}^{2}3_{\pm1}^{2}4_{\pm4} 5_{\pm5} 6_{\pm6} 8_{\pm8}, \\
& 1_{0} 2_{\pm1}^{2}3_{\pm1} 5_{\pm5} 6_{\pm6} 7_{\pm7} 8_{\pm8},
1_{0}^{2}2_{\pm1}^{3}3_{\pm1}^{2}4_{\pm4} 5_{\pm5} 6_{\pm6} 7_{\pm7}^{2}8_{\pm8},
1_{0} 2_{\pm1}^{3}3_{\pm1}^{2}4_{\pm4} 5_{\pm5} 6_{\pm6} 7_{\pm7} 8_{\pm8},
1_{0} 2_{\pm1} 6_{\pm6} 8_{\pm8}, \\
& 1_{0} 2_{\pm1} 3_{\pm1} 5_{\pm5} 7_{\pm7} 8_{\pm8},
1_{0}^{2}2_{\pm1}^{2}3_{\pm1} 4_{\pm4} 6_{\pm6} 7_{\pm7} 8_{\pm8},
1_{0}^{2}2_{\pm1}^{3}3_{\pm1}^{2}4_{\pm4} 5_{\pm5} 6_{\pm6} 7_{\pm7} 8_{\pm8}^{2};
1_{0}^{2}2_{\pm1}^{2}3_{\pm1} 5_{\pm5} 6_{\pm6} 7_{\pm7} 8_{\pm8}, \\
& 1_{0} 2_{\pm1} 7_{\pm7} 8_{\pm8}, 
1_{0} 2_{\pm1} 3_{\pm1} 6_{\pm6} 7_{\pm7} 8_{\pm8},
1_{0} 2_{\pm1}^{2}3_{\pm1} 6_{\pm6} 7_{\pm7} 8_{\pm8}, 
1_{0} 2_{\pm1}^{2}3_{\pm1} 4_{\pm4} 6_{\pm6} 8_{\pm8},
1_{0} 2_{\pm1}^{2}3_{\pm1}^{2}4_{\pm4} 5_{\pm5} 7_{\pm7} 8_{\pm8}, \\
& 1_{0}^{2}2_{\pm1}^{3}3_{\pm1}^{2}4_{\pm4} 5_{\pm5} 6_{\pm6} 7_{\pm7} 8_{\pm8},
1_{0} 2_{\pm1}^{2}3_{\pm1}^{2}4_{\pm4} 6_{\pm6} 7_{\pm7} 8_{\pm8},
1_{0} 2_{\pm1}^{2}3_{\pm1} 4_{\pm4} 7_{\pm7} 8_{\pm8},
1_{0} 2_{\pm1} 3_{\pm1} 4_{\pm4} 8_{\pm8}, \\
& 1_{0} 2_{\pm1}^{2}3_{\pm1} 5_{\pm5} 6_{\pm6} 8_{\pm8},
1_{0} 2_{\pm1}^{2}3_{\pm1}^{2}5_{\pm5} 6_{\pm6} 7_{\pm7} 8_{\pm8},
1_{0} 2_{\pm1}^{2}3_{\pm1} 5_{\pm5} 7_{\pm7} 8_{\pm8},
1_{0} 2_{\pm1} 3_{\pm1} 5_{\pm5} 8_{\pm8}, 
1_{0} 2_{\pm1} 3_{\pm1} 7_{\pm7} 8_{\pm8}.
\end{align*}
\end{gather}

\begin{gather}
\begin{align*}
& (17)  1_{0} 2_{\pm1} 8_{\pm4} 4_{\pm4} 7_{\pm7},
1_{0} 2_{\pm1} 3_{\pm1} 8_{\pm4} 4_{\pm4} 5_{\pm5} 7_{\pm7},
1_{0} 2_{\pm1}^{2} 3_{\pm1} 8_{\pm4} 4_{\pm4} 5_{\pm5} 6_{\pm6} 7_{\pm7},
1_{0}^{2} 2_{\pm1}^{2} 3_{\pm1} 8_{\pm4} 4_{\pm4} 5_{\pm5} 6_{\pm6} 7_{\pm7}^{2}, \\
& 1_{0} 2_{\pm1} 3_{\pm1} 8_{\pm4} 4_{\pm4} 6_{\pm6} 7_{\pm7},
1_{0} 2_{\pm1}^{2} 3_{\pm1} 8_{\pm4} 4_{\pm4} 5_{\pm5} 7_{\pm7}^{2},
1_{0}^{2} 2_{\pm1}^{3} 3_{\pm1}^{2} 8_{\pm4} 4_{\pm4}^{2} 5_{\pm5} 6_{\pm6} 7_{\pm7}^{2},
1_{0} 2_{\pm1}^{2} 3_{\pm1}^{2} 8_{\pm4} 4_{\pm4} 5_{\pm5} 6_{\pm6} 7_{\pm7}^{2}, \\
& 1_{0} 2_{\pm1}^{2} 3_{\pm1} 8_{\pm4} 4_{\pm4} 6_{\pm6} 7_{\pm7}^{2},
1_{0} 2_{\pm1} 3_{\pm1} 8_{\pm4} 4_{\pm4} 7_{\pm7}^{2},
1_{0} 2_{\pm1} 8_{\pm4} 5_{\pm5} 7_{\pm7},
1_{0} 2_{\pm1} 8_{\pm4} 7_{\pm7},
1_{0}^{2} 2_{\pm1}^{2} 3_{\pm1} 8_{\pm4} 4_{\pm4} 5_{\pm5} 6_{\pm6} 7_{\pm7}, \\
& 1_{0}^{2} 2_{\pm1}^{3} 3_{\pm1}^{2} 8_{\pm4} 4_{\pm4} 5_{\pm5}^{2} 6_{\pm6} 7_{\pm7}^{2},
1_{0}^{2} 2_{\pm1}^{3} 3_{\pm1} 8_{\pm4} 4_{\pm4} 5_{\pm5} 6_{\pm6} 7_{\pm7}^{2},
1_{0}^{2} 2_{\pm1}^{2} 3_{\pm1} 8_{\pm4} 4_{\pm4} 5_{\pm5} 7_{\pm7}^{2},
1_{0} 2_{\pm1}^{2} 3_{\pm1} 8_{\pm4} 4_{\pm4} 5_{\pm5} 7_{\pm7}, \\
& 1_{0} 2_{\pm1} 3_{\pm1} 8_{\pm4} 5_{\pm5} 6_{\pm6} 7_{\pm7},
1_{0}^{2} 2_{\pm1}^{3} 3_{\pm1}^{2} 8_{\pm4} 4_{\pm4} 5_{\pm5} 6_{\pm6}^{2} 7_{\pm7}^{2},
1_{0}^{2} 2_{\pm1}^{2} 3_{\pm1}^{2} 8_{\pm4} 4_{\pm4} 5_{\pm5} 6_{\pm6} 7_{\pm7}^{2},
1_{0} 2_{\pm1}^{2} 3_{\pm1}^{2} 8_{\pm4} 4_{\pm4} 5_{\pm5} 6_{\pm6} 7_{\pm7}, \\
& 1_{0} 2_{\pm1}^{2} 3_{\pm1} 8_{\pm4} 5_{\pm5} 6_{\pm6} 7_{\pm7}^{2},
1_{0}^{2} 2_{\pm1}^{3} 3_{\pm1}^{2} 8_{\pm4} 4_{\pm4} 5_{\pm5} 6_{\pm6} 7_{\pm7}^{3},
1_{0} 2_{\pm1}^{3} 3_{\pm1}^{2} 8_{\pm4} 4_{\pm4} 5_{\pm5} 6_{\pm6} 7_{\pm7}^{2},
1_{0} 2_{\pm1} 8_{\pm4} 6_{\pm6} 7_{\pm7}, \\
& 1_{0} 2_{\pm1} 3_{\pm1} 8_{\pm4} 5_{\pm5} 7_{\pm7}^{2},
1_{0}^{2} 2_{\pm1}^{2} 3_{\pm1} 8_{\pm4} 4_{\pm4} 6_{\pm6} 7_{\pm7}^{2},
1_{0}^{2} 2_{\pm1}^{3} 3_{\pm1}^{2} 8_{\pm4}^{2} 4_{\pm4} 5_{\pm5} 6_{\pm6} 7_{\pm7}^{3},
1_{0}^{2} 2_{\pm1}^{2} 3_{\pm1} 8_{\pm4} 5_{\pm5} 6_{\pm6} 7_{\pm7}^{2}, \\
& 1_{0} 2_{\pm1} 3_{\pm1} 8_{\pm4} 6_{\pm6} 7_{\pm7}^{2},
1_{0} 2_{\pm1} 8_{\pm4} 7_{\pm7}^{2},
1_{0} 2_{\pm1} 3_{\pm1} 8_{\pm4} 7_{\pm7}^{2},
1_{0} 2_{\pm1}^{2} 3_{\pm1} 8_{\pm4} 4_{\pm4} 6_{\pm6} 7_{\pm7},
1_{0} 2_{\pm1}^{2} 3_{\pm1}^{2} 8_{\pm4} 4_{\pm4} 5_{\pm5} 7_{\pm7}^{2}, \\
& 1_{0}^{2} 2_{\pm1}^{3} 3_{\pm1}^{2} 8_{\pm4} 4_{\pm4} 5_{\pm5} 6_{\pm6} 7_{\pm7}^{2},
1_{0} 2_{\pm1}^{2} 3_{\pm1}^{2} 8_{\pm4} 4_{\pm4} 6_{\pm6} 7_{\pm7}^{2},
1_{0} 2_{\pm1}^{2} 3_{\pm1} 8_{\pm4} 4_{\pm4} 7_{\pm7}^{2},
1_{0} 2_{\pm1} 3_{\pm1} 8_{\pm4} 4_{\pm4} 7_{\pm7}, 
1_{0} 2_{\pm1} 3_{\pm1} 8_{\pm4} 6_{\pm6} 7_{\pm7}, \\
& 1_{0} 2_{\pm1}^{2} 3_{\pm1} 8_{\pm4} 5_{\pm5} 6_{\pm6} 7_{\pm7},
1_{0} 2_{\pm1}^{2} 3_{\pm1}^{2} 8_{\pm4} 5_{\pm5} 6_{\pm6} 7_{\pm7}^{2},
1_{0} 2_{\pm1}^{2} 3_{\pm1} 8_{\pm4} 5_{\pm5} 7_{\pm7}^{2},
1_{0} 2_{\pm1} 3_{\pm1} 8_{\pm4} 5_{\pm5} 7_{\pm7}, 
1_{0} 2_{\pm1}^{2} 3_{\pm1} 8_{\pm4} 6_{\pm6} 7_{\pm7}^{2}.
\end{align*}
\end{gather}

\begin{gather}
\begin{align*}
& (18)  1_{0} 2_{\pm1} 7_{\pm3} 4_{\pm4} 6_{\pm6} 8_{\pm6},
1_{0} 2_{\pm1} 3_{\pm1} 7_{\pm3} 4_{\pm4} 5_{\pm5} 6_{\pm6} 8_{\pm6},
1_{0} 2_{\pm1}^{2} 3_{\pm1} 7_{\pm3} 4_{\pm4} 5_{\pm5} 6_{\pm6}^{2} 8_{\pm6},
1_{0} 2_{\pm1} 7_{\pm3} 5_{\pm5} 6_{\pm6} 8_{\pm6}, \\
& 1_{0} 2_{\pm1} 3_{\pm1} 7_{\pm3} 4_{\pm4} 6_{\pm6}^{2} 8_{\pm6},
1_{0}^{2} 2_{\pm1}^{2} 3_{\pm1} 7_{\pm3}^{2} 4_{\pm4} 5_{\pm5} 6_{\pm6}^{3} 8_{\pm6},
1_{0}^{2} 2_{\pm1}^{2} 3_{\pm1} 7_{\pm3} 4_{\pm4} 5_{\pm5} 6_{\pm6}^{2} 8_{\pm6},
1_{0} 2_{\pm1} 3_{\pm1} 7_{\pm3} 5_{\pm5} 6_{\pm6}^{2} 8_{\pm6}, \\
& 1_{0} 2_{\pm1}^{2} 3_{\pm1} 7_{\pm3}^{2} 4_{\pm4} 5_{\pm5} 6_{\pm6}^{2} 8_{\pm6},
1_{0}^{2} 2_{\pm1}^{3} 3_{\pm1}^{2} 7_{\pm3}^{2} 4_{\pm4}^{2} 5_{\pm5} 6_{\pm6}^{3} 8_{\pm6},
1_{0} 2_{\pm1}^{2} 3_{\pm1}^{2} 7_{\pm3}^{2} 4_{\pm4} 5_{\pm5} 6_{\pm6}^{3} 8_{\pm6},
1_{0} 2_{\pm1} 7_{\pm3} 6_{\pm6}^{2} 8_{\pm6}, \\
& 1_{0} 2_{\pm1}^{2} 3_{\pm1} 7_{\pm3}^{2} 4_{\pm4} 6_{\pm6}^{3} 8_{\pm6},
1_{0} 2_{\pm1}^{2} 3_{\pm1} 7_{\pm3} 4_{\pm4} 5_{\pm5} 6_{\pm6} 8_{\pm6},
1_{0}^{2} 2_{\pm1}^{3} 3_{\pm1}^{2} 7_{\pm3}^{2} 4_{\pm4} 5_{\pm5}^{2} 6_{\pm6}^{3} 8_{\pm6},
1_{0}^{2} 2_{\pm1}^{3} 3_{\pm1} 7_{\pm3}^{2} 4_{\pm4} 5_{\pm5} 6_{\pm6}^{3} 8_{\pm6}, \\
& 1_{0} 2_{\pm1} 3_{\pm1} 7_{\pm3}^{2} 4_{\pm4} 6_{\pm6}^{2} 8_{\pm6},
1_{0}^{2} 2_{\pm1}^{2} 3_{\pm1} 7_{\pm3}^{2} 4_{\pm4} 5_{\pm5} 6_{\pm6}^{2} 8_{\pm6},
1_{0} 2_{\pm1}^{2} 3_{\pm1}^{2} 7_{\pm3} 4_{\pm4} 5_{\pm5} 6_{\pm6}^{2} 8_{\pm6},
1_{0}^{2} 2_{\pm1}^{3} 3_{\pm1}^{2} 7_{\pm3}^{2} 4_{\pm4} 5_{\pm5} 6_{\pm6}^{4} 8_{\pm6}, \\
& 1_{0}^{2} 2_{\pm1}^{2} 3_{\pm1}^{2} 7_{\pm3}^{2} 4_{\pm4} 5_{\pm5} 6_{\pm6}^{3} 8_{\pm6},
1_{0} 2_{\pm1}^{2} 3_{\pm1} 7_{\pm3} 4_{\pm4} 6_{\pm6}^{2} 8_{\pm6},
1_{0} 2_{\pm1}^{2} 3_{\pm1} 7_{\pm3}^{2} 5_{\pm5} 6_{\pm6}^{3} 8_{\pm6},
1_{0}^{2} 2_{\pm1}^{3} 3_{\pm1}^{2} 7_{\pm3}^{3} 4_{\pm4} 5_{\pm5} 6_{\pm6}^{4} 8_{\pm6}^{2}, \\
& 1_{0} 2_{\pm1}^{3} 3_{\pm1}^{2} 7_{\pm3}^{2} 4_{\pm4} 5_{\pm5} 6_{\pm6}^{3} 8_{\pm6},
1_{0} 2_{\pm1}^{2} 3_{\pm1} 7_{\pm3} 5_{\pm5} 6_{\pm6}^{2} 8_{\pm6},
1_{0} 2_{\pm1} 3_{\pm1} 7_{\pm3}^{2} 5_{\pm5} 6_{\pm6}^{2} 8_{\pm6},
1_{0}^{2} 2_{\pm1}^{2} 3_{\pm1} 7_{\pm3}^{2} 4_{\pm4} 6_{\pm6}^{3} 8_{\pm6}, \\
& 1_{0}^{2} 2_{\pm1}^{3} 3_{\pm1}^{2} 7_{\pm3}^{3} 4_{\pm4} 5_{\pm5} 6_{\pm6}^{4} 8_{\pm6},
1_{0}^{2} 2_{\pm1}^{2} 3_{\pm1} 7_{\pm3}^{2} 5_{\pm5} 6_{\pm6}^{3} 8_{\pm6},
1_{0} 2_{\pm1} 3_{\pm1} 7_{\pm3}^{2} 6_{\pm6}^{3} 8_{\pm6},
1_{0} 2_{\pm1} 3_{\pm1} 7_{\pm3} 4_{\pm4} 6_{\pm6} 8_{\pm6}, \\
& 1_{0} 2_{\pm1}^{2} 3_{\pm1}^{2} 7_{\pm3}^{2} 4_{\pm4} 5_{\pm5} 6_{\pm6}^{2} 8_{\pm6},
1_{0}^{2} 2_{\pm1}^{3} 3_{\pm1}^{2} 7_{\pm3}^{2} 4_{\pm4} 5_{\pm5} 6_{\pm6}^{3} 8_{\pm6},
1_{0} 2_{\pm1}^{2} 3_{\pm1}^{2} 7_{\pm3}^{2} 4_{\pm4} 6_{\pm6}^{3} 8_{\pm6},
1_{0} 2_{\pm1} 7_{\pm3}^{2} 6_{\pm6}^{2} 8_{\pm6}, \\
& 1_{0} 2_{\pm1}^{2} 3_{\pm1} 7_{\pm3}^{2} 4_{\pm4} 6_{\pm6}^{2} 8_{\pm6},
1_{0} 2_{\pm1} 3_{\pm1} 7_{\pm3} 5_{\pm5} 6_{\pm6} 8_{\pm6},
1_{0} 2_{\pm1}^{2} 3_{\pm1}^{2} 7_{\pm3}^{2} 5_{\pm5} 6_{\pm6}^{3} 8_{\pm6},
1_{0} 2_{\pm1}^{2} 3_{\pm1} 7_{\pm3}^{2} 5_{\pm5} 6_{\pm6}^{2} 8_{\pm6}, \\
& 1_{0} 2_{\pm1} 3_{\pm1} 7_{\pm3} 6_{\pm6}^{2} 8_{\pm6},
1_{0} 2_{\pm1}^{2} 3_{\pm1} 7_{\pm3}^{2} 6_{\pm6}^{3} 8_{\pm6},
1_{0} 2_{\pm1} 7_{\pm3} 6_{\pm6} 8_{\pm6},
1_{0} 2_{\pm1} 3_{\pm1} 7_{\pm3}^{2} 6_{\pm6}^{2} 8_{\pm6}.
\end{align*}
\end{gather}

\begin{gather}
\begin{align*}
& (19)  1_{0} 2_{\pm1} 8_{\pm2} 4_{\pm4} 6_{\pm6},
1_{0} 2_{\pm1} 3_{\pm1} 8_{\pm2} 4_{\pm4} 5_{\pm5} 6_{\pm6},
1_{0} 2_{\pm1}^{2} 3_{\pm1} 8_{\pm2} 4_{\pm4} 5_{\pm5} 6_{\pm6}^{2},
1_{0} 2_{\pm1} 8_{\pm2} 5_{\pm5} 6_{\pm6}, 
1_{0} 2_{\pm1} 3_{\pm1} 8_{\pm2} 6_{\pm6}^{2}, \\
& 1_{0} 2_{\pm1} 3_{\pm1} 8_{\pm2} 4_{\pm4} 6_{\pm6}^{2},
1_{0}^{2} 2_{\pm1}^{2} 3_{\pm1} 7_{\pm3} 8_{\pm2} 4_{\pm4} 5_{\pm5} 6_{\pm6}^{3},
1_{0}^{2} 2_{\pm1}^{2} 3_{\pm1} 8_{\pm2} 4_{\pm4} 5_{\pm5} 6_{\pm6}^{2},
1_{0} 2_{\pm1} 3_{\pm1} 8_{\pm2} 5_{\pm5} 6_{\pm6}^{2},  \\
& 1_{0} 2_{\pm1}^{2} 3_{\pm1} 7_{\pm3} 8_{\pm2} 4_{\pm4} 5_{\pm5} 6_{\pm6}^{2},
1_{0} 2_{\pm1}^{2} 3_{\pm1}^{2} 7_{\pm3} 8_{\pm2} 4_{\pm4} 5_{\pm5} 6_{\pm6}^{3},
1_{0} 2_{\pm1} 3_{\pm1} 7_{\pm3} 8_{\pm2} 6_{\pm6}^{2},
1_{0} 2_{\pm1} 8_{\pm2} 6_{\pm6}^{2}, 
1_{0} 2_{\pm1} 8_{\pm2} 6_{\pm6}, \\
& 1_{0} 2_{\pm1}^{2} 3_{\pm1} 7_{\pm3} 8_{\pm2} 4_{\pm4} 6_{\pm6}^{3},
1_{0} 2_{\pm1}^{2} 3_{\pm1} 8_{\pm2} 4_{\pm4} 5_{\pm5} 6_{\pm6},
1_{0}^{2} 2_{\pm1}^{3}  3_{\pm1}^{2} 7_{\pm3} 8_{\pm2} 4_{\pm4} 5_{\pm5}^{2} 6_{\pm6}^{3},
1_{0}^{2} 2_{\pm1}^{3}  3_{\pm1} 7_{\pm3} 8_{\pm2} 4_{\pm4} 5_{\pm5} 6_{\pm6}^{3}, \\
& 1_{0} 2_{\pm1} 3_{\pm1} 7_{\pm3} 8_{\pm2} 4_{\pm4} 6_{\pm6}^{2},
1_{0}^{2} 2_{\pm1}^{2} 3_{\pm1} 7_{\pm3} 8_{\pm2} 4_{\pm4} 5_{\pm5} 6_{\pm6}^{2},
1_{0} 2_{\pm1}^{2} 3_{\pm1}^{2} 8_{\pm2} 4_{\pm4} 5_{\pm5} 6_{\pm6}^{2},
1_{0}^{2} 2_{\pm1}^{3}  3_{\pm1}^{2} 7_{\pm3} 8_{\pm2} 4_{\pm4} 5_{\pm5} 6_{\pm6}^{4}, \\
& 1_{0}^{2} 2_{\pm1}^{2} 3_{\pm1}^{2} 7_{\pm3} 8_{\pm2} 4_{\pm4} 5_{\pm5} 6_{\pm6}^{3},
1_{0} 2_{\pm1}^{2} 3_{\pm1} 8_{\pm2} 4_{\pm4} 6_{\pm6}^{2},
1_{0} 2_{\pm1}^{2} 3_{\pm1} 7_{\pm3} 8_{\pm2} 5_{\pm5} 6_{\pm6}^{3},
1_{0}^{2} 2_{\pm1}^{3}  3_{\pm1}^{2} 7_{\pm3} 8_{\pm2}^{2} 4_{\pm4} 5_{\pm5} 6_{\pm6}^{4}, \\
& 1_{0} 2_{\pm1}^{3}  3_{\pm1}^{2} 7_{\pm3} 8_{\pm2} 4_{\pm4} 5_{\pm5} 6_{\pm6}^{3},
1_{0} 2_{\pm1}^{2} 3_{\pm1} 8_{\pm2} 5_{\pm5} 6_{\pm6}^{2},
1_{0} 2_{\pm1} 3_{\pm1} 7_{\pm3} 8_{\pm2} 5_{\pm5} 6_{\pm6}^{2}, 
1_{0}^{2} 2_{\pm1}^{2} 3_{\pm1} 7_{\pm3} 8_{\pm2} 4_{\pm4} 6_{\pm6}^{3}, \\
& 1_{0}^{2} 2_{\pm1}^{2} 3_{\pm1} 7_{\pm3} 8_{\pm2} 5_{\pm5} 6_{\pm6}^{3},
1_{0} 2_{\pm1} 3_{\pm1} 7_{\pm3} 8_{\pm2} 6_{\pm6}^{3},
1_{0} 2_{\pm1} 3_{\pm1} 8_{\pm2} 4_{\pm4} 6_{\pm6}, 
1_{0} 2_{\pm1} 3_{\pm1} 8_{\pm2} 5_{\pm5} 6_{\pm6}, \\
& 1_{0} 2_{\pm1}^{2} 3_{\pm1}^{2} 7_{\pm3} 8_{\pm2} 4_{\pm4} 5_{\pm5} 6_{\pm6}^{2},
1_{0}^{2} 2_{\pm1}^{3}  3_{\pm1}^{2} 7_{\pm3} 8_{\pm2} 4_{\pm4} 5_{\pm5} 6_{\pm6}^{3},
1_{0} 2_{\pm1}^{2} 3_{\pm1}^{2} 7_{\pm3} 8_{\pm2} 4_{\pm4} 6_{\pm6}^{3},
1_{0} 2_{\pm1} 7_{\pm3} 8_{\pm2} 6_{\pm6}^{2}, \\
& 1_{0} 2_{\pm1}^{2} 3_{\pm1} 7_{\pm3} 8_{\pm2} 4_{\pm4} 6_{\pm6}^{2},
1_{0} 2_{\pm1}^{2} 3_{\pm1}^{2} 7_{\pm3} 8_{\pm2} 5_{\pm5} 6_{\pm6}^{3},
1_{0} 2_{\pm1}^{2} 3_{\pm1} 7_{\pm3} 8_{\pm2} 5_{\pm5} 6_{\pm6}^{2},
1_{0} 2_{\pm1}^{2} 3_{\pm1} 7_{\pm3} 8_{\pm2} 6_{\pm6}^{3}, \\
& 1_{0}^{2} 2_{\pm1}^{3}  3_{\pm1}^{2} 7_{\pm3} 8_{\pm2} 4_{\pm4}^{2} 5_{\pm5} 6_{\pm6}^{3},
1_{0}^{2} 2_{\pm1}^{3}  3_{\pm1}^{2} 7_{\pm3}^{2} 8_{\pm2} 4_{\pm4} 5_{\pm5} 6_{\pm6}^{4}. \\
& \quad \\
& (20)  1_{0} 2_{\pm1} 6_{\pm2} 4_{\pm4} 5_{\pm5} 8_{\pm6},
1_{0} 2_{\pm1} 3_{\pm1} 6_{\pm2} 4_{\pm4} 5_{\pm5}^{2}  8_{\pm6},
1_{0} 2_{\pm1}^{2}  3_{\pm1} 6_{\pm2}^{2}  4_{\pm4} 5_{\pm5}^{3}   7_{\pm5} 8_{\pm6},
1_{0} 2_{\pm1} 6_{\pm2} 5_{\pm5}^{2}  8_{\pm6}, \\
& 1_{0} 2_{\pm1} 3_{\pm1} 6_{\pm2}^{2}  4_{\pm4} 5_{\pm5}^{2}  7_{\pm5} 8_{\pm6},
1_{0}^{2}  2_{\pm1}^{2}  3_{\pm1} 6_{\pm2}^{3}   4_{\pm4} 5_{\pm5}^{4}   7_{\pm5} 8_{\pm6},
1_{0}^{2}  2_{\pm1}^{2}  3_{\pm1} 6_{\pm2}^{2}  4_{\pm4} 5_{\pm5}^{3}   7_{\pm5} 8_{\pm6},
1_{0} 2_{\pm1}^{2}  3_{\pm1} 6_{\pm2}^{2}  4_{\pm4} 5_{\pm5}^{3}   8_{\pm6}, \\
& 1_{0}^{2}  2_{\pm1}^{3}   3_{\pm1}^{2}  6_{\pm2}^{3}   4_{\pm4}^{2}  5_{\pm5}^{4}   7_{\pm5} 8_{\pm6},
1_{0} 2_{\pm1} 3_{\pm1} 6_{\pm2}^{2}  5_{\pm5}^{3}   7_{\pm5} 8_{\pm6},
1_{0} 2_{\pm1}^{2}  3_{\pm1}^{2}  6_{\pm2}^{3}   4_{\pm4} 5_{\pm5}^{4}   7_{\pm5} 8_{\pm6},
1_{0} 2_{\pm1} 6_{\pm2}^{2}  5_{\pm5}^{2}  7_{\pm5} 8_{\pm6}, \\
& 1_{0} 2_{\pm1}^{2}  3_{\pm1} 6_{\pm2} 4_{\pm4} 5_{\pm5}^{2}  8_{\pm6},
1_{0}^{2}  2_{\pm1}^{3}   3_{\pm1}^{2}  6_{\pm2}^{3}   4_{\pm4} 5_{\pm5}^{5}   7_{\pm5} 8_{\pm6},
1_{0} 2_{\pm1}^{2}  3_{\pm1} 6_{\pm2}^{3}   4_{\pm4} 5_{\pm5}^{3}   7_{\pm5} 8_{\pm6},
1_{0}^{2}  2_{\pm1}^{3}   3_{\pm1} 6_{\pm2}^{3}   4_{\pm4} 5_{\pm5}^{4}   7_{\pm5} 8_{\pm6}, \\
& 1_{0} 2_{\pm1} 3_{\pm1} 6_{\pm2}^{2}  4_{\pm4} 5_{\pm5}^{2}  8_{\pm6},
1_{0} 2_{\pm1}^{2}  3_{\pm1}^{2}  6_{\pm2}^{2}  4_{\pm4} 5_{\pm5}^{3}   7_{\pm5} 8_{\pm6},
1_{0}^{2}  2_{\pm1}^{3}   3_{\pm1}^{2}  6_{\pm2}^{4}   4_{\pm4} 5_{\pm5}^{5}   7_{\pm5}^{2}  8_{\pm6},
1_{0}^{2}  2_{\pm1}^{2}  3_{\pm1} 6_{\pm2}^{2}  4_{\pm4} 5_{\pm5}^{3}   8_{\pm6}, \\
& 1_{0}^{2}  2_{\pm1}^{2}  3_{\pm1}^{2}  6_{\pm2}^{3}   4_{\pm4} 5_{\pm5}^{4}   7_{\pm5} 8_{\pm6},
1_{0} 2_{\pm1}^{2}  3_{\pm1} 6_{\pm2}^{2}  4_{\pm4} 5_{\pm5}^{2}  7_{\pm5} 8_{\pm6},
1_{0} 2_{\pm1}^{2}  3_{\pm1} 6_{\pm2}^{3}   5_{\pm5}^{4}   7_{\pm5} 8_{\pm6},
1_{0}^{2}  2_{\pm1}^{3}   3_{\pm1}^{2}  6_{\pm2}^{4} 4_{\pm4} 5_{\pm5}^{5} 7_{\pm5} 8_{\pm6}^{2}, \\
& 1_{0} 2_{\pm1}^{3}   3_{\pm1}^{2}  6_{\pm2}^{3}   4_{\pm4} 5_{\pm5}^{4}   7_{\pm5} 8_{\pm6},
1_{0} 2_{\pm1} 3_{\pm1} 6_{\pm2}^{2}  5_{\pm5}^{3}   8_{\pm6},
1_{0}^{2}  2_{\pm1}^{2}  3_{\pm1} 6_{\pm2}^{3}   4_{\pm4} 5_{\pm5}^{3}   7_{\pm5} 8_{\pm6},
1_{0}^{2}  2_{\pm1}^{3}   3_{\pm1}^{2}  6_{\pm2}^{4}   4_{\pm4} 5_{\pm5}^{5}   7_{\pm5} 8_{\pm6}, \\
& 1_{0} 2_{\pm1}^{2}  3_{\pm1} 6_{\pm2}^{2}  5_{\pm5}^{3}   7_{\pm5} 8_{\pm6},
1_{0}^{2}  2_{\pm1}^{2}  3_{\pm1} 6_{\pm2}^{3}   5_{\pm5}^{4}   7_{\pm5} 8_{\pm6},
1_{0} 2_{\pm1} 3_{\pm1} 6_{\pm2} 4_{\pm4} 5_{\pm5} 8_{\pm6},
1_{0} 2_{\pm1}^{2}  3_{\pm1}^{2}  6_{\pm2}^{2}  4_{\pm4} 5_{\pm5}^{3} 8_{\pm6}, \\
& 1_{0}^{2}  2_{\pm1}^{3}   3_{\pm1}^{2}  6_{\pm2}^{3}   4_{\pm4} 5_{\pm5}^{4}   7_{\pm5} 8_{\pm6},
1_{0} 2_{\pm1} 3_{\pm1} 6_{\pm2}^{3}   5_{\pm5}^{3}   7_{\pm5} 8_{\pm6},
1_{0} 2_{\pm1}^{2}  3_{\pm1}^{2}  6_{\pm2}^{3}   4_{\pm4} 5_{\pm5}^{3}   7_{\pm5} 8_{\pm6},
1_{0} 2_{\pm1} 6_{\pm2}^{2}  5_{\pm5}^{2}  8_{\pm6}, \\
& 1_{0} 2_{\pm1} 3_{\pm1} 6_{\pm2} 5_{\pm5}^{2}  8_{\pm6},
1_{0} 2_{\pm1}^{2}  3_{\pm1}^{2}  6_{\pm2}^{3}   5_{\pm5}^{4}   7_{\pm5} 8_{\pm6},
1_{0} 2_{\pm1}^{2}  3_{\pm1} 6_{\pm2}^{2}  4_{\pm4} 5_{\pm5}^{2}  8_{\pm6},
1_{0} 2_{\pm1}^{2}  3_{\pm1} 6_{\pm2}^{2}  5_{\pm5}^{3}  8_{\pm6}, \\
& 1_{0} 2_{\pm1} 3_{\pm1} 6_{\pm2}^{2}  5_{\pm5}^{2}  7_{\pm5} 8_{\pm6},
1_{0} 2_{\pm1}^{2}  3_{\pm1} 6_{\pm2}^{3}   5_{\pm5}^{3}   7_{\pm5} 8_{\pm6},
1_{0} 2_{\pm1} 6_{\pm2} 5_{\pm5} 8_{\pm6},
1_{0} 2_{\pm1} 3_{\pm1} 6_{\pm2}^{2}  5_{\pm5}^{2}  8_{\pm6}. \\
& \quad \\
& (21)  1_{0} 6_{0} 2_{\pm5} 8_{\pm4},
1_{0} 3_{\pm1} 6_{0}^{2}  2_{\pm5} 4_{\pm4}^{2}   7_{\pm3} 8_{\pm4},
1_{0} 6_{0}^{2}   2_{\pm5} 4_{\pm4} 7_{\pm3} 8_{\pm4},
1_{0} 3_{\pm1} 6_{0}^{2}   2_{\pm5} 4_{\pm4} 8_{\pm4},
1_{0} 3_{\pm1} 6_{0}^{3}    2_{\pm5}^{2}   4_{\pm4}^{2}   7_{\pm3} 8_{\pm4}, \\
& 1_{0} 3_{\pm1} 6_{0}^{2}   2_{\pm5} 4_{\pm4}^{2}   8_{\pm4},
1_{0} 6_{0}^{2}   2_{\pm5} 4_{\pm4} 8_{\pm4},
1_{0} 3_{\pm1} 5_{\pm1} 6_{0}^{2}   2_{\pm5}^{2}   4_{\pm4}^{2}   7_{\pm3} 8_{\pm4},
1_{0}^{2}   3_{\pm1} 5_{\pm1} 6_{0}^{3}    2_{\pm5}^{2}   4_{\pm4}^{3}    7_{\pm3} 8_{\pm4},
1_{0} 3_{\pm1} 5_{\pm1} 6_{0} 2_{\pm5} 4_{\pm4}^{2}   8_{\pm4}, \\
& 1_{0} 6_{0} 2_{\pm5} 4_{\pm4} 8_{\pm4},
1_{0} 3_{\pm1} 6_{0}^{2}   2_{\pm5}^{2}   4_{\pm4} 7_{\pm3} 8_{\pm4},
1_{0}^{2}  3_{\pm1} 6_{0}^{3}    2_{\pm5}^{2}   4_{\pm4}^{2}   7_{\pm3} 8_{\pm4},
1_{0}^{2}  3_{\pm1}^{2}   5_{\pm1} 6_{0}^{3}    2_{\pm5}^{3}    4_{\pm4}^{3}    7_{\pm3} 8_{\pm4},
1_{0}^{2}   3_{\pm1} 5_{\pm1} 6_{0}^{2}   2_{\pm5}^{2}   4_{\pm4}^{2}   7_{\pm3} 8_{\pm4}, \\
& 1_{0} 3_{\pm1} 6_{0}^{3}    2_{\pm5} 4_{\pm4}^{2}   7_{\pm3} 8_{\pm4},
1_{0} 3_{\pm1}^{2}   5_{\pm1} 6_{0}^{3}    2_{\pm5}^{2}   4_{\pm4}^{3}    7_{\pm3} 8_{\pm4},
1_{0}^{2}   3_{\pm1}^{2}   5_{\pm1} 6_{0}^{4}     2_{\pm5}^{3}    4_{\pm4}^{3}    7_{\pm3}^{2}   8_{\pm4},
1_{0} 3_{\pm1} 5_{\pm1} 6_{0}^{2}   2_{\pm5} 4_{\pm4}^{2}   7_{\pm3} 8_{\pm4}, \\
& 1_{0} 3_{\pm1} 6_{0} 2_{\pm5} 4_{\pm4} 8_{\pm4},
1_{0} 3_{\pm1} 5_{\pm1} 6_{0} 2_{\pm5} 4_{\pm4} 8_{\pm4}, 
1_{0} 3_{\pm1} 5_{\pm1} 6_{0}^{2}   2_{\pm5}^{2}  4_{\pm4} 8_{\pm4},
1_{0} 3_{\pm1}^{2}   6_{0}^{3}    2_{\pm5}^{2}   4_{\pm4}^{2}   7_{\pm3} 8_{\pm4},
1_{0} 3_{\pm1} 5_{\pm1} 6_{0}^{2}   2_{\pm5}^{2}   4_{\pm4}^{2}   8_{\pm4}, \\
& 1_{0}^{2}   3_{\pm1} 5_{\pm1} 6_{0}^{3}    2_{\pm5}^{3}    4_{\pm4}^{2}   7_{\pm3} 8_{\pm4},
1_{0}^{2}   3_{\pm1}^{2}   5_{\pm1} 6_{0}^{4}     2_{\pm5}^{3}    4_{\pm4}^{3}    7_{\pm3} 8_{\pm4}]^{2},
1_{0} 3_{\pm1} 6_{0}^{2}   2_{\pm5}^{2}   4_{\pm4} 8_{\pm4},
1_{0} 3_{\pm1} 5_{\pm1} 6_{0}^{3}    2_{\pm5}^{2}   4_{\pm4}^{2}   7_{\pm3} 8_{\pm4}, \\
& 1_{0}^{2}   3_{\pm1}^{2}   5_{\pm1} 6_{0}^{3}    2_{\pm5}^{2}   4_{\pm4}^{3}    7_{\pm3} 8_{\pm4},
1_{0}^{2}   3_{\pm1}^{2}   5_{\pm1} 6_{0}^{4}     2_{\pm5}^{3}    4_{\pm4}^{3}    7_{\pm3} 8_{\pm4},
1_{0} 5_{\pm1} 6_{0} 2_{\pm5} 4_{\pm4} 8_{\pm4},
1_{0}^{2}   3_{\pm1} 5_{\pm1} 6_{0}^{2}   2_{\pm5}^{2}   4_{\pm4}^{2}   8_{\pm4}, \\
& 1_{0} 3_{\pm1}^{2}   5_{\pm1} 6_{0}^{2}   2_{\pm5}^{2}   4_{\pm4}^{2}   7_{\pm3} 8_{\pm4},
1_{0} 3_{\pm1}^{2}   5_{\pm1} 6_{0}^{3}    2_{\pm5}^{3}    4_{\pm4}^{2}   7_{\pm3} 8_{\pm4},
1_{0}^{2}   3_{\pm1}^{2}   5_{\pm1}^{2}   6_{0}^{3}    2_{\pm5}^{3}    4_{\pm4}^{3}    7_{\pm3} 8_{\pm4},
1_{0} 3_{\pm1} 5_{\pm1} 6_{0} 2_{\pm5}^{2}   4_{\pm4} 8_{\pm4}, \\
& 1_{0} 3_{\pm1} 6_{0}^{2}   2_{\pm5} 4_{\pm4} 7_{\pm3} 8_{\pm4},
1_{0} 3_{\pm1} 6_{0}^{3}    2_{\pm5}^{2}   4_{\pm4} 7_{\pm3} 8_{\pm4},
1_{0}^{2}   3_{\pm1} 5_{\pm1} 6_{0}^{3}    2_{\pm5}^{2}   4_{\pm4}^{2}   7_{\pm3} 8_{\pm4},
1_{0}^{2}   3_{\pm1}^{2}   5_{\pm1} 6_{0}^{3}    2_{\pm5}^{3}    4_{\pm4}^{2}   7_{\pm3} 8_{\pm4}, \\
& 1_{0} 3_{\pm1} 5_{\pm1} 6_{0}^{2}   2_{\pm5}^{2}   4_{\pm4} 7_{\pm3} 8_{\pm4},
1_{0} 3_{\pm1} 5_{\pm1} 6_{0}^{2}   2_{\pm5} 4_{\pm4}^{2}   8_{\pm4},
1_{0} 3_{\pm1}^{2}   5_{\pm1} 6_{0}^{2}   2_{\pm5}^{2}   4_{\pm4}^{2}   8_{\pm4},
1_{0} 3_{\pm1}^{2}   5_{\pm1} 6_{0}^{3}    2_{\pm5}^{2}   4_{\pm4}^{2}   7_{\pm3} 8_{\pm4}.
\end{align*}
\end{gather}

\begin{gather}
\begin{align*}
& (22)  1_{0} 2_{\pm1} 6_{\pm2} 8_{\pm2} 4_{\pm4} 5_{\pm5} 7_{\pm5},
1_{0} 2_{\pm1} 3_{\pm1} 6_{\pm2} 8_{\pm2} 4_{\pm4} 5_{\pm5}^{2} 7_{\pm5},
1_{0} 2_{\pm1}^{2} 3_{\pm1} 6_{\pm2}^{2} 8_{\pm2} 4_{\pm4} 5_{\pm5}^{3} 7_{\pm5}^{2},
1_{0} 2_{\pm1} 6_{\pm2} 8_{\pm2} 5_{\pm5}^{2} 7_{\pm5}, \\
& 1_{0} 2_{\pm1} 3_{\pm1} 6_{\pm2}^{2} 8_{\pm2} 4_{\pm4} 5_{\pm5}^{2} 7_{\pm5}^{2},
1_{0}^{2} 2_{\pm1}^{2} 3_{\pm1} 6_{\pm2}^{3} 8_{\pm2} 4_{\pm4} 5_{\pm5}^{4} 7_{\pm5}^{2},
1_{0}^{2} 2_{\pm1}^{2} 3_{\pm1} 6_{\pm2}^{2} 8_{\pm2} 4_{\pm4} 5_{\pm5}^{3} 7_{\pm5}^{2},
1_{0} 2_{\pm1}^{2} 3_{\pm1} 6_{\pm2}^{2} 8_{\pm2} 4_{\pm4} 5_{\pm5}^{3} 7_{\pm5}, \\
& 1_{0}^{2} 2_{\pm1}^{3} 3_{\pm1}^{2} 6_{\pm2}^{3} 8_{\pm2} 4_{\pm4}^{2} 5_{\pm5}^{4} 7_{\pm5}^{2},
1_{0} 2_{\pm1} 3_{\pm1} 6_{\pm2}^{2} 8_{\pm2} 5_{\pm5}^{3} 7_{\pm5}^{2},
1_{0} 2_{\pm1}^{2} 3_{\pm1}^{2} 6_{\pm2}^{3} 8_{\pm2} 4_{\pm4} 5_{\pm5}^{4} 7_{\pm5}^{2},
1_{0} 2_{\pm1} 6_{\pm2}^{2} 8_{\pm2} 5_{\pm5}^{2} 7_{\pm5}^{2}, \\
& 1_{0} 2_{\pm1}^{2} 3_{\pm1} 6_{\pm2} 8_{\pm2} 4_{\pm4} 5_{\pm5}^{2} 7_{\pm5},
1_{0}^{2} 2_{\pm1}^{3} 3_{\pm1}^{2} 6_{\pm2}^{3} 8_{\pm2} 4_{\pm4} 5_{\pm5}^{5} 7_{\pm5}^{2},
1_{0} 2_{\pm1}^{2} 3_{\pm1} 6_{\pm2}^{3} 8_{\pm2} 4_{\pm4} 5_{\pm5}^{3} 7_{\pm5}^{2},
1_{0}^{2} 2_{\pm1}^{3} 3_{\pm1} 6_{\pm2}^{3} 8_{\pm2} 4_{\pm4} 5_{\pm5}^{4} 7_{\pm5}^{2}, \\
& 1_{0} 2_{\pm1} 3_{\pm1} 6_{\pm2}^{2} 8_{\pm2} 4_{\pm4} 5_{\pm5}^{2} 7_{\pm5},
1_{0} 2_{\pm1}^{2} 3_{\pm1}^{2} 6_{\pm2}^{2} 8_{\pm2} 4_{\pm4} 5_{\pm5}^{3} 7_{\pm5}^{2},
1_{0}^{2} 2_{\pm1}^{3} 3_{\pm1}^{2} 6_{\pm2}^{4} 8_{\pm2} 4_{\pm4} 5_{\pm5}^{5} 7_{\pm5}^{3},
1_{0}^{2} 2_{\pm1}^{2} 3_{\pm1} 6_{\pm2}^{2} 8_{\pm2} 4_{\pm4} 5_{\pm5}^{3} 7_{\pm5}, \\
& 1_{0}^{2} 2_{\pm1}^{2} 3_{\pm1}^{2} 6_{\pm2}^{3} 8_{\pm2} 4_{\pm4} 5_{\pm5}^{4} 7_{\pm5}^{2},
1_{0} 2_{\pm1}^{2} 3_{\pm1} 6_{\pm2}^{2} 8_{\pm2} 4_{\pm4} 5_{\pm5}^{2} 7_{\pm5}^{2},
1_{0} 2_{\pm1}^{2} 3_{\pm1} 6_{\pm2}^{3} 8_{\pm2} 5_{\pm5}^{4} 7_{\pm5}^{2},
1_{0}^{2} 2_{\pm1}^{3} 3_{\pm1}^{2} 6_{\pm2}^{4} 8_{\pm2}^{2} 4_{\pm4} 5_{\pm5}^{5} 7_{\pm5}^{3}, \\
& 1_{0} 2_{\pm1}^{3} 3_{\pm1}^{2} 6_{\pm2}^{3} 8_{\pm2} 4_{\pm4} 5_{\pm5}^{4} 7_{\pm5}^{2},
1_{0} 2_{\pm1} 3_{\pm1} 6_{\pm2}^{2} 8_{\pm2} 5_{\pm5}^{3} 7_{\pm5},
1_{0}^{2} 2_{\pm1}^{2} 3_{\pm1} 6_{\pm2}^{3} 8_{\pm2} 4_{\pm4} 5_{\pm5}^{3} 7_{\pm5}^{2},
1_{0}^{2} 2_{\pm1}^{3} 3_{\pm1}^{2} 6_{\pm2}^{4} 8_{\pm2} 4_{\pm4} 5_{\pm5}^{5} 7_{\pm5}^{2}, \\
& 1_{0} 2_{\pm1}^{2} 3_{\pm1} 6_{\pm2}^{2} 8_{\pm2} 5_{\pm5}^{3} 7_{\pm5}^{2},
1_{0}^{2} 2_{\pm1}^{2} 3_{\pm1} 6_{\pm2}^{3} 8_{\pm2} 5_{\pm5}^{4} 7_{\pm5}^{2},
1_{0} 2_{\pm1} 3_{\pm1} 6_{\pm2} 8_{\pm2} 4_{\pm4} 5_{\pm5} 7_{\pm5},
1_{0} 2_{\pm1}^{2} 3_{\pm1}^{2} 6_{\pm2}^{2} 8_{\pm2} 4_{\pm4} 5_{\pm5}^{3} 7_{\pm5}, \\
& 1_{0}^{2} 2_{\pm1}^{3} 3_{\pm1}^{2} 6_{\pm2}^{3} 8_{\pm2} 4_{\pm4} 5_{\pm5}^{4} 7_{\pm5}^{2},
1_{0} 2_{\pm1} 3_{\pm1} 6_{\pm2}^{3} 8_{\pm2} 5_{\pm5}^{3} 7_{\pm5}^{2},
1_{0} 2_{\pm1}^{2} 3_{\pm1}^{2} 6_{\pm2}^{3} 8_{\pm2} 4_{\pm4} 5_{\pm5}^{3} 7_{\pm5}^{2},
1_{0} 2_{\pm1} 6_{\pm2}^{2} 8_{\pm2} 5_{\pm5}^{2} 7_{\pm5}, \\
& 1_{0} 2_{\pm1} 3_{\pm1} 6_{\pm2} 8_{\pm2} 5_{\pm5}^{2} 7_{\pm5},
1_{0} 2_{\pm1}^{2} 3_{\pm1}^{2} 6_{\pm2}^{3} 8_{\pm2} 5_{\pm5}^{4} 7_{\pm5}^{2},
1_{0} 2_{\pm1}^{2} 3_{\pm1} 6_{\pm2}^{2} 8_{\pm2} 4_{\pm4} 5_{\pm5}^{2} 7_{\pm5},
1_{0} 2_{\pm1}^{2} 3_{\pm1} 6_{\pm2}^{2} 8_{\pm2} 5_{\pm5}^{3} 7_{\pm5}, \\
& 1_{0} 2_{\pm1} 3_{\pm1} 6_{\pm2}^{2} 8_{\pm2} 5_{\pm5}^{2} 7_{\pm5}^{2},
1_{0} 2_{\pm1}^{2} 3_{\pm1} 6_{\pm2}^{3} 8_{\pm2} 5_{\pm5}^{3} 7_{\pm5}^{2},
1_{0} 2_{\pm1} 6_{\pm2} 8_{\pm2} 5_{\pm5} 7_{\pm5},
1_{0} 2_{\pm1} 3_{\pm1} 6_{\pm2}^{2} 8_{\pm2} 5_{\pm5}^{2} 7_{\pm5}.
\end{align*}
\end{gather}

\begin{gather}
\begin{align*}
& (23)  1_{0} 2_{\pm1} 7_{\pm1} 4_{\pm4} 5_{\pm5} 8_{\pm4},
1_{0} 2_{\pm1} 3_{\pm1} 7_{\pm1} 4_{\pm4} 5_{\pm5}^{2} 8_{\pm4},
1_{0} 2_{\pm1} 7_{\pm1} 5_{\pm5}^{2} 8_{\pm4},
1_{0} 2_{\pm1}^{2} 3_{\pm1} 7_{\pm1}^{2} 4_{\pm4} 5_{\pm5}^{3} 8_{\pm4}, \\
& 1_{0} 2_{\pm1}^{2} 3_{\pm1} 6_{\pm2} 7_{\pm1} 4_{\pm4} 5_{\pm5}^{3} 8_{\pm4},
1_{0} 2_{\pm1} 3_{\pm1} 7_{\pm1}^{2} 4_{\pm4} 5_{\pm5}^{2} 8_{\pm4},
1_{0}^{2} 2_{\pm1}^{2} 3_{\pm1} 6_{\pm2} 7_{\pm1}^{2} 4_{\pm4} 5_{\pm5}^{4} 8_{\pm4},
1_{0} 2_{\pm1}^{2} 3_{\pm1} 7_{\pm1} 4_{\pm4} 5_{\pm5}^{2} 8_{\pm4}, \\
& 1_{0}^{2} 2_{\pm1}^{2} 3_{\pm1} 7_{\pm1}^{2} 4_{\pm4} 5_{\pm5}^{3} 8_{\pm4},
1_{0} 2_{\pm1} 3_{\pm1} 6_{\pm2} 7_{\pm1} 4_{\pm4} 5_{\pm5}^{2} 8_{\pm4},
1_{0}^{2} 2_{\pm1}^{3} 3_{\pm1}^{2} 6_{\pm2} 7_{\pm1}^{2} 4_{\pm4}^{2} 5_{\pm5}^{4} 8_{\pm4},
1_{0} 2_{\pm1} 3_{\pm1} 7_{\pm1}^{2} 5_{\pm5}^{3} 8_{\pm4}, \\
& 1_{0} 2_{\pm1}^{2} 3_{\pm1}^{2} 6_{\pm2} 7_{\pm1}^{2} 4_{\pm4} 5_{\pm5}^{4} 8_{\pm4},
1_{0}^{2} 2_{\pm1}^{2} 3_{\pm1} 6_{\pm2} 7_{\pm1} 4_{\pm4} 5_{\pm5}^{3} 8_{\pm4},
1_{0}^{2} 2_{\pm1}^{3} 3_{\pm1}^{2} 6_{\pm2} 7_{\pm1}^{2} 4_{\pm4} 5_{\pm5}^{5} 8_{\pm4},
1_{0} 2_{\pm1} 7_{\pm1}^{2} 5_{\pm5}^{2} 8_{\pm4}, \\
& 1_{0} 2_{\pm1}^{2} 3_{\pm1} 6_{\pm2} 7_{\pm1}^{2} 4_{\pm4} 5_{\pm5}^{3} 8_{\pm4},
1_{0}^{2} 2_{\pm1}^{3} 3_{\pm1} 6_{\pm2} 7_{\pm1}^{2} 4_{\pm4} 5_{\pm5}^{4} 8_{\pm4},
1_{0} 2_{\pm1} 3_{\pm1} 7_{\pm1} 4_{\pm4} 5_{\pm5} 8_{\pm4},
1_{0} 2_{\pm1} 3_{\pm1} 6_{\pm2} 7_{\pm1} 5_{\pm5}^{3} 8_{\pm4}, \\
& 1_{0} 2_{\pm1}^{2} 3_{\pm1}^{2} 7_{\pm1}^{2} 4_{\pm4} 5_{\pm5}^{3} 8_{\pm4},
1_{0}^{2} 2_{\pm1}^{3} 3_{\pm1}^{2} 6_{\pm2} 7_{\pm1}^{3} 4_{\pm4} 5_{\pm5}^{5} 8_{\pm4}^{2}, 
1_{0}^{2} 2_{\pm1}^{2} 3_{\pm1}^{2} 6_{\pm2} 7_{\pm1}^{2} 4_{\pm4} 5_{\pm5}^{4} 8_{\pm4},
1_{0} 2_{\pm1}^{2} 3_{\pm1} 7_{\pm1}^{2} 4_{\pm4} 5_{\pm5}^{2} 8_{\pm4}, \\
& 1_{0} 2_{\pm1}^{2} 3_{\pm1} 6_{\pm2} 7_{\pm1}^{2} 5_{\pm5}^{4} 8_{\pm4},
1_{0}^{2} 2_{\pm1}^{3} 3_{\pm1}^{2} 6_{\pm2} 7_{\pm1}^{3} 4_{\pm4} 5_{\pm5}^{5} 8_{\pm4},
1_{0} 2_{\pm1}^{2} 3_{\pm1}^{2} 6_{\pm2} 7_{\pm1} 4_{\pm4} 5_{\pm5}^{3} 8_{\pm4},
1_{0} 2_{\pm1}^{3} 3_{\pm1}^{2} 6_{\pm2} 7_{\pm1}^{2} 4_{\pm4} 5_{\pm5}^{4} 8_{\pm4}, \\
& 1_{0} 2_{\pm1} 6_{\pm2} 7_{\pm1} 5_{\pm5}^{2} 8_{\pm4},
1_{0}^{2} 2_{\pm1}^{2} 3_{\pm1} 6_{\pm2} 7_{\pm1}^{2} 4_{\pm4} 5_{\pm5}^{3} 8_{\pm4},
1_{0}^{2} 2_{\pm1}^{3} 3_{\pm1}^{2} 6_{\pm2}^{2} 7_{\pm1}^{2} 4_{\pm4} 5_{\pm5}^{5} 8_{\pm4},
1_{0} 2_{\pm1} 3_{\pm1} 7_{\pm1} 5_{\pm5}^{2} 8_{\pm4}, \\
& 1_{0} 2_{\pm1}^{2} 3_{\pm1} 7_{\pm1}^{2} 5_{\pm5}^{3} 8_{\pm4},
1_{0}^{2} 2_{\pm1}^{2} 3_{\pm1} 6_{\pm2} 7_{\pm1}^{2} 5_{\pm5}^{4} 8_{\pm4},
1_{0} 2_{\pm1}^{2} 3_{\pm1} 6_{\pm2} 7_{\pm1} 4_{\pm4} 5_{\pm5}^{2} 8_{\pm4},
1_{0}^{2} 2_{\pm1}^{3} 3_{\pm1}^{2} 6_{\pm2} 7_{\pm1}^{2} 4_{\pm4} 5_{\pm5}^{4} 8_{\pm4}, \\
& 1_{0} 2_{\pm1} 3_{\pm1} 6_{\pm2} 7_{\pm1}^{2} 5_{\pm5}^{3} 8_{\pm4},
1_{0} 2_{\pm1}^{2} 3_{\pm1}^{2} 6_{\pm2} 7_{\pm1}^{2} 4_{\pm4} 5_{\pm5}^{3} 8_{\pm4},
1_{0} 2_{\pm1}^{2} 3_{\pm1} 6_{\pm2} 7_{\pm1} 5_{\pm5}^{3} 8_{\pm4},
1_{0} 2_{\pm1}^{2} 3_{\pm1}^{2} 6_{\pm2} 7_{\pm1}^{2} 5_{\pm5}^{4} 8_{\pm4}, \\
& 1_{0} 2_{\pm1} 7_{\pm1} 5_{\pm5} 8_{\pm4},
1_{0} 2_{\pm1} 3_{\pm1} 7_{\pm1}^{2} 5_{\pm5}^{2} 8_{\pm4},
1_{0} 2_{\pm1}^{2} 3_{\pm1} 6_{\pm2} 7_{\pm1}^{2} 5_{\pm5}^{3} 8_{\pm4},
1_{0} 2_{\pm1} 3_{\pm1} 6_{\pm2} 7_{\pm1} 5_{\pm5}^{2} 8_{\pm4}.
\end{align*}
\end{gather}

\begin{gather}
\begin{align*}
& (24)  1_{0} 2_{\pm1} 8_{0} 5_{\pm5},
1_{0} 2_{\pm1} 8_{0} 4_{\pm4} 5_{\pm5},
1_{0} 2_{\pm1} 3_{\pm1} 8_{0} 4_{\pm4} 5_{\pm5}^{2},
1_{0} 2_{\pm1} 8_{0} 5_{\pm5}^{2},
1_{0} 2_{\pm1}^{2}3_{\pm1} 7_{\pm1} 8_{0} 4_{\pm4} 5_{\pm5}^{3}, 
1_{0} 2_{\pm1}^{2}3_{\pm1} 6_{\pm2} 8_{0} 4_{\pm4} 5_{\pm5}^{3}, \\
& 1_{0} 2_{\pm1} 3_{\pm1} 7_{\pm1} 8_{0} 5_{\pm5}^{2},
1_{0} 2_{\pm1} 3_{\pm1} 7_{\pm1} 8_{0} 4_{\pm4} 5_{\pm5}^{2},
1_{0}^{2}2_{\pm1}^{2}3_{\pm1} 6_{\pm2} 7_{\pm1} 8_{0} 4_{\pm4} 5_{\pm5}^{4},
1_{0} 2_{\pm1}^{2}3_{\pm1} 8_{0} 4_{\pm4} 5_{\pm5}^{2}, \\
& 1_{0} 2_{\pm1} 7_{\pm1} 8_{0} 5_{\pm5}^{2}, 
1_{0}^{2}2_{\pm1}^{2}3_{\pm1} 7_{\pm1} 8_{0} 4_{\pm4} 5_{\pm5}^{3},
1_{0} 2_{\pm1} 3_{\pm1} 6_{\pm2} 8_{0} 4_{\pm4} 5_{\pm5}^{2},
1_{0}^{2}2_{\pm1}^{3}3_{\pm1}^{2}6_{\pm2} 7_{\pm1} 8_{0} 4_{\pm4}^{2}5_{\pm5}^{4},
1_{0} 2_{\pm1} 3_{\pm1} 7_{\pm1} 8_{0} 5_{\pm5}^{3}, \\
& 1_{0} 2_{\pm1}^{2}3_{\pm1}^{2}6_{\pm2} 7_{\pm1} 8_{0} 4_{\pm4} 5_{\pm5}^{4},
1_{0}^{2}2_{\pm1}^{2}3_{\pm1} 6_{\pm2} 8_{0} 4_{\pm4} 5_{\pm5}^{3},
1_{0}^{2}2_{\pm1}^{3}3_{\pm1}^{2}6_{\pm2} 7_{\pm1} 8_{0} 4_{\pm4} 5_{\pm5}^{5},
1_{0} 2_{\pm1}^{2}3_{\pm1} 6_{\pm2} 7_{\pm1} 8_{0} 5_{\pm5}^{3}, \\
& 1_{0} 2_{\pm1}^{2}3_{\pm1} 6_{\pm2} 7_{\pm1} 8_{0} 4_{\pm4} 5_{\pm5}^{3},
1_{0}^{2}2_{\pm1}^{3}3_{\pm1} 6_{\pm2} 7_{\pm1} 8_{0} 4_{\pm4} 5_{\pm5}^{4},
1_{0} 2_{\pm1} 3_{\pm1} 8_{0} 4_{\pm4} 5_{\pm5},
1_{0} 2_{\pm1} 3_{\pm1} 6_{\pm2} 8_{0} 5_{\pm5}^{3}, \\
& 1_{0} 2_{\pm1}^{2}3_{\pm1}^{2}7_{\pm1} 8_{0} 4_{\pm4} 5_{\pm5}^{3},
1_{0}^{2}2_{\pm1}^{3}3_{\pm1}^{2}6_{\pm2} 7_{\pm1} 8_{0}^{2}4_{\pm4} 5_{\pm5}^{5},
1_{0}^{2}2_{\pm1}^{2}3_{\pm1}^{2}6_{\pm2} 7_{\pm1} 8_{0} 4_{\pm4} 5_{\pm5}^{4},
1_{0} 2_{\pm1}^{2}3_{\pm1} 7_{\pm1} 8_{0} 4_{\pm4} 5_{\pm5}^{2}, \\
& 1_{0} 2_{\pm1}^{2}3_{\pm1} 6_{\pm2} 7_{\pm1} 8_{0} 5_{\pm5}^{4},
1_{0}^{2}2_{\pm1}^{3}3_{\pm1}^{2}6_{\pm2} 7_{\pm1}^{2}8_{0} 4_{\pm4} 5_{\pm5}^{5},
1_{0} 2_{\pm1}^{2}3_{\pm1}^{2}6_{\pm2} 8_{0} 4_{\pm4} 5_{\pm5}^{3},
1_{0} 2_{\pm1}^{3}3_{\pm1}^{2}6_{\pm2} 7_{\pm1} 8_{0} 4_{\pm4} 5_{\pm5}^{4}, \\
& 1_{0} 2_{\pm1} 6_{\pm2} 8_{0} 5_{\pm5}^{2},
1_{0}^{2}2_{\pm1}^{2}3_{\pm1} 6_{\pm2} 7_{\pm1} 8_{0} 4_{\pm4} 5_{\pm5}^{3},
1_{0}^{2}2_{\pm1}^{3}3_{\pm1}^{2}6_{\pm2}^{2}7_{\pm1} 8_{0} 4_{\pm4} 5_{\pm5}^{5},
1_{0} 2_{\pm1} 3_{\pm1} 6_{\pm2} 8_{0} 5_{\pm5}^{2}
1_{0} 2_{\pm1} 3_{\pm1} 8_{0} 5_{\pm5}^{2}, \\
& 1_{0} 2_{\pm1}^{2}3_{\pm1} 7_{\pm1} 8_{0} 5_{\pm5}^{3},
1_{0}^{2}2_{\pm1}^{2}3_{\pm1} 6_{\pm2} 7_{\pm1} 8_{0} 5_{\pm5}^{4},
1_{0} 2_{\pm1}^{2}3_{\pm1} 6_{\pm2} 8_{0} 4_{\pm4} 5_{\pm5}^{2},
1_{0}^{2}2_{\pm1}^{3}3_{\pm1}^{2}6_{\pm2} 7_{\pm1} 8_{0} 4_{\pm4} 5_{\pm5}^{4}, \\
& 1_{0} 2_{\pm1} 3_{\pm1} 6_{\pm2} 7_{\pm1} 8_{0} 5_{\pm5}^{3},
1_{0} 2_{\pm1}^{2}3_{\pm1}^{2}6_{\pm2} 7_{\pm1} 8_{0} 4_{\pm4} 5_{\pm5}^{3},
1_{0} 2_{\pm1}^{2}3_{\pm1} 6_{\pm2} 8_{0} 5_{\pm5}^{3},
1_{0} 2_{\pm1}^{2}3_{\pm1}^{2}6_{\pm2} 7_{\pm1} 8_{0} 5_{\pm5}^{4}.
\end{align*}
\end{gather}

\begin{gather}
\begin{align*}
& (25)  1_{0} 2_{\pm1} 5_{\pm1} 4_{\pm4} 8_{\pm6},
1_{0} 2_{\pm1} 5_{\pm1} 4_{\pm4}^{2} 8_{\pm6},
1_{0} 2_{\pm1} 3_{\pm1} 5_{\pm1}^{2} 4_{\pm4}^{3} 6_{\pm4} 8_{\pm6},
1_{0} 2_{\pm1}^{2} 3_{\pm1} 5_{\pm1}^{3} 4_{\pm4}^{4} 6_{\pm4} 7_{\pm5} 8_{\pm6},
1_{0} 2_{\pm1} 5_{\pm1}^{2} 4_{\pm4}^{2} 6_{\pm4} 8_{\pm6}, \\
& 1_{0} 2_{\pm1} 3_{\pm1} 5_{\pm1}^{2} 4_{\pm4}^{3} 7_{\pm5} 8_{\pm6},
1_{0}^{2} 2_{\pm1}^{2} 3_{\pm1} 5_{\pm1}^{4} 4_{\pm4}^{5}  6_{\pm4} 7_{\pm5} 8_{\pm6},
1_{0} 2_{\pm1}^{2} 3_{\pm1} 5_{\pm1}^{3} 4_{\pm4}^{4} 6_{\pm4} 8_{\pm6},
1_{0}^{2} 2_{\pm1}^{2} 3_{\pm1} 5_{\pm1}^{3} 4_{\pm4}^{4} 6_{\pm4} 7_{\pm5} 8_{\pm6}, \\
& 1_{0}^{2} 2_{\pm1}^{3} 3_{\pm1}^{2} 5_{\pm1}^{4} 4_{\pm4}^{6}  6_{\pm4} 7_{\pm5} 8_{\pm6},
1_{0} 2_{\pm1} 3_{\pm1} 5_{\pm1}^{3} 4_{\pm4}^{3} 6_{\pm4} 7_{\pm5} 8_{\pm6},
1_{0} 2_{\pm1} 5_{\pm1}^{2} 4_{\pm4}^{2} 7_{\pm5} 8_{\pm6},
1_{0} 2_{\pm1}^{2} 3_{\pm1} 5_{\pm1}^{2} 4_{\pm4}^{3} 6_{\pm4} 8_{\pm6}, \\
& 1_{0} 2_{\pm1}^{2} 3_{\pm1}^{2} 5_{\pm1}^{4} 4_{\pm4}^{5}  6_{\pm4} 7_{\pm5} 8_{\pm6},
1_{0}^{2} 2_{\pm1}^{3} 3_{\pm1}^{2} 5_{\pm1}^{5}  4_{\pm4}^{6}  6_{\pm4}^{2} 7_{\pm5} 8_{\pm6},
1_{0} 2_{\pm1}^{2} 3_{\pm1} 5_{\pm1}^{3} 4_{\pm4}^{4} 7_{\pm5} 8_{\pm6},
1_{0} 2_{\pm1} 3_{\pm1} 5_{\pm1}^{2} 4_{\pm4}^{3} 8_{\pm6}, \\
& 1_{0} 2_{\pm1}^{2} 3_{\pm1}^{2} 5_{\pm1}^{3} 4_{\pm4}^{4} 6_{\pm4} 7_{\pm5} 8_{\pm6},
1_{0}^{2} 2_{\pm1}^{3} 3_{\pm1} 5_{\pm1}^{4} 4_{\pm4}^{5}  6_{\pm4} 7_{\pm5} 8_{\pm6},
1_{0}^{2} 2_{\pm1}^{3} 3_{\pm1}^{2} 5_{\pm1}^{5}  4_{\pm4}^{6}  6_{\pm4} 7_{\pm5}^{2} 8_{\pm6},
1_{0}^{2} 2_{\pm1}^{2} 3_{\pm1} 5_{\pm1}^{3} 4_{\pm4}^{4} 6_{\pm4} 8_{\pm6}, \\
& 1_{0} 2_{\pm1}^{2} 3_{\pm1} 5_{\pm1}^{2} 4_{\pm4}^{3} 7_{\pm5} 8_{\pm6},
1_{0} 2_{\pm1}^{2} 3_{\pm1} 5_{\pm1}^{4} 4_{\pm4}^{4} 6_{\pm4} 7_{\pm5} 8_{\pm6},
1_{0}^{2} 2_{\pm1}^{2} 3_{\pm1}^{2} 5_{\pm1}^{4} 4_{\pm4}^{5}  6_{\pm4} 7_{\pm5} 8_{\pm6},
1_{0}^{2} 2_{\pm1}^{3} 3_{\pm1}^{2} 5_{\pm1}^{5}  4_{\pm4}^{6}  6_{\pm4} 7_{\pm5} 8_{\pm6}^{2}, \\
& 1_{0} 2_{\pm1} 3_{\pm1} 5_{\pm1}^{3} 4_{\pm4}^{3} 6_{\pm4} 8_{\pm6},
1_{0}^{2} 2_{\pm1}^{2} 3_{\pm1} 5_{\pm1}^{3} 4_{\pm4}^{4} 7_{\pm5} 8_{\pm6},
1_{0} 2_{\pm1}^{3} 3_{\pm1}^{2} 5_{\pm1}^{4} 4_{\pm4}^{5}  6_{\pm4} 7_{\pm5} 8_{\pm6},
1_{0}^{2} 2_{\pm1}^{3} 3_{\pm1}^{2} 5_{\pm1}^{5}  4_{\pm4}^{6}  6_{\pm4} 7_{\pm5} 8_{\pm6}, \\
& 1_{0} 2_{\pm1}^{2} 3_{\pm1} 5_{\pm1}^{3} 4_{\pm4}^{3} 6_{\pm4} 7_{\pm5} 8_{\pm6},
1_{0} 2_{\pm1} 3_{\pm1} 5_{\pm1} 4_{\pm4}^{2} 8_{\pm6},
1_{0} 2_{\pm1}^{2} 3_{\pm1}^{2} 5_{\pm1}^{3} 4_{\pm4}^{4} 6_{\pm4} 8_{\pm6},
1_{0}^{2} 2_{\pm1}^{2} 3_{\pm1} 5_{\pm1}^{4} 4_{\pm4}^{4} 6_{\pm4} 7_{\pm5} 8_{\pm6}, \\
& 1_{0}^{2} 2_{\pm1}^{3} 3_{\pm1}^{2} 5_{\pm1}^{4} 4_{\pm4}^{5}  6_{\pm4} 7_{\pm5} 8_{\pm6},
1_{0} 2_{\pm1} 3_{\pm1} 5_{\pm1}^{3} 4_{\pm4}^{3} 7_{\pm5} 8_{\pm6},
1_{0} 2_{\pm1} 5_{\pm1}^{2} 4_{\pm4}^{2} 8_{\pm6},
1_{0} 2_{\pm1} 3_{\pm1} 5_{\pm1}^{2} 4_{\pm4}^{2} 6_{\pm4} 8_{\pm6}, \\
& 1_{0} 2_{\pm1}^{2} 3_{\pm1}^{2} 5_{\pm1}^{3} 4_{\pm4}^{4} 7_{\pm5} 8_{\pm6},
1_{0} 2_{\pm1}^{2} 3_{\pm1}^{2} 5_{\pm1}^{4} 4_{\pm4}^{4} 6_{\pm4} 7_{\pm5} 8_{\pm6},
1_{0} 2_{\pm1}^{2} 3_{\pm1} 5_{\pm1}^{2} 4_{\pm4}^{3} 8_{\pm6},
1_{0} 2_{\pm1} 3_{\pm1} 5_{\pm1}^{2} 4_{\pm4}^{2} 7_{\pm5} 8_{\pm6}, \\
& 1_{0} 2_{\pm1}^{2} 3_{\pm1} 5_{\pm1}^{3} 4_{\pm4}^{3} 6_{\pm4} 8_{\pm6},
1_{0} 2_{\pm1}^{2} 3_{\pm1} 5_{\pm1}^{3} 4_{\pm4}^{3} 7_{\pm5} 8_{\pm6},
1_{0} 2_{\pm1} 3_{\pm1} 5_{\pm1}^{2} 4_{\pm4}^{2} 8_{\pm6}. \\
& \quad \\
& (26)  1_{0} 2_{\pm1} 5_{\pm1} 8_{\pm2} 4_{\pm4}^{2} 7_{\pm5},
1_{0} 2_{\pm1} 3_{\pm1} 5_{\pm1}^{2} 8_{\pm2} 4_{\pm4}^{3}  6_{\pm4} 7_{\pm5},
1_{0} 2_{\pm1}^{2} 3_{\pm1} 5_{\pm1}^{3}  8_{\pm2} 4_{\pm4}^{4}   6_{\pm4} 7_{\pm5}^{2},
1_{0} 2_{\pm1} 5_{\pm1}^{2} 8_{\pm2} 4_{\pm4}^{2} 6_{\pm4} 7_{\pm5}, \\
& 1_{0} 2_{\pm1} 3_{\pm1} 5_{\pm1}^{2} 8_{\pm2} 4_{\pm4}^{3}  7_{\pm5}^{2},
1_{0}^{2} 2_{\pm1}^{2} 3_{\pm1} 5_{\pm1}^{4}   8_{\pm2} 4_{\pm4}^{5}    6_{\pm4} 7_{\pm5}^{2},
1_{0} 2_{\pm1}^{2} 3_{\pm1} 5_{\pm1}^{3}  8_{\pm2} 4_{\pm4}^{4}   6_{\pm4} 7_{\pm5},
1_{0}^{2} 2_{\pm1}^{2} 3_{\pm1} 5_{\pm1}^{3}  8_{\pm2} 4_{\pm4}^{4}   6_{\pm4} 7_{\pm5}^{2}, \\
& 1_{0}^{2} 2_{\pm1}^{3}  3_{\pm1}^{2} 5_{\pm1}^{4}   8_{\pm2} 4_{\pm4}^{6}     6_{\pm4} 7_{\pm5}^{2},
1_{0} 2_{\pm1} 3_{\pm1} 5_{\pm1}^{3}  8_{\pm2} 4_{\pm4}^{3}  6_{\pm4} 7_{\pm5}^{2},
1_{0} 2_{\pm1} 5_{\pm1}^{2} 8_{\pm2} 4_{\pm4}^{2} 7_{\pm5}^{2},
1_{0} 2_{\pm1}^{2} 3_{\pm1} 5_{\pm1}^{2} 8_{\pm2} 4_{\pm4}^{3}  6_{\pm4} 7_{\pm5}, \\
& 1_{0} 2_{\pm1}^{2} 3_{\pm1}^{2} 5_{\pm1}^{4}   8_{\pm2} 4_{\pm4}^{5}    6_{\pm4} 7_{\pm5}^{2},
1_{0}^{2} 2_{\pm1}^{3}  3_{\pm1}^{2} 5_{\pm1}^{5}  8_{\pm2} 4_{\pm4}^{6}   6_{\pm4}^{2} 7_{\pm5}^{2},
1_{0} 2_{\pm1}^{2} 3_{\pm1} 5_{\pm1}^{3}  8_{\pm2} 4_{\pm4}^{4}   7_{\pm5}^{2},
1_{0} 2_{\pm1} 3_{\pm1} 5_{\pm1}^{2} 8_{\pm2} 4_{\pm4}^{3}  7_{\pm5}, \\
& 1_{0} 2_{\pm1}^{2} 3_{\pm1}^{2} 5_{\pm1}^{3}  8_{\pm2} 4_{\pm4}^{4}  6_{\pm4} 7_{\pm5}^{2},
1_{0}^{2} 2_{\pm1}^{3}  3_{\pm1} 5_{\pm1}^{4}   8_{\pm2} 4_{\pm4}^{5}  6_{\pm4} 7_{\pm5}^{2},
1_{0}^{2} 2_{\pm1}^{3}  3_{\pm1}^{2} 5_{\pm1}^{5}    8_{\pm2} 4_{\pm4}^{6}     6_{\pm4} 7_{\pm5}^{3},
1_{0}^{2} 2_{\pm1}^{2} 3_{\pm1} 5_{\pm1}^{3}  8_{\pm2} 4_{\pm4}^{4}   6_{\pm4} 7_{\pm5}, \\
& 1_{0} 2_{\pm1}^{2} 3_{\pm1} 5_{\pm1}^{2} 8_{\pm2} 4_{\pm4}^{3}  7_{\pm5}^{2},
1_{0} 2_{\pm1}^{2} 3_{\pm1} 5_{\pm1}^{4}   8_{\pm2} 4_{\pm4}^{4}   6_{\pm4} 7_{\pm5}^{2},
1_{0}^{2} 2_{\pm1}^{2} 3_{\pm1}^{2} 5_{\pm1}^{4} 8_{\pm2} 4_{\pm4}^{5}    6_{\pm4} 7_{\pm5}^{2},
1_{0}^{2} 2_{\pm1}^{3}  3_{\pm1}^{2} 5_{\pm1}^{5} 8_{\pm2}^{2} 4_{\pm4}^{6} 6_{\pm4} 7_{\pm5}^{3}, \\
& 1_{0} 2_{\pm1} 3_{\pm1} 5_{\pm1}^{3}  8_{\pm2} 4_{\pm4}^{3}  6_{\pm4} 7_{\pm5},
1_{0}^{2} 2_{\pm1}^{2} 3_{\pm1} 5_{\pm1}^{3}  8_{\pm2} 4_{\pm4}^{4} 7_{\pm5}^{2},
1_{0} 2_{\pm1}^{3}  3_{\pm1}^{2} 5_{\pm1}^{4}   8_{\pm2} 4_{\pm4}^{5}  6_{\pm4} 7_{\pm5}^{2},
1_{0}^{2} 2_{\pm1}^{3}  3_{\pm1}^{2} 5_{\pm1}^{5}  8_{\pm2} 4_{\pm4}^{6} 6_{\pm4} 7_{\pm5}^{2}, \\
& 1_{0} 2_{\pm1}^{2} 3_{\pm1} 5_{\pm1}^{3}  8_{\pm2} 4_{\pm4}^{3}  6_{\pm4} 7_{\pm5}^{2},
1_{0} 2_{\pm1} 3_{\pm1} 5_{\pm1} 8_{\pm2} 4_{\pm4}^{2} 7_{\pm5},
1_{0} 2_{\pm1}^{2} 3_{\pm1}^{2} 5_{\pm1}^{3} 8_{\pm2} 4_{\pm4}^{4}  6_{\pm4} 7_{\pm5},
1_{0}^{2} 2_{\pm1}^{2} 3_{\pm1} 5_{\pm1}^{4} 8_{\pm2} 4_{\pm4}^{4}  6_{\pm4} 7_{\pm5}^{2}, \\
& 1_{0}^{2} 2_{\pm1}^{3}  3_{\pm1}^{2} 5_{\pm1}^{4} 8_{\pm2} 4_{\pm4}^{5}  6_{\pm4} 7_{\pm5}^{2},
1_{0} 2_{\pm1} 3_{\pm1} 5_{\pm1}^{3}  8_{\pm2} 4_{\pm4}^{3}  7_{\pm5}^{2},
1_{0} 2_{\pm1} 5_{\pm1}^{2} 8_{\pm2} 4_{\pm4}^{2} 7_{\pm5},
1_{0} 2_{\pm1} 3_{\pm1} 5_{\pm1}^{2} 8_{\pm2} 4_{\pm4}^{2} 6_{\pm4} 7_{\pm5}, \\
& 1_{0} 2_{\pm1}^{2} 3_{\pm1}^{2} 5_{\pm1}^{3}  8_{\pm2} 4_{\pm4}^{4}  7_{\pm5}^{2},
1_{0} 2_{\pm1}^{2} 3_{\pm1}^{2} 5_{\pm1}^{4}   8_{\pm2} 4_{\pm4}^{4}  6_{\pm4} 7_{\pm5}^{2},
1_{0} 2_{\pm1}^{2} 3_{\pm1} 5_{\pm1}^{2} 8_{\pm2} 4_{\pm4}^{3}  7_{\pm5},
1_{0} 2_{\pm1} 3_{\pm1} 5_{\pm1}^{2} 8_{\pm2} 4_{\pm4}^{2} 7_{\pm5}^{2}, \\
& 1_{0} 2_{\pm1}^{2} 3_{\pm1} 5_{\pm1}^{3}  8_{\pm2} 4_{\pm4}^{3}  6_{\pm4} 7_{\pm5},
1_{0} 2_{\pm1}^{2} 3_{\pm1} 5_{\pm1}^{3}  8_{\pm2} 4_{\pm4}^{3}  7_{\pm5}^{2},
1_{0} 2_{\pm1} 5_{\pm1} 8_{\pm2} 4_{\pm4} 7_{\pm5},
1_{0} 2_{\pm1} 3_{\pm1} 5_{\pm1}^{2} 8_{\pm2} 4_{\pm4}^{2} 7_{\pm5}. \\
& \quad \\
& (27)  1_{0} 2_{\pm1} 5_{\pm1} 7_{\pm1} 4_{\pm4}^{2} 6_{\pm4} 8_{\pm4},
1_{0} 2_{\pm1} 3_{\pm1} 5_{\pm1}^{2} 7_{\pm1} 4_{\pm4}^{3}  6_{\pm4}^{2} 8_{\pm4},
1_{0} 2_{\pm1} 5_{\pm1}^{2} 7_{\pm1} 4_{\pm4}^{2} 6_{\pm4}^{2} 8_{\pm4},
1_{0} 2_{\pm1}^{2} 3_{\pm1} 5_{\pm1}^{3}  7_{\pm1}^{2} 4_{\pm4}^{4}   6_{\pm4}^{3}  8_{\pm4}, \\
& 1_{0} 2_{\pm1} 3_{\pm1} 5_{\pm1}^{2} 7_{\pm1}^{2} 4_{\pm4}^{3}  6_{\pm4}^{2} 8_{\pm4},
1_{0} 2_{\pm1}^{2} 3_{\pm1} 5_{\pm1}^{3}  7_{\pm1} 4_{\pm4}^{4}   6_{\pm4}^{2} 8_{\pm4},
1_{0}^{2} 2_{\pm1}^{2} 3_{\pm1} 5_{\pm1}^{4}   7_{\pm1}^{2} 4_{\pm4}^{5}   6_{\pm4}^{3}  8_{\pm4},
1_{0} 2_{\pm1}^{2} 3_{\pm1} 5_{\pm1}^{2} 7_{\pm1} 4_{\pm4}^{3}  6_{\pm4}^{2} 8_{\pm4}, \\
& 1_{0} 2_{\pm1} 3_{\pm1} 5_{\pm1}^{2} 7_{\pm1} 4_{\pm4}^{3}  6_{\pm4} 8_{\pm4},
1_{0}^{2} 2_{\pm1}^{2} 3_{\pm1} 5_{\pm1}^{3}  7_{\pm1}^{2} 4_{\pm4}^{4}  6_{\pm4}^{3}  8_{\pm4},
1_{0}^{2} 2_{\pm1}^{3}  3_{\pm1}^{2} 5_{\pm1}^{4}   7_{\pm1}^{2} 4_{\pm4}^{6}  6_{\pm4}^{3}  8_{\pm4},
1_{0} 2_{\pm1} 3_{\pm1} 5_{\pm1}^{3}  7_{\pm1}^{2} 4_{\pm4}^{3}  6_{\pm4}^{3}  8_{\pm4}, \\
& 1_{0}^{2} 2_{\pm1}^{2} 3_{\pm1} 5_{\pm1}^{3}  7_{\pm1} 4_{\pm4}^{4}   6_{\pm4}^{2} 8_{\pm4},
1_{0} 2_{\pm1}^{2} 3_{\pm1}^{2} 5_{\pm1}^{4}   7_{\pm1}^{2} 4_{\pm4}^{5}   6_{\pm4}^{3}  8_{\pm4},
1_{0}^{2} 2_{\pm1}^{3}  3_{\pm1}^{2} 5_{\pm1}^{5}   7_{\pm1}^{2} 4_{\pm4}^{6}    6_{\pm4}^{4}   8_{\pm4},
1_{0} 2_{\pm1} 5_{\pm1}^{2} 7_{\pm1}^{2} 4_{\pm4}^{2} 6_{\pm4}^{2} 8_{\pm4}, \\
& 1_{0} 2_{\pm1}^{2} 3_{\pm1} 5_{\pm1}^{3}  7_{\pm1}^{2} 4_{\pm4}^{4}   6_{\pm4}^{2} 8_{\pm4},
1_{0} 2_{\pm1} 3_{\pm1} 5_{\pm1} 7_{\pm1} 4_{\pm4}^{2} 6_{\pm4} 8_{\pm4},
1_{0} 2_{\pm1} 3_{\pm1} 5_{\pm1}^{3}  7_{\pm1} 4_{\pm4}^{3}  6_{\pm4}^{2} 8_{\pm4},
1_{0} 2_{\pm1}^{2} 3_{\pm1}^{2} 5_{\pm1}^{3}  7_{\pm1}^{2} 4_{\pm4}^{4}   6_{\pm4}^{3}  8_{\pm4}, \\
& 1_{0}^{2} 2_{\pm1}^{3}  3_{\pm1} 5_{\pm1}^{4}   7_{\pm1}^{2} 4_{\pm4}^{5}   6_{\pm4}^{3}  8_{\pm4},
1_{0}^{2} 2_{\pm1}^{3}  3_{\pm1}^{2} 5_{\pm1}^{5}   7_{\pm1}^{3}  4_{\pm4}^{6}    6_{\pm4}^{4}   8_{\pm4}^{2},
1_{0} 2_{\pm1}^{2} 3_{\pm1} 5_{\pm1}^{2} 7_{\pm1}^{2} 4_{\pm4}^{3}  6_{\pm4}^{2} 8_{\pm4},
1_{0} 2_{\pm1}^{2} 3_{\pm1} 5_{\pm1}^{4}   7_{\pm1}^{2} 4_{\pm4}^{4}   6_{\pm4}^{3}  8_{\pm4}, \\
& 1_{0}^{2} 2_{\pm1}^{2} 3_{\pm1}^{2} 5_{\pm1}^{4}   7_{\pm1}^{2} 4_{\pm4}^{5}   6_{\pm4}^{3}  8_{\pm4},
1_{0}^{2} 2_{\pm1}^{3}  3_{\pm1}^{2} 5_{\pm1}^{5}   7_{\pm1}^{3}  4_{\pm4}^{6}    6_{\pm4}^{4}   8_{\pm4},
1_{0} 2_{\pm1}^{2} 3_{\pm1}^{2} 5_{\pm1}^{3}  7_{\pm1} 4_{\pm4}^{4}   6_{\pm4}^{2} 8_{\pm4},
1_{0} 2_{\pm1} 5_{\pm1}^{2} 7_{\pm1} 4_{\pm4}^{2} 6_{\pm4} 8_{\pm4}, \\
& 1_{0}^{2} 2_{\pm1}^{2} 3_{\pm1} 5_{\pm1}^{3}  7_{\pm1}^{2} 4_{\pm4}^{4}   6_{\pm4}^{2} 8_{\pm4},
1_{0} 2_{\pm1}^{3}  3_{\pm1}^{2} 5_{\pm1}^{4}   7_{\pm1}^{2} 4_{\pm4}^{5}   6_{\pm4}^{3}  8_{\pm4},
1_{0}^{2} 2_{\pm1}^{3}  3_{\pm1}^{2} 5_{\pm1}^{5}   7_{\pm1}^{2} 4_{\pm4}^{6}    6_{\pm4}^{3}  8_{\pm4},
1_{0} 2_{\pm1} 3_{\pm1} 5_{\pm1}^{2} 7_{\pm1} 4_{\pm4}^{2} 6_{\pm4}^{2} 8_{\pm4}, \\
& 1_{0} 2_{\pm1}^{2} 3_{\pm1} 5_{\pm1}^{3}  7_{\pm1}^{2} 4_{\pm4}^{3}  6_{\pm4}^{3}  8_{\pm4},
1_{0} 2_{\pm1}^{2} 3_{\pm1} 5_{\pm1}^{2} 7_{\pm1} 4_{\pm4}^{3}  6_{\pm4} 8_{\pm4},
1_{0}^{2} 2_{\pm1}^{2} 3_{\pm1} 5_{\pm1}^{4}   7_{\pm1}^{2} 4_{\pm4}^{4}   6_{\pm4}^{3}  8_{\pm4},
1_{0}^{2} 2_{\pm1}^{3}  3_{\pm1}^{2} 5_{\pm1}^{4}   7_{\pm1}^{2} 4_{\pm4}^{5}   6_{\pm4}^{3}  8_{\pm4}, \\
& 1_{0} 2_{\pm1} 3_{\pm1} 5_{\pm1}^{3}  7_{\pm1}^{2} 4_{\pm4}^{3}  6_{\pm4}^{2} 8_{\pm4},
1_{0} 2_{\pm1}^{2} 3_{\pm1} 5_{\pm1}^{3}  7_{\pm1} 4_{\pm4}^{3}  6_{\pm4}^{2} 8_{\pm4},
1_{0} 2_{\pm1}^{2} 3_{\pm1}^{2} 5_{\pm1}^{3}  7_{\pm1}^{2} 4_{\pm4}^{4}   6_{\pm4}^{2} 8_{\pm4},
1_{0} 2_{\pm1}^{2} 3_{\pm1}^{2} 5_{\pm1}^{4}   7_{\pm1}^{2} 4_{\pm4}^{4}   6_{\pm4}^{3}  8_{\pm4}, \\
& 1_{0} 2_{\pm1} 5_{\pm1} 7_{\pm1} 4_{\pm4} 6_{\pm4} 8_{\pm4},
1_{0} 2_{\pm1} 3_{\pm1} 5_{\pm1}^{2} 7_{\pm1}^{2} 4_{\pm4}^{2} 6_{\pm4}^{2} 8_{\pm4},
1_{0} 2_{\pm1}^{2} 3_{\pm1} 5_{\pm1}^{3}  7_{\pm1}^{2} 4_{\pm4}^{3}  6_{\pm4}^{2} 8_{\pm4},
1_{0} 2_{\pm1} 3_{\pm1} 5_{\pm1}^{2} 7_{\pm1} 4_{\pm4}^{2} 6_{\pm4} 8_{\pm4}.
\end{align*}
\end{gather}

\begin{gather}
\begin{align*}
& (28)  1_{0} 2_{\pm1} 5_{\pm1} 8_{0} 4_{\pm4}^{2} 6_{\pm4},
1_{0} 2_{\pm1} 3_{\pm1} 5_{\pm1}^{2} 8_{0} 4_{\pm4}^{3}  6_{\pm4}^{2},
1_{0} 2_{\pm1} 5_{\pm1}^{2} 8_{0} 4_{\pm4}^{2} 6_{\pm4}^{2},
1_{0} 2_{\pm1}^{2} 3_{\pm1} 5_{\pm1}^{3}  7_{\pm1} 8_{0} 4_{\pm4}^{4}  6_{\pm4}^{3}, \\
& 1_{0} 2_{\pm1} 3_{\pm1} 5_{\pm1}^{2} 7_{\pm1} 8_{0} 4_{\pm4}^{3}  6_{\pm4}^{2},
1_{0} 2_{\pm1}^{2} 3_{\pm1} 5_{\pm1}^{3}  8_{0} 4_{\pm4}^{4}  6_{\pm4}^{2},
1_{0}^{2} 2_{\pm1}^{2} 3_{\pm1} 5_{\pm1}^{4}  7_{\pm1} 8_{0} 4_{\pm4}^{5}   6_{\pm4}^{3},
1_{0} 2_{\pm1}^{2} 3_{\pm1} 5_{\pm1}^{2} 8_{0} 4_{\pm4}^{3}  6_{\pm4}^{2},\\
& 1_{0} 2_{\pm1} 3_{\pm1} 5_{\pm1}^{2} 8_{0} 4_{\pm4}^{3}  6_{\pm4},
1_{0}^{2} 2_{\pm1}^{2} 3_{\pm1} 5_{\pm1}^{3}  7_{\pm1} 8_{0} 4_{\pm4}^{4}  6_{\pm4}^{3},
1_{0}^{2} 2_{\pm1}^{3}  3_{\pm1}^{2} 5_{\pm1}^{4}  7_{\pm1} 8_{0} 4_{\pm4}^{6}   6_{\pm4}^{3},
1_{0} 2_{\pm1} 3_{\pm1} 5_{\pm1}^{3}  7_{\pm1} 8_{0} 4_{\pm4}^{3}  6_{\pm4}^{3}, \\
& 1_{0}^{2} 2_{\pm1}^{2} 3_{\pm1} 5_{\pm1}^{3}  8_{0} 4_{\pm4}^{4}  6_{\pm4}^{2},
1_{0} 2_{\pm1}^{2} 3_{\pm1}^{2} 5_{\pm1}^{4}  7_{\pm1} 8_{0} 4_{\pm4}^{5}   6_{\pm4}^{3},
1_{0}^{2} 2_{\pm1}^{3}  3_{\pm1}^{2} 5_{\pm1}^{5}   7_{\pm1} 8_{0} 4_{\pm4}^{6}   6_{\pm4}^{4},
1_{0} 2_{\pm1} 5_{\pm1}^{2} 7_{\pm1} 8_{0} 4_{\pm4}^{2} 6_{\pm4}^{2}, \\
& 1_{0} 2_{\pm1}^{2} 3_{\pm1} 5_{\pm1}^{3}  7_{\pm1} 8_{0} 4_{\pm4}^{4}  6_{\pm4}^{2},
1_{0} 2_{\pm1} 3_{\pm1} 5_{\pm1} 8_{0} 4_{\pm4}^{2} 6_{\pm4},
1_{0} 2_{\pm1} 3_{\pm1} 5_{\pm1}^{3}  8_{0} 4_{\pm4}^{3}  6_{\pm4}^{2},
1_{0} 2_{\pm1}^{2} 3_{\pm1}^{2} 5_{\pm1}^{3}  7_{\pm1} 8_{0} 4_{\pm4}^{4}  6_{\pm4}^{3}, \\
& 1_{0}^{2} 2_{\pm1}^{3}  3_{\pm1} 5_{\pm1}^{4}  7_{\pm1} 8_{0} 4_{\pm4}^{5}   6_{\pm4}^{3},
1_{0}^{2} 2_{\pm1}^{3}  3_{\pm1}^{2} 5_{\pm1}^{5}   7_{\pm1} 8_{0}^{2} 4_{\pm4}^{6}   6_{\pm4}^{4},
1_{0} 2_{\pm1}^{2} 3_{\pm1} 5_{\pm1}^{2} 7_{\pm1} 8_{0} 4_{\pm4}^{3}  6_{\pm4}^{2},
1_{0} 2_{\pm1}^{2} 3_{\pm1} 5_{\pm1}^{4}  7_{\pm1} 8_{0} 4_{\pm4}^{4}  6_{\pm4}^{3}, \\
& 1_{0}^{2} 2_{\pm1}^{2} 3_{\pm1}^{2} 5_{\pm1}^{4}  7_{\pm1} 8_{0} 4_{\pm4}^{5}   6_{\pm4}^{3},
1_{0}^{2} 2_{\pm1}^{3}  3_{\pm1}^{2} 5_{\pm1}^{5}   7_{\pm1}^{2} 8_{0} 4_{\pm4}^{6}   6_{\pm4}^{4},
1_{0} 2_{\pm1}^{2} 3_{\pm1}^{2} 5_{\pm1}^{3}  8_{0} 4_{\pm4}^{4}  6_{\pm4}^{2},
1_{0} 2_{\pm1} 5_{\pm1}^{2} 8_{0} 4_{\pm4}^{2} 6_{\pm4}, \\
& 1_{0}^{2} 2_{\pm1}^{2} 3_{\pm1} 5_{\pm1}^{3}  7_{\pm1} 8_{0} 4_{\pm4}^{4}  6_{\pm4}^{2},
1_{0} 2_{\pm1}^{3}  3_{\pm1}^{2} 5_{\pm1}^{4}  7_{\pm1} 8_{0} 4_{\pm4}^{5}   6_{\pm4}^{3},
1_{0}^{2} 2_{\pm1}^{3}  3_{\pm1}^{2} 5_{\pm1}^{5}   7_{\pm1} 8_{0} 4_{\pm4}^{6}   6_{\pm4}^{3},
1_{0} 2_{\pm1} 3_{\pm1} 5_{\pm1}^{2} 8_{0} 4_{\pm4}^{2} 6_{\pm4}^{2}, \\
& 1_{0} 2_{\pm1}^{2} 3_{\pm1} 5_{\pm1}^{3}  7_{\pm1} 8_{0} 4_{\pm4}^{3}  6_{\pm4}^{3},
1_{0} 2_{\pm1}^{2} 3_{\pm1} 5_{\pm1}^{2} 8_{0} 4_{\pm4}^{3}  6_{\pm4},
1_{0}^{2} 2_{\pm1}^{2} 3_{\pm1} 5_{\pm1}^{4}  7_{\pm1} 8_{0} 4_{\pm4}^{4}  6_{\pm4}^{3},
1_{0}^{2} 2_{\pm1}^{3}  3_{\pm1}^{2} 5_{\pm1}^{4}  7_{\pm1} 8_{0} 4_{\pm4}^{5}   6_{\pm4}^{3}, \\
& 1_{0} 2_{\pm1} 3_{\pm1} 5_{\pm1}^{3}  7_{\pm1} 8_{0} 4_{\pm4}^{3}  6_{\pm4}^{2},
1_{0} 2_{\pm1}^{2} 3_{\pm1} 5_{\pm1}^{3}  8_{0} 4_{\pm4}^{3}  6_{\pm4}^{2},
1_{0} 2_{\pm1}^{2} 3_{\pm1}^{2} 5_{\pm1}^{3}  7_{\pm1} 8_{0} 4_{\pm4}^{4}  6_{\pm4}^{2},
1_{0} 2_{\pm1}^{2} 3_{\pm1}^{2} 5_{\pm1}^{4}  7_{\pm1} 8_{0} 4_{\pm4}^{4}  6_{\pm4}^{3}, \\
& 1_{0} 2_{\pm1} 5_{\pm1} 8_{0} 4_{\pm4} 6_{\pm4},
1_{0} 2_{\pm1} 3_{\pm1} 5_{\pm1}^{2} 7_{\pm1} 8_{0} 4_{\pm4}^{2} 6_{\pm4}^{2},
1_{0} 2_{\pm1}^{2} 3_{\pm1} 5_{\pm1}^{3}  7_{\pm1} 8_{0} 4_{\pm4}^{3}  6_{\pm4}^{2},
1_{0} 2_{\pm1} 3_{\pm1} 5_{\pm1}^{2} 8_{0} 4_{\pm4}^{2} 6_{\pm4}. \\
& \quad \\
& (29)  1_{0} 2_{\pm1} 6_{0} 4_{\pm4}^{2} 8_{\pm4},
1_{0} 2_{\pm1} 3_{\pm1} 6_{0}^{2} 4_{\pm4}^{3} 7_{\pm3} 8_{\pm4},
1_{0} 2_{\pm1} 6_{0}^{2} 4_{\pm4}^{2} 7_{\pm3} 8_{\pm4},
1_{0} 2_{\pm1}^{2} 3_{\pm1} 6_{0}^{3} 4_{\pm4}^{4} 7_{\pm3} 8_{\pm4}, \\
& 1_{0} 2_{\pm1} 3_{\pm1} 6_{0}^{2} 4_{\pm4}^{3} 8_{\pm4},
1_{0} 2_{\pm1}^{2} 3_{\pm1} 5_{\pm1} 6_{0}^{2} 4_{\pm4}^{4} 7_{\pm3} 8_{\pm4},
1_{0}^{2} 2_{\pm1}^{2} 3_{\pm1} 5_{\pm1} 6_{0}^{3} 4_{\pm4}^{5} 7_{\pm3} 8_{\pm4},
1_{0} 2_{\pm1} 3_{\pm1} 5_{\pm1} 6_{0} 4_{\pm4}^{3} 8_{\pm4}, \\
& 1_{0} 2_{\pm1}^{2} 3_{\pm1} 6_{0}^{2} 4_{\pm4}^{3} 7_{\pm3} 8_{\pm4},
1_{0}^{2} 2_{\pm1}^{2} 3_{\pm1} 6_{0}^{3} 4_{\pm4}^{4} 7_{\pm3} 8_{\pm4},
1_{0}^{2} 2_{\pm1}^{3} 3_{\pm1}^{2} 5_{\pm1} 6_{0}^{3} 4_{\pm4}^{6} 7_{\pm3} 8_{\pm4},
1_{0}^{2} 2_{\pm1}^{2} 3_{\pm1} 5_{\pm1} 6_{0}^{2} 4_{\pm4}^{4} 7_{\pm3} 8_{\pm4}, \\
& 1_{0} 2_{\pm1} 3_{\pm1} 6_{0}^{3} 4_{\pm4}^{3} 7_{\pm3} 8_{\pm4},
1_{0} 2_{\pm1}^{2} 3_{\pm1}^{2} 5_{\pm1} 6_{0}^{3} 4_{\pm4}^{5} 7_{\pm3} 8_{\pm4},
1_{0}^{2} 2_{\pm1}^{3} 3_{\pm1}^{2} 5_{\pm1} 6_{0}^{4} 4_{\pm4}^{6} 7_{\pm3}^{2} 8_{\pm4},
1_{0} 2_{\pm1} 6_{0}^{2} 4_{\pm4}^{2} 8_{\pm4}, \\
& 1_{0} 2_{\pm1} 3_{\pm1} 6_{0} 4_{\pm4}^{2} 8_{\pm4},
1_{0} 2_{\pm1} 3_{\pm1} 5_{\pm1} 6_{0}^{2} 4_{\pm4}^{3} 7_{\pm3} 8_{\pm4},
1_{0} 2_{\pm1}^{2} 3_{\pm1}^{2} 6_{0}^{3} 4_{\pm4}^{4} 7_{\pm3} 8_{\pm4},
1_{0} 2_{\pm1}^{2} 3_{\pm1} 5_{\pm1} 6_{0}^{2} 4_{\pm4}^{4} 8_{\pm4}, \\
& 1_{0}^{2} 2_{\pm1}^{3} 3_{\pm1} 5_{\pm1} 6_{0}^{3} 4_{\pm4}^{5} 7_{\pm3} 8_{\pm4},
1_{0}^{2} 2_{\pm1}^{3} 3_{\pm1}^{2} 5_{\pm1} 6_{0}^{4} 4_{\pm4}^{6} 7_{\pm3} 8_{\pm4}^{2},
1_{0} 2_{\pm1}^{2} 3_{\pm1} 6_{0}^{2} 4_{\pm4}^{3} 8_{\pm4},
1_{0} 2_{\pm1}^{2} 3_{\pm1} 5_{\pm1} 6_{0}^{3} 4_{\pm4}^{4} 7_{\pm3} 8_{\pm4}, \\
& 1_{0}^{2} 2_{\pm1}^{2} 3_{\pm1}^{2} 5_{\pm1} 6_{0}^{3} 4_{\pm4}^{5} 7_{\pm3} 8_{\pm4},
1_{0}^{2} 2_{\pm1}^{3} 3_{\pm1}^{2} 5_{\pm1} 6_{0}^{4} 4_{\pm4}^{6} 7_{\pm3} 8_{\pm4},
1_{0} 2_{\pm1} 5_{\pm1} 6_{0} 4_{\pm4}^{2} 8_{\pm4},
1_{0}^{2} 2_{\pm1}^{2} 3_{\pm1} 5_{\pm1} 6_{0}^{2} 4_{\pm4}^{4} 8_{\pm4}, \\
& 1_{0} 2_{\pm1}^{2} 3_{\pm1}^{2} 5_{\pm1} 6_{0}^{2} 4_{\pm4}^{4} 7_{\pm3} 8_{\pm4},
1_{0} 2_{\pm1}^{3} 3_{\pm1}^{2} 5_{\pm1} 6_{0}^{3} 4_{\pm4}^{5} 7_{\pm3} 8_{\pm4},
1_{0}^{2} 2_{\pm1}^{3} 3_{\pm1}^{2} 5_{\pm1}^{2} 6_{0}^{3} 4_{\pm4}^{6} 7_{\pm3} 8_{\pm4},
1_{0} 2_{\pm1} 3_{\pm1} 6_{0}^{2} 4_{\pm4}^{2} 7_{\pm3} 8_{\pm4}, \\
& 1_{0} 2_{\pm1}^{2} 3_{\pm1} 5_{\pm1} 6_{0} 4_{\pm4}^{3} 8_{\pm4},
1_{0} 2_{\pm1}^{2} 3_{\pm1} 6_{0}^{3} 4_{\pm4}^{3} 7_{\pm3} 8_{\pm4},
1_{0}^{2} 2_{\pm1}^{2} 3_{\pm1} 5_{\pm1} 6_{0}^{3} 4_{\pm4}^{4} 7_{\pm3} 8_{\pm4},
1_{0}^{2} 2_{\pm1}^{3} 3_{\pm1}^{2} 5_{\pm1} 6_{0}^{3} 4_{\pm4}^{5} 7_{\pm3} 8_{\pm4}, \\
& 1_{0} 2_{\pm1}^{2} 3_{\pm1} 5_{\pm1} 6_{0}^{2} 4_{\pm4}^{3} 7_{\pm3} 8_{\pm4},
1_{0} 2_{\pm1} 3_{\pm1} 5_{\pm1} 6_{0}^{2} 4_{\pm4}^{3} 8_{\pm4},
1_{0} 2_{\pm1}^{2} 3_{\pm1}^{2} 5_{\pm1} 6_{0}^{2} 4_{\pm4}^{4} 8_{\pm4},
1_{0} 2_{\pm1}^{2} 3_{\pm1}^{2} 5_{\pm1} 6_{0}^{3} 4_{\pm4}^{4} 7_{\pm3} 8_{\pm4}, \\
& 1_{0} 2_{\pm1} 6_{0} 4_{\pm4} 8_{\pm4},
1_{0} 2_{\pm1} 3_{\pm1} 6_{0}^{2} 4_{\pm4}^{2} 8_{\pm4},
1_{0} 2_{\pm1}^{2} 3_{\pm1} 5_{\pm1} 6_{0}^{2} 4_{\pm4}^{3} 8_{\pm4},
1_{0} 2_{\pm1} 3_{\pm1} 5_{\pm1} 6_{0} 4_{\pm4}^{2} 8_{\pm4}. \\
& \quad \\
& (30)  1_{0} 2_{\pm1} 6_{0} 8_{0} 4_{\pm4}^{2} 7_{\pm3},
1_{0} 2_{\pm1} 3_{\pm1} 6_{0}^{2} 8_{0} 4_{\pm4}^{3}  7_{\pm3}^{2},
1_{0} 2_{\pm1} 6_{0}^{2} 8_{0} 4_{\pm4}^{2} 7_{\pm3}^{2},
1_{0} 2_{\pm1}^{2} 3_{\pm1} 6_{0}^{3}  8_{0} 4_{\pm4}^{4}   7_{\pm3}^{2}, \\
& 1_{0} 2_{\pm1} 3_{\pm1} 6_{0}^{2} 8_{0} 4_{\pm4}^{3}  7_{\pm3},
1_{0} 2_{\pm1}^{2} 3_{\pm1} 5_{\pm1} 6_{0}^{2} 8_{0} 4_{\pm4}^{4}   7_{\pm3}^{2},
1_{0}^{2} 2_{\pm1}^{2} 3_{\pm1} 5_{\pm1} 6_{0}^{3}  8_{0} 4_{\pm4}^{5}    7_{\pm3}^{2},
1_{0} 2_{\pm1} 3_{\pm1} 5_{\pm1} 6_{0} 8_{0} 4_{\pm4}^{3}  7_{\pm3}, \\
& 1_{0} 2_{\pm1}^{2} 3_{\pm1} 6_{0}^{2} 8_{0} 4_{\pm4}^{3}  7_{\pm3}^{2},
1_{0}^{2} 2_{\pm1}^{2} 3_{\pm1} 6_{0}^{3}  8_{0} 4_{\pm4}^{4}   7_{\pm3}^{2},
1_{0}^{2} 2_{\pm1}^{3}  3_{\pm1}^{2} 5_{\pm1} 6_{0}^{3}  8_{0} 4_{\pm4}^{6}    7_{\pm3}^{2},
1_{0}^{2} 2_{\pm1}^{2} 3_{\pm1} 5_{\pm1} 6_{0}^{2} 8_{0} 4_{\pm4}^{4}   7_{\pm3}^{2}, \\
& 1_{0} 2_{\pm1} 3_{\pm1} 6_{0}^{3}  8_{0} 4_{\pm4}^{3}  7_{\pm3}^{2},
1_{0} 2_{\pm1}^{2} 3_{\pm1}^{2} 5_{\pm1} 6_{0}^{3}  8_{0} 4_{\pm4}^{5}    7_{\pm3}^{2},
1_{0}^{2} 2_{\pm1}^{3}  3_{\pm1}^{2} 5_{\pm1} 6_{0}^{4}   8_{0} 4_{\pm4}^{6}    7_{\pm3}^{3},
1_{0} 2_{\pm1} 6_{0}^{2} 8_{0} 4_{\pm4}^{2} 7_{\pm3}, \\
& 1_{0} 2_{\pm1} 3_{\pm1} 6_{0} 8_{0} 4_{\pm4}^{2} 7_{\pm3},
1_{0} 2_{\pm1} 3_{\pm1} 5_{\pm1} 6_{0}^{2} 8_{0} 4_{\pm4}^{3}  7_{\pm3}^{2},
1_{0} 2_{\pm1}^{2} 3_{\pm1}^{2} 6_{0}^{3}  8_{0} 4_{\pm4}^{4}   7_{\pm3}^{2},
1_{0} 2_{\pm1}^{2} 3_{\pm1} 5_{\pm1} 6_{0}^{2} 8_{0} 4_{\pm4}^{4}   7_{\pm3}, \\
& 1_{0}^{2} 2_{\pm1}^{3}  3_{\pm1} 5_{\pm1} 6_{0}^{3}  8_{0} 4_{\pm4}^{5}    7_{\pm3}^{2},
1_{0}^{2} 2_{\pm1}^{3}  3_{\pm1}^{2} 5_{\pm1} 6_{0}^{4}   8_{0}^{2} 4_{\pm4}^{6}    7_{\pm3}^{3},
1_{0} 2_{\pm1}^{2} 3_{\pm1} 6_{0}^{2} 8_{0} 4_{\pm4}^{3}  7_{\pm3},
1_{0} 2_{\pm1}^{2} 3_{\pm1} 5_{\pm1} 6_{0}^{3}  8_{0} 4_{\pm4}^{4}   7_{\pm3}^{2}, \\
& 1_{0}^{2} 2_{\pm1}^{2} 3_{\pm1}^{2} 5_{\pm1} 6_{0}^{3}  8_{0} 4_{\pm4}^{5}  7_{\pm3}^{2},
1_{0}^{2} 2_{\pm1}^{3}  3_{\pm1}^{2} 5_{\pm1} 6_{0}^{4}   8_{0} 4_{\pm4}^{6} 7_{\pm3}^{2},
1_{0} 2_{\pm1} 5_{\pm1} 6_{0} 8_{0} 4_{\pm4}^{2} 7_{\pm3},
1_{0}^{2} 2_{\pm1}^{2} 3_{\pm1} 5_{\pm1} 6_{0}^{2} 8_{0} 4_{\pm4}^{4}   7_{\pm3}, \\
& 1_{0} 2_{\pm1}^{2} 3_{\pm1}^{2} 5_{\pm1} 6_{0}^{2} 8_{0} 4_{\pm4}^{4}   7_{\pm3}^{2},
1_{0} 2_{\pm1}^{3}  3_{\pm1}^{2} 5_{\pm1} 6_{0}^{3}  8_{0} 4_{\pm4}^{5}    7_{\pm3}^{2},
1_{0}^{2} 2_{\pm1}^{3}  3_{\pm1}^{2} 5_{\pm1}^{2} 6_{0}^{3}  8_{0} 4_{\pm4}^{6}    7_{\pm3}^{2},
1_{0} 2_{\pm1} 3_{\pm1} 6_{0}^{2} 8_{0} 4_{\pm4}^{2} 7_{\pm3}^{2}, \\
& 1_{0} 2_{\pm1}^{2} 3_{\pm1} 5_{\pm1} 6_{0} 8_{0} 4_{\pm4}^{3}  7_{\pm3},
1_{0} 2_{\pm1}^{2} 3_{\pm1} 6_{0}^{3}  8_{0} 4_{\pm4}^{3}  7_{\pm3}^{2},
1_{0}^{2} 2_{\pm1}^{2} 3_{\pm1} 5_{\pm1} 6_{0}^{3}  8_{0} 4_{\pm4}^{4}   7_{\pm3}^{2},
1_{0}^{2} 2_{\pm1}^{3}  3_{\pm1}^{2} 5_{\pm1} 6_{0}^{3}  8_{0} 4_{\pm4}^{5}    7_{\pm3}^{2}, \\
& 1_{0} 2_{\pm1}^{2} 3_{\pm1} 5_{\pm1} 6_{0}^{2} 8_{0} 4_{\pm4}^{3}  7_{\pm3}^{2},
1_{0} 2_{\pm1} 3_{\pm1} 5_{\pm1} 6_{0}^{2} 8_{0} 4_{\pm4}^{3}  7_{\pm3},
1_{0} 2_{\pm1}^{2} 3_{\pm1}^{2} 5_{\pm1} 6_{0}^{2} 8_{0} 4_{\pm4}^{4}   7_{\pm3},
1_{0} 2_{\pm1}^{2} 3_{\pm1}^{2} 5_{\pm1} 6_{0}^{3}  8_{0} 4_{\pm4}^{4}   7_{\pm3}^{2}, \\
& 1_{0} 2_{\pm1} 6_{0} 8_{0} 4_{\pm4} 7_{\pm3},
1_{0} 2_{\pm1} 3_{\pm1} 6_{0}^{2} 8_{0} 4_{\pm4}^{2} 7_{\pm3},
1_{0} 2_{\pm1}^{2} 3_{\pm1} 5_{\pm1} 6_{0}^{2} 8_{0} 4_{\pm4}^{3}  7_{\pm3},
1_{0} 2_{\pm1} 3_{\pm1} 5_{\pm1} 6_{0} 8_{0} 4_{\pm4}^{2} 7_{\pm3}. \\
\end{align*}
\end{gather}


\begin{gather}
\begin{align*}
& (31)  1_{\pm1} 2_{\pm2} 7_{0} 4_{\pm5}^{2} 8_{\pm3},
1_{\pm1} 2_{\pm2} 3_{\pm2} 7_{0}^{2} 4_{\pm5}^{3} 8_{\pm3},
1_{\pm1} 2_{\pm2} 7_{0}^{2} 4_{\pm5}^{2} 8_{\pm3},
1_{\pm1} 2_{\pm2} 3_{\pm2} 6_{\pm1} 7_{0} 4_{\pm5}^{3} 8_{\pm3}, \\
& 1_{\pm1} 2_{\pm2}^{2} 3_{\pm2} 6_{\pm1} 7_{0}^{2} 4_{\pm5}^{4} 8_{\pm3},
1_{\pm1} 2_{\pm2} 6_{\pm1} 7_{0} 4_{\pm5}^{2} 8_{\pm3},
1_{\pm1} 2_{\pm2} 3_{\pm2} 5_{\pm2} 7_{0} 4_{\pm5}^{3} 8_{\pm3},
1_{\pm1} 2_{\pm2}^{2} 3_{\pm2} 5_{\pm2} 7_{0}^{2} 4_{\pm5}^{4} 8_{\pm3}, \\
& 1_{\pm1}^{2} 2_{\pm2}^{2} 3_{\pm2} 5_{\pm2} 6_{\pm1} 7_{0}^{2} 4_{\pm5}^{5} 8_{\pm3},
1_{\pm1} 2_{\pm2}^{2} 3_{\pm2} 5_{\pm2} 6_{\pm1} 7_{0} 4_{\pm5}^{4} 8_{\pm3},
1_{\pm1} 2_{\pm2} 3_{\pm2} 7_{0} 4_{\pm5}^{2} 8_{\pm3},
1_{\pm1} 2_{\pm2}^{2} 3_{\pm2} 7_{0}^{2} 4_{\pm5}^{3} 8_{\pm3}, \\
& 1_{\pm1}^{2} 2_{\pm2}^{2} 3_{\pm2} 6_{\pm1} 7_{0}^{2} 4_{\pm5}^{4} 8_{\pm3},
1_{\pm1}^{2} 2_{\pm2}^{3} 3_{\pm2}^{2} 5_{\pm2} 6_{\pm1} 7_{0}^{2} 4_{\pm5}^{6} 8_{\pm3},
1_{\pm1} 2_{\pm2} 5_{\pm2} 7_{0} 4_{\pm5}^{2} 8_{\pm3},
1_{\pm1} 2_{\pm2}^{2} 3_{\pm2} 6_{\pm1} 7_{0} 4_{\pm5}^{3} 8_{\pm3}, \\
& 1_{\pm1}^{2} 2_{\pm2}^{2} 3_{\pm2} 5_{\pm2} 7_{0}^{2} 4_{\pm5}^{4} 8_{\pm3},
1_{\pm1} 2_{\pm2} 3_{\pm2} 6_{\pm1} 7_{0}^{2} 4_{\pm5}^{3} 8_{\pm3},
1_{\pm1} 2_{\pm2}^{2} 3_{\pm2}^{2} 5_{\pm2} 6_{\pm1} 7_{0}^{2} 4_{\pm5}^{5} 8_{\pm3},
1_{\pm1}^{2} 2_{\pm2}^{3} 3_{\pm2}^{2} 5_{\pm2} 6_{\pm1} 7_{0}^{3} 4_{\pm5}^{6} 8_{\pm3}^{2}, \\
& 1_{\pm1} 2_{\pm2} 3_{\pm2} 5_{\pm2} 7_{0}^{2} 4_{\pm5}^{3} 8_{\pm3},
1_{\pm1} 2_{\pm2}^{2} 3_{\pm2}^{2} 6_{\pm1} 7_{0}^{2} 4_{\pm5}^{4} 8_{\pm3},
1_{\pm1}^{2} 2_{\pm2}^{3} 3_{\pm2} 5_{\pm2} 6_{\pm1} 7_{0}^{2} 4_{\pm5}^{5} 8_{\pm3},
1_{\pm1}^{2} 2_{\pm2}^{3} 3_{\pm2}^{2} 5_{\pm2} 6_{\pm1} 7_{0}^{3} 4_{\pm5}^{6} 8_{\pm3}, \\
& 1_{\pm1} 2_{\pm2}^{2} 3_{\pm2} 5_{\pm2} 6_{\pm1} 7_{0}^{2} 4_{\pm5}^{4} 8_{\pm3},
1_{\pm1}^{2} 2_{\pm2}^{2} 3_{\pm2} 5_{\pm2} 6_{\pm1} 7_{0} 4_{\pm5}^{4} 8_{\pm3},
1_{\pm1}^{2} 2_{\pm2}^{2} 3_{\pm2}^{2} 5_{\pm2} 6_{\pm1} 7_{0}^{2} 4_{\pm5}^{5} 8_{\pm3},
1_{\pm1}^{2} 2_{\pm2}^{3} 3_{\pm2}^{2} 5_{\pm2} 6_{\pm1}^{2} 7_{0}^{2} 4_{\pm5}^{6} 8_{\pm3}, \\
& 1_{\pm1} 2_{\pm2} 3_{\pm2} 5_{\pm2} 6_{\pm1} 7_{0} 4_{\pm5}^{3} 8_{\pm3},
1_{\pm1} 2_{\pm2}^{2} 3_{\pm2} 5_{\pm2} 7_{0} 4_{\pm5}^{3} 8_{\pm3},
1_{\pm1} 2_{\pm2}^{2} 3_{\pm2}^{2} 5_{\pm2} 7_{0}^{2} 4_{\pm5}^{4} 8_{\pm3},
1_{\pm1} 2_{\pm2}^{3} 3_{\pm2}^{2} 5_{\pm2} 6_{\pm1} 7_{0}^{2} 4_{\pm5}^{5} 8_{\pm3}, \\
& 1_{\pm1}^{2} 2_{\pm2}^{3} 3_{\pm2}^{2} 5_{\pm2}^{2} 6_{\pm1} 7_{0}^{2} 4_{\pm5}^{6} 8_{\pm3},
1_{\pm1} 2_{\pm2} 7_{0} 4_{\pm5} 8_{\pm3},
1_{\pm1} 2_{\pm2}^{2} 3_{\pm2}^{2} 5_{\pm2} 6_{\pm1} 7_{0} 4_{\pm5}^{4} 8_{\pm3},
1_{\pm1} 2_{\pm2} 3_{\pm2} 7_{0}^{2} 4_{\pm5}^{2} 8_{\pm3}, \\
& 1_{\pm1} 2_{\pm2}^{2} 3_{\pm2} 6_{\pm1} 7_{0}^{2} 4_{\pm5}^{3} 8_{\pm3},
1_{\pm1}^{2} 2_{\pm2}^{2} 3_{\pm2} 5_{\pm2} 6_{\pm1} 7_{0}^{2} 4_{\pm5}^{4} 8_{\pm3},
1_{\pm1}^{2} 2_{\pm2}^{3} 3_{\pm2}^{2} 5_{\pm2} 6_{\pm1} 7_{0}^{2} 4_{\pm5}^{5} 8_{\pm3},
1_{\pm1} 2_{\pm2} 3_{\pm2} 6_{\pm1} 7_{0} 4_{\pm5}^{2} 8_{\pm3}, \\
& 1_{\pm1} 2_{\pm2}^{2} 3_{\pm2} 5_{\pm2} 7_{0}^{2} 4_{\pm5}^{3} 8_{\pm3},
1_{\pm1} 2_{\pm2}^{2} 3_{\pm2}^{2} 5_{\pm2} 6_{\pm1} 7_{0}^{2} 4_{\pm5}^{4} 8_{\pm3},
1_{\pm1} 2_{\pm2}^{2} 3_{\pm2} 5_{\pm2} 6_{\pm1} 7_{0} 4_{\pm5}^{3} 8_{\pm3},
1_{\pm1} 2_{\pm2} 3_{\pm2} 5_{\pm2} 7_{0} 4_{\pm5}^{2} 8_{\pm3}. \\
& \quad \\
& (32)  1_{\pm2} 2_{\pm3} 8_{0} 4_{\pm6},
1_{\pm2} 2_{\pm3} 8_{0} 4_{\pm6}^{2},
1_{\pm2} 2_{\pm3} 3_{\pm3} 7_{\pm1} 8_{0} 4_{\pm6}^{3},
1_{\pm2} 2_{\pm3} 7_{\pm1} 8_{0} 4_{\pm6}^{2},
1_{\pm2} 2_{\pm3} 3_{\pm3} 6_{\pm2} 8_{0} 4_{\pm6}^{3},
1_{\pm2} 2_{\pm3} 6_{\pm2} 8_{0} 4_{\pm6}^{2}, \\
& 1_{\pm2} 2_{\pm3}^{2} 3_{\pm3} 6_{\pm2} 7_{\pm1} 8_{0} 4_{\pm6}^{4},
1_{\pm2} 2_{\pm3} 3_{\pm3} 8_{0} 4_{\pm6}^{2},
1_{\pm2} 2_{\pm3} 3_{\pm3} 5_{\pm3} 8_{0} 4_{\pm6}^{3},
1_{\pm2} 2_{\pm3}^{2} 3_{\pm3} 7_{\pm1} 8_{0} 4_{\pm6}^{3},
1_{\pm2} 2_{\pm3}^{2} 3_{\pm3} 5_{\pm3} 7_{\pm1} 8_{0} 4_{\pm6}^{4}, \\
& 1_{\pm2}^{2} 2_{\pm3}^{2} 3_{\pm3} 5_{\pm3} 6_{\pm2} 7_{\pm1} 8_{0} 4_{\pm6}^{5},
1_{\pm2} 2_{\pm3}^{2} 3_{\pm3} 5_{\pm3} 6_{\pm2} 8_{0} 4_{\pm6}^{4}, 
1_{\pm2}^{2} 2_{\pm3}^{2} 3_{\pm3} 6_{\pm2} 7_{\pm1} 8_{0} 4_{\pm6}^{4},
1_{\pm2}^{2} 2_{\pm3}^{3} 3_{\pm3}^{2} 5_{\pm3} 6_{\pm2} 7_{\pm1} 8_{0} 4_{\pm6}^{6}, \\
&  1_{\pm2}^{2} 2_{\pm3}^{2} 3_{\pm3} 5_{\pm3} 7_{\pm1} 8_{0} 4_{\pm6}^{4},
1_{\pm2} 2_{\pm3} 3_{\pm3} 6_{\pm2} 7_{\pm1} 8_{0} 4_{\pm6}^{3},
1_{\pm2} 2_{\pm3}^{2} 3_{\pm3}^{2} 5_{\pm3} 6_{\pm2} 7_{\pm1} 8_{0} 4_{\pm6}^{5},
1_{\pm2}^{2} 2_{\pm3}^{3} 3_{\pm3}^{2} 5_{\pm3} 6_{\pm2} 7_{\pm1} 8_{0}^{2} 4_{\pm6}^{6}, \\
& 1_{\pm2} 2_{\pm3} 3_{\pm3} 5_{\pm3} 7_{\pm1} 8_{0} 4_{\pm6}^{3},
1_{\pm2} 2_{\pm3}^{2} 3_{\pm3}^{2} 6_{\pm2} 7_{\pm1} 8_{0} 4_{\pm6}^{4},
1_{\pm2}^{2} 2_{\pm3}^{3} 3_{\pm3} 5_{\pm3} 6_{\pm2} 7_{\pm1} 8_{0} 4_{\pm6}^{5},
1_{\pm2}^{2} 2_{\pm3}^{3} 3_{\pm3}^{2} 5_{\pm3} 6_{\pm2} 7_{\pm1}^{2} 8_{0} 4_{\pm6}^{6}, \\
& 1_{\pm2} 2_{\pm3}^{2} 3_{\pm3} 5_{\pm3} 6_{\pm2} 7_{\pm1} 8_{0} 4_{\pm6}^{4},
1_{\pm2}^{2} 2_{\pm3}^{2} 3_{\pm3} 5_{\pm3} 6_{\pm2} 8_{0} 4_{\pm6}^{4},
1_{\pm2}^{2} 2_{\pm3}^{2} 3_{\pm3}^{2} 5_{\pm3} 6_{\pm2} 7_{\pm1} 8_{0} 4_{\pm6}^{5},
1_{\pm2}^{2} 2_{\pm3}^{3} 3_{\pm3}^{2} 5_{\pm3} 6_{\pm2}^{2} 7_{\pm1} 8_{0} 4_{\pm6}^{6}, \\
& 1_{\pm2} 2_{\pm3} 3_{\pm3} 5_{\pm3} 6_{\pm2} 8_{0} 4_{\pm6}^{3},
1_{\pm2} 2_{\pm3}^{2} 3_{\pm3} 5_{\pm3} 8_{0} 4_{\pm6}^{3},
1_{\pm2} 2_{\pm3}^{2} 3_{\pm3}^{2} 5_{\pm3} 7_{\pm1} 8_{0} 4_{\pm6}^{4},
1_{\pm2} 2_{\pm3}^{3} 3_{\pm3}^{2} 5_{\pm3} 6_{\pm2} 7_{\pm1} 8_{0} 4_{\pm6}^{5}, \\
& 1_{\pm2}^{2} 2_{\pm3}^{3} 3_{\pm3}^{2} 5_{\pm3}^{2} 6_{\pm2} 7_{\pm1} 8_{0} 4_{\pm6}^{6},
1_{\pm2} 2_{\pm3}^{2} 3_{\pm3}^{2} 5_{\pm3} 6_{\pm2} 8_{0} 4_{\pm6}^{4},
1_{\pm2} 2_{\pm3} 3_{\pm3} 7_{\pm1} 8_{0} 4_{\pm6}^{2}, 
1_{\pm2} 2_{\pm3} 5_{\pm3} 8_{0} 4_{\pm6}^{2}, 
1_{\pm2} 2_{\pm3}^{2} 3_{\pm3} 6_{\pm2} 8_{0} 4_{\pm6}^{3}, \\
& 1_{\pm2} 2_{\pm3}^{2} 3_{\pm3} 6_{\pm2} 7_{\pm1} 8_{0} 4_{\pm6}^{3},
1_{\pm2}^{2} 2_{\pm3}^{2} 3_{\pm3} 5_{\pm3} 6_{\pm2} 7_{\pm1} 8_{0} 4_{\pm6}^{4},
1_{\pm2}^{2} 2_{\pm3}^{3} 3_{\pm3}^{2} 5_{\pm3} 6_{\pm2} 7_{\pm1} 8_{0} 4_{\pm6}^{5},
1_{\pm2} 2_{\pm3} 3_{\pm3} 6_{\pm2} 8_{0} 4_{\pm6}^{2}, \\
& 1_{\pm2} 2_{\pm3}^{2} 3_{\pm3} 5_{\pm3} 7_{\pm1} 8_{0} 4_{\pm6}^{3},
1_{\pm2} 2_{\pm3}^{2} 3_{\pm3}^{2} 5_{\pm3} 6_{\pm2} 7_{\pm1} 8_{0} 4_{\pm6}^{4},
1_{\pm2} 2_{\pm3}^{2} 3_{\pm3} 5_{\pm3} 6_{\pm2} 8_{0} 4_{\pm6}^{3},
1_{\pm2} 2_{\pm3} 3_{\pm3} 5_{\pm3} 8_{0} 4_{\pm6}^{2}.  \\
& \quad \\
& (33)  1_{0} 4_{0}^{2} 7_{\pm1} 2_{\pm3} 3_{\pm3}^{2} 5_{\pm3} 6_{\pm4} 8_{\pm4},
1_{0} 4_{0}^{3} 7_{\pm1} 2_{\pm3}^{2} 3_{\pm3}^{2} 5_{\pm3} 6_{\pm4}^{2} 8_{\pm4},
1_{0} 4_{0}^{2} 7_{\pm1} 2_{\pm3} 3_{\pm3}^{2} 6_{\pm4}^{2} 8_{\pm4},
1_{0} 4_{0} 7_{\pm1} 2_{\pm3} 3_{\pm3} 6_{\pm4} 8_{\pm4}, \\
& 1_{0} 4_{0}^{4} 7_{\pm1}^{2} 2_{\pm3}^{2} 3_{\pm3}^{3} 5_{\pm3} 6_{\pm4}^{3} 8_{\pm4},
1_{0} 4_{0}^{3} 7_{\pm1}^{2} 2_{\pm3}^{2} 3_{\pm3}^{2} 5_{\pm3} 6_{\pm4}^{2} 8_{\pm4},
1_{0} 4_{0}^{4} 7_{\pm1} 2_{\pm3}^{2} 3_{\pm3}^{3} 5_{\pm3} 6_{\pm4}^{2} 8_{\pm4},
1_{0} 4_{0}^{3} 7_{\pm1} 2_{\pm3} 3_{\pm3}^{2} 5_{\pm3} 6_{\pm4}^{2} 8_{\pm4}, \\
& 1_{0}^{2} 4_{0}^{5} 7_{\pm1}^{2} 2_{\pm3}^{3} 3_{\pm3}^{4} 5_{\pm3} 6_{\pm4}^{3} 8_{\pm4},
1_{0} 4_{0}^{3} 7_{\pm1} 2_{\pm3}^{2} 3_{\pm3}^{2} 5_{\pm3} 6_{\pm4} 8_{\pm4},
1_{0}^{2} 4_{0}^{4} 7_{\pm1}^{2} 2_{\pm3}^{2} 3_{\pm3}^{3} 5_{\pm3} 6_{\pm4}^{3} 8_{\pm4},
1_{0} 4_{0}^{3} 7_{\pm1}^{2} 2_{\pm3}^{2} 3_{\pm3}^{2} 6_{\pm4}^{3} 8_{\pm4}, \\
& 1_{0}^{2} 4_{0}^{6} 7_{\pm1}^{2} 2_{\pm3}^{3} 3_{\pm3}^{4} 5_{\pm3}^{2} 6_{\pm4}^{3} 8_{\pm4},
1_{0}^{2} 4_{0}^{4} 7_{\pm1} 2_{\pm3}^{2} 3_{\pm3}^{3} 5_{\pm3} 6_{\pm4}^{2} 8_{\pm4},
1_{0} 4_{0}^{5} 7_{\pm1}^{2} 2_{\pm3}^{3} 3_{\pm3}^{3} 5_{\pm3} 6_{\pm4}^{3} 8_{\pm4},
1_{0} 4_{0}^{2} 7_{\pm1}^{2} 2_{\pm3} 3_{\pm3}^{2} 6_{\pm4}^{2} 8_{\pm4}, \\
& 1_{0} 4_{0}^{4} 7_{\pm1}^{2} 2_{\pm3}^{2} 3_{\pm3}^{3} 5_{\pm3} 6_{\pm4}^{2} 8_{\pm4},
1_{0} 4_{0}^{2} 7_{\pm1} 2_{\pm3} 3_{\pm3} 5_{\pm3} 6_{\pm4} 8_{\pm4},
1_{0}^{2} 4_{0}^{6} 7_{\pm1}^{2} 2_{\pm3}^{3} 3_{\pm3}^{4} 5_{\pm3} 6_{\pm4}^{4} 8_{\pm4}, 
1_{0} 4_{0}^{4} 7_{\pm1}^{2} 2_{\pm3}^{2} 3_{\pm3}^{2} 5_{\pm3} 6_{\pm4}^{3} 8_{\pm4}, \\
& 1_{0} 4_{0}^{3} 7_{\pm1} 2_{\pm3}^{2} 3_{\pm3}^{2} 6_{\pm4}^{2} 8_{\pm4},
1_{0}^{2} 4_{0}^{5} 7_{\pm1}^{2} 2_{\pm3}^{2} 3_{\pm3}^{4} 5_{\pm3} 6_{\pm4}^{3} 8_{\pm4},
1_{0}^{2} 4_{0}^{6} 7_{\pm1}^{3} 2_{\pm3}^{3} 3_{\pm3}^{4} 5_{\pm3} 6_{\pm4}^{4} 8_{\pm4}^{2},
1_{0} 4_{0}^{4} 7_{\pm1}^{2} 2_{\pm3}^{2} 3_{\pm3}^{3} 6_{\pm4}^{3} 8_{\pm4}, \\
& 1_{0} 4_{0}^{3} 7_{\pm1}^{2} 2_{\pm3} 3_{\pm3}^{2} 5_{\pm3} 6_{\pm4}^{2} 8_{\pm4},
1_{0}^{2} 4_{0}^{5} 7_{\pm1}^{2} 2_{\pm3}^{3} 3_{\pm3}^{3} 5_{\pm3} 6_{\pm4}^{3} 8_{\pm4},
1_{0} 4_{0}^{4} 7_{\pm1} 2_{\pm3}^{2} 3_{\pm3}^{2} 5_{\pm3} 6_{\pm4}^{2} 8_{\pm4},
1_{0} 4_{0}^{2} 7_{\pm1} 2_{\pm3} 3_{\pm3}^{2} 6_{\pm4} 8_{\pm4}, \\
& 1_{0}^{2} 4_{0}^{6} 7_{\pm1}^{3} 2_{\pm3}^{3} 3_{\pm3}^{4} 5_{\pm3} 6_{\pm4}^{4} 8_{\pm4},
1_{0}^{2} 4_{0}^{4} 7_{\pm1}^{2} 2_{\pm3}^{2} 3_{\pm3}^{3} 5_{\pm3} 6_{\pm4}^{2} 8_{\pm4},
1_{0} 4_{0}^{5} 7_{\pm1}^{2} 2_{\pm3}^{2} 3_{\pm3}^{3} 5_{\pm3} 6_{\pm4}^{3} 8_{\pm4},
1_{0} 4_{0}^{2} 7_{\pm1} 2_{\pm3} 3_{\pm3} 6_{\pm4}^{2} 8_{\pm4}, \\
& 1_{0} 4_{0}^{3} 7_{\pm1}^{2} 2_{\pm3} 3_{\pm3}^{2} 6_{\pm4}^{3} 8_{\pm4},
1_{0}^{2} 4_{0}^{6} 7_{\pm1}^{2} 2_{\pm3}^{3} 3_{\pm3}^{4} 5_{\pm3} 6_{\pm4}^{3} 8_{\pm4},
1_{0} 4_{0}^{3} 7_{\pm1} 2_{\pm3} 3_{\pm3}^{2} 5_{\pm3} 6_{\pm4} 8_{\pm4},
1_{0}^{2} 4_{0}^{4} 7_{\pm1}^{2} 2_{\pm3}^{2} 3_{\pm3}^{3} 6_{\pm4}^{3} 8_{\pm4}, \\
& 1_{0} 4_{0}^{3} 7_{\pm1}^{2} 2_{\pm3}^{2} 3_{\pm3}^{2} 6_{\pm4}^{2} 8_{\pm4},
1_{0}^{2} 4_{0}^{5} 7_{\pm1}^{2} 2_{\pm3}^{2} 3_{\pm3}^{3} 5_{\pm3} 6_{\pm4}^{3} 8_{\pm4},
1_{0} 4_{0}^{3} 7_{\pm1} 2_{\pm3} 3_{\pm3}^{2} 6_{\pm4}^{2} 8_{\pm4},
1_{0} 4_{0}^{4} 7_{\pm1}^{2} 2_{\pm3}^{2} 3_{\pm3}^{2} 5_{\pm3} 6_{\pm4}^{2} 8_{\pm4}, \\
& 1_{0} 4_{0}^{4} 7_{\pm1}^{2} 2_{\pm3}^{2} 3_{\pm3}^{2} 6_{\pm4}^{3} 8_{\pm4},
1_{0} 4_{0}^{2} 7_{\pm1}^{2} 2_{\pm3} 3_{\pm3} 6_{\pm4}^{2} 8_{\pm4},
1_{0} 4_{0}^{3} 7_{\pm1}^{2} 2_{\pm3} 3_{\pm3}^{2} 6_{\pm4}^{2} 8_{\pm4},
1_{0} 4_{0}^{2} 7_{\pm1} 2_{\pm3} 3_{\pm3} 6_{\pm4} 8_{\pm4}.
\end{align*}
\end{gather}

\begin{gather}
\begin{align*}
& (34)  1_{0} 4_{0}^{2} 2_{\pm3} 3_{\pm3}^{2} 5_{\pm3} 8_{\pm6},
1_{0} 4_{0}^{3} 2_{\pm3}^{2} 3_{\pm3}^{2} 5_{\pm3} 6_{\pm4} 8_{\pm6},
1_{0} 4_{0}^{4} 2_{\pm3}^{2} 3_{\pm3}^{3} 5_{\pm3} 6_{\pm4} 7_{\pm5} 8_{\pm6},
1_{0} 4_{0}^{3} 2_{\pm3}^{2} 3_{\pm3}^{2} 5_{\pm3} 7_{\pm5} 8_{\pm6}, \\
& 1_{0} 4_{0}^{2} 2_{\pm3} 3_{\pm3}^{2} 6_{\pm4} 8_{\pm6},
1_{0}^{2} 4_{0}^{5} 2_{\pm3}^{3} 3_{\pm3}^{4} 5_{\pm3} 6_{\pm4} 7_{\pm5} 8_{\pm6},
1_{0} 4_{0}^{4} 2_{\pm3}^{2} 3_{\pm3}^{3} 5_{\pm3} 6_{\pm4} 8_{\pm6},
1_{0}^{2} 4_{0}^{4} 2_{\pm3}^{2} 3_{\pm3}^{3} 5_{\pm3} 6_{\pm4} 7_{\pm5} 8_{\pm6},\\
& 1_{0} 4_{0}^{3} 2_{\pm3}^{2} 3_{\pm3}^{2} 6_{\pm4} 7_{\pm5} 8_{\pm6},
1_{0} 4_{0}^{2} 2_{\pm3} 3_{\pm3}^{2} 7_{\pm5} 8_{\pm6},
1_{0} 4_{0} 2_{\pm3} 3_{\pm3} 8_{\pm6},
1_{0}^{2} 4_{0}^{6} 2_{\pm3}^{3} 3_{\pm3}^{4} 5_{\pm3}^{2} 6_{\pm4} 7_{\pm5} 8_{\pm6}, \\
& 1_{0} 4_{0}^{3} 2_{\pm3} 3_{\pm3}^{2} 5_{\pm3} 6_{\pm4} 8_{\pm6},
1_{0} 4_{0}^{5} 2_{\pm3}^{3} 3_{\pm3}^{3} 5_{\pm3} 6_{\pm4} 7_{\pm5} 8_{\pm6},
1_{0} 4_{0}^{4} 2_{\pm3}^{2} 3_{\pm3}^{3} 5_{\pm3} 7_{\pm5} 8_{\pm6},
1_{0} 4_{0}^{3} 2_{\pm3}^{2} 3_{\pm3}^{2} 5_{\pm3} 8_{\pm6}, \\
& 1_{0}^{2} 4_{0}^{6} 2_{\pm3}^{3} 3_{\pm3}^{4} 5_{\pm3} 6_{\pm4}^{2} 7_{\pm5} 8_{\pm6},
1_{0} 4_{0}^{4} 2_{\pm3}^{2} 3_{\pm3}^{2} 5_{\pm3} 6_{\pm4} 7_{\pm5} 8_{\pm6},
1_{0}^{2} 4_{0}^{5} 2_{\pm3}^{2} 3_{\pm3}^{4} 5_{\pm3} 6_{\pm4} 7_{\pm5} 8_{\pm6},
1_{0}^{2} 4_{0}^{4} 2_{\pm3}^{2} 3_{\pm3}^{3} 5_{\pm3} 6_{\pm4} 8_{\pm6}, \\
 & 1_{0}^{2} 4_{0}^{6} 2_{\pm3}^{3} 3_{\pm3}^{4} 5_{\pm3} 6_{\pm4} 7_{\pm5}^{2} 8_{\pm6},
1_{0} 4_{0}^{4} 2_{\pm3}^{2} 3_{\pm3}^{3} 6_{\pm4} 7_{\pm5} 8_{\pm6},
1_{0} 4_{0}^{3} 2_{\pm3} 3_{\pm3}^{2} 5_{\pm3} 7_{\pm5} 8_{\pm6},
1_{0}^{2} 4_{0}^{5} 2_{\pm3}^{3} 3_{\pm3}^{3} 5_{\pm3} 6_{\pm4} 7_{\pm5} 8_{\pm6}, \\
& 1_{0}^{2} 4_{0}^{6} 2_{\pm3}^{3} 3_{\pm3}^{4} 5_{\pm3} 6_{\pm4} 7_{\pm5} 8_{\pm6}^{2},
1_{0}^{2} 4_{0}^{4} 2_{\pm3}^{2} 3_{\pm3}^{3} 5_{\pm3} 7_{\pm5} 8_{\pm6},
1_{0} 4_{0}^{3} 2_{\pm3}^{2} 3_{\pm3}^{2} 6_{\pm4} 8_{\pm6},
1_{0} 4_{0}^{5} 2_{\pm3}^{2} 3_{\pm3}^{3} 5_{\pm3} 6_{\pm4} 7_{\pm5} 8_{\pm6}, \\
& 1_{0} 4_{0}^{3} 2_{\pm3} 3_{\pm3}^{2} 6_{\pm4} 7_{\pm5} 8_{\pm6},
1_{0} 4_{0}^{2} 2_{\pm3} 3_{\pm3} 5_{\pm3} 8_{\pm6},
1_{0}^{2} 4_{0}^{6} 2_{\pm3}^{3} 3_{\pm3}^{4} 5_{\pm3} 6_{\pm4} 7_{\pm5} 8_{\pm6},
1_{0} 4_{0}^{4} 2_{\pm3}^{2} 3_{\pm3}^{2} 5_{\pm3} 6_{\pm4} 8_{\pm6}, \\
& 1_{0}^{2} 4_{0}^{4} 2_{\pm3}^{2} 3_{\pm3}^{3} 6_{\pm4} 7_{\pm5} 8_{\pm6},
1_{0} 4_{0}^{3} 2_{\pm3}^{2} 3_{\pm3}^{2} 7_{\pm5} 8_{\pm6},
1_{0} 4_{0}^{2} 2_{\pm3} 3_{\pm3}^{2} 8_{\pm6},
1_{0}^{2} 4_{0}^{5} 2_{\pm3}^{2} 3_{\pm3}^{3} 5_{\pm3} 6_{\pm4} 7_{\pm5} 8_{\pm6}, \\
& 1_{0} 4_{0}^{2} 2_{\pm3} 3_{\pm3} 6_{\pm4} 8_{\pm6},
1_{0} 4_{0}^{4} 2_{\pm3}^{2} 3_{\pm3}^{2} 5_{\pm3} 7_{\pm5} 8_{\pm6},
1_{0} 4_{0}^{3} 2_{\pm3} 3_{\pm3}^{2} 5_{\pm3} 8_{\pm6},
1_{0} 4_{0}^{4} 2_{\pm3}^{2} 3_{\pm3}^{2} 6_{\pm4} 7_{\pm5} 8_{\pm6}, \\
& 1_{0} 4_{0}^{2} 2_{\pm3} 3_{\pm3} 7_{\pm5} 8_{\pm6},
1_{0} 4_{0}^{3} 2_{\pm3} 3_{\pm3}^{2} 6_{\pm4} 8_{\pm6},
1_{0} 4_{0}^{3} 2_{\pm3} 3_{\pm3}^{2} 7_{\pm5} 8_{\pm6},
1_{0} 4_{0}^{2} 2_{\pm3} 3_{\pm3} 8_{\pm6}. \\
& \quad \\
& (35)  1_{0} 4_{0}^{2} 8_{\pm2} 2_{\pm3} 3_{\pm3}^{2} 5_{\pm3} 7_{\pm5},
1_{0} 4_{0}^{3} 8_{\pm2} 2_{\pm3}^{2} 3_{\pm3}^{2} 5_{\pm3} 6_{\pm4} 7_{\pm5},
1_{0} 4_{0}^{4} 8_{\pm2} 2_{\pm3}^{2} 3_{\pm3}^{3} 5_{\pm3} 6_{\pm4} 7_{\pm5}^{2},
1_{0} 4_{0}^{3} 8_{\pm2} 2_{\pm3}^{2} 3_{\pm3}^{2} 5_{\pm3} 7_{\pm5}^{2}, \\
& 1_{0} 4_{0}^{2} 8_{\pm2} 2_{\pm3} 3_{\pm3}^{2} 6_{\pm4} 7_{\pm5},
1_{0}^{2} 4_{0}^{5}  8_{\pm2} 2_{\pm3}^{3} 3_{\pm3}^{4} 5_{\pm3} 6_{\pm4} 7_{\pm5}^{2},
1_{0} 4_{0}^{4} 8_{\pm2} 2_{\pm3}^{2} 3_{\pm3}^{3} 5_{\pm3} 6_{\pm4} 7_{\pm5},
1_{0}^{2} 4_{0}^{4} 8_{\pm2} 2_{\pm3}^{2} 3_{\pm3}^{3} 5_{\pm3} 6_{\pm4} 7_{\pm5}^{2}, \\
& 1_{0} 4_{0}^{3} 8_{\pm2} 2_{\pm3}^{2} 3_{\pm3}^{2} 6_{\pm4} 7_{\pm5}^{2},
1_{0} 4_{0}^{2} 8_{\pm2} 2_{\pm3} 3_{\pm3}^{2} 7_{\pm5}^{2},
1_{0}^{2} 4_{0}^{6}  8_{\pm2} 2_{\pm3}^{3} 3_{\pm3}^{4} 5_{\pm3}^{2} 6_{\pm4} 7_{\pm5}^{2},
1_{0} 4_{0}^{3} 8_{\pm2} 2_{\pm3} 3_{\pm3}^{2} 5_{\pm3} 6_{\pm4} 7_{\pm5}, \\
& 1_{0} 4_{0}^{5}  8_{\pm2} 2_{\pm3}^{3} 3_{\pm3}^{3} 5_{\pm3} 6_{\pm4} 7_{\pm5}^{2},
1_{0} 4_{0}^{4} 8_{\pm2} 2_{\pm3}^{2} 3_{\pm3}^{3} 5_{\pm3} 7_{\pm5}^{2},
1_{0} 4_{0} 8_{\pm2} 2_{\pm3} 3_{\pm3} 7_{\pm5},
1_{0} 4_{0}^{3} 8_{\pm2} 2_{\pm3}^{2} 3_{\pm3}^{2} 5_{\pm3} 7_{\pm5}, \\
& 1_{0} 4_{0}^{2} 8_{\pm2} 2_{\pm3} 3_{\pm3} 7_{\pm5},
1_{0} 4_{0}^{4} 8_{\pm2} 2_{\pm3}^{2} 3_{\pm3}^{2} 5_{\pm3} 6_{\pm4} 7_{\pm5}^{2},
1_{0}^{2} 4_{0}^{5}  8_{\pm2} 2_{\pm3}^{2} 3_{\pm3}^{4} 5_{\pm3} 6_{\pm4} 7_{\pm5}^{2},
1_{0}^{2} 4_{0}^{4} 8_{\pm2} 2_{\pm3}^{2} 3_{\pm3}^{3} 5_{\pm3} 6_{\pm4} 7_{\pm5}, \\
& 1_{0}^{2} 4_{0}^{6}  8_{\pm2} 2_{\pm3}^{3} 3_{\pm3}^{4} 5_{\pm3} 6_{\pm4} 7_{\pm5}^{3},
1_{0} 4_{0}^{4} 8_{\pm2} 2_{\pm3}^{2} 3_{\pm3}^{3} 6_{\pm4} 7_{\pm5}^{2},
1_{0} 4_{0}^{3} 8_{\pm2} 2_{\pm3} 3_{\pm3}^{2} 5_{\pm3} 7_{\pm5}^{2},
1_{0}^{2} 4_{0}^{5}  8_{\pm2} 2_{\pm3}^{3} 3_{\pm3}^{3} 5_{\pm3} 6_{\pm4} 7_{\pm5}^{2}, \\
& 1_{0}^{2} 4_{0}^{6}  8_{\pm2}^{2} 2_{\pm3}^{3} 3_{\pm3}^{4} 5_{\pm3} 6_{\pm4} 7_{\pm5}^{3},
1_{0}^{2} 4_{0}^{4} 8_{\pm2} 2_{\pm3}^{2} 3_{\pm3}^{3} 5_{\pm3} 7_{\pm5}^{2},
1_{0} 4_{0}^{3} 8_{\pm2} 2_{\pm3}^{2} 3_{\pm3}^{2} 6_{\pm4} 7_{\pm5},
1_{0} 4_{0}^{5}  8_{\pm2} 2_{\pm3}^{2} 3_{\pm3}^{3} 5_{\pm3} 6_{\pm4} 7_{\pm5}^{2}, \\
& 1_{0} 4_{0}^{3} 8_{\pm2} 2_{\pm3} 3_{\pm3}^{2} 6_{\pm4} 7_{\pm5}^{2},
1_{0} 4_{0}^{2} 8_{\pm2} 2_{\pm3} 3_{\pm3} 5_{\pm3} 7_{\pm5},
1_{0}^{2} 4_{0}^{6}  8_{\pm2} 2_{\pm3}^{3} 3_{\pm3}^{4} 5_{\pm3} 6_{\pm4} 7_{\pm5}^{2},
1_{0} 4_{0}^{4} 8_{\pm2} 2_{\pm3}^{2} 3_{\pm3}^{2} 5_{\pm3} 6_{\pm4} 7_{\pm5}, \\
& 1_{0}^{2} 4_{0}^{4} 8_{\pm2} 2_{\pm3}^{2} 3_{\pm3}^{3} 6_{\pm4} 7_{\pm5}^{2},
1_{0} 4_{0}^{3} 8_{\pm2} 2_{\pm3}^{2} 3_{\pm3}^{2} 7_{\pm5}^{2},
1_{0} 4_{0}^{2} 8_{\pm2} 2_{\pm3} 3_{\pm3}^{2} 7_{\pm5},
1_{0}^{2} 4_{0}^{5}  8_{\pm2} 2_{\pm3}^{2} 3_{\pm3}^{3} 5_{\pm3} 6_{\pm4} 7_{\pm5}^{2}, \\
& 1_{0} 4_{0}^{2} 8_{\pm2} 2_{\pm3} 3_{\pm3} 6_{\pm4} 7_{\pm5},
1_{0} 4_{0}^{4} 8_{\pm2} 2_{\pm3}^{2} 3_{\pm3}^{2} 5_{\pm3} 7_{\pm5}^{2},
1_{0} 4_{0}^{3} 8_{\pm2} 2_{\pm3} 3_{\pm3}^{2} 5_{\pm3} 7_{\pm5},
1_{0} 4_{0}^{4} 8_{\pm2} 2_{\pm3}^{2} 3_{\pm3}^{2} 6_{\pm4} 7_{\pm5}^{2}, \\
& 1_{0} 4_{0}^{2} 8_{\pm2} 2_{\pm3} 3_{\pm3} 7_{\pm5}^{2},
1_{0} 4_{0}^{3} 8_{\pm2} 2_{\pm3} 3_{\pm3}^{2} 6_{\pm4} 7_{\pm5},
1_{0} 4_{0}^{3} 8_{\pm2} 2_{\pm3} 3_{\pm3}^{2} 7_{\pm5}^{2},
1_{0}^{2} 4_{0}^{6}  8_{\pm2} 2_{\pm3}^{3} 3_{\pm3}^{4} 5_{\pm3} 6_{\pm4}^{2} 7_{\pm5}^{2}. \\
& \quad \\
& (36)  1_{0} 4_{0}^{2} 8_{0} 2_{\pm3} 3_{\pm3}^{2} 5_{\pm3} 6_{\pm4},
1_{0} 4_{0}^{3}  8_{0} 2_{\pm3}^{2} 3_{\pm3}^{2} 5_{\pm3} 6_{\pm4}^{2},
1_{0} 4_{0}^{2} 8_{0} 2_{\pm3} 3_{\pm3}^{2} 6_{\pm4}^{2},
1_{0} 4_{0} 8_{0} 2_{\pm3} 3_{\pm3} 6_{\pm4}, \\
& 1_{0} 4_{0}^{4}  7_{\pm1} 8_{0} 2_{\pm3}^{2} 3_{\pm3}^{3}  5_{\pm3} 6_{\pm4}^{3},
1_{0} 4_{0}^{3}  7_{\pm1} 8_{0} 2_{\pm3}^{2} 3_{\pm3}^{2} 5_{\pm3} 6_{\pm4}^{2},
1_{0} 4_{0}^{4}  8_{0} 2_{\pm3}^{2} 3_{\pm3}^{3}  5_{\pm3} 6_{\pm4}^{2},
1_{0} 4_{0}^{3}  8_{0} 2_{\pm3} 3_{\pm3}^{2} 5_{\pm3} 6_{\pm4}^{2}, \\
& 1_{0}^{2} 4_{0}^{5}  7_{\pm1} 8_{0} 2_{\pm3}^{3}  3_{\pm3}^{4}  5_{\pm3} 6_{\pm4}^{3},
1_{0} 4_{0}^{3}  8_{0} 2_{\pm3}^{2} 3_{\pm3}^{2} 5_{\pm3} 6_{\pm4},
1_{0}^{2} 4_{0}^{4}  7_{\pm1} 8_{0} 2_{\pm3}^{2} 3_{\pm3}^{3}  5_{\pm3} 6_{\pm4}^{3},
1_{0} 4_{0}^{3}  7_{\pm1} 8_{0} 2_{\pm3}^{2} 3_{\pm3}^{2} 6_{\pm4}^{3}, \\
& 1_{0}^{2} 4_{0}^{6}  7_{\pm1} 8_{0} 2_{\pm3}^{3}  3_{\pm3}^{4}  5_{\pm3}^{2} 6_{\pm4}^{3},
1_{0}^{2} 4_{0}^{4}  8_{0} 2_{\pm3}^{2} 3_{\pm3}^{3}  5_{\pm3} 6_{\pm4}^{2},
1_{0} 4_{0}^{5}  7_{\pm1} 8_{0} 2_{\pm3}^{3}  3_{\pm3}^{3}  5_{\pm3} 6_{\pm4}^{3},
1_{0} 4_{0}^{2} 7_{\pm1} 8_{0} 2_{\pm3} 3_{\pm3}^{2} 6_{\pm4}^{2}, \\
& 1_{0} 4_{0}^{4}  7_{\pm1} 8_{0} 2_{\pm3}^{2} 3_{\pm3}^{3}  5_{\pm3} 6_{\pm4}^{2},
1_{0} 4_{0}^{2} 8_{0} 2_{\pm3} 3_{\pm3} 5_{\pm3} 6_{\pm4},
1_{0}^{2} 4_{0}^{6}  7_{\pm1} 8_{0} 2_{\pm3}^{3}  3_{\pm3}^{4}  5_{\pm3} 6_{\pm4}^{4},
1_{0} 4_{0}^{4}  7_{\pm1} 8_{0} 2_{\pm3}^{2} 3_{\pm3}^{2} 5_{\pm3} 6_{\pm4}^{3}, \\
& 1_{0} 4_{0}^{3}  8_{0} 2_{\pm3}^{2} 3_{\pm3}^{2} 6_{\pm4}^{2},
1_{0}^{2} 4_{0}^{5}  7_{\pm1} 8_{0} 2_{\pm3}^{2} 3_{\pm3}^{4}  5_{\pm3} 6_{\pm4}^{3},
1_{0}^{2} 4_{0}^{6}  7_{\pm1} 8_{0}^{2} 2_{\pm3}^{3}  3_{\pm3}^{4}  5_{\pm3} 6_{\pm4}^{4},
1_{0} 4_{0}^{4}  7_{\pm1} 8_{0} 2_{\pm3}^{2} 3_{\pm3}^{3}  6_{\pm4}^{3}, \\
& 1_{0} 4_{0}^{3}  7_{\pm1} 8_{0} 2_{\pm3} 3_{\pm3}^{2} 5_{\pm3} 6_{\pm4}^{2},
1_{0}^{2} 4_{0}^{5}  7_{\pm1} 8_{0} 2_{\pm3}^{3}  3_{\pm3}^{3}  5_{\pm3} 6_{\pm4}^{3},
1_{0} 4_{0}^{4}  8_{0} 2_{\pm3}^{2} 3_{\pm3}^{2} 5_{\pm3} 6_{\pm4}^{2},
1_{0} 4_{0}^{2} 8_{0} 2_{\pm3} 3_{\pm3}^{2} 6_{\pm4}, \\
& 1_{0}^{2} 4_{0}^{6}  7_{\pm1}^{2} 8_{0} 2_{\pm3}^{3}  3_{\pm3}^{4}  5_{\pm3} 6_{\pm4}^{4},
1_{0}^{2} 4_{0}^{4}  7_{\pm1} 8_{0} 2_{\pm3}^{2} 3_{\pm3}^{3}  5_{\pm3} 6_{\pm4}^{2},
1_{0} 4_{0}^{5}  7_{\pm1} 8_{0} 2_{\pm3}^{2} 3_{\pm3}^{3}  5_{\pm3} 6_{\pm4}^{3},
1_{0} 4_{0}^{2} 8_{0} 2_{\pm3} 3_{\pm3} 6_{\pm4}^{2}, \\
& 1_{0} 4_{0}^{3}  7_{\pm1} 8_{0} 2_{\pm3} 3_{\pm3}^{2} 6_{\pm4}^{3},
1_{0}^{2} 4_{0}^{6}  7_{\pm1} 8_{0} 2_{\pm3}^{3}  3_{\pm3}^{4}  5_{\pm3} 6_{\pm4}^{3},
1_{0} 4_{0}^{3}  8_{0} 2_{\pm3} 3_{\pm3}^{2} 5_{\pm3} 6_{\pm4},
1_{0}^{2} 4_{0}^{4}  7_{\pm1} 8_{0} 2_{\pm3}^{2} 3_{\pm3}^{3}  6_{\pm4}^{3}, \\
& 1_{0} 4_{0}^{3}  7_{\pm1} 8_{0} 2_{\pm3}^{2} 3_{\pm3}^{2} 6_{\pm4}^{2},
1_{0}^{2} 4_{0}^{5}  7_{\pm1} 8_{0} 2_{\pm3}^{2} 3_{\pm3}^{3}  5_{\pm3} 6_{\pm4}^{3},
1_{0} 4_{0}^{3}  8_{0} 2_{\pm3} 3_{\pm3}^{2} 6_{\pm4}^{2},
1_{0} 4_{0}^{4}  7_{\pm1} 8_{0} 2_{\pm3}^{2} 3_{\pm3}^{2} 5_{\pm3} 6_{\pm4}^{2}, \\
& 1_{0} 4_{0}^{4}  7_{\pm1} 8_{0} 2_{\pm3}^{2} 3_{\pm3}^{2} 6_{\pm4}^{3},
1_{0} 4_{0}^{2} 7_{\pm1} 8_{0} 2_{\pm3} 3_{\pm3} 6_{\pm4}^{2},
1_{0} 4_{0}^{3}  7_{\pm1} 8_{0} 2_{\pm3} 3_{\pm3}^{2} 6_{\pm4}^{2},
1_{0} 4_{0}^{2} 8_{0} 2_{\pm3} 3_{\pm3} 6_{\pm4}. \\
\end{align*}
\end{gather}
\begin{gather}
\begin{align*}
& (37)  1_{0} 4_{0}^{2} 6_{0} 2_{\pm3} 3_{\pm3}^{2} 5_{\pm3}^{2} 8_{\pm4},
1_{0} 4_{0}^{3}  6_{0}^{2} 2_{\pm3}^{2} 3_{\pm3}^{2} 5_{\pm3}^{3}  7_{\pm3} 8_{\pm4},
1_{0} 4_{0}^{2} 6_{0}^{2} 2_{\pm3} 3_{\pm3}^{2} 5_{\pm3}^{2} 7_{\pm3} 8_{\pm4},
1_{0} 4_{0} 6_{0} 2_{\pm3} 3_{\pm3} 5_{\pm3} 8_{\pm4}, \\
& 1_{0} 4_{0}^{4}  6_{0}^{3}  2_{\pm3}^{2} 3_{\pm3}^{3}  5_{\pm3}^{4}  7_{\pm3} 8_{\pm4},
1_{0} 4_{0}^{3}  6_{0}^{2} 2_{\pm3}^{2} 3_{\pm3}^{2} 5_{\pm3}^{3}  8_{\pm4},
1_{0} 4_{0}^{4}  6_{0}^{2} 2_{\pm3}^{2} 3_{\pm3}^{3}  5_{\pm3}^{3}  7_{\pm3} 8_{\pm4},
1_{0}^{2} 4_{0}^{5}  6_{0}^{3}  2_{\pm3}^{3}  3_{\pm3}^{4}  5_{\pm3}^{4}  7_{\pm3} 8_{\pm4}, \\
& 1_{0} 4_{0}^{3}  6_{0} 2_{\pm3}^{2} 3_{\pm3}^{2} 5_{\pm3}^{2} 8_{\pm4},
1_{0} 4_{0}^{3}  6_{0}^{2} 2_{\pm3} 3_{\pm3}^{2} 5_{\pm3}^{3}  7_{\pm3} 8_{\pm4},
1_{0}^{2} 4_{0}^{4}  6_{0}^{3}  2_{\pm3}^{2} 3_{\pm3}^{3}  5_{\pm3}^{4}  7_{\pm3} 8_{\pm4},
1_{0}^{2} 4_{0}^{6}  6_{0}^{3}  2_{\pm3}^{3}  3_{\pm3}^{4}  5_{\pm3}^{5}  7_{\pm3} 8_{\pm4}, \\
& 1_{0}^{2} 4_{0}^{4}  6_{0}^{2} 2_{\pm3}^{2} 3_{\pm3}^{3}  5_{\pm3}^{3}  7_{\pm3} 8_{\pm4},
1_{0} 4_{0}^{3}  6_{0}^{3}  2_{\pm3}^{2} 3_{\pm3}^{2} 5_{\pm3}^{3}  7_{\pm3} 8_{\pm4},
1_{0} 4_{0}^{5}  6_{0}^{3}  2_{\pm3}^{3}  3_{\pm3}^{3}  5_{\pm3}^{4}  7_{\pm3} 8_{\pm4},
1_{0} 4_{0}^{2} 6_{0}^{2} 2_{\pm3} 3_{\pm3}^{2} 5_{\pm3}^{2} 8_{\pm4}, \\
& 1_{0} 4_{0}^{2} 6_{0} 2_{\pm3} 3_{\pm3} 5_{\pm3}^{2} 8_{\pm4},
1_{0}^{2} 4_{0}^{6}  6_{0}^{4}  2_{\pm3}^{3}  3_{\pm3}^{4}  5_{\pm3}^{5}  7_{\pm3}^{2} 8_{\pm4},
1_{0} 4_{0}^{4}  6_{0}^{3}  2_{\pm3}^{2} 3_{\pm3}^{2} 5_{\pm3}^{4}  7_{\pm3} 8_{\pm4},
1_{0} 4_{0}^{3}  6_{0}^{2} 2_{\pm3}^{2} 3_{\pm3}^{2} 5_{\pm3}^{2} 7_{\pm3} 8_{\pm4}, \\
& 1_{0} 4_{0}^{4}  6_{0}^{2} 2_{\pm3}^{2} 3_{\pm3}^{3}  5_{\pm3}^{3}  8_{\pm4},
1_{0}^{2} 4_{0}^{5}  6_{0}^{3}  2_{\pm3}^{2} 3_{\pm3}^{4}  5_{\pm3}^{4}  7_{\pm3} 8_{\pm4},
1_{0}^{2} 4_{0}^{6}  6_{0}^{4}  2_{\pm3}^{3}  3_{\pm3}^{4}  5_{\pm3}^{5}  7_{\pm3} 8_{\pm4}^{2},
1_{0} 4_{0}^{4}  6_{0}^{3}  2_{\pm3}^{2} 3_{\pm3}^{3}  5_{\pm3}^{3}  7_{\pm3} 8_{\pm4}, \\
& 1_{0} 4_{0}^{3}  6_{0}^{2} 2_{\pm3} 3_{\pm3}^{2} 5_{\pm3}^{3}  8_{\pm4},
1_{0}^{2} 4_{0}^{5}  6_{0}^{3}  2_{\pm3}^{3}  3_{\pm3}^{3}  5_{\pm3}^{4}  7_{\pm3} 8_{\pm4},
1_{0} 4_{0}^{2} 6_{0} 2_{\pm3} 3_{\pm3}^{2} 5_{\pm3} 8_{\pm4},
1_{0}^{2} 4_{0}^{6}  6_{0}^{4}  2_{\pm3}^{3}  3_{\pm3}^{4}  5_{\pm3}^{5}  7_{\pm3} 8_{\pm4}, \\
& 1_{0}^{2} 4_{0}^{4}  6_{0}^{2} 2_{\pm3}^{2} 3_{\pm3}^{3}  5_{\pm3}^{3}  8_{\pm4},
1_{0} 4_{0}^{4}  6_{0}^{2} 2_{\pm3}^{2} 3_{\pm3}^{2} 5_{\pm3}^{3}  7_{\pm3} 8_{\pm4},
1_{0} 4_{0}^{5}  6_{0}^{3}  2_{\pm3}^{2} 3_{\pm3}^{3}  5_{\pm3}^{4}  7_{\pm3} 8_{\pm4},
1_{0} 4_{0}^{2} 6_{0}^{2} 2_{\pm3} 3_{\pm3} 5_{\pm3}^{2} 7_{\pm3} 8_{\pm4}, \\
& 1_{0}^{2} 4_{0}^{6}  6_{0}^{3}  2_{\pm3}^{3}  3_{\pm3}^{4}  5_{\pm3}^{4}  7_{\pm3} 8_{\pm4},
1_{0} 4_{0}^{3}  6_{0} 2_{\pm3} 3_{\pm3}^{2} 5_{\pm3}^{2} 8_{\pm4},
1_{0} 4_{0}^{3}  6_{0}^{3}  2_{\pm3} 3_{\pm3}^{2} 5_{\pm3}^{3}  7_{\pm3} 8_{\pm4},
1_{0}^{2} 4_{0}^{4}  6_{0}^{3}  2_{\pm3}^{2} 3_{\pm3}^{3}  5_{\pm3}^{3}  7_{\pm3} 8_{\pm4}, \\
& 1_{0}^{2} 4_{0}^{5}  6_{0}^{3}  2_{\pm3}^{2} 3_{\pm3}^{3}  5_{\pm3}^{4}  7_{\pm3} 8_{\pm4},
1_{0} 4_{0}^{3}  6_{0}^{2} 2_{\pm3} 3_{\pm3}^{2} 5_{\pm3}^{2} 7_{\pm3} 8_{\pm4},
1_{0} 4_{0}^{3}  6_{0}^{2} 2_{\pm3}^{2} 3_{\pm3}^{2} 5_{\pm3}^{2} 8_{\pm4},
1_{0} 4_{0}^{4}  6_{0}^{2} 2_{\pm3}^{2} 3_{\pm3}^{2} 5_{\pm3}^{3}  8_{\pm4}, \\
& 1_{0} 4_{0}^{4}  6_{0}^{3}  2_{\pm3}^{2} 3_{\pm3}^{2} 5_{\pm3}^{3}  7_{\pm3} 8_{\pm4},
1_{0} 4_{0}^{2} 6_{0}^{2} 2_{\pm3} 3_{\pm3} 5_{\pm3}^{2} 8_{\pm4},
1_{0} 4_{0}^{3}  6_{0}^{2} 2_{\pm3} 3_{\pm3}^{2} 5_{\pm3}^{2} 8_{\pm4},
1_{0} 4_{0}^{2} 6_{0} 2_{\pm3} 3_{\pm3} 5_{\pm3} 8_{\pm4}. \\
& \quad \\
& (38)  1_{\pm1} 4_{\pm1}^{2} 7_{0} 2_{\pm4} 3_{\pm4}^{2} 5_{\pm4}^{2} 8_{\pm3},
1_{\pm1} 4_{\pm1} 7_{0} 2_{\pm4} 3_{\pm4} 5_{\pm4} 8_{\pm3},
1_{\pm1} 4_{\pm1}^{3} 7_{0}^{2} 2_{\pm4}^{2} 3_{\pm4}^{2} 5_{\pm4}^{3} 8_{\pm3},
1_{\pm1} 4_{\pm1}^{2} 7_{0}^{2} 2_{\pm4} 3_{\pm4}^{2} 5_{\pm4}^{2} 8_{\pm3}, \\
& 1_{\pm1} 4_{\pm1}^{3} 6_{\pm1} 7_{0} 2_{\pm4}^{2} 3_{\pm4}^{2} 5_{\pm4}^{3} 8_{\pm3},
1_{\pm1} 4_{\pm1}^{4} 6_{\pm1} 7_{0}^{2} 2_{\pm4}^{2} 3_{\pm4}^{3} 5_{\pm4}^{4} 8_{\pm3},
1_{\pm1} 4_{\pm1}^{2} 6_{\pm1} 7_{0} 2_{\pm4} 3_{\pm4}^{2} 5_{\pm4}^{2} 8_{\pm3},
1_{\pm1} 4_{\pm1}^{3} 7_{0} 2_{\pm4}^{2} 3_{\pm4}^{2} 5_{\pm4}^{2} 8_{\pm3}, \\
& 1_{\pm1} 4_{\pm1}^{4} 7_{0}^{2} 2_{\pm4}^{2} 3_{\pm4}^{3} 5_{\pm4}^{3} 8_{\pm3},
1_{\pm1}^{2} 4_{\pm1}^{5} 6_{\pm1} 7_{0}^{2} 2_{\pm4}^{3} 3_{\pm4}^{4} 5_{\pm4}^{4} 8_{\pm3},
1_{\pm1} 4_{\pm1}^{4} 6_{\pm1} 7_{0} 2_{\pm4}^{2} 3_{\pm4}^{3} 5_{\pm4}^{3} 8_{\pm3},
1_{\pm1} 4_{\pm1}^{2} 7_{0} 2_{\pm4} 3_{\pm4} 5_{\pm4}^{2} 8_{\pm3}, \\
& 1_{\pm1} 4_{\pm1}^{3} 7_{0}^{2} 2_{\pm4} 3_{\pm4}^{2} 5_{\pm4}^{3} 8_{\pm3},
1_{\pm1}^{2} 4_{\pm1}^{4} 6_{\pm1} 7_{0}^{2} 2_{\pm4}^{2} 3_{\pm4}^{3} 5_{\pm4}^{4} 8_{\pm3},
1_{\pm1} 4_{\pm1}^{2} 7_{0} 2_{\pm4} 3_{\pm4}^{2} 5_{\pm4} 8_{\pm3},
1_{\pm1}^{2} 4_{\pm1}^{6} 6_{\pm1} 7_{0}^{2} 2_{\pm4}^{3} 3_{\pm4}^{4} 5_{\pm4}^{5} 8_{\pm3}, \\
& 1_{\pm1}^{2} 4_{\pm1}^{4} 7_{0}^{2} 2_{\pm4}^{2} 3_{\pm4}^{3} 5_{\pm4}^{3} 8_{\pm3},
1_{\pm1} 4_{\pm1}^{3} 6_{\pm1} 7_{0} 2_{\pm4} 3_{\pm4}^{2} 5_{\pm4}^{3} 8_{\pm3},
1_{\pm1} 4_{\pm1}^{3} 6_{\pm1} 7_{0}^{2} 2_{\pm4}^{2} 3_{\pm4}^{2} 5_{\pm4}^{3} 8_{\pm3},
1_{\pm1} 4_{\pm1}^{5} 6_{\pm1} 7_{0}^{2} 2_{\pm4}^{3} 3_{\pm4}^{3} 5_{\pm4}^{4} 8_{\pm3}, \\
& 1_{\pm1}^{2} 4_{\pm1}^{6} 6_{\pm1} 7_{0}^{3} 2_{\pm4}^{3} 3_{\pm4}^{4} 5_{\pm4}^{5} 8_{\pm3}^{2},
1_{\pm1} 4_{\pm1}^{4} 6_{\pm1} 7_{0}^{2} 2_{\pm4}^{2} 3_{\pm4}^{2} 5_{\pm4}^{4} 8_{\pm3},
1_{\pm1} 4_{\pm1}^{3} 7_{0}^{2} 2_{\pm4}^{2} 3_{\pm4}^{2} 5_{\pm4}^{2} 8_{\pm3},
1_{\pm1}^{2} 4_{\pm1}^{5} 6_{\pm1} 7_{0}^{2} 2_{\pm4}^{2} 3_{\pm4}^{4} 5_{\pm4}^{4} 8_{\pm3}, \\
& 1_{\pm1}^{2} 4_{\pm1}^{6} 6_{\pm1} 7_{0}^{3} 2_{\pm4}^{3} 3_{\pm4}^{4} 5_{\pm4}^{5} 8_{\pm3},
1_{\pm1} 4_{\pm1}^{4} 6_{\pm1} 7_{0}^{2} 2_{\pm4}^{2} 3_{\pm4}^{3} 5_{\pm4}^{3} 8_{\pm3},
1_{\pm1}^{2} 4_{\pm1}^{4} 6_{\pm1} 7_{0} 2_{\pm4}^{2} 3_{\pm4}^{3} 5_{\pm4}^{3} 8_{\pm3}, 
1_{\pm1}^{2} 4_{\pm1}^{5} 6_{\pm1} 7_{0}^{2} 2_{\pm4}^{3} 3_{\pm4}^{3} 5_{\pm4}^{4} 8_{\pm3}, \\
& 1_{\pm1}^{2} 4_{\pm1}^{6} 6_{\pm1}^{2} 7_{0}^{2} 2_{\pm4}^{3} 3_{\pm4}^{4} 5_{\pm4}^{5} 8_{\pm3},
1_{\pm1} 4_{\pm1}^{3} 6_{\pm1} 7_{0} 2_{\pm4}^{2} 3_{\pm4}^{2} 5_{\pm4}^{2} 8_{\pm3},
1_{\pm1} 4_{\pm1}^{3} 7_{0} 2_{\pm4} 3_{\pm4}^{2} 5_{\pm4}^{2} 8_{\pm3},
1_{\pm1} 4_{\pm1}^{4} 7_{0}^{2} 2_{\pm4}^{2} 3_{\pm4}^{2} 5_{\pm4}^{3} 8_{\pm3}, \\
& 1_{\pm1} 4_{\pm1}^{5} 6_{\pm1} 7_{0}^{2} 2_{\pm4}^{2} 3_{\pm4}^{3} 5_{\pm4}^{4} 8_{\pm3},
1_{\pm1}^{2} 4_{\pm1}^{6} 6_{\pm1} 7_{0}^{2} 2_{\pm4}^{3} 3_{\pm4}^{4} 5_{\pm4}^{4} 8_{\pm3},
1_{\pm1} 4_{\pm1}^{4} 6_{\pm1} 7_{0} 2_{\pm4}^{2} 3_{\pm4}^{2} 5_{\pm4}^{3} 8_{\pm3},
1_{\pm1} 4_{\pm1}^{2} 7_{0}^{2} 2_{\pm4} 3_{\pm4} 5_{\pm4}^{2} 8_{\pm3}, \\
& 1_{\pm1} 4_{\pm1}^{3} 6_{\pm1} 7_{0}^{2} 2_{\pm4} 3_{\pm4}^{2} 5_{\pm4}^{3} 8_{\pm3},
1_{\pm1}^{2} 4_{\pm1}^{4} 6_{\pm1} 7_{0}^{2} 2_{\pm4}^{2} 3_{\pm4}^{3} 5_{\pm4}^{3} 8_{\pm3},
1_{\pm1}^{2} 4_{\pm1}^{5} 6_{\pm1} 7_{0}^{2} 2_{\pm4}^{2} 3_{\pm4}^{3} 5_{\pm4}^{4} 8_{\pm3},
1_{\pm1} 4_{\pm1}^{3} 7_{0}^{2} 2_{\pm4} 3_{\pm4}^{2} 5_{\pm4}^{2} 8_{\pm3}, \\
& 1_{\pm1} 4_{\pm1}^{2} 6_{\pm1} 7_{0} 2_{\pm4} 3_{\pm4} 5_{\pm4}^{2} 8_{\pm3},
1_{\pm1} 4_{\pm1}^{4} 6_{\pm1} 7_{0}^{2} 2_{\pm4}^{2} 3_{\pm4}^{2} 5_{\pm4}^{3} 8_{\pm3},
1_{\pm1} 4_{\pm1}^{3} 6_{\pm1} 7_{0} 2_{\pm4} 3_{\pm4}^{2} 5_{\pm4}^{2} 8_{\pm3},
1_{\pm1} 4_{\pm1}^{2} 7_{0} 2_{\pm4} 3_{\pm4} 5_{\pm4} 8_{\pm3}. \\
& \quad \\
& (39)  1_{\pm2} 4_{\pm2}^{2} 8_{0} 2_{\pm5} 3_{\pm5}^{2} 5_{\pm5}^{2},
1_{\pm2} 4_{\pm2} 8_{0} 2_{\pm5} 3_{\pm5} 5_{\pm5},
1_{\pm2} 4_{\pm2}^{3} 7_{\pm1} 8_{0} 2_{\pm5}^{2} 3_{\pm5}^{2} 5_{\pm5}^{3},
1_{\pm2} 4_{\pm2}^{2} 7_{\pm1} 8_{0} 2_{\pm5} 3_{\pm5}^{2} 5_{\pm5}^{2}, \\
& 1_{\pm2} 4_{\pm2}^{3} 6_{\pm2} 8_{0} 2_{\pm5}^{2} 3_{\pm5}^{2} 5_{\pm5}^{3},
1_{\pm2} 4_{\pm2}^{4}  6_{\pm2} 7_{\pm1} 8_{0} 2_{\pm5}^{2} 3_{\pm5}^{3} 5_{\pm5}^{4},
1_{\pm2} 4_{\pm2}^{2} 6_{\pm2} 8_{0} 2_{\pm5} 3_{\pm5}^{2} 5_{\pm5}^{2},
1_{\pm2} 4_{\pm2}^{3} 8_{0} 2_{\pm5}^{2} 3_{\pm5}^{2} 5_{\pm5}^{2}, \\
& 1_{\pm2} 4_{\pm2}^{4}  7_{\pm1} 8_{0} 2_{\pm5}^{2} 3_{\pm5}^{3} 5_{\pm5}^{3},
1_{\pm2}^{2} 4_{\pm2}^{5}   6_{\pm2} 7_{\pm1} 8_{0} 2_{\pm5}^{3} 3_{\pm5}^{4}  5_{\pm5}^{4},
1_{\pm2} 4_{\pm2}^{4}  6_{\pm2} 8_{0} 2_{\pm5}^{2} 3_{\pm5}^{3} 5_{\pm5}^{3},
1_{\pm2} 4_{\pm2}^{2} 8_{0} 2_{\pm5} 3_{\pm5} 5_{\pm5}^{2}, \\
& 1_{\pm2} 4_{\pm2}^{3} 7_{\pm1} 8_{0} 2_{\pm5} 3_{\pm5}^{2} 5_{\pm5}^{3},
1_{\pm2}^{2} 4_{\pm2}^{4}  6_{\pm2} 7_{\pm1} 8_{0} 2_{\pm5}^{2} 3_{\pm5}^{3} 5_{\pm5}^{4},
1_{\pm2} 4_{\pm2}^{2} 8_{0} 2_{\pm5} 3_{\pm5}^{2} 5_{\pm5},
1_{\pm2}^{2} 4_{\pm2}^{6}    6_{\pm2} 7_{\pm1} 8_{0} 2_{\pm5}^{3} 3_{\pm5}^{4}  5_{\pm5}^{5}, \\
& 1_{\pm2}^{2} 4_{\pm2}^{4}  7_{\pm1} 8_{0} 2_{\pm5}^{2} 3_{\pm5}^{3} 5_{\pm5}^{3},
1_{\pm2} 4_{\pm2}^{3} 6_{\pm2} 8_{0} 2_{\pm5} 3_{\pm5}^{2} 5_{\pm5}^{3},
1_{\pm2} 4_{\pm2}^{3} 6_{\pm2} 7_{\pm1} 8_{0} 2_{\pm5}^{2} 3_{\pm5}^{2} 5_{\pm5}^{3},
1_{\pm2} 4_{\pm2}^{5}   6_{\pm2} 7_{\pm1} 8_{0} 2_{\pm5}^{3} 3_{\pm5}^{3} 5_{\pm5}^{4}, \\
& 1_{\pm2}^{2} 4_{\pm2}^{6}    6_{\pm2} 7_{\pm1} 8_{0}^{2} 2_{\pm5}^{3} 3_{\pm5}^{4}  5_{\pm5}^{5},
1_{\pm2} 4_{\pm2}^{4}  6_{\pm2} 7_{\pm1} 8_{0} 2_{\pm5}^{2} 3_{\pm5}^{2} 5_{\pm5}^{4},
1_{\pm2} 4_{\pm2}^{3} 7_{\pm1} 8_{0} 2_{\pm5}^{2} 3_{\pm5}^{2} 5_{\pm5}^{2},
1_{\pm2}^{2} 4_{\pm2}^{5}   6_{\pm2} 7_{\pm1} 8_{0} 2_{\pm5}^{2} 3_{\pm5}^{4}  5_{\pm5}^{4}, \\
& 1_{\pm2}^{2} 4_{\pm2}^{6}    6_{\pm2} 7_{\pm1}^{2} 8_{0} 2_{\pm5}^{3} 3_{\pm5}^{4}  5_{\pm5}^{5},
1_{\pm2} 4_{\pm2}^{4}  6_{\pm2} 7_{\pm1} 8_{0} 2_{\pm5}^{2} 3_{\pm5}^{3} 5_{\pm5}^{3},
1_{\pm2}^{2} 4_{\pm2}^{4}  6_{\pm2} 8_{0} 2_{\pm5}^{2} 3_{\pm5}^{3} 5_{\pm5}^{3},
1_{\pm2}^{2} 4_{\pm2}^{5}   6_{\pm2} 7_{\pm1} 8_{0} 2_{\pm5}^{3} 3_{\pm5}^{3} 5_{\pm5}^{4}, \\
& 1_{\pm2}^{2} 4_{\pm2}^{6}    6_{\pm2}^{2} 7_{\pm1} 8_{0} 2_{\pm5}^{3} 3_{\pm5}^{4}  5_{\pm5}^{5},
1_{\pm2} 4_{\pm2}^{3} 6_{\pm2} 8_{0} 2_{\pm5}^{2} 3_{\pm5}^{2} 5_{\pm5}^{2},
1_{\pm2} 4_{\pm2}^{3} 8_{0} 2_{\pm5} 3_{\pm5}^{2} 5_{\pm5}^{2},
1_{\pm2} 4_{\pm2}^{4}  7_{\pm1} 8_{0} 2_{\pm5}^{2} 3_{\pm5}^{2} 5_{\pm5}^{3}, \\
& 1_{\pm2} 4_{\pm2}^{5}   6_{\pm2} 7_{\pm1} 8_{0} 2_{\pm5}^{2} 3_{\pm5}^{3} 5_{\pm5}^{4},
1_{\pm2}^{2} 4_{\pm2}^{6}    6_{\pm2} 7_{\pm1} 8_{0} 2_{\pm5}^{3} 3_{\pm5}^{4}  5_{\pm5}^{4},
1_{\pm2} 4_{\pm2}^{4}  6_{\pm2} 8_{0} 2_{\pm5}^{2} 3_{\pm5}^{2} 5_{\pm5}^{3},
1_{\pm2} 4_{\pm2}^{2} 7_{\pm1} 8_{0} 2_{\pm5} 3_{\pm5} 5_{\pm5}^{2}, \\
& 1_{\pm2} 4_{\pm2}^{3} 6_{\pm2} 7_{\pm1} 8_{0} 2_{\pm5} 3_{\pm5}^{2} 5_{\pm5}^{3},
1_{\pm2}^{2} 4_{\pm2}^{4}  6_{\pm2} 7_{\pm1} 8_{0} 2_{\pm5}^{2} 3_{\pm5}^{3} 5_{\pm5}^{3},
1_{\pm2}^{2} 4_{\pm2}^{5}   6_{\pm2} 7_{\pm1} 8_{0} 2_{\pm5}^{2} 3_{\pm5}^{3} 5_{\pm5}^{4},
1_{\pm2} 4_{\pm2}^{3} 7_{\pm1} 8_{0} 2_{\pm5} 3_{\pm5}^{2} 5_{\pm5}^{2}, \\
& 1_{\pm2} 4_{\pm2}^{2} 6_{\pm2} 8_{0} 2_{\pm5} 3_{\pm5} 5_{\pm5}^{2},
1_{\pm2} 4_{\pm2}^{4}  6_{\pm2} 7_{\pm1} 8_{0} 2_{\pm5}^{2} 3_{\pm5}^{2} 5_{\pm5}^{3},
1_{\pm2} 4_{\pm2}^{3} 6_{\pm2} 8_{0} 2_{\pm5} 3_{\pm5}^{2} 5_{\pm5}^{2},
1_{\pm2} 4_{\pm2}^{2} 8_{0} 2_{\pm5} 3_{\pm5} 5_{\pm5}.
\end{align*}
\end{gather}

\begin{gather}
\begin{align*}
& (40)  1_{\pm1} 5_{0}^{2} 2_{\pm4} 3_{\pm4}^{2} 6_{\pm3} 8_{\pm5},
1_{\pm1} 5_{0}^{3} 2_{\pm4}^{2} 3_{\pm4}^{2} 6_{\pm3} 7_{\pm4} 8_{\pm5},
1_{\pm1} 5_{0}^{2} 2_{\pm4} 3_{\pm4}^{2} 7_{\pm4} 8_{\pm5},
1_{\pm1} 5_{0} 2_{\pm4} 3_{\pm4} 8_{\pm5}, \\
& 1_{\pm1} 5_{0}^{4}  2_{\pm4}^{2} 3_{\pm4}^{3} 6_{\pm3} 7_{\pm4} 8_{\pm5},
1_{\pm1} 5_{0}^{3} 2_{\pm4}^{2} 3_{\pm4}^{2} 6_{\pm3} 8_{\pm5},
1_{\pm1} 4_{\pm1} 5_{0}^{3} 2_{\pm4}^{2} 3_{\pm4}^{3} 6_{\pm3} 7_{\pm4} 8_{\pm5},
1_{\pm1}^{2} 4_{\pm1} 5_{0}^{4}  2_{\pm4}^{3} 3_{\pm4}^{4}  6_{\pm3} 7_{\pm4} 8_{\pm5}, \\
& 1_{\pm1} 4_{\pm1} 5_{0}^{2} 2_{\pm4}^{2} 3_{\pm4}^{2} 6_{\pm3} 8_{\pm5},
1_{\pm1} 5_{0}^{3} 2_{\pm4} 3_{\pm4}^{2} 6_{\pm3} 7_{\pm4} 8_{\pm5},
1_{\pm1}^{2} 5_{0}^{4}  2_{\pm4}^{2} 3_{\pm4}^{3} 6_{\pm3} 7_{\pm4} 8_{\pm5},
1_{\pm1}^{2} 4_{\pm1} 5_{0}^{5}  2_{\pm4}^{3} 3_{\pm4}^{4}  6_{\pm3}^{2} 7_{\pm4} 8_{\pm5}, \\
& 1_{\pm1}^{2} 4_{\pm1} 5_{0}^{3} 2_{\pm4}^{2} 3_{\pm4}^{3} 6_{\pm3} 7_{\pm4} 8_{\pm5},
1_{\pm1} 5_{0}^{3} 2_{\pm4}^{2} 3_{\pm4}^{2} 7_{\pm4} 8_{\pm5},
1_{\pm1} 5_{0}^{2} 2_{\pm4} 3_{\pm4}^{2} 8_{\pm5},
1_{\pm1} 5_{0}^{2} 2_{\pm4} 3_{\pm4} 6_{\pm3} 8_{\pm5}, \\
& 1_{\pm1} 4_{\pm1} 5_{0}^{4}  2_{\pm4}^{3} 3_{\pm4}^{3} 6_{\pm3} 7_{\pm4} 8_{\pm5},
1_{\pm1}^{2} 4_{\pm1} 5_{0}^{5}  2_{\pm4}^{3} 3_{\pm4}^{4}  6_{\pm3} 7_{\pm4}^{2} 8_{\pm5},
1_{\pm1} 5_{0}^{4}  2_{\pm4}^{2} 3_{\pm4}^{2} 6_{\pm3} 7_{\pm4} 8_{\pm5},
1_{\pm1} 4_{\pm1} 5_{0}^{2} 2_{\pm4}^{2} 3_{\pm4}^{2} 7_{\pm4} 8_{\pm5}, \\
& 1_{\pm1} 4_{\pm1} 5_{0}^{3} 2_{\pm4}^{2} 3_{\pm4}^{3} 6_{\pm3} 8_{\pm5},
1_{\pm1}^{2} 4_{\pm1} 5_{0}^{4}  2_{\pm4}^{2} 3_{\pm4}^{4}  6_{\pm3} 7_{\pm4} 8_{\pm5},
1_{\pm1}^{2} 4_{\pm1} 5_{0}^{5}  2_{\pm4}^{3} 3_{\pm4}^{4}  6_{\pm3} 7_{\pm4} 8_{\pm5}^{2},
1_{\pm1} 4_{\pm1} 5_{0}^{3} 2_{\pm4}^{2} 3_{\pm4}^{3} 7_{\pm4} 8_{\pm5}, \\
& 1_{\pm1} 5_{0}^{3} 2_{\pm4} 3_{\pm4}^{2} 6_{\pm3} 8_{\pm5},
1_{\pm1} 4_{\pm1} 5_{0} 2_{\pm4} 3_{\pm4}^{2} 8_{\pm5},
1_{\pm1}^{2} 4_{\pm1} 5_{0}^{4}  2_{\pm4}^{3} 3_{\pm4}^{3} 6_{\pm3} 7_{\pm4} 8_{\pm5},
1_{\pm1}^{2} 4_{\pm1} 5_{0}^{5}  2_{\pm4}^{3} 3_{\pm4}^{4}  6_{\pm3} 7_{\pm4} 8_{\pm5}, \\
& 1_{\pm1}^{2} 4_{\pm1} 5_{0}^{3} 2_{\pm4}^{2} 3_{\pm4}^{3} 6_{\pm3} 8_{\pm5},
1_{\pm1} 4_{\pm1} 5_{0}^{3} 2_{\pm4}^{2} 3_{\pm4}^{2} 6_{\pm3} 7_{\pm4} 8_{\pm5},
1_{\pm1} 5_{0}^{2} 2_{\pm4} 3_{\pm4} 7_{\pm4} 8_{\pm5},
1_{\pm1} 4_{\pm1} 5_{0}^{4}  2_{\pm4}^{2} 3_{\pm4}^{3} 6_{\pm3} 7_{\pm4} 8_{\pm5}, \\
& 1_{\pm1}^{2} 4_{\pm1}^{2} 5_{0}^{4}  2_{\pm4}^{3} 3_{\pm4}^{4}  6_{\pm3} 7_{\pm4} 8_{\pm5},
1_{\pm1} 4_{\pm1} 5_{0}^{2} 2_{\pm4} 3_{\pm4}^{2} 6_{\pm3} 8_{\pm5},
1_{\pm1} 5_{0}^{3} 2_{\pm4} 3_{\pm4}^{2} 7_{\pm4} 8_{\pm5},
1_{\pm1}^{2} 4_{\pm1} 5_{0}^{3} 2_{\pm4}^{2} 3_{\pm4}^{3} 7_{\pm4} 8_{\pm5}, \\
& 1_{\pm1}^{2} 4_{\pm1} 5_{0}^{4}  2_{\pm4}^{2} 3_{\pm4}^{3} 6_{\pm3} 7_{\pm4} 8_{\pm5},
1_{\pm1} 4_{\pm1} 5_{0}^{2} 2_{\pm4} 3_{\pm4}^{2} 7_{\pm4} 8_{\pm5},
1_{\pm1} 4_{\pm1} 5_{0}^{2} 2_{\pm4}^{2} 3_{\pm4}^{2} 8_{\pm5},
1_{\pm1} 4_{\pm1} 5_{0}^{3} 2_{\pm4}^{2} 3_{\pm4}^{2} 6_{\pm3} 8_{\pm5}, \\
& 1_{\pm1} 4_{\pm1} 5_{0}^{3} 2_{\pm4}^{2} 3_{\pm4}^{2} 7_{\pm4} 8_{\pm5},
1_{\pm1} 5_{0}^{2} 2_{\pm4} 3_{\pm4} 8_{\pm5},
1_{\pm1} 4_{\pm1} 5_{0}^{2} 2_{\pm4} 3_{\pm4}^{2} 8_{\pm5},
1_{\pm1} 4_{\pm1} 5_{0} 2_{\pm4} 3_{\pm4} 8_{\pm5}. \\
& \quad \\
& (41)  1_{\pm1} 5_{0}^{2} 8_{\pm1} 2_{\pm4} 3_{\pm4}^{2} 6_{\pm3} 7_{\pm4},
1_{\pm1} 5_{0}^{3}  8_{\pm1} 2_{\pm4}^{2} 3_{\pm4}^{2} 6_{\pm3} 7_{\pm4}^{2},
1_{\pm1} 5_{0}^{2} 8_{\pm1} 2_{\pm4} 3_{\pm4}^{2} 7_{\pm4}^{2},
1_{\pm1} 5_{0} 8_{\pm1} 2_{\pm4} 3_{\pm4} 7_{\pm4}, \\
& 1_{\pm1} 5_{0}^{4}  8_{\pm1} 2_{\pm4}^{2} 3_{\pm4}^{3}  6_{\pm3} 7_{\pm4}^{2},
1_{\pm1} 5_{0}^{3}  8_{\pm1} 2_{\pm4}^{2} 3_{\pm4}^{2} 6_{\pm3} 7_{\pm4},
1_{\pm1} 4_{\pm1} 5_{0}^{3}  8_{\pm1} 2_{\pm4}^{2} 3_{\pm4}^{3}  6_{\pm3} 7_{\pm4}^{2},
1_{\pm1}^{2} 4_{\pm1} 5_{0}^{4}  8_{\pm1} 2_{\pm4}^{3}  3_{\pm4}^{4}  6_{\pm3} 7_{\pm4}^{2}, \\
& 1_{\pm1} 4_{\pm1} 5_{0}^{2} 8_{\pm1} 2_{\pm4}^{2} 3_{\pm4}^{2} 6_{\pm3} 7_{\pm4},
1_{\pm1} 5_{0}^{3}  8_{\pm1} 2_{\pm4} 3_{\pm4}^{2} 6_{\pm3} 7_{\pm4}^{2},
1_{\pm1}^{2} 5_{0}^{4}  8_{\pm1} 2_{\pm4}^{2} 3_{\pm4}^{3}  6_{\pm3} 7_{\pm4}^{2},
1_{\pm1}^{2} 4_{\pm1} 5_{0}^{5}  8_{\pm1} 2_{\pm4}^{3}  3_{\pm4}^{4}  6_{\pm3}^{2} 7_{\pm4}^{2}, \\
& 1_{\pm1}^{2} 4_{\pm1} 5_{0}^{3}  8_{\pm1} 2_{\pm4}^{2} 3_{\pm4}^{3}  6_{\pm3} 7_{\pm4}^{2},
1_{\pm1} 5_{0}^{3}  8_{\pm1} 2_{\pm4}^{2} 3_{\pm4}^{2} 7_{\pm4}^{2},
1_{\pm1} 5_{0}^{2} 8_{\pm1} 2_{\pm4} 3_{\pm4}^{2} 7_{\pm4},
1_{\pm1} 5_{0}^{2} 8_{\pm1} 2_{\pm4} 3_{\pm4} 6_{\pm3} 7_{\pm4}, \\
& 1_{\pm1} 4_{\pm1} 5_{0}^{4}  8_{\pm1} 2_{\pm4}^{3}  3_{\pm4}^{3}  6_{\pm3} 7_{\pm4}^{2},
1_{\pm1}^{2} 4_{\pm1} 5_{0}^{5}  8_{\pm1} 2_{\pm4}^{3}  3_{\pm4}^{4}  6_{\pm3} 7_{\pm4}^{3},
1_{\pm1} 5_{0}^{4}  8_{\pm1} 2_{\pm4}^{2} 3_{\pm4}^{2} 6_{\pm3} 7_{\pm4}^{2},
1_{\pm1} 4_{\pm1} 5_{0}^{2} 8_{\pm1} 2_{\pm4}^{2} 3_{\pm4}^{2} 7_{\pm4}^{2}, \\
& 1_{\pm1} 4_{\pm1} 5_{0}^{3}  8_{\pm1} 2_{\pm4}^{2} 3_{\pm4}^{3}  6_{\pm3} 7_{\pm4},
1_{\pm1}^{2} 4_{\pm1} 5_{0}^{4}  8_{\pm1} 2_{\pm4}^{2} 3_{\pm4}^{4}  6_{\pm3} 7_{\pm4}^{2},
1_{\pm1}^{2} 4_{\pm1} 5_{0}^{5}  8_{\pm1}^{2} 2_{\pm4}^{3}  3_{\pm4}^{4}  6_{\pm3} 7_{\pm4}^{3},
1_{\pm1} 4_{\pm1} 5_{0}^{3}  8_{\pm1} 2_{\pm4}^{2} 3_{\pm4}^{3}  7_{\pm4}^{2}, \\
& 1_{\pm1} 5_{0}^{3}  8_{\pm1} 2_{\pm4} 3_{\pm4}^{2} 6_{\pm3} 7_{\pm4},
1_{\pm1} 4_{\pm1} 5_{0} 8_{\pm1} 2_{\pm4} 3_{\pm4}^{2} 7_{\pm4},
1_{\pm1}^{2} 4_{\pm1} 5_{0}^{4}  8_{\pm1} 2_{\pm4}^{3}  3_{\pm4}^{3}  6_{\pm3} 7_{\pm4}^{2},
1_{\pm1}^{2} 4_{\pm1} 5_{0}^{5}  8_{\pm1} 2_{\pm4}^{3}  3_{\pm4}^{4}  6_{\pm3} 7_{\pm4}^{2}, \\
& 1_{\pm1}^{2} 4_{\pm1} 5_{0}^{3}  8_{\pm1} 2_{\pm4}^{2} 3_{\pm4}^{3}  6_{\pm3} 7_{\pm4},
1_{\pm1} 4_{\pm1} 5_{0}^{3}  8_{\pm1} 2_{\pm4}^{2} 3_{\pm4}^{2} 6_{\pm3} 7_{\pm4}^{2},
1_{\pm1} 5_{0}^{2} 8_{\pm1} 2_{\pm4} 3_{\pm4} 7_{\pm4}^{2},
1_{\pm1} 4_{\pm1} 5_{0}^{4}  8_{\pm1} 2_{\pm4}^{2} 3_{\pm4}^{3}  6_{\pm3} 7_{\pm4}^{2}, \\
& 1_{\pm1}^{2} 4_{\pm1}^{2} 5_{0}^{4}  8_{\pm1} 2_{\pm4}^{3}  3_{\pm4}^{4}  6_{\pm3} 7_{\pm4}^{2},
1_{\pm1} 4_{\pm1} 5_{0}^{2} 8_{\pm1} 2_{\pm4} 3_{\pm4}^{2} 6_{\pm3} 7_{\pm4},
1_{\pm1} 5_{0}^{3}  8_{\pm1} 2_{\pm4} 3_{\pm4}^{2} 7_{\pm4}^{2},
1_{\pm1}^{2} 4_{\pm1} 5_{0}^{3}  8_{\pm1} 2_{\pm4}^{2} 3_{\pm4}^{3}  7_{\pm4}^{2}, \\
& 1_{\pm1}^{2} 4_{\pm1} 5_{0}^{4}  8_{\pm1} 2_{\pm4}^{2} 3_{\pm4}^{3}  6_{\pm3} 7_{\pm4}^{2},
1_{\pm1} 4_{\pm1} 5_{0}^{2} 8_{\pm1} 2_{\pm4} 3_{\pm4}^{2} 7_{\pm4}^{2},
1_{\pm1} 4_{\pm1} 5_{0}^{2} 8_{\pm1} 2_{\pm4}^{2} 3_{\pm4}^{2} 7_{\pm4},
1_{\pm1} 4_{\pm1} 5_{0}^{3}  8_{\pm1} 2_{\pm4}^{2} 3_{\pm4}^{2} 6_{\pm3} 7_{\pm4}, \\
& 1_{\pm1} 4_{\pm1} 5_{0}^{3}  8_{\pm1} 2_{\pm4}^{2} 3_{\pm4}^{2} 7_{\pm4}^{2},
1_{\pm1} 5_{0}^{2} 8_{\pm1} 2_{\pm4} 3_{\pm4} 7_{\pm4},
1_{\pm1} 4_{\pm1} 5_{0}^{2} 8_{\pm1} 2_{\pm4} 3_{\pm4}^{2} 7_{\pm4},
1_{\pm1} 4_{\pm1} 5_{0} 8_{\pm1} 2_{\pm4} 3_{\pm4} 7_{\pm4}. \\
& \quad \\
& (42)  1_{\pm1} 5_{0}^{2} 7_{0} 2_{\pm4} 3_{\pm4}^{2} 6_{\pm3}^{2} 8_{\pm3},
1_{\pm1} 5_{0} 7_{0} 2_{\pm4} 3_{\pm4} 6_{\pm3} 8_{\pm3},
1_{\pm1} 5_{0}^{3}  7_{0}^{2} 2_{\pm4}^{2} 3_{\pm4}^{2} 6_{\pm3}^{3}  8_{\pm3},
1_{\pm1} 5_{0}^{2} 7_{0}^{2} 2_{\pm4} 3_{\pm4}^{2} 6_{\pm3}^{2} 8_{\pm3}, \\
& 1_{\pm1} 5_{0}^{3}  7_{0} 2_{\pm4}^{2} 3_{\pm4}^{2} 6_{\pm3}^{2} 8_{\pm3},
1_{\pm1} 5_{0}^{4}  7_{0}^{2} 2_{\pm4}^{2} 3_{\pm4}^{3}  6_{\pm3}^{3}  8_{\pm3},
1_{\pm1} 5_{0}^{2} 7_{0} 2_{\pm4} 3_{\pm4}^{2} 6_{\pm3} 8_{\pm3},
1_{\pm1} 4_{\pm1} 5_{0}^{2} 7_{0} 2_{\pm4}^{2} 3_{\pm4}^{2} 6_{\pm3}^{2} 8_{\pm3}, \\
& 1_{\pm1} 4_{\pm1} 5_{0}^{3}  7_{0}^{2} 2_{\pm4}^{2} 3_{\pm4}^{3}  6_{\pm3}^{3}  8_{\pm3},
1_{\pm1}^{2} 4_{\pm1} 5_{0}^{4}  7_{0}^{2} 2_{\pm4}^{3}  3_{\pm4}^{4}  6_{\pm3}^{3}  8_{\pm3},
1_{\pm1} 4_{\pm1} 5_{0}^{3}  7_{0} 2_{\pm4}^{2} 3_{\pm4}^{3}  6_{\pm3}^{2} 8_{\pm3},
1_{\pm1} 5_{0}^{2} 7_{0} 2_{\pm4} 3_{\pm4} 6_{\pm3}^{2} 8_{\pm3}, \\
& 1_{\pm1} 5_{0}^{3}  7_{0}^{2} 2_{\pm4} 3_{\pm4}^{2} 6_{\pm3}^{3}  8_{\pm3},
1_{\pm1} 4_{\pm1} 5_{0} 7_{0} 2_{\pm4} 3_{\pm4}^{2} 6_{\pm3} 8_{\pm3},
1_{\pm1}^{2} 5_{0}^{4}  7_{0}^{2} 2_{\pm4}^{2} 3_{\pm4}^{3}  6_{\pm3}^{3}  8_{\pm3},
1_{\pm1}^{2} 4_{\pm1} 5_{0}^{5}  7_{0}^{2} 2_{\pm4}^{3}  3_{\pm4}^{4}  6_{\pm3}^{4}  8_{\pm3}, \\
& 1_{\pm1}^{2} 4_{\pm1} 5_{0}^{3}  7_{0}^{2} 2_{\pm4}^{2} 3_{\pm4}^{3}  6_{\pm3}^{3}  8_{\pm3},
1_{\pm1} 5_{0}^{3}  7_{0} 2_{\pm4} 3_{\pm4}^{2} 6_{\pm3}^{2} 8_{\pm3},
1_{\pm1} 5_{0}^{3}  7_{0}^{2} 2_{\pm4}^{2} 3_{\pm4}^{2} 6_{\pm3}^{2} 8_{\pm3},
1_{\pm1} 4_{\pm1} 5_{0}^{4}  7_{0}^{2} 2_{\pm4}^{3}  3_{\pm4}^{3}  6_{\pm3}^{3}  8_{\pm3}, \\
& 1_{\pm1}^{2} 4_{\pm1} 5_{0}^{5}  7_{0}^{3}  2_{\pm4}^{3}  3_{\pm4}^{4}  6_{\pm3}^{4}  8_{\pm3}^{2},
1_{\pm1} 5_{0}^{4}  7_{0}^{2} 2_{\pm4}^{2} 3_{\pm4}^{2} 6_{\pm3}^{3}  8_{\pm3},
1_{\pm1} 4_{\pm1} 5_{0}^{2} 7_{0}^{2} 2_{\pm4}^{2} 3_{\pm4}^{2} 6_{\pm3}^{2} 8_{\pm3},
1_{\pm1}^{2} 4_{\pm1} 5_{0}^{4}  7_{0}^{2} 2_{\pm4}^{2} 3_{\pm4}^{4}  6_{\pm3}^{3}  8_{\pm3}, \\
& 1_{\pm1}^{2} 4_{\pm1} 5_{0}^{5}  7_{0}^{3}  2_{\pm4}^{3}  3_{\pm4}^{4}  6_{\pm3}^{4}  8_{\pm3},
1_{\pm1} 4_{\pm1} 5_{0}^{3}  7_{0}^{2} 2_{\pm4}^{2} 3_{\pm4}^{3}  6_{\pm3}^{2} 8_{\pm3},
1_{\pm1}^{2} 4_{\pm1} 5_{0}^{3}  7_{0} 2_{\pm4}^{2} 3_{\pm4}^{3}  6_{\pm3}^{2} 8_{\pm3},
1_{\pm1}^{2} 4_{\pm1} 5_{0}^{4}  7_{0}^{2} 2_{\pm4}^{3}  3_{\pm4}^{3}  6_{\pm3}^{3}  8_{\pm3}, \\
& 1_{\pm1}^{2} 4_{\pm1} 5_{0}^{5}  7_{0}^{2} 2_{\pm4}^{3}  3_{\pm4}^{4}  6_{\pm3}^{3}  8_{\pm3},
1_{\pm1} 4_{\pm1} 5_{0}^{2} 7_{0} 2_{\pm4}^{2} 3_{\pm4}^{2} 6_{\pm3} 8_{\pm3},
1_{\pm1} 4_{\pm1} 5_{0}^{2} 7_{0} 2_{\pm4} 3_{\pm4}^{2} 6_{\pm3}^{2} 8_{\pm3},
1_{\pm1} 4_{\pm1} 5_{0}^{3}  7_{0}^{2} 2_{\pm4}^{2} 3_{\pm4}^{2} 6_{\pm3}^{3}  8_{\pm3}, \\
& 1_{\pm1} 4_{\pm1} 5_{0}^{4}  7_{0}^{2} 2_{\pm4}^{2} 3_{\pm4}^{3}  6_{\pm3}^{3}  8_{\pm3},
1_{\pm1}^{2} 4_{\pm1}^{2} 5_{0}^{4}  7_{0}^{2} 2_{\pm4}^{3}  3_{\pm4}^{4}  6_{\pm3}^{3}  8_{\pm3},
1_{\pm1} 4_{\pm1} 5_{0}^{3}  7_{0} 2_{\pm4}^{2} 3_{\pm4}^{2} 6_{\pm3}^{2} 8_{\pm3},
1_{\pm1} 5_{0}^{2} 7_{0}^{2} 2_{\pm4} 3_{\pm4} 6_{\pm3}^{2} 8_{\pm3}, \\
& 1_{\pm1} 5_{0}^{3}  7_{0}^{2} 2_{\pm4} 3_{\pm4}^{2} 6_{\pm3}^{2} 8_{\pm3},
1_{\pm1}^{2} 4_{\pm1} 5_{0}^{3}  7_{0}^{2} 2_{\pm4}^{2} 3_{\pm4}^{3}  6_{\pm3}^{2} 8_{\pm3},
1_{\pm1}^{2} 4_{\pm1} 5_{0}^{4}  7_{0}^{2} 2_{\pm4}^{2} 3_{\pm4}^{3}  6_{\pm3}^{3}  8_{\pm3},
1_{\pm1} 4_{\pm1} 5_{0}^{2} 7_{0}^{2} 2_{\pm4} 3_{\pm4}^{2} 6_{\pm3}^{2} 8_{\pm3}, \\
& 1_{\pm1} 5_{0}^{2} 7_{0} 2_{\pm4} 3_{\pm4} 6_{\pm3} 8_{\pm3},
1_{\pm1} 4_{\pm1} 5_{0}^{3}  7_{0}^{2} 2_{\pm4}^{2} 3_{\pm4}^{2} 6_{\pm3}^{2} 8_{\pm3},
1_{\pm1} 4_{\pm1} 5_{0}^{2} 7_{0} 2_{\pm4} 3_{\pm4}^{2} 6_{\pm3} 8_{\pm3},
1_{\pm1} 4_{\pm1} 5_{0} 7_{0} 2_{\pm4} 3_{\pm4} 6_{\pm3} 8_{\pm3}.
\end{align*}
\end{gather}

\begin{gather}
\begin{align*}
& (43)  1_{\pm2} 5_{\pm1}^{2} 8_{0} 2_{\pm5} 3_{\pm5}^{2} 6_{\pm4}^{2},
1_{\pm2} 5_{\pm1} 8_{0} 2_{\pm5} 3_{\pm5} 6_{\pm4},
1_{\pm2} 5_{\pm1}^{3}  7_{\pm1} 8_{0} 2_{\pm5}^{2} 3_{\pm5}^{2} 6_{\pm4}^{3},
1_{\pm2} 5_{\pm1}^{2} 7_{\pm1} 8_{0} 2_{\pm5} 3_{\pm5}^{2} 6_{\pm4}^{2}, \\
& 1_{\pm2} 5_{\pm1}^{3}  8_{0} 2_{\pm5}^{2} 3_{\pm5}^{2} 6_{\pm4}^{2},
1_{\pm2} 5_{\pm1}^{4}  7_{\pm1} 8_{0} 2_{\pm5}^{2} 3_{\pm5}^{3}  6_{\pm4}^{3},
1_{\pm2} 5_{\pm1}^{2} 8_{0} 2_{\pm5} 3_{\pm5}^{2} 6_{\pm4},
1_{\pm2} 4_{\pm2} 5_{\pm1}^{2} 8_{0} 2_{\pm5}^{2} 3_{\pm5}^{2} 6_{\pm4}^{2}, \\
& 1_{\pm2} 4_{\pm2} 5_{\pm1}^{3}  7_{\pm1} 8_{0} 2_{\pm5}^{2} 3_{\pm5}^{3}  6_{\pm4}^{3},
1_{\pm2}^{2} 4_{\pm2} 5_{\pm1}^{4}  7_{\pm1} 8_{0} 2_{\pm5}^{3}  3_{\pm5}^{4}  6_{\pm4}^{3},
1_{\pm2} 4_{\pm2} 5_{\pm1}^{3}  8_{0} 2_{\pm5}^{2} 3_{\pm5}^{3}  6_{\pm4}^{2},
1_{\pm2} 5_{\pm1}^{2} 8_{0} 2_{\pm5} 3_{\pm5} 6_{\pm4}^{2}, \\
& 1_{\pm2} 5_{\pm1}^{3}  7_{\pm1} 8_{0} 2_{\pm5} 3_{\pm5}^{2} 6_{\pm4}^{3},
1_{\pm2} 4_{\pm2} 5_{\pm1} 8_{0} 2_{\pm5} 3_{\pm5}^{2} 6_{\pm4},
1_{\pm2}^{2} 5_{\pm1}^{4}  7_{\pm1} 8_{0} 2_{\pm5}^{2} 3_{\pm5}^{3}  6_{\pm4}^{3},
1_{\pm2}^{2} 4_{\pm2} 5_{\pm1}^{5}  7_{\pm1} 8_{0} 2_{\pm5}^{3}  3_{\pm5}^{4}  6_{\pm4}^{4}, \\
& 1_{\pm2}^{2} 4_{\pm2} 5_{\pm1}^{3}  7_{\pm1} 8_{0} 2_{\pm5}^{2} 3_{\pm5}^{3}  6_{\pm4}^{3},
1_{\pm2} 5_{\pm1}^{3}  8_{0} 2_{\pm5} 3_{\pm5}^{2} 6_{\pm4}^{2},
1_{\pm2} 5_{\pm1}^{3}  7_{\pm1} 8_{0} 2_{\pm5}^{2} 3_{\pm5}^{2} 6_{\pm4}^{2},
1_{\pm2} 4_{\pm2} 5_{\pm1}^{4}  7_{\pm1} 8_{0} 2_{\pm5}^{3}  3_{\pm5}^{3}  6_{\pm4}^{3}, \\
& 1_{\pm2}^{2} 4_{\pm2} 5_{\pm1}^{5}  7_{\pm1} 8_{0}^{2} 2_{\pm5}^{3}  3_{\pm5}^{4}  6_{\pm4}^{4},
1_{\pm2} 5_{\pm1}^{4}  7_{\pm1} 8_{0} 2_{\pm5}^{2} 3_{\pm5}^{2} 6_{\pm4}^{3},
1_{\pm2} 4_{\pm2} 5_{\pm1}^{2} 7_{\pm1} 8_{0} 2_{\pm5}^{2} 3_{\pm5}^{2} 6_{\pm4}^{2},
1_{\pm2}^{2} 4_{\pm2} 5_{\pm1}^{4}  7_{\pm1} 8_{0} 2_{\pm5}^{2} 3_{\pm5}^{4}  6_{\pm4}^{3}, \\
& 1_{\pm2}^{2} 4_{\pm2} 5_{\pm1}^{5}  7_{\pm1}^{2} 8_{0} 2_{\pm5}^{3}  3_{\pm5}^{4}  6_{\pm4}^{4},
1_{\pm2} 4_{\pm2} 5_{\pm1}^{3}  7_{\pm1} 8_{0} 2_{\pm5}^{2} 3_{\pm5}^{3}  6_{\pm4}^{2},
1_{\pm2}^{2} 4_{\pm2} 5_{\pm1}^{3}  8_{0} 2_{\pm5}^{2} 3_{\pm5}^{3}  6_{\pm4}^{2},
1_{\pm2}^{2} 4_{\pm2} 5_{\pm1}^{4}  7_{\pm1} 8_{0} 2_{\pm5}^{3}  3_{\pm5}^{3}  6_{\pm4}^{3}, \\
& 1_{\pm2}^{2} 4_{\pm2} 5_{\pm1}^{5}  7_{\pm1} 8_{0} 2_{\pm5}^{3}  3_{\pm5}^{4}  6_{\pm4}^{3},
1_{\pm2} 4_{\pm2} 5_{\pm1}^{2} 8_{0} 2_{\pm5}^{2} 3_{\pm5}^{2} 6_{\pm4},
1_{\pm2} 4_{\pm2} 5_{\pm1}^{2} 8_{0} 2_{\pm5} 3_{\pm5}^{2} 6_{\pm4}^{2},
1_{\pm2} 4_{\pm2} 5_{\pm1}^{3}  7_{\pm1} 8_{0} 2_{\pm5}^{2} 3_{\pm5}^{2} 6_{\pm4}^{3}, \\
& 1_{\pm2} 4_{\pm2} 5_{\pm1}^{4}  7_{\pm1} 8_{0} 2_{\pm5}^{2} 3_{\pm5}^{3}  6_{\pm4}^{3},
1_{\pm2}^{2} 4_{\pm2}^{2} 5_{\pm1}^{4}  7_{\pm1} 8_{0} 2_{\pm5}^{3}  3_{\pm5}^{4}  6_{\pm4}^{3},
1_{\pm2} 4_{\pm2} 5_{\pm1}^{3}  8_{0} 2_{\pm5}^{2} 3_{\pm5}^{2} 6_{\pm4}^{2},
1_{\pm2} 5_{\pm1}^{2} 7_{\pm1} 8_{0} 2_{\pm5} 3_{\pm5} 6_{\pm4}^{2}, \\
& 1_{\pm2} 5_{\pm1}^{3}  7_{\pm1} 8_{0} 2_{\pm5} 3_{\pm5}^{2} 6_{\pm4}^{2},
1_{\pm2}^{2} 4_{\pm2} 5_{\pm1}^{3}  7_{\pm1} 8_{0} 2_{\pm5}^{2} 3_{\pm5}^{3}  6_{\pm4}^{2},
1_{\pm2}^{2} 4_{\pm2} 5_{\pm1}^{4}  7_{\pm1} 8_{0} 2_{\pm5}^{2} 3_{\pm5}^{3}  6_{\pm4}^{3},
1_{\pm2} 4_{\pm2} 5_{\pm1}^{2} 7_{\pm1} 8_{0} 2_{\pm5} 3_{\pm5}^{2} 6_{\pm4}^{2}, \\
& 1_{\pm2} 5_{\pm1}^{2} 8_{0} 2_{\pm5} 3_{\pm5} 6_{\pm4},
1_{\pm2} 4_{\pm2} 5_{\pm1}^{3}  7_{\pm1} 8_{0} 2_{\pm5}^{2} 3_{\pm5}^{2} 6_{\pm4}^{2},
1_{\pm2} 4_{\pm2} 5_{\pm1}^{2} 8_{0} 2_{\pm5} 3_{\pm5}^{2} 6_{\pm4},
1_{\pm2} 4_{\pm2} 5_{\pm1} 8_{0} 2_{\pm5} 3_{\pm5} 6_{\pm4}. \\
& \quad \\
& (44)  1_{\pm2} 6_{0}^{2} 2_{\pm5} 3_{\pm5}^{2} 7_{\pm3} 8_{\pm4},
1_{\pm2} 6_{0} 2_{\pm5} 3_{\pm5} 8_{\pm4},
1_{\pm2} 6_{0}^{3} 2_{\pm5}^{2} 3_{\pm5}^{2} 7_{\pm3} 8_{\pm4},
1_{\pm2} 6_{0}^{2} 2_{\pm5} 3_{\pm5}^{2} 8_{\pm4}, \\
& 1_{\pm2} 5_{\pm1} 6_{0}^{2} 2_{\pm5}^{2} 3_{\pm5}^{2} 7_{\pm3} 8_{\pm4},
1_{\pm2} 5_{\pm1} 6_{0}^{3} 2_{\pm5}^{2} 3_{\pm5}^{3} 7_{\pm3} 8_{\pm4},
1_{\pm2} 5_{\pm1} 6_{0} 2_{\pm5} 3_{\pm5}^{2} 8_{\pm4},
1_{\pm2} 4_{\pm2} 6_{0}^{2} 2_{\pm5}^{2} 3_{\pm5}^{2} 7_{\pm3} 8_{\pm4}, \\
& 1_{\pm2} 4_{\pm2} 6_{0}^{3} 2_{\pm5}^{2} 3_{\pm5}^{3} 7_{\pm3} 8_{\pm4},
1_{\pm2}^{2} 4_{\pm2} 5_{\pm1} 6_{0}^{3} 2_{\pm5}^{3} 3_{\pm5}^{4} 7_{\pm3} 8_{\pm4},
1_{\pm2} 4_{\pm2} 5_{\pm1} 6_{0}^{2} 2_{\pm5}^{2} 3_{\pm5}^{3} 7_{\pm3} 8_{\pm4},
1_{\pm2} 6_{0}^{2} 2_{\pm5} 3_{\pm5} 7_{\pm3} 8_{\pm4}, \\
& 1_{\pm2} 4_{\pm2} 6_{0} 2_{\pm5} 3_{\pm5}^{2} 8_{\pm4},
1_{\pm2} 6_{0}^{3} 2_{\pm5} 3_{\pm5}^{2} 7_{\pm3} 8_{\pm4},
1_{\pm2}^{2} 5_{\pm1} 6_{0}^{3} 2_{\pm5}^{2} 3_{\pm5}^{3} 7_{\pm3} 8_{\pm4},
1_{\pm2}^{2} 4_{\pm2} 5_{\pm1} 6_{0}^{4} 2_{\pm5}^{3} 3_{\pm5}^{4} 7_{\pm3}^{2} 8_{\pm4}, \\
& 1_{\pm2}^{2} 4_{\pm2} 6_{0}^{3} 2_{\pm5}^{2} 3_{\pm5}^{3} 7_{\pm3} 8_{\pm4},
1_{\pm2} 5_{\pm1} 6_{0}^{2} 2_{\pm5} 3_{\pm5}^{2} 7_{\pm3} 8_{\pm4},
1_{\pm2} 5_{\pm1} 6_{0}^{2} 2_{\pm5}^{2} 3_{\pm5}^{2} 8_{\pm4},
1_{\pm2} 4_{\pm2} 5_{\pm1} 6_{0}^{3} 2_{\pm5}^{3} 3_{\pm5}^{3} 7_{\pm3} 8_{\pm4}, \\
& 1_{\pm2}^{2} 4_{\pm2} 5_{\pm1} 6_{0}^{4} 2_{\pm5}^{3} 3_{\pm5}^{4} 7_{\pm3} 8_{\pm4}^{2},
1_{\pm2} 5_{\pm1} 6_{0}^{3} 2_{\pm5}^{2} 3_{\pm5}^{2} 7_{\pm3} 8_{\pm4},
1_{\pm2} 4_{\pm2} 6_{0}^{2} 2_{\pm5}^{2} 3_{\pm5}^{2} 8_{\pm4},
1_{\pm2}^{2} 4_{\pm2} 5_{\pm1} 6_{0}^{3} 2_{\pm5}^{2} 3_{\pm5}^{4} 7_{\pm3} 8_{\pm4}, \\
& 1_{\pm2}^{2} 4_{\pm2} 5_{\pm1} 6_{0}^{4} 2_{\pm5}^{3} 3_{\pm5}^{4} 7_{\pm3} 8_{\pm4},
1_{\pm2} 4_{\pm2} 5_{\pm1} 6_{0}^{2} 2_{\pm5}^{2} 3_{\pm5}^{3} 8_{\pm4},
1_{\pm2}^{2} 4_{\pm2} 5_{\pm1} 6_{0}^{2} 2_{\pm5}^{2} 3_{\pm5}^{3} 7_{\pm3} 8_{\pm4},
1_{\pm2}^{2} 4_{\pm2} 5_{\pm1} 6_{0}^{3} 2_{\pm5}^{3} 3_{\pm5}^{3} 7_{\pm3} 8_{\pm4}, \\
& 1_{\pm2}^{2} 4_{\pm2} 5_{\pm1}^{2} 6_{0}^{3} 2_{\pm5}^{3} 3_{\pm5}^{4} 7_{\pm3} 8_{\pm4},
1_{\pm2} 4_{\pm2} 5_{\pm1} 6_{0} 2_{\pm5}^{2} 3_{\pm5}^{2} 8_{\pm4},
1_{\pm2} 4_{\pm2} 6_{0}^{2} 2_{\pm5} 3_{\pm5}^{2} 7_{\pm3} 8_{\pm4},
1_{\pm2} 4_{\pm2} 6_{0}^{3} 2_{\pm5}^{2} 3_{\pm5}^{2} 7_{\pm3} 8_{\pm4}, \\
& 1_{\pm2} 4_{\pm2} 5_{\pm1} 6_{0}^{3} 2_{\pm5}^{2} 3_{\pm5}^{3} 7_{\pm3} 8_{\pm4},
1_{\pm2}^{2} 4_{\pm2}^{2} 5_{\pm1} 6_{0}^{3} 2_{\pm5}^{3} 3_{\pm5}^{4} 7_{\pm3} 8_{\pm4},
1_{\pm2} 4_{\pm2} 5_{\pm1} 6_{0}^{2} 2_{\pm5}^{2} 3_{\pm5}^{2} 7_{\pm3} 8_{\pm4},
1_{\pm2} 6_{0}^{2} 2_{\pm5} 3_{\pm5} 8_{\pm4}, \\
& 1_{\pm2} 5_{\pm1} 6_{0}^{2} 2_{\pm5} 3_{\pm5}^{2} 8_{\pm4},
1_{\pm2}^{2} 4_{\pm2} 5_{\pm1} 6_{0}^{2} 2_{\pm5}^{2} 3_{\pm5}^{3} 8_{\pm4},
1_{\pm2}^{2} 4_{\pm2} 5_{\pm1} 6_{0}^{3} 2_{\pm5}^{2} 3_{\pm5}^{3} 7_{\pm3} 8_{\pm4},
1_{\pm2} 4_{\pm2} 6_{0}^{2} 2_{\pm5} 3_{\pm5}^{2} 8_{\pm4}, \\
& 1_{\pm2} 5_{\pm1} 6_{0} 2_{\pm5} 3_{\pm5} 8_{\pm4},
1_{\pm2} 4_{\pm2} 5_{\pm1} 6_{0}^{2} 2_{\pm5}^{2} 3_{\pm5}^{2} 8_{\pm4},
1_{\pm2} 4_{\pm2} 5_{\pm1} 6_{0} 2_{\pm5} 3_{\pm5}^{2} 8_{\pm4},
1_{\pm2} 4_{\pm2} 6_{0} 2_{\pm5} 3_{\pm5} 8_{\pm4}. \\
& \quad \\
& (45)  1_{\pm2} 6_{0}^{2} 8_{0} 2_{\pm5} 3_{\pm5}^{2} 7_{\pm3}^{2},
1_{\pm2} 6_{0} 8_{0} 2_{\pm5} 3_{\pm5} 7_{\pm3},
1_{\pm2} 6_{0}^{3} 8_{0} 2_{\pm5}^{2} 3_{\pm5}^{2} 7_{\pm3}^{2},
1_{\pm2} 6_{0}^{2} 8_{0} 2_{\pm5} 3_{\pm5}^{2} 7_{\pm3}, \\
& 1_{\pm2} 5_{\pm1} 6_{0}^{2} 8_{0} 2_{\pm5}^{2} 3_{\pm5}^{2} 7_{\pm3}^{2},
1_{\pm2} 5_{\pm1} 6_{0}^{3} 8_{0} 2_{\pm5}^{2} 3_{\pm5}^{3} 7_{\pm3}^{2},
1_{\pm2} 5_{\pm1} 6_{0} 8_{0} 2_{\pm5} 3_{\pm5}^{2} 7_{\pm3},
1_{\pm2} 4_{\pm2} 6_{0}^{2} 8_{0} 2_{\pm5}^{2} 3_{\pm5}^{2} 7_{\pm3}^{2}, \\
& 1_{\pm2} 4_{\pm2} 6_{0}^{3} 8_{0} 2_{\pm5}^{2} 3_{\pm5}^{3} 7_{\pm3}^{2},
1_{\pm2}^{2} 4_{\pm2} 5_{\pm1} 6_{0}^{3} 8_{0} 2_{\pm5}^{3} 3_{\pm5}^{4} 7_{\pm3}^{2},
1_{\pm2} 4_{\pm2} 5_{\pm1} 6_{0}^{2} 8_{0} 2_{\pm5}^{2} 3_{\pm5}^{3} 7_{\pm3}^{2},
1_{\pm2} 6_{0}^{2} 8_{0} 2_{\pm5} 3_{\pm5} 7_{\pm3}^{2}, \\
& 1_{\pm2} 4_{\pm2} 6_{0} 8_{0} 2_{\pm5} 3_{\pm5}^{2} 7_{\pm3},
1_{\pm2} 6_{0}^{3} 8_{0} 2_{\pm5} 3_{\pm5}^{2} 7_{\pm3}^{2},
1_{\pm2}^{2} 5_{\pm1} 6_{0}^{3} 8_{0} 2_{\pm5}^{2} 3_{\pm5}^{3} 7_{\pm3}^{2},
1_{\pm2}^{2} 4_{\pm2} 5_{\pm1} 6_{0}^{4} 8_{0} 2_{\pm5}^{3} 3_{\pm5}^{4} 7_{\pm3}^{3}, \\
& 1_{\pm2}^{2} 4_{\pm2} 6_{0}^{3} 8_{0} 2_{\pm5}^{2} 3_{\pm5}^{3} 7_{\pm3}^{2},
1_{\pm2} 5_{\pm1} 6_{0}^{2} 8_{0} 2_{\pm5} 3_{\pm5}^{2} 7_{\pm3}^{2},
1_{\pm2} 5_{\pm1} 6_{0}^{2} 8_{0} 2_{\pm5}^{2} 3_{\pm5}^{2} 7_{\pm3}, 
1_{\pm2} 4_{\pm2} 5_{\pm1} 6_{0}^{3} 8_{0} 2_{\pm5}^{3} 3_{\pm5}^{3} 7_{\pm3}^{2}, \\
& 1_{\pm2}^{2} 4_{\pm2} 5_{\pm1} 6_{0}^{4} 8_{0}^{2} 2_{\pm5}^{3} 3_{\pm5}^{4} 7_{\pm3}^{3},
1_{\pm2} 5_{\pm1} 6_{0}^{3} 8_{0} 2_{\pm5}^{2} 3_{\pm5}^{2} 7_{\pm3}^{2},
1_{\pm2} 4_{\pm2} 6_{0}^{2} 8_{0} 2_{\pm5}^{2} 3_{\pm5}^{2} 7_{\pm3},
1_{\pm2}^{2} 4_{\pm2} 5_{\pm1} 6_{0}^{3} 8_{0} 2_{\pm5}^{2} 3_{\pm5}^{4} 7_{\pm3}^{2}, \\
& 1_{\pm2}^{2} 4_{\pm2} 5_{\pm1} 6_{0}^{4} 8_{0} 2_{\pm5}^{3} 3_{\pm5}^{4} 7_{\pm3}^{2},
1_{\pm2} 4_{\pm2} 5_{\pm1} 6_{0}^{2} 8_{0} 2_{\pm5}^{2} 3_{\pm5}^{3} 7_{\pm3},
1_{\pm2}^{2} 4_{\pm2} 5_{\pm1} 6_{0}^{2} 8_{0} 2_{\pm5}^{2} 3_{\pm5}^{3} 7_{\pm3}^{2},
1_{\pm2}^{2} 4_{\pm2} 5_{\pm1} 6_{0}^{3} 8_{0} 2_{\pm5}^{3} 3_{\pm5}^{3} 7_{\pm3}^{2}, \\
& 1_{\pm2}^{2} 4_{\pm2} 5_{\pm1}^{2} 6_{0}^{3} 8_{0} 2_{\pm5}^{3} 3_{\pm5}^{4} 7_{\pm3}^{2},
1_{\pm2} 4_{\pm2} 5_{\pm1} 6_{0} 8_{0} 2_{\pm5}^{2} 3_{\pm5}^{2} 7_{\pm3},
1_{\pm2} 4_{\pm2} 6_{0}^{2} 8_{0} 2_{\pm5} 3_{\pm5}^{2} 7_{\pm3}^{2},
1_{\pm2} 4_{\pm2} 6_{0}^{3} 8_{0} 2_{\pm5}^{2} 3_{\pm5}^{2} 7_{\pm3}^{2}, \\
& 1_{\pm2} 4_{\pm2} 5_{\pm1} 6_{0}^{3} 8_{0} 2_{\pm5}^{2} 3_{\pm5}^{3} 7_{\pm3}^{2},
1_{\pm2}^{2} 4_{\pm2}^{2} 5_{\pm1} 6_{0}^{3} 8_{0} 2_{\pm5}^{3} 3_{\pm5}^{4} 7_{\pm3}^{2},
1_{\pm2} 4_{\pm2} 5_{\pm1} 6_{0}^{2} 8_{0} 2_{\pm5}^{2} 3_{\pm5}^{2} 7_{\pm3}^{2},
1_{\pm2} 6_{0}^{2} 8_{0} 2_{\pm5} 3_{\pm5} 7_{\pm3}, \\
& 1_{\pm2} 5_{\pm1} 6_{0}^{2} 8_{0} 2_{\pm5} 3_{\pm5}^{2} 7_{\pm3},
1_{\pm2}^{2} 4_{\pm2} 5_{\pm1} 6_{0}^{2} 8_{0} 2_{\pm5}^{2} 3_{\pm5}^{3} 7_{\pm3},
1_{\pm2}^{2} 4_{\pm2} 5_{\pm1} 6_{0}^{3} 8_{0} 2_{\pm5}^{2} 3_{\pm5}^{3} 7_{\pm3}^{2},
1_{\pm2} 4_{\pm2} 6_{0}^{2} 8_{0} 2_{\pm5} 3_{\pm5}^{2} 7_{\pm3}, \\
& 1_{\pm2} 5_{\pm1} 6_{0} 8_{0} 2_{\pm5} 3_{\pm5} 7_{\pm3},
1_{\pm2} 4_{\pm2} 5_{\pm1} 6_{0}^{2} 8_{0} 2_{\pm5}^{2} 3_{\pm5}^{2} 7_{\pm3},
1_{\pm2} 4_{\pm2} 5_{\pm1} 6_{0} 8_{0} 2_{\pm5} 3_{\pm5}^{2} 7_{\pm3},
1_{\pm2} 4_{\pm2} 6_{0} 8_{0} 2_{\pm5} 3_{\pm5} 7_{\pm3}.
\end{align*}
\end{gather}

\begin{gather}
\begin{align*}
& (46)  1_{\pm3} 7_{0} 2_{\pm6} 3_{\pm6} 8_{\pm3},
1_{\pm3} 7_{0}^{2} 2_{\pm6} 3_{\pm6}^{2} 8_{\pm3},
1_{\pm3} 6_{\pm1} 7_{0} 2_{\pm6} 3_{\pm6}^{2} 8_{\pm3},
1_{\pm3} 6_{\pm1} 7_{0}^{2} 2_{\pm6}^{2} 3_{\pm6}^{2} 8_{\pm3},  \\
& 1_{\pm3} 5_{\pm2} 7_{0} 2_{\pm6} 3_{\pm6}^{2} 8_{\pm3},
1_{\pm3} 5_{\pm2} 7_{0}^{2} 2_{\pm6}^{2} 3_{\pm6}^{2} 8_{\pm3},
1_{\pm3} 5_{\pm2} 6_{\pm1} 7_{0}^{2} 2_{\pm6}^{2} 3_{\pm6}^{3} 8_{\pm3},
1 _{\pm3} 5_{\pm2} 6_{\pm1} 7_{0} 2_{\pm6}^{2} 3_{\pm6}^{2} 8_{\pm3}, \\
& 1_{\pm3} 4_{\pm3} 7_{0} 2_{\pm6} 3_{\pm6}^{2} 8_{\pm3},
 1_{\pm3} 4_{\pm3} 7_{0}^{2} 2_{\pm6}^{2} 3_{\pm6}^{2} 8_{\pm3},
 1_{\pm3} 4_{\pm3} 6_{\pm1} 7_{0}^{2} 2_{\pm6}^{2} 3_{\pm6}^{3} 8_{\pm3},
 1_{\pm3}^{2} 4_{\pm3} 5_{\pm2} 6_{\pm1} 7_{0}^{2} 2_{\pm6}^{3} 3_{\pm6}^{4} 8_{\pm3}, \\
& 1_{\pm3} 4_{\pm3} 5_{\pm2} 7_{0}^{2} 2_{\pm6}^{2} 3_{\pm6}^{3} 8_{\pm3},
 1_{\pm3} 4_{\pm3} 6_{\pm1} 7_{0} 2_{\pm6}^{2} 3_{\pm6}^{2} 8_{\pm3},
 1_{\pm3} 7_{0}^{2} 2_{\pm6} 3_{\pm6} 8_{\pm3},
 1_{\pm3} 6_{\pm1} 7_{0}^{2} 2_{\pm6} 3_{\pm6}^{2} 8_{\pm3}, \\
& 1_{\pm3}^{2} 5_{\pm2} 6_{\pm1} 7_{0}^{2} 2_{\pm6}^{2} 3_{\pm6}^{3} 8_{\pm3},
 1_{\pm3}^{2} 4_{\pm3} 5_{\pm2} 6_{\pm1} 7_{0}^{3} 2_{\pm6}^{3} 3_{\pm6}^{4} 8_{\pm3}^{2},
 1_{\pm3}^{2} 4_{\pm3} 6_{\pm1} 7_{0}^{2} 2_{\pm6}^{2} 3_{\pm6}^{3} 8_{\pm3},
 1_{\pm3} 5_{\pm2} 7_{0}^{2} 2_{\pm6} 3_{\pm6}^{2} 8_{\pm3}, \\
& 1_{\pm3} 6_{\pm1} 7_{0} 2_{\pm6} 3_{\pm6} 8_{\pm3},
 1_{\pm3} 4_{\pm3} 5_{\pm2} 6_{\pm1} 7_{0}^{2} 2_{\pm6}^{3} 3_{\pm6}^{3} 8_{\pm3},
 1_{\pm3}^{2} 4_{\pm3} 5_{\pm2} 6_{\pm1} 7_{0}^{3} 2_{\pm6}^{3} 3_{\pm6}^{4} 8_{\pm3},
 1_{\pm3} 5_{\pm2} 6_{\pm1} 7_{0}^{2} 2_{\pm6}^{2} 3_{\pm6}^{2} 8_{\pm3}, \\
& 1_{\pm3} 4_{\pm3} 5_{\pm2} 6_{\pm1} 7_{0} 2_{\pm6}^{2} 3_{\pm6}^{3} 8_{\pm3},
 1_{\pm3}^{2} 4_{\pm3} 5_{\pm2} 6_{\pm1} 7_{0}^{2} 2_{\pm6}^{2} 3_{\pm6}^{4} 8_{\pm3},
 1_{\pm3}^{2} 4_{\pm3} 5_{\pm2} 6_{\pm1}^{2} 7_{0}^{2} 2_{\pm6}^{3} 3_{\pm6}^{4} 8_{\pm3},
 1_{\pm3} 5_{\pm2} 6_{\pm1} 7_{0} 2_{\pm6} 3_{\pm6}^{2} 8_{\pm3}, \\
& 1_{\pm3} 4_{\pm3} 5_{\pm2} 7_{0} 2_{\pm6}^{2} 3_{\pm6}^{2} 8_{\pm3},
 1_{\pm3}^{2} 4_{\pm3} 5_{\pm2} 7_{0}^{2} 2_{\pm6}^{2} 3_{\pm6}^{3} 8_{\pm3},
 1_{\pm3}^{2} 4_{\pm3} 5_{\pm2} 6_{\pm1} 7_{0}^{2} 2_{\pm6}^{3} 3_{\pm6}^{3} 8_{\pm3},
 1_{\pm3}^{2} 4_{\pm3} 5_{\pm2}^{2} 6_{\pm1} 7_{0}^{2} 2_{\pm6}^{3} 3_{\pm6}^{4} 8_{\pm3}, \\
& 1_{\pm3}^{2} 4_{\pm3} 5_{\pm2} 6_{\pm1} 7_{0} 2_{\pm6}^{2} 3_{\pm6}^{3} 8_{\pm3},
 1_{\pm3} 5_{\pm2} 7_{0} 2_{\pm6} 3_{\pm6} 8_{\pm3},
 1_{\pm3} 4_{\pm3} 7_{0}^{2} 2_{\pm6} 3_{\pm6}^{2} 8_{\pm3},
 1_{\pm3} 4_{\pm3} 6_{\pm1} 7_{0}^{2} 2_{\pm6}^{2} 3_{\pm6}^{2} 8_{\pm3}, \\
& 1_{\pm3} 4_{\pm3} 5_{\pm2} 6_{\pm1} 7_{0}^{2} 2_{\pm6}^{2} 3_{\pm6}^{3} 8_{\pm3},
 1_{\pm3}^{2} 4_{\pm3}^{2} 5_{\pm2} 6_{\pm1} 7_{0}^{2} 2_{\pm6}^{3} 3_{\pm6}^{4} 8_{\pm3},
 1_{\pm3} 4_{\pm3} 5_{\pm2} 7_{0}^{2} 2_{\pm6}^{2} 3_{\pm6}^{2} 8_{\pm3},
 1_{\pm3} 4_{\pm3} 6_{\pm1} 7_{0} 2_{\pm6} 3_{\pm6}^{2} 8_{\pm3}, \\
& 1_{\pm3}^{2} 4_{\pm3} 5_{\pm2} 6_{\pm1} 7_{0}^{2} 2_{\pm6}^{2} 3_{\pm6}^{3} 8_{\pm3},
 1_{\pm3} 4_{\pm3} 5_{\pm2} 6_{\pm1} 7_{0} 2_{\pm6}^{2} 3_{\pm6}^{2} 8_{\pm3},
 1_{\pm3} 4_{\pm3} 5_{\pm2} 7_{0} 2_{\pm6} 3_{\pm6}^{2} 8_{\pm3},
 1_{\pm3} 4_{\pm3} 7_{0} 2_{\pm6} 3_{\pm6} 8_{\pm3}. \\
& \quad \\
& (47)  1_{\pm4} 8_{0} 2_{\pm7} 3_{\pm7},
1_{\pm4} 7_{\pm1} 8_{0} 2_{\pm7} 3_{\pm7}^{2},
1_{\pm4} 6_{\pm2} 8_{0} 2_{\pm7} 3_{\pm7}^{2},
1_{\pm4} 6_{\pm2} 7_{\pm1} 8_{0} 2_{\pm7}^{2} 3_{\pm7}^{2}, \\
& 1_{\pm4} 5_{\pm3} 8_{0} 2_{\pm7} 3_{\pm7}^{2},
1_{\pm4} 5_{\pm3} 7_{\pm1} 8_{0} 2_{\pm7}^{2} 3_{\pm7}^{2},
1_{\pm4} 5_{\pm3} 6_{\pm2} 7_{\pm1} 8_{0} 2_{\pm7}^{2} 3_{\pm7}^{3},
1_{\pm4} 5_{\pm3} 6_{\pm2} 8_{0} 2_{\pm7}^{2} 3_{\pm7}^{2}, \\
& 1_{\pm4} 4_{\pm4} 8_{0} 2_{\pm7} 3_{\pm7}^{2},
1_{\pm4} 4_{\pm4} 7_{\pm1} 8_{0} 2_{\pm7}^{2} 3_{\pm7}^{2},
1_{\pm4} 4_{\pm4} 6_{\pm2} 7_{\pm1} 8_{0} 2_{\pm7}^{2} 3_{\pm7}^{3},
1_{\pm4}^{2} 4_{\pm4} 5_{\pm3} 6_{\pm2} 7_{\pm1} 8_{0} 2_{\pm7}^{3} 3_{\pm7}^{4}, \\
& 1_{\pm4} 4_{\pm4} 5_{\pm3} 7_{\pm1} 8_{0} 2_{\pm7}^{2} 3_{\pm7}^{3},
1_{\pm4} 4_{\pm4} 6_{\pm2} 8_{0} 2_{\pm7}^{2} 3_{\pm7}^{2},
1_{\pm4} 7_{\pm1} 8_{0} 2_{\pm7} 3_{\pm7},
1_{\pm4} 6_{\pm2} 7_{\pm1} 8_{0} 2_{\pm7} 3_{\pm7}^{2}, \\
& 1_{\pm4}^{2} 5_{\pm3} 6_{\pm2} 7_{\pm1} 8_{0} 2_{\pm7}^{2} 3_{\pm7}^{3},
1_{\pm4}^{2} 4_{\pm4} 5_{\pm3} 6_{\pm2} 7_{\pm1} 8_{0}^{2} 2_{\pm7}^{3} 3_{\pm7}^{4},
1_{\pm4}^{2} 4_{\pm4} 6_{\pm2} 7_{\pm1} 8_{0} 2_{\pm7}^{2} 3_{\pm7}^{3},
1_{\pm4} 5_{\pm3} 7_{\pm1} 8_{0} 2_{\pm7} 3_{\pm7}^{2}, \\
& 1_{\pm4} 6_{\pm2} 8_{0} 2_{\pm7} 3_{\pm7},
1_{\pm4} 4_{\pm4} 5_{\pm3} 6_{\pm2} 7_{\pm1} 8_{0} 2_{\pm7}^{3} 3_{\pm7}^{3},
1_{\pm4}^{2} 4_{\pm4} 5_{\pm3} 6_{\pm2} 7_{\pm1}^{2} 8_{0} 2_{\pm7}^{3} 3_{\pm7}^{4},
1_{\pm4} 5_{\pm3} 6_{\pm2} 7_{\pm1} 8_{0} 2_{\pm7}^{2} 3_{\pm7}^{2}, \\
& 1_{\pm4} 4_{\pm4} 5_{\pm3} 6_{\pm2} 8_{0} 2_{\pm7}^{2} 3_{\pm7}^{3},
1_{\pm4}^{2} 4_{\pm4} 5_{\pm3} 6_{\pm2} 7_{\pm1} 8_{0} 2_{\pm7}^{2} 3_{\pm7}^{4},
1_{\pm4}^{2} 4_{\pm4} 5_{\pm3} 6_{\pm2}^{2} 7_{\pm1} 8_{0} 2_{\pm7}^{3} 3_{\pm7}^{4},
1_{\pm4} 5_{\pm3} 6_{\pm2} 8_{0} 2_{\pm7} 3_{\pm7}^{2}, \\
& 1_{\pm4} 4_{\pm4} 5_{\pm3} 8_{0} 2_{\pm7}^{2} 3_{\pm7}^{2},
1_{\pm4}^{2} 4_{\pm4} 5_{\pm3} 7_{\pm1} 8_{0} 2_{\pm7}^{2} 3_{\pm7}^{3},
1_{\pm4}^{2} 4_{\pm4} 5_{\pm3} 6_{\pm2} 7_{\pm1} 8_{0} 2_{\pm7}^{3} 3_{\pm7}^{3},
1_{\pm4}^{2} 4_{\pm4} 5_{\pm3}^{2} 6_{\pm2} 7_{\pm1} 8_{0} 2_{\pm7}^{3} 3_{\pm7}^{4}, \\
& 1_{\pm4}^{2} 4_{\pm4} 5_{\pm3} 6_{\pm2} 8_{0} 2_{\pm7}^{2} 3_{\pm7}^{3},
1_{\pm4} 5_{\pm3} 8_{0} 2_{\pm7} 3_{\pm7},
1_{\pm4} 4_{\pm4} 7_{\pm1} 8_{0} 2_{\pm7} 3_{\pm7}^{2},
1_{\pm4} 4_{\pm4} 6_{\pm2} 7_{\pm1} 8_{0} 2_{\pm7}^{2} 3_{\pm7}^{2}, \\
& 1_{\pm4} 4_{\pm4} 5_{\pm3} 6_{\pm2} 7_{\pm1} 8_{0} 2_{\pm7}^{2} 3_{\pm7}^{3},
1_{\pm4}^{2} 4_{\pm4}^{2} 5_{\pm3} 6_{\pm2} 7_{\pm1} 8_{0} 2_{\pm7}^{3} 3_{\pm7}^{4},
1_{\pm4} 4_{\pm4} 5_{\pm3} 7_{\pm1} 8_{0} 2_{\pm7}^{2} 3_{\pm7}^{2},
1_{\pm4} 4_{\pm4} 6_{\pm2} 8_{0} 2_{\pm7} 3_{\pm7}^{2}, \\
& 1_{\pm4}^{2} 4_{\pm4} 5_{\pm3} 6_{\pm2} 7_{\pm1} 8_{0} 2_{\pm7}^{2} 3_{\pm7}^{3},
1_{\pm4} 4_{\pm4} 5_{\pm3} 6_{\pm2} 8_{0} 2_{\pm7}^{2} 3_{\pm7}^{2},
1_{\pm4} 4_{\pm4} 5_{\pm3} 8_{0} 2_{\pm7} 3_{\pm7}^{2},
1_{\pm4} 4_{\pm4} 8_{0} 2_{\pm7} 3_{\pm7}. \\
& \quad \\
& (48)  1_{\pm1}2_{0}4_{\pm1}3_{\pm4}^{2} 5_{\pm4}8_{\pm7},
1_{\pm1}2_{0}^{2} 4_{\pm1}3_{\pm4}^{2} 5_{\pm4}6_{\pm5}8_{\pm7},
1_{\pm1}2_{0}^{2} 4_{\pm1}^{2} 3_{\pm4}^{3}  5_{\pm4}6_{\pm5}7_{\pm6}8_{\pm7},
1_{\pm1}2_{0}^{2} 4_{\pm1}3_{\pm4}^{2} 5_{\pm4}7_{\pm6}8_{\pm7}, \\
& 1_{\pm1}2_{0}4_{\pm1}3_{\pm4}^{2} 6_{\pm5}8_{\pm7},
1_{\pm1}^{2} 2_{0}^{3}  4_{\pm1}^{2} 3_{\pm4}^{4}  5_{\pm4}6_{\pm5}7_{\pm6}8_{\pm7},
1_{\pm1}2_{0}^{2} 4_{\pm1}^{2} 3_{\pm4}^{3}  5_{\pm4}6_{\pm5}8_{\pm7},
1_{\pm1}^{2} 2_{0}^{2} 4_{\pm1}^{2} 3_{\pm4}^{3}  5_{\pm4}6_{\pm5}7_{\pm6}8_{\pm7}, \\
& 1_{\pm1}2_{0}^{2} 4_{\pm1}3_{\pm4}^{2} 6_{\pm5}7_{\pm6}8_{\pm7},
1_{\pm1}2_{0}4_{\pm1}3_{\pm4}^{2} 7_{\pm6}8_{\pm7},
1_{\pm1}2_{0}3_{\pm4}8_{\pm7},
1_{\pm1}^{2} 2_{0}^{3}  4_{\pm1}^{3}  3_{\pm4}^{4}  5_{\pm4}^{2} 6_{\pm5}7_{\pm6}8_{\pm7}, \\
& 1_{\pm1}2_{0}^{3}  4_{\pm1}^{2} 3_{\pm4}^{3}  5_{\pm4}6_{\pm5}7_{\pm6}8_{\pm7},
1_{\pm1}2_{0}^{2} 4_{\pm1}^{2} 3_{\pm4}^{3}  5_{\pm4}7_{\pm6}8_{\pm7},
1_{\pm1}2_{0}^{2} 4_{\pm1}3_{\pm4}^{2} 5_{\pm4}8_{\pm7},
1_{\pm1}2_{0}4_{\pm1}^{2} 3_{\pm4}^{2} 5_{\pm4}6_{\pm5}8_{\pm7}, \\
& 1_{\pm1}^{2} 2_{0}^{3}  4_{\pm1}^{3}  3_{\pm4}^{4}  5_{\pm4}6_{\pm5}^{2} 7_{\pm6}8_{\pm7},
1_{\pm1}2_{0}^{2} 4_{\pm1}^{2} 3_{\pm4}^{2} 5_{\pm4}6_{\pm5}7_{\pm6}8_{\pm7},
1_{\pm1}^{2} 2_{0}^{2} 4_{\pm1}^{3}  3_{\pm4}^{4}  5_{\pm4}6_{\pm5}7_{\pm6}8_{\pm7},
1_{\pm1}^{2} 2_{0}^{2} 4_{\pm1}^{2} 3_{\pm4}^{3}  5_{\pm4}6_{\pm5}8_{\pm7}, \\
& 1_{\pm1}^{2} 2_{0}^{3}  4_{\pm1}^{3}  3_{\pm4}^{4}  5_{\pm4}6_{\pm5}7_{\pm6}^{2} 8_{\pm7},
1_{\pm1}2_{0}^{2} 4_{\pm1}^{2} 3_{\pm4}^{3}  6_{\pm5}7_{\pm6}8_{\pm7},
1_{\pm1}^{2} 2_{0}^{3}  4_{\pm1}^{2} 3_{\pm4}^{3}  5_{\pm4}6_{\pm5}7_{\pm6}8_{\pm7},
1_{\pm1}2_{0}4_{\pm1}^{2} 3_{\pm4}^{2} 5_{\pm4}7_{\pm6}8_{\pm7}, \\
& 1_{\pm1}^{2} 2_{0}^{3}  4_{\pm1}^{3}  3_{\pm4}^{4}  5_{\pm4}6_{\pm5}7_{\pm6}8_{\pm7}^{2},
1_{\pm1}^{2} 2_{0}^{2} 4_{\pm1}^{2} 3_{\pm4}^{3}  5_{\pm4}7_{\pm6}8_{\pm7},
1_{\pm1}2_{0}^{2} 4_{\pm1}^{3}  3_{\pm4}^{3}  5_{\pm4}6_{\pm5}7_{\pm6}8_{\pm7},
1_{\pm1}2_{0}4_{\pm1}^{2} 3_{\pm4}^{2} 6_{\pm5}7_{\pm6}8_{\pm7}, \\
& 1_{\pm1}2_{0}4_{\pm1}3_{\pm4}5_{\pm4}8_{\pm7},
1_{\pm1}2_{0}^{2} 4_{\pm1}3_{\pm4}^{2} 6_{\pm5}8_{\pm7},
1_{\pm1}^{2} 2_{0}^{3}  4_{\pm1}^{3}  3_{\pm4}^{4}  5_{\pm4}6_{\pm5}7_{\pm6}8_{\pm7},
1_{\pm1}2_{0}^{2} 4_{\pm1}^{2} 3_{\pm4}^{2} 5_{\pm4}6_{\pm5}8_{\pm7}, \\
& 1_{\pm1}^{2} 2_{0}^{2} 4_{\pm1}^{2} 3_{\pm4}^{3}  6_{\pm5}7_{\pm6} 8_{\pm7},
1_{\pm1}2_{0}^{2} 4_{\pm1}3_{\pm4}^{2} 7_{\pm6}8_{\pm7},
1_{\pm1}2_{0}4_{\pm1}3_{\pm4}^{2} 8_{\pm7},
1_{\pm1}^{2} 2_{0}^{2} 4_{\pm1}^{3}  3_{\pm4}^{3}  5_{\pm4}6_{\pm5}7_{\pm6} 8_{\pm7}, \\
& 1_{\pm1}2_{0}^{2} 4_{\pm1}^{2} 3_{\pm4}^{2} 5_{\pm4}7_{\pm6}8_{\pm7},
1_{\pm1}2_{0}4_{\pm1}^{2} 3_{\pm4}^{2} 5_{\pm4}8_{\pm7},
1_{\pm1}2_{0}4_{\pm1}3_{\pm4}6_{\pm5}8_{\pm7},
1_{\pm1}2_{0}^{2} 4_{\pm1}^{2} 3_{\pm4}^{2} 6_{\pm5}7_{\pm6} 8_{\pm7}, \\
& 1_{\pm1}2_{0}4_{\pm1}3_{\pm4}7_{\pm6}8_{\pm7},
1_{\pm1}2_{0}4_{\pm1}^{2} 3_{\pm4}^{2} 6_{\pm5}8_{\pm7},
1_{\pm1}2_{0}4_{\pm1}^{2} 3_{\pm4}^{2} 7_{\pm6}8_{\pm7},
1_{\pm1}2_{0}4_{\pm1}3_{\pm4}8_{\pm7}.
\end{align*}
\end{gather}

\begin{gather}
\begin{align*}
& (49)  1_{\pm1} 2_{0} 4_{\pm1} 8_{\pm3} 3_{\pm4}^{2} 5_{\pm4} 7_{\pm6},
1_{\pm1} 2_{0}^{2} 4_{\pm1} 8_{\pm3} 3_{\pm4}^{2} 5_{\pm4} 6_{\pm5} 7_{\pm6},
1_{\pm1} 2_{0}^{2} 4_{\pm1}^{2} 8_{\pm3} 3_{\pm4}^{3} 5_{\pm4} 6_{\pm5} 7_{\pm6}^{2},
1_{\pm1} 2_{0}^{2} 4_{\pm1} 8_{\pm3} 3_{\pm4}^{2} 5_{\pm4} 7_{\pm6}^{2}, \\
& 1_{\pm1} 2_{0} 4_{\pm1} 8_{\pm3} 3_{\pm4}^{2} 6_{\pm5} 7_{\pm6},
1_{\pm1}^{2} 2_{0}^{3} 4_{\pm1}^{2} 8_{\pm3} 3_{\pm4}^{4} 5_{\pm4} 6_{\pm5} 7_{\pm6}^{2},
1_{\pm1} 2_{0}^{2} 4_{\pm1}^{2} 8_{\pm3} 3_{\pm4}^{3} 5_{\pm4} 6_{\pm5} 7_{\pm6},
1_{\pm1}^{2} 2_{0}^{2} 4_{\pm1}^{2} 8_{\pm3} 3_{\pm4}^{3} 5_{\pm4} 6_{\pm5} 7_{\pm6}^{2}, \\
& 1_{\pm1} 2_{0}^{2} 4_{\pm1} 8_{\pm3} 3_{\pm4}^{2} 6_{\pm5} 7_{\pm6}^{2},
1_{\pm1} 2_{0} 4_{\pm1} 8_{\pm3} 3_{\pm4}^{2} 7_{\pm6}^{2},
1_{\pm1}^{2} 2_{0}^{3} 4_{\pm1}^{3} 8_{\pm3} 3_{\pm4}^{4} 5_{\pm4}^{2} 6_{\pm5} 7_{\pm6}^{2},
1_{\pm1} 2_{0}^{3} 4_{\pm1}^{2} 8_{\pm3} 3_{\pm4}^{3} 5_{\pm4} 6_{\pm5} 7_{\pm6}^{2}, \\
& 1_{\pm1} 2_{0}^{2} 4_{\pm1}^{2} 8_{\pm3} 3_{\pm4}^{3} 5_{\pm4} 7_{\pm6}^{2},
1_{\pm1} 2_{0} 8_{\pm3} 3_{\pm4} 7_{\pm6},
1_{\pm1} 2_{0}^{2} 4_{\pm1} 8_{\pm3} 3_{\pm4}^{2} 5_{\pm4} 7_{\pm6},
1_{\pm1} 2_{0} 4_{\pm1}^{2} 8_{\pm3} 3_{\pm4}^{2} 5_{\pm4} 6_{\pm5} 7_{\pm6}, \\
& 1_{\pm1}^{2} 2_{0}^{3} 4_{\pm1}^{3} 8_{\pm3} 3_{\pm4}^{4} 5_{\pm4} 6_{\pm5}^{2} 7_{\pm6}^{2},
1_{\pm1} 2_{0}^{2} 4_{\pm1}^{2} 8_{\pm3} 3_{\pm4}^{2} 5_{\pm4} 6_{\pm5} 7_{\pm6}^{2},
1_{\pm1}^{2} 2_{0}^{2} 4_{\pm1}^{3} 8_{\pm3} 3_{\pm4}^{4} 5_{\pm4} 6_{\pm5} 7_{\pm6}^{2},
1_{\pm1}^{2} 2_{0}^{2} 4_{\pm1}^{2} 8_{\pm3} 3_{\pm4}^{3} 5_{\pm4} 6_{\pm5} 7_{\pm6}, \\
& 1_{\pm1}^{2} 2_{0}^{3} 4_{\pm1}^{3} 8_{\pm3} 3_{\pm4}^{4} 5_{\pm4} 6_{\pm5} 7_{\pm6}^{3},
1_{\pm1} 2_{0}^{2} 4_{\pm1}^{2} 8_{\pm3} 3_{\pm4}^{3} 6_{\pm5} 7_{\pm6}^{2},
1_{\pm1}^{2} 2_{0}^{3} 4_{\pm1}^{2} 8_{\pm3} 3_{\pm4}^{3} 5_{\pm4} 6_{\pm5} 7_{\pm6}^{2},
1_{\pm1} 2_{0} 4_{\pm1}^{2} 8_{\pm3} 3_{\pm4}^{2} 5_{\pm4} 7_{\pm6}^{2}, \\
& 1_{\pm1}^{2} 2_{0}^{3} 4_{\pm1}^{3} 8_{\pm3}^{2} 3_{\pm4}^{4} 5_{\pm4} 6_{\pm5} 7_{\pm6}^{3},
1_{\pm1}^{2} 2_{0}^{2} 4_{\pm1}^{2} 8_{\pm3} 3_{\pm4}^{3} 5_{\pm4} 7_{\pm6}^{2},
1_{\pm1} 2_{0}^{2} 4_{\pm1}^{3} 8_{\pm3} 3_{\pm4}^{3} 5_{\pm4} 6_{\pm5} 7_{\pm6}^{2},
1_{\pm1} 2_{0} 4_{\pm1}^{2} 8_{\pm3} 3_{\pm4}^{2} 6_{\pm5} 7_{\pm6}^{2}, \\
& 1_{\pm1} 2_{0} 4_{\pm1} 8_{\pm3} 3_{\pm4} 5_{\pm4} 7_{\pm6},
1_{\pm1} 2_{0}^{2} 4_{\pm1} 8_{\pm3} 3_{\pm4}^{2} 6_{\pm5} 7_{\pm6},
1_{\pm1}^{2} 2_{0}^{3} 4_{\pm1}^{3} 8_{\pm3} 3_{\pm4}^{4} 5_{\pm4} 6_{\pm5} 7_{\pm6}^{2},
1_{\pm1} 2_{0}^{2} 4_{\pm1}^{2} 8_{\pm3} 3_{\pm4}^{2} 5_{\pm4} 6_{\pm5} 7_{\pm6}, \\
& 1_{\pm1}^{2} 2_{0}^{2} 4_{\pm1}^{2} 8_{\pm3} 3_{\pm4}^{3} 6_{\pm5} 7_{\pm6}^{2},
1_{\pm1} 2_{0}^{2} 4_{\pm1} 8_{\pm3} 3_{\pm4}^{2} 7_{\pm6}^{2},
1_{\pm1} 2_{0} 4_{\pm1} 8_{\pm3} 3_{\pm4}^{2} 7_{\pm6},
1_{\pm1}^{2} 2_{0}^{2} 4_{\pm1}^{3} 8_{\pm3} 3_{\pm4}^{3} 5_{\pm4} 6_{\pm5} 7_{\pm6}^{2}, \\
& 1_{\pm1} 2_{0}^{2} 4_{\pm1}^{2} 8_{\pm3} 3_{\pm4}^{2} 5_{\pm4} 7_{\pm6}^{2},
1_{\pm1} 2_{0} 4_{\pm1}^{2} 8_{\pm3} 3_{\pm4}^{2} 5_{\pm4} 7_{\pm6},
1_{\pm1} 2_{0} 4_{\pm1} 8_{\pm3} 3_{\pm4} 6_{\pm5} 7_{\pm6},
1_{\pm1} 2_{0}^{2} 4_{\pm1}^{2} 8_{\pm3} 3_{\pm4}^{2} 6_{\pm5} 7_{\pm6}^{2}, \\
& 1_{\pm1} 2_{0} 4_{\pm1} 8_{\pm3} 3_{\pm4} 7_{\pm6}^{2},
1_{\pm1} 2_{0} 4_{\pm1}^{2} 8_{\pm3} 3_{\pm4}^{2} 6_{\pm5} 7_{\pm6},
1_{\pm1} 2_{0} 4_{\pm1}^{2} 8_{\pm3} 3_{\pm4}^{2} 7_{\pm6}^{2},
1_{\pm1} 2_{0} 4_{\pm1} 8_{\pm3} 3_{\pm4} 7_{\pm6}. \\
& \quad \\
& (50)  1_{\pm1} 2_{0} 4_{\pm1} 7_{\pm2} 3_{\pm4}^{2} 5_{\pm4} 6_{\pm5} 8_{\pm5},
1_{\pm1} 2_{0}^{2} 4_{\pm1} 7_{\pm2} 3_{\pm4}^{2} 5_{\pm4} 6_{\pm5}^{2} 8_{\pm5},
1_{\pm1} 2_{0} 4_{\pm1} 7_{\pm2} 3_{\pm4}^{2} 6_{\pm5}^{2} 8_{\pm5},
1_{\pm1} 2_{0} 7_{\pm2} 3_{\pm4} 6_{\pm5} 8_{\pm5}, \\
& 1_{\pm1} 2_{0}^{2} 4_{\pm1}^{2} 7_{\pm2}^{2} 3_{\pm4}^{3} 5_{\pm4} 6_{\pm5}^{3} 8_{\pm5},
1_{\pm1} 2_{0}^{2} 4_{\pm1} 7_{\pm2}^{2} 3_{\pm4}^{2} 5_{\pm4} 6_{\pm5}^{2} 8_{\pm5},
1_{\pm1} 2_{0}^{2} 4_{\pm1}^{2} 7_{\pm2} 3_{\pm4}^{3} 5_{\pm4} 6_{\pm5}^{2} 8_{\pm5},
1_{\pm1} 2_{0} 4_{\pm1}^{2} 7_{\pm2} 3_{\pm4}^{2} 5_{\pm4} 6_{\pm5}^{2} 8_{\pm5}, \\
& 1_{\pm1}^{2} 2_{0}^{3} 4_{\pm1}^{2} 7_{\pm2}^{2} 3_{\pm4}^{4}  5_{\pm4} 6_{\pm5}^{3} 8_{\pm5},
1_{\pm1}^{2} 2_{0}^{2} 4_{\pm1}^{2} 7_{\pm2}^{2} 3_{\pm4}^{3} 5_{\pm4} 6_{\pm5}^{3} 8_{\pm5},
1_{\pm1} 2_{0}^{2} 4_{\pm1} 7_{\pm2}^{2} 3_{\pm4}^{2} 6_{\pm5}^{3} 8_{\pm5},
1_{\pm1} 2_{0}^{2} 4_{\pm1} 7_{\pm2} 3_{\pm4}^{2} 5_{\pm4} 6_{\pm5} 8_{\pm5}, \\
& 1_{\pm1}^{2} 2_{0}^{3} 4_{\pm1}^{3} 7_{\pm2}^{2} 3_{\pm4}^{4}  5_{\pm4}^{2} 6_{\pm5}^{3} 8_{\pm5},
1_{\pm1}^{2} 2_{0}^{2} 4_{\pm1}^{2} 7_{\pm2} 3_{\pm4}^{3} 5_{\pm4} 6_{\pm5}^{2} 8_{\pm5},
1_{\pm1} 2_{0}^{3} 4_{\pm1}^{2} 7_{\pm2}^{2} 3_{\pm4}^{3} 5_{\pm4} 6_{\pm5}^{3} 8_{\pm5},
1_{\pm1} 2_{0} 4_{\pm1} 7_{\pm2}^{2} 3_{\pm4}^{2} 6_{\pm5}^{2} 8_{\pm5}, \\
& 1_{\pm1} 2_{0}^{2} 4_{\pm1}^{2} 7_{\pm2}^{2} 3_{\pm4}^{3} 5_{\pm4} 6_{\pm5}^{2} 8_{\pm5},
1_{\pm1} 2_{0} 4_{\pm1} 7_{\pm2} 3_{\pm4} 5_{\pm4} 6_{\pm5} 8_{\pm5},
1_{\pm1}^{2} 2_{0}^{3} 4_{\pm1}^{3} 7_{\pm2}^{2} 3_{\pm4}^{4}  5_{\pm4} 6_{\pm5}^{4}  8_{\pm5},
1_{\pm1} 2_{0}^{2} 4_{\pm1}^{2} 7_{\pm2}^{2} 3_{\pm4}^{2} 5_{\pm4} 6_{\pm5}^{3} 8_{\pm5}, \\
& 1_{\pm1}^{2} 2_{0}^{2} 4_{\pm1}^{3} 7_{\pm2}^{2} 3_{\pm4}^{4}  5_{\pm4} 6_{\pm5}^{3} 8_{\pm5},
1_{\pm1} 2_{0}^{2} 4_{\pm1} 7_{\pm2} 3_{\pm4}^{2} 6_{\pm5}^{2} 8_{\pm5},
1_{\pm1}^{2} 2_{0}^{3} 4_{\pm1}^{3} 7_{\pm2}^{3} 3_{\pm4}^{4}  5_{\pm4} 6_{\pm5}^{4}  8_{\pm5}^{2},
1_{\pm1} 2_{0}^{2} 4_{\pm1}^{2} 7_{\pm2}^{2} 3_{\pm4}^{3} 6_{\pm5}^{3} 8_{\pm5}, \\
& 1_{\pm1}^{2} 2_{0}^{3} 4_{\pm1}^{2} 7_{\pm2}^{2} 3_{\pm4}^{3} 5_{\pm4} 6_{\pm5}^{3} 8_{\pm5},
1_{\pm1} 2_{0}^{2} 4_{\pm1}^{2} 7_{\pm2} 3_{\pm4}^{2} 5_{\pm4} 6_{\pm5}^{2} 8_{\pm5},
1_{\pm1} 2_{0} 4_{\pm1} 7_{\pm2} 3_{\pm4}^{2} 6_{\pm5} 8_{\pm5},
1_{\pm1} 2_{0} 4_{\pm1}^{2} 7_{\pm2}^{2} 3_{\pm4}^{2} 5_{\pm4} 6_{\pm5}^{2} 8_{\pm5}, \\
& 1_{\pm1}^{2} 2_{0}^{3} 4_{\pm1}^{3} 7_{\pm2}^{3} 3_{\pm4}^{4}  5_{\pm4} 6_{\pm5}^{4}  8_{\pm5},
1_{\pm1}^{2} 2_{0}^{2} 4_{\pm1}^{2} 7_{\pm2}^{2} 3_{\pm4}^{3} 5_{\pm4} 6_{\pm5}^{2} 8_{\pm5},
1_{\pm1} 2_{0}^{2} 4_{\pm1}^{3} 7_{\pm2}^{2} 3_{\pm4}^{3} 5_{\pm4} 6_{\pm5}^{3} 8_{\pm5},
1_{\pm1} 2_{0} 4_{\pm1} 7_{\pm2} 3_{\pm4} 6_{\pm5}^{2} 8_{\pm5}, \\
& 1_{\pm1} 2_{0} 4_{\pm1}^{2} 7_{\pm2}^{2} 3_{\pm4}^{2} 6_{\pm5}^{3} 8_{\pm5},
1_{\pm1}^{2} 2_{0}^{3} 4_{\pm1}^{3} 7_{\pm2}^{2} 3_{\pm4}^{4}  5_{\pm4} 6_{\pm5}^{3} 8_{\pm5},
1_{\pm1}^{2} 2_{0}^{2} 4_{\pm1}^{2} 7_{\pm2}^{2} 3_{\pm4}^{3} 6_{\pm5}^{3} 8_{\pm5},
1_{\pm1} 2_{0}^{2} 4_{\pm1} 7_{\pm2}^{2} 3_{\pm4}^{2} 6_{\pm5}^{2} 8_{\pm5}, \\
& 1_{\pm1} 2_{0} 4_{\pm1}^{2} 7_{\pm2} 3_{\pm4}^{2} 5_{\pm4} 6_{\pm5} 8_{\pm5},
1_{\pm1}^{2} 2_{0}^{2} 4_{\pm1}^{3} 7_{\pm2}^{2} 3_{\pm4}^{3} 5_{\pm4} 6_{\pm5}^{3} 8_{\pm5},
1_{\pm1} 2_{0} 4_{\pm1}^{2} 7_{\pm2} 3_{\pm4}^{2} 6_{\pm5}^{2} 8_{\pm5},
1_{\pm1} 2_{0}^{2} 4_{\pm1}^{2} 7_{\pm2}^{2} 3_{\pm4}^{2} 5_{\pm4} 6_{\pm5}^{2} 8_{\pm5}, \\
& 1_{\pm1} 2_{0}^{2} 4_{\pm1}^{2} 7_{\pm2}^{2} 3_{\pm4}^{2} 6_{\pm5}^{3} 8_{\pm5},
1_{\pm1} 2_{0} 4_{\pm1} 7_{\pm2}^{2} 3_{\pm4} 6_{\pm5}^{2} 8_{\pm5},
1_{\pm1} 2_{0} 4_{\pm1}^{2} 7_{\pm2}^{2} 3_{\pm4}^{2} 6_{\pm5}^{2} 8_{\pm5},
1_{\pm1} 2_{0} 4_{\pm1} 7_{\pm2} 3_{\pm4} 6_{\pm5} 8_{\pm5}. \\
& \quad \\
& (51)  1_{\pm1} 2_{0} 4_{\pm1} 8_{\pm1} 3_{\pm4}^{2} 5_{\pm4} 6_{\pm5},
1_{\pm1} 2_{0}^{2} 4_{\pm1} 8_{\pm1} 3_{\pm4}^{2} 5_{\pm4} 6_{\pm5}^{2},
1_{\pm1} 2_{0} 4_{\pm1} 8_{\pm1} 3_{\pm4}^{2} 6_{\pm5}^{2},
1_{\pm1} 2_{0} 8_{\pm1} 3_{\pm4} 6_{\pm5}, \\
& 1_{\pm1} 2_{0}^{2} 4_{\pm1}^{2} 7_{\pm2} 8_{\pm1} 3_{\pm4}^{3} 5_{\pm4} 6_{\pm5}^{3},
1_{\pm1} 2_{0}^{2} 4_{\pm1} 7_{\pm2} 8_{\pm1} 3_{\pm4}^{2} 5_{\pm4} 6_{\pm5}^{2},
1_{\pm1} 2_{0}^{2} 4_{\pm1}^{2} 8_{\pm1} 3_{\pm4}^{3} 5_{\pm4} 6_{\pm5}^{2},
1_{\pm1} 2_{0} 4_{\pm1}^{2} 8_{\pm1} 3_{\pm4}^{2} 5_{\pm4} 6_{\pm5}^{2}, \\
& 1_{\pm1}^{2} 2_{0}^{3} 4_{\pm1}^{2} 7_{\pm2} 8_{\pm1} 3_{\pm4}^{4} 5_{\pm4} 6_{\pm5}^{3},
1_{\pm1}^{2} 2_{0}^{2} 4_{\pm1}^{2} 7_{\pm2} 8_{\pm1} 3_{\pm4}^{3} 5_{\pm4} 6_{\pm5}^{3},
1_{\pm1} 2_{0}^{2} 4_{\pm1} 7_{\pm2} 8_{\pm1} 3_{\pm4}^{2} 6_{\pm5}^{3},
1_{\pm1} 2_{0}^{2} 4_{\pm1} 8_{\pm1} 3_{\pm4}^{2} 5_{\pm4} 6_{\pm5}, \\
& 1_{\pm1}^{2} 2_{0}^{3} 4_{\pm1}^{3} 7_{\pm2} 8_{\pm1} 3_{\pm4}^{4} 5_{\pm4}^{2} 6_{\pm5}^{3},
1_{\pm1}^{2} 2_{0}^{2} 4_{\pm1}^{2} 8_{\pm1} 3_{\pm4}^{3} 5_{\pm4} 6_{\pm5}^{2},
1_{\pm1} 2_{0}^{3} 4_{\pm1}^{2} 7_{\pm2} 8_{\pm1} 3_{\pm4}^{3} 5_{\pm4} 6_{\pm5}^{3},
1_{\pm1} 2_{0} 4_{\pm1} 7_{\pm2} 8_{\pm1} 3_{\pm4}^{2} 6_{\pm5}^{2}, \\
& 1_{\pm1} 2_{0}^{2} 4_{\pm1}^{2} 7_{\pm2} 8_{\pm1} 3_{\pm4}^{3} 5_{\pm4} 6_{\pm5}^{2},
1_{\pm1} 2_{0} 4_{\pm1} 8_{\pm1} 3_{\pm4} 5_{\pm4} 6_{\pm5},
1_{\pm1}^{2} 2_{0}^{3} 4_{\pm1}^{3} 7_{\pm2} 8_{\pm1} 3_{\pm4}^{4} 5_{\pm4} 6_{\pm5}^{4},
1_{\pm1} 2_{0}^{2} 4_{\pm1}^{2} 7_{\pm2} 8_{\pm1} 3_{\pm4}^{2} 5_{\pm4} 6_{\pm5}^{3}, \\
& 1_{\pm1}^{2} 2_{0}^{2} 4_{\pm1}^{3} 7_{\pm2} 8_{\pm1} 3_{\pm4}^{4} 5_{\pm4} 6_{\pm5}^{3},
1_{\pm1} 2_{0}^{2} 4_{\pm1} 8_{\pm1} 3_{\pm4}^{2} 6_{\pm5}^{2},
1_{\pm1}^{2} 2_{0}^{3} 4_{\pm1}^{3} 7_{\pm2} 8_{\pm1}^{2} 3_{\pm4}^{4} 5_{\pm4} 6_{\pm5}^{4},
1_{\pm1} 2_{0}^{2} 4_{\pm1}^{2} 7_{\pm2} 8_{\pm1} 3_{\pm4}^{3} 6_{\pm5}^{3}, \\
& 1_{\pm1}^{2} 2_{0}^{3} 4_{\pm1}^{2} 7_{\pm2} 8_{\pm1} 3_{\pm4}^{3} 5_{\pm4} 6_{\pm5}^{3},
1_{\pm1} 2_{0}^{2} 4_{\pm1}^{2} 8_{\pm1} 3_{\pm4}^{2} 5_{\pm4} 6_{\pm5}^{2},
1_{\pm1} 2_{0} 4_{\pm1} 8_{\pm1} 3_{\pm4}^{2} 6_{\pm5},
1_{\pm1} 2_{0} 4_{\pm1}^{2} 7_{\pm2} 8_{\pm1} 3_{\pm4}^{2} 5_{\pm4} 6_{\pm5}^{2}, \\
& 1_{\pm1}^{2} 2_{0}^{3} 4_{\pm1}^{3} 7_{\pm2}^{2} 8_{\pm1} 3_{\pm4}^{4} 5_{\pm4} 6_{\pm5}^{4},
1_{\pm1}^{2} 2_{0}^{2} 4_{\pm1}^{2} 7_{\pm2} 8_{\pm1} 3_{\pm4}^{3} 5_{\pm4} 6_{\pm5}^{2},
1_{\pm1} 2_{0}^{2} 4_{\pm1}^{3} 7_{\pm2} 8_{\pm1} 3_{\pm4}^{3} 5_{\pm4} 6_{\pm5}^{3},
1_{\pm1} 2_{0} 4_{\pm1} 8_{\pm1} 3_{\pm4} 6_{\pm5}^{2}, \\
& 1_{\pm1} 2_{0} 4_{\pm1}^{2} 7_{\pm2} 8_{\pm1} 3_{\pm4}^{2} 6_{\pm5}^{3},
1_{\pm1}^{2} 2_{0}^{3} 4_{\pm1}^{3} 7_{\pm2} 8_{\pm1} 3_{\pm4}^{4} 5_{\pm4} 6_{\pm5}^{3},
1_{\pm1}^{2} 2_{0}^{2} 4_{\pm1}^{2} 7_{\pm2} 8_{\pm1} 3_{\pm4}^{3} 6_{\pm5}^{3},
1_{\pm1} 2_{0}^{2} 4_{\pm1} 7_{\pm2} 8_{\pm1} 3_{\pm4}^{2} 6_{\pm5}^{2}, \\
& 1_{\pm1} 2_{0} 4_{\pm1}^{2} 8_{\pm1} 3_{\pm4}^{2} 5_{\pm4} 6_{\pm5},
1_{\pm1}^{2} 2_{0}^{2} 4_{\pm1}^{3} 7_{\pm2} 8_{\pm1} 3_{\pm4}^{3} 5_{\pm4} 6_{\pm5}^{3},
1_{\pm1} 2_{0} 4_{\pm1}^{2} 8_{\pm1} 3_{\pm4}^{2} 6_{\pm5}^{2},
1_{\pm1} 2_{0}^{2} 4_{\pm1}^{2} 7_{\pm2} 8_{\pm1} 3_{\pm4}^{2} 5_{\pm4} 6_{\pm5}^{2}, \\
& 1_{\pm1} 2_{0}^{2} 4_{\pm1}^{2} 7_{\pm2} 8_{\pm1} 3_{\pm4}^{2} 6_{\pm5}^{3},
1_{\pm1} 2_{0} 4_{\pm1} 7_{\pm2} 8_{\pm1} 3_{\pm4} 6_{\pm5}^{2},
1_{\pm1} 2_{0} 4_{\pm1}^{2} 7_{\pm2} 8_{\pm1} 3_{\pm4}^{2} 6_{\pm5}^{2},
1_{\pm1} 2_{0} 4_{\pm1} 8_{\pm1} 3_{\pm4} 6_{\pm5}.
\end{align*}
\end{gather}

\begin{gather}
\begin{align*}
& (52)  1_{\pm1}  2_{0}  4_{\pm1}  6_{\pm1}  3_{\pm4}^{2} 5_{\pm4}^{2} 8_{\pm5},
1_{\pm1}  2_{0}^{2} 4_{\pm1}  6_{\pm1}^{2} 3_{\pm4}^{2} 5_{\pm4}^{3} 7_{\pm4}  8_{\pm5},
1_{\pm1}  2_{0}  4_{\pm1}  6_{\pm1}^{2} 3_{\pm4}^{2} 5_{\pm4}^{2} 7_{\pm4}  8_{\pm5},
1_{\pm1}  2_{0}  6_{\pm1}  3_{\pm4}  5_{\pm4}  8_{\pm5}, \\
& 1_{\pm1}  2_{0}^{2} 4_{\pm1}^{2} 6_{\pm1}^{3} 3_{\pm4}^{3} 5_{\pm4}^{4} 7_{\pm4}  8_{\pm5},
1_{\pm1}  2_{0}^{2} 4_{\pm1}  6_{\pm1}^{2} 3_{\pm4}^{2} 5_{\pm4}^{3} 8_{\pm5},
1_{\pm1}  2_{0}^{2} 4_{\pm1}^{2} 6_{\pm1}^{2} 3_{\pm4}^{3} 5_{\pm4}^{3} 7_{\pm4}  8_{\pm5},
1_{\pm1}^{2} 2_{0}^{3} 4_{\pm1}^{2} 6_{\pm1}^{3} 3_{\pm4}^{4} 5_{\pm4}^{4} 7_{\pm4}  8_{\pm5}, \\
& 1_{\pm1}  2_{0}  4_{\pm1}^{2} 6_{\pm1}^{2} 3_{\pm4}^{2} 5_{\pm4}^{3} 7_{\pm4}  8_{\pm5},
1_{\pm1}^{2} 2_{0}^{2} 4_{\pm1}^{2} 6_{\pm1}^{3} 3_{\pm4}^{3} 5_{\pm4}^{4} 7_{\pm4}  8_{\pm5},
1_{\pm1}  2_{0}^{2} 4_{\pm1}  6_{\pm1}  3_{\pm4}^{2} 5_{\pm4}^{2} 8_{\pm5},
1_{\pm1}^{2} 2_{0}^{3} 4_{\pm1}^{3} 6_{\pm1}^{3} 3_{\pm4}^{4} 5_{\pm4}^{5} 7_{\pm4}  8_{\pm5}, \\
& 1_{\pm1}^{2} 2_{0}^{2} 4_{\pm1}^{2} 6_{\pm1}^{2} 3_{\pm4}^{3} 5_{\pm4}^{3} 7_{\pm4}  8_{\pm5},
1_{\pm1}  2_{0}^{2} 4_{\pm1}  6_{\pm1}^{3} 3_{\pm4}^{2} 5_{\pm4}^{3} 7_{\pm4}  8_{\pm5},
1_{\pm1}  2_{0}^{3} 4_{\pm1}^{2} 6_{\pm1}^{3} 3_{\pm4}^{3} 5_{\pm4}^{4} 7_{\pm4}  8_{\pm5},
1_{\pm1}  2_{0}  4_{\pm1}  6_{\pm1}^{2} 3_{\pm4}^{2} 5_{\pm4}^{2} 8_{\pm5}, \\
& 1_{\pm1}  2_{0}  4_{\pm1}  6_{\pm1}  3_{\pm4}  5_{\pm4}^{2} 8_{\pm5},
1_{\pm1}^{2} 2_{0}^{3} 4_{\pm1}^{3} 6_{\pm1}^{4} 3_{\pm4}^{4} 5_{\pm4}^{5} 7_{\pm4}^{2} 8_{\pm5},
1_{\pm1}  2_{0}^{2} 4_{\pm1}^{2} 6_{\pm1}^{3} 3_{\pm4}^{2} 5_{\pm4}^{4} 7_{\pm4}  8_{\pm5},
1_{\pm1}  2_{0}^{2} 4_{\pm1}^{2} 6_{\pm1}^{2} 3_{\pm4}^{3} 5_{\pm4}^{3} 8_{\pm5}, \\
& 1_{\pm1}^{2} 2_{0}^{2} 4_{\pm1}^{3} 6_{\pm1}^{3} 3_{\pm4}^{4} 5_{\pm4}^{4} 7_{\pm4}  8_{\pm5},
1_{\pm1}  2_{0}^{2} 4_{\pm1}  6_{\pm1}^{2} 3_{\pm4}^{2} 5_{\pm4}^{2} 7_{\pm4}  8_{\pm5},
1_{\pm1}^{2} 2_{0}^{3} 4_{\pm1}^{3} 6_{\pm1}^{4} 3_{\pm4}^{4} 5_{\pm4}^{5} 7_{\pm4}  8_{\pm5}^{2},
1_{\pm1}  2_{0}^{2} 4_{\pm1}^{2} 6_{\pm1}^{3} 3_{\pm4}^{3} 5_{\pm4}^{3} 7_{\pm4}  8_{\pm5}, \\
& 1_{\pm1}^{2} 2_{0}^{3} 4_{\pm1}^{2} 6_{\pm1}^{3} 3_{\pm4}^{3} 5_{\pm4}^{4} 7_{\pm4}  8_{\pm5},
1_{\pm1}  2_{0}  4_{\pm1}  6_{\pm1}  3_{\pm4}^{2} 5_{\pm4}  8_{\pm5},
1_{\pm1}  2_{0}  4_{\pm1}^{2} 6_{\pm1}^{2} 3_{\pm4}^{2} 5_{\pm4}^{3} 8_{\pm5},
1_{\pm1}^{2} 2_{0}^{3} 4_{\pm1}^{3} 6_{\pm1}^{4} 3_{\pm4}^{4} 5_{\pm4}^{5} 7_{\pm4}  8_{\pm5}, \\
& 1_{\pm1}^{2} 2_{0}^{2} 4_{\pm1}^{2} 6_{\pm1}^{2} 3_{\pm4}^{3} 5_{\pm4}^{3} 8_{\pm5},
1_{\pm1}  2_{0}^{2} 4_{\pm1}^{2} 6_{\pm1}^{2} 3_{\pm4}^{2} 5_{\pm4}^{3} 7_{\pm4}  8_{\pm5},
1_{\pm1}  2_{0}^{2} 4_{\pm1}^{3} 6_{\pm1}^{3} 3_{\pm4}^{3} 5_{\pm4}^{4} 7_{\pm4}  8_{\pm5},
1_{\pm1}  2_{0}  4_{\pm1}  6_{\pm1}^{2} 3_{\pm4}  5_{\pm4}^{2} 7_{\pm4}  8_{\pm5}, \\
& 1_{\pm1}^{2} 2_{0}^{3} 4_{\pm1}^{3} 6_{\pm1}^{3} 3_{\pm4}^{4} 5_{\pm4}^{4} 7_{\pm4}  8_{\pm5},
1_{\pm1}  2_{0}  4_{\pm1}^{2} 6_{\pm1}^{3} 3_{\pm4}^{2} 5_{\pm4}^{3} 7_{\pm4}  8_{\pm5},
1_{\pm1}^{2} 2_{0}^{2} 4_{\pm1}^{2} 6_{\pm1}^{3} 3_{\pm4}^{3} 5_{\pm4}^{3} 7_{\pm4}  8_{\pm5},
1_{\pm1}  2_{0}  4_{\pm1}^{2} 6_{\pm1}  3_{\pm4}^{2} 5_{\pm4}^{2} 8_{\pm5}, \\
& 1_{\pm1}^{2} 2_{0}^{2} 4_{\pm1}^{3} 6_{\pm1}^{3} 3_{\pm4}^{3} 5_{\pm4}^{4} 7_{\pm4}  8_{\pm5},
1_{\pm1}  2_{0}  4_{\pm1}^{2} 6_{\pm1}^{2} 3_{\pm4}^{2} 5_{\pm4}^{2} 7_{\pm4}  8_{\pm5},
1_{\pm1}  2_{0}^{2} 4_{\pm1}  6_{\pm1}^{2} 3_{\pm4}^{2} 5_{\pm4}^{2} 8_{\pm5},
1_{\pm1}  2_{0}^{2} 4_{\pm1}^{2} 6_{\pm1}^{2} 3_{\pm4}^{2} 5_{\pm4}^{3} 8_{\pm5}, \\
& 1_{\pm1}  2_{0}^{2} 4_{\pm1}^{2} 6_{\pm1}^{3} 3_{\pm4}^{2} 5_{\pm4}^{3} 7_{\pm4}  8_{\pm5},
1_{\pm1}  2_{0}  4_{\pm1}  6_{\pm1}^{2} 3_{\pm4}  5_{\pm4}^{2} 8_{\pm5},
1_{\pm1}  2_{0}  4_{\pm1}^{2} 6_{\pm1}^{2} 3_{\pm4}^{2} 5_{\pm4}^{2} 8_{\pm5},
1_{\pm1}  2_{0}  4_{\pm1}  6_{\pm1}  3_{\pm4}  5_{\pm4}  8_{\pm5}. \\
& \quad \\
& (53)  1_{\pm1} 2_{0} 4_{\pm1} 6_{\pm1} 8_{\pm1} 3_{\pm4}^{2} 5_{\pm4}^{2} 7_{\pm4},
1_{\pm1} 2_{0}^{2} 4_{\pm1} 6_{\pm1}^{2} 8_{\pm1} 3_{\pm4}^{2} 5_{\pm4}^{3}  7_{\pm4}^{2},
1_{\pm1} 2_{0} 4_{\pm1} 6_{\pm1}^{2} 8_{\pm1} 3_{\pm4}^{2} 5_{\pm4}^{2} 7_{\pm4}^{2},
1_{\pm1} 2_{0} 6_{\pm1} 8_{\pm1} 3_{\pm4} 5_{\pm4} 7_{\pm4}, \\
& 1_{\pm1} 2_{0}^{2} 4_{\pm1}^{2} 6_{\pm1}^{3}  8_{\pm1} 3_{\pm4}^{3}  5_{\pm4}^{4}  7_{\pm4}^{2},
1_{\pm1} 2_{0}^{2} 4_{\pm1} 6_{\pm1}^{2} 8_{\pm1} 3_{\pm4}^{2} 5_{\pm4}^{3}  7_{\pm4},
1_{\pm1} 2_{0}^{2} 4_{\pm1}^{2} 6_{\pm1}^{2} 8_{\pm1} 3_{\pm4}^{3}  5_{\pm4}^{3}  7_{\pm4}^{2},
1_{\pm1}^{2} 2_{0}^{3}  4_{\pm1}^{2} 6_{\pm1}^{3}  8_{\pm1} 3_{\pm4}^{4}  5_{\pm4}^{4}  7_{\pm4}^{2}, \\
& 1_{\pm1} 2_{0} 4_{\pm1}^{2} 6_{\pm1}^{2} 8_{\pm1} 3_{\pm4}^{2} 5_{\pm4}^{3}  7_{\pm4}^{2},
1_{\pm1}^{2} 2_{0}^{2} 4_{\pm1}^{2} 6_{\pm1}^{3}  8_{\pm1} 3_{\pm4}^{3}  5_{\pm4}^{4}  7_{\pm4}^{2},
1_{\pm1} 2_{0}^{2} 4_{\pm1} 6_{\pm1} 8_{\pm1} 3_{\pm4}^{2} 5_{\pm4}^{2} 7_{\pm4},
1_{\pm1}^{2} 2_{0}^{3}  4_{\pm1}^{3}  6_{\pm1}^{3}  8_{\pm1} 3_{\pm4}^{4}  5_{\pm4}^{5}  7_{\pm4}^{2}, \\
& 1_{\pm1}^{2} 2_{0}^{2} 4_{\pm1}^{2} 6_{\pm1}^{2} 8_{\pm1} 3_{\pm4}^{3}  5_{\pm4}^{3}  7_{\pm4}^{2},
1_{\pm1} 2_{0}^{2} 4_{\pm1} 6_{\pm1}^{3}  8_{\pm1} 3_{\pm4}^{2} 5_{\pm4}^{3}  7_{\pm4}^{2},
1_{\pm1} 2_{0}^{3}  4_{\pm1}^{2} 6_{\pm1}^{3}  8_{\pm1} 3_{\pm4}^{3}  5_{\pm4}^{4}  7_{\pm4}^{2},
1_{\pm1} 2_{0} 4_{\pm1} 6_{\pm1}^{2} 8_{\pm1} 3_{\pm4}^{2} 5_{\pm4}^{2} 7_{\pm4}, \\
& 1_{\pm1} 2_{0} 4_{\pm1} 6_{\pm1} 8_{\pm1} 3_{\pm4} 5_{\pm4}^{2} 7_{\pm4},
1_{\pm1}^{2} 2_{0}^{3}  4_{\pm1}^{3}  6_{\pm1}^{4}  8_{\pm1} 3_{\pm4}^{4}  5_{\pm4}^{5}  7_{\pm4}^{3},
1_{\pm1} 2_{0}^{2} 4_{\pm1}^{2} 6_{\pm1}^{3}  8_{\pm1} 3_{\pm4}^{2} 5_{\pm4}^{4}  7_{\pm4}^{2},
1_{\pm1} 2_{0}^{2} 4_{\pm1}^{2} 6_{\pm1}^{2} 8_{\pm1} 3_{\pm4}^{3}  5_{\pm4}^{3}  7_{\pm4}, \\
& 1_{\pm1}^{2} 2_{0}^{2} 4_{\pm1}^{3}  6_{\pm1}^{3}  8_{\pm1} 3_{\pm4}^{4}  5_{\pm4}^{4}  7_{\pm4}^{2},
1_{\pm1} 2_{0}^{2} 4_{\pm1} 6_{\pm1}^{2} 8_{\pm1} 3_{\pm4}^{2} 5_{\pm4}^{2} 7_{\pm4}^{2},
1_{\pm1}^{2} 2_{0}^{3}  4_{\pm1}^{3}  6_{\pm1}^{4}  8_{\pm1}^{2} 3_{\pm4}^{4}  5_{\pm4}^{5}  7_{\pm4}^{3},
1_{\pm1} 2_{0}^{2} 4_{\pm1}^{2} 6_{\pm1}^{3}  8_{\pm1} 3_{\pm4}^{3}  5_{\pm4}^{3}  7_{\pm4}^{2}, \\
& 1_{\pm1}^{2} 2_{0}^{3}  4_{\pm1}^{2} 6_{\pm1}^{3}  8_{\pm1} 3_{\pm4}^{3}  5_{\pm4}^{4}  7_{\pm4}^{2},
1_{\pm1} 2_{0} 4_{\pm1} 6_{\pm1} 8_{\pm1} 3_{\pm4}^{2} 5_{\pm4} 7_{\pm4},
1_{\pm1} 2_{0} 4_{\pm1}^{2} 6_{\pm1}^{2} 8_{\pm1} 3_{\pm4}^{2} 5_{\pm4}^{3}  7_{\pm4},
1_{\pm1}^{2} 2_{0}^{3}  4_{\pm1}^{3}  6_{\pm1}^{4}  8_{\pm1} 3_{\pm4}^{4}  5_{\pm4}^{5}  7_{\pm4}^{2}, \\
& 1_{\pm1}^{2} 2_{0}^{2} 4_{\pm1}^{2} 6_{\pm1}^{2} 8_{\pm1} 3_{\pm4}^{3}  5_{\pm4}^{3}  7_{\pm4},
1_{\pm1} 2_{0}^{2} 4_{\pm1}^{2} 6_{\pm1}^{2} 8_{\pm1} 3_{\pm4}^{2} 5_{\pm4}^{3}  7_{\pm4}^{2},
1_{\pm1} 2_{0}^{2} 4_{\pm1}^{3}  6_{\pm1}^{3}  8_{\pm1} 3_{\pm4}^{3}  5_{\pm4}^{4}  7_{\pm4}^{2},
1_{\pm1} 2_{0} 4_{\pm1} 6_{\pm1}^{2} 8_{\pm1} 3_{\pm4} 5_{\pm4}^{2} 7_{\pm4}^{2}, \\
& 1_{\pm1}^{2} 2_{0}^{3}  4_{\pm1}^{3}  6_{\pm1}^{3}  8_{\pm1} 3_{\pm4}^{4}  5_{\pm4}^{4}  7_{\pm4}^{2},
1_{\pm1} 2_{0} 4_{\pm1}^{2} 6_{\pm1}^{3}  8_{\pm1} 3_{\pm4}^{2} 5_{\pm4}^{3}  7_{\pm4}^{2},
1_{\pm1}^{2} 2_{0}^{2} 4_{\pm1}^{2} 6_{\pm1}^{3}  8_{\pm1} 3_{\pm4}^{3}  5_{\pm4}^{3}  7_{\pm4}^{2},
1_{\pm1} 2_{0} 4_{\pm1}^{2} 6_{\pm1} 8_{\pm1} 3_{\pm4}^{2} 5_{\pm4}^{2} 7_{\pm4}, \\
& 1_{\pm1}^{2} 2_{0}^{2} 4_{\pm1}^{3}  6_{\pm1}^{3}  8_{\pm1} 3_{\pm4}^{3}  5_{\pm4}^{4}  7_{\pm4}^{2},
1_{\pm1} 2_{0} 4_{\pm1}^{2} 6_{\pm1}^{2} 8_{\pm1} 3_{\pm4}^{2} 5_{\pm4}^{2} 7_{\pm4}^{2},
1_{\pm1} 2_{0}^{2} 4_{\pm1} 6_{\pm1}^{2} 8_{\pm1} 3_{\pm4}^{2} 5_{\pm4}^{2} 7_{\pm4},
1_{\pm1} 2_{0}^{2} 4_{\pm1}^{2} 6_{\pm1}^{2} 8_{\pm1} 3_{\pm4}^{2} 5_{\pm4}^{3}  7_{\pm4}, \\
& 1_{\pm1} 2_{0}^{2} 4_{\pm1}^{2} 6_{\pm1}^{3}  8_{\pm1} 3_{\pm4}^{2} 5_{\pm4}^{3}  7_{\pm4}^{2},
1_{\pm1} 2_{0} 4_{\pm1} 6_{\pm1}^{2} 8_{\pm1} 3_{\pm4} 5_{\pm4}^{2} 7_{\pm4},
1_{\pm1} 2_{0} 4_{\pm1}^{2} 6_{\pm1}^{2} 8_{\pm1} 3_{\pm4}^{2} 5_{\pm4}^{2} 7_{\pm4},
1_{\pm1} 2_{0} 4_{\pm1} 6_{\pm1} 8_{\pm1} 3_{\pm4} 5_{\pm4} 7_{\pm4}. \\
& \quad \\
& (54)  1_{\pm1} 2_{0} 4_{\pm1} 7_{0} 3_{\pm4}^{2} 5_{\pm4}^{2} 8_{\pm3},
1_{\pm1} 2_{0} 7_{0} 3_{\pm4} 5_{\pm4} 8_{\pm3},
1_{\pm1} 2_{0}^{2} 4_{\pm1} 7_{0}^{2} 3_{\pm4}^{2} 5_{\pm4}^{3} 8_{\pm3},
1_{\pm1} 2_{0} 4_{\pm1} 7_{0}^{2} 3_{\pm4}^{2} 5_{\pm4}^{2} 8_{\pm3}, \\
& 1_{\pm1} 2_{0}^{2} 4_{\pm1} 6_{\pm1} 7_{0} 3_{\pm4}^{2} 5_{\pm4}^{3} 8_{\pm3},
1_{\pm1} 2_{0}^{2} 4_{\pm1}^{2} 6_{\pm1} 7_{0}^{2} 3_{\pm4}^{3} 5_{\pm4}^{4} 8_{\pm3},
1_{\pm1} 2_{0}^{2} 4_{\pm1} 7_{0} 3_{\pm4}^{2} 5_{\pm4}^{2} 8_{\pm3},
1_{\pm1} 2_{0}^{2} 4_{\pm1}^{2} 7_{0}^{2} 3_{\pm4}^{3} 5_{\pm4}^{3} 8_{\pm3}, \\
& 1_{\pm1} 2_{0} 4_{\pm1} 6_{\pm1} 7_{0} 3_{\pm4}^{2} 5_{\pm4}^{2} 8_{\pm3},
1_{\pm1}^{2} 2_{0}^{3} 4_{\pm1}^{2} 6_{\pm1} 7_{0}^{2} 3_{\pm4}^{4} 5_{\pm4}^{4} 8_{\pm3},
1_{\pm1} 2_{0}^{2} 4_{\pm1}^{2} 6_{\pm1} 7_{0} 3_{\pm4}^{3} 5_{\pm4}^{3} 8_{\pm3},
1_{\pm1} 2_{0} 4_{\pm1} 7_{0} 3_{\pm4} 5_{\pm4}^{2} 8_{\pm3}, \\
& 1_{\pm1} 2_{0} 4_{\pm1}^{2} 7_{0}^{2} 3_{\pm4}^{2} 5_{\pm4}^{3} 8_{\pm3},
1_{\pm1}^{2} 2_{0}^{2} 4_{\pm1}^{2} 6_{\pm1} 7_{0}^{2} 3_{\pm4}^{3} 5_{\pm4}^{4} 8_{\pm3},
1_{\pm1} 2_{0} 4_{\pm1} 7_{0} 3_{\pm4}^{2} 5_{\pm4} 8_{\pm3},
1_{\pm1}^{2} 2_{0}^{3} 4_{\pm1}^{3} 6_{\pm1} 7_{0}^{2} 3_{\pm4}^{4} 5_{\pm4}^{5} 8_{\pm3}, \\
& 1_{\pm1}^{2} 2_{0}^{2} 4_{\pm1}^{2} 7_{0}^{2} 3_{\pm4}^{3} 5_{\pm4}^{3} 8_{\pm3},
1_{\pm1} 2_{0}^{2} 4_{\pm1} 6_{\pm1} 7_{0}^{2} 3_{\pm4}^{2} 5_{\pm4}^{3} 8_{\pm3},
1_{\pm1} 2_{0}^{3} 4_{\pm1}^{2} 6_{\pm1} 7_{0}^{2} 3_{\pm4}^{3} 5_{\pm4}^{4} 8_{\pm3},
1_{\pm1} 2_{0} 4_{\pm1}^{2} 6_{\pm1} 7_{0} 3_{\pm4}^{2} 5_{\pm4}^{3} 8_{\pm3}, \\
& 1_{\pm1}^{2} 2_{0}^{3} 4_{\pm1}^{3} 6_{\pm1} 7_{0}^{3} 3_{\pm4}^{4} 5_{\pm4}^{5} 8_{\pm3}^{2},
1_{\pm1} 2_{0}^{2} 4_{\pm1}^{2} 6_{\pm1} 7_{0}^{2} 3_{\pm4}^{2} 5_{\pm4}^{4} 8_{\pm3},
1_{\pm1}^{2} 2_{0}^{2} 4_{\pm1}^{3} 6_{\pm1} 7_{0}^{2} 3_{\pm4}^{4} 5_{\pm4}^{4} 8_{\pm3},
1_{\pm1} 2_{0}^{2} 4_{\pm1} 7_{0}^{2} 3_{\pm4}^{2} 5_{\pm4}^{2} 8_{\pm3}, \\
& 1_{\pm1}^{2} 2_{0}^{3} 4_{\pm1}^{3} 6_{\pm1} 7_{0}^{3} 3_{\pm4}^{4} 5_{\pm4}^{5} 8_{\pm3},
1_{\pm1} 2_{0}^{2} 4_{\pm1}^{2} 6_{\pm1} 7_{0}^{2} 3_{\pm4}^{3} 5_{\pm4}^{3} 8_{\pm3},
1_{\pm1}^{2} 2_{0}^{2} 4_{\pm1}^{2} 6_{\pm1} 7_{0} 3_{\pm4}^{3} 5_{\pm4}^{3} 8_{\pm3},
1_{\pm1}^{2} 2_{0}^{3} 4_{\pm1}^{2} 6_{\pm1} 7_{0}^{2} 3_{\pm4}^{3} 5_{\pm4}^{4} 8_{\pm3}, \\
& 1_{\pm1}^{2} 2_{0}^{3} 4_{\pm1}^{3} 6_{\pm1}^{2} 7_{0}^{2} 3_{\pm4}^{4} 5_{\pm4}^{5} 8_{\pm3},
1_{\pm1} 2_{0} 4_{\pm1}^{2} 7_{0} 3_{\pm4}^{2} 5_{\pm4}^{2} 8_{\pm3},
1_{\pm1} 2_{0}^{2} 4_{\pm1}^{2} 7_{0}^{2} 3_{\pm4}^{2} 5_{\pm4}^{3} 8_{\pm3},
1_{\pm1} 2_{0}^{2} 4_{\pm1}^{3} 6_{\pm1} 7_{0}^{2} 3_{\pm4}^{3} 5_{\pm4}^{4} 8_{\pm3}, \\
& 1_{\pm1} 2_{0}^{2} 4_{\pm1} 6_{\pm1} 7_{0} 3_{\pm4}^{2} 5_{\pm4}^{2} 8_{\pm3},
1_{\pm1}^{2} 2_{0}^{3} 4_{\pm1}^{3} 6_{\pm1} 7_{0}^{2} 3_{\pm4}^{4} 5_{\pm4}^{4} 8_{\pm3},
1_{\pm1} 2_{0}^{2} 4_{\pm1}^{2} 6_{\pm1} 7_{0} 3_{\pm4}^{2} 5_{\pm4}^{3} 8_{\pm3},
1_{\pm1} 2_{0} 4_{\pm1} 7_{0}^{2} 3_{\pm4} 5_{\pm4}^{2} 8_{\pm3}, \\
& 1_{\pm1} 2_{0} 4_{\pm1}^{2} 6_{\pm1} 7_{0}^{2} 3_{\pm4}^{2} 5_{\pm4}^{3} 8_{\pm3},
1_{\pm1}^{2} 2_{0}^{2} 4_{\pm1}^{2} 6_{\pm1} 7_{0}^{2} 3_{\pm4}^{3} 5_{\pm4}^{3} 8_{\pm3},
1_{\pm1}^{2} 2_{0}^{2} 4_{\pm1}^{3} 6_{\pm1} 7_{0}^{2} 3_{\pm4}^{3} 5_{\pm4}^{4} 8_{\pm3},
1_{\pm1} 2_{0} 4_{\pm1}^{2} 7_{0}^{2} 3_{\pm4}^{2} 5_{\pm4}^{2} 8_{\pm3}, \\
& 1_{\pm1} 2_{0} 4_{\pm1} 6_{\pm1} 7_{0} 3_{\pm4} 5_{\pm4}^{2} 8_{\pm3},
1_{\pm1} 2_{0}^{2} 4_{\pm1}^{2} 6_{\pm1} 7_{0}^{2} 3_{\pm4}^{2} 5_{\pm4}^{3} 8_{\pm3},
1_{\pm1} 2_{0} 4_{\pm1}^{2} 6_{\pm1} 7_{0} 3_{\pm4}^{2} 5_{\pm4}^{2} 8_{\pm3},
1_{\pm1} 2_{0} 4_{\pm1} 7_{0} 3_{\pm4} 5_{\pm4} 8_{\pm3}.
\end{align*}
\end{gather}

\begin{gather}
\begin{align*}
& (55)  1_{\pm2} 2_{\pm1} 4_{\pm2} 8_{0} 3_{\pm5}^{2} 5_{\pm5}^{2},
1_{\pm2} 2_{\pm1} 8_{0} 3_{\pm5} 5_{\pm5},
1_{\pm2} 2_{\pm1}^{2} 4_{\pm2} 7_{\pm1} 8_{0} 3_{\pm5}^{2} 5_{\pm5}^{3},
1_{\pm2} 2_{\pm1} 4_{\pm2} 7_{\pm1} 8_{0} 3_{\pm5}^{2} 5_{\pm5}^{2}, \\
& 1_{\pm2} 2_{\pm1}^{2} 4_{\pm2} 6_{\pm2} 8_{0} 3_{\pm5}^{2} 5_{\pm5}^{3},
1_{\pm2} 2_{\pm1}^{2} 4_{\pm2}^{2} 6_{\pm2} 7_{\pm1} 8_{0} 3_{\pm5}^{3}  5_{\pm5}^{4},
1_{\pm2} 2_{\pm1}^{2} 4_{\pm2} 8_{0} 3_{\pm5}^{2} 5_{\pm5}^{2},
1_{\pm2} 2_{\pm1}^{2} 4_{\pm2}^{2} 7_{\pm1} 8_{0} 3_{\pm5}^{3} 5_{\pm5}^{3}, \\
& 1_{\pm2} 2_{\pm1} 4_{\pm2} 6_{\pm2} 8_{0} 3_{\pm5}^{2} 5_{\pm5}^{2},
1_{\pm2}^{2} 2_{\pm1}^{3}  4_{\pm2}^{2} 6_{\pm2} 7_{\pm1} 8_{0} 3_{\pm5}^{4}  5_{\pm5}^{4},
1_{\pm2} 2_{\pm1}^{2} 4_{\pm2}^{2} 6_{\pm2} 8_{0} 3_{\pm5}^{3}  5_{\pm5}^{3},
1_{\pm2} 2_{\pm1} 4_{\pm2} 8_{0} 3_{\pm5} 5_{\pm5}^{2}, \\
& 1_{\pm2} 2_{\pm1} 4_{\pm2}^{2} 7_{\pm1} 8_{0} 3_{\pm5}^{2} 5_{\pm5}^{3},
1_{\pm2}^{2} 2_{\pm1}^{2} 4_{\pm2}^{2} 6_{\pm2} 7_{\pm1} 8_{0} 3_{\pm5}^{3}  5_{\pm5}^{4},
1_{\pm2} 2_{\pm1} 4_{\pm2} 8_{0} 3_{\pm5}^{2} 5_{\pm5},
1_{\pm2}^{2} 2_{\pm1}^{3}  4_{\pm2}^{3}  6_{\pm2} 7_{\pm1} 8_{0} 3_{\pm5}^{4}   5_{\pm5}^{5}, \\
& 1_{\pm2}^{2} 2_{\pm1}^{2} 4_{\pm2}^{2} 7_{\pm1} 8_{0} 3_{\pm5}^{3}  5_{\pm5}^{3},
1_{\pm2} 2_{\pm1}^{2} 4_{\pm2} 6_{\pm2} 7_{\pm1} 8_{0} 3_{\pm5}^{2} 5_{\pm5}^{3},
1_{\pm2} 2_{\pm1}^{3}  4_{\pm2}^{2} 6_{\pm2} 7_{\pm1} 8_{0} 3_{\pm5}^{3}  5_{\pm5}^{4},
1_{\pm2} 2_{\pm1} 4_{\pm2}^{2} 6_{\pm2} 8_{0} 3_{\pm5}^{2} 5_{\pm5}^{3}, \\
& 1_{\pm2}^{2} 2_{\pm1}^{3}  4_{\pm2}^{3}  6_{\pm2} 7_{\pm1} 8_{0}^{2} 3_{\pm5}^{4}   5_{\pm5}^{5},
1_{\pm2} 2_{\pm1}^{2} 4_{\pm2}^{2} 6_{\pm2} 7_{\pm1} 8_{0} 3_{\pm5}^{2} 5_{\pm5}^{4},
1_{\pm2}^{2} 2_{\pm1}^{2} 4_{\pm2}^{3}  6_{\pm2} 7_{\pm1} 8_{0} 3_{\pm5}^{4}   5_{\pm5}^{4},
1_{\pm2} 2_{\pm1}^{2} 4_{\pm2} 7_{\pm1} 8_{0} 3_{\pm5}^{2} 5_{\pm5}^{2}, \\
& 1_{\pm2}^{2} 2_{\pm1}^{3}  4_{\pm2}^{3}  6_{\pm2} 7_{\pm1}^{2} 8_{0} 3_{\pm5}^{4}   5_{\pm5}^{5},
1_{\pm2} 2_{\pm1}^{2} 4_{\pm2}^{2} 6_{\pm2} 7_{\pm1} 8_{0} 3_{\pm5}^{3}  5_{\pm5}^{3},
1_{\pm2}^{2} 2_{\pm1}^{2} 4_{\pm2}^{2} 6_{\pm2} 8_{0} 3_{\pm5}^{3}  5_{\pm5}^{3},
1_{\pm2}^{2} 2_{\pm1}^{3}  4_{\pm2}^{2} 6_{\pm2} 7_{\pm1} 8_{0} 3_{\pm5}^{3}  5_{\pm5}^{4}, \\
& 1_{\pm2}^{2} 2_{\pm1}^{3}  4_{\pm2}^{3}  6_{\pm2}^{2} 7_{\pm1} 8_{0} 3_{\pm5}^{4}   5_{\pm5}^{5},
1_{\pm2} 2_{\pm1} 4_{\pm2}^{2} 8_{0} 3_{\pm5}^{2} 5_{\pm5}^{2},
1_{\pm2} 2_{\pm1}^{2} 4_{\pm2}^{2} 7_{\pm1} 8_{0} 3_{\pm5}^{2} 5_{\pm5}^{3},
1_{\pm2} 2_{\pm1}^{2} 4_{\pm2}^{3}  6_{\pm2} 7_{\pm1} 8_{0} 3_{\pm5}^{3}  5_{\pm5}^{4}, \\
& 1_{\pm2} 2_{\pm1}^{2} 4_{\pm2} 6_{\pm2} 8_{0} 3_{\pm5}^{2} 5_{\pm5}^{2},
1_{\pm2}^{2} 2_{\pm1}^{3}  4_{\pm2}^{3}  6_{\pm2} 7_{\pm1} 8_{0} 3_{\pm5}^{4}   5_{\pm5}^{4},
1_{\pm2} 2_{\pm1}^{2} 4_{\pm2}^{2} 6_{\pm2} 8_{0} 3_{\pm5}^{2} 5_{\pm5}^{3},
1_{\pm2} 2_{\pm1} 4_{\pm2} 7_{\pm1} 8_{0} 3_{\pm5} 5_{\pm5}^{2}, \\
& 1_{\pm2} 2_{\pm1} 4_{\pm2}^{2} 6_{\pm2} 7_{\pm1} 8_{0} 3_{\pm5}^{2} 5_{\pm5}^{3},
1_{\pm2}^{2} 2_{\pm1}^{2} 4_{\pm2}^{2} 6_{\pm2} 7_{\pm1} 8_{0} 3_{\pm5}^{3} 5_{\pm5}^{3},
1_{\pm2}^{2} 2_{\pm1}^{2} 4_{\pm2}^{3}  6_{\pm2} 7_{\pm1} 8_{0} 3_{\pm5}^{3} 5_{\pm5}^{4},
1_{\pm2} 2_{\pm1} 4_{\pm2}^{2} 7_{\pm1} 8_{0} 3_{\pm5}^{2} 5_{\pm5}^{2}, \\
& 1_{\pm2} 2_{\pm1} 4_{\pm2} 6_{\pm2} 8_{0} 3_{\pm5} 5_{\pm5}^{2},
1_{\pm2} 2_{\pm1}^{2} 4_{\pm2}^{2} 6_{\pm2} 7_{\pm1} 8_{0} 3_{\pm5}^{2} 5_{\pm5}^{3},
1_{\pm2} 2_{\pm1} 4_{\pm2}^{2} 6_{\pm2} 8_{0} 3_{\pm5}^{2} 5_{\pm5}^{2},
1_{\pm2} 2_{\pm1} 4_{\pm2} 8_{0} 3_{\pm5} 5_{\pm5}. \\
& \quad \\
& (56)  1_{\pm1} 2_{0} 5_{0}^{2} 3_{\pm4}^{2} 4_{\pm3} 6_{\pm3} 8_{\pm5},
1_{\pm1} 2_{0}^{2} 5_{0}^{3} 3_{\pm4}^{2} 4_{\pm3}^{2} 6_{\pm3} 7_{\pm4} 8_{\pm5},
1_{\pm1} 2_{0} 5_{0}^{2} 3_{\pm4}^{2} 4_{\pm3} 7_{\pm4} 8_{\pm5},
1_{\pm1} 2_{0} 5_{0} 3_{\pm4} 4_{\pm3} 8_{\pm5}, \\
& 1_{\pm1} 2_{0}^{2} 5_{0}^{4} 3_{\pm4}^{3} 4_{\pm3}^{2} 6_{\pm3} 7_{\pm4} 8_{\pm5},
1_{\pm1} 2_{0}^{2} 5_{0}^{3} 3_{\pm4}^{2} 4_{\pm3}^{2} 6_{\pm3} 8_{\pm5},
1_{\pm1} 2_{0}^{2} 5_{0}^{3} 3_{\pm4}^{3} 4_{\pm3} 6_{\pm3} 7_{\pm4} 8_{\pm5},
1_{\pm1}^{2} 2_{0}^{3} 5_{0}^{4} 3_{\pm4}^{4} 4_{\pm3}^{2} 6_{\pm3} 7_{\pm4} 8_{\pm5}, \\
& 1_{\pm1} 2_{0} 5_{0}^{3} 3_{\pm4}^{2} 4_{\pm3} 6_{\pm3} 7_{\pm4} 8_{\pm5},
1_{\pm1} 2_{0}^{2} 5_{0}^{2} 3_{\pm4}^{2} 4_{\pm3} 6_{\pm3} 8_{\pm5},
1_{\pm1}^{2} 2_{0}^{2} 5_{0}^{4} 3_{\pm4}^{3} 4_{\pm3}^{2} 6_{\pm3} 7_{\pm4} 8_{\pm5},
1_{\pm1}^{2} 2_{0}^{3} 5_{0}^{5}  3_{\pm4}^{4} 4_{\pm3}^{2} 6_{\pm3}^{2} 7_{\pm4} 8_{\pm5}, \\
& 1_{\pm1}^{2} 2_{0}^{2} 5_{0}^{3} 3_{\pm4}^{3} 4_{\pm3} 6_{\pm3} 7_{\pm4} 8_{\pm5},
1_{\pm1} 2_{0}^{2} 5_{0}^{3} 3_{\pm4}^{2} 4_{\pm3}^{2} 7_{\pm4} 8_{\pm5},
1_{\pm1} 2_{0} 5_{0}^{2} 3_{\pm4}^{2} 4_{\pm3} 8_{\pm5},
1_{\pm1} 2_{0} 5_{0}^{2} 3_{\pm4} 4_{\pm3} 6_{\pm3} 8_{\pm5}, \\
& 1_{\pm1} 2_{0}^{3} 5_{0}^{4} 3_{\pm4}^{3} 4_{\pm3}^{2} 6_{\pm3} 7_{\pm4} 8_{\pm5},
1_{\pm1}^{2} 2_{0}^{3} 5_{0}^{5}  3_{\pm4}^{4} 4_{\pm3}^{2} 6_{\pm3} 7_{\pm4}^{2} 8_{\pm5},
1_{\pm1} 2_{0}^{2} 5_{0}^{4} 3_{\pm4}^{2} 4_{\pm3}^{2} 6_{\pm3} 7_{\pm4} 8_{\pm5},
1_{\pm1} 2_{0}^{2} 5_{0}^{3} 3_{\pm4}^{3} 4_{\pm3} 6_{\pm3} 8_{\pm5}, \\
& 1_{\pm1} 2_{0}^{2} 5_{0}^{2} 3_{\pm4}^{2} 4_{\pm3} 7_{\pm4} 8_{\pm5},
1_{\pm1}^{2} 2_{0}^{2} 5_{0}^{4} 3_{\pm4}^{4} 4_{\pm3} 6_{\pm3} 7_{\pm4} 8_{\pm5},
1_{\pm1}^{2} 2_{0}^{3} 5_{0}^{5}  3_{\pm4}^{4} 4_{\pm3}^{2} 6_{\pm3} 7_{\pm4} 8_{\pm5}^{2},
1_{\pm1} 2_{0}^{2} 5_{0}^{3} 3_{\pm4}^{3} 4_{\pm3} 7_{\pm4} 8_{\pm5}, \\
& 1_{\pm1} 2_{0} 5_{0} 3_{\pm4}^{2} 8_{\pm5},
1_{\pm1} 2_{0} 5_{0}^{3} 3_{\pm4}^{2} 4_{\pm3} 6_{\pm3} 8_{\pm5},
1_{\pm1}^{2} 2_{0}^{3} 5_{0}^{4} 3_{\pm4}^{3} 4_{\pm3}^{2} 6_{\pm3} 7_{\pm4} 8_{\pm5},
1_{\pm1}^{2} 2_{0}^{3} 5_{0}^{5}  3_{\pm4}^{4} 4_{\pm3}^{2} 6_{\pm3} 7_{\pm4} 8_{\pm5}, \\
& 1_{\pm1}^{2} 2_{0}^{2} 5_{0}^{3} 3_{\pm4}^{3} 4_{\pm3} 6_{\pm3} 8_{\pm5},
1_{\pm1} 2_{0}^{2} 5_{0}^{3} 3_{\pm4}^{2} 4_{\pm3} 6_{\pm3} 7_{\pm4} 8_{\pm5},
1_{\pm1} 2_{0} 5_{0}^{2} 3_{\pm4} 4_{\pm3} 7_{\pm4} 8_{\pm5},
1_{\pm1} 2_{0}^{2} 5_{0}^{4} 3_{\pm4}^{3} 4_{\pm3} 6_{\pm3} 7_{\pm4} 8_{\pm5}, \\
& 1_{\pm1}^{2} 2_{0}^{3} 5_{0}^{4} 3_{\pm4}^{4} 4_{\pm3} 6_{\pm3} 7_{\pm4} 8_{\pm5},
1_{\pm1} 2_{0} 5_{0}^{3} 3_{\pm4}^{2} 4_{\pm3} 7_{\pm4} 8_{\pm5},
1_{\pm1} 2_{0} 5_{0}^{2} 3_{\pm4}^{2} 6_{\pm3} 8_{\pm5},
1_{\pm1}^{2} 2_{0}^{2} 5_{0}^{3} 3_{\pm4}^{3} 4_{\pm3} 7_{\pm4} 8_{\pm5}, \\
& 1_{\pm1}^{2} 2_{0}^{2} 5_{0}^{4} 3_{\pm4}^{3} 4_{\pm3} 6_{\pm3} 7_{\pm4} 8_{\pm5},
1_{\pm1} 2_{0} 5_{0}^{2} 3_{\pm4}^{2} 7_{\pm4} 8_{\pm5},
1_{\pm1} 2_{0}^{2} 5_{0}^{2} 3_{\pm4}^{2} 4_{\pm3} 8_{\pm5},
1_{\pm1} 2_{0}^{2} 5_{0}^{3} 3_{\pm4}^{2} 4_{\pm3} 6_{\pm3} 8_{\pm5}, \\
& 1_{\pm1} 2_{0}^{2} 5_{0}^{3} 3_{\pm4}^{2} 4_{\pm3} 7_{\pm4} 8_{\pm5},
1_{\pm1} 2_{0} 5_{0}^{2} 3_{\pm4} 4_{\pm3} 8_{\pm5},
1_{\pm1} 2_{0} 5_{0}^{2} 3_{\pm4}^{2} 8_{\pm5},
1_{\pm1} 2_{0} 5_{0} 3_{\pm4} 8_{\pm5}. \\
& \quad \\
& (57)  1_{\pm1}  2_{0} 5_{0}^{2} 8_{\pm1} 3_{\pm4}^{2} 4_{\pm3} 6_{\pm3} 7_{\pm4},
1_{\pm1}  2_{0}^{2} 5_{0}^{3} 8_{\pm1} 3_{\pm4}^{2} 4_{\pm3}^{2} 6_{\pm3} 7_{\pm4}^{2},
1_{\pm1}  2_{0} 5_{0}^{2} 8_{\pm1} 3_{\pm4}^{2} 4_{\pm3} 7_{\pm4}^{2},
1_{\pm1}  2_{0} 5_{0} 8_{\pm1} 3_{\pm4} 4_{\pm3} 7_{\pm4}, \\
& 1_{\pm1}  2_{0}^{2} 5_{0}^{4} 8_{\pm1} 3_{\pm4}^{3} 4_{\pm3}^{2} 6_{\pm3} 7_{\pm4}^{2},
1_{\pm1}  2_{0}^{2} 5_{0}^{3} 8_{\pm1} 3_{\pm4}^{2} 4_{\pm3}^{2} 6_{\pm3} 7_{\pm4},
1_{\pm1}  2_{0}^{2} 5_{0}^{3} 8_{\pm1} 3_{\pm4}^{3} 4_{\pm3} 6_{\pm3} 7_{\pm4}^{2},
1_{\pm1}^{2} 2_{0}^{3} 5_{0}^{4} 8_{\pm1} 3_{\pm4}^{4} 4_{\pm3}^{2} 6_{\pm3} 7_{\pm4}^{2}, \\
& 1_{\pm1}  2_{0} 5_{0}^{3} 8_{\pm1} 3_{\pm4}^{2} 4_{\pm3} 6_{\pm3} 7_{\pm4}^{2},
1_{\pm1}  2_{0}^{2} 5_{0}^{2} 8_{\pm1} 3_{\pm4}^{2} 4_{\pm3} 6_{\pm3} 7_{\pm4},
1_{\pm1}^{2} 2_{0}^{2} 5_{0}^{4} 8_{\pm1} 3_{\pm4}^{3} 4_{\pm3}^{2} 6_{\pm3} 7_{\pm4}^{2},
1_{\pm1}^{2} 2_{0}^{3} 5_{0}^{5} 8_{\pm1} 3_{\pm4}^{4} 4_{\pm3}^{2} 6_{\pm3}^{2} 7_{\pm4}^{2}, \\
& 1_{\pm1}^{2} 2_{0}^{2} 5_{0}^{3} 8_{\pm1} 3_{\pm4}^{3} 4_{\pm3} 6_{\pm3} 7_{\pm4}^{2},
1_{\pm1}  2_{0}^{2} 5_{0}^{3} 8_{\pm1} 3_{\pm4}^{2} 4_{\pm3}^{2} 7_{\pm4}^{2},
1_{\pm1}  2_{0} 5_{0}^{2} 8_{\pm1} 3_{\pm4}^{2} 4_{\pm3} 7_{\pm4},
1_{\pm1}  2_{0} 5_{0}^{2} 8_{\pm1} 3_{\pm4} 4_{\pm3} 6_{\pm3} 7_{\pm4}, \\
& 1_{\pm1}  2_{0}^{3} 5_{0}^{4} 8_{\pm1} 3_{\pm4}^{3} 4_{\pm3}^{2} 6_{\pm3} 7_{\pm4}^{2},
1_{\pm1}^{2} 2_{0}^{3} 5_{0}^{5} 8_{\pm1} 3_{\pm4}^{4} 4_{\pm3}^{2} 6_{\pm3} 7_{\pm4}^{3},
1_{\pm1}  2_{0}^{2} 5_{0}^{4} 8_{\pm1} 3_{\pm4}^{2} 4_{\pm3}^{2} 6_{\pm3} 7_{\pm4}^{2},
1_{\pm1}  2_{0}^{2} 5_{0}^{3} 8_{\pm1} 3_{\pm4}^{3} 4_{\pm3} 6_{\pm3} 7_{\pm4}, \\
& 1_{\pm1}  2_{0}^{2} 5_{0}^{2} 8_{\pm1} 3_{\pm4}^{2} 4_{\pm3} 7_{\pm4}^{2},
1_{\pm1}^{2} 2_{0}^{2} 5_{0}^{4} 8_{\pm1} 3_{\pm4}^{4} 4_{\pm3} 6_{\pm3} 7_{\pm4}^{2},
1_{\pm1}^{2} 2_{0}^{3} 5_{0}^{5} 8_{\pm1}^{2} 3_{\pm4}^{4} 4_{\pm3}^{2} 6_{\pm3} 7_{\pm4}^{3},
1_{\pm1}  2_{0}^{2} 5_{0}^{3} 8_{\pm1} 3_{\pm4}^{3} 4_{\pm3} 7_{\pm4}^{2}, \\
& 1_{\pm1}  2_{0} 5_{0} 8_{\pm1} 3_{\pm4}^{2} 7_{\pm4},
1_{\pm1}  2_{0} 5_{0}^{3} 8_{\pm1} 3_{\pm4}^{2} 4_{\pm3} 6_{\pm3} 7_{\pm4},
1_{\pm1}^{2} 2_{0}^{3} 5_{0}^{4} 8_{\pm1} 3_{\pm4}^{3} 4_{\pm3}^{2} 6_{\pm3} 7_{\pm4}^{2},
1_{\pm1}^{2} 2_{0}^{3} 5_{0}^{5} 8_{\pm1} 3_{\pm4}^{4} 4_{\pm3}^{2} 6_{\pm3} 7_{\pm4}^{2}, \\
& 1_{\pm1}^{2} 2_{0}^{2} 5_{0}^{3} 8_{\pm1} 3_{\pm4}^{3} 4_{\pm3} 6_{\pm3} 7_{\pm4},
1_{\pm1}  2_{0}^{2} 5_{0}^{3} 8_{\pm1} 3_{\pm4}^{2} 4_{\pm3} 6_{\pm3} 7_{\pm4}^{2},
1_{\pm1}  2_{0} 5_{0}^{2} 8_{\pm1} 3_{\pm4} 4_{\pm3} 7_{\pm4}^{2},
1_{\pm1}  2_{0}^{2} 5_{0}^{4} 8_{\pm1} 3_{\pm4}^{3} 4_{\pm3} 6_{\pm3} 7_{\pm4}^{2}, \\
& 1_{\pm1}^{2} 2_{0}^{3} 5_{0}^{4} 8_{\pm1} 3_{\pm4}^{4} 4_{\pm3} 6_{\pm3} 7_{\pm4}^{2},
1_{\pm1}  2_{0} 5_{0}^{3} 8_{\pm1} 3_{\pm4}^{2} 4_{\pm3} 7_{\pm4}^{2},
1_{\pm1}  2_{0} 5_{0}^{2} 8_{\pm1} 3_{\pm4}^{2} 6_{\pm3} 7_{\pm4},
1_{\pm1}^{2} 2_{0}^{2} 5_{0}^{3} 8_{\pm1} 3_{\pm4}^{3} 4_{\pm3} 7_{\pm4}^{2}, \\
& 1_{\pm1}^{2} 2_{0}^{2} 5_{0}^{4} 8_{\pm1} 3_{\pm4}^{3} 4_{\pm3} 6_{\pm3} 7_{\pm4}^{2},
1_{\pm1}  2_{0} 5_{0}^{2} 8_{\pm1} 3_{\pm4}^{2} 7_{\pm4}^{2},
1_{\pm1}  2_{0}^{2} 5_{0}^{2} 8_{\pm1} 3_{\pm4}^{2} 4_{\pm3} 7_{\pm4},
1_{\pm1}  2_{0}^{2} 5_{0}^{3} 8_{\pm1} 3_{\pm4}^{2} 4_{\pm3} 6_{\pm3} 7_{\pm4}, \\
& 1_{\pm1}  2_{0}^{2} 5_{0}^{3} 8_{\pm1} 3_{\pm4}^{2} 4_{\pm3} 7_{\pm4}^{2},
1_{\pm1}  2_{0} 5_{0}^{2} 8_{\pm1} 3_{\pm4} 4_{\pm3} 7_{\pm4},
1_{\pm1}  2_{0} 5_{0}^{2} 8_{\pm1} 3_{\pm4}^{2} 7_{\pm4},
1_{\pm1}  2_{0} 5_{0} 8_{\pm1} 3_{\pm4} 7_{\pm4}.
\end{align*}
\end{gather}

\begin{gather}
\begin{align*}
& (58)  1_{\pm1} 2_{0} 5_{0}^{2} 7_{0} 3_{\pm4}^{2} 4_{\pm3} 6_{\pm3}^{2} 8_{\pm3},
1_{\pm1} 2_{0} 5_{0} 7_{0} 3_{\pm4} 4_{\pm3} 6_{\pm3} 8_{\pm3},
1_{\pm1} 2_{0}^{2} 5_{0}^{3} 7_{0}^{2} 3_{\pm4}^{2} 4_{\pm3}^{2} 6_{\pm3}^{3} 8_{\pm3},
1_{\pm1} 2_{0} 5_{0}^{2} 7_{0}^{2} 3_{\pm4}^{2} 4_{\pm3} 6_{\pm3}^{2} 8_{\pm3}, \\
& 1_{\pm1} 2_{0}^{2} 5_{0}^{3} 7_{0} 3_{\pm4}^{2} 4_{\pm3}^{2} 6_{\pm3}^{2} 8_{\pm3},
1_{\pm1} 2_{0}^{2} 5_{0}^{4} 7_{0}^{2} 3_{\pm4}^{3} 4_{\pm3}^{2} 6_{\pm3}^{3} 8_{\pm3},
1_{\pm1} 2_{0}^{2} 5_{0}^{2} 7_{0} 3_{\pm4}^{2} 4_{\pm3} 6_{\pm3}^{2} 8_{\pm3},
1_{\pm1} 2_{0} 5_{0}^{2} 7_{0} 3_{\pm4}^{2} 4_{\pm3} 6_{\pm3} 8_{\pm3}, \\
 & 1_{\pm1} 2_{0}^{2} 5_{0}^{3} 7_{0}^{2} 3_{\pm4}^{3} 4_{\pm3} 6_{\pm3}^{3} 8_{\pm3},
1_{\pm1}^{2} 2_{0}^{3} 5_{0}^{4} 7_{0}^{2} 3_{\pm4}^{4} 4_{\pm3}^{2} 6_{\pm3}^{3} 8_{\pm3},
1_{\pm1} 2_{0}^{2} 5_{0}^{3} 7_{0} 3_{\pm4}^{3} 4_{\pm3} 6_{\pm3}^{2} 8_{\pm3},
1_{\pm1} 2_{0} 5_{0}^{2} 7_{0} 3_{\pm4} 4_{\pm3} 6_{\pm3}^{2} 8_{\pm3}, \\
& 1_{\pm1} 2_{0} 5_{0}^{3} 7_{0}^{2} 3_{\pm4}^{2} 4_{\pm3} 6_{\pm3}^{3} 8_{\pm3},
1_{\pm1} 2_{0} 5_{0} 7_{0} 3_{\pm4}^{2} 6_{\pm3} 8_{\pm3},
1_{\pm1}^{2} 2_{0}^{2} 5_{0}^{4} 7_{0}^{2} 3_{\pm4}^{3} 4_{\pm3}^{2} 6_{\pm3}^{3} 8_{\pm3},
1_{\pm1}^{2} 2_{0}^{3} 5_{0}^{5} 7_{0}^{2} 3_{\pm4}^{4} 4_{\pm3}^{2} 6_{\pm3}^{4} 8_{\pm3}, \\
& 1_{\pm1}^{2} 2_{0}^{2} 5_{0}^{3} 7_{0}^{2} 3_{\pm4}^{3} 4_{\pm3} 6_{\pm3}^{3} 8_{\pm3},
1_{\pm1} 2_{0}^{2} 5_{0}^{3} 7_{0}^{2} 3_{\pm4}^{2} 4_{\pm3}^{2} 6_{\pm3}^{2} 8_{\pm3},
1_{\pm1} 2_{0} 5_{0}^{3} 7_{0} 3_{\pm4}^{2} 4_{\pm3} 6_{\pm3}^{2} 8_{\pm3},
1_{\pm1} 2_{0}^{3} 5_{0}^{4} 7_{0}^{2} 3_{\pm4}^{3} 4_{\pm3}^{2} 6_{\pm3}^{3} 8_{\pm3}, \\
& 1_{\pm1}^{2} 2_{0}^{3} 5_{0}^{5} 7_{0}^{3} 3_{\pm4}^{4} 4_{\pm3}^{2} 6_{\pm3}^{4} 8_{\pm3}^{2},
1_{\pm1} 2_{0}^{2} 5_{0}^{4} 7_{0}^{2} 3_{\pm4}^{2} 4_{\pm3}^{2} 6_{\pm3}^{3} 8_{\pm3},
1_{\pm1} 2_{0}^{2} 5_{0}^{2} 7_{0}^{2} 3_{\pm4}^{2} 4_{\pm3} 6_{\pm3}^{2} 8_{\pm3},
1_{\pm1}^{2} 2_{0}^{2} 5_{0}^{4} 7_{0}^{2} 3_{\pm4}^{4} 4_{\pm3} 6_{\pm3}^{3} 8_{\pm3}, \\
& 1_{\pm1}^{2} 2_{0}^{3} 5_{0}^{5} 7_{0}^{3} 3_{\pm4}^{4} 4_{\pm3}^{2} 6_{\pm3}^{4} 8_{\pm3},
1_{\pm1} 2_{0}^{2} 5_{0}^{3} 7_{0}^{2} 3_{\pm4}^{3} 4_{\pm3} 6_{\pm3}^{2} 8_{\pm3},
1_{\pm1}^{2} 2_{0}^{2} 5_{0}^{3} 7_{0} 3_{\pm4}^{3} 4_{\pm3} 6_{\pm3}^{2} 8_{\pm3},
1_{\pm1}^{2} 2_{0}^{3} 5_{0}^{4} 7_{0}^{2} 3_{\pm4}^{3} 4_{\pm3}^{2} 6_{\pm3}^{3} 8_{\pm3}, \\
& 1_{\pm1}^{2} 2_{0}^{3} 5_{0}^{5} 7_{0}^{2} 3_{\pm4}^{4} 4_{\pm3}^{2} 6_{\pm3}^{3} 8_{\pm3},
1_{\pm1} 2_{0} 5_{0}^{2} 7_{0} 3_{\pm4}^{2} 6_{\pm3}^{2} 8_{\pm3},
1_{\pm1} 2_{0}^{2} 5_{0}^{3} 7_{0}^{2} 3_{\pm4}^{2} 4_{\pm3} 6_{\pm3}^{3} 8_{\pm3},
1_{\pm1} 2_{0}^{2} 5_{0}^{2} 7_{0} 3_{\pm4}^{2} 4_{\pm3} 6_{\pm3} 8_{\pm3}, \\
& 1_{\pm1} 2_{0}^{2} 5_{0}^{4} 7_{0}^{2} 3_{\pm4}^{3} 4_{\pm3} 6_{\pm3}^{3} 8_{\pm3},
1_{\pm1}^{2} 2_{0}^{3} 5_{0}^{4} 7_{0}^{2} 3_{\pm4}^{4} 4_{\pm3} 6_{\pm3}^{3} 8_{\pm3},
1_{\pm1} 2_{0}^{2} 5_{0}^{3} 7_{0} 3_{\pm4}^{2} 4_{\pm3} 6_{\pm3}^{2} 8_{\pm3},
1_{\pm1} 2_{0} 5_{0}^{2} 7_{0}^{2} 3_{\pm4} 4_{\pm3} 6_{\pm3}^{2} 8_{\pm3}, \\
& 1_{\pm1} 2_{0} 5_{0}^{3} 7_{0}^{2} 3_{\pm4}^{2} 4_{\pm3} 6_{\pm3}^{2} 8_{\pm3},
1_{\pm1}^{2} 2_{0}^{2} 5_{0}^{3} 7_{0}^{2} 3_{\pm4}^{3} 4_{\pm3} 6_{\pm3}^{2} 8_{\pm3},
1_{\pm1}^{2} 2_{0}^{2} 5_{0}^{4} 7_{0}^{2} 3_{\pm4}^{3} 4_{\pm3} 6_{\pm3}^{3} 8_{\pm3},
1_{\pm1} 2_{0} 5_{0}^{2} 7_{0}^{2} 3_{\pm4}^{2} 6_{\pm3}^{2} 8_{\pm3}, \\
& 1_{\pm1} 2_{0} 5_{0}^{2} 7_{0} 3_{\pm4} 4_{\pm3} 6_{\pm3} 8_{\pm3},
1_{\pm1} 2_{0}^{2} 5_{0}^{3} 7_{0}^{2} 3_{\pm4}^{2} 4_{\pm3} 6_{\pm3}^{2} 8_{\pm3},
1_{\pm1} 2_{0} 5_{0}^{2} 7_{0} 3_{\pm4}^{2} 6_{\pm3} 8_{\pm3},
1_{\pm1} 2_{0} 5_{0} 7_{0} 3_{\pm4} 6_{\pm3} 8_{\pm3}. \\
& \quad \\
& (59)  1_{\pm2} 2_{\pm1} 5_{\pm1}^{2} 8_{0} 3_{\pm5}^{2} 4_{\pm4} 6_{\pm4}^{2},
1_{\pm2} 2_{\pm1} 5_{\pm1} 8_{0} 3_{\pm5} 4_{\pm4} 6_{\pm4},
1_{\pm2} 2_{\pm1}^{2} 5_{\pm1}^{3} 7_{\pm1} 8_{0} 3_{\pm5}^{2} 4_{\pm4}^{2} 6_{\pm4}^{3},
1_{\pm2} 2_{\pm1} 5_{\pm1}^{2} 7_{\pm1} 8_{0} 3_{\pm5}^{2} 4_{\pm4} 6_{\pm4}^{2}, \\
& 1_{\pm2} 2_{\pm1}^{2} 5_{\pm1}^{3} 8_{0} 3_{\pm5}^{2} 4_{\pm4}^{2} 6_{\pm4}^{2},
1_{\pm2} 2_{\pm1}^{2} 5_{\pm1}^{4} 7_{\pm1} 8_{0} 3_{\pm5}^{3} 4_{\pm4}^{2} 6_{\pm4}^{3},
1_{\pm2} 2_{\pm1}^{2} 5_{\pm1}^{2} 8_{0} 3_{\pm5}^{2} 4_{\pm4} 6_{\pm4}^{2},
1_{\pm2} 2_{\pm1} 5_{\pm1}^{2} 8_{0} 3_{\pm5}^{2} 4_{\pm4} 6_{\pm4}, \\
& 1_{\pm2} 2_{\pm1}^{2} 5_{\pm1}^{3} 7_{\pm1} 8_{0} 3_{\pm5}^{3} 4_{\pm4} 6_{\pm4}^{3},
1_{\pm2}^{2} 2_{\pm1}^{3} 5_{\pm1}^{4} 7_{\pm1} 8_{0} 3_{\pm5}^{4} 4_{\pm4}^{2} 6_{\pm4}^{3},
1_{\pm2} 2_{\pm1}^{2} 5_{\pm1}^{3} 8_{0} 3_{\pm5}^{3} 4_{\pm4} 6_{\pm4}^{2},
1_{\pm2} 2_{\pm1} 5_{\pm1}^{2} 8_{0} 3_{\pm5} 4_{\pm4} 6_{\pm4}^{2}, \\
& 1_{\pm2} 2_{\pm1} 5_{\pm1}^{3} 7_{\pm1} 8_{0} 3_{\pm5}^{2} 4_{\pm4} 6_{\pm4}^{3},
1_{\pm2} 2_{\pm1} 5_{\pm1} 8_{0} 3_{\pm5}^{2} 6_{\pm4},
1_{\pm2}^{2} 2_{\pm1}^{2} 5_{\pm1}^{4} 7_{\pm1} 8_{0} 3_{\pm5}^{3} 4_{\pm4}^{2} 6_{\pm4}^{3},
1_{\pm2}^{2} 2_{\pm1}^{3} 5_{\pm1}^{5} 7_{\pm1} 8_{0} 3_{\pm5}^{4} 4_{\pm4}^{2} 6_{\pm4}^{4}, \\
& 1_{\pm2}^{2} 2_{\pm1}^{2} 5_{\pm1}^{3} 7_{\pm1} 8_{0} 3_{\pm5}^{3} 4_{\pm4} 6_{\pm4}^{3},
1_{\pm2} 2_{\pm1}^{2} 5_{\pm1}^{3} 7_{\pm1} 8_{0} 3_{\pm5}^{2} 4_{\pm4}^{2} 6_{\pm4}^{2},
1_{\pm2} 2_{\pm1} 5_{\pm1}^{3} 8_{0} 3_{\pm5}^{2} 4_{\pm4} 6_{\pm4}^{2},
1_{\pm2} 2_{\pm1}^{3} 5_{\pm1}^{4} 7_{\pm1} 8_{0} 3_{\pm5}^{3} 4_{\pm4}^{2} 6_{\pm4}^{3}, \\
& 1_{\pm2}^{2} 2_{\pm1}^{3} 5_{\pm1}^{5} 7_{\pm1} 8_{0}^{2} 3_{\pm5}^{4} 4_{\pm4}^{2} 6_{\pm4}^{4},
1_{\pm2} 2_{\pm1}^{2} 5_{\pm1}^{4} 7_{\pm1} 8_{0} 3_{\pm5}^{2} 4_{\pm4}^{2} 6_{\pm4}^{3},
1_{\pm2} 2_{\pm1}^{2} 5_{\pm1}^{2} 7_{\pm1} 8_{0} 3_{\pm5}^{2} 4_{\pm4} 6_{\pm4}^{2},
1_{\pm2}^{2} 2_{\pm1}^{2} 5_{\pm1}^{4} 7_{\pm1} 8_{0} 3_{\pm5}^{4} 4_{\pm4} 6_{\pm4}^{3}, \\
& 1_{\pm2}^{2} 2_{\pm1}^{3} 5_{\pm1}^{5} 7_{\pm1}^{2} 8_{0} 3_{\pm5}^{4} 4_{\pm4}^{2} 6_{\pm4}^{4},
1_{\pm2} 2_{\pm1}^{2} 5_{\pm1}^{3} 7_{\pm1} 8_{0} 3_{\pm5}^{3} 4_{\pm4} 6_{\pm4}^{2},
1_{\pm2}^{2} 2_{\pm1}^{2} 5_{\pm1}^{3} 8_{0} 3_{\pm5}^{3} 4_{\pm4} 6_{\pm4}^{2},
1_{\pm2}^{2} 2_{\pm1}^{3} 5_{\pm1}^{4} 7_{\pm1} 8_{0} 3_{\pm5}^{3} 4_{\pm4}^{2} 6_{\pm4}^{3}, \\
& 1_{\pm2}^{2} 2_{\pm1}^{3} 5_{\pm1}^{5} 7_{\pm1} 8_{0} 3_{\pm5}^{4} 4_{\pm4}^{2} 6_{\pm4}^{3},
1_{\pm2} 2_{\pm1} 5_{\pm1}^{2} 8_{0} 3_{\pm5}^{2} 6_{\pm4}^{2},
1_{\pm2} 2_{\pm1}^{2} 5_{\pm1}^{3} 7_{\pm1} 8_{0} 3_{\pm5}^{2} 4_{\pm4} 6_{\pm4}^{3},
1_{\pm2} 2_{\pm1}^{2} 5_{\pm1}^{2} 8_{0} 3_{\pm5}^{2} 4_{\pm4} 6_{\pm4}, \\
& 1_{\pm2} 2_{\pm1}^{2} 5_{\pm1}^{4} 7_{\pm1} 8_{0} 3_{\pm5}^{3} 4_{\pm4} 6_{\pm4}^{3},
1_{\pm2}^{2} 2_{\pm1}^{3} 5_{\pm1}^{4} 7_{\pm1} 8_{0} 3_{\pm5}^{4} 4_{\pm4} 6_{\pm4}^{3},
1_{\pm2} 2_{\pm1}^{2} 5_{\pm1}^{3} 8_{0} 3_{\pm5}^{2} 4_{\pm4} 6_{\pm4}^{2},
1_{\pm2} 2_{\pm1} 5_{\pm1}^{2} 7_{\pm1} 8_{0} 3_{\pm5} 4_{\pm4} 6_{\pm4}^{2}, \\
& 1_{\pm2} 2_{\pm1} 5_{\pm1}^{3} 7_{\pm1} 8_{0} 3_{\pm5}^{2} 4_{\pm4} 6_{\pm4}^{2},
1_{\pm2}^{2} 2_{\pm1}^{2} 5_{\pm1}^{3} 7_{\pm1} 8_{0} 3_{\pm5}^{3} 4_{\pm4} 6_{\pm4}^{2},
1_{\pm2}^{2} 2_{\pm1}^{2} 5_{\pm1}^{4} 7_{\pm1} 8_{0} 3_{\pm5}^{3} 4_{\pm4} 6_{\pm4}^{3},
1_{\pm2} 2_{\pm1} 5_{\pm1}^{2} 7_{\pm1} 8_{0} 3_{\pm5}^{2} 6_{\pm4}^{2}, \\
& 1_{\pm2} 2_{\pm1} 5_{\pm1}^{2} 8_{0} 3_{\pm5} 4_{\pm4} 6_{\pm4},
1_{\pm2} 2_{\pm1}^{2} 5_{\pm1}^{3} 7_{\pm1} 8_{0} 3_{\pm5}^{2} 4_{\pm4} 6_{\pm4}^{2},
1_{\pm2} 2_{\pm1} 5_{\pm1}^{2} 8_{0} 3_{\pm5}^{2} 6_{\pm4},
1_{\pm2} 2_{\pm1} 5_{\pm1} 8_{0} 3_{\pm5} 6_{\pm4}. \\
& \quad \\
& (60)  1_{\pm2} 2_{\pm1} 6_{0}^{2} 3_{\pm5}^{2} 4_{\pm4} 7_{\pm3} 8_{\pm4},
1_{\pm2} 2_{\pm1} 6_{0} 3_{\pm5} 4_{\pm4} 8_{\pm4},
1_{\pm2} 2_{\pm1}^{2} 6_{0}^{3} 3_{\pm5}^{2} 4_{\pm4}^{2} 7_{\pm3} 8_{\pm4},
1_{\pm2} 2_{\pm1} 6_{0}^{2} 3_{\pm5}^{2} 4_{\pm4} 8_{\pm4}, \\
& 1_{\pm2} 2_{\pm1}^{2} 5_{\pm1} 6_{0}^{2} 3_{\pm5}^{2} 4_{\pm4}^{2} 7_{\pm3} 8_{\pm4},
1_{\pm2} 2_{\pm1}^{2} 5_{\pm1} 6_{0}^{3} 3_{\pm5}^{3} 4_{\pm4}^{2} 7_{\pm3} 8_{\pm4},
1_{\pm2} 2_{\pm1} 5_{\pm1} 6_{0} 3_{\pm5}^{2} 4_{\pm4} 8_{\pm4},
1_{\pm2} 2_{\pm1}^{2} 6_{0}^{2} 3_{\pm5}^{2} 4_{\pm4} 7_{\pm3} 8_{\pm4}, \\
& 1_{\pm2} 2_{\pm1}^{2} 6_{0}^{3} 3_{\pm5}^{3} 4_{\pm4} 7_{\pm3} 8_{\pm4},
1_{\pm2}^{2} 2_{\pm1}^{3} 5_{\pm1} 6_{0}^{3} 3_{\pm5}^{4} 4_{\pm4}^{2} 7_{\pm3} 8_{\pm4},
1_{\pm2} 2_{\pm1}^{2} 5_{\pm1} 6_{0}^{2} 3_{\pm5}^{3} 4_{\pm4} 7_{\pm3} 8_{\pm4},
1_{\pm2} 2_{\pm1} 6_{0}^{2} 3_{\pm5} 4_{\pm4} 7_{\pm3} 8_{\pm4}, \\
& 1_{\pm2} 2_{\pm1} 6_{0} 3_{\pm5}^{2} 8_{\pm4},
1_{\pm2} 2_{\pm1} 6_{0}^{3} 3_{\pm5}^{2} 4_{\pm4} 7_{\pm3} 8_{\pm4},
1_{\pm2}^{2} 2_{\pm1}^{2} 5_{\pm1} 6_{0}^{3} 3_{\pm5}^{3} 4_{\pm4}^{2} 7_{\pm3} 8_{\pm4},
1_{\pm2}^{2} 2_{\pm1}^{3} 5_{\pm1} 6_{0}^{4} 3_{\pm5}^{4} 4_{\pm4}^{2} 7_{\pm3}^{2} 8_{\pm4}, \\
& 1_{\pm2}^{2} 2_{\pm1}^{2} 6_{0}^{3} 3_{\pm5}^{3} 4_{\pm4} 7_{\pm3} 8_{\pm4},
1_{\pm2} 2_{\pm1} 5_{\pm1} 6_{0}^{2} 3_{\pm5}^{2} 4_{\pm4} 7_{\pm3} 8_{\pm4},
1_{\pm2} 2_{\pm1}^{2} 5_{\pm1} 6_{0}^{2} 3_{\pm5}^{2} 4_{\pm4}^{2} 8_{\pm4},
1_{\pm2} 2_{\pm1}^{3} 5_{\pm1} 6_{0}^{3} 3_{\pm5}^{3} 4_{\pm4}^{2} 7_{\pm3} 8_{\pm4}, \\
& 1_{\pm2}^{2} 2_{\pm1}^{3} 5_{\pm1} 6_{0}^{4} 3_{\pm5}^{4} 4_{\pm4}^{2} 7_{\pm3} 8_{\pm4}^{2},
1_{\pm2} 2_{\pm1}^{2} 5_{\pm1} 6_{0}^{3} 3_{\pm5}^{2} 4_{\pm4}^{2} 7_{\pm3} 8_{\pm4},
1_{\pm2} 2_{\pm1}^{2} 6_{0}^{2} 3_{\pm5}^{2} 4_{\pm4} 8_{\pm4},
1_{\pm2}^{2} 2_{\pm1}^{2} 5_{\pm1} 6_{0}^{3} 3_{\pm5}^{4} 4_{\pm4} 7_{\pm3} 8_{\pm4}, \\
& 1_{\pm2}^{2} 2_{\pm1}^{3} 5_{\pm1} 6_{0}^{4} 3_{\pm5}^{4} 4_{\pm4}^{2} 7_{\pm3} 8_{\pm4},
1_{\pm2} 2_{\pm1}^{2} 5_{\pm1} 6_{0}^{2} 3_{\pm5}^{3} 4_{\pm4} 8_{\pm4},
1_{\pm2}^{2} 2_{\pm1}^{2} 5_{\pm1} 6_{0}^{2} 3_{\pm5}^{3} 4_{\pm4} 7_{\pm3} 8_{\pm4},
1_{\pm2}^{2} 2_{\pm1}^{3} 5_{\pm1} 6_{0}^{3} 3_{\pm5}^{3} 4_{\pm4}^{2} 7_{\pm3} 8_{\pm4}, \\
& 1_{\pm2}^{2} 2_{\pm1}^{3} 5_{\pm1}^{2} 6_{0}^{3} 3_{\pm5}^{4} 4_{\pm4}^{2} 7_{\pm3} 8_{\pm4},
1_{\pm2} 2_{\pm1} 6_{0}^{2} 3_{\pm5}^{2} 7_{\pm3} 8_{\pm4},
1_{\pm2} 2_{\pm1}^{2} 5_{\pm1} 6_{0} 3_{\pm5}^{2} 4_{\pm4} 8_{\pm4},
1_{\pm2} 2_{\pm1}^{2} 6_{0}^{3} 3_{\pm5}^{2} 4_{\pm4} 7_{\pm3} 8_{\pm4}, \\
& 1_{\pm2} 2_{\pm1}^{2} 5_{\pm1} 6_{0}^{3} 3_{\pm5}^{3} 4_{\pm4} 7_{\pm3} 8_{\pm4},
1_{\pm2}^{2} 2_{\pm1}^{3} 5_{\pm1} 6_{0}^{3} 3_{\pm5}^{4} 4_{\pm4} 7_{\pm3} 8_{\pm4},
1_{\pm2} 2_{\pm1}^{2} 5_{\pm1} 6_{0}^{2} 3_{\pm5}^{2} 4_{\pm4} 7_{\pm3} 8_{\pm4},
1_{\pm2} 2_{\pm1} 6_{0}^{2} 3_{\pm5} 4_{\pm4} 8_{\pm4}, \\
& 1_{\pm2} 2_{\pm1} 5_{\pm1} 6_{0}^{2} 3_{\pm5}^{2} 4_{\pm4} 8_{\pm4},
1_{\pm2}^{2} 2_{\pm1}^{2} 5_{\pm1} 6_{0}^{2} 3_{\pm5}^{3} 4_{\pm4} 8_{\pm4},
1_{\pm2}^{2} 2_{\pm1}^{2} 5_{\pm1} 6_{0}^{3} 3_{\pm5}^{3} 4_{\pm4} 7_{\pm3} 8_{\pm4},
1_{\pm2} 2_{\pm1} 6_{0}^{2} 3_{\pm5}^{2} 8_{\pm4}, \\
& 1_{\pm2} 2_{\pm1} 5_{\pm1} 6_{0} 3_{\pm5} 4_{\pm4} 8_{\pm4},
1_{\pm2} 2_{\pm1}^{2} 5_{\pm1} 6_{0}^{2} 3_{\pm5}^{2} 4_{\pm4} 8_{\pm4},
1_{\pm2} 2_{\pm1} 5_{\pm1} 6_{0} 3_{\pm5}^{2} 8_{\pm4},
1_{\pm2} 2_{\pm1} 6_{0} 3_{\pm5} 8_{\pm4}.
\end{align*}
\end{gather}

\begin{gather}
\begin{align*}
& (61)  1_{\pm2} 2_{\pm1} 6_{0}^{2} 8_{0} 3_{\pm5}^{2} 4_{\pm4} 7_{\pm3}^{2},
1_{\pm2} 2_{\pm1} 6_{0} 8_{0} 3_{\pm5} 4_{\pm4} 7_{\pm3},
1_{\pm2} 2_{\pm1}^{2} 6_{0}^{3} 8_{0} 3_{\pm5}^{2} 4_{\pm4}^{2} 7_{\pm3}^{2},
1_{\pm2} 2_{\pm1} 6_{0}^{2} 8_{0} 3_{\pm5}^{2} 4_{\pm4} 7_{\pm3}, \\
& 1_{\pm2} 2_{\pm1}^{2} 5_{\pm1} 6_{0}^{2} 8_{0} 3_{\pm5}^{2} 4_{\pm4}^{2} 7_{\pm3}^{2},
1_{\pm2} 2_{\pm1}^{2} 5_{\pm1} 6_{0}^{3} 8_{0} 3_{\pm5}^{3} 4_{\pm4}^{2} 7_{\pm3}^{2},
1_{\pm2} 2_{\pm1} 5_{\pm1} 6_{0} 8_{0} 3_{\pm5}^{2} 4_{\pm4} 7_{\pm3},
1_{\pm2} 2_{\pm1}^{2} 6_{0}^{2} 8_{0} 3_{\pm5}^{2} 4_{\pm4} 7_{\pm3}^{2}, \\
& 1_{\pm2} 2_{\pm1}^{2} 6_{0}^{3} 8_{0} 3_{\pm5}^{3} 4_{\pm4} 7_{\pm3}^{2},
1_{\pm2}^{2} 2_{\pm1}^{3} 5_{\pm1} 6_{0}^{3} 8_{0} 3_{\pm5}^{4} 4_{\pm4}^{2} 7_{\pm3}^{2},
1_{\pm2} 2_{\pm1}^{2} 5_{\pm1} 6_{0}^{2} 8_{0} 3_{\pm5}^{3} 4_{\pm4} 7_{\pm3}^{2},
1_{\pm2} 2_{\pm1} 6_{0}^{2} 8_{0} 3_{\pm5} 4_{\pm4} 7_{\pm3}^{2}, \\
& 1_{\pm2} 2_{\pm1} 6_{0} 8_{0} 3_{\pm5}^{2} 7_{\pm3},
1_{\pm2} 2_{\pm1} 6_{0}^{3} 8_{0} 3_{\pm5}^{2} 4_{\pm4} 7_{\pm3}^{2},
1_{\pm2}^{2} 2_{\pm1}^{2} 5_{\pm1} 6_{0}^{3} 8_{0} 3_{\pm5}^{3} 4_{\pm4}^{2} 7_{\pm3}^{2},
1_{\pm2}^{2} 2_{\pm1}^{3} 5_{\pm1} 6_{0}^{4} 8_{0} 3_{\pm5}^{4} 4_{\pm4}^{2} 7_{\pm3}^{3}, \\
& 1_{\pm2}^{2} 2_{\pm1}^{2} 6_{0}^{3} 8_{0} 3_{\pm5}^{3} 4_{\pm4} 7_{\pm3}^{2},
1_{\pm2} 2_{\pm1} 5_{\pm1} 6_{0}^{2} 8_{0} 3_{\pm5}^{2} 4_{\pm4} 7_{\pm3}^{2},
1_{\pm2} 2_{\pm1}^{2} 5_{\pm1} 6_{0}^{2} 8_{0} 3_{\pm5}^{2} 4_{\pm4}^{2} 7_{\pm3},
1_{\pm2} 2_{\pm1}^{3} 5_{\pm1} 6_{0}^{3} 8_{0} 3_{\pm5}^{3} 4_{\pm4}^{2} 7_{\pm3}^{2}, \\
& 1_{\pm2}^{2} 2_{\pm1}^{3} 5_{\pm1} 6_{0}^{4} 8_{0}^{2} 3_{\pm5}^{4} 4_{\pm4}^{2} 7_{\pm3}^{3},
1_{\pm2} 2_{\pm1}^{2} 5_{\pm1} 6_{0}^{3} 8_{0} 3_{\pm5}^{2} 4_{\pm4}^{2} 7_{\pm3}^{2},
1_{\pm2} 2_{\pm1}^{2} 6_{0}^{2} 8_{0} 3_{\pm5}^{2} 4_{\pm4} 7_{\pm3},
1_{\pm2}^{2} 2_{\pm1}^{2} 5_{\pm1} 6_{0}^{3} 8_{0} 3_{\pm5}^{4} 4_{\pm4} 7_{\pm3}^{2}, \\
& 1_{\pm2}^{2} 2_{\pm1}^{3} 5_{\pm1} 6_{0}^{4} 8_{0} 3_{\pm5}^{4} 4_{\pm4}^{2} 7_{\pm3}^{2},
1_{\pm2} 2_{\pm1}^{2} 5_{\pm1} 6_{0}^{2} 8_{0} 3_{\pm5}^{3} 4_{\pm4} 7_{\pm3},
1_{\pm2}^{2} 2_{\pm1}^{2} 5_{\pm1} 6_{0}^{2} 8_{0} 3_{\pm5}^{3} 4_{\pm4} 7_{\pm3}^{2},
1_{\pm2}^{2} 2_{\pm1}^{3} 5_{\pm1} 6_{0}^{3} 8_{0} 3_{\pm5}^{3} 4_{\pm4}^{2} 7_{\pm3}^{2}, \\
& 1_{\pm2}^{2} 2_{\pm1}^{3} 5_{\pm1}^{2} 6_{0}^{3} 8_{0} 3_{\pm5}^{4} 4_{\pm4}^{2} 7_{\pm3}^{2},
1_{\pm2} 2_{\pm1} 6_{0}^{2} 8_{0} 3_{\pm5}^{2} 7_{\pm3}^{2},
1_{\pm2} 2_{\pm1}^{2} 5_{\pm1} 6_{0} 8_{0} 3_{\pm5}^{2} 4_{\pm4} 7_{\pm3},
1_{\pm2} 2_{\pm1}^{2} 6_{0}^{3} 8_{0} 3_{\pm5}^{2} 4_{\pm4} 7_{\pm3}^{2}, \\
& 1_{\pm2} 2_{\pm1}^{2} 5_{\pm1} 6_{0}^{3} 8_{0} 3_{\pm5}^{3} 4_{\pm4} 7_{\pm3}^{2},
1_{\pm2}^{2} 2_{\pm1}^{3} 5_{\pm1} 6_{0}^{3} 8_{0} 3_{\pm5}^{4} 4_{\pm4} 7_{\pm3}^{2},
1_{\pm2} 2_{\pm1}^{2} 5_{\pm1} 6_{0}^{2} 8_{0} 3_{\pm5}^{2} 4_{\pm4} 7_{\pm3}^{2},
1_{\pm2} 2_{\pm1} 6_{0}^{2} 8_{0} 3_{\pm5} 4_{\pm4} 7_{\pm3}, \\
& 1_{\pm2} 2_{\pm1} 5_{\pm1} 6_{0}^{2} 8_{0} 3_{\pm5}^{2} 4_{\pm4} 7_{\pm3},
1_{\pm2}^{2} 2_{\pm1}^{2} 5_{\pm1} 6_{0}^{2} 8_{0} 3_{\pm5}^{3} 4_{\pm4} 7_{\pm3},
1_{\pm2}^{2} 2_{\pm1}^{2} 5_{\pm1} 6_{0}^{3} 8_{0} 3_{\pm5}^{3} 4_{\pm4} 7_{\pm3}^{2},
1_{\pm2} 2_{\pm1} 6_{0}^{2} 8_{0} 3_{\pm5}^{2} 7_{\pm3}, \\
& 1_{\pm2} 2_{\pm1} 5_{\pm1} 6_{0} 8_{0} 3_{\pm5} 4_{\pm4} 7_{\pm3},
1_{\pm2} 2_{\pm1}^{2} 5_{\pm1} 6_{0}^{2} 8_{0} 3_{\pm5}^{2} 4_{\pm4} 7_{\pm3},
1_{\pm2} 2_{\pm1} 5_{\pm1} 6_{0} 8_{0} 3_{\pm5}^{2} 7_{\pm3},
1_{\pm2} 2_{\pm1} 6_{0} 8_{0} 3_{\pm5} 7_{\pm3}. \\
& \quad \\
& (62)  1_{\pm3} 2_{\pm2} 7_{0} 3_{\pm6} 4_{\pm5} 8_{\pm3},
1_{\pm3} 2_{\pm2} 7_{0}^{2} 3_{\pm6}^{2} 4_{\pm5} 8_{\pm3},
1_{\pm3} 2_{\pm2} 6_{\pm1} 7_{0} 3_{\pm6}^{2} 4_{\pm5} 8_{\pm3},
1_{\pm3} 2_{\pm2}^{2} 6_{\pm1} 7_{0}^{2} 3_{\pm6}^{2} 4_{\pm5}^{2} 8_{\pm3}, \\
& 1_{\pm3} 2_{\pm2} 5_{\pm2} 7_{0} 3_{\pm6}^{2} 4_{\pm5} 8_{\pm3},
1_{\pm3} 2_{\pm2}^{2} 5_{\pm2} 7_{0}^{2} 3_{\pm6}^{2} 4_{\pm5}^{2} 8_{\pm3},
1_{\pm3} 2_{\pm2}^{2} 5_{\pm2} 6_{\pm1} 7_{0}^{2} 3_{\pm6}^{3} 4_{\pm5}^{2} 8_{\pm3},
1_{\pm3} 2_{\pm2}^{2} 5_{\pm2} 6_{\pm1} 7_{0} 3_{\pm6}^{2} 4_{\pm5}^{2} 8_{\pm3}, \\
& 1_{\pm3} 2_{\pm2} 7_{0} 3_{\pm6}^{2} 8_{\pm3},
1_{\pm3} 2_{\pm2}^{2} 7_{0}^{2} 3_{\pm6}^{2} 4_{\pm5} 8_{\pm3},
1_{\pm3} 2_{\pm2}^{2} 6_{\pm1} 7_{0}^{2} 3_{\pm6}^{3} 4_{\pm5} 8_{\pm3},
1_{\pm3}^{2} 2_{\pm2}^{3} 5_{\pm2} 6_{\pm1} 7_{0}^{2} 3_{\pm6}^{4} 4_{\pm5}^{2} 8_{\pm3}, \\
& 1_{\pm3} 2_{\pm2}^{2} 5_{\pm2} 7_{0}^{2} 3_{\pm6}^{3} 4_{\pm5} 8_{\pm3},
1_{\pm3} 2_{\pm2}^{2} 6_{\pm1} 7_{0} 3_{\pm6}^{2} 4_{\pm5} 8_{\pm3},
1_{\pm3} 2_{\pm2} 7_{0}^{2} 3_{\pm6} 4_{\pm5} 8_{\pm3},
1_{\pm3} 2_{\pm2} 6_{\pm1} 7_{0}^{2} 3_{\pm6}^{2} 4_{\pm5} 8_{\pm3}, \\
& 1_{\pm3}^{2} 2_{\pm2}^{2} 5_{\pm2} 6_{\pm1} 7_{0}^{2} 3_{\pm6}^{3} 4_{\pm5}^{2} 8_{\pm3},
1_{\pm3}^{2} 2_{\pm2}^{3} 5_{\pm2} 6_{\pm1} 7_{0}^{3} 3_{\pm6}^{4} 4_{\pm5}^{2} 8_{\pm3}^{2},
1_{\pm3}^{2} 2_{\pm2}^{2} 6_{\pm1} 7_{0}^{2} 3_{\pm6}^{3} 4_{\pm5} 8_{\pm3},
1_{\pm3} 2_{\pm2} 6_{\pm1} 7_{0} 3_{\pm6} 4_{\pm5} 8_{\pm3}, \\
& 1_{\pm3} 2_{\pm2} 5_{\pm2} 7_{0}^{2} 3_{\pm6}^{2} 4_{\pm5} 8_{\pm3},
1_{\pm3} 2_{\pm2}^{3} 5_{\pm2} 6_{\pm1} 7_{0}^{2} 3_{\pm6}^{3} 4_{\pm5}^{2} 8_{\pm3},
1_{\pm3}^{2} 2_{\pm2}^{3} 5_{\pm2} 6_{\pm1} 7_{0}^{3} 3_{\pm6}^{4} 4_{\pm5}^{2} 8_{\pm3},
1_{\pm3} 2_{\pm2}^{2} 5_{\pm2} 6_{\pm1} 7_{0}^{2} 3_{\pm6}^{2} 4_{\pm5}^{2} 8_{\pm3}, \\
& 1_{\pm3} 2_{\pm2}^{2} 5_{\pm2} 6_{\pm1} 7_{0} 3_{\pm6}^{3} 4_{\pm5} 8_{\pm3},
1_{\pm3}^{2} 2_{\pm2}^{2} 5_{\pm2} 6_{\pm1} 7_{0}^{2} 3_{\pm6}^{4} 4_{\pm5} 8_{\pm3},
1_{\pm3}^{2} 2_{\pm2}^{3} 5_{\pm2} 6_{\pm1}^{2} 7_{0}^{2} 3_{\pm6}^{4} 4_{\pm5}^{2} 8_{\pm3},
1_{\pm3} 2_{\pm2} 5_{\pm2} 6_{\pm1} 7_{0} 3_{\pm6}^{2} 4_{\pm5} 8_{\pm3}, \\
& 1_{\pm3} 2_{\pm2}^{2} 5_{\pm2} 7_{0} 3_{\pm6}^{2} 4_{\pm5} 8_{\pm3},
1_{\pm3}^{2} 2_{\pm2}^{2} 5_{\pm2} 7_{0}^{2} 3_{\pm6}^{3} 4_{\pm5} 8_{\pm3},
1_{\pm3}^{2} 2_{\pm2}^{3} 5_{\pm2} 6_{\pm1} 7_{0}^{2} 3_{\pm6}^{3} 4_{\pm5}^{2} 8_{\pm3},
1_{\pm3}^{2} 2_{\pm2}^{3} 5_{\pm2}^{2} 6_{\pm1} 7_{0}^{2} 3_{\pm6}^{4} 4_{\pm5}^{2} 8_{\pm3}, \\
& 1_{\pm3}^{2} 2_{\pm2}^{2} 5_{\pm2} 6_{\pm1} 7_{0} 3_{\pm6}^{3} 4_{\pm5} 8_{\pm3},
1_{\pm3} 2_{\pm2} 5_{\pm2} 7_{0} 3_{\pm6} 4_{\pm5} 8_{\pm3},
1_{\pm3} 2_{\pm2} 7_{0}^{2} 3_{\pm6}^{2} 8_{\pm3},
1_{\pm3} 2_{\pm2}^{2} 6_{\pm1} 7_{0}^{2} 3_{\pm6}^{2} 4_{\pm5} 8_{\pm3}, \\
& 1_{\pm3} 2_{\pm2}^{2} 5_{\pm2} 6_{\pm1} 7_{0}^{2} 3_{\pm6}^{3} 4_{\pm5} 8_{\pm3},
1_{\pm3}^{2} 2_{\pm2}^{3} 5_{\pm2} 6_{\pm1} 7_{0}^{2} 3_{\pm6}^{4} 4_{\pm5} 8_{\pm3},
1_{\pm3} 2_{\pm2}^{2} 5_{\pm2} 7_{0}^{2} 3_{\pm6}^{2} 4_{\pm5} 8_{\pm3},
1_{\pm3} 2_{\pm2} 6_{\pm1} 7_{0} 3_{\pm6}^{2} 8_{\pm3}, \\
& 1_{\pm3}^{2} 2_{\pm2}^{2} 5_{\pm2} 6_{\pm1} 7_{0}^{2} 3_{\pm6}^{3} 4_{\pm5} 8_{\pm3},
1_{\pm3} 2_{\pm2}^{2} 5_{\pm2} 6_{\pm1} 7_{0} 3_{\pm6}^{2} 4_{\pm5} 8_{\pm3},
1_{\pm3} 2_{\pm2} 5_{\pm2} 7_{0} 3_{\pm6}^{2} 8_{\pm3},
1_{\pm3} 2_{\pm2} 7_{0} 3_{\pm6} 8_{\pm3}. \\
& \quad \\
& (63)  1_{\pm4} 2_{\pm3} 8_{0} 3_{\pm7}, 
1_{\pm4} 2_{\pm3} 8_{0} 3_{\pm7} 4_{\pm6},
1_{\pm4} 2_{\pm3} 7_{\pm1} 8_{0} 3_{\pm7}^{2} 4_{\pm6},
1_{\pm4} 2_{\pm3} 6_{\pm2} 8_{0} 3_{\pm7}^{2} 4_{\pm6},
1_{\pm4} 2_{\pm3}^{2} 6_{\pm2} 7_{\pm1} 8_{0} 3_{\pm7}^{2} 4_{\pm6}^{2}, \\
& 1_{\pm4} 2_{\pm3} 5_{\pm3} 8_{0} 3_{\pm7}^{2} 4_{\pm6},
1_{\pm4} 2_{\pm3}^{2} 5_{\pm3} 7_{\pm1} 8_{0} 3_{\pm7}^{2} 4_{\pm6}^{2},
1_{\pm4} 2_{\pm3}^{2} 5_{\pm3} 6_{\pm2} 7_{\pm1} 8_{0} 3_{\pm7}^{3} 4_{\pm6}^{2},
1_{\pm4} 2_{\pm3}^{2} 5_{\pm3} 6_{\pm2} 8_{0} 3_{\pm7}^{2} 4_{\pm6}^{2}, \\
& 1_{\pm4} 2_{\pm3} 8_{0} 3_{\pm7}^{2},
1_{\pm4} 2_{\pm3}^{2} 7_{\pm1} 8_{0} 3_{\pm7}^{2} 4_{\pm6},
1_{\pm4} 2_{\pm3}^{2} 6_{\pm2} 7_{\pm1} 8_{0} 3_{\pm7}^{3} 4_{\pm6},
1_{\pm4}^{2} 2_{\pm3}^{3} 5_{\pm3} 6_{\pm2} 7_{\pm1} 8_{0} 3_{\pm7}^{4} 4_{\pm6}^{2}, \\
& 1_{\pm4} 2_{\pm3}^{2} 5_{\pm3} 7_{\pm1} 8_{0} 3_{\pm7}^{3} 4_{\pm6},
1_{\pm4} 2_{\pm3}^{2} 6_{\pm2} 8_{0} 3_{\pm7}^{2} 4_{\pm6},
1_{\pm4} 2_{\pm3} 7_{\pm1} 8_{0} 3_{\pm7} 4_{\pm6},
1_{\pm4} 2_{\pm3} 6_{\pm2} 7_{\pm1} 8_{0} 3_{\pm7}^{2} 4_{\pm6}, \\
& 1_{\pm4}^{2} 2_{\pm3}^{2} 5_{\pm3} 6_{\pm2} 7_{\pm1} 8_{0} 3_{\pm7}^{3} 4_{\pm6}^{2},
1_{\pm4}^{2} 2_{\pm3}^{3} 5_{\pm3} 6_{\pm2} 7_{\pm1} 8_{0}^{2} 3_{\pm7}^{4} 4_{\pm6}^{2},
1_{\pm4}^{2} 2_{\pm3}^{2} 6_{\pm2} 7_{\pm1} 8_{0} 3_{\pm7}^{3} 4_{\pm6},
1_{\pm4} 2_{\pm3} 6_{\pm2} 8_{0} 3_{\pm7} 4_{\pm6}, \\
& 1_{\pm4} 2_{\pm3} 5_{\pm3} 7_{\pm1} 8_{0} 3_{\pm7}^{2} 4_{\pm6},
1_{\pm4} 2_{\pm3}^{3} 5_{\pm3} 6_{\pm2} 7_{\pm1} 8_{0} 3_{\pm7}^{3} 4_{\pm6}^{2},
1_{\pm4}^{2} 2_{\pm3}^{3} 5_{\pm3} 6_{\pm2} 7_{\pm1}^{2} 8_{0} 3_{\pm7}^{4} 4_{\pm6}^{2},
1_{\pm4} 2_{\pm3}^{2} 5_{\pm3} 6_{\pm2} 7_{\pm1} 8_{0} 3_{\pm7}^{2} 4_{\pm6}^{2}, \\
& 1_{\pm4} 2_{\pm3}^{2} 5_{\pm3} 6_{\pm2} 8_{0} 3_{\pm7}^{3} 4_{\pm6},
1_{\pm4}^{2} 2_{\pm3}^{2} 5_{\pm3} 6_{\pm2} 7_{\pm1} 8_{0} 3_{\pm7}^{4} 4_{\pm6},
1_{\pm4}^{2} 2_{\pm3}^{3} 5_{\pm3} 6_{\pm2}^{2} 7_{\pm1} 8_{0} 3_{\pm7}^{4} 4_{\pm6}^{2},
1_{\pm4} 2_{\pm3} 5_{\pm3} 6_{\pm2} 8_{0} 3_{\pm7}^{2} 4_{\pm6}, \\
& 1_{\pm4} 2_{\pm3}^{2} 5_{\pm3} 8_{0} 3_{\pm7}^{2} 4_{\pm6},
1_{\pm4}^{2} 2_{\pm3}^{2} 5_{\pm3} 7_{\pm1} 8_{0} 3_{\pm7}^{3} 4_{\pm6},
1_{\pm4}^{2} 2_{\pm3}^{3} 5_{\pm3} 6_{\pm2} 7_{\pm1} 8_{0} 3_{\pm7}^{3} 4_{\pm6}^{2},
1_{\pm4}^{2} 2_{\pm3}^{3} 5_{\pm3}^{2} 6_{\pm2} 7_{\pm1} 8_{0} 3_{\pm7}^{4} 4_{\pm6}^{2}, \\
& 1_{\pm4}^{2} 2_{\pm3}^{2} 5_{\pm3} 6_{\pm2} 8_{0} 3_{\pm7}^{3} 4_{\pm6},
1_{\pm4} 2_{\pm3} 5_{\pm3} 8_{0} 3_{\pm7} 4_{\pm6},
1_{\pm4} 2_{\pm3} 7_{\pm1} 8_{0} 3_{\pm7}^{2},
1_{\pm4} 2_{\pm3}^{2} 6_{\pm2} 7_{\pm1} 8_{0} 3_{\pm7}^{2} 4_{\pm6}, \\
& 1_{\pm4} 2_{\pm3}^{2} 5_{\pm3} 6_{\pm2} 7_{\pm1} 8_{0} 3_{\pm7}^{3} 4_{\pm6},
1_{\pm4}^{2} 2_{\pm3}^{3} 5_{\pm3} 6_{\pm2} 7_{\pm1} 8_{0} 3_{\pm7}^{4} 4_{\pm6},
1_{\pm4} 2_{\pm3}^{2} 5_{\pm3} 7_{\pm1} 8_{0} 3_{\pm7}^{2} 4_{\pm6},
1_{\pm4} 2_{\pm3} 6_{\pm2} 8_{0} 3_{\pm7}^{2}, \\
& 1_{\pm4}^{2} 2_{\pm3}^{2} 5_{\pm3} 6_{\pm2} 7_{\pm1} 8_{0} 3_{\pm7}^{3} 4_{\pm6},
1_{\pm4} 2_{\pm3}^{2} 5_{\pm3} 6_{\pm2} 8_{0} 3_{\pm7}^{2} 4_{\pm6},
1_{\pm4} 2_{\pm3} 5_{\pm3} 8_{0} 3_{\pm7}^{2}.
\end{align*}
\end{gather}

\begin{gather}
\begin{align*}
& (64)  1_{0} 4_{0} 6_{0} 8_{0} 2_{\pm3} 3_{\pm3} 5_{\pm3} 7_{\pm3}, 
1_{0} 4_{0}^{2} 6_{0} 8_{0} 2_{\pm3} 3_{\pm3}^{2} 5_{\pm3}^{2} 7_{\pm3},
1_{0} 4_{0}^{3} 6_{0}^{2} 8_{0} 2_{\pm3}^{2} 3_{\pm3}^{2} 5_{\pm3}^{3} 7_{\pm3}^{2},
1_{0} 4_{0}^{2} 6_{0}^{2} 8_{0} 2_{\pm3} 3_{\pm3}^{2} 5_{\pm3}^{2} 7_{\pm3}^{2}, \\
& 1_{0} 4_{0}^{4} 6_{0}^{3} 8_{0} 2_{\pm3}^{2} 3_{\pm3}^{3} 5_{\pm3}^{4} 7_{\pm3}^{2},
1_{0} 4_{0}^{3} 6_{0}^{2} 8_{0} 2_{\pm3}^{2} 3_{\pm3}^{2} 5_{\pm3}^{3} 7_{\pm3},
1_{0} 4_{0}^{4} 6_{0}^{2} 8_{0} 2_{\pm3}^{2} 3_{\pm3}^{3} 5_{\pm3}^{3} 7_{\pm3}^{2},
1_{0}^{2} 4_{0}^{5} 6_{0}^{3} 8_{0} 2_{\pm3}^{3} 3_{\pm3}^{4} 5_{\pm3}^{4} 7_{\pm3}^{2}, \\
& 1_{0} 4_{0}^{3} 6_{0} 8_{0} 2_{\pm3}^{2} 3_{\pm3}^{2} 5_{\pm3}^{2} 7_{\pm3},
1_{0} 4_{0}^{3} 6_{0}^{2} 8_{0} 2_{\pm3} 3_{\pm3}^{2} 5_{\pm3}^{3} 7_{\pm3}^{2},
1_{0}^{2} 4_{0}^{4} 6_{0}^{3} 8_{0} 2_{\pm3}^{2} 3_{\pm3}^{3} 5_{\pm3}^{4} 7_{\pm3}^{2},
1_{0}^{2} 4_{0}^{6}  6_{0}^{3} 8_{0} 2_{\pm3}^{3} 3_{\pm3}^{4} 5_{\pm3}^{5} 7_{\pm3}^{2}, \\
& 1_{0}^{2} 4_{0}^{4} 6_{0}^{2} 8_{0} 2_{\pm3}^{2} 3_{\pm3}^{3} 5_{\pm3}^{3} 7_{\pm3}^{2},
1_{0} 4_{0}^{3} 6_{0}^{3} 8_{0} 2_{\pm3}^{2} 3_{\pm3}^{2} 5_{\pm3}^{3} 7_{\pm3}^{2},
1_{0} 4_{0}^{5} 6_{0}^{3} 8_{0} 2_{\pm3}^{3} 3_{\pm3}^{3} 5_{\pm3}^{4} 7_{\pm3}^{2},
1_{0} 4_{0}^{2} 6_{0}^{2} 8_{0} 2_{\pm3} 3_{\pm3}^{2} 5_{\pm3}^{2} 7_{\pm3}, \\
& 1_{0} 4_{0}^{2} 6_{0} 8_{0} 2_{\pm3} 3_{\pm3} 5_{\pm3}^{2} 7_{\pm3},
1_{0}^{2} 4_{0}^{6}  6_{0}^{4} 8_{0} 2_{\pm3}^{3} 3_{\pm3}^{4} 5_{\pm3}^{5} 7_{\pm3}^{3},
1_{0} 4_{0}^{4} 6_{0}^{3} 8_{0} 2_{\pm3}^{2} 3_{\pm3}^{2} 5_{\pm3}^{4} 7_{\pm3}^{2},
1_{0} 4_{0}^{3} 6_{0}^{2} 8_{0} 2_{\pm3}^{2} 3_{\pm3}^{2} 5_{\pm3}^{2} 7_{\pm3}^{2}, \\
& 1_{0} 4_{0}^{4} 6_{0}^{2} 8_{0} 2_{\pm3}^{2} 3_{\pm3}^{3} 5_{\pm3}^{3} 7_{\pm3},
1_{0}^{2} 4_{0}^{5} 6_{0}^{3} 8_{0} 2_{\pm3}^{2} 3_{\pm3}^{4} 5_{\pm3}^{4} 7_{\pm3}^{2},
1_{0}^{2} 4_{0}^{6}  6_{0}^{4} 8_{0}^{2} 2_{\pm3}^{3} 3_{\pm3}^{4} 5_{\pm3}^{5} 7_{\pm3}^{3},
1_{0} 4_{0}^{4} 6_{0}^{3} 8_{0} 2_{\pm3}^{2} 3_{\pm3}^{3} 5_{\pm3}^{3} 7_{\pm3}^{2}, \\
& 1_{0} 4_{0}^{3} 6_{0}^{2} 8_{0} 2_{\pm3} 3_{\pm3}^{2} 5_{\pm3}^{3} 7_{\pm3},
1_{0}^{2} 4_{0}^{5} 6_{0}^{3} 8_{0} 2_{\pm3}^{3} 3_{\pm3}^{3} 5_{\pm3}^{4} 7_{\pm3}^{2},
1_{0} 4_{0}^{2} 6_{0} 8_{0} 2_{\pm3} 3_{\pm3}^{2} 5_{\pm3} 7_{\pm3},
1_{0}^{2} 4_{0}^{6}  6_{0}^{4} 8_{0} 2_{\pm3}^{3} 3_{\pm3}^{4} 5_{\pm3}^{5} 7_{\pm3}^{2}, \\
& 1_{0}^{2} 4_{0}^{4} 6_{0}^{2} 8_{0} 2_{\pm3}^{2} 3_{\pm3}^{3} 5_{\pm3}^{3} 7_{\pm3},
1_{0} 4_{0}^{4} 6_{0}^{2} 8_{0} 2_{\pm3}^{2} 3_{\pm3}^{2} 5_{\pm3}^{3} 7_{\pm3}^{2},
1_{0} 4_{0}^{5} 6_{0}^{3} 8_{0} 2_{\pm3}^{2} 3_{\pm3}^{3} 5_{\pm3}^{4} 7_{\pm3}^{2},
1_{0} 4_{0}^{2} 6_{0}^{2} 8_{0} 2_{\pm3} 3_{\pm3} 5_{\pm3}^{2} 7_{\pm3}^{2}, \\
& 1_{0}^{2} 4_{0}^{6}  6_{0}^{3} 8_{0} 2_{\pm3}^{3} 3_{\pm3}^{4} 5_{\pm3}^{4} 7_{\pm3}^{2},
1_{0} 4_{0}^{3} 6_{0} 8_{0} 2_{\pm3} 3_{\pm3}^{2} 5_{\pm3}^{2} 7_{\pm3},
1_{0} 4_{0}^{3} 6_{0}^{3} 8_{0} 2_{\pm3} 3_{\pm3}^{2} 5_{\pm3}^{3} 7_{\pm3}^{2},
1_{0}^{2} 4_{0}^{4} 6_{0}^{3} 8_{0} 2_{\pm3}^{2} 3_{\pm3}^{3} 5_{\pm3}^{3} 7_{\pm3}^{2}, \\
& 1_{0}^{2} 4_{0}^{5} 6_{0}^{3} 8_{0} 2_{\pm3}^{2} 3_{\pm3}^{3} 5_{\pm3}^{4} 7_{\pm3}^{2},
1_{0} 4_{0}^{3} 6_{0}^{2} 8_{0} 2_{\pm3} 3_{\pm3}^{2} 5_{\pm3}^{2} 7_{\pm3}^{2},
1_{0} 4_{0}^{3} 6_{0}^{2} 8_{0} 2_{\pm3}^{2} 3_{\pm3}^{2} 5_{\pm3}^{2} 7_{\pm3},
1_{0} 4_{0}^{4} 6_{0}^{2} 8_{0} 2_{\pm3}^{2} 3_{\pm3}^{2} 5_{\pm3}^{3} 7_{\pm3}, \\
& 1_{0} 4_{0}^{4} 6_{0}^{3} 8_{0} 2_{\pm3}^{2} 3_{\pm3}^{2} 5_{\pm3}^{3} 7_{\pm3}^{2},
1_{0} 4_{0}^{2} 6_{0}^{2} 8_{0} 2_{\pm3} 3_{\pm3} 5_{\pm3}^{2} 7_{\pm3},
1_{0} 4_{0}^{3} 6_{0}^{2} 8_{0} 2_{\pm3} 3_{\pm3}^{2} 5_{\pm3}^{2} 7_{\pm3},
1_{0} 4_{0}^{2} 6_{0} 8_{0} 2_{\pm3} 3_{\pm3} 5_{\pm3} 7_{\pm3}.
\end{align*}
\end{gather}
\end{appendix}

\end{document}